\documentclass[12pt,reqno]{amsart}
\usepackage{amsmath,amsthm,amssymb,mathrsfs,stmaryrd,color,mathtools}

\usepackage[all]{xy}
\usepackage{url}
\usepackage{tikz-cd}
\usepackage[margin=1in,footskip=.5in]{geometry}

\usepackage[utf8]{inputenc}
\usepackage[T1]{fontenc}

\usepackage{relsize}
\usepackage[bbgreekl]{mathbbol}
\usepackage{amsfonts}

\DeclareSymbolFontAlphabet{\mathbb}{AMSb}
\DeclareSymbolFontAlphabet{\mathbbl}{bbold}

\usepackage{enumitem}
\usepackage[colorlinks=true,hyperindex, linkcolor=magenta, pagebackref=false, citecolor=cyan,pdfpagelabels]{hyperref}
\usepackage[capitalize]{cleveref}

\theoremstyle{plain}
\newtheorem{thm}{Theorem}[section]
\newtheorem{thm2}{Theorem}

\newtheorem{conv}[thm]{Convention}

\newtheorem{prop}[thm]{Proposition}
\newtheorem{prop2}[thm2]{Proposition}

\newtheorem{cor}[thm]{Corollary}

\newtheorem{lem}[thm]{Lemma}

\theoremstyle{definition}
\newtheorem{defi}[thm]{Definition}

\newtheorem{rmk}[thm]{Remark}

\newtheorem{exa}[thm]{Example}

\newtheorem{const}[thm]{Construction}
\newtheorem{ass}[thm]{Assumption}

\newtheorem{sett}[thm]{Setting}

\numberwithin{equation}{section}

\newcommand{\bb}[1]{\mathbb{#1}}
\newcommand{\cl}[1]{{\mathcal{#1}}}

\newcommand{\msf}[1]{{\mathsf{#1}}}
\newcommand{\mfr}[1]{{\mathfrak{#1}}}
\newcommand{\mrm}[1]{{\mathrm{#1}}}

\newcommand{\mbf}[1]{\mathbf{#1}}

\newcommand{\ov}[1]{{\overline{#1}}}
\newcommand{\und}[1]{{\underline{#1}}}
\newcommand{\wtd}[1]{{\widetilde{#1}}}

\newcommand{\Qla}{{\overline{\mathbb{Q}}_\ell}}
\newcommand{\Qlax}{{\overline{\mathbb{Q}}_\ell^\times}}

\newcommand{\LG}{{{}^LG}}

\newcommand{\LM}{{{}^LM}}

\newcommand{\HT}{{\operatorname{HT}}}

\newcommand{\Flag}{{\mathscr{F}\!\ell}}

\newcommand{\BL}{{\operatorname{BL}}}

\newcommand{\Lie}{{\operatorname{Lie}}}
\newcommand{\Img}{{\operatorname{Im}}}

\newcommand{\ints}{{\operatorname{int}}}

\newcommand{\Hck}{{\operatorname{Hck}}}

\newcommand{\BdR}{B_{\operatorname{dR}}}
\newcommand{\BdRp}{B^+_{\operatorname{dR}}}

\newcommand{\Eq}{{\operatorname{Eq}}}

\newcommand{\Gr}{{\operatorname{Gr}}}
\newcommand{\GM}{{\operatorname{GM}}}
\newcommand{\Spd}{{\operatorname{Spd}}}
\newcommand{\Spec}{{\operatorname{Spec}}}
\newcommand{\Spa}{{\operatorname{Spa}}}
\newcommand{\Spf}{{\operatorname{Spf}}}
\newcommand{\Perf}{{\operatorname{Perf}}}
\newcommand{\Perfd}{{\operatorname{Perfd}}}

\newcommand{\Bun}{{\operatorname{Bun}}}

\newcommand{\Sh}{{\operatorname{Sh}}}
\newcommand{\univ}{{\operatorname{univ}}}
\newcommand{\lis}{{\operatorname{lis}}}

\newcommand{\un}{{\operatorname{un}}}

\newcommand{\et}{{\acute{e}t}}

\newcommand{\perf}{{\operatorname{perf}}}

\newcommand{\bas}{{\operatorname{bas}}}

\newcommand{\Ker}{{\operatorname{Ker}}}

\newcommand{\can}{{\operatorname{can}}}

\newcommand{\id}{{\operatorname{id}}}

\newcommand{\colim}{{\operatorname{colim}}}
\newcommand{\pr}{{\operatorname{pr}}}
\newcommand{\pre}{{\operatorname{pre}}}
\newcommand{\ad}{{\operatorname{ad}}}
\newcommand{\red}{{\operatorname{red}}}

\newcommand{\Div}{{\operatorname{Div}}}

\newcommand{\Hom}{{\operatorname{Hom}}}

\newcommand{\cInd}{{\operatorname{cInd}}}
\newcommand{\tr}{{\operatorname{tr}}}
\newcommand{\Res}{{\operatorname{Res}}}
\newcommand{\res}{{\operatorname{res}}}

\newcommand{\End}{{\operatorname{End}}}

\newcommand{\der}{{\operatorname{der}}}

\newcommand{\IC}{{\operatorname{IC}}}
\newcommand{\Nm}{{\operatorname{Nm}}}

\newcommand{\asym}{{\operatorname{asym}}}
\newcommand{\sym}{{\operatorname{sym}}}

\newcommand{\diag}{{\operatorname{diag}}}

\newcommand{\FS}{{\operatorname{FS}}}

\newcommand{\CM}{{\operatorname{CM}}}
\newcommand{\Rep}{{\operatorname{Rep}}}
\newcommand{\Irr}{{\operatorname{Irr}}}
\newcommand{\Gal}{{\operatorname{Gal}}}

\newcommand{\GL}{{\operatorname{GL}}}
\newcommand{\GSp}{{\operatorname{GSp}}}

\newcommand{\Sht}{{\operatorname{Sht}}}

\newcommand{\sm}{{\operatorname{sm}}}

\newcommand{\Ad}{{\operatorname{Ad}}}
\newcommand{\ab}{{\operatorname{ab}}}

\newcommand{\ord}{{\operatorname{ord}}}

\usepackage[citestyle=alphabetic,bibstyle=alphabetic,backend=bibtex, maxalphanames=10, maxnames=10, url=true, doi=false]{biblatex}
\title{Geometry around unramified CM points}
\author{Yuta Takaya}
\address{Graduate School of Mathematical Sciences, The University of Tokyo, 3-8-1 Komaba,
Meguro-ku, Tokyo 153-8914, Japan}
\email{takaya@ms.u-tokyo.ac.jp}
\begin{document}
\begin{abstract}
    We compute an explicit contribution of supercuspidal representations arising from Yu's construction to the middle \'{e}tale cohomology of local Shimura varieties. For the proof, we construct special affinoids in local Shimura varieties whose reductions are isomorphic to Heisenberg Deligne-Lusztig varieties and then apply positive-depth Deligne-Lusztig theory. Along the way, we propose a stacky analogue of special affinoids using the stack of local shtukas and observe that the `general canonical subgroup problem' can be solved purely on the stack of local shtukas, which yields the corresponding claim on Shimura varieties via the Hodge-Tate period map. 
\end{abstract}

\maketitle
\setcounter{tocdepth}{2}
\tableofcontents

\section*{Introduction}

\subsection{Main results}

Let $F$ be a finite extension of $\bb{Q}_p$ and let $G$ be a connected reductive group over $F$. The local Langlands correspondence associates an $L$-parameter $\varphi_\pi$ to each smooth irreducible representation $\pi$ of $G(F)$. Although the local Langlands correspondence is established for $\GL_n(F)$ by \cite{HT01}, it remains largely open for general $G$. Currently, there are two approaches toward that generality: a geometric approach developed by Fargues-Scholze \cite{FS24} in the style of geometric Langlands, and a type-theoretic approach of Kaletha (\cite{Kal19} and \cite{Kal21}) based on Yu's construction \cite{Yu01}. On the one hand, Fargues-Scholze constructed a semisimple $L$-parameter $\varphi_\pi^\FS$, called the \textit{Fargues-Scholze parameter}, for every $\pi$ that should be the semisimplification of $\varphi_\pi$. On the other hand, Kaletha formed supercuspidal $L$-packets consisting of regular (or nonsingular) supercuspidal representations. 

Recently, the compatibility of these two constructions has been an active research area. Hansen \cite{Han26} deduced the constancy of Fargues-Scholze parameters in Kaletha's regular supercuspidal $L$-packets from their atomic stability\footnote{It is verified at least for \textit{sufficiently large} $p$ by \cite[Corollary 3.4.12]{Var24}.}, and a more direct comparison of $L$-parameters has been carried out by Cotner-Feng (see \cite{Cot26}, \cite{CF26_I} and \cite{CF26_II}) and independently announced by Fu in the depth-zero case. In this paper, we will study the contribution of supercuspidal representations arising from Yu's construction to the \'{e}tale cohomology of local Shimura varieties. 

In the literature, geometric realization of Yu's construction has been studied in the special fiber using positive-depth Deligne-Lusztig varieties (see e.g.\ \cite{CO25}, \cite{IN26} and \cite{Tak26_Yu} for recent developments and further references). Our method is to relate local Shimura varieties to positive-depth Deligne-Lusztig varieties via the nearby cycles of \textit{special affinoids}. 

To simplify the introduction, suppose that $G$ is split. Let $x \in \cl{B}(G, F)$ be a hyperspecial point of the reduced Bruhat-Tits building and let $\{ \mu \}$ be a geometric conjugacy class of minuscule cocharacters of $G$. Let $[b] \in B(G, \mu)_\bas$ be the unique basic $\sigma$-conjugacy class bounded by $\{\mu \}$ and let $G_b$ be the associated extended pure inner form of $G$. To each $x$ and $\{\mu\}$, we associate a \textit{common} unramified elliptic twisted Levi subgroup 
\[
    G \hookleftarrow M \hookrightarrow G_b
\]
satisfying $x \in \cl{B}(M, F)$ (see \Cref{ssec:unramified_length_0_triple}). Let $x_b \in \cl{B}(G_b, F)$ be the image of $x$. When $G = \GL_n$ and $\mu(t) = \diag(t^{-1}, 1, \ldots, 1)$, the above sequence reduces to
\[
    G = \GL_n \hookleftarrow F_n^\times \hookrightarrow D^\times = G_b. 
\]
Here, $F_n / F$ is the unramified extension of degree $n$ and $D / F$ is the division algebra with invariant $1 / n$. This corresponds to the Lubin-Tate case treated in \cite{Yos10} and \cite{BW16}. 

In Yu's construction, we consider transfers of irreducible supercuspidal representations along $M \subset G$ and $M \subset G_b$. Let $\rho$ be a smooth irreducible representation of $M(F)_x$ such that $\tau = \cInd_{M(F)_x}^{M(F)} \rho$ is irreducible (hence supercuspidal) and $\rho$ is `suitably' generic of depth $n \geq 1$ (see \Cref{defi:supergeneric_representation} for the precise condition). 
Then, we get irreducible supercuspidal representations
\[
    \pi = \cInd_{G(F)_{x, n/2} M(F)_x}^{G(F)} (\kappa_{x, n, \phi} \otimes \rho) ,\quad 
    \pi_b = \cInd_{G_b(F)_{x_b, n/2} M(F)_x}^{G_b(F)} (\kappa_{x_b, n, \phi} \otimes \rho). 
\]
Here, $\kappa_{x, n, \phi}$ and $\kappa_{x_b, n, \phi}$ are twisted Heisenberg-Weil representations introduced in \cite{FKS23} (and in \cite{Tak26_Yu} when $p = 2$). They are associated to a generic character $\phi \colon M(F) \to \Qlax$ of depth $n$ that extends the central character $\rho\vert_{M(F)_{x, n}}$. Our main result is a geometric realization of these transfers under the relevant Hecke operator 
\[
    i_b^* T_{-\mu} i_{1!} \colon \cl{D}(G(F), \Qla) \to \cl{D}(G_b(F), \Qla)^{BW_F}
\]
introduced in \cite{FS24}. Now, let $E$ be the splitting field of $M$.  

\begin{thm2}\textup{(\Cref{cor:Hecke_operator_unramified_CM})} \label{thm2:middle_Hecke_operator} 
    In the above setting, we have an inclusion
    \[
        \pi_b \boxtimes \xi_E \subset H^0(i_b^* T_{-\mu} i_{1!}(\pi))
    \] 
    for some smooth character $\xi_E \colon W_E \to \Qlax$ after restricting the Weil action to $W_E$. In particular, we get an equality $\varphi^\FS_\pi = \varphi^\FS_{\pi_b}$ of Fargues-Scholze parameters. 
\end{thm2}

More generally, we expect that $\varphi_\pi^\FS = \varphi_\tau^\FS = \varphi_{\pi_b}^\FS$ should hold for a suitable $L$-embedding $\LM \subset \LG$. Notably, those parameters could be non-supercuspidal since $\tau$ can be a character twist of a unipotent supercuspidal representation. The main feature of \Cref{thm2:middle_Hecke_operator} is the appearance of $\pi_b$ in the \textit{middle} degree of $i_b^* T_{-\mu} i_{1!}(\pi)$. 
This phenomenon is beyond the scope of the usual vanishing conjecture (e.g.\ \cite[Conjecture 1.1 (2)]{HJ25}) since $\varphi_\pi^\FS$ could be non-supercuspidal. The difficulty of these problems lies in the fact that the degree information is lost once one passes to character computations such as \cite{HKW22}. 

The strategy of our proof is to find \textit{special affinoids} in the relevant local Shimura variety $\cl{M}_{G, b, \mu, K}$. This approach had previously been applied only to Lubin-Tate spaces for a long time (e.g.\ \cite{Yos10} and \cite{BW16}) and the author \cite{Tak25z} recently found a way to apply this method to general local Shimura varieties for depth-zero regular supercuspidal representations. Here, we develop a positive-depth generalization of \cite{Tak25z}. 

Let $K_\phi \subset G(F)_{x, n, n/2}$ be a compact open subgroup containing $G(F)_{x, n+, n/2 +}$ such that 
\[
    K_\phi / G(F)_{x, n+, n/2+} \cong \Ker(X) \subset \msf{m}_{x, n} = M(F)_{x, n} / M(F)_{x, n+}
\]
for a suitable $k$-linear map $X\colon \msf{m}_{x, n} \to k$ associated to $\phi$ (see \Cref{ssec:special_affinoids_at_positive_depth}). Let $m = \lceil \tfrac{n}{2} \rceil$. We construct a positive-depth special affinoid $\cl{W}(n)_\phi$ satisfying the following properties. 

\begin{thm2} \textup{(\Cref{thm:property_of_special_affinoids})} \label{thm2:HDLV_as_reductions}
    There is an explicit affinoid $\cl{W}(n)_{\phi, \bb{C}_p} \subset \cl{M}_{G, b, \mu, K_\phi, \bb{C}_p}$ with good reduction of the local Shimura variety at level $K_\phi$ such that its reduction $\msf{W}(n)_\phi$ is isomorphic to a Heisenberg Deligne-Lusztig variety for $n \geq 2$. 
\end{thm2}

Here, Heisenberg Deligne-Lusztig varieties refer to those varieties  studied in \cite[Theorem 6]{Tak26_Yu} and their \'{e}tale cohomology geometrically realizes twisted Heisenberg-Weil representations. The reduction $\msf{W}(n)_\phi$ is a generalization of the variety $X$ studied in \cite{BW16} for Lubin-Tate spaces and the exceptional behavior at depth $n = 1$ is a common feature as in \cite{BW16}. Its \'{e}tale cohomology  lifts to local Shimura varieties via the nearby cycles functor. 

Our proof of \Cref{thm2:HDLV_as_reductions} follows the method of \cite{Tak25z} and relies on the explicit $\cl{G}$-BKF module over the universal deformation introduced in \cite{Ito25a}. One main advantage of our method over \cite{BW16} is that we directly work with local Shimura varieties \textit{at finite levels} and our special affinoids have good reduction. 

\subsection{Canonical level structures}

In modern $p$-adic geometry, stacky approaches have been successfully applied to many situations. We adopt this perspective and formulate a stacky analogue of special affinoids, which we call special open neighborhoods (see \Cref{defi:special_open_neighborhoods}), in terms of the stack $\Sht_{\cl{G}}$ of local shtukas. They are formulated so that \Cref{thm2:middle_Hecke_operator} follows formally from the existence of special open neighborhoods. In our context, they are directly constructed from explicit geometry of local Shimura varieties (see \Cref{prop:restatement_LSV}). 

In the course of developing a stacky interpretation, we find that a particular part of our construction on canonical trivializations (see \Cref{sec:canonical_trivialization}) can be generalized to \textit{local special points}, which we introduce in \Cref{sec:canonical_level_structures} as a local analogue of special points on Shimura varieties. It turns out that the ordinary case solves the `general canonical subgroup problem' on Shimura varieties via the pullback along the Hodge-Tate period map
\[
    \pi_\HT \colon \Sh_{\msf{K}}(\mbf{G}, \mbf{X})_E^\diamondsuit \to \Sht_{\cl{G}, -\mu}. 
\]
Here, $\cl{G}$ is a special maximal parahoric group scheme of $G = \mbf{G}_{\bb{Q}_p}$ associated to $x \in \cl{B}(G, \bb{Q}_p)$ and $\mu$ is a minuscule cocharacter of $G$ defined over $\bb{Q}_p$. The latter condition on $\mu$ is necessary for the existence of the ordinary locus (e.g.\ \cite[{}1.6.3]{Wed99}). In this setting, we introduce the ordinary locus $\Sht_{\cl{G}, -\mu}^\ord \subset \Sht_{\cl{G}, -\mu}$ (see \Cref{ssec:ordinary_locus}), which is isomorphic to $[\Spd(\bb{Q}_p) / \und{\cl{P}_\mu(\bb{Z}_p)}]$ with $\cl{P}_\mu = \cl{G}^{\mu \leq 0}$. 

\begin{prop2}\textup{(\Cref{cor:canonical_filtration})} \label{cor2:canonical_filtration}
    The ordinary locus $\lvert \Sh_{\msf{K}}(\mbf{G}, \mbf{X})_E\rvert^{\ord} = \pi_\HT^{-1}(\Sht_{\cl{G}, -\mu}^\ord)$ of the Shimura variety $\Sh_{\msf{K}}(\mbf{G}, \mbf{X})_E$ admits a decreasing family of open neighborhoods
    \[
        \Sh_{\msf{K}}(\mbf{G}, \mbf{X})_E^{\ord}(r) \subset \Sh_{\msf{K}}(\mbf{G}, \mbf{X})_E^\ad
    \]
    indexed by $r > 0$ with the following properties. 
    \begin{enumerate}
        \item Let $\msf{K}_{p, r} = G(\bb{Q}_p)_{x, r} \cl{P}_\mu(\bb{Z}_p)$.
        There is a section
        \[
            \can_r \colon \Sh_{\msf{K}}(\mbf{G}, \mbf{X})_E^{\ord}(r) \to \Sh_{\msf{K}_{p, r} \msf{K}^p}(\mbf{G}, \mbf{X})_E^\ad
        \]
        extending the truncation of the canonical filtration on $\Sh_{\msf{K}}(\mbf{G}, \mbf{X})_E^{\diamondsuit, \ord}$. 
        \item We have $\bigcap_{r > 0} \lvert \Sh_{\msf{K}}(\mbf{G}, \mbf{X})_E^{\ord}(r) \rvert = \lvert \Sh_{\msf{K}}(\mbf{G}, \mbf{X})_E\rvert^{\ord}$ and the composition map
        \[
            U_p \colon \Sh_{\msf{K}}(\mbf{G}, \mbf{X})_E^{\ord}(1) \xrightarrow{\can_1} \Sh_{\msf{K}_{p, 1} \msf{K}^p}(\mbf{G}, \mbf{X})_E^\ad \xrightarrow{\mu(p^{-1})} \Sh_{\msf{K}}(\mbf{G}, \mbf{X})_E^\ad
        \]
        restricts to a finite \'{e}tale surjection $\Sh_{\msf{K}}(\mbf{G}, \mbf{X})_E^{\ord}(r+1) \to \Sh_{\msf{K}}(\mbf{G}, \mbf{X})_E^{\ord}(r)$. 
    \end{enumerate}
\end{prop2}

For some Shimura varieties of PEL type, $\can_r$ was constructed by \cite[Theorem A]{GK12} and \cite[Théorème 8]{Far11} using integral models and finding such a map was called the `general canonical subgroup problem' in \cite[Introduction]{GK12}. We observe in \Cref{thm:canonical_level_structures} that this phenomenon can be proved purely locally on the stack of local shtukas at every local special point. In fact, the ordinary case is close to the analysis of \cite{BP21} for flag varieties. 

We think that local special points provide a natural class that contains both the ordinary locus and CM points as two extreme cases. For our application, unramified CM points are relevant in the construction of $\cl{W}(n)_\phi$. We specify special open balls $\cl{U}(n) \subset \cl{M}_{\cl{G}, b, \mu}$ at hyperspecial level and then construct canonical trivializations
\[
    \can_{n, m} \colon \cl{U}(n)_{\bb{C}_p} \to \cl{M}_{G, b, \mu, G(F)_{x, n, m}}. 
\]
Our special affinoid $\cl{W}(n)_\phi$ is constructed as the inverse image of $\Img(\can_{n, m})$ to $\cl{M}_{G, b, \mu, K_\phi}$. In \Cref{sec:cohomology_computation}, we show that $\cl{W}(n)_\phi \to \cl{U}(n)$ has good reduction and the reduction $\msf{W}(n)_\phi \to \msf{U}(n)$ is a finite \'{e}tale torsor of an affine space studied in \cite{Tak26_Yu}. 

\addtocontents{toc}{\protect\setcounter{tocdepth}{1}}

\subsection*{The structure of the paper}

In \Cref{sec:canonical_level_structures}, we introduce local special points and show explicit overconvergence of truncations of canonical level structures at local special points. In \Cref{sec:special_open_neighborhoods}, we formulate special open neighborhoods and introduce relevant unramified CM points. In \Cref{sec:depth_zero_case}, we study the period map $\pi_\GM$ and review the depth-zero case $n = 0$ by incorporating the results of \cite{Tak25z}. In \Cref{sec:special_open_balls}, we introduce special open balls $\cl{U}(n)$ for $n > 0$ and study their stability under group actions. In \Cref{sec:canonical_trivialization}, we construct canonical trivializations $\can_{n, m}$. In \Cref{sec:cohomology_computation}, we introduce special affinoids $\cl{W}(n)_\phi$ and compute their \'{e}tale cohomology by relating the reductions $\msf{W}(n)_\phi$ to Heisenberg Deligne-Lusztig varieties. In \Cref{sec:summary}, we summarize the results and deduce \Cref{thm2:middle_Hecke_operator}. 

\subsection*{Acknowledgements}
I would like to thank my advisor Yoichi Mieda for his constant support and encouragement. I would also like to thank Amoru Fujii, Naoki Imai, Haruto Onishi and Kazuki Tokimoto for helpful discussions related to this work. This work was supported by the WINGS-FMSP program at the Graduate School of Mathematical Sciences, the University of Tokyo and JSPS KAKENHI Grant number JP24KJ0865.

\subsection*{Notation}

We fix two primes $p \neq \ell$ and the coefficient field of representations will always be $\Qla$. All rings are assumed to be commutative. The reduced underlying scheme of a formal scheme $\mfr{X}$ is denoted by $\mfr{X}_\red$. The perfection of a scheme $X$ over $\bb{F}_p$ is denoted by $X^\mrm{perf}$. For a scheme $X$ over a ring $A$, its base change to an $A$-algebra $B$ is denoted by $X_B$ or $X\otimes_A B$. For a field $k$, an algebraic closure of $k$ is denoted by $\ov{k}$. 

For a locally profinite group $K$ that is compact modulo its center $Z$ and a central character $\chi \colon Z \to \Qlax$, $\cl{D}(K, \Qla)_\chi$ denotes the derived category of smooth representations of $K$  over $\Qla$ with central character $\chi$ and let $\Rep(K)_\chi = \cl{D}(K, \Qla)_\chi^\heartsuit$. Let $\Irr^\sm(K)$ be the set of irreducible smooth representations of $K$. For every $K$ in our context, Schur's lemma holds, i.e.\ $\End_K(\rho) \cong \Qla$ for $\rho \in \Irr^\sm(K)$ (see \cite[Section 2.6]{BH06}). For $\rho \in \Irr^\sm(K)$ with central character $\chi$ and $V \in \Rep(K)_\chi$, the $\rho$-isotypic part of $V$ is denoted by 
\[
    V[\rho] = \Hom_K(\rho, V) \otimes \rho \subset V. 
\]
As $K$ is compact modulo $Z$, it is an exact functor on $\Rep(K)_\chi$ and extends to $\cl{D}(K, \Qla)_\chi$. 

The center of an algebraic group $G$ is denoted by $Z_G$. The identity component of $G$ is denoted by $G^\circ$. For a cocharacter $\lambda \colon \bb{G}_m \to G$ over a ring $A$, $\lambda(a)$ is also denoted by $a^\lambda$ for $a\in A^\times$. 
Let $G^{\lambda=0} \subset G$ denote the centralizer of $\lambda$ and let $G^{\lambda \geq 0} \subset G$ denote the parabolic subgroup
\[
	G^{\lambda \geq 0} = \{ g\in G \mid \lim_{t \to 0} \lambda(t) g \lambda(t)^{-1} \hspace{3pt} \mrm{exists}\}. 
\]
Let $G^{\lambda > 0}$ denote the unipotent radical of $G^{\lambda \geq 0}$. Note that $G^{\lambda \leq 0}$ is also denoted by $P_\lambda$. For a torus $T$, $X^*(T)$ (resp.\ $X_*(T)$) denotes the character (resp.\ cocharacter) group of $T$ and the natural pairing between $X^*(T)$ and $X_*(T)$ is denoted by $\langle - , - \rangle$. 

Let $F$ be a non-archimedean local field with ring of integers $O_F$ and residue field $k$. Let $\pi$ be a uniformizer of $F$ and let $q$ be the cardinality of $k$. For a perfect $k$-algebra $R$, let 
\[  
    W_{O_F}(R)= \left\{ \begin{alignedat}{4}
        & W(R) \otimes_{W(k)} O_F & \quad & (\mrm{char}(F) = 0) \\
        & R\llbracket \pi \rrbracket & \quad & (\mrm{char}(F) = p). 
    \end{alignedat} \right.
\]
The Frobenius on $W_{O_F}(R)$ relative to $k$ is denoted by $\sigma$. Let $\breve{F}$ be the completed maximal unramified extension of $F$ and let $\ov{k}$ be the residue field of $\breve{F}$. For an $O_F$-scheme $X$ and a perfect $k$-algebra $R$, the pullback along $\sigma$ induces a map 
\[
    X(W_{O_F}(R)) \to X(W_{O_F(R)}) ,\quad x \mapsto \sigma^*(x). 
\]
To simplify our notation, $\sigma^*(x)$ is denoted by $\sigma(x)$. 

For a connected reductive group $G$ over $F$, the reduced (resp.\ enlarged) Bruhat-Tits building is denoted by $\cl{B}(G, F)$ (resp.\ $\cl{\wtd{B}}(G, F)$). For each $x \in \cl{B}(G, F)$ and $r \geq 0$, the Moy-Prasad filtrations of $G(F)$ (resp.\ $\Lie(G)$) are denoted by $G(F)_{x, r}$ (resp.\ $\Lie(G)_{x, r}$). For $r > 0$, the graded piece of this filtration is denoted in the sans-serif type by $\msf{g}_{x, r}$ and we will freely use the Moy-Prasad isomorphism 
\[
    \msf{g}_{x, r} = G(F)_{x, r} / G(F)_{x, r+} \cong \Lie(G)_{x, r} / \Lie(G)_{x, r+}. 
\]
The parahoric group scheme of $G$ associated to $x$ is denoted in the calligraphic type by $\cl{G}$ and its maximal reductive quotient $\msf{G} = G(F)_{x, 0} / G(F)_{x, 0+}$ is denoted in the sans-serif type. We will omit the subscript $x$ when there is no confusion. 

Let $R$ be a perfectoid ring over $O_F$. Its tilt is denoted by $R^\flat$ and the Frobenius on $W_{O_F}(R^\flat)$ is denoted by $\varphi_R$. Let
\[
    \theta_R\colon W_{O_F}(R^\flat)\to R
\]
be the canonical surjection and let $\xi_R$ denote a generator of $\Ker(\theta_R)$. When we just say that $R$ is a perfectoid ring, we apply the above notation for $O_F = \bb{Z}_p$. When $F$ is a finite extension of $\bb{Q}_p$, we fix its completed algebraic closure $\bb{C}_p$ and a compatible system $\{\pi^{1 / n}\}_{n \geq 1}$ of roots of $\pi$ in $\bb{C}_p$. There is an associated system $\{ \pi^{\flat, 1 / n} \}_{n \geq 1}$ of pseudo-uniformizers in $\bb{C}_p^\flat$. 

When a $v$-sheaf $X$ is partially proper (i.e.\ $X(R, R^+)$ is independent of the choice of $R^+$), $X(R, R^+)$ is simply denoted by $X(R)$. For a $v$-sheaf $X$ over $\Spd(\bb{Z}_p)$, let $X_s = X \times_{\Spd(\bb{Z}_p)} \Spd(\bb{F}_p)$ and $X_\eta = X \times_{\Spd(\bb{Z}_p)} \Spd(\bb{Q}_p, \bb{Z}_p)$.

\addtocontents{toc}{\protect\setcounter{tocdepth}{2}}

\section{Explicit overconvergence of canonical level structures} \label{sec:canonical_level_structures}

In this section, we introduce local special points on the stack of local shtukas and prove explicit overconvergence of truncations of their canonical level structures (see \Cref{thm:canonical_level_structures}). By specializing to the ordinary locus, we obtain a solution to the `general canonical subgroup problem' on Shimura varieties (see \Cref{cor:canonical_filtration}). 

Here, let $\Perf$ be the category of perfectoid spaces over $k$ and let $\Perfd$ be the category of perfectoid spaces over $F$. All $v$-stacks refer to $v$-stacks on $\Perf$ and $v$-stacks on $\Perfd$ are identified with $v$-stacks over $\Spd(F)$. This notation will be modified after \Cref{sec:special_open_neighborhoods}. 

\subsection{Stacks of local shtukas} \label{ssec:local_shtuka}

In this section, we recall the definition of local shtukas introduced in \cite{SW20}. For each $S \in \Perfd$, there is an analytic adic space $\cl{Y}_{O_F, S}$. When $S = \Spa(R,R^+)$ with a pseudo-uniformizer $\varpi \in R^{\flat+}$, it is given by
\[
    \cl{Y}_{O_F, S} = \Spa(W_{O_F}(R^{\flat+})) - V([\varpi]). 
\]
When there is no confusion, $\cl{Y}_{O_F, S}$ is denoted by $\cl{Y}_S$. For each rational number $r>0$, let 
\[
    \cl{Y}_{S,[0, r]} = \{ \lvert \pi \rvert^r \leq \lvert [\varpi] \rvert \neq 0 \} \subset \cl{Y}_S. 
\]
By the proof of \cite[Proposition II.1.1]{SW20}, we have
\begin{equation}
    \cl{Y}_{S, [0, r]} = \Spa(B_{S,[0, r]},B^+_{S,[0, r]}), \quad B_{S, [0, n]}^\wedge \cong W_{O_F}(R^\flat). \label{eq:YSn}
\end{equation}
Here, the wedge denotes the $\pi$-adic completion. Then, the Fargues-Fontaine curve $\cl{X}_S$ is defined as the quotient 
\[
    \cl{X}_S = \cl{Y}_{S, (0, \infty)} / \varphi^\bb{Z}
\]
where $\cl{Y}_{S, (0, \infty)} = \cl{Y}_{S} - V(\pi)$. Let $G$ be a connected reductive group over $F$ and let $\cl{G}$ be a smooth affine model of $G$ with connected fibers over $O_F$. 

\begin{defi}(\cite[Section 23.1]{SW20})
    For each $S \in \Perfd$, a local $\cl{G}$-shtuka over $S$ is a $\cl{G}$-torsor $\cl{P}$ over $\cl{Y}_S$ equipped with a Frobenius map
    \[
        \varphi_{\cl{P}}\colon \varphi^*\cl{P}\vert_{\cl{Y}_S\backslash S^\sharp} \cong \cl{P}\vert_{\cl{Y}_S\backslash S^\sharp}
    \]
    meromorphic along $S^\sharp$. The $v$-stack of local $\cl{G}$-shtukas on $\Perfd$ is denoted by $\Sht_{\cl{G}}$. 
\end{defi}

Let $\{\mu \colon \bb{G}_m \to G_{\ov{F}}\}$ be a geometric conjugacy class of cocharacters and let $E$ be the reflex field of $(G, \{ \mu \})$. 
A local $\cl{G}$-shtuka $\cl{P}$ over a perfectoid space $S$ over $E$ is bounded by $\mu$ (resp.\ of type $\mu$) if the relative position of $\varphi_{\cl{P}}$ at each geometric point of $S$ is bounded by (resp.\ equal to) $\mu$. The substack of local $\cl{G}$-shtukas bounded by (resp.\ of type) $\mu$ is denoted by
\[
    \Sht_{\cl{G}, \leq \mu} \subset \Sht_{\cl{G}, E} \quad (\textup{resp.}\ \Sht_{\cl{G}, \mu} \subset \Sht_{\cl{G}, E}). 
\]
Here, the first inclusion is closed and $\Sht_{\cl{G}, \mu} \subset \Sht_{\cl{G}, \leq \mu}$ is open. When $\mu$ is minuscule, we have $\Sht_{\cl{G}, \mu} = \Sht_{\cl{G}, \leq \mu}$. 

An efficient way to describe $\Sht_{\cl{G}, \mu}$ is via the $\BdRp$-affine Grassmannian. For each affinoid perfectoid space $S = \Spa(R,R^+)$, the formal completion along $S \subset \cl{Y}_S$ is denoted by $\BdRp(S)$. A generator of the Cartier divisor $S \subset \cl{Y}_S$ is denoted by $\xi_S$ and let $\BdR(S) = \BdRp(S)[\tfrac{1}{\xi_S}]$. Then, $\BdRp$ and $\BdR$ are partially proper $v$-sheaves on $\Perfd$. 

When $S$ is defined over $E$, there is a natural embedding $S \subset \cl{Y}_{O_E, S}$ and $\BdRp(S)$ is identified with the formal completion along $S \subset \cl{Y}_{O_E, S}$. In particular, $\BdRp(S)$ and $\BdR(S)$ can be regarded as $E$-algebras when $S$ is defined over $E$. 

\begin{defi}\textup{(\cite[Definition 19.1.1]{SW20})}
   The $\BdRp$-affine Grassmannian $\Gr_G$ is the \'{e}tale sheafification of the functor sending $S \in \Perfd$ to $G(\BdR(S)) / G(\BdRp(S))$. The substack $\Gr_{G, \mu} \subset \Gr_{G, E}$ consists of elements $g \in \Gr_G(S)$ that lie in the Schubert cell
   \[
        G(\BdRp) \cdot \mu(\xi) \cdot G(\BdRp)
   \]
   at each geometric point of $S$. 
\end{defi}

An element $g \in G(\BdR(S)) / G(\BdRp(S))$ defines a modification $g^{-1} \colon \cl{E} \dashrightarrow \cl{P}$ at $S$ of the trivial $\cl{G}$-torsor $\cl{E}$ over $\cl{Y}_S$. To get a local $\cl{G}$-shtuka $\cl{P}$ associated to $g$, one further takes a modification $\cl{E} \dashrightarrow \cl{P}$ at $\cup_{n \geq 1} \varphi^n(S)$, so that $\cl{E} \dashrightarrow \cl{P}$ is given by $\varphi^n(g)^{-1}$ at each $\varphi^n(S)$. Then, $\cl{P}$ can be enhanced to a local $\cl{G}$-shtuka and we get a map
\[
    \Gr_{G} \to \Sht_{\cl{G}}. 
\]
After base change to $E$, it restricts to a map $\Gr_{G, \mu} \to \Sht_{\cl{G}, -\mu}$. 

\begin{prop}\textup{(cf.\ \cite[Corollary 3.3.9]{DHKZ26})} \label{prop:ShtvsGr}
    The above map provides the identification
    \[
        \Sht_{\cl{G}, -\mu} \cong \und{\cl{G}(O_F)} \backslash \Gr_{G, \mu}. 
    \] 
\end{prop}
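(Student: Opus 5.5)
The plan is to show that $\Gr_{G,\mu}\to\Sht_{\cl{G},-\mu}$ is a $\und{\cl{G}(O_F)}$-torsor, working $v$-locally on $S=\Spa(R,R^+)\in\Perfd$ over $E$. The action is the natural one: $h\in\cl{G}(O_F)$ acts on $G(\BdR(S))/G(\BdRp(S))$ by left multiplication. The first point is that this action preserves the fibers of the map. Indeed, $h$ defines an automorphism of the trivial $\cl{G}$-torsor $\cl{E}$ over $\cl{Y}_S$ commuting with Frobenius, since $\varphi(h)=h$. Moreover, the modification of $\cl{E}$ at each $\varphi^n(S)$ attached to $hg$ is $\varphi^n(hg)^{-1}=\varphi^n(g)^{-1}h^{-1}$. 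So $h$ induces an isomorphism of the resulting local $\cl{G}$-shtukas, and the map factors through the quotient $\und{\cl{G}(O_F)}\backslash\Gr_{G,\mu}$. The sign change from $\mu$ to $-\mu$ should come out of the convention $g^{-1}\colon\cl{E}\dashrightarrow\cl{P}$: the shtuka $\cl{P}$ is determined by $\varphi_{\cl{P}}$ having relative position $\mu(\xi)^{-1}$ at $S$.

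For essential surjectivity I would start from a local $\cl{G}$-shtuka $(\cl{P},\varphi_{\cl{P}})$ of type $-\mu$ over $S$. Since $\varphi_{\cl{P}}$ is an isomorphism away from $S^\sharp$, it extends $\cl{P}$ over the region of $\cl{Y}_S$ near $V(\pi)$ (for $r$ large, beyond $\varphi^{-1}$ of the leg). By the standard argument in \cite{SW20}, Section 23, we get a $\varphi$-module over $W_{O_F}(R^{\flat+})$, or equivalently a $\varphi$-equivariant $\cl{G}$-torsor on $\cl{Y}_{S,[r,\infty)}$. On the region near $V(\pi)$ the Frobenius is an isomorphism, so after a $v$-cover (strictly totally disconnected $S$) this torsor with Frobenius is trivialized by its $\varphi$-invariants, which form a pro-étale $\cl{G}(O_F)$-torsor. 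This is the analogue of the statement that a $\varphi$-module over $W(R^\flat)$ with bijective Frobenius comes from an $O_F$-local system, via Lang's lemma for the smooth affine connected group $\cl{G}$ and $\pi$-adic approximation. I then transport this trivialization by $\varphi_{\cl{P}}^{-1}$ toward $S^\sharp$, where it becomes a meromorphic trivialization of $\cl{P}$ at $S$. This gives the point $g\in\Gr_G(S)$, and the type condition puts it in $\Gr_{G,\mu}$. Using \eqref{eq:YSn} to pass between $B_{S,[0,r]}$ and $W_{O_F}(R^\flat)$ is what makes the trivialization extend over all of $\cl{Y}_S$ minus the legs.

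Full faithfulness, and the identification of fibers with $\und{\cl{G}(O_F)}$-orbits, reduces to computing automorphisms. Suppose two points $g,g'$ give isomorphic shtukas. The isomorphism restricts near $V(\pi)$ to a $\varphi$-equivariant automorphism of the trivial $\cl{G}$-torsor, that is, an element of $\cl{G}(W_{O_F}(R^{\flat+}))^{\varphi=1}$ (after completion). This group equals $\cl{G}(O_F)$ locally on $S$, i.e.\ $\und{\cl{G}(O_F)}(S)$, because $W_{O_F}(R^{\flat})^{\varphi=1}=\und{O_F}(S)$ and $\cl{G}$ is affine. Hence $g'=hg$, and the action is free with the right stabilizers. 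This completes the identification.

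I expect the main obstacle to be the trivialization step: showing that the $\varphi$-invariants near $V(\pi)$ form a $\cl{G}(O_F)$-torsor for a general smooth affine $\cl{G}$ with connected fibers, not just for $\GL_n$. My first approach would be to reduce to successive approximation modulo $\pi^k$ using smoothness and Lang's theorem on each $\cl{G}\otimes O_F/\pi^k$, in the manner of \cite[Corollary 3.3.9]{DHKZ26}.
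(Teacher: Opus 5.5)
Your route is the same as the paper's. Its proof only invokes \cite[Proposition 22.6.1]{SW20} (via \cite[Corollary 3.3.9]{DHKZ26}), and that result is essentially your argument:
\begin{itemize}
\item the shtuka is \'etale near $V(\pi)$;
\item its Frobenius invariants there form a pro-\'etale $\und{\cl{G}(O_F)}$-torsor;
\item a trivialization of that torsor, propagated by $\varphi_{\cl{P}}$, gives the point of $\Gr_{G,\mu}$;
\item automorphisms are controlled by $W_{O_F}(R^\flat)^{\varphi=1}=\und{O_F}(S)$.
\end{itemize}
Two steps in your write-up need correcting.

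First, you place the \'etale region at the wrong end of $\cl{Y}_S$. The object to trivialize is $\cl{P}|_{\cl{Y}_{S,[0,r]}}$ for $r$ below the radius of $S^\sharp$, so that $\varphi_{\cl{P}}$ has no pole there. Completed via \eqref{eq:YSn}, this becomes a $\cl{G}$-torsor over $W_{O_F}(R^\flat)$ with bijective Frobenius. It is not a $\varphi$-module over $W_{O_F}(R^{\flat+})$: the spectrum of that ring contains the leg, so Frobenius is not bijective there. Nor is it the $\varphi$-equivariant torsor on $\cl{Y}_{S,[r,\infty)}$. That torsor is the $G$-bundle $\BL_{\cl{G}}(\cl{P})$ on $\cl{X}_S$, for example $\cl{E}^b$ over $\Sht^b_{\cl{G},-\mu}$. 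Its $\varphi$-invariants never form a $\und{\cl{G}(O_F)}$-torsor: at best they form a $\und{G(F)}$-torsor when the bundle is trivial, and not even that for $b\neq 1$. So the trivialization step fails if carried out on that side. The integral level structure is only visible near $V(\pi)$. Also, no extension of $\cl{P}$ toward $V(\pi)$ is needed, since $\cl{Y}_S$ already contains $V(\pi)$.

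Second, the $\cl{G}(O_F)$-action on $\Gr_{G,\mu}$ is not free. For instance, at a local special point $x_\mu$ the stabilizer is $H_{\mu,x}$ by \Cref{lem:level_structure_special_points}. What your automorphism computation proves, and what the stack quotient requires, is that
\[
    \und{\cl{G}(O_F)}\times\Gr_{G,\mu}\to\Gr_{G,\mu}\times_{\Sht_{\cl{G},-\mu}}\Gr_{G,\mu}
\]
is an isomorphism. Equivalently, stabilizers of points of $\Gr_{G,\mu}$ are the automorphism groups of the corresponding shtukas.
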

\begin{proof}
    The claim is proved loc. cit. for quasi-parahoric group schemes and the same argument works here. It essentially follows from \cite[Proposition 22.6.1]{SW20}. 
\end{proof}

For this reason, we will use the following notation. 

\begin{defi} \label{defi:local_shtuka_general_level}
    For each open subgroup $K \subset G(F)$, we set $\Sht_{K, -\mu} = \underline{K} \backslash \Gr_{G, \mu}$. 
\end{defi}

\subsection{The ordinary locus} \label{ssec:ordinary_locus}

In this section, we introduce the ordinary locus $\Sht_{\cl{G}, -\mu}^\ord \subset \Sht_{\cl{G}, -\mu}$. It serves as a motivating example of local special points on $\Sht_{\cl{G}, -\mu}$ (see \Cref{ssec:extremal_points}).  

Here, we impose $E=F$ and $\{ \mu \}$ admits an $F$-rational representative $\mu \colon \bb{G}_m \to G$. It is a necessary condition to consider the ordinary locus as observed in \cite[{}1.6.3]{Wed99} for Shimura varieties of PEL type. 

Let $\Bun_G$ be the $v$-stack of $G$-bundles over the Fargues-Fontaine curve. The underlying space of $\Bun_G$ is homeomorphic to the set of $\sigma$-conjugacy classes $B(G)$ (see \cite[Theorem 1.1]{Vie24}). Note that this claim holds over $k$ since Newton points and Kottwitz values are invariant under $\Gal(\ov{k} / k)$. 

For each local $\cl{G}$-shtuka $\cl{P}$ over $S$, its restriction to $\cl{Y}_{S, [r, \infty)}$ for sufficiently large $r$ provides a $G$-bundle over the Fargues-Fontaine curve $\cl{X}_S$. This induces a natural map 
\begin{equation} \label{eq:Beauville_Laszlo}
    \BL_{\cl{G}} \colon \Sht_{\cl{G}, -\mu} \to \Bun_G. 
\end{equation}
The image is equal to the $\mu$-bounded locus $B(G, \mu) \subset B(G)$ (cf.\ \cite[Proposition A.9]{Sch18}). It has a unique maximal element represented by $\mu(\pi)$. 

\begin{defi} \label{defi:ord_locus_stack}
    The closed substack $\Sht_{\cl{G}, -\mu}^\ord = \BL_{\cl{G}}^{-1}([\mu(\pi)])$ is the ordinary locus of $\Sht_{\cl{G}, -\mu}$. 
\end{defi}

Now, we provide a simple description of $\Sht_{\cl{G}, -\mu}^\ord$. For this, we suppose that $\cl{G}$ is a special maximal parahoric group scheme of $G$. Since $\cl{G}$ contains the standard integral model of a maximal $F$-split torus, we may assume that $\mu$ extends to a cocharacter $\mu \colon \bb{G}_m \to \cl{G}$ by replacing $\mu$ with a suitable $G(F)$-conjugate. 

Let $P_{\mu} = G^{\mu \leq 0}$ and $\Flag_{G, \mu} = G / P_{\mu}$. We also set $\cl{P}_{\mu} = \cl{G}^{\mu \leq 0}$. 

\begin{prop} \label{prop:ordinary_locus}
    There is a natural inclusion $\und{\Flag_{G, \mu}(F)} \subset \Gr_{G, \mu}$ inducing an identification 
    \[
        \Sht_{\cl{G}, -\mu}^\ord = \und{\cl{G}(O_F)} \backslash \und{\Flag_{G, \mu}(F)} \cong [\Spd(F) / \und{\cl{P}_{\mu}(O_F)}]
    \]
    when $\cl{G}$ is special maximal parahoric and $\mu$ is defined over $O_F$. 
\end{prop}
\begin{proof}
    The proof of \cite[Corollary IX.7.3]{FS24} uses the fact that the ordinary locus of $\Gr_{G, \mu}$ equals $\und{\Flag_{G, \mu}(F)}$, which is justified by \cite[Proposition 4.7]{GI23} via the Tannakian interpretation. Then, \Cref{prop:ShtvsGr} implies $\Sht_{\cl{G}, -\mu}^\ord = \und{\cl{G}(O_F)} \backslash \und{\Flag_{G, \mu}(F)}$. The second isomorphism follows from the Iwasawa decomposition $G(F) = \cl{G}(O_F) P_\mu(F)$ (see \cite[Theorem 5.3.4]{KP23}). 
\end{proof}

In particular, there is a \textit{canonical filtration}
\[
    \Sht_{\cl{G}, -\mu}^\ord \to \und{\cl{P}_\mu(O_F)} \backslash \Gr_{G, \mu}
\]
on the ordinary locus. 

\subsection{Local special points on the stack of local shtukas} \label{ssec:extremal_points}

In this section, we introduce the notion of local special points on $\Sht_{\cl{G}, -\mu}$ as a generalization of the ordinary locus. It is a local analogue of special points on Shimura varieties (see \Cref{ssec:special_points}) and it also includes CM points. The following datum is the input to specify a special point. 

\begin{defi} \label{defi:local_special_pair}
    A pair $(M, \mu)$ is a \textit{local special pair} for $(G, \{ \mu \})$ if $M \subset G$ is a twisted Levi subgroup and $\mu$ is a representative of $\{ \mu \}$ defined over a finite extension $E / F$ such that 
    \[
        M_{\ov{F}} = \bigcap_{\tau \in \Gal(\ov{F} / F)} G_{\ov{F}}^{\tau\mu = 0}. 
    \]
    We say that $(M, \mu)$ is tamely ramified (resp.\ unramified) if $M$ is tamely ramified (resp.\ unramified). For each $(M, \mu)$, let $H_\mu \subset G$ be the closed subgroup containing $M$ such that 
    \[
        H_{\mu, \ov{F}} = \bigcap_{\tau \in \Gal(\ov{F} / F)} G_{\ov{F}}^{\tau\mu \leq 0}.
    \]
\end{defi}

We will heavily rely on the Bruhat-Tits theory and the Moy-Prasad filtration. For this reason, we will always assume that $M  \subset G$ is tamely ramified. In particular, we follow the following convention throughout this paper. 

\begin{conv} \label{conv:tamely_ramified_minimal}
    In this paper, local special pairs are always assumed to be tamely ramified and $E$ denotes the defining field of $\mu$ (rather than the reflex field of $\{ \mu \}$) unless otherwise stated. 
\end{conv}

\begin{rmk}
    We replace $E$ by a sufficiently ramified Galois extension in some proofs (e.g.\ \Cref{prop:characterize_Kr}, \Cref{thm:canonical_level_structures}). Nevertheless, we follow \Cref{conv:tamely_ramified_minimal} in every statement. 
\end{rmk}

Since $\mu$ is central in $M$, $\Gr_{M, \mu} = \Spd(E)$. Let $x_\mu \in \Gr_{G, \mu}(E)$ denote the point associated to $\Gr_{M, \mu} \to \Gr_{G, \mu}$. For simplicity, we assume that $\cl{G}$ is parahoric. Let $\cl{\wtd{B}}(G, F)$ be the enlarged Bruhat-Tits building of $G$ and let $x \in \cl{\wtd{B}}(G, F)$ be a point such that $\cl{G}(O_F) = G(F)_{x, 0}$. 


\begin{lem} \label{lem:level_structure_special_points}
    The $\cl{G}(O_F)$-orbit of $x_\mu\colon \Gr_{M, \mu} \to \Gr_{G, \mu}$ induces a closed immersion
    \[
        [\Spd(E) / \und{H_{\mu, x}}] \hookrightarrow  \Sht_{\cl{G}, -\mu, E}
    \]
    by setting $H_{\mu, x} = H_\mu(F) \cap \cl{G}(O_F)$. This closed substack is denoted by $\Sht_{\cl{G}, -\mu}^{[M, \mu]}$ and called the \textup{local special point} of $\Sht_{\cl{G}, -\mu}$ associated to $(M, \mu)$. 
\end{lem}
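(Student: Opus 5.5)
Via \Cref{prop:ShtvsGr}, base changed to $E$, we have $\Sht_{\cl{G}, -\mu, E} \cong \und{\cl{G}(O_F)} \backslash \Gr_{G, \mu, E}$. So the plan is to show that the $\und{\cl{G}(O_F)}$-orbit of $x_\mu$ is a closed subsheaf of $\Gr_{G, \mu, E}$ isomorphic to $\und{\cl{G}(O_F)/H_{\mu,x}}$. Quotienting by $\und{\cl{G}(O_F)}$ then gives $[\Spd(E)/\und{H_{\mu,x}}]$. The proof splits into two steps: identifying the stabilizer of $x_\mu$, and showing the orbit map is a closed immersion.

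\textbf{Stabilizer.} For $g \in G(F)$ we have $g x_\mu = x_\mu$ iff $g \in G(\BdRp)^{\mu(\xi)}$, i.e.\ $\mu(\xi)^{-1} g \mu(\xi) \in G(\BdRp)$ over a geometric point $\Spa(C)$. Choose a maximal torus $T \subset M$, split over a finite extension. Over $\ov{F}$, decompose $g$ along the big cell of $G^{\mu\leq0}$, with root groups weighted by $\langle \alpha, \mu\rangle$.

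\begin{itemize}
\item Conjugation by $\mu(\xi)$ multiplies the $\alpha$-root coordinate by $\xi^{-\langle\alpha,\mu\rangle}$.
\item For a constant $F$-point (inside $\ov{F}\subset\BdRp$), the result is integral iff the coordinates with $\langle\alpha,\mu\rangle>0$ vanish.
\item This gives $\mathrm{Stab}_{G(F)}(x_\mu) \subset G^{\mu\leq 0}(\ov{F})$, and conversely $G^{\mu\leq0}$ stabilizes $x_\mu$.
\end{itemize}

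Hence $\mathrm{Stab}_{G(F)}(x_\mu) = G(F)\cap G^{\mu\le0}(\ov F)$. A Galois-invariant element lies in every $G^{\tau\mu\le 0}$, hence in $H_\mu(F)$. Intersecting with $\cl{G}(O_F)$ gives $H_{\mu,x}$. The same Tannakian/Bruhat argument applies to $\und{\cl{G}(O_F)}(S)$, whose elements are locally constant, so the stabilizer sheaf is $\und{H_{\mu,x}}$. This yields a monomorphism $\und{\cl{G}(O_F)/H_{\mu,x}} \to \Gr_{G,\mu,E}$.

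\textbf{Closed immersion.} The source is a profinite set over $\Spd(E)$ and the target is separated over $\Spd(E)$ (the Schubert cell is proper for minuscule $\mu$, and in general the map factors through the proper $\Gr_{G,\le\mu}$). Hence the map from $\und{\cl{G}(O_F)}\times\Spd(E)$ is proper, and its image is closed. A proper monomorphism of $v$-sheaves is a closed immersion, provided injectivity on geometric points holds, which the stabilizer computation gives. Descending along the $v$-cover $\Gr_{G,\mu,E} \to \und{\cl{G}(O_F)}\backslash\Gr_{G,\mu,E}$ then shows that the induced map of quotient stacks is a closed immersion.

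\textbf{Expected obstacle.} The delicate step is the stabilizer computation, because it requires compatibility between the point $x_\mu$ and the intersection over Galois conjugates. The point $x_\mu$ is only defined over $E$ and depends on $\mu$, not its conjugates. The key observation I would make precise is that for $F$-rational $g$, $g\in G^{\mu\le0}$ forces $g=\tau(g)\in G^{\tau\mu\le0}$ for all $\tau$. I would also justify the weight argument over $\BdRp$ by using a faithful representation to reduce to $\GL_n$, where the $\xi$-adic valuation is clear.
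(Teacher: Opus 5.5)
Your proposal is correct and follows the paper's proof. Both compute the $G(F)$-stabilizer of $x_\mu$ as $H_\mu(F)$; the paper does this by reducing $G(\BdRp)\cap \mu(\xi) G(\BdRp) \mu(\xi)^{-1}$ modulo $\xi$ into $P_\mu(\bb{C}_p)$ rather than via your faithful-representation weight argument, but the content is the same. Both then conclude that $\und{\cl{G}(O_F)/H_{\mu, x}} \times \Spd(E) \to \Gr_{G, \mu, E}$ is a proper monomorphism (profinite source, separated target), hence a closed immersion. One slip to fix: apply the proper-monomorphism criterion to the map from the quotient $\und{\cl{G}(O_F)/H_{\mu, x}} \times \Spd(E)$, not to the map from $\und{\cl{G}(O_F)} \times \Spd(E)$, which is not a monomorphism.
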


\begin{proof}
    First, we show that the stabilizer of $x_\mu$ under the $G(F)$-action is $H_\mu(F)$. Let $\bb{C}_p$ be the completed algebraic closure of $E$ and let $\xi = \xi_{\bb{C}_p} \in \BdRp(\bb{C}_p)$. For each $g \in G(F)$, $g \cdot x_\mu = x_\mu$ is equivalent to 
    \begin{equation} \label{eq:condition_g}
        g \mu(\xi) \in \mu(\xi) \cdot G(\BdRp(\bb{C}_p)). 
    \end{equation}
    If $g \in H_\mu(F) \subset P_\mu$, we have $\mu(\xi)^{-1} g \mu(\xi) \in P_\mu(\BdRp(\bb{C}_p)) \subset G(\BdRp(\bb{C}_p))$. On the other hand, if \eqref{eq:condition_g} holds, we have $g \in G(\BdRp(\bb{C}_p)) \cap \mu(\xi) \cdot G(\BdRp(\bb{C}_p)) \cdot \mu(\xi)^{-1}$. The right-hand side maps to $P_\mu(\bb{C}_p)$ under the reduction modulo $\xi$. Thus, $g \in P_\mu(\bb{C}_p)$, so we get $g \in H_\mu(F)$ from $G(F) \cap P_\mu(\bb{C}_p) = H_\mu(F)$. Since $H_{\mu, x} = H_\mu(F) \cap \cl{G}(O_F) $, the $\cl{G}(O_F)$-orbit of $x_\mu$ provides an inclusion 
    \[
        [\Spd(E) / \und{H_{\mu, x}}] \hookrightarrow  \Sht_{\cl{G}, -\mu, E}. 
    \]
    It remains to see that it is a closed immersion. For this, it is enough to prove that $\und{\cl{G}(O_F) / H_{\mu, x}} \times \Spd(E) \to \Gr_{G, \mu, E}$ is a closed immersion. Since $\cl{G}(O_F) / H_{\mu, x}$ is a profinite space, the left-hand side is proper over $\Spd(E)$. Since $\Gr_{G, \mu, E} \to \Spd(E)$ is separated (see \cite[Proposition 20.2.3]{SW20}), $\und{\cl{G}(O_F) / H_{\mu, x}} \times \Spd(E) \to \Gr_{G, \mu, E}$ is a proper monomorphism. Thus, it is a closed immersion (see \cite[Lemma 2.1]{AGLR22}).  
\end{proof}

As in the case of the ordinary locus, there is a \textit{canonical level structure}
\[
    \Sht_{\cl{G}, -\mu}^{[M, \mu]} \to \und{H_{\mu, x}} \backslash \Gr_{G, \mu}. 
\]
Our goal is to show that it can be extended to an \textit{explicit} open neighborhood of $\Sht_{\cl{G}, -\mu}^{[M, \mu]}$ after \textit{truncations}, i.e.\ after passing to the composition  
\[
    \Sht_{\cl{G}, -\mu}^{[M, \mu]} \to \und{H_{\mu, x}} \backslash \Gr_{G, \mu} \to \Sht_{K, -\mu}
\]
for a compact open subgroup $K \subset \cl{G}(O_F)$ containing $H_{\mu, x}$. 

\begin{rmk} \label{rmk:overconvergence_formal}
    The intuition to expect the overconvergence of such $K$-level structures is the overconvergence of finite \'{e}tale sites \cite[Lemma 12.17]{Sch17}: it implies that for any local $\cl{G}$-shtuka $\cl{P} \colon X \to \Sht_{\cl{G}, -\mu}$ on a qcqs $v$-sheaf $X$ whose \textit{$(M, \mu)$-special} locus 
    \[
        X^{[M, \mu]} = \cl{P}^{-1}(\Sht_{\cl{G}, -\mu}^{[M, \mu]}) \subset X
    \]
    can be written as the intersection $\bigcap_{i \in I} U_i$ of qcqs open neighborhoods, the composition
    \[
        X^{[M, \mu]} \to \und{H_{\mu, x}} \backslash \Gr_{G, \mu} \to \Sht_{K, -\mu}
    \]
    extends to some $U_i$. In applications, it is still important to have an explicit family of overconvergent loci (cf.\ \Cref{prop:compact_Up_operator}). 
\end{rmk}

To simplify the situation, we impose the following compatibility between $(M, \mu)$ and $x$. 

\begin{defi} \label{defi:local_special_triple}
    A triple $(M, \mu, x)$ is a local special triple if $(M, \mu)$ is a local special pair and $x$ is a rational point in the enlarged Bruhat-Tits building $\wtd{\cl{B}}(M, F)$ such that $\cl{G}(O_F) = G(F)_{x, 0}$ for some admissible embedding $\wtd{\cl{B}}(M, F) \subset \wtd{\cl{B}}(G, F)$ (see \cite[Section 14.2]{KP23}). 
\end{defi}

From now on, we fix a local special triple $(M, \mu, x)$ and a compatible admissible embedding $\wtd{\cl{B}}(M, F) \subset \wtd{\cl{B}}(G, F)$. 

\subsection{Examples of local special points} \label{ssec:example_special_points}

In this section, we examine some basic examples of local special points. The motivating examples for us are the following. 

\begin{defi} \label{defi:special_case_of_extremal_points}
    A local special pair $(M, \mu)$ is \textit{ordinary} (resp.\ \textit{CM}) if $\mu$ is defined over $F$ (resp.\ $M$ is elliptic, i.e.\ $Z_M^\circ / Z_G^\circ$ is anisotropic). In each case, we say that the associated local special point $\Sht_{\cl{G}, -\mu}^{[M, \mu]}$ is ordinary (resp.\ CM). 
\end{defi}

\begin{lem}
    Let $(M, \mu)$ be a local special pair. We have the following. 
    \begin{enumerate}
        \item When $(M, \mu)$ is ordinary, $M = G^{\mu = 0}$ and $H_\mu = P_\mu$. In particular, $\Sht_{\cl{G}, -\mu}^{[M, \mu]} = \Sht_{\cl{G}, -\mu}^\ord$ when the image of $x$ in $\cl{B}(G, F)$ is a special vertex. 
        \item When $(M, \mu)$ is CM, $H_\mu = M$ and $H_{\mu, x} = M(F)_{x, 0}$. 
    \end{enumerate}
\end{lem}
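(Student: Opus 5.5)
For (1), I would unwind \Cref{defi:local_special_pair} using the $F$-rationality of $\mu$. Every $\tau \in \Gal(\ov{F}/F)$ fixes $\mu$, so $\tau\mu = \mu$ for all $\tau$. The intersections in the definitions of $M$ and $H_\mu$ then collapse to $M_{\ov{F}} = G_{\ov{F}}^{\mu = 0}$ and $H_{\mu, \ov{F}} = G_{\ov{F}}^{\mu \leq 0}$. Both sides are defined over $F$, so by Galois descent we get $M = G^{\mu=0}$ and $H_\mu = P_\mu$.

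For the identification with the ordinary locus, I would compare the closed immersion of \Cref{lem:level_structure_special_points}, namely $[\Spd(F)/\und{H_{\mu,x}}] = [\Spd(F)/\und{\cl{P}_\mu(O_F)}]$, with \Cref{prop:ordinary_locus}. This uses $P_\mu(F) \cap \cl{G}(O_F) = \cl{P}_\mu(O_F)$, where $\cl{P}_\mu = \cl{G}^{\mu\le 0}$. That equality holds because $\cl{P}_\mu$ is the schematic closure of $P_\mu$ in $\cl{G}$, or equivalently the $O_F$-points of the attractor for the $\bb{G}_m$-action are the points whose limit exists. It requires that $\mu$ extend over $O_F$, and this is arranged by conjugating $\mu$ within $G(F)$ so that it factors through the standard torus in $\cl{G}$, as in the setup of \Cref{prop:ordinary_locus}.

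It then remains to see that the point $x_\mu$ lies in the ordinary locus. Since $\mu$ is $F$-rational, $x_\mu = \mu(\xi)$ corresponds to the $F$-point $P_\mu \in \Flag_{G,\mu}(F) \subset \Gr_{G,\mu}$. Its associated $G$-bundle is $[\mu(\pi)]$, because a modification of the trivial bundle by an $F$-rational minuscule $\mu$ gives the bundle of slope $\mu$; this is also exactly the content of the inclusion in \Cref{prop:ordinary_locus}. The two closed substacks therefore share a point with the same stabilizer. When $x$ is special, \Cref{prop:ordinary_locus} says the ordinary locus is a single orbit $[\Spd(F)/\und{\cl{P}_\mu(O_F)}]$, and the two substacks coincide. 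The main point needing care is this matching of the $\cl{G}(O_F)$-orbit of $x_\mu$ with all of $\und{\cl{G}(O_F)}\backslash\und{\Flag_{G,\mu}(F)}$. That step is the Iwasawa decomposition $G(F) = \cl{G}(O_F)P_\mu(F)$, already used in the proof of \Cref{prop:ordinary_locus}.

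For (2), assume $M$ is elliptic. Then $H_\mu \supset M$ is a closed subgroup defined over $F$, and $H_{\mu,\ov{F}}$ is an intersection of parabolics all containing the Levi $M_{\ov{F}}$. I would show that its unipotent part is trivial. Work with a maximal torus $T \subset M$ and pass to roots. A root $\alpha$ of $T$ lies in $H_\mu$ exactly when $\langle \alpha, \tau\mu\rangle \le 0$ for all $\tau$. Since $\mu$ is central in $M$ and $M$ is the common centralizer, the $\tau\mu$ range over cocharacters of $Z_M^\circ$ up to $Z_G^\circ$. Their sum $\sum_\tau \tau\mu$, taken over a finite Galois orbit, is $\Gal$-invariant, hence lies in $X_*(Z_M^\circ)^{\Gal}\otimes\bb{Q}$, which equals $X_*(Z_G^\circ)\otimes\bb{Q}$ by ellipticity. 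Pairing with $\alpha$ then gives $\sum_\tau\langle\alpha,\tau\mu\rangle = 0$. Together with $\langle\alpha,\tau\mu\rangle \le 0$ for every $\tau$, this forces $\langle\alpha,\tau\mu\rangle = 0$ for all $\tau$, so $\alpha$ is a root of $M$. Hence $H_\mu$ and $M$ have the same roots, and since $H_\mu \subset P_\mu$ is connected (an intersection of parabolics containing a common torus), $H_\mu = M$. Finally, $H_{\mu,x} = M(F)\cap G(F)_{x,0}$, and this equals $M(F)_{x,0}$ by the admissibility of the embedding $\wtd{\cl{B}}(M,F)\subset\wtd{\cl{B}}(G,F)$ for tame twisted Levis (\cite{KP23}). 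I expect the only subtle point here to be that intersecting with $M(F)$ recovers the parahoric $M(F)_{x,0}$ rather than the full stabilizer, which uses that $M$ is tame and that $x$ lies in the building of $M$.
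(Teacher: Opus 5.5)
Your arguments for (1) and for $H_\mu = M$ in (2) are correct and essentially the paper's. The paper calls the first claim of (1) immediate and obtains the second from \Cref{prop:ordinary_locus}, i.e.\ the Iwasawa decomposition showing $\cl{G}(O_F)\cdot x_\mu = \Flag_{G,\mu}(F)$. It proves (2) from $\sum_\tau \tau\mu \in X_*(Z_G^\circ)$, exactly as in your root computation. The conjugation of $\mu$ you invoke in (1) is unnecessary. For a local special triple, $\mu$ already factors through every maximal $F$-split torus $S$ of $M$ with $x \in \cl{A}(M,S)$. In any case, the equality of substacks only uses the orbit statement.

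The step that fails is the last one: $M(F)\cap G(F)_{x,0} = M(F)_{x,0}$ does not follow from admissibility. Admissibility of $\wtd{\cl{B}}(M,F)\subset\wtd{\cl{B}}(G,F)$ gives $M(F)\cap G(F)_x = M(F)_x$ for the full stabilizers, and $M(F)\cap G(F)_{x,r} = M(F)_{x,r}$ for $r>0$. At depth zero it only gives $M(F)_{x,0}\subseteq M(F)\cap G(F)_{x,0}\subseteq M(F)_x$. The parahorics are cut out of the stabilizers by $\ker\kappa_G$ and $\ker\kappa_M$, and $\ker\kappa_M$ can be strictly smaller than $\ker\kappa_G\cap M(F)$, because $\pi_1(M)_{I_F}\to\pi_1(G)_{I_F}$ need not be injective on torsion.

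Concretely, take $p$ odd, $G=\mathrm{SL}_2$, $E=F(\sqrt{\pi})$, $M=T=\ker(\Nm_{E/F})$, and $\mu$ a generator of $X_*(T)$. This is a tame CM pair with $H_\mu=T$. The element $-1\in T(F)$ is central in $\mathrm{SL}_2(F)$, so it lies in every parahoric, and hence $-1\in H_{\mu,x}$ for any admissible $x$. On the other hand, $X_*(T)^{I_F}=0$, so the reductive quotient of the parahoric of $T$ is trivial and $T(F)_{x,0}=T(F)_{x,0+}$ is pro-$p$. It therefore does not contain $-1$; equivalently, $\kappa_T(-1)$ is the nontrivial element of $X_*(T)_{I_F}\cong\bb{Z}/2$.

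So this part of (2) cannot be proved in the stated generality, and the paper's one-line proof does not address it either. In general you only get $M(F)_{x,0}\subseteq H_{\mu,x}\subseteq M(F)_x$. Equality with $M(F)_{x,0}$ needs an extra hypothesis, for instance the case used later: $M$ unramified and $x$ hyperspecial. There $M(F)_x = Z_G(F)\cl{M}(O_F)$ by \Cref{lem:hyperspecial_M(F)_x}, and one checks that $Z_G(F)\cap\cl{G}(O_F)\subseteq\cl{M}(O_F)$.
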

\begin{proof}
    For (1), the first claim is immediate and the second claim is a reformulation of \Cref{prop:ordinary_locus}. For (2), the claim follows from 
    $
        \sum_{\tau \in \Gal(\wtd{E} / F)} \tau \mu \in X_*(Z_G^\circ)
    $
    for the Galois closure $\wtd{E}$ of $E / F$ since $Z_M^\circ / Z_G^\circ$ is anisotropic. 
\end{proof}

These two examples of local special points are opposite extremes. To investigate some examples in the middle, we present a construction of unramified local special pairs. 

\begin{const} \label{const:special_pair_from_Weyl_element}
    Suppose that $G$ is unramified and $x \in \cl{B}(G, F)$ is a hyperspecial point. Take a maximal torus $T \subset G$ such that $x \in \cl{A}(G, T)$ and let $\cl{T} \subset \cl{G}$ be its integral model. For each Weyl element $w \in N_{\cl{G}}(\cl{T})(O_{\breve{F}})$, we take an element $p_w \in \cl{G}(O_{\breve{F}})$ such that 
    \[  
        p_w^{-1} \sigma(p_w) = w. 
    \]  
    Fix a representative $\mu \in X_*(T)$ of $\{ \mu \}$. Then, $\mu_w = \Ad(p_w)(\mu)$ provides an unramified local special pair $(M_w, \mu_w)$ such that 
    \[
        M_w = \Ad(p_w)\left( \bigcap_{n \geq 0} G^{(w\sigma)^n \mu = 0} \right). 
    \]
\end{const}

Now, we will explore this construction when $G = \GL_n$ and $\mu(t) = \diag(t^{-1}, 1, \ldots, 1)$. For each $1 \leq k \leq n$, let $I_k$ denote the identity matrix of size $k \times k$ and consider a Weyl element
\[
    w_k = {\footnotesize \begin{pmatrix}
        0 & 1 & 0 \\
        I_{k-1} & 0 & 0 \\
        0 & 0 & I_{n-k}
    \end{pmatrix}}. 
\]
Let $P_k \subset \GL_n$ be the parabolic subgroup
\[
    P_k = \left\{ {\small \begin{pmatrix}
        A & B \\
        0 & D
    \end{pmatrix}} \in \GL_n \; \bigg\vert \; A \in \GL_k, D \in \GL_{n-k}
    \right\}. 
\]
Let $\pi_k \colon P_k \to \GL_k \times \GL_{n-k}$ denote the Levi quotient. 
\begin{lem}
    Let $(M_k, \mu_k)$ be the local special pair associated to $w_k$. Let $F_k$ be the unramified extension of $F$ of degree $k$. Then, 
    \[
        M_k \cong \Res_{F_k / F} \bb{G}_m \times \GL_{n-k} \subset \GL_k \times \GL_{n-k} ,\quad H_{\mu_k, x} = \pi_k^{-1}(M_k) \cap \GL_n(O_F). 
    \]
\end{lem}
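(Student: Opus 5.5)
We compute everything explicitly with $T$ the diagonal torus, $x$ the standard hyperspecial vertex, and $\mu = \diag(t^{-1},1,\ldots,1)$. By \Cref{const:special_pair_from_Weyl_element}, $M_k = \Ad(p_{w_k})(M')$, where
\[
M' = \bigcap_{j\ge 0} G^{(w_k\sigma)^j\mu=0}.
\]
Since $w_k$ is a permutation matrix and $\mu$ is $\sigma$-invariant, $(w_k\sigma)^j\mu = w_k^j\mu$. The element $w_k$ acts on the first $k$ basis vectors as the $k$-cycle $e_1\mapsto e_2\mapsto\cdots\mapsto e_k\mapsto e_1$ and fixes $e_{k+1},\ldots,e_n$. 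Hence the cocharacters $w_k^j\mu$ for $0\le j<k$ are $\diag(1,\ldots,t^{-1},\ldots,1)$ with $t^{-1}$ in position $j+1\le k$.

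The common centralizer of these $k$ cocharacters is the Levi $T_k\times\GL_{n-k}$, where $T_k$ is the diagonal torus of $\GL_k$. Similarly, $\bigcap_j G^{w_k^j\mu\le0}$ is the group of matrices $\begin{pmatrix}A&B\\0&D\end{pmatrix}$ with $A$ diagonal; call it $T_kP'$. Conjugating by $p_{w_k}$, we can choose $p_{w_k}$ in the image of $\GL_k(O_{\breve F})$ in block form, since $w_k$ lies there and Lang's theorem applies to $\GL_k$ over the residue field with lifting by smoothness. Then $\Ad(p_{w_k})$ preserves the block structure. The Frobenius-twisted form of $T_k$ by $w_k$, which is a Coxeter element of $\GL_k$, is the elliptic unramified torus $\Res_{F_k/F}\bb{G}_m$. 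This gives $M_k\cong\Res_{F_k/F}\bb{G}_m\times\GL_{n-k}\subset\GL_k\times\GL_{n-k}$ as an $F$-subgroup.

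Next we identify $H_{\mu_k}$. By definition it is the intersection over all Galois conjugates $\tau\mu_k$. Because $\mu_k = \Ad(p_w)\mu$ with $\sigma(p_w)=p_ww$, we have $\sigma\mu_k = \Ad(p_w)(w\mu)$. Over $\breve F$ the Galois conjugates relevant to an unramified object are the $\sigma^j\mu_k$, which equal $\Ad(p_w)(w^j\mu)$. So
\[
H_{\mu_k,\ov F} = \Ad(p_w)\bigl(T_k P'\bigr).
\]
Because $p_w$ is block diagonal and $P_k$ is normalized by it, this equals $\pi_k^{-1}(\Ad(p_w)(T_k)\times\GL_{n-k}) = \pi_k^{-1}(M_k)$, and it descends to $F$ since both sides are $\sigma$-stable. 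Intersecting with $\cl G(O_F)=\GL_n(O_F)$ yields $H_{\mu_k,x} = \pi_k^{-1}(M_k)\cap\GL_n(O_F)$.

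The main obstacle is bookkeeping rather than ideas. One point is justifying that $p_w$ can be taken block-diagonal, which is needed so that conjugation does not move $P_k$. The other is the identification of the twisted torus with $\Res_{F_k/F}\bb G_m$, which follows from the standard fact that the twist of the diagonal torus by a $k$-cycle is the norm-one-free Weil restriction. A minor check is that conjugating the $\sigma^j\mu_k$ by $\Gal(\ov F/F)$ beyond the unramified part adds nothing, since $\mu_k$ is defined over $\breve F$'s finite unramified subfield $F_k$.
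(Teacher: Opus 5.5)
Your proposal is correct and follows the same route as the paper: compute the orbit $\{(w_k\sigma)^j\mu\}=\{w_k^j\mu\}_{0\le j<k}$, read off the common centralizer $T_k\times\GL_{n-k}$ and the intersection $\pi_k^{-1}(T_k\times\GL_{n-k})$ of the parabolics $G^{w_k^j\mu\le 0}$, and then transport everything by $\Ad(p_{w_k})$. The extra bookkeeping you supply is exactly what the paper compresses into ``following the definition'': choosing $p_{w_k}$ block-diagonal so that $M_k\subset\GL_k\times\GL_{n-k}$ makes sense, and identifying the diagonal torus of $\GL_k$ with Frobenius twisted by the $k$-cycle block of $w_k$ with $\Res_{F_k/F}\bb{G}_m$.
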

\begin{proof}
    Since $(w_k\sigma)^i \mu = \diag(1, \ldots, 1, t^{-1}, 1, \ldots, 1)$ where $t$ appears (under $0$-based index) in the $(i \bmod k)$-th coordinate, it is easy to verify the claim by following the definition. 
\end{proof}

In particular, $(M_k, \mu_k)$ is ordinary when $k = 1$, and is CM when $k = n$. Recall that when $F = \bb{Q}_p$, $\Sht_{\cl{G}, -\mu}$ can be thought of as the stack of $p$-divisible groups of height $n$ and dimension $1$ via \cite{SW13} (e.g.\ \cite[Theorem 24.2.5]{SW20}). Under this identification, for each $1 \leq k \leq n$, the canonical level structure at 
$
    \Sht_{\cl{G}, -\mu}^{[M_k, \mu_k]}
$
corresponds to a filtration of $p$-divisible groups
\[
    0 \to X_0 \to X \to X_1 \to 0
\]
such that $X_0$ is of height $k$ and equipped with a CM structure under $\bb{Q}_{p^k}$. Here, $\bb{Q}_{p^k}$ is the unramified extension of $\bb{Q}_p$ of degree $k$. 

\subsection{Explicit neighborhoods of local special points}

In this section, we construct an explicit family of open neighborhoods $\Sht_{\cl{G}, -\mu}^{[M, \mu]}(r)$ of the local special point $\Sht_{\cl{G}, -\mu}^{[M, \mu]}$ so that the canonical level structure extends to $\Sht_{\cl{G}, -\mu}^{[M, \mu]}(r)$ after the ``depth-$r$'' truncation. 

Let $\Flag_{G, \mu} = G_E / P_\mu$. Though $\Flag_{G, \mu}$ can be defined over the reflex field of $(G, \{ \mu \})$, we only need the $E$-rational structure here. We use the following map to access $\Gr_{G, \mu, E}$. 

\begin{prop}\textup{(\cite[Proposition 19.4.2]{SW20})} \label{prop:BB_map}
    There is a natural Bialynicki-Birula map
    \[
        \pi_\mu \colon \Gr_{G, \mu, E} \to \Flag_{G, \mu}^\diamondsuit.
    \]
    When $\mu$ is minuscule, $\pi_\mu$ is an isomorphism. In general, $\pi_\mu$ is $\ell$-cohomologically smooth (hence universally open) and has connected fibers. 
\end{prop}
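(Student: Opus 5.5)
The plan is to construct $\pi_\mu$ group-theoretically, by writing the Schubert cell as a homogeneous space and reducing modulo $\xi$. Fix a geometric point, or more generally work \'{e}tale-locally on $S$. The orbit description gives
\[
    G(\BdRp)\mu(\xi)G(\BdRp)/G(\BdRp) \cong G(\BdRp)/\mathrm{Stab}_\mu, \qquad \mathrm{Stab}_\mu = G(\BdRp)\cap \mu(\xi)G(\BdRp)\mu(\xi)^{-1}.
\]
As in the proof of \Cref{lem:level_structure_special_points}, reduction modulo $\xi$ sends $\mathrm{Stab}_\mu$ into $P_\mu$. Concretely, decompose $\Lie(G)$ into $\Ad\mu$-weight spaces. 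The conjugate $\mu(\xi)G(\BdRp)\mu(\xi)^{-1}$ meets the weight-$j$ root groups in $\xi^{j}\BdRp$-points, and these reduce to the identity modulo $\xi$ when $j>0$. The reduction map $G(\BdRp)\to G(\BdRp/\xi)$ therefore induces a map
\[
    \Gr_{G,\mu,E}\to G/P_\mu = \Flag_{G,\mu}^\diamondsuit.
\]
Since both sides are \'{e}tale sheafifications, the map glues. It is visibly natural, being $G$-equivariant and functorial in $(G,\mu)$. Tannakianly, it sends a lattice to the Hodge filtration it induces on $\omega(V)\otimes \BdRp/\xi$.

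Next comes the minuscule case. Here the weights of $\Ad\mu$ lie in $\{-1,0,1\}$. Hence $\mathrm{Stab}_\mu$ is exactly the preimage of $P_\mu(\BdRp/\xi)$ under reduction modulo $\xi$: the weight $\leq 0$ part imposes no condition, and on the weight $1$ part the condition is divisibility by $\xi$. It follows that $G(\BdRp)/\mathrm{Stab}_\mu \to G(\BdRp/\xi)/P_\mu(\BdRp/\xi)$ is bijective, using that $G(\BdRp)\to G(\BdRp/\xi)$ is surjective by smoothness of $G$ and henselianity. This gives a bijection on $S$-points for affinoid perfectoid $S$, compatibly with \'{e}tale localization. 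We then conclude that $\pi_\mu$ is an isomorphism of $v$-sheaves. If one prefers to argue geometrically, $\pi_\mu$ is a proper bijective map of proper spatial diamonds over $\Spd E$, and separatedness together with \cite[Lemma 2.1]{AGLR22}-type arguments upgrade it to an isomorphism.

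For general $\mu$, I would analyze the fibers through the congruence filtration. Let $L^{(k)} = \ker(G(\BdRp)\to G(\BdRp/\xi^k))$. The fiber of $\pi_\mu$ over the base point is $P_\mu(\BdRp/\xi)^{\sim}\cdot\mu(\xi)G(\BdRp)/G(\BdRp)$, where the tilde denotes the preimage in $G(\BdRp)$. This fiber is a quotient of the weight-positive part $L^{(1)}U$. Filtering by the $L^{(k)}$, each graded step is a torsor under a quotient of $\bigoplus_j \Lie(G)_j\otimes \xi^k\BdRp/\xi^{k+1}\BdRp$. Each such summand is a finite sum of copies of $\bb{A}^{1,\diamondsuit}$ (as $\xi^k\BdRp/\xi^{k+1}\cong \cl{O}$), and only finitely many $k$ contribute, since $L^{(N)}\subset \mathrm{Stab}_\mu$ for $N$ at least the largest weight. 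So the fibers, and more precisely $\pi_\mu$ itself relative to the base, are iterated torsors under vector-group diamonds. This yields $\ell$-cohomological smoothness, since $\bb{A}^{1,\diamondsuit}\to\ast$ is cohomologically smooth and the property is stable under composition and \'{e}tale-local on the target. It also yields connected fibers, since affine spaces are connected and iterated torsors under them over a connected base stay connected. Universal openness then follows from cohomological smoothness.

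The main obstacle is making the torsor structure precise in families rather than only on geometric fibers. One must check that each step of the congruence tower is a torsor under a vector-bundle diamond over the previous step. This requires splitting the relevant exact sequences $v$-locally on $S$, which I would do by choosing a $\BdRp$-lift of $\mu$ and using the big-cell decomposition $U^-\times P_\mu$ near each point of $\Flag_{G,\mu}$ to trivialize over opens of the target.
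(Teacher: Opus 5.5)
Your construction of $\pi_\mu$ by reduction modulo $\xi$ matches the paper. So do your treatment of the minuscule case and the overall plan for general $\mu$: the congruence tower, whose steps are bundles under the weight-$>k$ part of $\Lie(G_E)$, followed by composition, universal openness and connectedness of fibers. The step that would fail is your resolution of the ``main obstacle''. Splitting $v$-locally on test objects $S$ is the right idea. But the steps of the tower cannot in general be trivialized over opens of $\Flag_{G,\mu}$, whether big cells or otherwise, nor even \'{e}tale-locally. So ``\'{e}tale-local on the target'' does not reduce you to products with affine spaces.

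Already for $G=\GL_2$ and $\mu(t)=\diag(t^2,1)$ this fails. Over the big cell $\bb{A}^1\subset\bb{P}^1=\Flag_{G,\mu}$, the map $\pi_\mu$ is the reduction map $\BdRp/\xi^2\to\BdRp/\xi=\cl{O}$. Hence a section over an open $V\subset\bb{A}^1$ is a lift of the coordinate $T$ to a section of $\BdRp/\xi^2$ over $V^\diamondsuit$. Pass to the pro-\'{e}tale cover adjoining $T^{1/p^\infty}$. There a generator of $\bb{Z}_p(1)$ sends the natural lift $[T^\flat]$ to $[\epsilon][T^\flat]\equiv(1+t)[T^\flat]\bmod\xi^2$, where $t=\log[\epsilon]$. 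This cocycle is not a coboundary: coboundaries $t(\gamma-1)a$ have no component on integral powers of $T$, and its class is essentially $dT\neq 0$. So no such lift exists.

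The fix is what the paper does. Only claim that each step becomes a product $S\times(\Lie(G_E)^{\mu>k})^\diamondsuit$ after base change to an affinoid perfectoid $S$, after lifting the point to $G(\BdRp(S))$ \'{e}tale locally on $S$. Then use that cohomological smoothness can be checked $v$-locally on the target in this way. The paper gets this from \cite[Proposition 23.15]{Sch17}, together with \cite[Proposition 24.4]{Sch17} for the vector groups. Connectedness of fibers then follows as you say, since universally open maps with connected fibers are stable under composition.
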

\begin{proof}
    The proof is given loc. cit. except for the last claim. For the full proof, we begin with a purely group-theoretic construction of $\pi_\mu$. 
    
    Let $S = \Spa(R, R^+) \in \Perfd_E$. Here, we write $\BdRp = \BdRp(S)$, $\BdR = \BdR(S)$ and $\xi = \xi_S$ for simplicity. Now, $\Gr_{G, \mu, E}$ is the sheafification of the functor
    \[
        S \mapsto G(\BdRp) \mu(\xi) G(\BdRp) / G(\BdRp) \cong G(\BdRp) / Z
    \]
    by setting $Z = G(\BdRp) \cap \Ad(\mu(\xi))(G(\BdRp))$. Let $K_n \subset G(\BdRp)$ be the kernel of the reduction map $G(\BdRp) \to G(\BdRp / \xi^n)$ and let $Z_n = Z K_n$. For sufficiently large $N$, we have 
    \[
        G(\BdRp) / Z = G(\BdRp) / Z_N \to G(\BdRp) / Z_{N-1} \to \cdots \to G(\BdRp) / Z_1 = G(R) / P_\mu(R). 
    \]
    The composition provides $\Gr_{G, \mu, E} \to \Flag_{G, \mu}^\diamondsuit$. When $\mu$ is minuscule, we can take $N = 1$, so $\pi_\mu$ is an isomorphism. It remains to prove the last claim. 

    Let $\Lie(G_E)^{\mu > n} \subset \Lie(G_E)$ be the subspace where the $\mu$-weight is larger than $n$. Then, each transition map $G(\BdRp) / Z_{n + 1} \to G(\BdRp) / Z_n$ is a fiber bundle under $(\Lie(G_E)^{\mu > n})(R)$ due to the smoothness of $G$. In particular, by passing to the $v$-sheafification, $\pi_\mu$ is written as an iteration of fiber bundles under vectorial groups $(\Lie(G_E)^{\mu > n})^\diamondsuit$. 
    
    Since $(\Lie(G_E)^{\mu > n})^\diamondsuit$ is $\ell$-cohomologically smooth over $\Spd(E)$ by \cite[Proposition 24.4]{Sch17}, each of the fiber bundle under $(\Lie(G_E)^{\mu > n})^\diamondsuit$ is $\ell$-cohomologically smooth by \cite[Proposition 23.15]{Sch17} (hence universally open by \cite[Proposition 23.11]{Sch17}) and has connected fibers. It is easy to see that the class of universally open morphisms with connected fibers is closed under composition. Thus, we get the last claim.
\end{proof}

It is easier to work with $\Flag_{G, \mu}^\diamondsuit$ since it comes from a rigid analytic variety $\Flag_{G, \mu}^\ad$. 
Let $U_{-\mu} = G_E^{\mu > 0}$. Then, $U_{-\mu} \subset \Flag_{G, \mu}$ is Zariski open. We will introduce rigid-analytic open neighborhoods $[1] \in \bb{B}_{x, r} \subset U_{-\mu}^\ad$ indexed by depth $r > 0$.

Let $S$ be a maximal $E$-split torus of $M$ such that $x$ lies in the enlarged apartment $\wtd{\cl{A}}(M, S)$. As in \cite[Section 2]{Yu01}, we can take a maximal torus $T \subset M$ that is the centralizer of a maximal $\breve{E}$-split torus containing $S$. Then, $T$ splits over a tamely ramified finite Galois extension $\wtd{E} / F$ containing $E$. 

There is a natural embedding $\cl{B}(G, E) \subset \cl{B}(G, \wtd{E})$ of the reduced Bruhat-Tits buildings (see \cite[Section 12.9]{KP23}). Let $\Phi$ be the set of roots of $G_{\wtd{E}}$ with respect to $T_{\wtd{E}}$ and let $\Phi_{\mu > 0} = \{ \alpha \in \Phi \vert \langle \alpha, \mu \rangle > 0 \}$. Then, we have a decomposition 
\begin{equation} \label{eq:product_description}
    U_{-\mu, \wtd{E}} \cong \prod_{\alpha \in \Phi_{\mu > 0}} U_{\alpha}
\end{equation}
into root groups. Here, we fix an order of $\Phi_{\mu > 0}$ to take the product. Now, we fix a special point $o \in \cl{A}(G_{\wtd{E}}, T_{\wtd{E}})$. We regard each $\alpha \in \Phi$ as a linear function on the reduced apartment $\cl{A}(G_{\wtd{E}}, T_{\wtd{E}})$ so that its vectorial part is $\alpha$ and $\alpha(o) = 0$. Let $\cl{U}_{\alpha, o}$ be the integral model of $U_\alpha$ associated to $o$. We have $\cl{U}_{\alpha, o} \cong \bb{A}^1$ and fix a coordinate $u_\alpha$ on $\cl{U}_{\alpha, o}$. 

Since we assume that $x$ is rational (cf.\ \cite[Definition 13.4.1]{KP23}), $\alpha(x)$ is a rational number for every $\alpha \in \Phi$. Note that the notion of $x$ being rational depends only on the relative position of $x$ in the unique facet that contains $x$ in its interior. 


\begin{prop} \label{prop:definition_B_xr}
    For every rational number $r > 0$,
    \[
        \bb{B}_{x, r} = \{ \lvert u_\alpha \rvert \leq \lvert \pi \rvert^{- \alpha(x) + r} \neq 0 \mid\alpha \in \Phi_{\mu > 0} \} \subset U_{-\mu, \wtd{E}}^\ad
    \]
    is closed under multiplication and independent of the choice of $T$, $o$ and the product order in \eqref{eq:product_description}. Moreover, $\bb{B}_{x, r}$ can be defined over $E$. 
\end{prop}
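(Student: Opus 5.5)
The plan is to identify $\bb{B}_{x, r}$ with the adic-space incarnation of the Moy--Prasad subgroup $U_{-\mu}(\wtd{E})_{x, r}$ attached to $x \in \cl{B}(G, \wtd{E})$. Everything then reduces to known properties of Moy--Prasad filtrations on unipotent groups. Concretely, the subgroup $U_{-\mu}(\wtd{E})_{x,r}$ is generated by the filtration subgroups $U_\alpha(\wtd{E})_{x, r}$ for $\alpha \in \Phi_{\mu>0}$. In the coordinate $u_\alpha$ normalized at $o$, the latter is exactly $\{ v(u_\alpha) \geq -\alpha(x) + r\}$. Here $v$ is normalized with $v(\pi)=1$ and the ramification index is absorbed in the normalization of the affine roots. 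For a split group, note that $\alpha(o)=0$ means $\cl{U}_{\alpha,o}(O_{\wtd E})$ is the depth-$0$ piece at $o$. Since $x - o$ shifts the affine root $\alpha$ by $\alpha(x)$, the depth-$r$ piece at $x$ is $\pi^{r-\alpha(x)}$ times this, up to the usual rounding. The rationality of $x$ makes these radii rational, so $\bb{B}_{x,r}$ is a finite intersection of rational subsets.

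First, closure under multiplication. I would work point-wise on $(C, C^+)$-points for complete algebraically closed $C$ over $\wtd E$, and then extend to general affinoid perfectoid points, because the defining inequalities are checked at points. The key input is the commutator formula (Chevalley relations): for $\alpha, \beta \in \Phi_{\mu>0}$ with $\alpha+\beta$ a root,
\[
[u_\alpha(a), u_\beta(b)] = \prod_{i,j>0} u_{i\alpha+j\beta}(c_{ij} a^i b^j),
\]
where the structure constants $c_{ij}$ are integral with respect to the Chevalley model at the special point $o$. The affine functions are additive, so $(i\alpha + j\beta)(x) = i\alpha(x) + j\beta(x)$. It follows that if $v(a) \geq -\alpha(x)+r$ and $v(b) \geq -\beta(x)+r$, then $v(c_{ij}a^ib^j) \geq -(i\alpha+j\beta)(x) + (i+j)r \geq -(i\alpha+j\beta)(x) + r$. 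Reordering a product of two elements of $\bb{B}_{x,r}$ into the fixed order of \eqref{eq:product_description} therefore only produces factors that again satisfy the bounds. One argues by induction on the height of roots in $\Phi_{\mu>0}$, using that $U_{-\mu}$ is unipotent, so the process terminates. The same argument gives independence of the product order, since the coordinates with respect to two orders differ by such reorderings. This is the standard proof that $U_{x,r}$ has an Iwahori factorization, carried out with real radii and over $C$-points. I expect it to be the main technical step, mostly because of the bookkeeping with $|\cdot| \neq 0$ and non-integral radii. A cleaner way to handle it may be to note that $\bb{B}_{x,r}(C,C^+) = U_{-\mu}(C)_{x,r}$ in the sense of Moy--Prasad over the valued field $C$, where all real depths are allowed and the subgroup property is known.

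Second, independence of $o$ and $T$. Changing $o$ to another special point $o'$ of the same apartment rescales each $u_\alpha$ by an element of valuation $\alpha(o') - \alpha(o)$, which exactly compensates for the change in the normalization of $\alpha(x)$. Changing the coordinate $u_\alpha$ on $\cl{U}_{\alpha,o} \cong \bb{A}^1$ multiplies it by a unit. Changing $T$ to another maximal torus whose apartment contains $x$ is done by conjugating by an element of $G(\wtd E)_{x,0}$, or more precisely of its intersection with $P_\mu$ when we must preserve $U_{-\mu}$. The Moy--Prasad description shows that $U_{-\mu}(C)_{x,r}$ is intrinsic to $x$, so the description in terms of $T$ is intrinsic as well.

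Finally, descent to $E$. For $\gamma \in \Gal(\wtd E/E)$, $\gamma$ fixes $\mu$ and $x$, because $x \in \cl{B}(G, E)$ and the embedding $\cl{B}(G,E) \subset \cl{B}(G,\wtd E)$ is Galois-equivariant. Hence $\gamma$ carries $\bb{B}_{x,r}$ to the set defined by the same recipe with $(\gamma T, \gamma o)$. By the independence just proved, this set is again $\bb{B}_{x,r}$. Since $\wtd E/E$ is finite Galois, a Galois-stable rational open subset of $U_{-\mu,\wtd E}^\ad$ descends to an open subset of $U_{-\mu}^\ad$ over $E$. This gives the last claim.
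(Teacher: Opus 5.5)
Your proposal is correct, and for most of the statement it is the paper's argument. Your commutator estimate $v(c_{ij}a^ib^j)\geq -(i\alpha+j\beta)(x)+(i+j)r\geq -(i\alpha+j\beta)(x)+r$ is exactly the concavity of $\alpha\mapsto -\alpha(x)+r$ on $\Phi_{\mu>0}$. The paper cites that concavity for closure under multiplication and for independence of the product order. The change of special point $o'=o+\lambda$ is the same rescaling $u_\alpha\mapsto \pi^{\langle\alpha,\lambda\rangle}u_\alpha$. Descent to $E$ is the same Galois argument. In the paper $T$ is $E$-rational, so $\tau\in\Gal(\wtd E/E)$ only replaces $o$ by $\tau(o)$ and $o$-independence suffices. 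Your use of $(\gamma T,\gamma o)$ is also fine, since $\gamma T\subset M$ and $x$ lies in its apartment.

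The real difference is the independence of $T$. The paper picks $h\in M(\wtd E)$ with $hx=x$ and $\Ad(h)(T_{\wtd E})=T'_{\wtd E}$, using \cite[4.1.12 (3)]{KP23} applied to $M$. It notes that $\Ad(h)(\mu)=\mu$, so $\Ad(h)$ preserves $U_{-\mu,\wtd E}$. It then transports $(T,o)$ to $(T',ho)$ and concludes $\Ad(h)(\bb{B}_{x,r})=\bb{B}_{x,r}$, working entirely over $\wtd E$.

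You instead characterize $\bb{B}_{x,r}$ intrinsically as $U_{-\mu}(C)\cap G(C)_{x,r}$. That route is valid but heavier than you suggest. It needs three things:
\begin{enumerate}
\item Bruhat--Tits theory for $G_C$ over a non-discretely valued field, or over all finite extensions of $\wtd E$.
\item The decomposition $U_{-\mu}(C)\cap G(C)_{x,r}=\prod_{\alpha\in\Phi_{\mu>0}}U_\alpha(C)_{x,r}$ for every admissible torus.
\item The remark that quasicompact opens of the rigid space $U_{-\mu,\wtd E}^{\ad}$ are determined by their classical points. This is needed because your identification $\bb{B}_{x,r}(C,C^+)=U_{-\mu}(C)_{x,r}$ is only literally true when $C^+=O_C$.
\end{enumerate}

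Also correct your parenthetical about $P_\mu$. $P_\mu=G^{\mu\leq 0}$ does not normalize $U_{-\mu}=G^{\mu>0}$; the normalizer of $U_{-\mu}$ is $G^{\mu\geq 0}$. So a conjugating element has to lie in $G^{\mu=0}$, and in fact in $M$. Likewise the competing torus $T'$ must lie in $M$. Otherwise $\mu\notin X_*(T')$, and neither $\Phi_{\mu>0}$ nor \eqref{eq:product_description} makes sense for $T'$.
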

\begin{proof}
    Since the function $\alpha \mapsto - \alpha(x) + r$ on $\Phi_{\mu > 0}$ is concave, $\bb{B}_{x, r}$ is closed under multiplication. Then, it implies the independence of the product order in \eqref{eq:product_description}. 

    We prove the independence of $o$. For another choice of a special point $o' \in \cl{A}(G_{\wtd{E}}, T_{\wtd{E}})$, we have $o' = o + \lambda$ for some $\lambda \in X_*(T_{\wtd{E}})_{\bb{Q}}$. Then, $\pi^{ \langle \alpha, \lambda \rangle} u_\alpha$ is a coordinate on $\cl{U}_{\alpha, o'} \cong \bb{A}^1$. The choice of $o'$ decreases $\alpha(x)$ by $\langle \alpha, \lambda \rangle$, so the independence of $o$ follows. 

    Next, we prove the independence of $T$. For another choice of a maximal torus $T' \subset M$ with $x \in \wtd{\cl{A}}(G_{\wtd{E}}, T'_{\wtd{E}})$, there is an element $h \in M(\wtd{E})$ such that $h x = x$ and $T'_{\wtd{E}} = \Ad(h)(T_{\wtd{E}})$ (see \cite[4.1.12 (3)]{KP23}). Since $\Ad(h)(\mu) = \mu$, we have $\Ad(h)(U_{-\mu, \wtd{E}}) = U_{-\mu, \wtd{E}}$. By using the choice of a special point $ho \in \cl{A}(G_{\wtd{E}}, T'_{\wtd{E}})$, it is easy to see $\Ad(h)(\bb{B}_{x, r}) = \bb{B}_{x, r}$. 

    For the last claim, it is enough to check $\tau(\bb{B}_{x, r}) = \bb{B}_{x, r}$ for every $\tau \in \Gal(\wtd{E} / E)$. Since $x \in \cl{A}(M, S)$, we have $\tau(x) = x$ and the $\tau$-action only changes the choice of a special point $o$ to $\tau(o)$. Then, the claim follows from the independence of $\bb{B}_{x, r}$ on the choice of $o$. 
\end{proof}

From now on, we will study the property of $\bb{B}_{x, r}$. First, we introduce truncations of the canonical level $H_{\mu, x}$. 

\begin{defi}
    We say that $K_{\mu, r} = G(F)_{x, r} H_{\mu, x}$ is the truncation of $H_{\mu, x}$ of depth $r > 0$. 
\end{defi}


\begin{prop} \label{prop:characterize_Kr}
    Let $\wtd{\cl{G}}$ be the parahoric group scheme of $G_E$ associated to $x$ and let $\cl{P}_\mu = \wtd{\cl{G}}^{\mu \leq 0}$. For every $r  > 0$, we have 
    \[
        K_{\mu, r} = \cl{G}(O_F) \cap G(E)_{x, r} \cl{P}_{\mu}(O_{E}). 
    \]
    Moreover, the same claim holds true if one replaces $E$ by its finite tame extension. 
\end{prop}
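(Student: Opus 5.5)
The inclusion $K_{\mu, r} \subset \cl{G}(O_F) \cap G(E)_{x, r} \cl{P}_{\mu}(O_E)$ is formal, and the reverse inclusion will be proved by passing to a large tame splitting field and applying Moy--Prasad theory there, followed by Galois descent.

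For $\subset$: we have $G(F)_{x, r} \subset G(E)_{x, r}$, because the admissible embedding $\wtd{\cl{B}}(G, F) \subset \wtd{\cl{B}}(G, E)$ is compatible with Moy--Prasad filtrations for tame extensions (with the normalization in which depth is measured by the valuation of $F$). Also $H_{\mu, x} = H_\mu(F) \cap \cl{G}(O_F) \subset P_\mu(E) \cap G(E)_{x, 0} = \cl{P}_\mu(O_E)$, using $H_\mu \subset P_\mu$ over $E$.

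For $\supset$, I first enlarge $E$ to a tame Galois extension $\wtd{E}$ splitting $T$. The intersection can only grow, since $G(E)_{x,r} = G(\wtd{E})_{x,r} \cap G(E)$ and $\cl{P}_\mu(O_E) \subset \cl{P}_\mu(O_{\wtd{E}})$, so it suffices to work over $\wtd{E}$. This also gives the last assertion of the proposition. Over $\wtd{E}$, the group $G(\wtd{E})_{x,r}\cl{P}_\mu(O_{\wtd{E}})$ has an Iwahori-type factorization with respect to $U_{-\mu}$. Concretely, writing $G(\wtd{E})_{x,r} = (U_{-\mu})_{x,r} \cdot (P_\mu)_{x,r}$ and absorbing the second factor into $\cl{P}_\mu(O_{\wtd{E}})$, every element factors as $u p$ with $u \in U_{-\mu}(\wtd{E})_{x,r}$ and $p \in \cl{P}_\mu(O_{\wtd{E}})$. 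This factorization is unique since $U_{-\mu} \cap P_\mu = 1$.

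Now take $g \in \cl{G}(O_F)$ of the form $g = up$. For each $\tau \in \Gal(\wtd{E}/F)$, applying $\tau$ gives $g = \tau(u)\tau(p)$, which is a factorization with respect to $(\tau\mu, x)$; note that $\tau$ fixes $x$. So $g$ lies in $G(\wtd{E})_{x,r}\,\cl{P}_{\tau\mu}(O_{\wtd{E}})$ simultaneously for all $\tau$. The key claim is then
\[
\bigcap_{\tau} G(\wtd{E})_{x,r}\,\cl{P}_{\tau\mu}(O_{\wtd{E}}) = G(\wtd{E})_{x,r}\, \cl{H}(O_{\wtd{E}}),
\]
where $\cl{H}$ is the closure of $H_{\mu}$, i.e.\ the intersection of the $\cl{P}_{\tau\mu}$. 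I expect to prove this with the root-group description: using concavity of $\alpha\mapsto -\alpha(x)+r$ and the product decomposition \eqref{eq:product_description}, one reduces modulo $G(\wtd{E})_{x,r}$ to the finite-type quotient $G(\wtd{E})_{x,0}/G(\wtd{E})_{x,r}$. There the statement becomes an intersection of "parabolic" subgroups in a smooth group scheme over $O_{\wtd{E}}/\varpi^{N}$, which is computed root by root, since all $\tau\mu$ lie in $X_*(T)$ for the common torus $T$.

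Finally, I descend to $F$. We know $g \in \cl{G}(O_F)$ and $g \in G(\wtd{E})_{x,r}\, H_\mu(\wtd{E})_{x,0}$. The image of $g$ in $G(\wtd{E})_{x,0}/G(\wtd{E})_{x,r}$ is Galois-invariant and lies in the image of $\cl{H}$. To lift it to $H_\mu(F)_{x,0}$ modulo $G(F)_{x,r}$, I would use vanishing of the relevant Galois cohomology: $H^1(\Gal(\wtd{E}/F), G(\wtd{E})_{x,r})$ is trivial for tame extensions, because $G(\wtd{E})_{x,r}$ has a filtration with vector-group graded pieces and tame Galois descent holds for Moy--Prasad filtrations. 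Together with $G(\wtd{E})_{x,r}^{\Gal} = G(F)_{x,r}$, this gives $g \in G(F)_{x,r} H_{\mu,x}$.

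The intersection claim and this descent are the main obstacle. For the descent I would first try writing $H_\mu = M \ltimes U_H$, where $U_H$ is the unipotent radical, and treating the $M$-part and the unipotent part separately. For the $M$-part I would use the tame descent results for Moy--Prasad filtrations from \cite{KP23}.
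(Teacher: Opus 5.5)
Your overall plan is the right one and matches the paper's: pass to a tame Galois splitting field $\wtd{E}$, use Galois invariance of $g$ to place it in $\bigcap_\tau G(\wtd{E})_{x,r}\cl{P}_{\tau\mu}(O_{\wtd{E}})$, then descend using tame vanishing of $H^1$. However, the descent step as written does not work. Write $g = nh$ with $n \in G(\wtd{E})_{x,r}$ and $h \in \cl{H}(O_{\wtd{E}})$. Galois invariance of $g$ makes $a_\tau = h^{-1}\tau(h)$ a $1$-cocycle with values in $N = \cl{H}(O_{\wtd{E}}) \cap G(\wtd{E})_{x,r}$. What you need is that $a$ is a coboundary \emph{in $N$}. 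If $a_\tau = m^{-1}\tau(m)$ with $m \in N$, then $h' = hm^{-1}$ is Galois-fixed, hence lies in $H_\mu(F)$. Moreover $gh'^{-1} \in G(F) \cap G(\wtd{E})_{x,r} = G(F)_{x,r}$, which also forces $h' \in \cl{G}(O_F)$.

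The vanishing of $H^1(\Gal(\wtd{E}/F), G(\wtd{E})_{x,r})$ gives no information here. The cocycle $a$ is always a coboundary in $G(\wtd{E})_{x,r}$, namely of $h^{-1}n^{-1}h$, because $g$ itself is already a Galois-fixed lift of its coset. The missing input is $H^1(\Gal(\wtd{E}/F), N) = 1$, which is exactly \Cref{lem:modify_h}. Its proof runs by successive approximation along the Moy--Prasad jumps. The graded pieces of $N$ are the $k_{\wtd{E}}$-spaces $\bigcap_\tau \Lie(G_{\wtd{E}})_{[i]}^{\tau\mu \leq 0}$ with semilinear Galois action. One kills the cocycle first on the totally tamely ramified part (order prime to $p$), then on $\Gal(k_{\wtd{E}}/k)$ via $V \cong V^{\Gal(k_{\wtd{E}}/k)} \otimes_k k_{\wtd{E}}$, and passes to the limit. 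This treats $M$ and the unipotent part uniformly, so splitting $H_\mu = M \ltimes U_H$ is unnecessary.

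The intersection identity $\bigcap_\tau G(\wtd{E})_{x,r}\cl{P}_{\tau\mu}(O_{\wtd{E}}) = G(\wtd{E})_{x,r}\cl{H}(O_{\wtd{E}})$ is true, but you only assert it. A one-shot ``root by root'' computation in $G(\wtd{E})_{x,0}/G(\wtd{E})_{x,r}$ is not automatic. Intersections do not commute with passing to quotients, and this quotient has no unique root-wise factorization once the reductive quotient $\msf{G}_x$ contains opposite root pairs.

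The paper avoids proving the identity separately by interleaving it with the descent. Suppose a Galois-fixed $g$ lies in $G(\wtd{E})_{x,i} \cap G(\wtd{E})_{x,r}\cl{P}_\mu(O_{\wtd{E}}) = G(\wtd{E})_{x,r}\bigl(\cl{P}_\mu(O_{\wtd{E}}) \cap G(\wtd{E})_{x,i}\bigr)$ for a jump $0 < i < r$. Then its image in the graded piece lies in $\Lie(G_{\wtd{E}})_{[i]}^{\mu \leq 0}$, hence by Galois invariance in $\bigcap_\tau \Lie(G_{\wtd{E}})_{[i]}^{\tau\mu \leq 0}$, which is a pure weight-space statement. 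One lifts this image to $\cl{H}(O_{\wtd{E}}) \cap G(\wtd{E})_{x,i}$ and makes the lift Galois-fixed modulo $G(\wtd{E})_{x,i+}$ by \Cref{lem:modify_h}. One then divides it off, so $g$ stays Galois-fixed, and moves to the next jump. The depth-zero step is handled the same way using $\bigcap_\tau \msf{G}_x^{\tau\mu \leq 0}$ in the reductive quotient. After finitely many jumps $g \in G(F) \cap G(\wtd{E})_{x,r} = G(F)_{x,r}$.

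Your strategy can be completed along these lines. As written, though, both the intersection identity and the descent are unproved, and the cohomological vanishing you cite is the wrong one.
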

\begin{proof}
    The inclusion $K_{\mu, r} \subset \cl{G}(O_F) \cap G(E)_{x, r} \cl{P}_{\mu}(O_{E})$ is immediate. For the converse, we may enlarge $E$ so that $E / F$ is Galois and $G_E$ is split. Let $\pi_E$ be the uniformizer of $E$ and let $k_E$ be the residue field of $E$. 

    First, we study the following $\Gal(E / F)$-stable subgroup scheme
    \[
        \wtd{\cl{H}}_\mu = \bigcap_{\tau \in \Gal(E / F)} \wtd{\cl{G}}^{\tau\mu \leq 0} \subset \wtd{\cl{G}}. 
    \]  
    It is easy to see that $\wtd{\cl{H}}_\mu$ is the product of the parahoric group scheme $\wtd{\cl{M}}$ of $M_E$ associated to $x$ and a suitable collection of root groups $\cl{U}_{\alpha, x}$. Now, we need the following lemma. 

\begin{lem} \label{lem:modify_h}
    Let $s > 0$ and let $h \in \wtd{\cl{H}}_\mu(O_E)$ be an element such that $h^{-1} \tau(h) \in G(E)_{x, s}$ for every $\tau \in \Gal(E / F)$. Then, there is an element $h' \in H_{\mu, x}$ such that $h' \in h \cdot G(E)_{x, s}$. 
\end{lem}
\begin{proof}
    Let $h_s = h$. For every $i \geq s$, we will inductively construct $h_i \in \wtd{\cl{H}}_\mu(O_E)$ so that $h_{i+} \in  h_{i} \cdot G(E)_{x, i}$ and $h_i^{-1} \tau(h_i) \in G(E)_{x, i}$ for every $\tau \in \Gal(E / F)$. For indices $i$, we only consider jumps of the Moy-Prasad filtration $G(E)_{x, (-)}$ and $i+$ denotes the successor of $i$. In particular, the index set is discrete.  
    
    Suppose that we have $h_i$ for some $i$. Let $F_\un$ be the maximal unramified subextension of $E / F$ and let $\Lie(\wtd{H}_\mu)_{[i]}$ be the image of $\wtd{\cl{H}}_\mu(O_E) \cap G(E)_{x, i}$ in the Moy-Prasad quotient 
    \[
        G(E)_{x, i} / G(E)_{x, i+} \cong \Lie(G_E)_{x, i} / \Lie(G_E)_{x, i+}. 
    \]
    Let $\Lie(G_E)_{[i]}$ denote the right-hand side. Since $\wtd{\cl{H}}_\mu$ is a product of $\wtd{\cl{M}}$ and some root groups, it is easy to see $\Lie(\wtd{H}_\mu)_{[i]} = \bigcap_{\tau \in \Gal(E / F)} \Lie(G_E)_{[i]}^{\tau\mu \leq 0}$. Since $\Lie(\wtd{H}_\mu)_{[i]}$ is a $k_E$-vector space and $E / F_\un$ is totally tamely ramified, the $1$-cocycle
    \[
        \Gal(E / F_\un) \ni \tau \mapsto h_i^{-1} \tau(h_i) \bmod{G(E)_{x,i+}} \in \Lie(\wtd{H}_\mu)_{[i]}
    \]
    can be resolved. Thus, there is $h_{i + 1}'\in \wtd{\cl{H}}_\mu(O_E)$ such that $h_i \equiv h'_{i+1} \pmod{G(E)_{x, i}}$ and $\tau(h'_{i+1}) \equiv h'_{i+1} \pmod{G(E)_{x, i+}}$ for every $\tau \in \Gal(E / F_\un)$. 
    
    Let $V = (\Lie(\wtd{H}_\mu)_{[i]})^{\Gal(E / F_\un)}$. It admits a natural semilinear $\Gal(k_E/k)$-action and the following $1$-cocycle is well-defined:
    \[
        \Gal(k_E / k) = \Gal(F_\un / F) \ni \sigma \mapsto h'^{-1}_{i+1} \sigma(h'_{i+1}) \bmod{G(E)_{x,i+}} \in V. 
    \]
    It follows from a general fact that $H^1(k_E / k, V) = 0$ since we have $V \cong V^{\Gal(k_E / k)} \otimes_k k_E$. Thus, the above $1$-cocycle can be resolved and we get $h_{i + 1}$. 

    Now, $\varprojlim_i h_i$ converges to an element $h' \in \wtd{\cl{H}}_\mu(O_E)$. By construction, $h' \in h \cdot G(E)_{x, s}$ and  $h' \in G(F) \cap \wtd{\cl{H}}_\mu(O_E) = H_{\mu, x}$. Thus, we get the claim. 
\end{proof}

    Now, we go back to the proof of the original claim. Let $g \in \cl{G}(O_F) \cap G(E)_{x, r} \cl{P}_{\mu}(O_E)$. First, consider the image 
    \[
        g \bmod G(E)_{x, 0+} \in G(E)_{x, 0} / G(E)_{x, 0+}. 
    \]
    The right-hand side equals the set of $k_E$-valued points of the maximal reductive quotient $\msf{G}_x$ of $\cl{\wtd{G}}_{k_E}$. Then, $g \bmod G(E)_{x, 0+}$ lies in $\msf{H}_\mu = \bigcap_{\tau \in \Gal(E / F)} \msf{G}_x^{\tau\mu \leq 0}$ and it lifts to $h_0 \in \wtd{\cl{H}}_\mu(O_E)$. By applying \Cref{lem:modify_h} to $h_0$ for $s = 0+$, we may assume $h_0 \in H_{\mu, x}$. By replacing $g$ with $h_0^{-1}g$, we reduce to the case $g \in G(E)_{x, 0+}$. 

    Now, suppose $g \in G(E)_{x, i}$ for some index $0 < i < r$. Then, consider the image 
    \[
        g \bmod G(E)_{x, i+} \in G(E)_{x, i} / G(E)_{x, i+} = \Lie(G_E)_{[i]}. 
    \]
    Since $g \in \cl{G}(O_F) \cap G(E)_{x, r} \cl{P}_{\mu}(O_E)$, it lies in $\Lie(\wtd{H}_\mu)_{[i]}$ and admits a lift $h_i \in \wtd{\cl{H}}_\mu(O_E) \cap G(E)_{x, i}$. By applying \Cref{lem:modify_h} to $h_i$ for $s = i+$, we may assume $h_i \in H_{\mu, x} \cap G(E)_{x, i}$. By replacing $g$ with $h_i^{-1}g$, we reduce to the case $g \in G(E)_{x, i+}$. 

    By induction, we reduce to the case $g \in G(E)_{x, r}$. In this case, $G(F) \cap G(E)_{x, r} = G(F)_{x, r}$ by \cite[Proposition 12.9.4]{KP23}, so we get the claim. 
\end{proof}

\begin{thm} \label{thm:canonical_level_structures}
    For every $r > 0$, $\bb{B}_{x, r}$ satisfies the following properties. 
    \begin{enumerate}
        \item We have $\bb{B}_{x, r} = g \cdot \bb{B}_{x, r}$ for $g \in K_{\mu, r}$ and $\bb{B}_{x, r} \cap g \cdot \bb{B}_{x, r} = \emptyset$ for $g \in \cl{G}(O_F) - K_{\mu, r}$. 
        \item There is a natural open immersion 
        \[
            \Sht_{\cl{G}, -\mu}^{[M, \mu]}(r) = \und{K_{\mu, r}} \backslash \pi_\mu^{-1}(\bb{B}_{x, r})^\diamond \subset \Sht_{\cl{G}, -\mu, E}
        \]
        and the image contains $\Sht_{\cl{G}, -\mu}^{[M, \mu]}$. Moreover, there is a unique section
        \[
            \can_r \colon \Sht_{\cl{G}, -\mu}^{[M, \mu]}(r) \to \Sht_{K_{\mu, r}, -\mu}
        \]
        extending the truncation of the canonical filtration on $\Sht_{\cl{G}, -\mu}^{[M, \mu]}$ along $H_{\mu, x} \subset K_{\mu ,r}$. 
        \item For $s> r $, we have $\Sht_{\cl{G}, -\mu}^{[M, \mu]}(s) \subset \Sht_{\cl{G}, -\mu}^{[M, \mu]}(r)$. Moreover, $\bigcap_{r>0} \Sht_{\cl{G}, -\mu}^{[M, \mu]}(r) = \Sht_{\cl{G}, -\mu}^{[M, \mu]}$ when $\mu$ is minuscule. 
    \end{enumerate}
\end{thm}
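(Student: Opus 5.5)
The plan is to prove (1) first, by working in coordinates on the big cell $U_{-\mu}^{\ad}\subset \Flag_{G,\mu}^{\ad}$ after base change to a sufficiently large tame Galois extension $\wtd{E}$ splitting $T$. Parts (2) and (3) should then follow from (1) by formal arguments with $v$-stacks.

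For the stability $g\cdot\bb{B}_{x,r}=\bb{B}_{x,r}$ with $g\in K_{\mu,r}$, I will use \Cref{prop:characterize_Kr} to write $g\in G(\wtd{E})_{x,r}\,\cl{P}_\mu(O_{\wtd{E}})$. Then I treat the two kinds of factors separately.
\begin{itemize}
\item For $p\in\cl{P}_\mu(O_{\wtd{E}})=\wtd{\cl{M}}_x\cdot\prod_{\alpha\in\Phi_{\mu<0}}\cl{U}_{\alpha,x}$, it suffices to check generators. The parahoric of the Levi $G^{\mu=0}$ normalizes $U_{-\mu}$ and preserves the valuative conditions $\lvert u_\alpha\rvert\le\lvert\pi\rvert^{-\alpha(x)+r}$, because the conditions come from a concave function. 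The negative root groups act on $[u]\in U_{-\mu}$ by $p\,u\,P_\mu = u'P_\mu$. Here I will use the Bruhat-cell decomposition $p u = u' m v$ together with integrality of $p$ at $x$ to see that $u'$ stays in $\bb{B}_{x,r}$.
\item For $g\in G(\wtd{E})_{x,r}$, I write $g$ as a product of $T(\wtd{E})_r$ and root group elements $u_\beta(a)$ with $\mathrm{val}(a)\ge -\beta(x)+r$, using the Moy-Prasad product decomposition. The positive root factors multiply into $\bb{B}_{x,r}$, by closure under multiplication (\Cref{prop:definition_B_xr}). The remaining factors land in $\cl{P}_\mu(O_{\wtd{E}})$, handled above.
\end{itemize}

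For disjointness, suppose $g\cdot b=b'$ with $b,b'\in\bb{B}_{x,r}$ at some $\bb{C}_p$-point. Then $g\in b'\,P_\mu(\bb{C}_p)\,b^{-1}$. Both $b$ and $b'$ are integral points of depth $\ge r$, so they lie in $G(\bb{C}_p)_{x,r}$ in the sense of the Moy-Prasad filtration over $\bb{C}_p$, after passing to a finite tame-plus-wild extension $E'$ over which the point is defined. Hence $g\in \cl{G}(O_F)\cap G(E')_{x,r}\,\cl{P}_\mu(O_{E'})$. Strictly, this uses the integral Iwahori/Bruhat decomposition $\cl{G}(O_{E'})\cap P_\mu(E')=\cl{P}_\mu(O_{E'})$. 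To conclude that $g\in K_{\mu,r}$, I would like to apply \Cref{prop:characterize_Kr} with $E'$. That proposition is stated only for tame extensions, however, while the points of $\bb{B}_{x,r}$ may be defined over wildly ramified extensions. I expect this to be the main obstacle. To avoid it, I would run the induction on Moy-Prasad jumps directly over $\bb{C}_p$-valued (or perfectoid) coefficients, using $G(F)\cap G(E')_{x,r}=G(F)_{x,r}$. The graded pieces of the filtration over $O_{\bb{C}_p}$ have dense value group, so I may need to approximate instead. Concretely, I would approximate $g$ modulo $\cl{P}_\mu$ by an $F$-rational element, since $g$ itself is $F$-rational. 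This reduces the question to comparing the $G(F)$-orbits of the base point $[1]$.

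With (1) in hand, (2) follows. By \Cref{prop:BB_map}, $\pi_\mu^{-1}(\bb{B}_{x,r})^\diamond$ is open in $\Gr_{G,\mu,E}$ and $K_{\mu,r}$-stable. By (1) its $\cl{G}(O_F)$-translates are indexed by $\cl{G}(O_F)/K_{\mu,r}$ and pairwise disjoint. Therefore $\und{K_{\mu,r}}\backslash\pi_\mu^{-1}(\bb{B}_{x,r})\to\und{\cl{G}(O_F)}\backslash\Gr_{G,\mu}$ is an open immersion, via \Cref{prop:ShtvsGr}. Moreover, the quotient map $\und{K_{\mu,r}}\backslash\pi_\mu^{-1}(\bb{B}_{x,r})\to\Sht_{K_{\mu,r},-\mu}$ is the section $\can_r$. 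The point $x_\mu$ maps to $[1]\in\bb{B}_{x,r}$, so the open substack contains the special point, and $\can_r$ restricts to the truncated canonical level structure. Uniqueness holds because the special point is dense enough: two sections of a finite étale map agree on an open-closed locus containing it, and the neighborhood is connected in the relevant sense. Alternatively, uniqueness can be taken as part of the construction.

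For (3), the inclusion for $s>r$ follows from $\bb{B}_{x,s}\subset\bb{B}_{x,r}$ and $K_{\mu,s}\subset K_{\mu,r}$. For the intersection, $\bigcap_r\bb{B}_{x,r}=\{[1]\}$ as adic spaces, and $\bigcap_r K_{\mu,r}=H_{\mu,x}$. In the minuscule case $\pi_\mu$ is an isomorphism, so the intersection of the open substacks is the $\cl{G}(O_F)$-orbit of $x_\mu$. By \Cref{lem:level_structure_special_points}, this orbit is $\Sht_{\cl{G},-\mu}^{[M,\mu]}$.
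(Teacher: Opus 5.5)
\textbf{Gap: the disjointness half of (1).} Your stability argument and the formal deductions in (2) follow the paper. But disjointness is the heart of the theorem, and you do not prove it. You correctly see the obstacle: passing to a field $E'$ over which $b$ is defined forces you to apply \Cref{prop:characterize_Kr} over a possibly wild extension, and there the vanishing of the Galois $1$-cocycles in \Cref{lem:modify_h} is no longer available. Your proposed workaround (Moy--Prasad induction with $\bb{C}_p$-coefficients, ``approximating $g$ modulo $\cl{P}_\mu$ by an $F$-rational element'') is not an argument, and it is also unnecessary.

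\textbf{The missing idea.} The field of definition of $b$ is irrelevant: only $g$ must be rational, and $g\in\cl{G}(O_F)$ already is. The paper's argument runs as follows.
\begin{enumerate}
\item Fix $r'<r$ with $K_{\mu,r'}=K_{\mu,r}$; this is possible because the jumps of $G(F)_{x,\bullet}$ are discrete.
\item Enlarge $E$ by a \emph{tame} extension with large prime-to-$p$ ramification index. Then the $O_E$-model $\cl{U}_{-\mu,x,r'}$ satisfies $\bb{B}_{x,r}(C,C^+)\subset\cl{U}_{-\mu,x,r'}(C^+)$ for every $(C,C^+)$.
\item From $g\cdot[u_b]=[u_{b'}]$ you get
\[
u_{b'}^{-1}gu_b\in P_\mu(C)\cap\wtd{\cl{G}}(C^+)=\cl{P}_\mu(C^+).
\]
\item Let $\mathcal{H}\cong\cl{U}_{-\mu,x,r'}\times\cl{P}_\mu$ be the smooth affine Moy--Prasad group scheme over $O_E$ attached to the concave function equal to $r'$ on $\Phi_{\mu>0}$ and $0$ elsewhere. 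By (3), $g\in\mathcal{H}(C^+)$.
\item Since $\mathcal{H}$ is affine over $O_E$ and $g\in G(E)$, you get $g\in\mathcal{H}(C^+)\cap G(E)=\mathcal{H}(O_E)\subset G(E)_{x,r'}\cl{P}_\mu(O_E)$.
\item The tame statement of \Cref{prop:characterize_Kr} now gives $g\in K_{\mu,r'}=K_{\mu,r}$.
\end{enumerate}
No wild extension and no filtration with $\bb{C}_p$-coefficients ever enters.

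\textbf{Smaller points.} In (3), $\bigcap_r\bb{B}_{x,r}=\{[1]\}$ does not by itself give
\[
\bigcap_r\cl{G}(O_F)\cdot\bb{B}_{x,r}=\cl{G}(O_F)\cdot[1].
\]
You need a limit step. Choose $g_r\in\cl{G}(O_F)$ with $g_r^{-1}b\in\bb{B}_{x,r}$. Pass to a limit $g$ in the compact group $\cl{G}(O_F)$; the paper makes the $g_r$ compatible using (1). Then use that $G(F)_{x,s}\subset K_{\mu,s}$ stabilizes $\bb{B}_{x,s}$ to conclude $g^{-1}b\in\bigcap_s\bb{B}_{x,s}$.

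Also, uniqueness of $\can_r$ cannot be ``taken as part of the construction''; it is part of the claim. It follows from your first sentence once you note the following. The equalizer of two sections is open and closed, because $\Sht_{K_{\mu,r},-\mu}\to\Sht_{\cl{G},-\mu}$ is finite \'etale. Moreover, $\pi_\mu^{-1}(\bb{B}_{x,r})$ is connected: $\bb{B}_{x,r}$ is connected, and $\pi_\mu$ is universally open with connected fibers by \Cref{prop:BB_map}.
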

\begin{proof}
    For (1), we first consider the case $g \in K_{\mu, r}$. By the Iwahori decomposition of $G(E)_{x, r}$ (see \cite[Proposition 13.2.5 (3)]{KP23}) with respect to $P_\mu$, we have $g \in U_{-\mu}(E)_{x, r} \cl{P}_\mu(O_E)$. Now, $U_{-\mu}(E)_{x, r} \subset \bb{B}_{x, r}(E)$, so the left multiplication under $U_{-\mu}(E)_{x, r}$ stabilizes $\bb{B}_{x, r}$. On the other hand, the stability under the left multiplication by $\wtd{\cl{G}}^{\mu < 0}(O_E)$ follows from the commutator relation between $U_{-\mu}$ and $U_{\mu}$. Since $\wtd{\cl{G}}^{\mu=0}(O_F)$ is also generated by root groups and the center (see \cite[Definition 7.3.3]{KP23}), the stability under $\wtd{\cl{G}}^{\mu=0}(O_F)$ also follows. 
    
    Next, we show that $\bb{B}_{x, r} \cap g \cdot \bb{B}_{x, r} \neq \emptyset$ implies $g \in K_{\mu, r}$. Let $b \in \bb{B}_{x, r}(C, C^+)$ be a geometric point such that $gb \in \bb{B}_{x, r}$. Let $u_b, u_g \in U_{-\mu}(C)$ be elements associated to $b$ and $gb$. Then, $g \cdot [u_b] = [u_g]$ implies 
    $
        u_g^{-1} g u_b \in P_\mu(C). 
    $

    We may enlarge $E$ so that the ramification index of $E / F$ is sufficiently large and $u_b$ and $u_g$ can be lifted to $\cl{U}_{-\mu, x, r'}(C^+)$ for some $0 < r' < r$ such that $K_{\mu, r} = K_{\mu, r'}$. For this choice of $E$, we have $u_g^{-1} g u_b \in \cl{P}_\mu(C^+)$. Now, the function 
    \[
        \alpha \mapsto \left\{
            \begin{alignedat}{4}
                & r' & \; & (\alpha \in \Phi_{\mu > 0}) \\
                & 0 & \; & (\alpha \notin \Phi_{\mu > 0})
            \end{alignedat}
        \right.
    \]
    is concave and the associated Moy-Prasad group scheme of $G_E$ is $\cl{U}_{-\mu, x, r'} \times \cl{P}_\mu$. Thus
    \[
        g \in (\cl{U}_{-\mu, x, r'} \times \cl{P}_\mu)(C^+) \cap G(E) = G(E)_{x, r'}\cl{P}_\mu(O_E). 
    \]
    Now, \Cref{prop:characterize_Kr} implies $g \in K_{\mu, r'} = K_{\mu, r}$, so we get the claim. 

    For (2), the result of (1) implies
    \[
        \coprod_{g \in \cl{G}(O_F) / K_{\mu, r}} g \cdot \pi_\mu^{-1}(\bb{B}_{x, r}) = \bigcup_{g \in \cl{G}(O_F)} g \cdot \pi_\mu^{-1}(\bb{B}_{x, r}) \subset \Gr_{G, \mu, E}. 
    \]
    Thus, $\Sht_{\cl{G}, -\mu}^{[M, \mu]}(r) \cong \und{\cl{G}(O_F)} \backslash \bigcup_{g \in \cl{G}(O_F)} g \cdot \pi_\mu^{-1}(\bb{B}_{x, r})$ and it is an open substack of $\Sht_{\cl{G}, -\mu, E}$. Since $\pi_\mu(x_\mu) = 1 \in \bb{B}_{x, r}$, $\Sht_{\cl{G}, -\mu}^{[M, \mu]}(r)$ contains $\Sht_{\cl{G}, -\mu}^{[M, \mu]}$. Moreover, $\pi_\mu^{-1}(\bb{B}_{x, r}) \subset \Gr_{G, \mu, E}$ induces a section
    \[
        \can_r \colon \Sht_{\cl{G}, -\mu}^{[M, \mu]}(r) \to \Sht_{K_{\mu, r}, -\mu}. 
    \]
    It is easy to see that the restriction to $\Sht_{\cl{G}, -\mu}^{[M, \mu]}$ is given by the composition
    \[
        \Sht_{\cl{G}, -\mu}^{[M, \mu]} \to \und{H_{\mu, x}} \backslash \Gr_{G, \mu} \to \Sht_{K_{\mu, r}, -\mu}. 
    \]
    It remains to show the uniqueness of $\can_r$. Since $\Sht_{K_{\mu, r}, -\mu} \to \Sht_{\cl{G}, -\mu}$ is finite \'{e}tale, the equalizer of any two sections $\Sht_{\cl{G}, -\mu}^{[M, \mu]}(r) \to \Sht_{K_{\mu, r}, -\mu}$ is closed and open in $\Sht_{\cl{G}, -\mu}^{[M, \mu]}(r)$. Then, such a section is uniquely determined by the image of $\Sht_{\cl{G}, -\mu}^{[M, \mu]}$ because $\bb{B}_{x, r}$ is connected and $\Sht_{\cl{G}, -\mu}^{[M, \mu]}(r)$ is connected by \Cref{prop:BB_map}. 

    For (3), the inclusion $\Sht_{\cl{G}, -\mu}^{[M, \mu]}(s) \subset \Sht_{\cl{G}, -\mu}^{[M, \mu]}(r)$ is obvious from $\bb{B}_{x, s} \subset \bb{B}_{x, r}$. Now, suppose that $\mu$ is minuscule, so that $\pi_\mu$ is an isomorphism. It is enough to show
    \[  
        \bigcap_{r > 0} (\cl{G}(O_F) / K_{\mu, r}) \times \bb{B}_{x, r} = \und{\cl{G}(O_F) / H_{\mu, x}} \subset \Flag_{G, \mu}^\ad. 
    \]
    Let $b \in \Flag_{G, \mu}(C, C^+)$ be a geometric point on the left-hand side. For each $r > 0$, take $g_r \in \cl{G}(O_F)$ so that $g_r^{-1} b \in \bb{B}_{x, r}$. By (1), $g_r \in g_s K_{\mu, r}$ for $s > r$, so we may modify the family $\{g_r\}$ so that $g_r \in g_s G(F)_{x, r}$ for $s > r$. Then, we can take the limit $g = \lim_{r \to \infty} g_r \in \cl{G}(O_F)$. Now, we have $b = [g] \in \und{\cl{G}(O_F) / H_{\mu, x}}$ and we get the claim. 
\end{proof}

\begin{exa} \label{exa:interpretation_p_divisible_groups}
    Here, we interpret \Cref{thm:canonical_level_structures} in terms of $p$-divisible groups when $\cl{G} = \GL_n / \bb{Z}_p$ and $\mu = \diag(t^{-1}, 1, \ldots, 1)$. Recall the notation in \Cref{ssec:example_special_points} and consider
    \[
        \can_m \colon \Sht_{\cl{G}, -\mu}^{[M_k, \mu_k]}(m) \to \Sht_{K_{\mu_k, m}, -\mu}
    \]
    for each $1 \leq k \leq n$ and $m \geq 1$. In this case, 
    \[
        K_{\mu_k, m} = \left\{ g \in \GL_n(\bb{Z}_p)  \; \bigg\vert \; g \bmod{\pi^m} = {\small \begin{pmatrix}
            A & B \\
            0 & D
        \end{pmatrix}},\; A \in (\bb{Z}_{p^k} / p^m)^\times,\; D \in \GL_{n-k}
        \right\}
    \]
    where $\bb{Z}_{p^k}$ is the unramified extension of $\bb{Z}_p$ of degree $k$. Then, $\can_m$ corresponds to an $m$-truncated Barsotti-Tate subgroup of the $p^m$-torsion of the universal $p$-divisible group $X$
    \[
        H \subset X[p^m]
    \]
    that is of height $k$. Moreover, $H$ is equipped with a CM structure under $\bb{Z}_{p^k} / p^m$. 
\end{exa}

\subsection{Canonical subgroup around the ordinary locus}

In this section, we specialize \Cref{thm:canonical_level_structures} in the case of the ordinary locus (see \Cref{defi:special_case_of_extremal_points}) and provide some consequences on the ordinary locus of Shimura varieties. 

Here, we assume that $\mu$ is defined over $F$ and $\cl{G}(O_F) = G(F)_{x, 0}$ is a special maximal parahoric subgroup. Then, $M = G^{\mu = 0}$ and $H_\mu = P_\mu$. In this case, 
\[
    \Sht_{\cl{G}, -\mu}^{[M, \mu]} = \Sht_{\cl{G}, -\mu}^\ord
\]
by \Cref{prop:ordinary_locus}. In particular, \Cref{thm:canonical_level_structures} provides the following family of open neighborhoods of $\Sht_{\cl{G}, -\mu}^\ord$. 

\begin{thm} \label{thm:canonical_filtration}
    There is a decreasing family of open neighborhoods $\Sht_{\cl{G}, -\mu}^{\ord}(r)$ of the ordinary locus $\Sht_{\cl{G}, -\mu}^\ord$ indexed by $r > 0$ with the following properties. 
    \begin{enumerate}
        \item Let $K_{\mu, r} = G(F)_{x, r} \cl{P}_\mu(O_F)$. There is a unique section
        \[
            \can_r \colon \Sht^{\ord}_{\cl{G}, -\mu}(r) \to \Sht_{K_{\mu, r}, -\mu}
        \]
        extending the truncation of the canonical filtration on $\Sht^\ord_{\cl{G}, -\mu}$. 
        \item When $\mu$ is minuscule, $\bigcap_{r > 0} \Sht_{\cl{G}, -\mu}^\ord(r) = \Sht^{\ord}_{\cl{G}, -\mu}$ and the composition map
        \[
            U_p \colon \Sht^{\ord}_{\cl{G}, -\mu}(1) \xrightarrow{\can_1} \Sht_{K_{\mu, 1}, -\mu} \xrightarrow{\mu(\pi^{-1})} \Sht_{\cl{G}, -\mu}
        \]
        restricts to a finite \'{e}tale surjection $\Sht_{\cl{G}, -\mu}^\ord(r+1) \to \Sht_{\cl{G}, -\mu}^\ord(r)$ for every $r > 0$. Here, the second map is induced by the inclusion $\Ad(\mu(\pi^{-1}))(K_{\mu, 1}) \subset \cl{G}(O_F)$. 
    \end{enumerate}
\end{thm}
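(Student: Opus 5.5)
The plan is to set $\Sht_{\cl{G}, -\mu}^\ord(r) = \Sht_{\cl{G}, -\mu}^{[M, \mu]}(r)$ for the ordinary local special pair $(M, \mu) = (G^{\mu=0}, \mu)$ and read off everything except the $U_p$ statement from \Cref{thm:canonical_level_structures}. Since $\mu$ is defined over $F$, we may take $E = F$. Then $H_\mu = P_\mu$ and $H_{\mu, x} = P_\mu(F) \cap \cl{G}(O_F) = \cl{P}_\mu(O_F)$, so $K_{\mu, r} = G(F)_{x, r}\cl{P}_\mu(O_F)$. The local special point is $\Sht_{\cl{G}, -\mu}^\ord$ by \Cref{prop:ordinary_locus}. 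With these identifications, (1) and the intersection claim in (2) are exactly parts (2) and (3) of \Cref{thm:canonical_level_structures}.

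For $U_p$, I will work upstairs on $\Gr_{G, \mu} \cong \Flag_{G, \mu}^\diamondsuit$, using \Cref{prop:BB_map} since $\mu$ is minuscule. By construction, $\can_1$ is induced by the $K_{\mu, 1}$-stable piece $\pi_\mu^{-1}(\bb{B}_{x, 1})$. The second map sends a $K_{\mu, 1}$-orbit of $y$ to the $\cl{G}(O_F)$-orbit of $\mu(\pi^{-1}) y$. So I need to compute $\mu(\pi^{-1}) \cdot \bb{B}_{x, s}$ for $s \geq 1$. Choose $T$ split, with $x$ in its apartment, and $\mu \in X_*(T)$. Since $\mu$ is minuscule, $\langle \alpha, \mu \rangle = 1$ for $\alpha \in \Phi_{\mu > 0}$. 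Conjugation by $\mu(\pi^{-1})$ multiplies $u_\alpha$ by $\pi^{-1}$, and it fixes the base point $[1]$ because $\mu(\pi^{-1}) \in P_\mu$. Hence
\[
    \mu(\pi^{-1}) \cdot \bb{B}_{x, s} = \bb{B}_{x, s-1}
\]
for $s > 1$. Moreover, the inclusion $\Ad(\mu(\pi^{-1}))(K_{\mu, 1}) \subset \cl{G}(O_F)$ follows from the same weight computation: $U_{-\mu}(F)_{x, 1}$ is scaled into $U_{-\mu}(F)_{x, 0}$, and $\cl{P}_\mu(O_F)$ is contracted.

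Under the disjointness of translates from \Cref{thm:canonical_level_structures}(1), we have
\[
    \Sht^\ord(r+1) \cong \und{K_{\mu, r+1}} \backslash \bb{B}_{x, r+1}^\diamond, \qquad \Sht^\ord(r) \cong \und{K_{\mu, r}} \backslash \bb{B}_{x, r}^\diamond.
\]
The map $U_p$ on $\Sht^\ord(r+1)$ is then induced by the isomorphism $\mu(\pi^{-1}) \colon \bb{B}_{x, r+1} \to \bb{B}_{x, r}$, together with the group homomorphism $\Ad(\mu(\pi^{-1})) \colon K_{\mu, r+1} \to \cl{G}(O_F)$. Its image $\Ad(\mu(\pi^{-1}))(K_{\mu, r+1})$ lies in $K_{\mu, r}$: the $U_{-\mu}$-part of depth $r+1$ goes to depth $r$, and $\Ad(\mu(\pi^{-1}))(\cl{P}_\mu(O_F)) \subset \cl{P}_\mu(O_F)\, G(F)_{x, r}$ after using the Iwahori decomposition. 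So the map factors through $\Sht^\ord(r)$ as
\[
    \und{K'} \backslash \bb{B}_{x, r}^\diamond \to \und{K_{\mu, r}} \backslash \bb{B}_{x, r}^\diamond, \qquad K' = \Ad(\mu(\pi^{-1}))(K_{\mu, r+1}).
\]
This is a quotient by a finite-index inclusion $K' \subset K_{\mu, r}$ of open compact subgroups, and is therefore finite étale and surjective.

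The main obstacle is verifying that $K'$ is open of finite index in $K_{\mu, r}$, i.e.\ that the inclusion makes sense at every depth. The concern is that $\Ad(\mu(\pi^{-1}))$ expands the $U_\mu$-directions: it sends $U_\mu(F)_{x, r+1}$ to depth $r+2$, which is still fine. Openness holds because $\Ad(\mu(\pi^{-1}))$ is a homeomorphism of $G(F)$. I also need to check that $U_p$ on $\Sht^\ord(r+1)$ indeed uses $\can_1$ restricted along $K_{\mu, r+1} \subset K_{\mu, 1}$. This compatibility follows from the uniqueness in \Cref{thm:canonical_level_structures}(2).
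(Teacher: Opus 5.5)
Your proposal is correct and follows the paper's proof essentially verbatim. Part (1) and the intersection claim come from \Cref{thm:canonical_level_structures} applied to the ordinary pair $(G^{\mu=0},\mu)$. The $U_p$-statement is the chain $\und{K_{\mu,r+1}}\backslash\bb{B}_{x,r+1}^\diamond \cong \und{\Ad(\mu(\pi^{-1}))(K_{\mu,r+1})}\backslash\bb{B}_{x,r}^\diamond \to \und{K_{\mu,r}}\backslash\bb{B}_{x,r}^\diamond$, justified via the Iwahori decomposition $K_{\mu,r+1} = U_{-\mu}(F)_{x,r+1}\cl{P}_\mu(O_F)$. One minor slip: $\Ad(\mu(\pi^{-1}))$ contracts rather than expands $G^{\mu<0}$, and that part already lies in $\cl{P}_\mu(O_F)$, which is preserved, so the ``concern'' in your last paragraph does not arise.
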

\begin{proof}
    First, (1) and the first claim of (2) are consequences of \Cref{thm:canonical_level_structures}. For the latter claim of (2), we first verify $\Ad(\mu(\pi^{-1}))(K_{\mu, 1}) \subset \cl{G}(O_F)$. By the Iwahori decomposition of $G(F)_{x, 1}$, we have $K_{\mu, 1} = U_{-\mu}(F)_{x, 1} \cl{P}_\mu(O_F)$. Then, it is easy to see that $\Ad(\mu(\pi^{-1}))$ sends both $U_{-\mu}(F)_{x, 1}$ and $\cl{P}_\mu(O_F)$ into $\cl{G}(O_F)$. 
    
    Now, the $U_p$-operator sends a geometric point of $\Sht_{\cl{G}, - \mu}^{\ord}(1)$ represented by $b \in \bb{B}_{x,1}(C, C^+)$ to $[\mu(\pi^{-1})b] \in \und{\cl{G}(O_F)} \backslash \Flag_{G, \mu}^\diamond$. Since $\Ad(\mu(\pi^{-1}))$ induces $\bb{B}_{x, r+1} \cong  \bb{B}_{x, r}$, the $U_p$-operator restricts to a finite \'{e}tale surjection
    \[
        \und{K_{\mu, r+1}} \backslash \bb{B}_{x, r+1}^\diamond \xrightarrow{\Ad(\mu(\pi^{-1}))} \und{\Ad(\mu(\pi^{-1}))(K_{\mu, r+1})} \backslash \bb{B}_{x, r}^\diamond \to \und{K_{\mu, r}} \backslash \bb{B}_{x, r}^\diamond
    \]
    The inclusion $\Ad(\mu(\pi^{-1}))(K_{\mu, r+1}) \subset K_{\mu, r}$ follows as previously from the Iwahori decomposition $K_{\mu, r+1} = U_{-\mu}(F)_{x, r+1} \cl{P}_\mu(O_F)$. Thus, we get the claim. 
\end{proof}

\begin{rmk}
    The case of the ordinary locus is quite close to the analysis in \cite[{}3.3.6]{BP21}. Our main contribution is to reinterpret their analysis in terms of the ordinary locus of the stack of local shtukas and generalize to other local special points such as CM points. 
\end{rmk}

For some Shimura varieties of PEL type, a counterpart of $\can_r$ was constructed by \cite[Theorem A]{GK12} and \cite[Théorème 8]{Far11}. Finding such a section $\can_r$ for Shimura varieties is called the `general canonical subgroup problem' in \cite[Introduction]{GK12}. These results can be recovered from \Cref{thm:canonical_filtration} by pulling back along the Hodge-Tate period map
\[
    \pi_\HT \colon \Sh_{\msf{K}_p \msf{K}^p}(\mbf{G}, \mbf{X})_E^\diamondsuit \to \Sht_{\msf{K}_p, -\mu}. 
\]
Note that $\pi_\HT$ was first constructed by \cite{Sch15} for Siegel moduli spaces, and then extended to general Shimura varieties by \cite{CS17} and \cite{PR24}. Here, our notation is as follows. 

First, $(\mbf{G}, \mbf{X})$ is a Shimura datum satisfying the axiom (SV5). We set $G = \mbf{G}_{\bb{Q}_p}$ and let $\msf{K}_p = \cl{G}(\bb{Z}_p)$ be a special maximal parahoric subgroup of $G(\bb{Q}_p)$ corresponding to a special point $x \in \cl{B}(G, \bb{Q}_p)$. Let $\msf{K}^p\subset \mbf{G}(\bb{A}^p_f)$ be a sufficiently small (or neat) compact open subgroup and let $\msf{K} = \msf{K}_p\msf{K}^p \subset \mbf{G}(\bb{A}_f)$. Let $\mbf{E}$ be the reflex field of $(\mbf{G},\mbf{X})$. We fix a place $v$ of $\mbf{E}$ over $p$ and set $E=\mbf{E}_v$. 

Now, the Shimura variety $\Sh_\msf{K}(\mbf{G},\mbf{X})$ is determined and there is a universal $\cl{G}$-shtuka over $\Sh_\msf{K}(\mbf{G},\mbf{X})_E^\diamondsuit$ (see \cite[Proposition 4.1.2]{PR24}) corresponding to the Hodge-Tate period map $\pi_\HT$. Here, the geometric conjugacy class $\{\mu\}$ of cocharacters of $G$ is taken to be the opposite of the usual one associated to $\mbf{X}$. The pullback along $\pi_\HT$ provides the following.

\begin{cor} \label{cor:canonical_filtration}
    Suppose that $\{ \mu \}$ admits a representative $\mu \colon \bb{G}_m \to \cl{G}$ defined over $\bb{Z}_p$ and let $\cl{P}_\mu = \cl{G}^{\mu \leq 0}$. Then, the closed subspace $\lvert \Sh_{\msf{K}}(\mbf{G}, \mbf{X})_E\rvert^{\ord} = \pi_\HT^{-1}(\Sht_{\cl{G}, -\mu}^\ord)$ of the Shimura variety $\Sh_{\msf{K}}(\mbf{G}, \mbf{X})_E$ admits a decreasing family of open neighborhoods
    \[
        \Sh_{\msf{K}}(\mbf{G}, \mbf{X})_E^{\ord}(r) \subset \Sh_{\msf{K}}(\mbf{G}, \mbf{X})_E^\ad
    \]
    indexed by $r > 0$ with the following properties. 
    \begin{enumerate}
        \item Let $\msf{K}_{p, r} = G(\bb{Q}_p)_{x, r} \cl{P}_\mu(\bb{Z}_p)$.
        There is a section
        \[
            \can_r \colon \Sh_{\msf{K}}(\mbf{G}, \mbf{X})_E^{\ord}(r) \to \Sh_{\msf{K}_{p, r} \msf{K}^p}(\mbf{G}, \mbf{X})_E^\ad
        \]
        extending the truncation of the canonical filtration on $\Sh_{\msf{K}}(\mbf{G}, \mbf{X})_E^{\diamondsuit, \ord}$. 
        \item We have $\bigcap_{r > 0} \lvert \Sh_{\msf{K}}(\mbf{G}, \mbf{X})_E^{\ord}(r) \rvert = \lvert \Sh_{\msf{K}}(\mbf{G}, \mbf{X})_E\rvert^{\ord}$ and the composition map
        \[
            U_p \colon \Sh_{\msf{K}}(\mbf{G}, \mbf{X})_E^{\ord}(1) \xrightarrow{\can_1} \Sh_{\msf{K}_{p, 1} \msf{K}^p}(\mbf{G}, \mbf{X})_E^\ad \xrightarrow{\mu(p^{-1})} \Sh_{\msf{K}}(\mbf{G}, \mbf{X})_E^\ad
        \]
        restricts to a finite \'{e}tale surjection $\Sh_{\msf{K}}(\mbf{G}, \mbf{X})_E^{\ord}(r+1) \to \Sh_{\msf{K}}(\mbf{G}, \mbf{X})_E^{\ord}(r)$. 
    \end{enumerate}
\end{cor}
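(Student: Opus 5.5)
The plan is to pull back \Cref{thm:canonical_filtration} along the Hodge-Tate period map. Throughout, $F = \bb{Q}_p$, $\cl{G}(\bb{Z}_p) = \msf{K}_p$, and $K_{\mu, r} = \msf{K}_{p, r}$. The universal $\cl{G}$-shtuka of \cite[Proposition 4.1.2]{PR24} gives $\pi_\HT \colon \Sh_{\msf{K}}(\mbf{G}, \mbf{X})_E^\diamondsuit \to \Sht_{\cl{G}, -\mu}$. Since $\Sht_{\cl{G}, -\mu}^\ord(r) \subset \Sht_{\cl{G}, -\mu}$ is open, its preimage is an open subsheaf of $\Sh_{\msf{K}}(\mbf{G}, \mbf{X})_E^\diamondsuit$. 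Open subsheaves of $X^\diamondsuit$ correspond to open subsets of $\lvert X^\diamondsuit \rvert = \lvert X^\ad \rvert$, so this preimage comes from an open adic subspace, which we define to be $\Sh_{\msf{K}}(\mbf{G}, \mbf{X})_E^{\ord}(r) \subset \Sh_{\msf{K}}(\mbf{G}, \mbf{X})_E^\ad$. The family is decreasing because the family $\Sht_{\cl{G}, -\mu}^\ord(r)$ is.

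The key input is the level-change compatibility of $\pi_\HT$. For $K' \subset \msf{K}_p$ open, the square relating $\Sh_{K'\msf{K}^p}^\diamondsuit \to \Sh_{\msf{K}}^\diamondsuit$ to $\Sht_{K', -\mu} \to \Sht_{\cl{G}, -\mu}$ should be cartesian. This holds because both vertical maps are the finite \'etale covers classifying $K'$-reductions of the trivialization of the étale $\cl{G}(\bb{Z}_p)$-torsor, i.e.\ of the $p$-adic Tate module. Concretely, $\Sh_{K'\msf{K}^p}$ is the quotient of the infinite-level Shimura variety by $K'$, and $\pi_\HT$ comes from the $\msf{K}_p$-equivariant map $\Sh_{\msf{K}^p} \to \Gr_{G,\mu}$. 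Given the cartesian square, the section $\can_r$ of \Cref{thm:canonical_filtration}(1) pulls back to a section of $\Sh_{\msf{K}_{p,r}\msf{K}^p}^\diamondsuit \to \Sh_{\msf{K}}^\diamondsuit$ over $\Sh^{\ord}(r)^\diamondsuit$. Because the map is finite étale, its sections over a diamond of a rigid space are clopen subspaces mapping isomorphically. These descend to the adic spaces, since $X \mapsto X^\diamondsuit$ is fully faithful on seminormal rigid spaces and the étale sites agree. On the ordinary locus the section restricts to the truncated canonical filtration by construction.

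For (2), the intersection claim follows from \Cref{thm:canonical_filtration}(2) at the level of topological spaces, since $\lvert \pi_\HT^{-1}(\cdot) \rvert$ commutes with intersections of subsets. For $U_p$, we use the Hecke equivariance of $\pi_\HT$. The map $\mu(p^{-1}) \colon \Sh_{\msf{K}_{p,1}\msf{K}^p} \to \Sh_{\msf{K}}$ is right multiplication by $\mu(p^{-1})$ composed with the projection along $\Ad(\mu(p^{-1}))(\msf{K}_{p,1}) \subset \msf{K}_p$, and $\pi_\HT$ intertwines it with the corresponding map on shtuka stacks. Hence $U_p$ on the Shimura side lies over $U_p$ on $\Sht$. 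It remains to see that $\Sh^\ord(r+1) \to \Sh^\ord(r)$ is the base change of $\Sht^\ord(r+1) \to \Sht^\ord(r)$, which is finite étale and surjective. Here one must check that the relevant Hecke squares (the $\can$ step and the $\mu(p^{-1})$ step) are cartesian. The $\can$ step is an open-closed embedding, and the $\mu(p^{-1})$ step is a level-change square of the type handled above. Finite étale surjectivity is then stable under base change and descends to adic spaces.

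I expect the main obstacle to be justifying that the Hecke/level-change squares are cartesian, i.e.\ that $\Sh_{K'\msf{K}^p} \cong \Sh_{\msf{K}} \times_{\Sht_{\cl{G},-\mu}} \Sht_{K',-\mu}$, including for the twisted Hecke correspondence by $\mu(p^{-1})$. I would deduce this from the $G(\bb{Q}_p)$-equivariance of $\pi_\HT \colon \Sh_{\msf{K}^p} \to \Gr_{G,\mu}$ at infinite level and the fact that $\Sh_{\msf{K}^p} \to \Sh_{K'\msf{K}^p}$ is a $\und{K'}$-torsor.
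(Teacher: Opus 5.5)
Your proposal is correct and follows the paper's proof. You define the neighborhoods as $\pi_\HT$-preimages of $\Sht_{\cl{G}, -\mu}^\ord(r)$, pull back every property of \Cref{thm:canonical_filtration}, and descend $\can_r$ from diamonds to adic spaces via the equivalence of \'{e}tale sites \cite[Lemma 15.6]{Sch17}; your explicit check that the level-change and Hecke squares are cartesian (so that $U_p$ on the Shimura side is a base change of $U_p$ on the stack) only spells out what the paper leaves implicit.
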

\begin{proof}
    All the claims follow from \Cref{thm:canonical_filtration} by pullback along $\pi_\HT$. Note that $\can_r$ is a priori defined after $v$-sheafification, but since the domain and codomain of $\can_r$ are \'{e}tale over $\Sh_{\msf{K}}(\mbf{G}, \mbf{X})_E^\ad$, we may apply \cite[Lemma 15.6]{Sch17} to realize it as a map of adic spaces. 
\end{proof}

Our construction exhibits the philosophy that the Hodge-Tate period map is a rigid-analytic analogue of the Hasse invariant. It would be interesting to directly compare the classical Hasse bound with the Hodge-Tate bound by $\bb{B}_{x, r}$. For example, each pair $(G, \mu)$ corresponds to the following classical geometric object.

\vspace{5pt}
\begin{center}
    \begin{tabular}{c|c} 
        $(G, \mu)$ & classical geometric objects \\ \hline
        $(\GL_h, (- 1^{(d)}, 0^{(h-d)}))$  & $p$-divisible groups of height $h$ and dimension $d$ \\ \hline
        $(\GSp_{2g}, (- 1^{(g)}, 0^{(g)}))$ & polarized $p$-divisible groups of height $2g$ 
    \end{tabular}
\end{center}
\vspace{5pt}

As explained in \Cref{rmk:overconvergence_formal}, the mere existence of $\can_r$ follows formally and the property (2) is rather important In applications. As a simple consequence of (2), we construct a compact $U_p$-operator in the following setting. 

We fix a family of automorphic vector bundles $\omega_{\msf{K}}$ on each $\Sh_{\msf{K}}(\mbf{G}, \mbf{X})$ (see \cite[Section 4.1.1]{BP21}). It comes from an algebraic representation of $G^{\mu = 0}$ and is stable under any Hecke correspondence. 

\begin{lem}
    When $\Sh_{\msf{K}}(\mbf{G}, \mbf{X})$ is proper, $\Sh_{\msf{K}}(\mbf{G}, \mbf{X})_E^{\ord}(r)$ is quasicompact for $r > 0$. In particular, the set of overconvergent sections 
    \[
        H^0(\Sh_{\msf{K}}(\mbf{G}, \mbf{X})_E^{\ord}(r), \omega_{\msf{K}})
    \] 
    is naturally equipped with the structure of a Banach $E$-vector space. 
\end{lem}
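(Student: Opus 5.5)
The plan is to realize $\Sh_{\msf{K}}(\mbf{G}, \mbf{X})_E^{\ord}(r)$ as the preimage of a quasicompact open under the Hodge-Tate period map, and to reduce quasicompactness to that of the Shimura variety itself. Since $\mu$ is minuscule, $\pi_\mu$ is an isomorphism by \Cref{prop:BB_map}, so $\Sht^{\ord}_{\cl{G}, -\mu}(r)$ is the quotient of $\bigcup_{g \in \cl{G}(O_F)} g\cdot \bb{B}_{x,r}^\diamond$ by $\und{\cl{G}(O_F)}$. By \Cref{thm:canonical_level_structures}(1), this union is the finite disjoint union $\coprod_{g \in \cl{G}(O_F)/K_{\mu,r}} g\cdot\bb{B}_{x,r}$; since $K_{\mu,r}$ is open in the compact group $\cl{G}(O_F)$, the index is finite. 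Each $g\cdot\bb{B}_{x,r}$ is a rational (affinoid) subset of $U_{-\mu}^\ad \subset \Flag_{G,\mu}^\ad$, being cut out by finitely many inequalities $\lvert u_\alpha\rvert \le \lvert\pi\rvert^{-\alpha(x)+r}$. Hence the union $V_r$ is a quasicompact open subset of $\Flag_{G,\mu}^\ad$, stable under $\cl{G}(O_F)$.

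Next I would pass to level $\msf{K}_p$ and the Shimura variety. The map $\pi_\HT$ at level $\msf{K}$ factors as $\Sh_{\msf{K}}^\diamondsuit \to \und{\cl{G}(\bb{Z}_p)}\backslash \Flag_{G,\mu}^\diamond$. At infinite level it is the $\cl{G}(\bb{Z}_p)$-equivariant map $\pi_\HT\colon \Sh_{\msf{K}^p} \to \Flag_{G,\mu}^\ad$, with $\Sh_{\msf{K}^p} \sim \varprojlim \Sh_{\msf{K}'_p\msf{K}^p}$. Then $\Sh^{\ord}_{\msf{K}}(r)$ is the image of $\pi_\HT^{-1}(V_r)$ in $\Sh_{\msf{K}}^\ad$. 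Since $V_r$ is a quasicompact open of the quasicompact (proper) flag variety, $\pi_\HT^{-1}(V_r)$ is a quasicompact open of the spectral space $\lvert\Sh_{\msf{K}^p}\rvert = \varprojlim \lvert \Sh_{\msf{K}'_p\msf{K}^p}\rvert$. Here properness of $\Sh_{\msf{K}}$ is used: it makes every finite level quasicompact, so the inverse limit is a quasicompact spectral space. The projection to $\lvert\Sh_{\msf{K}}\rvert$ is continuous, so the image of a quasicompact set is quasicompact. It is open because the projection is open, being a pro-finite-étale quotient by $\cl{G}(\bb{Z}_p)$, and $\pi_\HT^{-1}(V_r)$ is $\cl{G}(\bb{Z}_p)$-stable. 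Thus $\Sh^{\ord}_{\msf{K}}(r)$ is a quasicompact open subspace of the rigid space $\Sh_{\msf{K},E}^\ad$.

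The Banach structure then follows by a standard argument. A quasicompact open in a separated rigid space over $E$ is covered by finitely many affinoids $W_i$ on which $\omega_{\msf{K}}$ is free. Each $H^0(W_i,\omega_{\msf{K}})$ is a finite free module over an affinoid algebra, hence Banach, and the pairwise intersections $W_i \cap W_j$ are again affinoid since the space is separated. So $H^0(\Sh_{\msf{K}}^{\ord}(r),\omega_{\msf{K}})$ is the kernel of a continuous map $\prod_i H^0(W_i) \to \prod_{i,j} H^0(W_i\cap W_j)$, which is a closed subspace of a Banach space. The resulting norm is independent of the cover up to equivalence, by the open mapping theorem.

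The main obstacle is the step transferring quasicompactness through $\pi_\HT$. The paper only defines $\pi_\HT$ at finite level into the stack, so I must justify that "preimage of a quasicompact open of the stack" is quasicompact. I would try to argue directly on underlying topological spaces: $\lvert \und{\cl{G}(O_F)}\backslash \Flag^\diamond\rvert$ is the quotient of $\lvert\Flag^\ad\rvert$ by a profinite group, $V_r/\cl{G}(O_F)$ is open there, and the statement reduces to the infinite-level argument above using the perfectoid Shimura variety of \cite{Sch15}, \cite{CS17} and \cite{PR24}.
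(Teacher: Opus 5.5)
Your argument follows the same route as the paper: quasicompactness of $\bb{B}_{x,r}$ (hence of the finite union $V_r$ of its $\cl{G}(O_F)$-translates), quasicompactness of $\pi_\HT$ coming from properness, and then a formal Banach argument. The paper delegates that last step to \cite[Lemma 2.5.20]{BP21}; you carry it out by hand with a finite affinoid cover, which is fine.

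One step is misjustified, and it is exactly the crux. You infer that $\pi_\HT^{-1}(V_r)$ is quasicompact from two facts: $V_r$ is a quasicompact open of $\Flag_{G,\mu}^\ad$, and $\lvert \Sh_{\msf{K}^p}\rvert$ is a quasicompact spectral space. That inference is not valid for a continuous map. Preimages of quasicompact opens under continuous maps of spectral spaces need not be quasicompact; the map must be spectral.

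What you need is that $\pi_\HT$ is a quasicompact morphism. This holds because a morphism $f\colon X\to Y$ from a quasicompact v-sheaf (or adic space) to a quasiseparated one is quasicompact. Indeed, $f^{-1}(V)=X\times_Y V\to X\times V$ is a base change of the quasicompact diagonal of $Y$, and $X\times V$ is quasicompact. Properness of $\Sh_{\msf{K}}(\mbf{G},\mbf{X})$ supplies the quasicompact source, and $\Flag_{G,\mu}^\ad$ (or the stack $\Sht_{\cl{G},-\mu}$) is quasiseparated. This is precisely the sentence ``$\pi_\HT$ is quasicompact'' in the paper's proof.

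With this fact in place, your infinite-level detour becomes optional. The same argument applies directly at level $\msf{K}$ to $\pi_\HT\colon \Sh_{\msf{K}}(\mbf{G},\mbf{X})_E^\diamondsuit\to\Sht_{\cl{G},-\mu}$ and the quasicompact open substack $\Sht^{\ord}_{\cl{G},-\mu}(r)$. Your infinite-level map is just the pullback of this one along the $\und{\cl{G}(\bb{Z}_p)}$-torsor $\Gr_{G,\mu}\to\Sht_{\cl{G},-\mu}$.
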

\begin{proof}
    If $\Sh_{\msf{K}}(\mbf{G}, \mbf{X})$ is proper, the Hodge-Tate period map $\pi_\HT$ is quasicompact, so the first claim follows from the quasicompactness of $\bb{B}_{x, r}$. Then, the second claim follows formally (cf.\ \cite[Lemma 2.5.20]{BP21}). 
\end{proof}

\begin{prop} \label{prop:compact_Up_operator}
    Let $U_{p, r} \colon \Sh_{\msf{K}}(\mbf{G}, \mbf{X})_E^{\ord}(r+1) \to \Sh_{\msf{K}}(\mbf{G}, \mbf{X})_E^{\ord}(r)$ be the finite \'{e}tale map coming from the $U_p$-operator. Then, we have $\omega_{\msf{K}} \cong U_{p, r}^* \omega_{\msf{K}}$ and the normalized trace map 
    \[
        \tr_{U_p} \colon H^0(\Sh_{\msf{K}}(\mbf{G}, \mbf{X})_E^{\ord}(r+1), \omega_{\msf{K}}) \to H^0(\Sh_{\msf{K}}(\mbf{G}, \mbf{X})_E^{\ord}(r), \omega_{\msf{K}})
    \]
    is well-defined. Then, the following composition
    \[
        H^0(\Sh_{\msf{K}}(\mbf{G}, \mbf{X})_E^{\ord}(r), \omega) \xrightarrow{\res} H^0(\Sh_{\msf{K}}(\mbf{G}, \mbf{X})_E^{\ord}(r+1), \omega) \xrightarrow{\tr_{U_p}} H^0(\Sh_{\msf{K}}(\mbf{G}, \mbf{X})_E^{\ord}(r), \omega)
    \]
    provides a compact operator on $H^0(\Sh_{\msf{K}}(\mbf{G}, \mbf{X})_E^{\ord}(r), \omega_{\msf{K}})$ when $\Sh_{\msf{K}}(\mbf{G}, \mbf{X})$ is proper. 
\end{prop}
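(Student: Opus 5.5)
The plan is to follow the standard Banach-space route (as in \cite[Section 2.5]{BP21}). First establish the isomorphism $\omega_{\msf{K}} \cong U_{p, r}^* \omega_{\msf{K}}$, then define the normalized trace, and finally show that the composition factors through a compact restriction map.

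For the isomorphism, note that $U_{p, r}$ is the composition of $\can_1$ with the Hecke correspondence given by $\mu(p^{-1})$. The automorphic vector bundles $\omega_{\msf{K}}$ are compatible with pullback along the forgetful maps $\Sh_{\msf{K}'}\to \Sh_{\msf{K}}$. Since $\can_1$ is a section of such a map, $\can_1^*$ of the pullback is $\omega_{\msf{K}}$. The Hecke action of $\mu(p^{-1})$ comes with an isomorphism of automorphic bundles, because $\omega$ is attached to a representation of $G^{\mu=0}$ and $\mu(p^{-1})$ is central in $G^{\mu=0}$. It therefore acts on $\omega$ through a scalar determined by the weight, which I absorb into the normalization. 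Since $U_{p,r}$ is finite \'etale by \Cref{cor:canonical_filtration}~(2), the pushforward $U_{p,r,*}U_{p,r}^*\omega_{\msf{K}}$ is locally free. The trace $U_{p,r,*}U_{p,r}^*\omega\to\omega$ then yields $\tr_{U_p}$, normalized by the inverse of the degree or a suitable power of $p$. This map is continuous for the Banach structures from the preceding lemma, since both spaces are quasicompact when $\Sh_{\msf{K}}$ is proper.

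For compactness, it suffices to show that the restriction map
\[
H^0(\Sh^{\ord}_{\msf{K}}(\mbf{G},\mbf{X})_E(r),\omega)\to H^0(\Sh^{\ord}_{\msf{K}}(\mbf{G},\mbf{X})_E(r+1),\omega)
\]
is compact, because a composition of a compact map with a continuous map is compact. The key input is that $\Sh^{\ord}(r+1)$ is relatively compact in $\Sh^{\ord}(r)$, meaning its closure is contained in $\Sh^{\ord}(r)$. This would follow by pulling back along $\pi_\HT$ a strict inclusion $\overline{\bb{B}_{x,r+1}}\subset \bb{B}_{x,r}$, and more precisely from $\bb{B}_{x,r+1}\Subset \bb{B}_{x,r'}$ for some $r<r'<r+1$. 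Indeed, $\bb{B}_{x,s}$ is a product of closed discs of radii $|\pi|^{-\alpha(x)+s}$, and shrinking the radius gives a relatively compact inclusion of affinoids in the rigid-analytic sense. Pulled back along $\pi_\HT$, this yields an inclusion $\Sh^{\ord}(r+1)\Subset\Sh^{\ord}(r')\subset\Sh^{\ord}(r)$ over $\Sh_{\msf{K}}^{\ad}$.

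The main obstacle is transferring this relative compactness through $\pi_\HT$. The map $\pi_\HT$ lands in a stack, and $\pi_\mu$ involves the quotient by $\und{K_{\mu,r}}$, so I would work on a $\cl{G}(\bb{Z}_p)$-torsor cover, namely the perfectoid Shimura variety at infinite level or a finite-level cover where $\pi_\HT$ lands in $\Flag_{G,\mu}$. Relative compactness is then checked there and descended using quasicompactness and properness of $\Sh_{\msf{K}}$. Once relative compactness of affinoids is known, the classical fact applies: restriction of sections of a coherent sheaf along a relatively compact inclusion of affinoid (or qcqs) rigid spaces is compact. Reducing to finitely many affinoid opens covering both sides gives the compactness of the composition.
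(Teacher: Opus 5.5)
Your proposal is correct and follows essentially the same route as the paper. The isomorphism $\omega_{\msf{K}} \cong U_{p,r}^*\omega_{\msf{K}}$ comes from Hecke-compatibility of automorphic bundles restricted along $\can_1$, the trace exists because $U_{p,r}$ is finite \'{e}tale, and compactness reduces to compactness of $\res$ via \cite[Lemma 2.5.23]{BP21} using that the closure of $\bb{B}_{x,r+1}$ lies in $\bb{B}_{x,r}$. The step you flag as the main obstacle does not need a passage to infinite level: continuity of $\pi_\HT$ on underlying spaces already gives that the closure of a preimage lies in the preimage of the closure, applied to the $\cl{G}(\bb{Z}_p)$-saturations, which are finite unions of translates.
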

\begin{proof}
    Recall the definition of $U_p$ in \Cref{cor:canonical_filtration} and let
    \[
        [1], [\mu(p^{-1})] \colon \Sh_{\msf{K}_{p, 1} \msf{K}^p}(\mbf{G}, \mbf{X})_E \to \Sh_{\msf{K}}(\mbf{G}, \mbf{X})_E
    \]
    denote the Hecke correspondences attached to the elements $1, \mu(p^{-1}) \in G(\bb{Q}_p)$ respectively. The compatibility of automorphic vector bundles $\{ \omega_{\msf{K}} \}_{\msf{K}}$ with Hecke correspondences provides
    \[
        [1]^* \omega_{\msf{K}} \cong \omega_{\msf{K}_{p, 1} \msf{K}^p} \cong [\mu(p^{-1})]^* \omega_{\msf{K}}.
    \]
    By restricting along $\can_1$, we get $\omega_{\msf{K}} \cong U_{p, r}^* \omega_{\msf{K}}$. Since $U_{p, r}$ is finite \'{e}tale, it formally follows that $\tr_{U_p}$ can be naturally defined as a continuous map. 

    When $\Sh_{\msf{K}}(\mbf{G}, \mbf{X})$ is proper, the restriction map $\res$ is compact by \cite[Lemma 2.5.23]{BP21} since the closure of $\bb{B}_{x, r + 1}$ in $\Flag_{G, \mu}$ is contained in $\bb{B}_{x, r}$. Thus, the composition $\tr_{U_p} \circ \res$ is a compact operator on the Banach $E$-vector space $H^0(\Sh_{\msf{K}}(\mbf{G}, \mbf{X})_E^{\ord}(r), \omega_{\msf{K}})$. 
\end{proof}
\begin{rmk}
    When $\Sh_{\msf{K}}(\mbf{G}, \mbf{X})$ is not proper, one usually needs to work with toroidal compactifications. In \cite{BP21}, Boxer-Pilloni dealt with this difficulty and developed higher Coleman theory for Shimura varieties of abelian type such that $G$ is quasi-split. 
\end{rmk}

\subsection{Relation to special points of Shimura varieties} \label{ssec:special_points}

In this section, we explain a relation between special points of Shimura varieties and local special points on the stack of local shtukas. First, we review some terminology for special points on Shimura varieties.

Let $(\mbf{T}, \mbf{x}) \hookrightarrow (\mbf{G}, \mbf{X})$ be an embedding of Shimura data (satisfying (SV4)) such that $\mbf{T}$ is a torus. It is called a special pair in \cite[Definition 12.5]{Mil05}. Let $\mbf{E}_{\mbf{x}} / \mbf{E}$ be the reflex field of $(\mbf{T}, \mbf{x})$ and let $T = \mbf{T}_{\bb{Q}_p}$. We suppose that for a suitable compact open subgroup $\msf{K}_{\mbf{T}} = \msf{K}_{\mbf{T}, p} \msf{K}_{\mbf{T}}^p \subset \mbf{T}(\bb{A}_f)$, we have a closed immersion
\begin{equation} \label{eq:Shimura_embedding}
    \Sh_{\msf{K}_{\mbf{T}}}(\mbf{T}, \mbf{x}) \hookrightarrow \Sh_{\msf{K}}(\mbf{G}, \mbf{X})_{\mbf{E}_{\mbf{x}}}
\end{equation}
(see e.g.\ \cite[Section 4.3]{PR24} for such functoriality). At the place $p$, we fix an admissible embedding $\cl{\wtd{B}}(T, \bb{Q}_p) \subset \cl{\wtd{B}}(G, \bb{Q}_p)$ of enlarged Bruhat-Tits buildings and take $\msf{K}_{\mbf{T}, p} = T(\bb{Q}_p)_{x, 0}$ and $\msf{K}_{p} = G(\bb{Q}_p)_{x, 0}$ for some point $x \in \cl{B}(T, \bb{Q}_p)$. 

Let $E_{\mbf{x}}$ be the completion of $\mbf{E}_{\mbf{x}}$ at a place over $v$. By the functoriality\footnote{This follows from the functoriality in \cite[Proposition 1.1.1]{PR24}.} of universal shtukas on Shimura varieties, we have the following commutative diagram. 
\begin{equation} \label{eq:diag_HT}
    \begin{tikzcd}
        \Sh_{\msf{K}_{\mbf{T}}}(\mbf{T}, \mbf{x})_{E_{\mbf{x}}}^\diamondsuit \ar[r, hook] \ar[d, "\pi_{\HT}^{\mbf{T}}"] & \Sh_{\msf{K}}(\mbf{G}, \mbf{X})_{E_{\mbf{x}}}^\diamondsuit \ar[d, "\pi_\HT"] \\
        \Sht_{\msf{K}_{\mbf{T}, p}, -\mu} \ar[r] & \Sht_{\msf{K}_{p}, -\mu}. 
    \end{tikzcd}
\end{equation}
Here, the vertical arrows denote Hodge-Tate period maps. Note that $\mbf{x} \in \mbf{X}$ determines a representative $\mu \in X_*(T_{\ov{\bb{Q}}_p})$ of the geometric conjugacy class $\{ \mu \}$. To apply the study of special points on $\Sht_{\msf{K}_{p}, -\mu}$, we assume that $T$ is tamely ramified. In this case, we say that the special pair $(\mbf{T}, \mbf{x})$ is tame at $p$. 

\begin{prop} \label{prop:image_extremal}
    Let $M = \bigcap_{\tau \in \Gal(\ov{\bb{Q}}_p / \bb{Q}_p)} G^{\tau \mu = 0}$ be a tamely ramified twisted Levi subgroup of $G$ containing $T$. Then, $(M, \mu)$ is a local special pair of $G$ and the image of 
    \[
        \Sht_{\msf{K}_{\mbf{T}, p}, -\mu} \to \Sht_{\msf{K}_{p}, -\mu}
    \]
    is equal to the associated special point $\Sht_{\msf{K}_{p}, -\mu}^{[M, \mu]}$. 
\end{prop}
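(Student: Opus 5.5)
First I will check that $(M,\mu)$ is a local special pair. Since $\mu$ factors through the torus $T$, it is defined over the splitting field $E$ of $T$, which is a finite tame extension of $\bb{Q}_p$. The intersection $\bigcap_\tau G_{\ov{\bb{Q}}_p}^{\tau\mu=0}$ is the centralizer of the subtorus of $T_{\ov{\bb{Q}}_p}$ generated by the Galois conjugates $\tau\mu$. That subtorus is $\Gal$-stable, hence descends to a $\bb{Q}_p$-subtorus $S\subset T$. Therefore $M=Z_G(S)$ is a twisted Levi subgroup defined over $\bb{Q}_p$ containing $T$. By hypothesis $M$ is tamely ramified, so \Cref{defi:local_special_pair} and \Cref{conv:tamely_ramified_minimal} are satisfied. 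The special point $\Sht_{\msf{K}_p,-\mu}^{[M,\mu]}$ is also well defined: we fixed $x\in\cl{B}(T,\bb{Q}_p)$ with an admissible embedding into $\wtd{\cl{B}}(G,\bb{Q}_p)$, and this factors through $\wtd{\cl{B}}(M,\bb{Q}_p)$. So $(M,\mu,x)$ is a local special triple, and \Cref{lem:level_structure_special_points} applies with $\cl{G}(O_F)=\msf{K}_p$.

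Next I will describe the source. For the torus $T$, the cocharacter $\mu$ is central, so $\Gr_{T,\mu}=\Spd(E)$. By \Cref{prop:ShtvsGr} and \Cref{defi:local_shtuka_general_level},
\[
\Sht_{\msf{K}_{\mbf{T},p},-\mu}=[\Spd(E)/\und{\msf{K}_{\mbf{T},p}}],
\]
and its unique point is the image of $\Gr_{T,\mu}\to\Gr_{T,\mu}$. The map to $\Sht_{\msf{K}_p,-\mu}$ comes from $\und{\msf{K}_{\mbf{T},p}}\backslash\Gr_{T,\mu}\to\und{\msf{K}_p}\backslash\Gr_{G,\mu}$, which is induced by $T\subset G$ and $\msf{K}_{\mbf{T},p}\subset\msf{K}_p$. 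I then use the factorization $\Gr_{T,\mu}\to\Gr_{M,\mu}\to\Gr_{G,\mu}$. Since $\mu$ is central in $M$ too, $\Gr_{M,\mu}=\Spd(E)$, and the composite is exactly the point $x_\mu$. So the image of the map is the $\und{\msf{K}_p}$-orbit of $x_\mu$ modulo $\msf{K}_p$. By \Cref{lem:level_structure_special_points}, this orbit is the closed substack $[\Spd(E)/\und{H_{\mu,x}}]=\Sht^{[M,\mu]}_{\msf{K}_p,-\mu}$.

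To make this rigorous at the level of $v$-stacks, I will argue as follows. The morphism $[\Spd(E)/\und{\msf{K}_{\mbf{T},p}}]\to[\Spd(E)/\und{H_{\mu,x}}]$ is induced by the group inclusion $\msf{K}_{\mbf{T},p}\subset H_{\mu,x}$. This inclusion holds because $T(\bb{Q}_p)\subset M(\bb{Q}_p)\subset H_\mu(\bb{Q}_p)$ and $T(\bb{Q}_p)_{x,0}\subset G(\bb{Q}_p)_{x,0}$. Such a map of classifying stacks is surjective as a map of $v$-stacks, since every $H_{\mu,x}$-torsor is $v$-locally trivial. Composing with the closed immersion of \Cref{lem:level_structure_special_points} shows that the image, taken as a subset of $|\Sht_{\msf{K}_p,-\mu}|$ or as the smallest closed substack through which the map factors, is exactly $\Sht^{[M,\mu]}_{\msf{K}_p,-\mu}$.

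The main obstacle I expect is checking that the functorial map $\Sht_{T}\to\Sht_{G}$, defined via the universal shtukas as in \eqref{eq:diag_HT}, agrees with the map induced on affine Grassmannians through \Cref{prop:ShtvsGr}. This needs compatibility of the identification with change of group, and the reduction to the claim that the $v$-stack image is a single orbit. I would handle the compatibility by noting that the construction $\Gr_G\to\Sht_{\cl{G}}$ (modifying the trivial torsor at the Frobenius translates of $S$) is manifestly functorial in the pair $(G,\cl{G})$.
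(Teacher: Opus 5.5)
Your proposal is correct and follows the same route as the paper. The paper's proof uses the same ingredients: the commutative square relating $\Gr_{T,\mu}\hookrightarrow\Gr_{G,\mu}$ to $\Sht_{\msf{K}_{\mbf{T},p},-\mu}\to\Sht_{\msf{K}_p,-\mu}$, the factorization $\Gr_{T,\mu}\xrightarrow{\sim}\Gr_{M,\mu}\to\Gr_{G,\mu}$ through the point $x_\mu$, and \Cref{lem:level_structure_special_points}; the compatibility you flag as the main obstacle is simply taken as given there. Your additional checks are sound and only add detail: that $M$ is the centralizer of a $\bb{Q}_p$-subtorus, that $\msf{K}_{\mbf{T},p}\subset H_{\mu,x}$, and $v$-surjectivity onto $[\Spd(E)/\und{H_{\mu,x}}]$. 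Checking that $(M,\mu,x)$ is a local special triple is unnecessary, since \Cref{lem:level_structure_special_points} only needs $\msf{K}_p=G(\bb{Q}_p)_{x,0}$.
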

\begin{proof}
    Since $T$ is tamely ramified, $(M, \mu)$ is a local special pair by construction. Moreover, we have the following commutative diagram. 
    \begin{center}
        \begin{tikzcd}
           \Gr_{T, \mu} \ar[r, hook] \ar[d] & \Gr_{G, \mu} \ar[d] \\
            \Sht_{\msf{K}_{\mbf{T}, p}, -\mu} \ar[r] & \Sht_{\msf{K}_{p}, -\mu}. 
        \end{tikzcd}
    \end{center}
    The above horizontal map factors as $\Gr_{T, \mu} \xrightarrow{\sim} \Gr_{M, \mu} \to \Gr_{G, \mu}$ since $\mu$ is central in $M$. In particular, the image of $\Sht_{\msf{K}_{\mbf{T}, p}, -\mu} \to \Sht_{\msf{K}_{p}, -\mu}$ is the special point $\Sht_{\msf{K}_{p}, -\mu}^{[M, \mu]}$. 
\end{proof}

\begin{cor} \label{cor:canonical_level_structure}
    For every $s \in \Sh_{\msf{K}}(\mbf{G}, \mbf{X})(\bb{C}_p)$ that lies in $\Sh_{\msf{K}_{\mbf{T}}}(\mbf{T}, \mbf{x})(\bb{C}_p)$ for some special pair $(\mbf{T}, \mbf{x})$ tame at $p$, $\pi_\HT(s) \in \Sht_{\msf{K}_p, -\mu}(\bb{C}_p)$ is a local special point. In particular, for a local special pair $(M, \mu)$ associated to $\pi_\HT(s)$, there is a decreasing family of open neighborhoods 
    \[
        \Sh_{\msf{K}}(\mbf{G}, \mbf{X})(s; r)
    \]
    of $s$ indexed by $r > 0$ on which there is a canonical level structure
    \[
        \can_r \colon \Sh_{\msf{K}}(\mbf{G}, \mbf{X})(s; r) \to \Sh_{\msf{K}_{p, r} \msf{K}^p}(\mbf{G}, \mbf{X}) 
    \]
    at the level $\msf{K}_{p, r} = G(\bb{Q}_p)_{x, r} H_{\mu, x}$ associated to $(M, \mu)$. 
\end{cor}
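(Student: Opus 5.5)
The plan is to combine the commutative diagram \eqref{eq:diag_HT} with \Cref{prop:image_extremal}, and then pull back the neighborhoods of \Cref{thm:canonical_level_structures} along $\pi_\HT$, following the same pattern as in \Cref{cor:canonical_filtration}.

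First, I show that $\pi_\HT(s)$ is a local special point. Let $s$ be the image of a point $t \in \Sh_{\msf{K}_{\mbf{T}}}(\mbf{T}, \mbf{x})(\bb{C}_p)$ under the closed immersion \eqref{eq:Shimura_embedding}. By \eqref{eq:diag_HT}, $\pi_\HT(s)$ is the image of $\pi_\HT^{\mbf{T}}(t)$ under $\Sht_{\msf{K}_{\mbf{T}, p}, -\mu} \to \Sht_{\msf{K}_p, -\mu}$. Set $M = \bigcap_\tau G^{\tau\mu = 0}$, where $\mu$ is the cocharacter determined by $\mbf{x}$. Since $T$ is tame and $M$ is the centralizer of a subtorus of $T$ ($\mu$ factors through $T$), $M$ is a tamely ramified twisted Levi subgroup containing $T$. \Cref{prop:image_extremal} then gives that $(M, \mu)$ is a local special pair and that $\pi_\HT(s) \in \Sht^{[M,\mu]}_{\msf{K}_p, -\mu}$.

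Next, to apply \Cref{thm:canonical_level_structures}, I need a local special triple. The point $x$ was chosen in $\cl{B}(T, \bb{Q}_p)$ via an admissible embedding, and $T \subset M$, so the embedding $\wtd{\cl{B}}(T) \subset \wtd{\cl{B}}(G)$ factors through an admissible embedding of $\wtd{\cl{B}}(M)$. Hence $x \in \wtd{\cl{B}}(M, \bb{Q}_p)$. This point needs some care: the admissible embeddings have to be chosen compatibly, and $x$ must be rational. I would handle this by noting that $\cl{B}(T,\bb{Q}_p)$ is a single point up to translation by $X_*(Z)$-directions, and by choosing the embedding of $\wtd{\cl{B}}(M)$ to contain the image of $\wtd{\cl{B}}(T)$, as in \cite[Section 14.2]{KP23}. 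Rationality can be arranged, since $x$ is a point fixed by a tame torus, so it lies in a facet with rational barycentric position. If necessary, I would simply add rationality to the hypotheses implicitly assumed in the statement.

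Finally, apply \Cref{thm:canonical_level_structures} to $(M, \mu, x)$. This gives open substacks $\Sht^{[M,\mu]}_{\msf{K}_p,-\mu}(r)$ containing the special point, with sections $\can_r$ to $\Sht_{K_{\mu,r},-\mu}$, where $K_{\mu,r} = \msf{K}_{p,r}$. Define $\Sh_{\msf{K}}(\mbf{G},\mbf{X})(s;r) = \pi_\HT^{-1}(\Sht^{[M,\mu]}_{\msf{K}_p,-\mu}(r))$, an open neighborhood of $s$ that decreases in $r$ by part (3). The Shimura variety at level $\msf{K}_{p,r}\msf{K}^p$ is the fiber product of $\pi_\HT$ with $\Sht_{\msf{K}_{p,r},-\mu} \to \Sht_{\msf{K}_p,-\mu}$, by the compatibility of the universal shtuka with change of level. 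Therefore pulling back $\can_r$ yields a section on diamonds. As in the proof of \Cref{cor:canonical_filtration}, \cite[Lemma 15.6]{Sch17} upgrades this to a map of adic spaces, because source and target are étale over $\Sh_{\msf{K}}(\mbf{G},\mbf{X})^\ad_E$.

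I expect the main obstacle to be the fiber-product identification of the level-$\msf{K}_{p,r}\msf{K}^p$ Shimura variety with the pullback of $\Sht_{\msf{K}_{p,r},-\mu}$. This should follow from the construction of $\pi_\HT$ via the pro-étale $\msf{K}_p$-torsor of trivializations of the $p$-adic local system.
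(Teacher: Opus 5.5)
Your proposal is correct and follows the paper's own argument. The first claim comes from \Cref{prop:image_extremal} together with the commutativity of \eqref{eq:diag_HT}, and the second from pulling back the neighborhoods and sections of \Cref{thm:canonical_level_structures} along $\pi_\HT$, exactly as in \Cref{cor:canonical_filtration}. The side conditions you flag, namely that $(M,\mu,x)$ be a local special triple and in particular that $x$ be rational, are left implicit in the paper. Being fixed by a tame torus does not by itself make $x$ rational, but $\msf{K}_p = G(\bb{Q}_p)_{x,0}$ depends only on the facet of $x$, so your fallback of choosing $x \in \cl{B}(T,\bb{Q}_p)$ rational within its facet is harmless.
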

\begin{proof}
    The first claim follows from \Cref{prop:image_extremal} and the commutativity of \eqref{eq:diag_HT}. Then, the second claim follows formally from \Cref{thm:canonical_level_structures} by taking the pullback along $\pi_{\HT}$.
\end{proof}

\begin{rmk}
    By \Cref{rmk:overconvergence_formal}, the overconvergence follows formally for the canonical level structure of $s$ at the level $G(\bb{Q}_p)_{x, r} \msf{K}_{\mbf{T}, p}$. Here, \Cref{cor:canonical_level_structure} at least claims that such level structure is independent of the choice of $(\mbf{T}, \mbf{x})$ by enlarging the level to $\msf{K}_{p, r}$. 
\end{rmk}

Note that special points on $\Sh_{\msf{K}}(\mbf{G}, \mbf{X})$ are those points whose Hecke orbits intersect with $\Sh_{\msf{K}_{\mbf{T}}}(\mbf{T}, \mbf{x})$ for some special pair $(\mbf{T}, \mbf{x})$ and \Cref{cor:canonical_level_structure} only treats those points lying in $\Sh_{\msf{K}_{\mbf{T}}}(\mbf{T}, \mbf{x})$ for some tame $(\mbf{T}, \mbf{x})$. This is because local special points depend on the choice of $x \in \cl{B}(G, \bb{Q}_p)$, which is not preserved under Hecke correspondences at $p$. 

\section{Special open neighborhoods of CM points} \label{sec:special_open_neighborhoods}


In this section, we first review representation-theoretic transfers associated to CM points in \Cref{ssec:representation_theory_preparation}. Then, we introduce special open neighborhoods of CM points in the stack of local shtukas (see \Cref{defi:special_open_neighborhoods}) and specify unramified CM points relevant to our main theorem (see \Cref{thm:explicit_geometry}). 

From now on, all $v$-stacks are considered over $\ov{k}$ and $\ast = \Spd(\ov{k})$. The completed maximal unramified extension of $F$ (resp.\ $E$) with residue field $\ov{k}$ is denoted by $\breve{F}$ (resp.\ $\breve{E}$). 

\subsection{Representation-theoretic setup} \label{ssec:representation_theory_preparation}


In this section, we set up our notation for CM points and review representation-theoretic transfers associated to them. Let $(M, \mu)$ be a tamely ramified CM pair. Let $b \in Z^\circ_M(\breve{F})$ be an element such that $[b] \in B(Z_M^\circ)$ corresponds to $\mu$ under the Kottwitz's isomorphism 
\[
    X_*(Z_M^\circ)_{\Gal(\ov{F} / F)} \cong B(Z_M^\circ)
\]
(see \cite[{}2.4]{Kot84}). We take a representative $b = \Nm_{\breve{E} / \breve{F}}(\mu(\pi_E))$. Since $Z_M^\circ / Z_G^\circ$ is anisotropic, the Newton point of $[b] \in B(G)$ is central and $[b]$ is basic in $B(G)$. Fix a representative $b \in Z_M^\circ(\breve{F})$ of $[b]$ and take an extended pure inner form $G_b$ of $G$ so that 
\[
    G_b(F) = \{ g \in G(\breve{F}) \mid b \sigma(g) = g b \}. 
\]
There is a natural embedding $M \subset G_b$ and we have canonical admissible embeddings 
\[
    \cl{B}(M, F) \subset \cl{B}(G, F), \quad \cl{B}(M, F) \subset \cl{B}(G_b, F). 
\]
of the reduced Bruhat-Tits buildings. Fix a rational point $x \in \cl{B}(M, F)$. Its image in $\cl{B}(G, F)$ (resp.\ $\cl{B}(G_b, F)$) is denoted by $x$ (resp.\ $x_b$). There is a canonical identification 
\[
    \cl{B}(G, F) \subset \cl{B}(G, \breve{F}) \cong \cl{B}(G_b, \breve{F}) \supset \cl{B}(G_b, F)
\]
and $x$ and $x_b$ are distinct in general under this identification. Nevertheless, $x_b \in \cl{B}(G, \breve{F})$ is independent of the choice of a representative $b \in [b] \subset Z_M^\circ(\breve{F})$. 

For $0 < r \leq 2s$, the Moy-Prasad subgroups associated to twisted Levi embeddings $M \subset G$ and $M \subset G_b$ are denoted by
\[ 
    G(F)_{x, r, s} = G(F) \cap G(E)_{x, r, s} ,\quad 
    G_b(F)_{x_b, r, s} = G_b(F) \cap G_b(E)_{x_b, r, s}
\]
following \cite[Section 2.5]{Fin21} (see also \cite[Section 2.2]{Tak26_Yu}). For $r = 0$, we set $G(F)_{x, 0, s} = G(F)_{x, 0+, s} M(F)_{x, 0}$ and $G_b(F)_{x_b, 0, s} = G_b(F)_{x_b, 0+, s} M(F)_{x, 0}$. 

\begin{defi} \label{defi:level_subgroups}
    For each $r \geq 0$, let 
    \[
        K_r = G(F)_{x, r/2} M(F)_{x} \subset G(F) ,\quad 
        K_{b, r} = G_b(F)_{x_b, r / 2} M(F)_{x} \subset G_b(F)
    \]
    be open subgroups that are compact modulo center. Here, $M(F)_x$ denotes the full stabilizer of $x \in \cl{B}(M, F)$ under $M(F)$. Moreover, we set 
    \[
        K_r^0 = K_r \cap \cl{G}(O_F) ,\quad 
        K_{b, r}^0 = K_{b, r} \cap \cl{G}_b(O_F). 
    \]
\end{defi}

Here, $K_0 \subset G(F)_x$ is not necessarily an equality. These open subgroups play an important role in Yu's construction \cite{Yu01}. In fact, there are transfers from smooth representations of $M(F)_{x}$ that are ``generic'' of depth $r$ to $K_r$ and $K_{b, r}$. As the situation is symmetric between $G$ and $G_b$, we review the construction of this transfer in terms of $M \subset G$. 

Recall that an unramified torus $S \subset G$ is \textit{maximally unramified} if $S$ is of maximal dimension among unramified subtori of $G$ (see \cite[Definition 3.4.2]{Kal19}). For transfers at depth $r = 0$, we will assume the following setting. 

\begin{ass} \label{ass:when_r=0}
    For $r = 0$, we assume that $M$ contains a maximally unramified torus $S \subset G$ such that $x \in \cl{A}(S, F)$ and fix a parabolic subgroup $P \subset G_{\breve{F}}$ with Levi factor $M_{\breve{F}}$. 
\end{ass}

\begin{sett} \label{sett:parabolic_r=0}
    Since $S \subset M$ is maximally unramified in $G$, $M_{\breve{F}} \subset G_{\breve{F}}$ is a Levi subgroup and the existence of $P$ is not an additional condition. The special fiber $\msf{S}$ of the standard integral model of $S$ is a maximal torus of $\msf{G}$ and there is a parabolic subgroup
    \[
        \msf{M}_{\ov{k}} \subset \msf{P} = \msf{G}_{\ov{k}}^{\lambda \geq 0} \subset \msf{G}_{\ov{k}}
    \]
    associated to $\msf{P}$ where $\lambda \in X_*(\msf{S}) \cong X_*(S)$ is a cocharacter satisfying $P = G_{\breve{F}}^{\lambda \geq 0}$. Let $\msf{N}$ be the unipotent radical of $\msf{P}$ and let 
    \[
        \msf{Y}_{\msf{P}} = \{ g \in \msf{G}_{\ov{k}} / \msf{N} \mid g ^{-1} \sigma(g) \in \msf{N} \cdot \sigma(\msf{N}) \}
    \]
    be the parabolic Deligne-Lusztig variety associated to $\msf{P}$. The adjoint action of $M(F)_x$ on $\msf{G} = G(F)_{x, 0} / G(F)_{x, 0+}$ induces an action $\Ad\colon M(F)_x \circlearrowright \msf{Y}_{\msf{P}}$. It is compatible with the natural $\msf{G} \times \msf{M}$-action in the sense that $\Ad(m) = (m, m) \circlearrowright \msf{Y}_{\msf{P}}$ for every $m \in M(F)_{x, 0}$. 
\end{sett}

\begin{defi}
    An irreducible smooth representation $\rho \in \Irr^\sm(M(F)_{x})$ is \textit{generic of depth $r$} with respect to $G$ if it is trivial on $M(F)_{x, r+}$ and satisfies the following condition. 
    \begin{enumerate}
        \item When $r = 0$, \Cref{ass:when_r=0} holds and $\rho \vert_{\msf{M}}$ belongs to blocks associated to $(\msf{G}, \msf{M})$-regular rational series in the sense of \cite[{}11.4]{BR03}. 
        \item When $r > 0$, $\rho \vert_{M(F)_{x, r} / M(F)_{x, r+}}$ is given by a scalar $\psi \circ X$ for some nontrivial character $\psi \colon k \to \Qlax$ and a $k$-linear $r$-generic map 
        \[
            X \colon \msf{m}_{x, r} \cong M(F)_{x, r} / M(F)_{x, r+} \to k
        \]
        with respect to $G$ in the sense of \cite[Definition 3.1]{Tak26_Yu}. 
    \end{enumerate}
\end{defi}

\begin{defi} \label{defi:R_induction}
    Let $\rho \in \Irr^\sm(M(F)_{x})$ be an irreducible smooth representation generic of depth $r$. We define a smooth representation $R^{K_r}_M(\rho)$ of $K_r$ as follows. 
    \begin{enumerate}
        \item When $r = 0$, let $d = \dim \msf{Y}_{\msf{P}}$ and we set
        \[
            R^{K_r}_M(\rho) = H_c^d(\msf{Y}_{\msf{P}}, \Qla) \mathop{\otimes}\limits_{\msf{M}} \rho\vert_{\msf{M}}
        \]
        where $G(F)_{x, 0}$ acts via $\msf{G} \circlearrowright H_c^d(\msf{Y}_{\msf{P}}, \Qla)$ and $M(F)_x$ acts diagonally. In particular, $R^{K_r}_M(\rho)\vert_{\msf{G}}$ is the usual Deligne-Lusztig induction along $\msf{M} \subset \msf{G}$. 
        \item When $r > 0$, consider the Heisenberg-Weil representation 
        \[
            \kappa_{x, r, \phi} \in \Irr(G(F)_{x, r, r/2} / G(F)_{x, r+, r/2+} \rtimes M(F)_{x})
        \]
        introduced in \cite[Lemma 3.2]{Tak26_Yu} depending only on $\psi \circ X$. We set
        \[
            R^{K_r}_M(\rho) = \kappa_{x, r, \phi} \otimes \rho
        \]
        where $G(F)_{x, r, r/2}$ acts trivially on $\rho$ and $M(F)_x$ acts diagonally on each factor. 
    \end{enumerate}
    We write $R^H_M(\rho) = \cInd_{K_r}^H R^{K_r}_M(\rho)$ for subgroups $H \subset G(F)$ containing $K_r$. 
\end{defi}

\begin{prop}
    In \Cref{defi:R_induction} (1), $R^{K_r}_M(\rho)$ is independent of the choice of $P \subset G_{\breve{F}}$ made in \Cref{ass:when_r=0}. 
\end{prop}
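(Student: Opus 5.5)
The plan is to reduce the claim to the independence of Lusztig induction from the parabolic in Bonnafé--Rouquier type theory, and then upgrade that statement so that it respects the full group $M(F)_x$ rather than only $\msf{M}$.

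First I set up both sides. Let $P, P' \subset G_{\breve{F}}$ be two parabolic subgroups with Levi factor $M_{\breve{F}}$, and let $\msf{P}, \msf{P}'$ be the associated parabolics of $\msf{G}_{\ov{k}}$ as in \Cref{sett:parabolic_r=0}, both with Levi factor $\msf{M}_{\ov{k}}$. Let $e$ be the sum of the block idempotents of $\Qla[\msf{M}]$ attached to the $(\msf{G},\msf{M})$-regular rational series that contain $\rho\vert_{\msf{M}}$. The key input is the theorem of Bonnafé--Rouquier (\cite[{}11.4]{BR03}, refined by Bonnafé--Dat--Rouquier) for these series. It states that the complex $R\Gamma_c(\msf{Y}_{\msf{P}}, \Qla)e$ of $(\Qla[\msf{G}], \Qla[\msf{M}])$-bimodules is concentrated in the single degree $d = \dim \msf{Y}_{\msf{P}}$. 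It also states that its cohomology induces a Morita equivalence, and that the resulting bimodule is independent of the parabolic. In particular there is an isomorphism of $\msf{G}\times\msf{M}^{\mathrm{op}}$-bimodules
\[
\Phi \colon H^{d}_c(\msf{Y}_{\msf{P}}, \Qla)e \cong H^{d'}_c(\msf{Y}_{\msf{P}'}, \Qla)e, \qquad d' = \dim \msf{Y}_{\msf{P}'} .
\]
Over $\Qla$ this can also be seen by a character computation: both sides are concentrated in one degree, so it suffices to compare virtual characters up to sign. The sign is fixed by the requirement that the bimodule is an honest module. From this I obtain an $\msf{G}$-isomorphism $R^{K_0}_M(\rho)\vert_{\msf{G}} \cong R'^{K_0}_M(\rho)\vert_{\msf{G}}$.

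The second step, which I expect to be the main obstacle, is to make $\Phi$ compatible with the diagonal action of $M(F)_x$, which is bigger than $M(F)_{x,0}$. For $m \in M(F)_x$, the map $\Ad(m)$ acts on $\msf{Y}_{\msf{P}}$ and on $\msf{Y}_{\msf{P}'}$, compatibly with the $\msf{G}\times\msf{M}$-action twisted by conjugation by $m$. Hence $\Ad(m)^{-1}\circ\Phi\circ\Ad(m)$ is again a bimodule isomorphism. Because the bimodule induces a Morita equivalence on the block $e$, its bimodule endomorphisms are the center of $\Qla[\msf{M}]e$. Therefore the two isomorphisms differ by a central unit $z_m$ of $\Qla[\msf{M}]e$, and $m \mapsto z_m$ is a cocycle.

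After tensoring with $\rho\vert_{\msf{M}}$, which is irreducible on each block piece and has scalar central action, each $z_m$ acts through a scalar. I would then kill the resulting character of $M(F)_x/M(F)_{x,0}$ in one of two ways. The first is to choose $\Phi$ canonically, namely as the Bonnafé--Rouquier isomorphism constructed geometrically from the variety comparing $\msf{Y}_{\msf{P}}$ and $\msf{Y}_{\msf{P}'}$. That construction is functorial for automorphisms of $(\msf{G}, \msf{M})$ preserving the situation, so it commutes with $\Ad(m)$ on the nose. If functoriality is awkward to verify, the fallback is to compare traces of elements $(g,m)$ on both sides via a Lefschetz/Deligne--Lusztig character formula for the twisted action. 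That formula is symmetric in $P$ and $P'$, and equality of characters of the compact-mod-center group $K_0$ then yields the isomorphism.

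Finally, $R^{K_0}_M(\rho) = H^d_c(\msf{Y}_{\msf{P}}, \Qla)\otimes_{\msf{M}}\rho\vert_{\msf{M}}$ only involves $H^d_c(\msf{Y}_{\msf{P}}, \Qla)e$. So the $M(F)_x$-equivariant $\Phi$ induces the desired $K_0$-isomorphism, using $K_0 = G(F)_{x,0}M(F)_x$ with $G(F)_{x,0+}$ acting trivially on both sides.
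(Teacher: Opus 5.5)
Your overall route is the paper's: concentration in degree $d$ from \cite{BR03}, independence of the parabolic from \cite{BDR17}, then an identification compatible with the diagonal $M(F)_x$-action. There is one gap. You apply the Bonnaf\'e--Rouquier and Bonnaf\'e--Dat--Rouquier results directly, for the given $\ell$, to the rational series of $\rho\vert_{\msf{M}}$. The paper instead invokes \cite{BDR17} only after reducing to $\ell$ sufficiently large. The reason is that those results are formulated for $\ell$-blocks. The $(\msf{G},\msf{M})$-regularity of a rational series need not pass to the $\ell$-block containing it when $\ell$ divides the order of the semisimple parameter. The paper's reduction runs as follows. By \cite[Corollaire 10.9]{BR03}, $R\Gamma_c(\msf{Y}_{\msf{P}},\Qla)\otimes_{\msf{M}}\rho\vert_{\msf{M}}$ is concentrated in degree $d$. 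Hence, by the Grothendieck--Lefschetz trace formula, the character of $R^{K_0}_M(\rho)$, which determines it up to isomorphism, is independent of $\ell$. So the two choices of parabolic may be compared for large $\ell$. You need to insert this step before your first step.

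For the $M(F)_x$-equivariance, your first option (a canonical geometric $\Phi$) is exactly what the paper does, in different packaging. The paper replaces $\msf{G}$ by the disconnected group $\wtd{\msf{G}} = K_0/G(F)_{x,0+}$, and $\msf{Y}_{\msf{P}}$ by $\wtd{\msf{Y}}_{\msf{P}} = \msf{Y}_{\msf{P}}\times^{\msf{G}}\wtd{\msf{G}}$ with its $\wtd{\msf{G}}\times\wtd{\msf{M}}$-action. It then extends the comparison through $\msf{Y}_{\msf{P},\msf{Q}}$ of \cite[Theorem 6.2]{BDR17} (with $\msf{Q}$ your $\msf{P}'$) to this setting, as in \cite[Lemma 2.6.7]{Kal21}. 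This works because $\Ad(m)$ preserves $\msf{M}$, $\msf{P}$, $\msf{Q}$ and commutes with $\sigma$.

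Your fallbacks are not arguments and should be dropped. There is no general character formula for Lusztig induction from a non-toral Levi that is visibly symmetric in the two parabolics. Independence of the parabolic, even at the level of virtual characters, is essentially the Mackey-formula problem, which is why one appeals to \cite{BDR17}. So neither your remark in the first step that this ``can also be seen by a character computation'' nor the trace comparison in your second step proves anything. The cocycle discussion is also flawed: $\rho\vert_{\msf{M}}$ is in general a sum of $M(F)_x$-conjugate irreducible representations, so a central unit $z_m$ of $\Qla[\msf{M}]e$ need not act on it by a scalar. None of this is needed once $\Phi$ is taken to be the geometric comparison isomorphism.
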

\begin{proof}
    This is proved in \cite[Section 2.6, (2.17)]{Kal21} when $P$ is Borel and the same argument works by using the parabolic case of \cite[Theorem 1.3]{BDR17}. For the reader's convenience, we review an argument here. 
    
    First, $R\Gamma_c(\msf{Y}_{\msf{P}}, \Qla) \otimes_{\msf{M}} \rho\vert_{\msf{M}}$ is concentrated in degree $d$ by \cite[Corollaire 10.9]{BR03}. Then, by the Grothendieck-Lefschetz trace formula, the character of $R^{K_r}_M(\rho)$ is $\ell$-independent, so we may assume that $\ell$ is sufficiently large so that \cite{BDR17} can be applied. In particular, $R^{K_r}_M(\rho) \vert_{\msf{G}}$ is independent of $P$ by \cite[Theorem 1.3]{BDR17}. To treat the full action of $\wtd{\msf{G}} = K_0 / G(F)_{x, 0+}$, one needs to work with a disconnected variant of the Deligne-Lusztig variety
    \[
        \msf{\wtd{Y}}_{\msf{P}} = \msf{Y}_{\msf{P}} \times^{\msf{G}}  \wtd{\msf{G}}. 
    \] 
    In fact, $\msf{\wtd{G}}$ can be equipped with the structure of a group scheme so that $\msf{G} = \msf{\wtd{G}}^\circ$ and
    \[
        \wtd{\msf{Y}}_{\msf{P}} \cong \{ g \in \msf{\wtd{G}}_{\ov{k}} / \msf{N} \mid g ^{-1} \sigma(g) \in \msf{N} \cdot \sigma(\msf{N}) \}
    \]
    (see \cite[Lemma 2.6.1]{Kal21}). Let $\msf{\wtd{M}} = M(F)_x / M(F)_{x, 0+}$. Then, $\wtd{\msf{Y}}_{\msf{P}}$ is equipped with the $\msf{\wtd{G}} \times \msf{\wtd{M}}$-action and $R^{K_r}_M(\rho) = H_c^d(\msf{\wtd{Y}}_{\msf{P}}, \Qla) \otimes_{\msf{\wtd{M}}} \rho$. Now, for any other choice of a parabolic subgroup $\msf{Q} \subset \msf{G}_{\ov{k}}$ with Levi factor $\msf{M}_{\ov{k}}$, \cite[Theorem 6.2]{BDR17} extends to $\wtd{\msf{Y}}_{\msf{P}}$ as in \cite[Lemma 2.6.7]{Kal21} and we get an isomorphism at middle degrees
    \[
         H_c^d(\msf{\wtd{Y}}_{\msf{P}}, \Qla) \otimes_{\msf{\wtd{M}}} \rho \cong H_c^{d_{\msf{P}, \msf{Q}}}(\msf{\wtd{Y}}_{\msf{P}, \msf{Q}}, \Qla) \otimes_{\msf{\wtd{M}}} \rho \cong H_c^{d_{\msf{Q}}}(\msf{\wtd{Y}}_{\msf{Q}}, \Qla) \otimes_{\msf{\wtd{M}}} \rho. 
    \]
\end{proof}

Now, $R_M^{K_r}$ transfers irreducible supercuspidal representations from $M(F)$ to $G(F)$. 

\begin{prop} \label{prop:transfer_regular_supercuspidal}
    Let $\rho \in \Irr^\sm(M(F)_x)$ be a smooth representation generic of depth $r$ such that $\cInd^{M(F)}_{M(F)_x} \rho$ is irreducible and supercuspidal and the following condition holds. 
    \begin{enumerate}
        \item When $r = 0$, there is an elliptic maximal torus $T \subset M$ containing some $S$ such that there is a regular depth-zero character $\theta \colon T(F) \to \Qlax$ with respect to $T \subset G$ and 
        \[
            \rho = R_T^{M(F)_x}(\theta). 
        \]
        \item When $r > 0$, there is a $(G, M)$-supergeneric character $\phi \colon M(F) \to \Qlax$ of depth $r$ in the sense of \cite[Definition 3.1]{Tak26_Yu} such that $\rho \otimes \phi^{-1}$ is trivial on $M(F)_{x, r}$. 
    \end{enumerate}
    Then, $R^{G(F)}_M(\rho)$ is irreducible and supercuspidal. In particular, $R_M^{H}(\rho)$ is irreducible for every subgroup $H \subset G(F)$ containing $K_r$. 
\end{prop}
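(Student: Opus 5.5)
The plan is to recognize $R^{G(F)}_M(\rho)$ as a representation produced by an existing construction of supercuspidals, and then quote the irreducibility and supercuspidality theorems for that construction. The two depths are handled separately.

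\emph{Case $r=0$.} Transitivity of induction should give
\[
R^{G(F)}_M(\rho)=\cInd_{K_0}^{G(F)}\bigl(H_c^d(\wtd{\msf{Y}}_{\msf{P}},\Qla)\otimes_{\wtd{\msf{M}}} R_T^{M(F)_x}(\theta)\bigr).
\]
The steps are as follows.
\begin{enumerate}
\item By the transitivity of Deligne--Lusztig induction in the disconnected form of \cite[Lemma 2.6.7]{Kal21}, rewrite the inner representation as the depth-zero Deligne--Lusztig induction of $\theta$ along $\msf{T}\subset\msf{G}$. This is an $\wtd{\msf{G}}$-representation, where $\wtd{\msf{G}}=K_0/G(F)_{x,0+}$. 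Compatibility of Deligne--Lusztig induction with composition of parabolics is \cite{BDR17}; concentration in degree $d$ follows from the regularity of the rational series, via \cite[Corollaire 10.9]{BR03}.
\item By regularity of $\theta$ with respect to $T\subset G$, this representation is $\pm$ an irreducible $K_0$-representation. It should coincide with Kaletha's $\kappa_{(S,\theta)}$.
\item The claim then follows from Kaletha's result that $\cInd_{G(F)_x}^{G(F)}$ of this representation is irreducible supercuspidal \cite{Kal19}, together with the observation that $K_0\subset G(F)_x$ and that the induction to $G(F)_x$ stays irreducible.
\end{enumerate}
Ellipticity of $T$ is what places us in Kaletha's setting.

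\emph{Case $r>0$.} The inducing datum is $(\kappa_{x,r,\phi}\otimes\rho)$ on $K_r=G(F)_{x,r/2}M(F)_x$. Twist by $\phi$, that is, write $\rho=\rho_0\otimes\phi$ with $\rho_0$ trivial on $M(F)_{x,r}$. Then $R^{G(F)}_M(\rho)$ is exactly Yu's representation attached to a generic twisted Levi sequence $(M\subset G)$. The ingredients are:
\begin{itemize}
\item the depth-zero datum $\tau=\cInd_{M(F)_x}^{M(F)}\rho_0$, which is irreducible supercuspidal after twisting back;
\item the generic character $\phi$ of depth $r$;
\item the twisted Heisenberg--Weil representation of \cite{FKS23} and \cite[Lemma 3.2]{Tak26_Yu}, which replaces Yu's original $\kappa$.
\end{itemize}
Yu's theorem \cite{Yu01}, as corrected by \cite{Fin21} and \cite{FKS23} (and by \cite{Tak26_Yu} for $p=2$), then gives irreducibility and supercuspidality. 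The supergenericity hypothesis is what \cite{Tak26_Yu} uses to ensure the genericity condition (GE) holds.

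The main technical point is the intertwining computation
\[
\dim\Hom_{K_r\cap {}^gK_r}\bigl(R^{K_r}_M(\rho),{}^gR^{K_r}_M(\rho)\bigr)=0
\]
for $g\notin K_r$. If a direct citation is not available, I would prove it by Mackey theory. For $g\notin G(F)_{x,r/2}M(F)G(F)_{x,r/2}$, genericity of $X$ kills intertwining of $\psi\circ X$ on $G(F)_{x,r,r/2+}$. For $g\in M(F)$, the claim reduces to irreducibility of $\tau$, using that the Heisenberg--Weil representation contributes a one-dimensional intertwining space.

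The final assertion is immediate. If $K_r\subset H$, then $R^{G(F)}_M(\rho)=\cInd_H^{G(F)}R^H_M(\rho)$. A proper subrepresentation of $R^H_M(\rho)$ would induce to a proper subrepresentation of the irreducible $R^{G(F)}_M(\rho)$, because compact induction is exact and faithful. Hence $R^H_M(\rho)$ is irreducible.
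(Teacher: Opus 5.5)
Your proposal follows the paper's proof: for $r=0$ the paper uses associativity of (disconnected) Deligne--Lusztig induction to get $R^{K_0}_M(\rho)=\cInd^{K_0}_{G(F)_{x,0}T(F)}R_T^G(\theta)$ and then recognizes $\cInd_{K_0}^{G(F)}$ of this as Kaletha's regular depth-zero supercuspidal attached to $(T,\theta)$ via \cite[Lemma 3.4.20]{Kal19}, while for $r>0$ it simply cites \cite[Theorem 3.5]{Tak26_Yu}. Two small corrections are needed: first, $R^{K_0}_M(\rho)$ is the induction to $K_0$ of Kaletha's $\kappa_{(T,\theta)}$ (which is a representation of $G(F)_{x,0}T(F)$), not $\kappa_{(T,\theta)}$ itself, so transitivity of induction finishes immediately with no detour through $G(F)_x$; second, $\rho\otimes\phi^{-1}$ is only assumed trivial on $M(F)_{x,r}$, so $\cInd_{M(F)_x}^{M(F)}(\rho\otimes\phi^{-1})$ need not have depth zero, and what you need is the one-step form of Yu's theorem (the form in which \cite[Theorem 3.5]{Tak26_Yu} is invoked) or your Mackey argument, which never uses depth zero, rather than Yu's theorem with a depth-zero input.
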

\begin{proof}
    For $r = 0$, it is easy to see from the associativity of (disconnected) Deligne-Lusztig induction that
    \[
        R^{K_0}_M(\rho) = \cInd^{K_0}_{G(F)_{x, 0} T(F)} R_T^G(\theta). 
    \]
    Then, by construction (see \cite[Lemma 3.4.20]{Kal19}), $\cInd^{G(F)}_{K_0} R^{K_0}_M(\rho)$ is a regular depth-zero supercuspidal representation associated to $(T, \theta)$. For $r > 0$, the claim is a reformulation of \cite[Theorem 3.5]{Tak26_Yu}. 
\end{proof}

\begin{rmk}
    When $\cInd^{M(F)}_{M(F)_x} \rho$ is a regular supercuspidal representation associated to an elliptic pair $(T, \theta)$ with $x \in \cl{B}(T, F)$ that is also regular in $G$ in the sense of \cite{Kal19}, then $R_M^{G(F)}(\rho)$ is also a regular supercuspidal representation associated to $(T, \theta)$.
\end{rmk}


Later, we will compare two transfers along $M \subset G$ and $M \subset G_b$. As a condition imposing genericity for both inclusions, we introduce the following terminology. 

\begin{defi} \label{defi:supergeneric_representation}
    When $\rho \in \Irr^\sm(M(F)_x)$ satisfies the conditions in \Cref{prop:transfer_regular_supercuspidal}, we say that $\rho$ is \textit{$(G, M)$-supergeneric} of depth $r$. When $\rho$ is also $(G_b, M)$-supergeneric, we say that $\rho$ is \textit{$(G, G_b, M)$-supergeneric} of depth $r$. 
\end{defi}

\subsection{Formulation of special open neighborhoods} \label{ssec:formulation_special_open}

In this section, we introduce the axioms of \textit{special open neighborhoods} of CM points in the stack of local shtukas, which provide a stacky interpretation of the theory of special affinoids in local Shimura varieties along the lines of \cite{Yos10} and \cite{BW16}. 

Since we work over $\Perf_{\ov{k}}$, $\Sht_{K, -\mu}$ is a $v$-stack over $\Spd(\breve{E})$ for each open subgroup $K \subset G(F)$. To keep track of the Frobenius action, we first endow it with the structure over $\Div_E^1$. 

\begin{defi}\textup{(\cite[Definition II.1.19]{FS24})}
    The $v$-sheaf $\Div^1_E = \Spd(\breve{E}) / \varphi^{\bb{Z}}$ is a $v$-sheaf sending $S \in \Perf_{\ov{k}}$ to the set of Cartier divisors on $\cl{X}_S$ that $v$-locally arise from untilts. 
\end{defi}

For each untilt $S^\sharp \in \Div^1_E(S)$, $\BdRp(S^\sharp)$ and $\BdR(S^\sharp)$ can be defined as previously. Thus, the affine $\BdRp$-Grassmannian $\Gr_{G, \mu}$ is defined over $\Div^1_E$ and $\Sht_{K, -\mu}$ can be defined over $\Div^1_E$ by adopting \Cref{defi:local_shtuka_general_level}. 

\begin{lem}
    There is a natural isomorphism 
    \[  
        \Sht_{M(F)_x, -\mu} \cong [\ast / \und{M(F)_x}] \times \Div^1_E \cong \Sht_{M(F)_x, \mu}. 
    \]
\end{lem}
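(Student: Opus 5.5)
The plan is to compute $\Gr_{M,\mu}$ over $\Div^1_E$ and then take the quotient by the profinite-modulo-center group $M(F)_x$. By \Cref{defi:local_shtuka_general_level}, $\Sht_{M(F)_x, -\mu} = \und{M(F)_x} \backslash \Gr_{M, \mu}$, where $\Gr_{M,\mu}$ is now regarded over $\Div^1_E$. Since $\mu$ is central in $M$ and defined over $E$, the Schubert cell $M(\BdRp)\mu(\xi)M(\BdRp)$ is the single coset $\mu(\xi) M(\BdRp)$. This holds over any untilt $S^\sharp \in \Div^1_E(S)$, because $\xi$ is well defined up to units of $\BdRp$ and $\mu(\text{unit}) \in M(\BdRp)$. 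Hence the structure map $\Gr_{M,\mu} \to \Div^1_E$ is an isomorphism. This is the relative version of the equality $\Gr_{M,\mu} = \Spd(E)$ used before \Cref{lem:level_structure_special_points}. The step that needs care is to work over $\Div^1_E = \Spd(\breve{E})/\varphi^{\bb{Z}}$ rather than over $\Spd(\breve E)$. I would first do the identification over $\Spd(\breve{E})$ and then check that it is compatible with the Frobenius descent datum. This is clear, since the point $\mu(\xi)$ is intrinsic to the untilt and does not depend on a choice.

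Next I would compute the action of $\und{M(F)_x}$ on $\Gr_{M,\mu}\cong \Div^1_E$. An element $m \in M(F)$ acts by left multiplication $m\mu(\xi)M(\BdRp) = \mu(\xi)\,(\mu(\xi)^{-1} m \mu(\xi)) M(\BdRp)$. Since $\mu$ is central, $\mu(\xi)^{-1} m \mu(\xi) = m \in M(F) \subset M(\BdRp)$, so this coset equals $\mu(\xi)M(\BdRp)$. This is the same computation as in the first part of the proof of \Cref{lem:level_structure_special_points}, where the stabilizer was all of $H_\mu(F)\supset M(F)$. So the action is trivial, and the quotient is $[\Div^1_E/\und{M(F)_x}]$ with trivial action, which is $[\ast/\und{M(F)_x}]\times \Div^1_E$. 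For this I would note that $\und{M(F)_x}$ is a locally profinite group, so the classifying stack of the trivial action is the product $[\ast/\und{M(F)_x}] \times \Div^1_E$, with $[\ast/\und{M(F)_x}]$ taken over $\ast = \Spd(\ov{k})$.

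The second isomorphism comes from running the same argument for $-\mu$ in place of $\mu$. The cocharacter $-\mu$ is still central in $M$ and defined over $E$, so $\Gr_{M,-\mu}\cong\Div^1_E$ with trivial $M(F)_x$-action, and hence $\Sht_{M(F)_x,\mu}\cong[\ast/\und{M(F)_x}]\times\Div^1_E$. Composing the two identifications gives the claim.

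The main obstacle, mild as it is, is making sure the notation $\Sht_{M(F)_x,\pm\mu}$ is interpreted via \Cref{defi:local_shtuka_general_level} over $\Div^1_E$. One also has to confirm that the quotient by a non-compact but compact-mod-center group, with trivial action, really splits as a product of $v$-stacks. This is formal, since the trivial action makes the groupoid $\und{M(F)_x}\times\Div^1_E \rightrightarrows \Div^1_E$ a product groupoid.
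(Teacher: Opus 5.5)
Your proposal is correct and follows the same approach as the paper. Centrality of $\mu$ in $M$ gives $\Gr_{M,\mu}\cong\Div^1_E\cong\Gr_{M,-\mu}$, and $M(F)_x$ acts trivially on each side, so both quotients are $[\ast/\und{M(F)_x}]\times\Div^1_E$.
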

\begin{proof}
    Since $\mu$ is central in $M$, $\Gr_{M, \mu} \cong \Div^1_E \cong \Gr_{M, -\mu}$. Moreover, the action of $M(F)_x$ is trivial on each side, so we get the claim. 
\end{proof}

Since $\Bun_G \cong \Bun_{G_b}$ over $\Perf_{\ov{k}}$, we may consider the following open substacks. 

\begin{defi}
    We define the admissible open loci of stacks of local shtukas
    \[
        \Sht_{K, -\mu}^b = \BL_K^{-1}(\Bun_G^b) \subset \Sht_{K, -\mu}, \quad 
        \Sht_{K_b, \mu}^1 = \BL_{K_b}^{-1}(\Bun_G^1) \subset \Sht_{K_b, \mu}. 
    \]
    for open subgroups $K \subset G(F)$ and $K_b \subset G_b(F)$. Here, $\BL_K$ (resp.\ $\BL_{K_b})$ denotes the Beauville-Laszlo map (see \eqref{eq:Beauville_Laszlo}) at level $K$ (resp.\ $K_b$). 
\end{defi}

For our purpose, we work with CM points (cf.\ \Cref{defi:special_case_of_extremal_points}) at levels such as $K_r$. 

\begin{lem} \label{lem:CM_points_general_depth}
    Let $K \subset G(F)$ and $K_b \subset G_b(F)$ be open subgroups compact modulo $Z_G(F)$ such that $K \cap M(F) = K_b \cap M(F) = M(F)_x$. Then, there are natural closed immersions
    \[
        \Sht_{M(F)_x, -\mu} \subset \Sht_{K, -\mu}^b ,\quad 
        \Sht_{M(F)_x, \mu} \subset \Sht_{K_{b}, \mu}^1. 
    \]
\end{lem}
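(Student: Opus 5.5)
We reduce to the argument of \Cref{lem:level_structure_special_points}, now at an arbitrary level $K$ and over $\Div^1_E$. Write $\Sht_{K,-\mu} = \und{K}\backslash \Gr_{G,\mu}$ and $\Sht_{M(F)_x,-\mu} = \und{M(F)_x}\backslash \Gr_{M,\mu}$, where $\Gr_{M,\mu}\cong \Div^1_E$ because $\mu$ is central in $M$. The embedding $M\subset G$ induces a map $\Gr_{M,\mu}\to\Gr_{G,\mu}$ over $\Div^1_E$, equivariant for $M(F)_x\subset K$. Hence we get
\[
    \Sht_{M(F)_x,-\mu}\to \Sht_{K,-\mu}.
\]

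\textbf{Step 1: monomorphism.} It suffices to compute the stabilizer in $K$ of the section $x_\mu\colon \Div^1_E\to\Gr_{G,\mu}$. This can be checked after pullback to $\Spd(\breve E)$ and at geometric points. By the argument in the proof of \Cref{lem:level_structure_special_points}, the stabilizer in $G(F)$ is $H_\mu(F)$. Since $(M,\mu)$ is CM, $H_\mu=M$. Therefore the stabilizer in $K$ is $K\cap M(F)=M(F)_x$, and the map is a monomorphism of stacks.

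\textbf{Step 2: closed immersion.} Since $K$ is compact modulo $Z_G(F)$ and $Z_G(F)\subset M(F)_x$ acts trivially, the $K$-orbit is $\und{K/M(F)_x}\times\Div^1_E$ with $K/M(F)_x$ profinite. After base change along $\Spd(\breve E)\to\Div^1_E$ this orbit is proper. Since $\Gr_{G,\mu}$ is separated, the orbit map is a proper monomorphism, hence a closed immersion by \cite[Lemma 2.1]{AGLR22}. Passing to the quotient by $\und{K}$ gives the closed immersion of stacks. The same argument applies to $G_b$, $K_b$ and $-\mu$ in place of $\mu$, using $M\subset G_b$ with $K_b\cap M(F)=M(F)_x$ and the fact that $(M,-\mu)$ is again CM for $G_b$.

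\textbf{Step 3: landing in the admissible loci.} We must show the image lies in $\Sht^b_{K,-\mu}$, respectively $\Sht^1_{K_b,\mu}$. Because the map factors through $\Sht_{M(F)_x,-\mu}$, the Beauville–Laszlo map factors as $\BL_{M}$ followed by $\Bun_M\to\Bun_G$. The $M$-shtuka given by $\mu(\xi)$ has associated class $[\Nm(\mu(\pi_E))]=[b]\in B(Z_M^\circ)$ by the Kottwitz isomorphism. Its image in $B(G)$ is $[b]$, so the point lies in $\Bun_G^b$.

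On the $G_b$ side the roles are reversed. Under $\Bun_G\cong\Bun_{G_b}$, the class $-\mu$ for $Z_M^\circ$ contributes $b^{-1}$, which cancels the twist by $b$ and lands in $\Bun_G^1$. The main obstacle is fixing the sign conventions here: one must check that the Beauville–Laszlo class of $\mu(\xi)$ is $[b]$ rather than $[b^{-1}]$, consistently with the convention $\Gr_{G,\mu}\to\Sht_{\cl G,-\mu}$. Since $\Bun_G^b$ and $\Bun_G^1$ are open, the closed immersions factor through these open substacks and remain closed there.
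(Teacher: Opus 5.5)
Your proposal is correct and follows essentially the same route as the paper. The stabilizer of $x_\mu$ in $K$ is $K\cap H_\mu(F)=K\cap M(F)=M(F)_x$ by \Cref{lem:level_structure_special_points}, closedness follows from compactness of $K/M(F)_x$, and landing in $\Sht^b_{K,-\mu}$ follows from the Beauville--Laszlo map factoring through $\Bun_M^b$. The paper dispatches the $G_b$ side by symmetry, and the sign issue you flag is settled by the paper's normalization that $[b]\in B(Z_M^\circ)$ corresponds to $\mu$, so that $\BL$ sends $\Sht_{\cl{G},-\mu}$ onto $B(G,\mu)$.
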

\begin{proof}
    It suffices to prove the claim for $\Sht_{K, -\mu}^b$. Since $x_\mu \in \Gr_{G, \mu}$ is stable under $M(F)_x$, the stabilizer of $x_\mu$ under $K$ is $M(F)_x$ by \Cref{lem:level_structure_special_points}. Then, we get an inclusion $\Sht_{M(F)_x, -\mu} \subset \Sht_{K, -\mu}$ and it is closed since $K / M(F)_x$ is compact. Since the diagram
    \begin{center}
        \begin{tikzcd}
            \Sht_{M(F)_x, -\mu} \ar[r] \ar[d] & \Bun_M^b \ar[d] \\
            \Sht_{K, -\mu} \ar[r] & \Bun_G
        \end{tikzcd}
    \end{center}
    commutes, we have $\Sht_{M(F)_x, -\mu} \subset \Sht_{K, -\mu}^b$ and the claim follows.
\end{proof}

\begin{lem} \label{lem:sht_comparison}
    In the setting of \Cref{lem:CM_points_general_depth}, there is a natural isomorphism
    \[
        \Sht_{K, -\mu}^b \times_{[\ast / \und{G_b(F)}]} [\ast / \und{K_b}] \cong \Sht_{K_b, \mu}^1 \times_{[\ast / \und{G(F)}]} [\ast / \und{K}]. 
    \]
    Moreover, this stack admits a natural map from $[\ast / \und{M(F)_x}] \times \Div^1_E$ that is a common lift of the CM points on $\Sht_{K, -\mu}$ and $\Sht_{K_b, \mu}$. 
\end{lem}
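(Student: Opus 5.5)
The plan is to realize both sides as the same moduli space of modifications, using the standard duality between local Shimura varieties for $(G,b,\mu)$ and $(G_b,b^{-1},-\mu)$ at infinite level. First I work at infinite level. Set
\[
\cl{M}_\infty = \Sht_{\{1\}, -\mu}^b \times_{\Bun_G^b} \ast,
\]
where $\ast \to \Bun_G^b$ is the point corresponding to $\cl{E}_b$. Concretely, $\cl{M}_\infty$ parametrizes an untilt $S^\sharp \in \Div^1_E(S)$ together with a modification $\cl{E}_1 \dashrightarrow \cl{E}_b$ of type bounded by $\mu$ at $S^\sharp$ (trivial $G$-bundle on one side, $\cl{E}_b$ on the other). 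It carries commuting actions of $G(F) = \Aut(\cl{E}_1)$ and $G_b(F) = \Aut(\cl{E}_b)$. Using $\Bun_G \cong \Bun_{G_b}$, under which $\cl{E}_b$ corresponds to the trivial $G_b$-bundle and $\cl{E}_1$ to the $G_b$-bundle of class $b^{-1}$, the inverse modification identifies $\cl{M}_\infty$ with $\Sht^1_{\{1\}, \mu} \times_{\Bun_{G_b}^1} \ast$ for $G_b$. This matches the $G(F)$- and $G_b(F)$-actions with the roles swapped. This is the usual duality isomorphism, essentially \cite[Corollary 23.3.2]{SW20}, and is checked on $S$-points by inverting the modification and recording type $-\mu$ versus $\mu$.

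Next I descend to finite level. From the definitions I expect
\[
\Sht_{K,-\mu}^b \cong \und{K}\backslash \Gr_{G,\mu} \times_{\Bun_G} \Bun_G^b \cong [\und{K}\backslash \cl{M}_\infty / \und{G_b(F)}],
\]
because $\Bun_G^b \cong [\ast/\und{G_b(F)}]$ when $b$ is basic. The map to $[\ast/\und{G_b(F)}]$ is the one used in the fiber product. Pulling back along $[\ast/\und{K_b}] \to [\ast/\und{G_b(F)}]$ therefore gives $[\und{K}\backslash \cl{M}_\infty / \und{K_b}]$. The same computation on the $G_b$ side, using the duality isomorphism above, gives the same double quotient, which proves the isomorphism. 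The compactness-modulo-center hypotheses only enter to make these quotients reasonable; they are not needed for the identification itself.

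For the second assertion, I lift the CM point to $\cl{M}_\infty$. The point $x_\mu \in \Gr_{G,\mu}$ comes from $\Gr_{M,\mu} \cong \Div^1_E$. Since $b \in Z_M^\circ(\breve F)$ corresponds to $\mu$, the associated modification $\cl{E}_1 \dashrightarrow \cl{E}_b$ is induced from the $M$-bundle modification, and this gives a canonical trivialization $\cl{E}_b$ of its target. Hence I obtain a section $\Div^1_E \to \cl{M}_\infty$ which is equivariant for $M(F)$ acting diagonally through $M(F) \subset G(F)$ and $M(F) \subset G_b(F)$, because $M(F)$ commutes with $\mu$ and $b$. Taking the quotient by $M(F)_x \subset K \cap K_b$ produces the map $[\ast/\und{M(F)_x}] \times \Div^1_E \to [\und{K}\backslash \cl{M}_\infty/\und{K_b}]$. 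Its projections to $\Sht_{K,-\mu}$ and $\Sht_{K_b,\mu}$ recover the closed immersions of \Cref{lem:CM_points_general_depth}.

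The main obstacle I expect is checking that the identification $\Bun_G \cong \Bun_{G_b}$ transports the canonical $M$-level trivialization compatibly on both sides. Concretely, under the isomorphism the diagonal $M(F)_x$-action must agree with the given embeddings $M \subset G$ and $M \subset G_b$. I would verify this directly using $b\sigma(m) = mb$ for $m \in M(\breve F)$, which holds since $b$ is central in $M$.
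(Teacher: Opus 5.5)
Your identification of both sides with $[\und{K}\backslash \cl{M}_\infty/\und{K_b}]$ is correct, and it is the paper's argument in different packaging. The paper notes that $\Sht^b_{K,-\mu}$ and $\Sht^1_{K_b,\mu}$ are the base changes of the single global Hecke stack $\Hck^{1,b}_{G,-\mu}$ along $[\ast/\und{K}]\to[\ast/\und{G(F)}]$ and $[\ast/\und{K_b}]\to[\ast/\und{G_b(F)}]$. Your $\cl{M}_\infty$ is the $\und{G(F)}\times\und{G_b(F)}$-bitorsor over it, i.e.\ $\Hck^{1,b}_{G,-\mu}\cong[\und{G(F)}\backslash\cl{M}_\infty/\und{G_b(F)}]$. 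The duality $\cl{M}_{G,b,\mu,\infty}\cong\cl{M}_{G_b,b^{-1},-\mu,\infty}$ you invoke is exactly what makes the $G_b$-side square Cartesian.

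The gap is in the CM lift: the claimed section $\Div^1_E\to\cl{M}_\infty$ does not exist, and there is no canonical trivialization of the target. The modification of the trivial $M$-bundle by the central cocharacter $\mu$ is isomorphic to the $M$-bundle of $b$ at every geometric point, but not canonically in families. Since $\mu$ factors through $Z_M^\circ$ and $b\in Z_M^\circ(\breve{F})$, the isomorphisms between them form, over $\Div^1_E$, an $\und{M(F)}$-torsor induced from a pro-\'{e}tale $\und{Z_M^\circ(F)}$-torsor. Its monodromy is a character of $W_E$ that is nontrivial in general. In the Lubin--Tate case it is the Lubin--Tate character, so infinite-level CM points are only rational over the Lubin--Tate extension; compare the paper's $x_{\CM,m}$, which is only defined over a finite extension $\breve{F}_m$. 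Even at parahoric level, \Cref{lem:zero_dimensional_LSV} shows the Weil descent datum on $\cl{M}_{\cl{M},b,\mu}$ is $m\mapsto z_\mu m$, so there is no $\Div^1_E$-rational point there either. Hence the equivariance check you single out as the main obstacle is not the real issue.

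What saves the statement is that $Z_M(F)$ acts trivially on the reduced building, so $Z_M^\circ(F)\subset M(F)_x\subset K\cap K_b$. Replace your section by the $M(F)\times M(F)$-equivariant embedding of the whole torsor $\cl{M}_{M,b,\mu,\infty}\hookrightarrow\cl{M}_\infty$. Because this torsor is induced from a central subgroup of $M(F)_x$, its quotient by $\und{M(F)_x}$ is $\und{M(F)/M(F)_x}\times\Div^1_E$. The identity coset then gives the required map $[\ast/\und{M(F)_x}]\times\Div^1_E\to[\und{K}\backslash\cl{M}_\infty/\und{K_b}]$.

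Alternatively, do as the paper does and stay at finite level. Map $\Sht_{M(F)_x,-\mu}\cong\Sht_{M(F)_x,\mu}$ into $\Sht^b_{K,-\mu}$ and $\Sht^1_{K_b,\mu}$ compatibly over $\Hck^{1,b}_{G,-\mu}$. The $M(F)_x$-reductions on the source and target sides determine each other because the twisting torsor is central in $M(F)_x$. In this way no trivialization of the modified bundle is ever chosen.
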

\begin{proof}
    Let $\Hck_{G, -\mu}^{1, b}$ be the global Hecke stack over the Fargues-Fontaine curve (see \cite[Chapter IX]{FS24}) that classifies modifications with one leg of type $-\mu$ from admissible $G$-bundles to admissible $G_b$-bundles. It is defined over $\Div^1_E$ to consider modifications of type $-\mu$ and fits in the following commutative diagram. 
    \begin{center}
        \begin{tikzcd}
            & \Sht_{M(F)_x, -\mu} \ar[rr, "\sim"] \ar[ld] & & \Sht_{M(F)_x, \mu} \ar[rd] & \\
            \Sht_{K, -\mu}^b \ar[rr] \ar[d] & & \Hck^{1, b}_{G, -\mu} \ar[ld] \ar[d] \ar[rd] & & \Sht_{K_b, \mu}^1 \ar[ll] \ar[d] \\
            \lbrack \ast / \und{K} \rbrack \ar[r] & \lbrack \ast / \und{G(F)} \rbrack & \Div^1_E & \lbrack \ast / \und{G_b(F)} \rbrack & \lbrack \ast / \und{K_b} \rbrack \ar[l]. 
        \end{tikzcd}
    \end{center}
    The bottom two squares are both Cartesian by construction, so the claim follows. 
\end{proof}

\Cref{lem:sht_comparison} provides a common \'{e}tale localization of $\Sht_{K, -\mu}^b$ and $\Sht_{K_{b}, \mu}^1$. Thus, the local geometry around the CM point should be the same between $\Sht_{K, -\mu}^b$ and $\Sht_{K_{b}, \mu}^1$. 

Now, we formulate the axioms of special open neighborhoods using lisse-\'{e}tale and $\ell$-adic sheaves (see \cite[{}VII.6]{FS24} and \cite[Section 26]{Sch17}). We avoid (the base change of) the motivic six-functor formalism \cite{Sch26} as the involved space is not partially proper. Let $d = \dim \Gr_{G, \mu}$ and let $\IC_\mu = \Qla(\tfrac{d}{2})[d]$ denote the intersection cohomology complex on $\Sht_{K, -\mu}$ and $\Sht_{K_{b}, \mu}$. Here, we choose $\sqrt{q} \in \Qla$ to consider half Tate twists on $\Div^1_E$. 

\begin{defi} \label{defi:special_open_neighborhoods}
    Let $r \geq 0$ be a rational number and let 
    \[  
        K_r \subset J_r \subset G(F), \quad K_{b, r} \subset J_{b, r} \subset G_b(F)
    \]
    be open subgroups compact modulo $Z_G(F)$. Let 
    \[
        \Sht_{M(F)_x, -\mu} \subset \Sht_{J_r, -\mu}^b(r) \subset \Sht_{J_r, -\mu}^b ,\quad 
        \Sht_{M(F)_x, \mu} \subset \Sht_{J_{b, r}, \mu}^1(r) \subset \Sht_{J_{b, r}, \mu}^1
    \]
    be open neighborhoods of the CM points equipped with an isomorphism 
    \[
        \iota \colon \Sht_{J_r, -\mu}^b(r) \cong \Sht_{J_{b, r}, \mu}^1(r)
    \]
    over $\Hck_{G, -\mu}^{1, b}$ sending $\Sht_{M(F)_x, -\mu}$ to $\Sht_{M(F)_x, \mu}$. We call $(\Sht_{J_r, -\mu}^b(r), \Sht_{J_{b, r}, \mu}^1(r), \iota)$ \textit{special open neighborhoods} at levels $(J_r, J_{b, r})$ of depth $r$ if the following condition holds. 
    \begin{quote}
        Consider the correspondence
        \begin{center}
            \begin{tikzcd}
                \lbrack \ast / \und{J_r} \rbrack & \Sht_{J_r, -\mu}^b(r) \ar[r, "\sim", "\iota"'] \ar[l, "h_1"'] & \Sht_{J_{b, r}, \mu}^1(r) \ar[r, "h_2"] & \lbrack \ast / \und{J_{b, r}} \rbrack \times \Div^1_E. 
            \end{tikzcd}
        \end{center}
        Then, $h_2$ is quasicompact and the induced functor 
        \[
            \cl{D}(J_r, \Qla) \to \cl{D}(J_{b, r}, \Qla)^{BW_E}, \quad \rho \mapsto (h_2 \circ \iota)_! (h_1^*\rho \otimes \IC_\mu)
        \]
        satisfies that for every $(G, G_b, M)$-supergeneric $\rho \in \Irr^\sm(M(F)_{x})$ of depth $r$, the natural map 
        \begin{equation} \label{eq:condition_2_question}
            (h_2 \circ \iota)_! (h_1^*R^{J_r}_M(\rho) \otimes \IC_\mu)[R^{J_{b, r}}_M(\rho)] \to (h_2 \circ \iota)_* (h_1^*R^{J_r}_M(\rho) \otimes \IC_\mu)[R^{J_{b, r}}_M(\rho)]
        \end{equation}
        is an isomorphism and both sides are isomorphic to $R^{J_{b, r}}_M(\rho)$ (after forgetting the $W_E$-action). 
    \end{quote}
\end{defi}
\begin{rmk} \label{rmk:restriction_to_jumps}
    Here, $r$ should be a jump for the Moy-Prasad filtration of $M$ at $x$. In particular, we may assume $r \in \bb{Z}_{\geq 0}$ when $x \in \cl{B}(M, F)$ is a hyperspecial point. 
\end{rmk}

Now, we explain rigorous definitions of functors in \Cref{defi:special_open_neighborhoods}. For the first functor 
\[
    \rho \mapsto (h_2 \circ \iota)_! (h_1^*\rho \otimes \IC_\mu), 
\]
we use lisse-\'{e}tale sheaves developed in \cite[{}VII.6]{FS24}. As in the case of $\Bun_G$ (see \cite[{}VII.7.1, IX.2.3]{FS24}), we have natural identifications
\[
    \cl{D}_\lis([\ast / \und{J_r}], \Qla) \cong \cl{D}(J_r, \Qla), \quad 
    \cl{D}_\lis([\ast / \und{J_{b, r}}] \times \Div^1_E, \Qla) \cong \cl{D}(J_{b, r}, \Qla)^{BW_E}. 
\]
Since $h_2$ is $\ell$-cohomologically smooth (cf.\ \Cref{prop:BB_map}), $(h_2 \circ \iota)_!$ is well-defined for lisse-\'{e}tale sheaves (see \cite[p.161]{HI25}). Now, the problem is the definition of the second functor 
\[
    \rho \mapsto (h_2 \circ \iota)_* (h_1^*\rho \otimes \IC_\mu)
\]
and the construction of a natural map 
\[
    (h_2 \circ \iota)_! (h_1^*\rho \otimes \IC_\mu) \to (h_2 \circ \iota)_* (h_1^*\rho \otimes \IC_\mu)
\]
for $\rho \in \Irr^\sm(J_r)$. As the behavior of $(h_2 \circ \iota)_*$ for lisse-\'{e}tale sheaves is rather difficult to describe explicitly, we define it by hand using $\ell$-adic sheaves in the case of interest. First, we interpret $(h_2 \circ \iota)_!$ in the $\ell$-adic sheaf formalism. 

\begin{lem} \label{lem:qc_smooth_preserve_ell_adic}
    Let $f \colon Y \to X$ be a quasicompact and $\ell$-cohomologically smooth map of small $v$-stacks. Let $\{ \cl{F}_n \in \cl{D}_\et(Y, \bb{Z} / \ell^n) \}_{n \geq 1}$ be a collection with $\cl{F}_{n} \cong \cl{F}_{n + 1} \otimes_{\bb{Z}/\ell^{n+1}} \bb{Z}/\ell^n$. Then, we have 
    \[
        f_\natural( \varprojlim\nolimits_{n \geq 1} \cl{F}_n) \cong \varprojlim\nolimits_{n \geq 1} f_\natural(\cl{F}_n). 
    \]
\end{lem}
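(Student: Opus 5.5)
The strategy is to reduce to the case where $X$ is representable and $f$ is a nice smooth chart, and there to write $f_\natural$ as a twist of $f_!$. By \cite[Proposition 23.12]{Sch17}-type statements, for $\ell$-cohomologically smooth $f$ the dualizing complex $\omega_f = Rf^!\bb{Z}/\ell^n$ is invertible. Its formation commutes with reduction mod $\ell^n$, so the $\omega_{f,n}$ form a compatible system. Moreover $f_\natural(\cl{F}) \cong f_!(\cl{F} \otimes \omega_f)$ on each finite level. Since twisting by an invertible object commutes with the derived limit (it is locally a shift of a rank-one locally constant sheaf), the claim reduces to the corresponding statement for $f_!$:
\[
    f_!\bigl(\varprojlim\nolimits_n \cl{G}_n\bigr) \cong \varprojlim\nolimits_n f_!(\cl{G}_n), \qquad \cl{G}_n = \cl{F}_n \otimes \omega_{f,n}.
\]
Here $\cl{G}_{n} \cong \cl{G}_{n+1}\otimes \bb{Z}/\ell^n$ still holds.

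Next I reduce to $X$ a strictly totally disconnected perfectoid space. Both sides are compatible with base change along $X' \to X$: $f_\natural$ satisfies base change, and $R\varprojlim$ commutes with pullback along $v$-covers because pullback is a limit-preserving functor on the relevant $v$-sheaf categories. Checking an isomorphism is $v$-local on $X$, so we may take $X$ strictly totally disconnected. Then $Y$ is quasicompact by assumption. For the stacky case I choose a smooth surjection from a quasicompact locally spatial diamond and use cohomological descent; alternatively I first treat the representable case and then argue via a Čech nerve.

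In the quasicompact representable case, $f_!$ is $f_* \circ j_!$ for a compactification $Y \subset \bar{Y}$ over $X$. Here $Rf_*$ commutes with $R\varprojlim$ formally, since both are right adjoints. The real content is that $j_!$ commutes with $R\varprojlim$ of such Mittag-Leffler systems. I would argue this using finite cohomological dimension: quasicompactness plus the finite $\ell$-cohomological dimension of $\bar{Y}$ over a strictly totally disconnected base \cite[Section 21]{Sch17}. With this, $Rf_!$ has bounded amplitude, so it commutes with the Milnor sequence computing $R\varprojlim$. Concretely, $\varprojlim \cl{F}_n$ is the fiber of $\prod \cl{F}_n \xrightarrow{1-\text{shift}} \prod \cl{F}_n$. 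So it suffices that $f_!$ commutes with countable products of uniformly bounded-below objects, which follows from finite cohomological dimension together with stalk computations at geometric points of the compactification.

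The main obstacle I expect is this last step: commuting $f_!$ (equivalently $j_!$) with countable products. Products are not generally preserved by lower-shriek functors, and the argument needs the uniform boundedness of the $\cl{F}_n$, guaranteed by $\cl{F}_n \cong \cl{F}_{n+1}\otimes \bb{Z}/\ell^n$ when $\cl{F}_1$ is bounded. If that fails, I would instead use quasicompactness plus the smooth base-change structure of $f_\natural$ as a left adjoint to $f^*$. Then I would check the claim on stalks, where it becomes compactly supported cohomology of a quasicompact fiber with finite cohomological dimension.
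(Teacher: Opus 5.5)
Your outline has the same structure as the paper's proof. You twist by the invertible $f^!$ to pass from $f_\natural$ to $f_!$, factor $f_!=g_*j_!$ through the compactification $Y\xrightarrow{j}\ov{Y}\xrightarrow{g}X$ (with $j$ quasicompact because $f$ is and $g$ is separated), note that $g_*$ commutes with limits formally, and isolate $j_!$ as the real content. The gap is in how you handle that crux. Since $j_!$ is extension by zero, it is exact, so finite cohomological dimension, bounded amplitude and the Milnor sequence are beside the point. What fails is that extension by zero does not commute with countable products or sequential limits, because stalks do not. A compatible family of sections of $j_!\cl{F}_n$ can have supports that creep towards the boundary as $n$ grows. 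For instance, take $j$ the quasicompact open immersion of $\{\lvert T\rvert=1\}$ into the closed unit disc, classical points $a_k$ in distinct residue discs, $\cl{F}_n=\bigoplus_k i_{a_k,*}\bb{Z}/\ell^n$, and the compatible sections $\sum_{k<n}\ell^k e_k$, each of which is finitely supported. Then $\varprojlim_n j_!\cl{F}_n$ has a nonzero stalk at the boundary point specializing the Gauss point, while $j_!\varprojlim_n\cl{F}_n$ vanishes there. So in ordinary \'etale sheaves this step is false even for quasicompact $j$ and bounded $\cl{F}_n$. Neither your product argument nor your fallback via stalks and compactly supported cohomology of fibers can work; boundedness is not a hypothesis of the lemma in any case. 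The missing ingredient is the solid framework of \cite[VII.1]{FS24}, in which $j_!$ preserves $\ell$-adically complete objects when $j$ is quasicompact \cite[Lemma 5.2 (iv)]{Man26_l_adic}. Then $f_!=g_*j_!$ preserves $\ell$-complete objects and commutes with $-\otimes\bb{Z}/\ell^n$. Hence $f_!(\varprojlim_n\cl{G}_n)$ is $\ell$-complete with reductions $f_!(\cl{G}_n)$, so it equals $\varprojlim_n f_!(\cl{G}_n)$.

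There is a second gap in your first reduction. You need $f_\natural(\cl{F})\cong f_!(\cl{F}\otimes\omega_f)$ for the $\ell$-adic object $\varprojlim_n\cl{F}_n$ itself, not only for each $\cl{F}_n$. Deducing it from the finite-level identities would require commuting both $f_\natural$ and $f_!$ with $\varprojlim_n$, which is what you are trying to prove. The paper uses a genuinely $\ell$-adic input here: $f_\natural\cong f_!((-)\otimes f^!\bb{Z}_\ell)$ on $\ell$-adically complete solid sheaves \cite[Proposition 8.10]{Man26_l_adic}. The same choice of solid sheaves is what fixes the category in which $\varprojlim_n\cl{F}_n$ and $f_\natural$ of it live. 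Finally, your reduction to a strictly totally disconnected base is unnecessary. The claim that pullback commutes with $\varprojlim$ is only automatic in the solid setting, where $f^*$ has the left adjoint $f_\natural$.
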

\begin{proof}
    Take a canonical compactification $Y \xrightarrow{j} \ov{Y} \xrightarrow{g} X$ of $f$. Since $f$ is quasicompact and $g$ is separated, $j$ is also quasicompact. By \cite[Proposition 8.10]{Man26_l_adic}, $f_\natural \cong f_!((-) \otimes f^! \bb{Z}_\ell)$ for $\ell$-adically complete sheaves, which include $\cl{F} = \varprojlim_{n \geq 1} \cl{F}_n$. Here, we work with solid sheaves introduced in \cite[{}VII.1]{FS24}. Since $f^! \bb{Z}_\ell$ is invertible, it is enough to show 
    \[
        f_!( \varprojlim\nolimits_{n \geq 1} \cl{F}_n) \cong \varprojlim\nolimits_{n \geq 1} f_!(\cl{F}_n). 
    \]
    By construction, $f_! = g_* j_!$ and it is formal that $g_*$ preserves $\ell$-adically complete sheaves. Since $j$ is quasicompact, $j_!$ also preserves $\ell$-adically complete sheaves by \cite[Lemma 5.2 (iv)]{Man26_l_adic}. By taking the composition, we get the claim. 
\end{proof}

As previously, we set $\IC_\mu = \ov{\bb{Z}}_\ell(\tfrac{d}{2})[d]$ when we work with $\ell$-adic torsion coefficients. 

\begin{lem}
    Suppose that $\rho \in \Irr^\sm(J_r)$ admits a $J_r$-stable $\ov{\bb{Z}}_\ell$-lattice $\rho_0$. Then
    \[
        (h_2 \circ \iota)_! (h_1^*(\rho_0 / \ell^n) \otimes \IC_\mu) \in \Perf_{\ov{\bb{Z}}_\ell / \ell^n}(J_{b, r})^{BW_E}
    \] 
    \[
        (h_2 \circ \iota)_* (h_1^*(\rho_0 / \ell^n) \otimes \IC_\mu) \in \Perf_{\ov{\bb{Z}}_\ell / \ell^n}(J_{b, r})^{BW_E}
    \]
    in the torsion \'{e}tale sheaf theory \cite{Sch17} for every $n \geq 0$. Moreover, we have 
    \[
        (h_2 \circ \iota)_! (h_1^*\rho \otimes \IC_\mu) = \bigl(\varprojlim_{n\geq 1} (h_2 \circ \iota)_! (h_1^*(\rho_0 / \ell^n) \otimes \IC_\mu) \bigr)\bigl[\tfrac{1}{\ell}\bigr] \in \Perf_{\Qla}(J_{b, r})^{BW_E}
    \]
\end{lem}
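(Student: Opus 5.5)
The plan is to reduce both perfectness claims to finiteness for torsion étale cohomology of the stack $\Sht_{J_{b, r}, \mu}^1(r)$ relative to $[\ast / \und{J_{b, r}}] \times \Div^1_E$. Then we pass to the limit using \Cref{lem:qc_smooth_preserve_ell_adic}.

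First, reduce to a pro-$p$ open subgroup. Choose a normal pro-$p$ open subgroup $J' \subset J_{b, r}$ that acts freely, so that after pullback along $[\ast/\und{J'}] \to [\ast / \und{J_{b, r}}]$ the map $h_2 \circ \iota$ becomes a map of small $v$-sheaves
\[
    f \colon Y' \to \Div^1_E, \quad Y' = \Sht_{J_{b,r},\mu}^1(r) \times_{[\ast/\und{J_{b,r}}]} \ast .
\]
Concretely, $Y'$ is an open subspace of a local Shimura variety at infinite level modulo a compact open, fibered over $\Div^1_E$. Since $J'$ is pro-$p$ and $\ell \neq p$, being perfect as a $\ov{\bb{Z}}_\ell/\ell^n[J_{b,r}]$-complex is equivalent to the underlying complex of $\ov{\bb{Z}}_\ell/\ell^n$-modules being perfect: invariants under $J'$ are exact, and the finite quotient $J_{b,r}/J'$ acts. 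The same holds for $J_r$ on the source side, via $h_1$, where $\rho_0/\ell^n$ factors through a finite quotient. Hence $h_1^*(\rho_0/\ell^n) \otimes \IC_\mu$ is a locally constant constructible sheaf on $Y'$ after further pullback to a finite étale cover.

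Second, prove finiteness of $f_!$ and $f_*$ of such sheaves. Since $h_2$ is quasicompact by assumption in \Cref{defi:special_open_neighborhoods} and $\ell$-cohomologically smooth (\Cref{prop:BB_map}), the space $Y'$ is, after base change to $\Spd(\bb{C}_p)$, a quasicompact open subset of a smooth rigid space. I would use the explicit description in \Cref{thm:canonical_level_structures}: $Y'$ is, up to finite étale covers, of the form $\pi_\mu^{-1}(\bb{B}_{x,r})^\diamond$, which is a finite union of affinoids. Finiteness of $R\Gamma_c$ and $R\Gamma$ with torsion lcc coefficients for quasicompact smooth rigid spaces over $\bb{C}_p$ (Huber's finiteness for $R\Gamma_c$, Scholze's for proper/affinoid $R\Gamma$ via primitive comparison or Huber's results on quasicompact opens) then gives perfect complexes. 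Descending along $\Spd(\bb{C}_p) \to \Div^1_E$ records the $W_E$-action. \textbf{This is the main obstacle}: $f_*$ for a non-partially-proper quasicompact open requires genuine finiteness input (e.g.\ Huber's theorem for smooth affinoids, or an explicit good-reduction model). I would first try to invoke the good reduction of $\cl{W}(n)_\phi$ and nearby cycles, or simply impose that the neighborhoods of interest have such models.

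Finally, the $\Qla$-statement. By \Cref{lem:qc_smooth_preserve_ell_adic} applied to $f_\natural$ and the identity $f_! = f_\natural(- \otimes f^!\ov{\bb{Z}}_\ell^{-1})$ with $f^!$ invertible, we have $f_!(\varprojlim_n) = \varprojlim_n f_!$. The terms form a compatible system of perfect complexes with $\ov{\bb{Z}}_\ell/\ell^{n+1} \otimes$-base change recovering level $n$ (by the projection formula for $f_!$). Hence the limit is a perfect $\ov{\bb{Z}}_\ell$-complex, and inverting $\ell$ yields an object of $\Perf_{\Qla}(J_{b,r})^{BW_E}$. This agrees with the lisse-étale definition of $(h_2\circ\iota)_!(h_1^*\rho\otimes\IC_\mu)$ because the lisse-étale $\Qla$-sheaf $h_1^*\rho$ is the rationalization of the $\ell$-complete $\varprojlim h_1^*(\rho_0/\ell^n)$.
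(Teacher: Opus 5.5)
Your last step, passing to the limit with \Cref{lem:qc_smooth_preserve_ell_adic} and using that $h_1^*$ preserves $\ell$-adically complete sheaves, matches the paper. The finiteness part, however, has a genuine gap.

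You never prove that $(h_2\circ\iota)_*(h_1^*(\rho_0/\ell^n)\otimes\IC_\mu)$ is perfect. You label it the main obstacle and propose two ways around it:
\begin{itemize}
\item Huber-type finiteness for quasicompact opens of smooth rigid spaces. This is not available in the generality of the lemma. The lemma concerns arbitrary special open neighborhoods in the sense of \Cref{defi:special_open_neighborhoods}, where only quasicompactness of $h_2$ is assumed. Also, $\mu$ need not be minuscule, so the fibers need not be rigid spaces at all.
\item Imposing good-reduction models. This changes the statement you are asked to prove.
\end{itemize}
The geometric description you rely on is also incorrect. The sets $\und{K_{\mu,r}}\backslash\pi_\mu^{-1}(\bb{B}_{x,r})^\diamond$ of \Cref{thm:canonical_level_structures} are neighborhoods of a local special point in $\Sht_{\cl{G},-\mu}$. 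They have no relation to the map $h_2\circ\iota$ to $[\ast/\und{J_{b,r}}]\times\Div^1_E$. Moreover, for non-minuscule $\mu$, the space $\pi_\mu^{-1}(\bb{B}_{x,r})$ is an iterated fiber bundle under vectorial groups over a ball, not a finite union of affinoids.

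The missing idea is that no geometric finiteness input is needed; you should dualize instead of controlling $f_*$ directly. Put $f=h_2\circ\iota$, $\Lambda=\ov{\bb{Z}}_\ell/\ell^n$ and $\cl{F}=h_1^*(\rho_0/\ell^n)\otimes\IC_\mu$; the sheaf $\cl{F}$ is locally constant with perfect fibers.

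First, perfectness of $f_!\cl{F}$. The paper takes this directly from \cite[Proposition 23.12 (ii)]{Sch17}, using only that $f$ is quasicompact and $\ell$-cohomologically smooth. Morally, the reason is as follows. Since $f^!\Lambda$ is invertible, $f^*$ commutes with limits and colimits. Hence its left adjoint $f_\natural=f_!(-\otimes f^!\Lambda)$ preserves compact objects, and $f_!$ differs from $f_\natural$ by twisting with an invertible sheaf.

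Second, the $f_*$ claim follows formally from Verdier duality \cite[Proposition 23.3 (i)]{Sch17}. Set $\mathbb{D}\cl{F}=R\mathcal{H}om(\cl{F},f^!\Lambda)$. It is again locally constant with perfect fibers, and it satisfies $R\mathcal{H}om(\mathbb{D}\cl{F},f^!\Lambda)\cong\cl{F}$. Therefore
\[
f_*\cl{F}\cong f_*R\mathcal{H}om(\mathbb{D}\cl{F},f^!\Lambda)\cong R\mathcal{H}om(f_!\mathbb{D}\cl{F},\Lambda).
\]
This is the dual of a perfect object, hence perfect.
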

\begin{proof}
    Since $h_2$ is quasicompact and $\ell$-cohomologically smooth, the first claim follows from \cite[Proposition 23.12 (ii)]{Sch17} and the Verdier duality \cite[Proposition 23.3 (i)]{Sch17}. Moreover, we may apply \Cref{lem:qc_smooth_preserve_ell_adic} to see that $(h_2 \circ \iota)_!$ preserves $\ell$-adically complete sheaves. Since $h_1^*$ also preserves $\ell$-adically complete sheaves by \cite[{}VII.3.1 (i)]{FS24}, we get 
    \[
        (h_2 \circ \iota)_! (h_1^*\rho \otimes \IC_\mu) = \bigl(\varprojlim\nolimits_{n\geq 1} (h_2 \circ \iota)_! (h_1^*(\rho_0 / \ell^n) \otimes \IC_\mu) \bigr)\bigl[\tfrac{1}{\ell}\bigr] \in \Perf_{\Qla}(J_{b, r})^{BW_E}. 
    \]
\end{proof}

Let $D$ be the maximal $F$-split toral quotient of $G$. Since $G / G^\der \cong G_b / G_b^\der$, $D$ is also the maximal $F$-split toral quotient of $G_b$. 

\begin{lem} \label{lem:twist_lattice}
    For every $\rho \in \Irr^\sm(J_r)$, there is a character $\chi \colon D(F) \to \Qlax$ such that $\rho \otimes \chi$ admits a $J_r$-stable $\ov{\bb{Z}}_\ell$-lattice. Any two such characters differ by a character $D(F) \to \ov{\bb{Z}}_\ell^\times$. 
\end{lem}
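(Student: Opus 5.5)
The plan is to reduce the question to the central character of $\rho$ on a cocompact subgroup coming from $D(F)$. Since $J_r$ is open and compact modulo $Z_G(F)$, fix a compact open subgroup $J^0 \subset J_r$. Let $\nu \colon G(F) \to D(F)$ be the natural map. Its image $\nu(Z_G(F))$ has finite index in $D(F)$, because the composite $Z_G^\circ \to G \to D$ is an isogeny onto $D$. The image $\nu(J_r)$ is then a subgroup of $D(F)$ that is compact modulo $\nu(Z_G(F))$.

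Next, choose a subgroup $A \subset Z_G(F) \cap J_r$ isomorphic to a lattice $\bb{Z}^a$ such that $A J^0$ has finite index in $J_r$. We can take $A$ mapping isomorphically onto a finite-index subgroup of $D(F)/D(O_F) \cong \bb{Z}^a$: pick elements of $Z_G^\circ(F) \cap J_r$ lifting a basis of a finite-index sublattice of $\nu(Z_G(F) \cap J_r)$ modulo compacts.

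By Schur's lemma, $A$ acts on $\rho$ through a character $\omega \colon A \to \Qlax$. Since $A \cong \bb{Z}^a$ maps injectively into $D(F)/D(O_F)$ with finite-index image, $\omega^{-1}$ extends to a character of $A \cdot D(O_F)$ that is trivial on $D(O_F)$. As $\Qlax$ is divisible, this extends further to a character $\chi \colon D(F) \to \Qlax$, which is automatically smooth since it is trivial on $D(O_F)$. Then $A$ acts trivially on $\rho \otimes \chi$, where $\chi$ is pulled back along $\nu$.

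Now $\rho \otimes \chi$ is a smooth irreducible (hence finite-dimensional) representation of $J_r$ that factors through $J_r / A$, and $J_r/A$ is compact. Averaging a $\ov{\bb{Z}}_\ell$-lattice over the finite image of $J_r/A$ produces a $J_r$-stable lattice. Here one uses that the representation is defined over a finite extension of $\bb{Q}_\ell$, and takes the $\ov{\bb{Z}}_\ell$-span of the finitely many translates of a lattice.

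For the uniqueness statement, suppose $\chi$ and $\chi'$ both work. Then $\rho\otimes\chi$ and $\rho\otimes\chi'$ have stable lattices, so for each $a \in A$ the scalars $\omega(a)\chi(\nu(a))$ and $\omega(a)\chi'(\nu(a))$ are units in $\ov{\bb{Z}}_\ell$. Hence $\chi/\chi'$ takes unit values on $\nu(A)$. It is trivial on $D(O_F)$ up to a smooth, finite-order character, so it takes unit values there as well. Since $\nu(A)D(O_F)$ has finite index in $D(F)$, every value of $\chi/\chi'$ has a power that is a unit, and is therefore itself a unit because $\ov{\bb{Z}}_\ell$ is integrally closed. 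Thus $\chi/\chi'$ takes values in $\ov{\bb{Z}}_\ell^\times$.

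The main obstacle is the bookkeeping showing that $A$ can be chosen with $AJ^0$ of finite index in $J_r$ and $\nu|_A$ injective modulo $D(O_F)$. This follows from the fact that $J_r/Z_G(F)$ is compact and that $Z_G^\circ \to D$ is an isogeny. I will take this structural input as standard.
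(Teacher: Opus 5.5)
Your argument is correct and is essentially the paper's proof. The paper makes the central character unit-valued on the maximal split central torus $Z_s(F)$ by prescribing the $\ell$-adic valuation of $\chi$ through $X^*(D)_{\bb{Q}} \cong X^*(Z_s)_{\bb{Q}}$; you instead trivialize the central character outright on a lattice $A$ in the center, and uniqueness is the same unit argument on a finite-index subgroup of $D(F)$.

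One small correction: $Z_G^\circ \to D$ is not an isogeny in general (if $G/G^{\der}$ is anisotropic then $D$ is trivial). What is true is that $Z_s \to D$ is an isogeny, and this, together with compactness of $Z_G(F)/Z_s(F)$, is exactly what your choice of $A$ needs.
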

\begin{proof}
    Let $Z_s \subset Z_G$ be the maximal $F$-split subtorus and let $\chi_\rho$ be the central character of $\rho$. Since $K_r$ is compact modulo $Z_G(F)$, $\rho$ admits a $J_r$-stable $\ov{\bb{Z}}_\ell$-lattice if $\chi_\rho \vert_{Z_s(F)}$ takes values in $\ov{\bb{Z}}_\ell^\times$. Since $X^*(D)_{\bb{Q}} \to X^*(Z_s)_{\bb{Q}}$ is an isomorphism, the norm character $\nu(\chi_\rho)$ lifts uniquely to $D(F)$ and any $\chi \colon D(F) \to \Qla$ such that $\nu(\chi) = - \nu(\chi_\rho)$ satisfies the given condition. 
\end{proof}

\begin{lem} \label{lem:twisting_D_character}
    For every character $\chi \colon D(F) \to \Qlax$ and $\rho \in \Irr^\sm(J_r)$, we have 
    \[
        (h_2 \circ \iota)_! (h_1^*(\rho \otimes \chi) \otimes \IC_\mu) \cong (h_2 \circ \iota)_! (h_1^*\rho \otimes \IC_\mu) \otimes \chi. 
    \]
    If $\chi$ takes values in $\ov{\bb{Z}}_\ell^\times$ and $\rho$ admits a $J_r$-stable $\ov{\bb{Z}}_\ell$-lattice $\rho_0$, we have 
    \[
        (h_2 \circ \iota)_* (h_1^*(\rho_0 / \ell^n \otimes \chi) \otimes \IC_\mu) \cong (h_2 \circ \iota)_* (h_1^*(\rho_0 / \ell^n) \otimes \IC_\mu) \otimes \chi.  
    \]
    In both cases, the $W_E$-action on the right-hand side is twisted by a smooth character $\xi_\chi$ corresponding to $\chi \circ \Nm_{D(E) / D(F)} \circ \mu^{-1} \colon E^\times \to \Qlax$ via local class field theory. 
\end{lem}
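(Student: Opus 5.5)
The plan is to realize the twist by $\chi$ geometrically, as pullback of a rank-one local system along a map from the correspondence to the torus, and then apply the projection formula. Let $\nu\colon G\to D$ and $\nu_b\colon G_b\to D$ be the quotient maps, which are compatible under $G/G^\der\cong G_b/G_b^\der$. By functoriality of \Cref{prop:ShtvsGr} and \Cref{defi:local_shtuka_general_level} along $G\to D$, there is a map $\Sht^b_{J_r,-\mu}(r)\to \Sht_{\nu(J_r),-\nu\circ\mu}$. The target is $[\ast/\und{\nu(J_r)}]\times\Div^1_E$, because $\nu\circ\mu$ is central and $\Gr_{D,\nu\mu}\cong\Div^1_E$. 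The same applies on the $G_b$ side.

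The key compatibility I will establish is the following. The $D$-shtuka attached to a point $[g]$ of $\Gr_{G,\mu}$ differs from the one attached to the corresponding point of $\Gr_{G_b,\mu}$ under $\iota$ exactly by the $D$-modification of type $\nu\mu$ along $\Div^1_E$. This is because $\iota$ lies over $\Hck^{1,b}_{G,-\mu}$, and pushing a Hecke modification along $G\to D$ gives a modification of type $\nu\circ\mu$ of the $D$-bundle. Concretely, we get a commutative square in which $\nu\circ h_1$ and $\nu_b\circ h_2\circ\iota$ agree up to the map
\[
\Div^1_E\to[\ast/\und{D(F)}]
\]
classifying the $D(F)$-torsor of trivializations of $\cl{O}(\nu\mu)$-modifications. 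By local class field theory (Lubin–Tate / Fargues–Scholze's description for tori), that map corresponds to the character $\Nm_{D(E)/D(F)}\circ\mu^{-1}\colon E^\times\to D(F)$, up to the sign convention, which I will fix by testing on $D=\bb{G}_m$.

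Hence
\[
h_1^*\chi\cong (h_2\circ\iota)^*(\chi\boxtimes\xi_\chi),
\]
where $\chi$ is viewed as a character sheaf on $[\ast/\und{J_{b,r}}]$ via $\nu_b$ and $\xi_\chi$ on $\Div^1_E$ corresponds to $\chi\circ\Nm\circ\mu^{-1}$. For $(-)_!$, the projection formula for lisse-étale sheaves gives
\[
(h_2\iota)_!(h_1^*\rho\otimes h_1^*\chi\otimes\IC_\mu)\cong(h_2\iota)_!(h_1^*\rho\otimes\IC_\mu)\otimes(\chi\boxtimes\xi_\chi).
\]
For $(-)_*$ in the torsion setting with $\chi$ valued in $\ov{\bb{Z}}_\ell^\times$, the sheaf $\chi\otimes\xi_\chi$ reduces mod $\ell^n$ to an invertible locally constant sheaf $L$. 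The isomorphism $f_*(\cl{F}\otimes f^*L)\cong f_*\cl{F}\otimes L$ holds for invertible $L$ by checking étale-locally where $L$ is trivial, and this yields the second claim.

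The main obstacle is the compatibility square, specifically identifying the determinant-type Hecke modification on $\Div^1_E$ with the Galois character $\xi_\chi$, including the direction of $\mu^{-1}$. I would first reduce to $D$ split by restriction of scalars and to $D=\bb{G}_m$ componentwise, using $X^*(D)$. There I would use the explicit description of $\Sht_{\bb{G}_m}$ via Lubin–Tate theory.
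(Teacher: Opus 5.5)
Your proposal is correct and follows the paper's proof: both reduce the twist to $h_1^*\chi \cong (h_2\circ\iota)^*(\chi\boxtimes\xi_\chi)$ by pushing $\Hck^{1,b}_{G,-\mu}$ forward to the torus Hecke stack for $D$, and then conclude by the projection formula (which you spell out for both $(-)_!$ and $(-)_*$). The only difference is that the paper obtains the torus identity $T_{-\mu}(\chi)=\chi\boxtimes\xi_\chi$ by citing Zou's categorical local Langlands correspondence for tori, whereas you propose to verify it directly through Lubin--Tate theory and local class field theory; this is more self-contained, and since $D$ is $F$-split by definition, your restriction-of-scalars reduction is vacuous.
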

\begin{proof}
    Since $\chi$ (resp.\ $\xi_\chi$) can be regarded as a $\Qla$-local system of rank $1$ on each of $[\ast / \und{J_r}]$ and $[\ast / \und{J_{b, r}}]$ (resp.\ $\Div^1_E$), it is enough for both claims to show $h_1^*\chi \cong (h_2 \circ \iota)^* (\chi \boxtimes \xi_\chi)$ on $\Sht^b_{\cl{G}, -\mu}(r)$. Since $\iota$ is a map over $\Hck_{G, -\mu}^{1, b}$ and there is a natural map $\Hck_{G, -\mu}^{1, b} \to \Hck_{D, -\mu}$, it is enough to show $\overleftarrow{h}^*\chi \cong \overrightarrow{h}^*(\chi \boxtimes \xi_\chi)$ for the diagram
    \[
        [\ast / \und{D(F)}] \xleftarrow{\overleftarrow{h}} \Hck_{D, -\mu} \xrightarrow{\overrightarrow{h}} [\ast / \und{D(F)}] \times \Div^1_E. 
    \]
    Since $D$ is a torus, $\overrightarrow{h}$ is an isomorphism, so it is enough to show $T_{-\mu}(\chi) = \chi \boxtimes \xi_\chi$. This follows from the categorical local Langlands correspondence for tori (see \cite[Theorem 6.4.1]{Zou24}) since $\xi_\chi = \mu^{-1} \circ \varphi_\chi \vert_{W_E}$ where $\varphi_\chi \colon W_F \to {}^LD$ is the $L$-parameter of $\chi$.
\end{proof}

\begin{defi} \label{defi:map_from_!_to_*}
    For every $\rho \in \Irr^\sm(J_r)$, take a character $\chi \colon D(F) \to \Qlax$ such that $\rho \otimes \chi$ admits a $J_r$-stable $\ov{\bb{Z}}_\ell$-lattice $\rho_0$ and we define
    \[
        (h_2 \circ \iota)_* (h_1^*\rho \otimes \IC_\mu) = \bigl(\varprojlim_{n\geq 1} (h_2 \circ \iota)_* (h_1^*(\rho_0 / \ell^n) \otimes \IC_\mu) \bigr)\bigl[\tfrac{1}{\ell}\bigr] \otimes (\chi \boxtimes \xi_\chi)^{-1} \in \Perf_{\Qla}(J_{b, r})^{BW_E}. 
    \]
    It is independent of the choice of $\chi$ by \Cref{lem:twisting_D_character}. Moreover, we have a natural map 
    \begin{equation} \label{eq:compact_supp_and_ordinary}
        (h_2 \circ \iota)_! (h_1^*\rho \otimes \IC_\mu) \to (h_2 \circ \iota)_* (h_1^*\rho \otimes \IC_\mu)
    \end{equation}
    as the limit of $(h_2 \circ \iota)_! (h_1^*(\rho_0 / \ell^n) \otimes \IC_\mu) \to (h_2 \circ \iota)_* (h_1^*(\rho_0 / \ell^n) \otimes \IC_\mu)$. 
\end{defi}

Since the diagonal action of $Z_G(F)$ on the infinite-level local shtuka space is trivial (see \Cref{lem:diagonal_central_action}), both sides of \eqref{eq:compact_supp_and_ordinary} have the same central character $\chi$ as that of $\rho$. In \eqref{eq:condition_2_question}, we take the isotypic part in the derived category $\cl{D}(J_{b, r}, \Qla)_\chi$ of $J_{b,r}$-representations with fixed central character $\chi$. 


\subsection{Relation to local Shimura varieties and Hecke operators}

In this section, we first present some consequences of the existence of special open neighborhoods to the Hecke operator $T_{-\mu}$. Then, we explain how to construct them via the geometry of local Shimura varieties. First, we recall the definition of local shtuka spaces. 

\begin{defi}\textup{(cf.\ \cite[Section 23.3]{SW20})}
    For every open subgroup $K \subset G(F)$, the local shtuka space $\cl{M}_{G, b, \mu, K}$ is a locally spatial diamond over $\Spd(\breve{E})$ given by the fiber product 
    \[
        \cl{M}_{G, b, \mu, K} = \Sht^b_{K, -\mu} \times_{\Bun_G^b} \ast. 
    \]
    It is equipped with a Weil descent datum relative to $E$
    \[
        \sigma_E \colon \cl{M}_{G, b, \mu, K} \to \sigma_E^*\cl{M}_{G, b, \mu, K}
    \]
    coming from the structure of $\Sht^b_{\cl{G}, -\mu}$ over $\Div^1_E$. There is a natural $G_b(F)$-equivariant \'{e}tale map called the (Grothendieck-Messing) period map
    \[
        \pi_{\GM} \colon \cl{M}_{G, b, \mu, K} \to \Gr_{G_b, -\mu, \breve{E}}
    \]
    that is \'{e}tale locally isomorphic to $G(F) / K$ over the admissible locus $\Gr_{G_b, -\mu, \breve{E}}^1$. When $\mu$ is minuscule, $\cl{M}_{G, b, \mu, K}$ is uniquely represented by a smooth rigid analytic variety over $\breve{E}$. In that case, $\cl{M}_{G, b, \mu, K}$ denotes such rigid analytic variety and is called a local Shimura variety. 

    When $K = \cl{G}(O_F)$ for some smooth affine model $\cl{G}$ of $G$ over $O_F$, $\cl{M}_{G, b, \mu, K}$ is also denoted by $\cl{M}_{\cl{G}, b, \mu}$. The infinite-level diamond $\varprojlim_K \cl{M}_{G, b, \mu, K}$ is denoted by $\cl{M}_{G, b, \mu, \infty}$. 
\end{defi}

\begin{rmk}
    Our notation is non-standard when $\mu$ is non-minuscule. In that case, the local shtuka space usually refers to the larger fiber product 
    $
        \cl{M}_{G, b, \leq \mu, K} = \Sht^b_{K, \leq -\mu} \times_{\Bun_G^b} \ast. 
    $
\end{rmk}

Note that our sign convention on $\mu$ is opposite to \cite{SW20}. Moreover, we follow the following convention for the group action unless otherwise stated. 

\begin{conv} \label{conv:group_action}
    The $G_b(F)$-action on $\cl{M}_{G, b, \mu, K}$ is always taken to be a left action. For any subgroup $H \subset G(F)$ normalizing an open subgroup $K \subset G(F)$, the $H$-action on $\cl{M}_{G, b, \mu, K}$ is also taken to be a left action unless otherwise stated. 
\end{conv}

Though the symmetry between $G$ and $G_b$ breaks down by passing to local shtuka spaces, they recover classical contexts such as Lubin-Tate spaces and Rapoport-Zink spaces. 
The following is a standard fact on infinite-level local shtuka spaces. 

\begin{lem} \label{lem:diagonal_central_action}
    The diagonal action of $Z_G(F)$ on $\cl{M}_{G, b, \mu, \infty}$ is trivial. 
\end{lem}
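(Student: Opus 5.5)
We describe $\cl{M}_{G, b, \mu, \infty}$ concretely as a moduli of isomorphisms of $G$-bundles and check the action of $z \in Z_G(F)$ there. By \cite[Section 23.3]{SW20}, for $S \in \Perf_{\ov{k}}$ with untilt $S^\sharp$, a point of $\cl{M}_{G, b, \mu, \infty}(S)$ consists of the untilt $S^\sharp$ together with a modification
\[
    f \colon \cl{E}_1 \dashrightarrow \cl{E}_b
\]
on $\cl{X}_S$ of type bounded by $-\mu$ at $S^\sharp$. Here $\cl{E}_1$ is the trivial $G$-bundle (equivalently, the infinite-level structure is a trivialization of the pro-étale $G(F)$-torsor). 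The action of $g \in G(F) = \Aut(\cl{E}_1)$ is $f \mapsto f \circ g^{-1}$, and the action of $h \in G_b(F) = \Aut(\cl{E}_b)$ is $f \mapsto h \circ f$. The exact left/right conventions in \Cref{conv:group_action} only change signs, and they are fixed so that the diagonal element is $(z, z)$ acting as $f \mapsto z \circ f \circ z^{-1}$.

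The key step is to show that $z \in Z_G(F)$ acts on $\cl{E}_1$ and on $\cl{E}_b$ through the same central automorphism, and that this automorphism commutes with every modification. We view $z$ as a section of the center $Z_G$. It then defines a canonical automorphism of \emph{any} $G$-torsor $\cl{P}$, namely right multiplication by $z$, which is $G$-equivariant precisely because $z$ is central. This family of automorphisms is natural in $\cl{P}$: every map of $G$-torsors commutes with it, and so does every modification, as one sees by checking on the locus where the modification is an isomorphism and using that the two sides agree there and are determined by it.

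It remains to match these canonical automorphisms with the two group actions.
\begin{enumerate}
    \item On $\cl{E}_1$, this automorphism is exactly $z \in G(F) = \Aut(\cl{E}_1)$.
    \item On $\cl{E}_b$, the group $G_b(F) = \{ g \in G(\breve{F}) \mid b\sigma(g) = gb\}$ contains $Z_G(F)$, since $z$ is central and $\sigma(z) = z$. Under $G_b(F) \cong \Aut(\cl{E}_b)$, the element $z$ again acts as the canonical central automorphism.
\end{enumerate}
Hence $z \circ f \circ z^{-1} = f$, because $f$ commutes with the canonical automorphism. The untilt is untouched by both actions, so the diagonal action is trivial on $S$-points, functorially in $S$. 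This gives the claim.

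The main obstacle is bookkeeping rather than mathematics. First, one has to make sure the identification $Z_G(F) \subset G_b(F)$ used in the paper is the natural one, i.e.\ the inclusion $Z_G(F) \subset G(\breve{F})$. Second, one has to check that, with the paper's left-action conventions, the diagonal copy is $(z, z)$ and not $(z, z^{-1})$. If the conventions produce $(z, z^{-1})$ instead, the same argument shows that the action of $(z, z^{-1})$ is trivial, and the statement should be read accordingly.
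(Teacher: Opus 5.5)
Your argument is correct and is the same as the paper's. The paper simply invokes the moduli description of $\cl{M}_{G, b, \mu, \infty}$ as modifications $\cl{E}_1 \dashrightarrow \cl{E}_b$ from \cite[p.219]{SW20}, leaving implicit exactly what you spell out: a central $z$ acts on both bundles by the canonical central automorphism, and every modification commutes with that automorphism. Your hedge about $(z, z^{-1})$ is unnecessary. With both actions taken as left actions as in \Cref{conv:group_action}, $g \in G(F)$ acts by $f \mapsto f \circ g^{-1}$ and $h \in G_b(F)$ by $f \mapsto h \circ f$, so the diagonal element $(z,z)$ acts by $f \mapsto z \circ f \circ z^{-1}$, as you wanted.
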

\begin{proof}
    It follows from the explicit moduli interpretation of $\cl{M}_{G, b, \mu, \infty}$ in \cite[p.219]{SW20}. 
\end{proof}

Next, we review the Hecke operators relevant to our context. Let 
\[
    T_{-\mu} \colon \cl{D}_\lis(\Bun_G, \Qla) \to \cl{D}_\lis(\Bun_G, \Qla)^{BW_E}
\]
be the Hecke operator associated to the highest weight representation $V_{-\mu}$ of $\widehat{G} \rtimes W_E$ with extremal weight $-\mu$ (see \cite[Proposition IX.2.1]{FS24}). For $\rho \in \Irr^\sm(J_r)$ admitting a $J_r$-stable $\bb{\ov{Z}}_\ell$-lattice $\rho_0$, the pullback along $\cl{M}_{G, b, \leq \mu, J_r} \to [\ast / \und{J_r}]$ induces an \'{e}tale $\Qla$-local system $\cl{L}_\rho$ equipped with a $\bb{\ov{Z}}_\ell$-lattice $\cl{L}_{\rho_0}$ on $\cl{M}_{G, b, \leq \mu, J_r}$. By construction and proper base change, the induced functor 
\[
    i_b^* T_{-\mu} i_{1!} \colon \cl{D}(G(F), \Qla) \to \cl{D}(G_b(F), \Qla)
\]
satisfies that for every such $\rho$, we have 
\begin{equation} \label{eq:Hecke_vs_LSV}
    i_b^* T_{-\mu} i_{1!}(\cInd_{J_r}^{G(F)} \rho) \cong R\Gamma_c(\cl{M}_{G, b, \leq \mu, J_r, \bb{C}_p}, \cl{L}_\rho \otimes \IC_{\leq \mu}). 
\end{equation}
Here, $\IC_{\leq \mu}$ denotes the intersection cohomology complex\footnote{Precisely, it is the pullback of the $\ell$-adic Satake sheaf $\cl{S}_{-\mu}$ associated to $V_{-\mu}$ under the geometric Satake equivalence \cite[Chapter VI]{FS24}.} on $\cl{M}_{G, b, \leq \mu, J_r}$ and its restriction to $\cl{M}_{G, b, \mu, J_r}$ equals $\IC_\mu = \Qla(\tfrac{d}{2})[d]$. As in \cite[{}IX.3]{FS24}, the right-hand side of \eqref{eq:Hecke_vs_LSV} denotes the colimit of $\ell$-adic cohomology complexes over quasicompact open subsets. 

\begin{prop} \label{prop:consequence_Hecke_operator}
    Let $\rho \in \Irr^\sm(M(F)_x)$ be a $(G, G_b, M)$-supergeneric representation of depth $r$ and let $\pi = R^{G(F)}_M(\rho)$ and $\pi_b =  R^{G_b(F)}_M(\rho)$. If there exist special open neighborhoods of depth $r$ for $(M, \mu, x)$, then for some smooth character $\xi_E \colon W_E \to \Qlax$, we have 
    \[
        \pi_b \boxtimes \xi_E \subset H^0(i_b^* T_{-\mu} i_{1!}(\pi)). 
    \]
    In particular, their Fargues-Scholze parameters are equal, i.e.\ $\varphi^\FS_\pi = \varphi^\FS_{\pi_b}$. 
\end{prop}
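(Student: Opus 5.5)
We use the special open neighborhoods $(\Sht_{J_r, -\mu}^b(r), \Sht_{J_{b, r}, \mu}^1(r), \iota)$ to cut out a direct summand of the full Hecke operator. We may twist $\rho$ by a character of $D(F)$ as in \Cref{lem:twist_lattice}; by \Cref{lem:twisting_D_character} this only twists both sides and the Weil action by a character. So we may assume that $R^{J_r}_M(\rho)$ admits a $\ov{\bb{Z}}_\ell$-lattice.

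By Frobenius reciprocity and \eqref{eq:Hecke_vs_LSV}, $\cInd_{J_r}^{G(F)} R^{J_r}_M(\rho)$ surjects onto $\pi = \cInd_{J_r}^{G(F)}R^{J_r}_M(\rho)$; indeed, by transitivity of compact induction from $K_r \subset J_r$, these are equal. Hence
\[
    i_b^* T_{-\mu} i_{1!}(\pi) \cong R\Gamma_c(\cl{M}_{G, b, \mu, J_r, \bb{C}_p}, \cl{L}_{R^{J_r}_M(\rho)} \otimes \IC_\mu),
\]
using that $\mu$ is minuscule in the relevant case; in general we restrict to the open stratum. Rewrite the right-hand side on the stack side. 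Via \Cref{lem:sht_comparison}, compactly supported cohomology of $\cl{M}_{G,b,\mu,J_r}$ as a $G_b(F)$-representation is $\cInd$-type pushforward along $\Sht^b_{J_r,-\mu}\to[\ast/\und{G_b(F)}]$. The open embedding of $\Sht_{J_r,-\mu}^b(r)$, transported by $\iota$ to level $J_{b,r}$, then gives a natural map
\[
    \cInd_{J_{b, r}}^{G_b(F)} (h_2 \circ \iota)_! (h_1^* R^{J_r}_M(\rho) \otimes \IC_\mu) \to i_b^* T_{-\mu} i_{1!}(\pi).
\]
This is extension by zero from an open substack, followed by pushforward along $[\ast/\und{J_{b,r}}] \to [\ast/\und{G_b(F)}]$.

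The key step is to show that, after taking the $R^{J_{b,r}}_M(\rho)$-isotypic part at level $J_{b,r}$, this open piece splits off. Let $U$ be the open piece and $Z$ its closed complement in $\Sht^b_{J_r,-\mu}\times_{[\ast/\und{G_b(F)}]}[\ast/\und{J_{b,r}}]$. The hypothesis \eqref{eq:condition_2_question} that $!$- and $*$-pushforwards from $U$ agree on the isotypic part gives a splitting. The excision triangle $j_!j^* \to \id \to i_*i^*$ and the triangle $i_*i^! \to \id \to j_*j^*$ produce a composition $j_! \to \id \to j_*$ on the $[R^{J_{b,r}}_M(\rho)]$-part, and this composition is an isomorphism by hypothesis. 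Thus $(h_2\circ\iota)_!(\ldots)[R^{J_{b,r}}_M(\rho)]$ is a direct summand of $\mathrm{Hom}_{J_{b,r}}$-isotypic components of the global cohomology. The quasicompactness of $h_2$ is needed here to make the $\ell$-adic limits of \Cref{defi:map_from_!_to_*} compatible with the colimit over quasicompact opens defining \eqref{eq:Hecke_vs_LSV}. This is the main obstacle: matching the hand-made $*$-pushforward with the actual complex requires a careful torsion-level argument, followed by passage to $\varprojlim$ and inverting $\ell$.

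By hypothesis, the summand is $R^{J_{b,r}}_M(\rho)$ concentrated in degree $0$, with $W_E$ acting through a character $\xi_E$. This holds because the isotypic part is one-dimensional in multiplicity, and $W_E$ commutes with $J_{b,r}$, so Schur's lemma applies. Hence $\Hom_{J_{b,r}}(R^{J_{b,r}}_M(\rho), H^0(i_b^*T_{-\mu}i_{1!}\pi))$ contains $\xi_E$. By Frobenius reciprocity this gives a nonzero $G_b(F)$-map $\pi_b = \cInd_{J_{b,r}}^{G_b(F)}R^{J_{b,r}}_M(\rho) \to H^0$ that is $W_E$-equivariant for $\xi_E$. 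Since $\pi_b$ is irreducible by \Cref{prop:transfer_regular_supercuspidal}, this map is injective.

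For the last claim, the Fargues-Scholze parameters are compatible with Hecke operators. An irreducible constituent $\pi_b$ of $H^0(i_b^*T_{-\mu}i_{1!}\pi)$ therefore has $\varphi^{\FS}_{\pi_b} = \varphi^{\FS}_\pi$. This follows from the spectral action: excursion operators commute with Hecke operators, so the Bernstein-center character is preserved; see \cite[Chapter IX]{FS24}.
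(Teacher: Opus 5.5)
Your proposal is correct and is essentially the paper's argument: extension by zero from the special open neighborhood into the global Hecke complex, the hypothesis that $!\to *$ is an isomorphism on the $R^{J_{b,r}}_M(\rho)$-isotypic part forcing this map to be injective there, Schur's lemma for the $W_E$-character, and compatibility of excursion operators with Hecke operators for $\varphi^{\FS}_\pi=\varphi^{\FS}_{\pi_b}$. The differences are cosmetic: you use Frobenius reciprocity plus irreducibility of $\pi_b$ where the paper uses finite disjoint unions of translates, and you twist at the start rather than the end. Also, your identification of the Hecke complex with cohomology of the open stratum should read $\cl{M}_{G,b,\le\mu,J_r}$ with $\IC_{\le\mu}$ for non-minuscule $\mu$, which is harmless since only maps from the open piece are used.
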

\begin{proof}
    First, suppose that $\rho$ admits an $M(F)_x$-stable $\bb{\ov{Z}}_\ell$-lattice $\rho_0$. Let 
    \[
        \cl{U}(r) = \Sht_{J_r, -\mu}^b(r) \times_{[\ast / \und{J_{b, r}}]} \ast
    \]
    where $\Sht_{J_r, -\mu}^b(r) \to [\ast / \und{J_{b, r}}]$ is given by $h_2 \circ \iota$. Via the natural open embedding 
    \[
        [\ast / \und{J_{b, r}}] \hookrightarrow [\ast / \und{J_{b, r}}] \times_{[\ast / \und{G_b(F)}]} [\ast / \und{J_{b, r}}], 
    \]
    we get an open embedding $\cl{U}(r) \subset \cl{M}_{G, b, \mu, J_r}$. By construction, $\cl{U}(r)$ is stable under $J_{b, r}$ and 
    \[
        j \cdot \cl{U}(r) \cap \cl{U}(r) = \emptyset \quad (j \in G_b(F) - J_{b, r}). 
    \]
    Since the central character of $\rho$ takes values in $\ov{\bb{Z}}_\ell^\times$, $R^{J_r}_M(\rho)$ also admits a central character with values in $\ov{\bb{Z}}_\ell^\times$ and it admits a $J_r$-stable $\bb{\ov{Z}}_\ell$-lattice. Then, $\cl{U}(r) \subset \cl{M}_{G, b, \mu, J_r}$ induces a commutative diagram of $\ell$-adic cohomology groups
    \begin{center}
        \begin{tikzcd}
            H_c^0(\cl{U}(r)_{\bb{C}_p}, \cl{L}_{R^{J_r}_M(\rho)}(\tfrac{d}{2})[d]) \ar[r] \ar[d] & H_c^0(\cl{M}_{G, b, \leq \mu, J_r, \bb{C}_p}, \cl{L}_{R^{J_r}_M(\rho)} \otimes \IC_{\leq \mu}) \ar[d] \\
            H^0(\cl{U}(r)_{\bb{C}_p}, \cl{L}_{R^{J_r}_M(\rho)}(\tfrac{d}{2})[d]) & H^0(\cl{M}_{G, b, \leq \mu, J_r, \bb{C}_p}, \cl{L}_{R^{J_r}_M(\rho)} \otimes \IC_{\leq \mu}). \ar[l]
        \end{tikzcd}
    \end{center}
    Here, $H^0(\cl{M}_{\cl{G}, b, \leq \mu, \bb{C}_p}, \cl{L}_{R^{J_r}_M(\rho)} \otimes \IC_{\leq \mu})$ denotes the limit of $\ell$-adic cohomology groups over quasicompact open subsets. By qcqs base change \cite[Proposition 17.6]{Sch17} along $\Spd(\bb{C}_p) \to [\ast / \und{J_{b, r}}] \times \Div^1_E$, the left vertical arrow is identified with \eqref{eq:compact_supp_and_ordinary}, so its $R_M^{J_{b, r}}(\rho)$-isotypic part is an isomorphism by the required condition in \Cref{defi:special_open_neighborhoods}. If we forget the $W_E$-action, it is isomorphic to $R^{J_{b, r}}_M(\rho)$, so the diagram implies 
    \[
        R^{J_{b, r}}_M(\rho) \subset H_c^0(\cl{M}_{G, b, \leq \mu, J_r, \bb{C}_p}, \cl{L}_{R^{J_r}_M(\rho)} \otimes \IC_{\leq \mu})
    \]
    as a $J_{b, r}$-subrepresentation. Now, we apply the same argument to finite disjoint unions 
    \[
        \bigsqcup_{j \in \Lambda} j \cdot \cl{U}(r)_{\bb{C}_p} \subset \cl{M}_{G, b, \mu, J_r, \bb{C}_p}
    \]
    running over finite subsets $\Lambda \subset G_b(F) / J_{b, r}$. Then, we get a $G_b(F)$-equivariant inclusion
    \[
        \pi_b = \cInd_{J_{b, r}}^{G_b(F)} R^{J_{b, r}}_M(\rho) \subset H_c^0(\cl{M}_{G, b, \leq \mu, J_r, \bb{C}_p}, \cl{L}_{R^{J_r}_M(\rho)} \otimes \IC_{\leq \mu}). 
    \]
    Then, the claim on the Fargues-Scholze parameter is a formal consequence (see e.g.\ \cite[Corollary 7.23]{Tak25z}). Since $\Sht_{J_r, -\mu}^b(r)$ is defined over $\Div^1_E$, $\bigsqcup_{j \in G_b(F) / J_{b, r}} j \cdot \cl{U}(r)$ is stable under the Weil descent datum. It implies that $\pi_b \subset H^0(i_b^* T_{-\mu} i_{1!}(\pi))$ is stable under the $W_E$-action. Since $\pi_b$ is irreducible and the $W_E$-action on $H^0(i_b^* T_{-\mu} i_{1!}(\pi))$ commutes with $G_b(F)$, $W_E$ acts on $\pi_b$ by a continuous character $\xi_E$. Since $W_E^\ab = E^\times$, $\xi_E$ is automatically smooth. 

    Finally, we treat the general case. As in \Cref{lem:twist_lattice}, we take a character $\chi \colon D(F) \to \Qlax$ so that $\rho \otimes \chi$ admits an $M(F)_x$-stable lattice. Then, we have
    \[
        (\pi_b \otimes \chi) \boxtimes \xi_E \subset H^0(i_b^* T_{-\mu} i_{1!}(\pi \otimes \chi))
    \]
    for some smooth character $\xi_E \colon W_E \to \Qlax$. As in the proof of \Cref{lem:twisting_D_character}, we have $\overleftarrow{h}^*(\chi) \cong \overrightarrow{h}^*(\chi \boxtimes \xi_\chi)$ on $\Hck_{G, -\mu}^{1, b}$, so we get 
    $
        \pi_b \boxtimes (\xi_E \otimes \xi_\chi^{-1}) \subset H^0(i_b^* T_{-\mu} i_{1!}(\pi)). 
    $
\end{proof}
\begin{rmk}
    In fact, \Cref{defi:special_open_neighborhoods} is designed to imply \Cref{prop:consequence_Hecke_operator}. The main motivation for this is that if we set $\tau = \cInd_{M(F)_x}^{M(F)} \rho$, we should have $\varphi_\pi^\FS = \varphi_\tau^\FS = \varphi_{\pi_b}^\FS$ for a suitable $L$-embedding ${}^LM \to {}^LG$ (cf.\ \cite[Lemma 5.2.6]{Kal19}): this follows from the conjectural compatibility between Fargues-Scholze $L$-parameters and Kaletha's $L$-packets when $\tau$, $\pi$ and $\pi_b$ are regular supercuspidal representations. In that case, $\pi_b$ would appear in degree $0$ by the vanishing conjecture (see \cite[Conjecture 1.1 (2)]{HJ25}). Moreover, under the categorical local Langlands conjecture, $\pi_b$ would correspond on the spectral side to the action of the extremal weight space $V_{-\mu}[-\mu] \subset V_{-\mu}$ to $\pi$, so that $W_E$ would act by scalar
    \[
        W_E \xrightarrow{\varphi_\tau^\FS} \widehat{M} \rtimes W_E \to \widehat{Z_M^\circ} \rtimes W_E \xrightarrow{-\mu} \bb{G}_m. 
    \]
\end{rmk}

Recall that the study of explicit geometry around CM points has its origin in the study of stable models of modular curves (\cite{DR73}, \cite{Wei16}, etc.) and special affinoids in Lubin-Tate spaces (\cite{Yos10}, \cite{BW16}, etc.) We will explain how to construct special open neighborhoods directly from the geometry of local shtuka spaces. To simplify the geometry, we will assume the following mild condition. Later, we will restrict to the unramified case. 

\begin{lem} \label{lem:zero_dimensional_LSV}
    Suppose that the following coinvariant for the inertial action 
    \[
        X_*(Z_M^\circ)_{I_F}
    \]
    is torsion-free. Then, we have an isomorphism 
    \[
        \cl{M}_{\cl{M}, b, \mu} \cong \und{M(F) / \cl{M}(O_F)} \times \Spd(\breve{E}). 
    \]
    Moreover, under $\lvert \cl{M}_{\cl{M}, b, \mu} \rvert\cong M(F) / \cl{M}(O_F)$, the Weil descent datum is given by 
    \[
        \sigma_E(m) = z_\mu m
    \]
    for $z_\mu = \Nm_{E/ F}(\mu(\pi_E)) \in Z_M(F)$. Moreover, $z_\mu \in Z_G(F)$ if $(M, \mu)$ is unramified. 
\end{lem}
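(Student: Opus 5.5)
Write $\cl{M} = \cl{M}_{\cl{M}, b, \mu}$. The plan is to compute this space directly from its definition $\cl{M}_{M, b, \mu, K} = \Sht^b_{K, -\mu} \times_{\Bun_M^b} \ast$ with $K = \cl{M}(O_F) = M(F)_{x,0}$ (here $\cl{M}$ is the parahoric of $M$ at $x$).

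\textbf{Step 1: the Grassmannian is a point.} Since $\mu$ is central in $M$, we have $\Gr_{M, \mu, \breve{E}} \cong \Spd(\breve{E})$, as already used before. By \Cref{prop:ShtvsGr} (level $K$), $\Sht_{K, -\mu} \cong [\Spd(\breve{E}) / \und{K}]$, where $K$ acts trivially.

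\textbf{Step 2: compute the fiber over $\Bun_M^b$.} The Beauville--Laszlo map sends the unique point to the bundle $\cl{E}_b$; indeed $[b]$ corresponds to $\mu$ by construction. We need $\Sht_{K,-\mu} \to \Bun_M$ to land in $\Bun_M^b$, and then
\[
    \cl{M} = [\Spd(\breve{E})/\und{K}] \times_{[\ast/\und{\wtd{M}_b}]} \ast .
\]
Since $\Bun_M^b \cong [\ast/\und{M_b(F)}]$ and $M_b = M$ because $b$ is central, this fiber product is $\und{M(F)/K} \times \Spd(\breve{E})$, provided the map of automorphism groups $K \to M(F)$ is the inclusion. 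To verify this, and to get the correct component structure, compare with the Rapoport--Zink type description: the points of $\cl{M}$ over $\bb{C}_p$ are pairs consisting of a $K$-lattice shtuka and a quasi-isogeny to $\cl{E}_b$. Their connected components are governed by the Kottwitz map $\kappa: M(F) \to X_*(Z_M^\circ)$-type invariants (more precisely $\pi_1(M)_{I_F}$).

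The torsion-freeness hypothesis on $X_*(Z_M^\circ)_{I_F}$ is, I expect, precisely what guarantees that no extra finite étale covers of $\Spd(\breve{E})$ appear. The $\und{K}$-torsor $\Gr \to \Sht$ restricted to the special point could a priori be a nontrivial torsor over $\Spd(\breve{E})$ coming from Galois action on the $E$-point $x_\mu$. However, $x_\mu$ is defined over $E$, and over $\breve E$ the relevant Galois cohomology $H^1$ with values in $K$ modulo the trivial action vanishes when $\pi_1(M)_{I}$-torsion is absent. This is the main obstacle: checking that the inertia-torsion in $\pi_1(M)_I$ is exactly what could obstruct the splitting of $\Sht_{K,-\mu}\times_{\Bun}\ast$ into copies of $\Spd(\breve{E})$. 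I would first try reducing to the torus $Z_M^\circ$ via $M \to M/M^{\der}$, where Kottwitz's isomorphism and the explicit description of $\Bun_T$ make it concrete.

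\textbf{Step 3: the Weil descent datum.} The descent datum on $\Sht_{K,-\mu}$ over $\Div^1_E$ corresponds, on the trivialization by $b$, to Frobenius $\sigma_E = \sigma^{f}$ on $\breve{E}$. Transporting the quasi-isogeny through $b\sigma$ iterated $[E_{\un}:F]$ times multiplies by $b\sigma(b)\cdots = \Nm_{\breve{E}/\breve{F}}$-type product, together with the ramified part $\mu(\pi_E)$. This yields $\sigma_E(m) = z_\mu m$ with $z_\mu = \Nm_{E/F}(\mu(\pi_E))$, which is central in $M$ because $\mu$ is. In the unramified case, $\Nm_{E/F}\mu(\pi) = \sum_\tau \tau\mu(\pi)$, and by ellipticity this sum lies in $X_*(Z_G^\circ)$ (as in the CM lemma), so $z_\mu \in Z_G(F)$.
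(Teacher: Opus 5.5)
Steps 1 and 3 are essentially fine. Step 2, however, carries the entire content of the first claim and is the only place the torsion-freeness hypothesis is used, and it is left unproved; the mechanism you sketch for it is not the right one.

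\textbf{Where Step 2 goes wrong.} The fiber product $[\Spd(\breve{E})/\und{K}] \times_{[\ast/\und{M(F)}]} \ast$ is not $\und{M(F)/K} \times \Spd(\breve{E})$ just because $K \to M(F)$ is the inclusion.
\begin{itemize}
\item The map $\Spd(\breve{E}) \to \Bun_M^b \cong [\ast/\und{M(F)}]$ defines an $\und{M(F)}$-torsor $P$ over $\Spd(\breve{E})$; this is the infinite-level space $\cl{M}_{M, b, \mu, \infty}$.
\item The fiber product is $P/\und{K}$, and this is constant only if the inertia $I_E$ acts trivially on $M(F)/K$.
\item $P$ itself is genuinely nontrivial. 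For example, in the Lubin--Tate CM case $I_E$ acts on infinite-level CM points through a Lubin--Tate type character $I_E \to O_E^\times = K$. So the relevant $K$-valued class does \emph{not} vanish, contrary to your suggestion.
\item The $\und{K}$-torsor $\Gr_{M,\mu} \to \Sht_{K,-\mu}$ over the special point is $\Spd(\breve{E}) \to [\Spd(\breve{E})/\und{K}]$ with trivial action, hence trivial. No obstruction lives there.
\end{itemize}
What must be shown is that the image of $I_E$ in $M(F)/K$ is trivial, i.e.\ that $\cl{M}_{\cl{M}, b, \mu}$ has an $\breve{E}$-valued point. You flag this as ``the main obstacle'' but do not resolve it.

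\textbf{The missing argument.}
\begin{enumerate}
\item Since $b$ and $\mu$ factor through $Z = Z_M^\circ$, work first with $\cl{M}_{\cl{Z}, b, \mu}$ for $\cl{Z} = Z_{\cl{M}}^\circ$. Its $\bb{C}_p$-points form $Z(F)/\cl{Z}(O_F)$.
\item Fix a point $x$ and write $\tau \cdot x = z_\tau \cdot x$ for $\tau \in I_E$. The inertia action commutes with the $Z(F)$-action and $Z$ is commutative, so $\tau$ acts on \emph{every} point by translation by $[z_\tau]$. Hence $\tau \mapsto [z_\tau]$ is a homomorphism $I_E \to Z(F)/\cl{Z}(O_F)$.
\item This homomorphism has finite image: the space is \'{e}tale over $\breve{E}$, so $x$ is defined over a finite extension of $\breve{E}$.
\item Torsion-freeness of $X_*(Z_M^\circ)_{I_F}$ makes $Z(F)/\cl{Z}(O_F)$ torsion-free (via the Kottwitz map). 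So the homomorphism is trivial and $x$ is an $\breve{E}$-point.
\item Push $x$ forward along $\cl{M}_{\cl{Z}, b, \mu} \to \cl{M}_{\cl{M}, b, \mu}$. The period map exhibits $\cl{M}_{\cl{M}, b, \mu}$ as \'{e}tale locally $\und{M(F)/\cl{M}(O_F)}$ over $\Gr_{M, \mu} = \Spd(\breve{E})$, and $M(F)$ acts transitively and over $\breve{E}$. The product decomposition follows.
\end{enumerate}

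The reduction should go through the inclusion $Z_M^\circ \subset M$, along which points push forward. Your proposed quotient $M \to M/M^{\der}$ would require lifting an $\breve{E}$-point back, and $M/M^{\der}$ is not the torus the hypothesis is about.

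\textbf{Step 3.} The ``ramified part'' is not an extra factor; it is already inside $b$. Choose $b = \Nm_{\breve{E}/\breve{F}}(\mu(\pi_E))$, which is $\sigma_E$-fixed. Reduce to $\cl{Z}$, which is allowed because the descent datum commutes with the $M(F)$-action. The explicit descent datum then gives
\[
\prod_{0 \le i < [F_{\un}:F]} \sigma^i(b) = \Nm_{E/F}(\mu(\pi_E)) = z_\mu,
\]
as in the paper. Your argument for the final claim ($z_\mu \in Z_G(F)$ in the unramified case) is the paper's.
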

\begin{proof}
    Since $\mu$ is central in $M$, the Grothendieck-Messing period map
    \[
        \cl{M}_{\cl{M}, b, \mu} \to \Gr_{M, \mu} = \Spd(\breve{E})
    \]
    is \'{e}tale locally isomorphic to $\und{M(F) / \cl{M}(O_F)}$ (see the proof of \cite[Proposition 23.3.3]{SW20}). Thus, for the first claim, it is enough to show that $\cl{M}_{\cl{M}, b, \mu}$ admits an $\breve{E}$-valued point. We set $Z = Z_M^\circ$ and $\cl{Z} = Z_{\cl{M}}^\circ$. As previously, we have 
    \[
        \cl{M}_{\cl{Z}, b, \mu, \bb{C}_p} \cong \und{Z(F) / \cl{Z}(O_F)} \times \Spd(\bb{C}_p). 
    \]
    Fix a point $x \in \cl{M}_{\cl{Z}, b, \mu}(\bb{C}_p)$ and consider the inertial orbit $I_E \cdot x$. Since $ \cl{M}_{\cl{Z}, b, \mu}$ is \'{e}tale over $\Spd(\breve{E})$, $x$ is defined over a finite extension of $\breve{E}$, so $I_E \cdot x$ is finite. On the other hand, let $z_\tau \in Z(F)$ be an element satisfying $\tau \cdot x = z_\tau \cdot x$ for each $\tau \in I_E$. Since the inertial action and the $Z(F)$-action commute, $\tau \cdot y = z_\tau \cdot y$ for every $y \in \cl{M}_{\cl{Z}, b, \mu}(\bb{C}_p)$. In particular, 
    \[
        I_E \to Z(F) / \cl{Z}(O_F) ,\quad \tau \mapsto [z_\tau]
    \]
    is a homomorphism with finite image. However, the given assumption on $X_*(Z_M^\circ)_{I_F}$ implies that $Z(F) / \cl{Z}(O_F)$ is torsion-free (see \cite[Corollary 11.7.2]{KP23}). Thus, the above homomorphism is trivial and $x$ is defined over $\breve{E}$. Thus, we get the first claim. 

    Next, we compute the Weil descent datum. It commutes with the inner action, so we may again reduce to $\cl{M}_{\cl{Z}, b, \mu}$. Here, we choose $b = \Nm_{\breve{E} / \breve{F}}(\mu(\pi_E))$ so that it is defined over the maximal unramified subextension $F_\un / F$ of $E$. Then, $\sigma_E(b) = b$, so the second claim follows from the explicit form of the Weil descent datum in \cite[(3.1.6)]{PR24} (see also \cite[Construction 5.10]{Tak25z}), which implies 
    \[
        \sigma_E(m) = \prod_{0 \leq i < [F_\un \colon F]} \sigma^i(b) \cdot m = z_\mu m
    \]
    for each $m \in M(F) / \cl{M}(O_F)$. When $M$ is unramified, $F_\un = E$ and we may take $\pi_E = \pi \in F$. Then $z_\mu = \mu_z(\pi)$ with $\mu_z = \sum_{0 \leq i < [F_\un \colon F]} \sigma^i \mu$. Since $\mu_z$ is $\Gal(F_\un / F)$-invariant and $Z_M^\circ / Z_G^\circ$ is anisotropic, we have $\mu_z \in X_*(Z_G^\circ)$. 
\end{proof}
\begin{rmk}
    The condition on $X_*(Z_M^\circ)_{I_F}$ automatically holds when $(M, \mu)$ is unramified or $Z_M^\circ$ is the product of induced tori. In particular, it always holds for $G = \GL_n$ since $Z_M^\circ$ is a product of the multiplicative groups $E^\times$ of some finite extensions $E / F$. 
\end{rmk}

In the classical setting, local Shimura varieties are considered at (sometimes deeper than) parahoric levels, so the geometry of local Shimura varieties is first related to $\Sht_{J_r^0, -\mu}^b$ where 
\[
    J_r^0 = J_r \cap \cl{G}(O_F) , \quad 
    J_{b, r}^0 = J_{b, r} \cap \cl{G}_b(O_F). 
\]
One needs to find an $M(F)_x$-stable open substack $\Sht_{J_r^0, -\mu}^b(r) \subset \Sht_{J_r^0, -\mu}^b$ to obtain special open neighborhoods at levels $(J_r, J_{b, r})$. For this construction, we find some interaction with \Cref{thm:canonical_level_structures}: later, we will construct $\Sht_{J_r^0, -\mu}^b(r)$ as the image of the \textit{canonical trivialization}
\[
    \Sht_{\cl{G}, -\mu}^b(r) \to \Sht_{J_r^0, -\mu}
\]
on some open neighborhood $\Sht_{\cl{G}, -\mu}^b(r) \subset \Sht_{\cl{G}, -\mu}^b$ of the CM point. We will rewrite this construction in terms of local shtuka spaces. 

First, $M(F)_x$ acts on $\cl{M}_{\cl{G}, b, \mu}$ since $M(F)_x$ normalizes $\cl{G}(O_F) = G(F)_{x, 0}$. For each $m \in M(F)_x$, let $\Delta(m) = (m , m) \in G(F) \times G_b(F)$ act on $\cl{M}_{\cl{G}, b, \mu}$ diagonally through the $M(F)_x$-action and the $G_b(F)$-action. Now, recall $d = \dim \Gr_{G, \mu} = \dim \cl{M}_{\cl{G}, b, \mu}$. 

\begin{prop} \label{prop:restatement_LSV}
    Suppose that the coinvariant $X_*(Z_M^\circ)_{I_F}$ is torsion-free and we have
    \[
        J_r = J_r^0 M(F)_x ,\quad J_{b, r} = J_{b, r}^0 M(F)_x
    \]
    for some $r \geq 0$. If there is a connected quasicompact open subset 
    $
        \cl{U}(r) \subset \cl{M}_{\cl{G}, b, \mu}
    $
    with the following conditions, then there are special open neighborhoods at levels $(J_r, J_{b, r})$ of depth $r$. 
    \begin{enumerate}
        \item $\cl{U}(r)$ is stable under $J^0_{b, r}$ and $j \cdot \cl{U}(r) \cap \cl{U}(r) = \emptyset$ for $j \in G_b(F) - J_{b, r}^0$. Moreover, 
        \[
            \Delta(m)(\cl{U}(r)) = \cl{U}(r) \quad (m \in M(F)_x) ,\quad
            \sigma_E(\cl{U}(r)) = z_\mu \cdot \cl{U}(r)
        \]
        as open subsets of $\cl{M}_{\cl{G}, b, \mu}$. Here, $z_\mu$ acts via the $G_b(F)$-action. 
        \item The restriction of $\pi_\GM$ to $\bigsqcup_{j \in \cl{G}_b(O_F) / J^0_{b, r}} j \cdot \cl{U}(r)$ is an open immersion. 
        \item There is a unique $\breve{E}$-valued point $x_\CM \in \cl{U}(r) \cap \cl{M}_{\cl{M}, b, \mu}$ and a unique section
        \[
            \can_r \colon \cl{U}(r) \to \cl{M}_{G, b, \mu, J^0_r}
        \]
        sending $x_\CM$ into its natural lift via $\cl{M}_{\cl{M}, b, \mu} \subset \cl{M}_{G, b, \mu, J^0_r}$. Moreover, the open image $\Img(\can_r) \subset \cl{M}_{G, b, \mu, J^0_r}$ is stable under $J_{b, r}^0$. 
        \item For every $(G, G_b, M)$-supergeneric $\rho \in \Irr^\sm(M(F)_x)$ of depth $r$, the natural map
        \begin{equation} \label{eq:map_from_compact_to_usual}
            R\Gamma_c(\cl{U}(r)_{\bb{C}_p}, \can_r^*\cl{L}_{R^{J_r}_M(\rho)})[R^{J_{b, r}}_M(\rho)] \to R\Gamma(\cl{U}(r)_{\bb{C}_p}, \can_r^*\cl{L}_{R^{J_r}_M(\rho)})[R^{J_{b, r}}_M(\rho)]
        \end{equation}
        is an isomorphism and isomorphic to $R^{J_{b, r}}_M(\rho)[-d]$ in $\cl{D}(J_{b, r}, \Qla)$. 
    \end{enumerate}
\end{prop}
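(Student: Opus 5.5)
We first build $\Sht_{J_r, -\mu}^b(r)$ and $\Sht_{J_{b,r},\mu}^1(r)$ from $\cl{U}(r)$, then identify the functor of \Cref{defi:special_open_neighborhoods} with the cohomology in (4).

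\textbf{Construction at parahoric level.} Set $\cl{W} = \Img(\can_r) \subset \cl{M}_{G, b, \mu, J_r^0}$. By (3) it is open and $J_{b,r}^0$-stable. Since $\can_r$ is a section over $\cl{U}(r)$, the map $\cl{W} \to \cl{U}(r)$ is an isomorphism. By (1), the translates $j\cdot\cl{W}$ for $j \in G_b(F)/J_{b,r}^0$ are pairwise disjoint. Recall $\cl{M}_{G,b,\mu,J_r^0} = \Sht^b_{J_r^0,-\mu}\times_{\Bun_G^b}\ast$ and $\Bun_G^b = [\ast/\und{G_b(F)}]$. Hence the $G_b(F)$-saturation $\bigsqcup_j j\cdot \cl{W}$ descends to an open substack
\[
\Sht^b_{J_r^0,-\mu}(r)\cong [\cl{W}/\und{J^0_{b,r}}]\subset \Sht^b_{J_r^0,-\mu}.
\]
It contains the CM point $[x_\CM/\und{M(F)_x\cap \cl{G}(O_F)}]$. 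The Weil descent condition $\sigma_E(\cl{U}(r)) = z_\mu\cdot\cl{U}(r)$ in (1) makes the saturation stable under $\sigma_E$, so the substack is defined over $\Div^1_E$.

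\textbf{Enlarging the level to $(J_r, J_{b,r})$.} Here we use $\Delta(m)(\cl{U}(r)) = \cl{U}(r)$ and $J_r = J_r^0 M(F)_x$. The plan is to show that $\cl{W}$ is stable under $\Delta(m)$ for $m\in M(F)_x$. Both $\Delta(m)\circ\can_r\circ\Delta(m)^{-1}$ and $\can_r$ are sections over $\cl{U}(r)$, and $x_\CM$ is fixed by $\Delta(m)$ because the CM locus carries trivial diagonal $M(F)_x$-action (\Cref{lem:zero_dimensional_LSV}, \Cref{lem:diagonal_central_action}). Since $\cl{U}(r)$ is connected, uniqueness of $\can_r$ gives equality, as in the proof of \Cref{thm:canonical_level_structures}. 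Quotienting by $M(F)_x$ (using $J_r = J_r^0M(F)_x$, and that the $J_r$-translates of $\cl{W}$ meet only along the $M(F)_x$-orbit) yields an open substack $\Sht^b_{J_r,-\mu}(r)\cong [\cl{W}/\und{J_{b,r}}]$.

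\textbf{The isomorphism $\iota$.} By \Cref{lem:sht_comparison}, the same substack pulled back along $J_{b,r}$ gives $\Sht^1_{J_{b,r},\mu}(r)$ together with an isomorphism $\iota$ over $\Hck^{1,b}_{G,-\mu}$. Condition (2) should ensure that this is an open substack of $\Sht^1_{J_{b,r},\mu}$: via $\pi_\GM$, $\cl{W}$ maps isomorphically onto an open subset of $\Gr_{G_b,-\mu}$ whose $\cl{G}_b(O_F)$-translates are disjoint. Then
\[
h_2\circ\iota\colon [\cl{W}/\und{J_{b,r}}]\to[\ast/\und{J_{b,r}}]\times\Div^1_E,
\]
and $h_2$ is quasicompact because $\cl{U}(r)$ is quasicompact.

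\textbf{Identifying the functor.} Apply qcqs base change along $\Spd(\bb{C}_p)\to[\ast/\und{J_{b,r}}]\times\Div^1_E$. This identifies the map \eqref{eq:compact_supp_and_ordinary} for $\rho' = R^{J_r}_M(\rho)$ with the map \eqref{eq:map_from_compact_to_usual} shifted by $[d]$. The pullback $h_1^*\rho'$ restricted to $\cl{W}\cong\cl{U}(r)$ is $\can_r^*\cl{L}_{\rho'}$, and $\IC_\mu = \Qla(\tfrac d2)[d]$ cancels the shift $[-d]$. To reduce to lattices, use \Cref{defi:map_from_!_to_*}, \Cref{lem:twist_lattice} and \Cref{lem:twisting_D_character}. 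Condition (4) then gives exactly the required isomorphism onto $R^{J_{b,r}}_M(\rho)$.

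The main obstacle I expect is the enlargement step. One must check carefully that the diagonal $M(F)_x$-action, the $J_r^0$-level, and the left/right action conventions (\Cref{conv:group_action}) combine so that $\Sht^b_{J_r,-\mu}(r)$ is genuinely an open substack of $\Sht^b_{J_r,-\mu}$ and not merely its image. In particular, one must verify that $J_r\cap\mathrm{Stab}$ produces no extra identifications beyond $M(F)_x$, which should again follow from disjointness in (1) together with the stabilizer computation of \Cref{lem:CM_points_general_depth}.
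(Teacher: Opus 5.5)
Your proposal is correct and follows essentially the paper's proof. The paper works with the infinite-level preimage $\cl{U}^\infty(r)\subset\cl{M}_{G,b,\mu,\infty}$ of $\Img(\can_r)$ rather than with $\cl{W}=\Img(\can_r)$ at level $J_r^0$. Your one-line appeal to (2) is made explicit there: the paper proves $(g,j)\cdot\cl{U}^\infty(r)\cap\cl{U}^\infty(r)=\emptyset$ for $(g,j)\in G(F)\times\cl{G}_b(O_F)$ outside $J_r^0\times J_{b,r}^0$. It uses that $\pi_\GM$ is a $\und{G(F)}$-torsor over the admissible locus and that $G(F)$ acts freely. In your language, this amounts to $\Sht^1_{J_{b,r},\mu}\cong\und{J_{b,r}}\backslash\Gr^1_{G_b,-\mu,\breve{E}}$ together with $\pi_\GM|_{\cl{W}}$ being a $J_{b,r}$-equivariant open immersion with $J_{b,r}$-stable image. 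That, and not disjointness of $\cl{G}_b(O_F)$-translates, is what you actually need.

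One justification should be corrected. The diagonal $M(F)_x$-action on the CM locus $\cl{M}_{\cl{M},b,\mu}\cong\und{M(F)/\cl{M}(O_F)}\times\Spd(\breve{E})$ is not trivial in general: only $Z_G(F)$ acts trivially (\Cref{lem:diagonal_central_action}), and a non-central $m$ acts by conjugation, fixing only the base point. So the fact that $\Delta(m)$ fixes $x_\CM$ and its natural lift must come, as in the paper, from $\Delta(m)(\cl{U}(r))=\cl{U}(r)$ together with the uniqueness of $x_\CM$ in (3). Likewise, $\sigma_E(\cl{W})=z_\mu\cdot\cl{W}$ needs condition (1) together with the uniqueness of $\can_r$ applied to $z_\mu^{-1}\circ\sigma_E$; condition (1) alone does not suffice.
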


Here, $\cl{L}_{R_M^{J_r}(\rho)}$ is a $G_b(F)$-equivariant \'{e}tale $\Qla$-local system on $\cl{M}_{G, b, \mu, J_r}$ and its pullback to $\cl{M}_{G, b, \mu, J^0_r}$ admits a $\bb{\ov{Z}}_\ell$-lattice since $J_r^0$ is compact. As $\Img(\can_r)$ is stable under $J_{b, r}^0$ and the diagonal action of $M(F)_x$ by the uniqueness of $\can_r$, the complexes in (4) naturally admit actions of $J_{b, r}^0$ and $M(F)_x$. As they are compatible and give rise to the action of $J_{b, r} = J_{b, r}^0 M(F)_x$, we may require condition (4). 

\begin{proof}
    First, we construct $\Sht_{J_r, -\mu}^b(r)$. Let $\cl{U}^\infty(r) \subset \cl{M}_{G, b, \mu, \infty}$ be the inverse image of $\Img(\can_r) \subset \cl{M}_{G, b, \mu, J^0_r}$. Then, conditions (1) and (3) imply 
    \[
        (g, j) \cdot \cl{U}^\infty(r) \cap \cl{U}^\infty(r) = \emptyset \quad ((g, j) \in \cl{G}(O_F) \times G_b(F) - J^0_r \times J^0_{b, r}).
    \]
    Moreover, $\Img(\can_r)$ is stable under $\Delta(m)$ for $m \in M(F)_x$ since $\cl{U}(r)$ is connected and $\can_r$ is uniquely determined by the value of $x_\CM$. Thus, $\Delta(m) \cdot \cl{U}^\infty(r) = \cl{U}^\infty(r)$ for every $m \in M(F)_x$. Now, since $J_r = J_r^0 M(F)_x$, 
    \[
        \bigsqcup_{j \in G_b(F) / J_{b, r}^0} j \cdot \cl{U}^\infty(r) \subset \cl{M}_{G, b, \mu, \infty}
    \]
    is stable under $J_r \times G_b(F)$. Thus, we may take an open substack 
    \[
        \Sht_{J_r, -\mu}^b(r) = \bigsqcup_{j \in G_b(F) / J_{b, r}^0} j \cdot \cl{U}^\infty(r) / \und{J_r \times G_b(F)} \subset \Sht_{J_r, -\mu}^b 
    \]
    over $\Spd(\breve{E})$. By the uniqueness of $\can_r$ and condition (1), we also have $\sigma_E(\Img(\can_r)) = z_\mu \cdot \Img(\can_r)$. Thus, $\Sht_{J_r, -\mu}^b(r)$ can be defined over $\Div_E^1$. 

    Next, we construct $\Sht_{J_{b, r}, \mu}^1(r)$. We will show 
    \begin{equation} \label{eq:stabilizer_U_infty}
        (g, j) \cdot \cl{U}^\infty(r) \cap \cl{U}^\infty(r) = \emptyset \quad ((g, j) \in G(F) \times \cl{G}_b(O_F) - J^0_{r} \times J^0_{b, r}).
    \end{equation} 
    Suppose that $(g, j) \cdot \cl{U}^\infty(r) \cap \cl{U}^\infty(r)$ is nonempty. Let $x \in \cl{U}^\infty(r)$ be a geometric point such that $y = (g, j) \cdot x \in \cl{U}^\infty(r)$. Since the Grothendieck-Messing period map
    \[
        \pi_{\GM} \colon \cl{M}_{G, b, \mu, \infty} \to \Gr_{G_b, -\mu, \breve{E}}
    \]
    is a $\und{G(F)}$-torsor on the admissible locus, $\pi_\GM(y) = \pi_\GM(j \cdot x)$. Let the subscript $(-)_0$ denote the projection $\cl{U}^\infty(r) \to \Img(\can_r)$. Then, $\pi_\GM(y_0) = \pi_\GM(j \cdot x_0)$, so condition (2) implies $y_0 = j \cdot x_0$. In particular, $y \in J^0_r \cdot (j \cdot x)$ and we also have $j \in J^0_{b, r}$ by condition (1). Since the $G(F)$-action on $\cl{M}_{G, b, \mu, \infty}$ is free, we get $g \in J^0_r$. Thus, \eqref{eq:stabilizer_U_infty} follows. 
    
    By \cite[Corollary 23.3.2]{SW20}, there is an isomorphism
    \[
        \cl{M}_{G, b, \mu, \infty} \cong \cl{M}_{G_b, b^{-1}, -\mu, \infty}. 
    \]
    By \eqref{eq:stabilizer_U_infty}, since $J_{b, r} = J_{b, r}^0 M(F)_x$, we have an open subspace stable under $G(F) \times J_{b, r}$
    \[
        \bigsqcup_{g \in G(F) / J_{r}^0} g \cdot \cl{U}^\infty(r) \subset \cl{M}_{G_b, b^{-1}, -\mu, \infty}. 
    \]
    Thus, we may take an open substack 
    \[
        \Sht_{J_{b, r}, \mu}^1(r) = \bigsqcup_{g \in G(F) / J_{r}^0} g \cdot \cl{U}^\infty(r) / \und{G(F) \times J_{b, r}} \subset \Sht_{J_{b, r}, \mu}^1.  
    \]
    As $\sigma_E(\cl{U}^\infty(r)) = z_\mu \cdot \cl{U}^\infty(r)$, $\Sht_{J_{b, r}, \mu}^1(r)$ can be defined over $\Div^1_E$. Now, we have
    \[
        \Sht_{J_r, -\mu}^b(r) \cong \bigsqcup_{j \in J_{b, r} / J^0_{b, r}} j \cdot \cl{U}^\infty(r) / \und{J_r \times J_{b, r}} =  \bigsqcup_{g \in J_{r} / J^0_{r}} g \cdot \cl{U}^\infty(r) / \und{J_r \times J_{b, r}} \cong \Sht_{J_{b, r}, \mu}^1(r)
    \]
    since $J_r / J_r^0 \cong M(F)_x / M(F)_{x, 0} \cong J_{b, r} / J_{b, r}^0$. Since $\bigsqcup_{j \in J_{b, r} / J^0_{b, r}} j \cdot \cl{U}^\infty(r)$ is stable under $\sigma_E$, this provides $\iota \colon \Sht_{J_r, -\mu}^b(r) \cong \Sht_{J_{b, r}, \mu}^1(r)$ over $\Div^1_E$. 

    It remains to see that condition (4) implies the condition of \Cref{defi:special_open_neighborhoods}. In terms of the notation loc. cit., the pullback of $h_2 \circ \iota$ along $\Spd(\bb{C}_p) \to [\ast / \und{J_{b, r}}] \times \Div^1$ is equal to
    \[
        \cl{U}(r)_{\bb{C}_p} \to \Spd(\bb{C}_p). 
    \]
    By qcqs base change \cite[Proposition 17.6]{Sch17} along $\Spd(\bb{C}_p) \to [\ast / \und{J_{b, r}}] \times \Div^1$,  
    \[
        (h_2 \circ \iota)_! (h_1^*R^{J_r}_M(\rho) \otimes \IC_\mu) \to (h_2 \circ \iota)_* (h_1^*R^{J_r}_M(\rho) \otimes \IC_\mu)
    \]
    is identified with $R\Gamma_c(\cl{U}(r)_{\bb{C}_p}, \cl{L}_{R^{J_r}_M(\rho)}) \to R\Gamma(\cl{U}(r)_{\bb{C}_p}, \cl{L}_{R^{J_r}_M(\rho)})$ up to the shift by $d$. Thus, the condition of \Cref{defi:special_open_neighborhoods} follows from condition (4). 
\end{proof}

\begin{rmk} \label{rmk:Artin_vanishing}
    In practice, we assume that $\mu$ is minuscule and $\cl{U}(r)$ is constructed to be affinoid. Then, the Artin vanishing (see \cite{Ber96}, \cite{Han20}) implies 
    \[
        R\Gamma_c(\cl{U}(r)_{\bb{C}_p}, \cl{L}_{R^{J_r}_M(\rho)}) \in \cl{D}^{\geq d}, \quad 
        R\Gamma(\cl{U}(r)_{\bb{C}_p}, \cl{L}_{R^{J_r}_M(\rho)}) \in \cl{D}^{\leq d}. 
    \]
    This is why we expect the middle concentration in condition (4). 
\end{rmk}



It seems a bit tricky to treat the $M(F)_x$-action on $\cl{U}(0)$. Nevertheless, we will later assume that $x \in \cl{B}(M, F)$ is hyperspecial, in which case it is essentially enough to treat the action of $M(F)_{x, 0}$ by the following lemma. 

\begin{lem} \label{lem:hyperspecial_M(F)_x}
    When $x \in \cl{B}(M, F)$ is hyperspecial, we have $M(F)_x = Z_G(F) \cl{M}(O_F)$. 
\end{lem}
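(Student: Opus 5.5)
The plan is to compare $M(F)_x$ with $Z_G(F)\cl{M}(O_F)$ through the Kottwitz homomorphism of $M$. Since $x\in\cl{B}(M,F)$ is hyperspecial, the parahoric $\cl{M}(O_F)=M(F)_{x,0}$ is the full stabilizer of $x$ in $M(F)^1=\ker(\kappa_M)$. Consequently
\[
    M(F)_x/\cl{M}(O_F)\hookrightarrow M(F)/M(F)^1\cong X^*(Z(\widehat{M})^{I_F})^{\Gal}.
\]
Here $M(F)_x$ is the stabilizer of $x$ in the \emph{reduced} building, so its image in $M(F)/M(F)^1$ is exactly the subgroup of elements acting trivially on the reduced building $\cl{B}(M,F)$. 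Equivalently, it is the subgroup whose image in the Iwahori--Weyl group has trivial translation part on $\cl{A}(M,S)$ modulo $X_*(Z_M^\circ)$-directions and fixes $x$.

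The key step is to show that this subgroup comes from the center. First we observe that $M$ is elliptic in $G$ (the CM situation), so $Z_M^\circ/Z_G^\circ$ is anisotropic. Next we argue that an element $m\in M(F)$ fixing the hyperspecial point $x$ of the reduced building can be written as $m=zk$ with $z\in Z_M(F)$ and $k\in\cl{M}(O_F)$. To see this, note that hyperspecial means the stabilizer of $x$ in $M^{\ad}(F)$ is $\cl{M}^{\ad}(O_F)$, up to the image of $M(F)$. Since $x$ is hyperspecial, $\cl{M}$ is reductive over $O_F$ and $M$ is unramified. By Lang's theorem and Hensel's lemma, $M(F)\to M^{\ad}(F)$ restricted to $\cl{M}(O_F)\to\cl{M}^{\ad}(O_F)$ has image containing every element of the image of $M(F)$ that lies in $\cl{M}^{\ad}(O_F)$; equivalently, we use $\ker(M\to M^{\ad})=Z_M$ together with $H^1(O_F,\cl{Z}_M)$ considerations. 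This gives $M(F)_x=Z_M(F)\cl{M}(O_F)$.

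The remaining step reduces $Z_M(F)$ to $Z_G(F)$ modulo $Z_M(F)\cap\cl{M}(O_F)$. Since $Z_M^\circ/Z_G^\circ$ is anisotropic and unramified, its $F$-points are compact, equal to its $O_F$-points, and $H^1$ of the smooth connected model vanishes. Hence $Z_M(F)=Z_G(F)\cdot Z_{\cl{M}}(O_F)$, at least at the level of identity components. For possibly disconnected centers we would pass through $\kappa$: the valuation map $Z_M(F)\to X_*(Z_M^\circ)^{\sigma}$ has image that, after tensoring with $\bb{Q}$, is generated by $X_*(Z_G^\circ)$, and the unramified hypothesis makes the relevant coinvariants torsion-free, in the spirit of the torsion-freeness used in \Cref{lem:zero_dimensional_LSV}.

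I expect the main obstacle to be the integral issues: the finite component groups of $Z_M$ and $Z_G$, and the surjectivity of $\cl{M}(O_F)\to\cl{M}^{\ad}(O_F)$ onto the relevant subgroup. I would handle these by working with the exact sequence $1\to Z_{\cl{M}}\to\cl{M}\to\cl{M}^{\ad}\to1$ of smooth $O_F$-group schemes, where $Z_{\cl{M}}$ is of multiplicative type. I would then compare the $H^1$ over $O_F$, which is trivial for connected and unramified parts and controlled by Lang's theorem, with the $H^1$ over $F$.
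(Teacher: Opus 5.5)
There is a gap at the decisive step. Your outline has the right shape: first prove $M(F)_x = Z_M(F)\cl{M}(O_F)$, then use ellipticity to trade $Z_M(F)$ for $Z_G(F)$. You are also right that ellipticity of $M$ is indispensable. But the step carrying all the content is assumed rather than proved.

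Hyperspecial does not \emph{mean} that the stabilizer of $x$ in $M^{\ad}(F)$ is $\cl{M}^{\ad}(O_F)$; it means the parahoric model is reductive. The stabilizer statement is exactly the centerless case of the identity $M(F)_x = Z_M(F)\cl{M}(O_F)$ you are trying to establish. Your first paragraph does the same when it asserts that the image of $M(F)_x$ in $M(F)/M(F)^1$ is that of the elements acting trivially on $\cl{B}(M,F)$. This is not formal (it fails for the midpoint of an edge of the tree of $\mathrm{PGL}_2$), and it is again the conclusion. So the proposal never explains why an element fixing $x$ is central modulo $\cl{M}(O_F)$.

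The passage from $M^{\ad}$ back to $M$ is also not supplied by Lang's theorem and Hensel's lemma. The center $Z_{\cl{M}}$ can be disconnected, e.g.\ contain a factor $\mu_n$, and $H^1_{\mathrm{fppf}}(O_F,\mu_n)=O_F^\times/(O_F^\times)^n$ is in general nonzero. Then $\cl{M}(O_F)\to\cl{M}^{\ad}(O_F)$ is not surjective. What you would need is injectivity of $H^1_{\mathrm{fppf}}(O_F,Z_{\cl{M}})\to H^1(F,Z_M)$, which requires a separate valuation argument that you do not give.

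The missing idea is the Cartan decomposition, applied directly to $M$, as the paper does.
\begin{itemize}
\item Take a maximal split torus $S\subset M$ with $x\in\cl{A}(M,S)$, and let $T$ be its centralizer, an unramified maximal torus.
\item Then $M(F)=\cl{M}(O_F)T(F)\cl{M}(O_F)$, so $M(F)_x=\cl{M}(O_F)\,(T(F)\cap M(F)_x)\,\cl{M}(O_F)$.
\item Moreover $T(F)=S(F)T(F)_0$ with $T(F)_0\subset\cl{M}(O_F)$, so it suffices to treat $t=\nu(\pi)$ with $\nu\in X_*(S)$.
\item Such $t$ acts on the reduced apartment by the translation through the image of $\nu$. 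This apartment is an affine space under $X_*(S)_{\bb{R}}/X_*(Z_{M,s})_{\bb{R}}$, where $Z_{M,s}$ is the maximal split central torus of $M$.
\item Ellipticity gives $Z_{M,s}=Z_{G,s}$. So fixing $x$ forces $\nu\in X_*(S)\cap X_*(Z_{G,s})_{\bb{R}}=X_*(Z_{G,s})$, by saturation since $Z_{G,s}\subset S$ is a subtorus. Hence $t\in Z_G(F)$.
\end{itemize}
This single argument replaces your adjoint-group step, the lift through the center, and the treatment of a disconnected $Z_M$ all at once. It never touches the cohomology of $Z_{\cl{M}}$, because only the unramified torus $T$ appears. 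Your identity-component computation with $Z_M^\circ/Z_G^\circ$ then becomes unnecessary.
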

\begin{proof}
    Since $M$ is unramified and $T$ contains a maximally $F$-split torus $S \subset M$, the Cartan decomposition implies $M(F) = \cl{M}(O_F) \cdot T(F) \cdot \cl{M}(O_F)$ (see \cite[Theorem 5.2.1]{KP23}). Thus
    \[
        M(F)_x = \cl{M}(O_F) \cdot T(F) \cap M(F)_x \cdot \cl{M}(O_F). 
    \]
    Take $t \in T(F) \cap M(F)_x$. Since $T(F) = S(F) \cdot T(F)_0$ by \cite[Lemma 7.1.1]{Kal11}, we may assume $t \in S(F)$ to show $t \in Z_G(F) M(F)_{x, 0}$. Since $S(F) / S(F)_0 \cong X_*(S)$, we may assume $t = \nu(\pi)$ for some $\nu \in X_*(S)$. Since $\cl{A}(M, S)$ is an affine space under $X_*(S / Z_G \cap S) \otimes \bb{R}$, $t \in M(F)_x$ implies $\nu \in X_*(Z_G^\circ)$. Then, $t \in Z_G(F)$ and we get the claim. 
\end{proof}

\subsection{Unramified length-zero triples} \label{ssec:unramified_length_0_triple}

In this section, we introduce the class of CM pairs $(M, \mu)$ for which we construct special open neighborhoods at every depth. It is a generalization of unramified CM points on Lubin-Tate spaces studied in \cite{BW16} and is exactly the class of CM points studied in \cite{Tak25z} at depth zero. 

\begin{defi} \label{defi:unramified_length_0}
    We say that a triple $(M, \mu, x)$ is \textit{unramified and length-zero} if the following conditions hold. 
    \begin{enumerate}
        \item The pair $(M, \mu)$ is an unramified CM pair and $x \in \cl{B}(G, F)$ is a hyperspecial point. 
        \item For the facet $\mfr{f} \subset \cl{B}(G, \breve{F})$ with $x_b \in \mfr{f}^\circ$, $x$ is a vertex of $\mfr{f}$. 
    \end{enumerate}
    In this case, $G$ is also unramified and $E$ is unramified over $F$. 
\end{defi}

\begin{lem} \label{lem:automatic_minuscule}
    For every unramified length-zero triple $(M, \mu, x)$, $\mu$ is minuscule. In fact, $\mu_i = \sum_{0 \leq j < i} \sigma^j \mu$ is minuscule for every $i \geq 0$. 
\end{lem}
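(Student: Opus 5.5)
Since $(M,\mu)$ is unramified and $x$ is hyperspecial, we pass to $\breve F$ and work in one apartment. Choose a maximal torus $T\subset M$ that is unramified, with $x\in\cl{A}(G,T)$. Then $M_{\breve F}$ is a Levi subgroup of $G_{\breve F}$, and the Frobenius acts on $X_*(T)$ as $w\sigma$ for some Weyl element $w$, as in \Cref{const:special_pair_from_Weyl_element}. Since $M$ is elliptic, we have $M_{\breve F}=\bigcap_i G^{(w\sigma)^i\mu=0}$, and the basic element $b$ is represented by $\Nm(\mu(\pi))$. Hence $b$ acts on the apartment by translation by $\mu$ followed by $w\sigma$. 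Writing $x_b$ for the fixed point of this twisted Frobenius $b\sigma$ on $\cl{A}(G,T)_{\breve F}$, we get $x_b=x+\nu$ for a central-type averaging vector
\[
\nu=\tfrac{1}{N}\sum_{0\le j<N} j\,\sigma^j\mu \pmod{X_*(Z_G)_{\bb{R}}},
\]
or some similar explicit convex combination of the $\mu_i$. The first step is to compute this $x_b$ precisely in terms of the $\mu_i=\sum_{0\le j<i}\sigma^j\mu$: the $b\sigma$-orbit of $x$ is $x, x+\mu_1, x+\mu_2,\dots$ modulo the $Z_G$-translation $\mu_N$, and $x_b$ is the barycenter of this orbit.

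The second step is to translate the length-zero hypothesis (2) of \Cref{defi:unramified_length_0} into a root condition. It says that $x$ is a vertex of the facet $\mfr f$ containing $x_b$ in its interior. Equivalently, the points $x+t(x_b-x)$ for small $t>0$ all lie in $\mfr f^\circ$, and the closed segment from $x$ to $x_b$ lies in $\ov{\mfr f}$. Since $x$ is hyperspecial, we may take $\alpha(x)=0$ for all roots $\alpha$. Then $\ov{\mfr f}$ lies in a single closed alcove with vertex $x$, and this gives $0\le \alpha(y)-\alpha(x)\le 1$, up to a uniform sign convention on a positive system, for every point $y\in\ov{\mfr f}$ and every root $\alpha$ positive with respect to that alcove. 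The facet $\mfr f$ is $b\sigma$-stable, because $x_b$ is fixed and lies in its interior. Hence $b\sigma$ permutes the vertices of $\ov{\mfr f}$, and in particular the orbit points $x+\mu_i$, taken modulo the center, lie in $\ov{\mfr f}$. This is the key point, and I expect it to be the main obstacle. One must check that $b\sigma$ maps the vertex $x$ to another vertex of the same facet, which holds since $b\sigma$ is an automorphism of the building fixing $\mfr f$ setwise. One must also handle the central ambiguity by passing to the reduced building.

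The third step concludes the proof. For each $i$, both $x$ and $x+\mu_i$ are vertices of the closed alcove-face $\ov{\mfr f}$, and the difference of two vertices of a common closed alcove in a reduced apartment satisfies $|\langle\alpha,\mu_i\rangle|\le 1$ for all roots $\alpha$. Indeed, for each $\alpha$ the affine function $\alpha$ varies within an interval of length at most $1$ on a closed alcove. So $\langle\alpha,\mu_i\rangle\in\{-1,0,1\}$ for all roots $\alpha$. Central components pair trivially with roots, so this says exactly that $\mu_i$ is minuscule. Taking $i=1$ gives minusculity of $\mu$ itself.
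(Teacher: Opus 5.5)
Your proof is correct and is essentially the paper's argument. Since $x_b$ is $b\sigma$-fixed and lies in $\mfr{f}^\circ$, the facet $\mfr{f}$ is $b\sigma$-stable. Hence $(b\sigma)^i x=\mu_i(\pi)\cdot x$ is again a vertex of $\mfr{f}$. Two points of a common closed facet (hence of a common closed alcove, normalized at the hyperspecial $x$) differ by a cocharacter whose pairing with every root lies in $\{-1,0,1\}$, so $\mu_i$ is minuscule.

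Your first step, computing $x_b$ as the barycenter of the orbit, is not needed. Only the $b\sigma$-invariance of $x_b$ enters the argument.
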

\begin{proof}
    Let $T \subset M$ be a maximally $F$-split unramified maximal torus with $x \in \cl{A}(M, T)$. Then, we have $x, x_b \in \mfr{f} \subset \cl{A}(\breve{G}, \breve{T})$. Since $\mfr{f}$ is $b\sigma$-stable and $b = \mu(\pi)$, $\mu_i(\pi) \cdot x$ is also a vertex of $\mfr{f}$. Then, $\mu_i$ is minuscule as $x$ and $\mu_i(\pi) \cdot x$ belong to the same facet $\mfr{f}$. 
\end{proof}

Condition (2) corresponds to the choice of a length-zero representative $b \in [b]$ made in \cite{Tak25z} and automatically implies that $\mu$ is minuscule. It turns out that unramified length-zero triples are essentially bijective to minuscule cocharacters. 

\begin{prop} \label{prop:existence_triple}
    Suppose that $G$ is split and $\{\mu\}$ is minuscule. For each hyperspecial point $x \in \cl{B}(G, F)$, there is an unramified length-zero triple $(M, \mu, x)$. 
\end{prop}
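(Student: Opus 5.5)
We will use \Cref{const:special_pair_from_Weyl_element} with a Coxeter-type Weyl element; this realizes $F_n^\times \subset \GL_n$ in the Lubin-Tate case. Since $G$ is split and $x$ is hyperspecial, we fix a split maximal torus $T$ with $x \in \cl{A}(G, T)$. Conjugating by $\cl{G}(O_F)$ lets us assume $x$ is the origin and $\mu \in X_*(T)$ is a minuscule dominant representative.

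\textbf{Choice of $w$ and the twisted Levi.} For a Weyl element $w$ and $p_w \in \cl{G}(O_{\breve{F}})$ with $p_w^{-1}\sigma(p_w) = w$ (Lang's theorem for the smooth connected group $\cl{G}$), \Cref{const:special_pair_from_Weyl_element} gives an unramified pair $(M_w, \mu_w)$. Because $p_w$ is integral, $x$ lies in $\cl{B}(M_w, F)$ and stays hyperspecial there. The CM condition says that $Z_{M_w}^\circ/Z_G^\circ$ is anisotropic. On the $\Ad(p_w)$-transported torus, Frobenius acts on $X_*(T)$ through $w$. So CM is equivalent to: the $w$-orbit sum of $\mu$ lies in $X_*(Z_G)_{\bb{Q}}$, together with the centralizer $\bigcap_n G^{w^n\mu = 0}$ having no further $w$-fixed central cocharacters beyond those of $Z_G$. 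I would take $w$ to be an elliptic element that is minimal in a suitable sense for the parabolic $P_\mu$: for example $w = w_0 w_{0,\mu}$, or a Coxeter-type element of a Levi adapted to $\mu$, mimicking $w_n$ for $\GL_n$. The goal is for the orbit $\{w^i\mu\}$ to span the centralizer correctly. Since $M_w$ is defined as the intersection over the orbit, the local special pair condition holds by construction. Ellipticity of $Z_{M_w}^\circ$ then follows if $w$ acts on $X_*(Z_{M_w}^\circ/Z_G^\circ)_{\bb{Q}}$ without fixed vectors; this is automatic once $w$ is elliptic in $W$.

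\textbf{Length-zero condition.} With $b = \mu_w(\pi)$ transported back by $p_w$, the $\sigma$-conjugacy class becomes $\mu(\pi) w$ on $\breve{T}$. Condition (2) of \Cref{defi:unramified_length_0} asks that $x$ be a vertex of the facet containing $x_b$. Here $x_b$ is the unique point of the apartment fixed by $b\sigma$, i.e.\ the barycenter-type fixed point of the affine element $t^{\mu} w$. So we need $\tau = t^\mu w$ to have length zero in the extended affine Weyl group, i.e.\ to stabilize the base alcove containing $x$. For minuscule $\mu$ there is a canonical such element: the length-zero element $\tau_\mu = t^{\mu} w_{0,\mu} w_0$ in $\Omega$, attached to $\mu$ via $X_*(T)/\bb{Z}\Phi^\vee \cong \Omega$. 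Choosing $w$ so that $t^\mu w = \tau_\mu$, the element $\tau$ permutes the vertices of the base alcove $\mfr{a}$ and fixes its barycentric point $x_b$. Since $x$ (the origin, a special vertex) is a vertex of $\mfr{a}$, condition (2) holds.

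\textbf{Main obstacle: ellipticity of the induced $M$.} This step is the obstacle. Length-zero fixes $w = w_{0,\mu}w_0$ (up to inverse conventions), and then we must check that the pair is CM. I would argue as follows. $\tau_\mu$ acts on $\mfr{a}$ with fixed point set $\mfr{a}^{\tau}$, and $Z_M^\circ$ corresponds to the $w$-fixed part of the span of $\{w^i\mu\}$ modulo center. A $w$-fixed vector $v$ in $X_*(T)_{\bb{Q}}$ orthogonal to nothing central would give a $\tau$-stable direction through $x_b$ that must stay inside the bounded facet $\mfr{f}$ up to translation. But $\tau^N$ is a central translation $t^{\mu_N}$, and $\mu_N$ is central by \Cref{lem:automatic_minuscule}-type reasoning, so $\sum_i w^i\mu \in X_*(Z_G)_{\bb{Q}}$. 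That is exactly the CM identity needed in the lemma after \Cref{defi:special_case_of_extremal_points}. If ellipticity of $Z_{M_w}$ fails beyond this orbit-sum identity, I would instead define $M$ directly as $\bigcap_i G^{w^i\mu=0}$ and verify that its connected center is spanned by the $w$-orbit of $\mu$ plus $Z_G$. That would make anisotropy follow from the orbit sum being central.
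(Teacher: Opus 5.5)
Your choice of $w$ is correct and coincides with the paper's: there $w\cdot\mfr{a} = \mu_0(-\pi)^{-1}\cdot\mfr{a}$, so $t^{\mu_0}w$ is the length-zero element attached to $\mu_0$. The gaps come afterwards, because both verifications implicitly assume this $w$ is elliptic in $W$. It is not in general. For $\GL_4$ and $\mu_0=(1,1,0,0)$ one gets $w=(13)(24)$, which fixes $(1,0,1,0)$ modulo the center.

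\textbf{Condition (2).} Since $w$ is not elliptic, the $b\sigma$-fixed locus in the reduced apartment is a line. So $x_b$ is not ``the unique fixed point'', and it is not the barycenter of $\mfr{a}$ either. By unramified descent for $\cl{B}(M,F)\subset\cl{B}(G_b,F)$, $x_b$ is the unique $b\sigma$-fixed point of $\cl{A}(\breve{G},\breve{T}_w)$ lying over the image of $x$ in $\cl{A}(\breve{M}_w,\breve{T}_w)$. In the $\GL_4$ example this is the midpoint of the edge $[x,\tau x]$, which is a proper face of $\mfr{a}$. Hence ``$x$ is a vertex of $\mfr{a}$'' does not give condition (2) of \Cref{defi:unramified_length_0}. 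You need $x$ to be a vertex of the face whose interior contains $x_b$.

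\textbf{CM condition.} The identity $\sum_i w^i\mu\in X_*(Z_G)_{\bb{Q}}$ is only a consequence of anisotropy; the lemma after \Cref{defi:special_case_of_extremal_points} uses it that way. It is not a substitute, because $X_*(Z_{M_w}^\circ)_{\bb{Q}}$ can be strictly larger than the span of the orbit plus $X_*(Z_G)_{\bb{Q}}$. Your fallback, that these two spaces coincide, is exactly the unproved point. It is in fact false for non-simple $G$: for $\GL_2\times\GL_2$ with $\mu_0=((1,0),(1,0))$ one gets $M_w=T$, of rank $2$ modulo the center, while the orbit contributes rank $1$. CM still holds in that example, so this route cannot work as stated.

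The missing idea is the facet argument used in the paper. Let $\mfr{f}$ be the minimal facet containing the $b\sigma$-orbit of $x$. Its points differ from $x$ by partial sums $\sum_{j<i}\sigma^j\mu_w$, up to sign.
\begin{itemize}
\item A root is constant on the orbit if and only if it is a root of $M_w$. Its constant value is then an integer, because $x$ is hyperspecial.
\item Hence the walls containing $\mfr{f}$ are exactly the walls of roots of $M_w$.
\item A facet is open in the intersection of the walls containing it. Therefore $\mfr{f}$ is parallel to $X_*(Z_{\breve{M}_w}/Z_{\breve{G}})\otimes\bb{R}$.
\item Because $G$ is split, $b\sigma$ acts factor by factor and permutes the vertices of $\mfr{f}$ transitively in each simple factor. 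So $\mfr{f}$ is a minimal $b\sigma$-facet, and $\mfr{f}^{b\sigma}$ is a single point inside $\mfr{f}$.
\end{itemize}
Two conclusions follow. First, a nonzero $\sigma$-fixed vector in $X_*(Z_{M_w}/Z_G)_{\bb{R}}$ would produce a segment of fixed points inside $\mfr{f}$; this contradiction gives the anisotropy of $Z_{M_w}/Z_G$. Second, since $\mfr{f}$ is parallel to that space, its fixed point lies over the image of $x$. So it equals $x_b$, and $x$ is a vertex of the facet $\mfr{f}$ containing $x_b$ in its interior.
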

\begin{proof}
    First, we prove the existence of an unramified length-zero triple. It essentially follows from \cite[Remark 2.1, Section 5.1]{Tak25z}. Fix a Borel pair $T \subset B \subset G$ such that $x \in \cl{A}(G, T)$. Let $\mfr{a} \subset \cl{A}(\breve{G}, \breve{T})$ be the base alcove that contains $x$ and lies in the negative chamber with respect to $x$ and $B$. Let $\mu_0 \in X_*(T)$ be the dominant representative of $\{\mu\}$. Then, we may take $w \in N_{\cl{G}}(\cl{T})(O_{\breve{F}})$ so that $w \cdot \mfr{a} = \mu_0(-\pi)^{-1} \cdot \mfr{a}$. By applying \Cref{const:special_pair_from_Weyl_element} to $w$, we get a special pair $(M_w, \mu_w)$. 
    
    First, we show that $(M_w, \mu_w)$ is CM. Let $T_w = \Ad(p_w)(T) \subset M_w$ and $\mfr{a}_w = p_w \cdot \mfr{a}$. Since $p_w \in \cl{G}(O_{\breve{F}})$, we have $x \in \mfr{a}_w \subset \cl{A}(\breve{M}_w, \breve{T}_w)$. By the choice of $w$, $\mfr{a}_w$ is stable under $b\sigma$ by setting $b = \mu_w(-\pi)$. Let $\mfr{f} \subset \mfr{a}_w$ be the minimal facet containing the $b\sigma$-orbit $\{ (b\sigma)^i x\}_{i \geq 0}$. Since $G$ is split, $\mfr{f}$ is a minimal $b\sigma$-facet (cf.\ \cite[Corollary 5.3]{Tak25z}) and the set of $b\sigma$-fixed points $\mfr{f}^{b\sigma}$ is a singleton. By construction, the vectorial part of the facet $\mfr{f}$ is spanned by $\{\sigma^i \mu_w\}_{i \geq 0}$. Then, since $\mfr{f}$ is a facet, its vectorial part is the orthogonal complement of roots $\alpha \in \Phi(\breve{G}, \breve{T}_w)$ such that 
    \[
        \langle \alpha, \sigma^i \mu_w \rangle = 0\quad \forall \; i \geq 0, 
    \]
    which are exactly roots inside $\breve{M}_w$. In particular, $\mfr{f}$ is parallel to $X_*(Z_{\breve{M}_w} / Z_{\breve{G}}) \otimes \bb{R}$. Since $\mfr{f}^{b\sigma}$ is a singleton, it follows that $Z_{M_w} / Z_G$ is anisotropic. 

    To verify condition (2) in \Cref{defi:unramified_length_0}, it is enough to show $x_b = \mfr{f}^{b\sigma}$. By the unramified descent \cite[Axiom 4.1.27]{KP23}, we have  
    \[
        \cl{B}(G_b, F) \supset \cl{A}(\breve{G}, \breve{T}_w)^{b\sigma} \to \cl{A}(\breve{M}_w, \breve{T}_w)^{\sigma}  \subset \cl{B}(M, F). 
    \]  
    The middle map is induced from the quotient $\cl{A}(\breve{G}, \breve{T}_w) \to \cl{A}(\breve{M}_w, \breve{T}_w)$ by $X_*(Z_{\breve{M}_w} / Z_{\breve{G}}) \otimes \bb{R}$. Since $M_w$ is elliptic, $\cl{A}(\breve{G}, \breve{T}_w)^{b\sigma} \to \cl{A}(\breve{M}_w, \breve{T}_w)^{\sigma}$ is an isomorphism and its inverse is the restriction of the natural embedding $\cl{B}(M, F) \subset \cl{B}(G_b, F)$. Thus, $x_b \in \cl{A}(\breve{G}, \breve{T}_w)$ is the unique $b\sigma$-stable point that maps to the image of $x$ in $
    \cl{A}(\breve{M}_w, \breve{T}_w)$. Since $x \in \mfr{f}$ and $\mfr{f}$ is parallel to $X_*(Z_{\breve{M}_w} / Z_{\breve{G}}) \otimes \bb{R}$, it is easy to verify that $\mfr{f}^{b\sigma}$ satisfies this condition. 
\end{proof}

\begin{rmk}
    The splitness condition on $G$ is only used to ensure that $\mfr{f}$ is a minimal $b\sigma$-facet. As in \cite[Corollary 5.3]{Tak25z}, this can be relaxed to the condition that each $F$-simple factor of $G$ is $\breve{F}$-simple. 
\end{rmk}

\begin{prop} \label{prop:uniqueness_triple}
    If we fix $x \in \cl{B}(G, F)$ in the setting of \Cref{prop:existence_triple}, an unramified length-zero triple $(M, \mu, x)$ is unique up to conjugacy under $\cl{G}(O_F)$. 
\end{prop}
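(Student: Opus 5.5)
Given two unramified length-zero triples $(M, \mu, x)$ and $(M', \mu', x)$ with the same hyperspecial $x$, the plan is to move both into a common apartment and show that the combinatorial datum determining each (a twisted Frobenius on the alcoves around $x$) is the same up to the Weyl group. Since $M$ and $M'$ are unramified elliptic and $x \in \cl{B}(M, F) \cap \cl{B}(M', F)$, I would pick maximally unramified maximal tori $T \subset M$ and $T' \subset M'$ with $x$ in both apartments. Every unramified maximal torus whose apartment contains the hyperspecial point $x$ comes from a Weyl element twist, as in \Cref{const:special_pair_from_Weyl_element}. Concretely, the special fibers $\msf{T}, \msf{T}'$ are maximal tori of the reductive quotient $\msf{G}$ at $x$. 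By Lang's theorem, after conjugating by some $g \in \cl{G}(O_F)$ (a lift of an element of $\msf{G}(\ov{k})$ corrected via Hensel/smoothness), we may write $T = \Ad(p_w)(T_0)$ and $T' = \Ad(p_{w'})(T_0)$ for a fixed split torus $T_0$ with $x \in \cl{A}(G, T_0)$, Weyl elements $w, w'$, and $p_w, p_{w'} \in \cl{G}(O_{\breve{F}})$. Accordingly $\mu = \Ad(p_w)(\mu_1)$ and $\mu' = \Ad(p_{w'})(\mu_1')$ with $\mu_1, \mu_1' \in X_*(T_0)$ in $\{\mu\}$, and $M = M_w$, $M' = M_{w'}$ are recovered from these data.

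The key step is to show that the length-zero condition (2) of \Cref{defi:unramified_length_0} forces the pair $(w, \mu_1)$, up to the action of $W_0 = N(T_0)/T_0$ by simultaneous $\sigma$-conjugation, to be the one in the proof of \Cref{prop:existence_triple}. Transported to $\cl{A}(\breve{G}, \breve{T}_0)$, the element $b\sigma$ acts as $\mu_1(\pi) w\sigma$. Condition (2) says there is a facet $\mfr{f}$ having $x$ as a vertex, with $b\sigma$-fixed point $x_b \in \mfr{f}^\circ$. Replacing $\mfr{f}$ by an alcove $\mfr{a} \supset \mfr{f}$ in a suitable $b\sigma$-stable way, I would reduce to a length-zero element $\mu_1(\pi) w$ of the extended affine Weyl group stabilizing an alcove $\mfr{a}$ with vertex $x$. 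Modulo conjugation by the finite Weyl group $W_0$ (the stabilizer of $x$ acting on such alcoves), we may take $\mfr{a}$ to be the base alcove of the proof of \Cref{prop:existence_triple}. The length-zero elements $\tau \in \Omega$ with $\tau \mfr{a} = \mfr{a}$ are classified by $\pi_1(G) = X_*(T_0)/Q^\vee$, and the class here is that of $\{\mu\}$. Hence $\mu_1(\pi) w = \tau_{\mu}$ is uniquely determined. So $w = \mu_1(\pi)^{-1}\tau_{\mu}$, and minuscularity (\Cref{lem:automatic_minuscule}) pins down $\mu_1$ as the unique minuscule translation part of $\tau_\mu$.

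The obstacle I expect is the reduction from a facet $\mfr{f}$ to a $b\sigma$-stable alcove with vertex $x$, and making the residual ambiguity exactly $W_0$-conjugacy. For the first point I would use that $\mfr{f}$ is $b\sigma$-stable (it contains $x_b$ in its interior) and that $b\sigma$ permutes the finitely many alcoves containing $\mfr{f}$. I would then show that the freedom in choosing an alcove corresponds to elements of $M(F)_{x,0}$, which do not change the triple. Alternatively, I would argue directly that the vectorial span of $\mfr{f}$ is spanned by $\{\sigma^i \mu\}$, as in the proof of \Cref{prop:existence_triple}, which determines $\mfr{f}$ from $\mu$ and reduces everything to the $W_0$-orbit of $\mu_1$.

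Finally, since the simultaneous conjugations used were by $\cl{G}(O_{\breve{F}})$-elements, I would descend them to $\cl{G}(O_F)$. Both $p_w$ and $p_{w'}$ trivialize the same class in $H^1(\sigma, \cl{G}(O_{\breve{F}}))$ modulo $\cl{T}_0$. Lang's theorem for the connected smooth group $\cl{G}$ over $O_F$ then shows that $p_{w'} p_w^{-1}$ can be adjusted to an element of $\cl{G}(O_F)$ conjugating $(M, \mu)$ to $(M', \mu')$.
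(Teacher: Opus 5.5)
Your overall architecture matches the paper's. You transport by $p_w \in \cl{G}(O_{\breve{F}})$ to the split apartment, recognize a length-zero element of the extended affine Weyl group, pin it down by the class of $\{\mu\}$ in $\pi_1(G)$, and descend to $\cl{G}(O_F)$ by Lang's theorem. Your last paragraph is fine: Lang for $\cl{T}_0$ with twisted Frobenius gives $s$, and then $p_{w'}s\,p_w^{-1}$ is $\sigma$-fixed.

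The gap is exactly the step you flag. Knowing that $b\sigma$ permutes the finitely many alcoves containing $\mfr{f}$ does not give a fixed alcove, and with your choice of torus there may be none. The affine roots vanishing on $\mfr{f}$ are exactly those with gradient in $\Phi_M$: they vanish at $x$ and at every $(b\sigma)^i x \in \overline{\mfr{f}}$, hence kill every $\sigma^i\mu$. So the alcoves of $\cl{A}(\breve{G},\breve{T})$ through $\mfr{f}$ correspond to Weyl chambers of $\Phi_M$, and $b\sigma$ acts on them through the Frobenius on $X_*(T)$. If $T$ is elliptic in $M$ and $M$ is not a torus, this Frobenius fixes no chamber: a chamber-preserving automorphism has a nonzero fixed vector in $X_*(T/Z_M)_{\bb{R}}$, contradicting ellipticity. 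Then $\mu_1(\pi)w$ has length zero for no alcove through $\mfr{f}$ in that apartment.

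Your remark that the freedom is given by $M(F)_{x,0}$ points at the wrong group. The correction needed is a Weyl element that, after transport, is typically not $\sigma$-fixed, so rational points cannot absorb it. Your alternative via the vectorial span of $\mfr{f}$ determines $\mfr{f}$ but not $w$, so it does not close the gap either. You also use without comment that $\mfr{f} \subset \cl{A}(\breve{G},\breve{T})$; this holds because $x_b$ is the unique $b\sigma$-fixed point of that apartment lying over $x$.

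There are two repairs.

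\emph{Repair (i).} Choose $T$ maximally $F$-split in the quasi-split group $M$, as the paper does. Then Frobenius preserves the positive system of an $F$-Borel of $M$ containing $T$. By the chamber correspondence above, the corresponding alcove through $\mfr{f}$ is $b\sigma$-stable.

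\emph{Repair (ii).} This is the paper's device, and it avoids stable alcoves entirely. Take any alcove $\mfr{a}_x \supset \mfr{f}$ and renormalize $p_w$ so that $p_w^{-1}\mfr{a}_x$ is the base alcove $\mfr{a}$. Then $\mu_1(\pi)w\mfr{a}$ is another alcove containing $\mfr{f}_1 = p_w^{-1}\mfr{f}$. Pick $v$ in the pointwise stabilizer of $\mfr{f}_1$ with $v\mu_1(\pi)w\mfr{a} = \mfr{a}$; such a $v$ fixes $x$ and commutes with $\mu_1(\pi)$, so $\mu_1(\pi)vw$ has length zero. The key point is that $\Ad(p_w)(v)$ fixes $\mfr{f}$ pointwise, so it lies in $\cl{M}(O_{\breve{F}})$ by the gradient computation above. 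Lang's theorem for $\cl{M}$ gives $q \in \cl{M}(O_{\breve{F}})$ with $q^{-1}\sigma(q) = \Ad(p_w)(v)$. Set $p_{vw} = q p_w$. Then $p_{vw}^{-1}\sigma(p_{vw}) = vw$ and $\Ad(p_{vw})(\mu_1) = \Ad(q)(\mu) = \mu$. So the triple attached to $vw$ is literally $(M,\mu,x)$, and your $\pi_1(G)$ argument applies to $vw$.
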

\begin{proof}
    Fix a Borel pair $T \subset B \subset G$ such that $x \in \cl{A}(G, T)$ and let $\mfr{a} \subset \cl{A}(\breve{G}, \breve{T})$ be the base alcove as in \Cref{prop:existence_triple}. On the other hand, let $T_x \subset M$ be a maximally $F$-split unramified maximal torus such that $x \in \cl{A}(M, T_x)$. By \cite[4.1.12 (3)]{KP23}, 
    \[
        \breve{T}_x = \Ad(p_w)(\breve{T})
    \]
    for some $p_w \in \cl{G}(O_{\breve{F}})$. Then, $w = p_w^{-1} \sigma(p_w) \in N_{\cl{G}}(\cl{T})(O_{\breve{F}})$. Now, $\mu \in X_*(T_x)$ and let $b = \mu(-\pi)$. By the construction of the embedding $\cl{B}(M, F) \subset \cl{B}(G_b, F)$, $x_b \in \cl{A}(\breve{G}, \breve{T}_x)$ is the unique $b\sigma$-stable point that maps to $x$ under 
    \[
        \cl{A}(\breve{G}, \breve{T}_x) \to \cl{A}(\breve{M}, \breve{T}_x). 
    \]
    In particular, the facet $\mfr{f} \subset \cl{B}(G, \breve{F})$ containing $x_b$ lies in the apartment $\cl{A}(\breve{G}, \breve{T}_x)$. Since $x \in \mfr{f}$, we have $(b\sigma)^i x \in \mfr{f}$ for every $i \geq 0$. 
    
    Fix an alcove $\mfr{a}_x \subset \cl{A}(\breve{G}, \breve{T}_x)$ containing $\mfr{f}$ and let $\mfr{a}_0 = p_w^{-1} \cdot \mfr{a}_x \subset \cl{A}(\breve{G}, \breve{T})$. We may assume $\mfr{a} = \mfr{a}_0$ by replacing $p_w$. Let $\mfr{f}_0 = p_w^{-1} \cdot \mfr{f}$ and  $\mu_0 = \Ad(p_w^{-1})(\mu) \in X_*(T)$. Then, $b\sigma \cdot \mfr{f} = \mfr{f}$ implies $ \mfr{f}_0 \subset \mu_0(-\pi)w \cdot \mfr{a} \cap \mfr{a}$, so we can take a Weyl element $v \in N_{\cl{G}}(\cl{T})(O_{\breve{F}})$ so that $v$ sends $\mu_0(-\pi)w \cdot \mfr{a}$ to $\mfr{a}$ and fixes $\mfr{f}_0$. 

    Since $v$ fixes $\mfr{f}_0$, $v$ commutes with $\mu_0(-\pi)$, so $\mu_0(-\pi)vw \cdot \mfr{a} = \mfr{a}$. In particular, $\mu_0 \in X_*(T)$ is a dominant representative of $\{ \mu \}$ and $(M_{vw}, \mu_{vw}, x)$ is an unramified length-zero triple associated to the Borel pair $T \subset B$ constructed in the proof of \Cref{prop:existence_triple}. It is enough to show that $(M_{vw}, \mu_{vw})$ is conjugate to $(M, \mu)$ under $\cl{G}(O_F)$. 

    By construction, $(M, \mu) = (M_w, \mu_w)$. It is enough to show $\Ad(p_{vw})(\mu) = \Ad(p_w)(\mu)$ for a suitable choice of $p_{vw}$. Let $q_v = p_{vw} p_w^{-1} \in \cl{G}(O_{\breve{F}})$. Then
    \[
        \sigma(q_v) = p_{vw} vw (p_w w)^{-1} = q_v \Ad(p_w)(v). 
    \]
    Since $\Ad(p_w)(v)$ fixes $\mfr{f}$, we have $\Ad(p_w)(v) \in \cl{M}(O_{\breve{F}})$. Then, we can make $q_v$ lie in $\cl{M}(O_{\breve{F}})$. Then, $\Ad(p_{vw})(\mu) = \Ad(q_v)(\Ad(p_w)(\mu)) = \Ad(p_w)(\mu)$, so we get the claim. 
\end{proof}

Now, we can state the main result of this paper. By \Cref{lem:automatic_minuscule}, $\cl{M}_{\cl{G}, b, \mu}$ is a rigid analytic variety over $\breve{F}$ and $\cl{U}(n)$ is constructed to be affinoid as announced in \Cref{rmk:Artin_vanishing}. 

\begin{thm}\textup{(=\Cref{thm:explicit_geometry_strongest})} \label{thm:explicit_geometry}
    Suppose that $F$ is a finite extension of $\bb{Q}_p$. For every unramified length-zero triple $(M, \mu, x)$ and $n \in \bb{Z}_{\geq 0}$, there is an open ball 
    $
        \cl{U}(n) \subset \cl{M}_{\cl{G}, b, \mu}
    $
    satisfying the conditions in \Cref{prop:restatement_LSV} for
    \[
        J_n = K_n, \quad 
        J_{b, n} = \left\{ \begin{alignedat}{4}
            & K_{b, n} & \; & (n = 2m) \\
            & K_{b, n - 1} & \; & (n = 2m - 1). 
        \end{alignedat}
        \right.
    \]
    In particular, there are special open neighborhoods at levels $(J_n, J_{b, n})$ of depth $n$ for every unramified length-zero triple $(M, \mu, x)$. 
\end{thm}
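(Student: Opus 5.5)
By \Cref{prop:restatement_LSV}, it suffices to construct, for each $n$, a connected quasicompact open $\cl{U}(n) \subset \cl{M}_{\cl{G}, b, \mu}$ satisfying conditions (1)--(4). The hypotheses of that proposition hold in our situation. The coinvariant $X_*(Z_M^\circ)_{I_F}$ is torsion-free because $M$ is unramified. By \Cref{lem:hyperspecial_M(F)_x}, $M(F)_x = Z_G(F)\cl{M}(O_F)$, which gives $J_n = J_n^0 M(F)_x$ and $J_{b,n} = J_{b,n}^0 M(F)_x$. By \Cref{lem:automatic_minuscule}, $\mu$ is minuscule, so the local Shimura variety is a smooth rigid space.

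\textbf{Step 1: define $\cl{U}(n)$ and get (1)--(2).} Since $\mu$ is minuscule, \Cref{prop:BB_map} identifies $\Gr_{G_b,-\mu}$ with a flag variety. I would define $\cl{U}(n)$ as the connected component through $x_\CM$ of $\pi_\GM^{-1}$ of the analogue of $\bb{B}_{x_b, n'}$ for $G_b$, with $n' = n/2$ suitably normalized. By \Cref{lem:zero_dimensional_LSV}, $x_\CM$ is an $\breve{E}$-point. Concretely, I would use the explicit $\cl{G}$-BKF module over the universal deformation of \cite{Ito25a} to write coordinates on the deformation space at $x_\CM$, and take $\cl{U}(n)$ to be a polydisc in these coordinates with radii governed by the roots of $M \subset G_b$.

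Condition (1) then follows from the $G_b$-analogue of \Cref{thm:canonical_level_structures}(1). The stabilizer of the ball is $(G_b)_{x_b,n/2}\cl{M}(O_F)$, which is $J^0_{b,n}$; for odd $n$ the integrality jumps force $K_{b,n-1}$ instead. The diagonal $M(F)_x$-stability and the relation $\sigma_E(\cl{U}) = z_\mu \cl{U}$ follow from equivariance of $\pi_\GM$ and from \Cref{lem:zero_dimensional_LSV}. For (2), I would check that $\pi_\GM$ is injective on the $J^0_{b,n}$-translates. This amounts to showing that the ball lies inside the residue disc of $x_\CM$ in $\cl{M}_{\cl{G},b,\mu}$, where $\pi_\GM$ is an open immersion for length-zero $b$, as in the depth-zero case of \cite{Tak25z}.

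\textbf{Step 2: canonical trivialization (3).} By \Cref{lem:sht_comparison}, the local geometry around the CM point agrees on the $G$ and $G_b$ sides. I would transport the ball to a neighborhood of the CM point in $\Sht_{\cl{G},-\mu}$ and apply \Cref{thm:canonical_level_structures} to the CM triple $(M,\mu,x)$. In the CM case $H_{\mu,x} = M(F)_{x,0}$, so the resulting level is $K_{\mu,n/2} = G(F)_{x,n/2}M(F)_{x,0} = J_n^0$. Once I show that $\cl{U}(n)$ maps into $\Sht^{[M,\mu]}_{\cl{G},-\mu}(n/2)$, this yields $\can_n$. Uniqueness and stability of the image under $J_{b,n}^0$ then follow from connectedness of $\cl{U}(n)$. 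The key estimate is a comparison between the Hodge--Tate period and the Grothendieck--Messing period on the ball. I would do this using the BKF module computation, which gives the $\BdRp$-lattice explicitly modulo $\xi$ up to the required order.

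\textbf{Step 3: cohomology (4), the main obstacle.} For $n = 0$ this is the depth-zero result of \cite{Tak25z}, where the reduction is a parabolic Deligne--Lusztig variety. For $n \geq 1$, I would define $\cl{W}(n)_\phi$ as the preimage of $\Img(\can_n)$ at the deeper level $K_\phi$. The steps are: prove that $\cl{W}(n)_\phi$ has good reduction; identify its reduction with a finite étale torsor over an affine space, namely the Heisenberg Deligne--Lusztig variety of \cite[Theorem 6]{Tak26_Yu}; and compute the isotypic part via positive-depth Deligne--Lusztig theory, which gives $\kappa_{x_b,n,\phi}\otimes\rho$ in middle degree. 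The comparison between compactly supported and ordinary cohomology in \eqref{eq:map_from_compact_to_usual} follows from \Cref{rmk:Artin_vanishing} together with the concentration of the isotypic part in a single degree.

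The hard part is Step 3's explicit equations. One must write the torsor $\cl{W}(n)_\phi \to \cl{U}(n)$ in integral coordinates to sufficient precision that its reduction is exactly the Heisenberg variety, including the quadratic (Weil) term. The odd case $n = 2m-1$ needs separate care, since the half-integral jumps are where the level $J_{b,n} = K_{b,n-1}$ arises. I would attack this by computing the Frobenius on the BKF module to second order in the coordinates.
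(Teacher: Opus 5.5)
The overall architecture matches the paper: reduce to \Cref{prop:restatement_LSV}, check (1)--(4), and get (4) from a special affinoid with good reduction plus positive-depth Deligne--Lusztig theory. There is, however, a genuine gap at the first step, the choice of $\cl{U}(n)$.

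\textbf{The choice of $\cl{U}(n)$.} The balls that work are $\{\lvert u_\alpha\rvert\le\lvert\pi\rvert^{m}\}$ for $n=2m-1$ and $\{\lvert u_\alpha\rvert\le\lvert\pi\rvert^{m+r_\alpha}\}$ for $n=2m$ (\Cref{defi:U(n)_radius}). Here $r_\alpha=\langle\alpha,q\sigma\lambda\rangle$ with $\lambda-q\sigma\lambda=\mu$; these are the $q$-dependent depth-zero radii of \cite{Tak25z}, shifted by $m$. They are not the Bruhat--Tits balls $\bb{B}_{x_b,n/2}$ of $G_b$, whose radii $n/2-\alpha(x_b)$ do not involve $q$. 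For instance, for $G=\GL_2$ and $n=2m$, \Cref{thm:computation_period_map} gives $\pi_\GM(\cl{U}(2m))=\{\lvert v\rvert\le\lvert\pi\rvert^{m-1/(q+1)}\}$, which is strictly smaller than $\bb{B}_{x_b,m}=\{\lvert v\rvert\le\lvert\pi\rvert^{m-1/2}\}$.

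This normalization is essential. Since $\nu^\flat(s_{\alpha,i})=-i-r_\alpha$ (\Cref{lem:valsalpha}), it is exactly on these balls that the level-structure equations reduce to the Lang torsor of $H_{x,n,\phi}$ (even $n$) or to an Artin--Schreier torsor $t^q-t=f_n$ with unit coefficients (odd $n$).
\begin{itemize}
\item On a strictly smaller ball, $\cl{W}(n)_\phi$ splits, so the isotypic parts of $R\Gamma_c$ and $R\Gamma$ sit in degrees $2d$ and $0$, and (4) fails.
\item On a larger ball the coefficients become non-integral, and the good-reduction argument is lost.
\end{itemize}

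For odd $n$ your ball also has the wrong stabilizer. By \Cref{thm:canonical_level_structures}~(1) applied to $G_b$, its stabilizer is $G_b(F)_{x_b,m-1/2}M(F)_{x,0}=K^0_{b,2m-1}$. In general this is strictly smaller than the required $J^0_{b,2m-1}=K^0_{b,2m-2}$, because $x_b$ is not hyperspecial. For example, for $\GL_h$ with $h\ge3$ and $\Pi$ a uniformizer of $D$, the element $1+\Pi^{h(m-1)+1}$ separates the two groups. Integrality of jumps does not turn $K_{b,n}$ into $K_{b,n-1}$.

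Even with the right ball, (1) and (2) are not formal consequences of flag-variety statements.
\begin{itemize}
\item Since $\pi_\GM$ is not injective, stability requires showing $j\cdot x_\CM\in\cl{U}(n)$. The paper does this deformation-theoretically, via \Cref{prop:isom_deformation} and Ito's (Perfd) property.
\item Disjointness for $j\notin J^0_{b,n}$ is extracted from the explicit level structures.
\item $\pi_\GM$ is an open immersion only on $\{\lvert u_\alpha\rvert\le\lvert\pi\rvert^{r}\}$ with $r>q^{-1}$, not on the whole residue disc (\Cref{thm:computation_period_map}).
\end{itemize}

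\textbf{Step 2 and Step 3.} Step 2 is a natural idea, and the level does come out as $K_{\mu,n/2}=J^0_n$. But the inclusion of $\cl{U}(n)$ into $\Sht^{[M,\mu]}_{\cl{G},-\mu}(n/2)$, i.e.\ control of $\pi_\HT$ on $\cl{U}(n)$, is exactly what you leave unproved. The paper does not go this way. It notes that $\cl{U}(n)^\diamond/\und{J^0_{b,n}}$ has a different form from $\und{K_r}\backslash\bb{B}^\diamond_{x,r}$, and leaves a direct stacky construction open. Instead it builds $\can_n$ from the explicit $\varphi$-invariant section $t_\infty g_{<m}$ of the universal $\cl{G}$-BKF module over the perfectoid cover, descended along geometric quotients with fixed points (\Cref{prop:descent_finite_etale_map}).

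More seriously, a stacky section can only reach levels $K_{\mu,r}\supseteq M(F)_{x,0}$. Step 3 needs more than that:
\begin{itemize}
\item the refinement $\can_{n,m}$ to $G(F)_{x,n,m}$, which trivializes an $M(F)_{x,0}/M(F)_{x,n}$-torsor over $\breve{F}_n$ (\Cref{thm:refined_trivialization}), to single out $\cl{W}(n)_\phi$;
\item stability of $\Img(\can_{n,m})$ under $G_b(F)_{x_b,n,m_b}$ (\Cref{prop:image_x_CM_n_mb}), so that $G_b$ acts on $\cl{W}(n)_\phi$ at all.
\end{itemize}

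Step 3 as written also contains errors.
\begin{itemize}
\item For odd $n$ the answer must be $\kappa_{b,n,\phi}\otimes\rho$ with $\kappa_{b,n,\phi}=\cInd_{G_b(F)_{x_b,n,n/2}}^{G_b(F)_{x_b,n,m_b}}\kappa_{x_b,n,\phi}$, matching $J_{b,n}=K_{b,n-1}$. It is not $\kappa_{x_b,n,\phi}\otimes\rho$.
\item For $n=1$ the reduction is not a Heisenberg Deligne--Lusztig variety; it needs a separate induction using the inner action.
\item Artin vanishing plus concentration in degree $d$ does not make $H^d_c\to H^d$ an isomorphism. That comes from nearby cycles on the smooth formal model and the corresponding property of the reduction.
\item Even then the $\msf{M}$-action is determined only up to a character, which for odd $n$ is fixed by a trace comparison.
\end{itemize}
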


\begin{rmk}
    Since $x \in \cl{B}(M, F)$ is hyperspecial, jumps of the Moy-Prasad filtration at $x$ are integers. This is why we restrict to the case $r = n \in \bb{Z}_{\geq 0}$ (cf.\ \Cref{rmk:restriction_to_jumps}). 
\end{rmk}

From now on, $F$ is a finite extension of $\bb{Q}_p$ and we fix an unramified length-zero triple $(M, \mu, x)$. We will apply the notation introduced so far to the CM pair $(M, \mu)$. We regard $\mu$ as a minuscule cocharacter $\mu \colon \bb{G}_m \to \breve{\cl{G}}$. For later use (i.e.\ \Cref{lem:expBKF}), we set $b = \mu(-\pi) \in Z_M(\breve{F})$. 

\section{Depth-zero case} \label{sec:depth_zero_case}

In this section, we prove \Cref{thm:explicit_geometry} for $n = 0$. The essential part is already developed in \cite{Tak25z} and the main task is to fill in the missing conditions for \Cref{prop:restatement_LSV}. 

\subsection{Depth-zero special affinoids}

In this section, we review the construction of $\cl{U}(0)$ in \cite[Theorem 1]{Tak25z} and its properties in the notation of the present paper. The construction begins with the use of the $v$-sheaf theoretic integral model $\cl{M}^\ints_{\cl{G}, b, \mu}$. 

\begin{defi}\textup{(\cite[Definition 25.1.1]{SW20})} \label{defi:v_sheaf_integral_model}
    The $v$-sheaf theoretic integral model $\cl{M}_{\cl{G},b,\mu}^\ints$ of $\cl{M}_{\cl{G}, b, \mu}$ is a $v$-sheaf sending a perfectoid space $S$ over $O_{\breve{F}}$ to the set of pairs $(\cl{P}, \iota)$ where 
    \begin{enumerate}
        \item $\cl{P}$ is a local $\cl{G}$-shtuka whose Frobenius $\varphi_{\cl{P}}$ is bounded by $-\mu$, and
        \item $\iota \colon \cl{P}\vert_{\cl{Y}_{[r, \infty)}} \cong \cl{E}^b\vert_{\cl{Y}_{[r, \infty)}}$ (for sufficiently large $r$) is a quasi-isogeny from $\cl{P}$ to $\cl{E}^b$. 
    \end{enumerate}
\end{defi}

It is known by \cite[{}2.61, 2.63]{Gle26_local_Shimura} that $\cl{M}^\ints_{\cl{G}, b, \mu}$ is a prekimberlite, a $v$-sheaf theoretic analogue of formal schemes introduced in \cite{Gle24}, and its reduction $(\cl{M}^\ints_{\cl{G}, b, \mu})_\red$ is isomorphic to the affine Deligne-Lusztig variety 
\[
    (\cl{M}_{\cl{G}, b, \mu}^\ints)^\red \cong X_\mu(b) = \{ g \in G(\breve{F}) / \cl{G}(O_{\breve{F}}) \mid g^{-1} b \sigma(g) \in \cl{G}(O_{\breve{F}}) \mu(\pi) \cl{G}(O_{\breve{F}}) \}. 
\]
It contains a closed point $[1] \in X_\mu(b)(\ov{k})$. Then, $\cl{U}(0)$ is constructed inside the tubular neighborhood at $[1]$ using the following explicit description of the formal completion. 

Let $T \subset M$ be a maximally $F$-split unramified maximal torus with $x \in \cl{A}(M, T)$. Let $\Phi$ be the set of roots of $\breve{G}$ with respect to $\breve{T}$ and let $\Phi_{\mu < 0} = \{ \alpha \in \Phi \mid \langle \alpha, \mu \rangle = -1 \}$. The Frobenius action on $X^*(T)$, in particular on $\Phi$, is denoted by $\sigma$. We set 
\[
    R_{\cl{G}, \mu} = O_{\breve{F}} \llbracket u_\alpha \vert \alpha \in \Phi_{\mu < 0} \rrbracket \cong (\cl{\breve{G}}^{\mu < 0})^\wedge_{/1}. 
\]
Here, $u_\alpha$ is a coordinate of the root group $\cl{U}_\alpha \subset \cl{\breve{G}}$ associated to $\alpha \in \Phi$. Precisely, $u_\alpha$ denotes the coordinate for a fixed isomorphism $i_\alpha \colon \bb{A}^1 \cong \cl{U}_\alpha$. For later use, we take $i_\alpha$ as follows. 

\begin{lem} \label{lem:choiceia}
    The family $\{i_\alpha\colon \bb{A}^1 \cong \cl{U}_{\alpha}\}_{\alpha \in \Phi}$ can be taken so that the diagram
    \begin{center}
        \begin{tikzcd}
            \bb{A}^1 \ar[d, "\sigma^*_{\bb{A}^1}"] \ar[r, "i_{\sigma \alpha}"] & \cl{U}_{\sigma \alpha} \ar[d, "\sigma^*_{\cl{G}}"] \ar[r] & \Spec(O_{\breve{F}}) \ar[d, "\sigma^*"] \\
            \bb{A}^1 \ar[r, "i_{\alpha}"] & \cl{U}_{\alpha} \ar[r] & \Spec(O_{\breve{F}})
        \end{tikzcd}
    \end{center}
    commutes for every $\alpha \in \Phi$. Here, $\sigma^*_{\bb{A}^1}$ and $\sigma^*_{\cl{G}}$ denote the natural morphism relative to $\sigma^*$ coming from the $O_F$-structure of $\bb{A}^1$ and $\cl{G}$. 
\end{lem}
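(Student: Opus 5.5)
The plan is to exploit the unramified situation: $G$ is unramified, so $\cl{G}$ is a reductive group scheme over $O_F$. The torus $T$ is maximally $F$-split and unramified, and it extends to a maximal torus $\cl{T}\subset\cl{G}$ that splits over $O_{\breve F}$. Inside $\breve{\cl{G}}$ the root groups $\cl{U}_\alpha$ are defined, and the geometric Frobenius $\sigma^*_{\cl{G}}$ (coming from the $O_F$-structure of $\cl{G}$) maps $\cl{U}_{\sigma\alpha}$ isomorphically onto $\cl{U}_\alpha$, covering $\sigma^*$ on $\Spec(O_{\breve F})$. Thus $\sigma$ acts on the set of pinnings $\{i_\alpha\}$, and the required commutativity says exactly that the family is $\sigma$-equivariant. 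I will choose it by descent, one $\sigma$-orbit of roots at a time.

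Fix an orbit $\alpha,\sigma\alpha,\dots,\sigma^{d-1}\alpha$ with $\sigma^d\alpha=\alpha$. Let $O_d\subset O_{\breve F}$ be the ring of integers of the unramified extension $F_d$ of degree $d$. Then $\cl{U}_\alpha$ is defined over $O_d$ as the root subgroup of $\cl{G}_{O_d}$ for the torus $\cl{T}_{O_d}$, since $\alpha$ is fixed by $\sigma^d$, that is, by $\Gal(F_d/F)$ after identifying with $\Gal(\breve F/F_d)$. Being the root group of a split reductive group over the local ring $O_d$, it is isomorphic to $\bb{G}_a$ over $O_d$. I first choose an isomorphism $i_\alpha\colon\bb{A}^1_{O_d}\cong\cl{U}_\alpha$ defined over $O_d$ and base change it to $O_{\breve F}$. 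Then I define $i_{\sigma^j\alpha}$ for $1\le j<d$ by transport of structure, setting $i_{\sigma^j\alpha}=(\sigma^*_{\cl{G}})^{-j}\circ i_\alpha\circ(\sigma^*_{\bb{A}^1})^{j}$. This makes the required square commute for each consecutive pair in the orbit.

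The only remaining check is the wrap-around square, from $\sigma^{d-1}\alpha$ to $\sigma^d\alpha=\alpha$. This amounts to $(\sigma^*_{\cl{G}})^d\circ i_\alpha=i_\alpha\circ(\sigma^*_{\bb{A}^1})^d$, i.e. $i_\alpha$ commutes with the $\sigma^d$-semilinear Frobenius. That holds precisely because $i_\alpha$ is defined over $O_d$, the fixed ring of $\sigma^d$, and both $(\sigma^*)^d$ maps are the canonical ones coming from $O_d$-structures. This compatibility is the one point needing care. I will verify it by writing $\cl{U}_\alpha=\Spec O_{\breve F}[u]$ with $u$ an $O_d$-rational coordinate. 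Then $\sigma^{*d}$ acts by $\sigma^d$ on the coefficients and fixes $u$, which is exactly $\sigma^{*d}_{\bb{A}^1}$. Doing this orbit by orbit yields the desired family $\{i_\alpha\}_{\alpha\in\Phi}$.
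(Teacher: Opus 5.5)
Your proof is correct, but it reaches the Frobenius-compatible family by a different route from the paper.

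\textbf{The paper's route.} It works entirely over $O_{\breve{F}}$. It picks an arbitrary coordinate $i_\alpha$ on each $\sigma$-orbit and propagates it by Frobenius transport, exactly as you do. It then measures the failure of the wrap-around square by a unit $c\in O_{\breve{F}}^\times$, since two coordinates on $\cl{U}_\alpha$ differ by a unit scalar. Finally it removes $c$ by solving $\sigma^d(x)=c^{-1}x$ in $O_{\breve{F}}^\times$ (with $d$ the orbit length), which is a Lang-type statement, and rescales the coordinate on $\cl{U}_{\sigma^i\alpha}$ by $\sigma^i(x)$.

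\textbf{Your route.} You build the correction in from the start by taking an $O_d$-rational coordinate, so the last square commutes automatically. Conceptually, a coordinate making all squares commute is exactly a $\sigma^d$-invariant coordinate, i.e.\ an $O_d$-rational one. The paper's equation $\sigma^d(x)=c^{-1}x$ is the cocycle vanishing that your descent argument packages. Your version explains \emph{why} a correction exists. The paper's is more elementary and avoids any discussion of root groups over $O_d$.

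\textbf{A justification to repair.} The descended $\cl{U}_\alpha$ over $O_d$ is not, in general, a root group of a \emph{split} group. The reason is that $\cl{T}_{O_d}$ need not split: the $\sigma$-orbit of $\alpha$ can be shorter than the splitting degree of $T$. What you actually need is the following.
\begin{itemize}
\item Descend $\cl{U}_\alpha$ along the finite \'{e}tale Galois extension $O_d\to O_N$ over which $\cl{T}$ splits. This works because $\Gal(F_N/F_d)$, generated by $\sigma^d$, fixes $\alpha$.
\item Check that the resulting form of $\bb{G}_a$ over $O_d$ is trivial. One way: $\cl{U}_\alpha$ is the vector group of $\mathfrak{g}_\alpha$ via the unique $\cl{T}$-equivariant exponential, which descends by uniqueness, and $\mathfrak{g}_\alpha$ is free of rank one over the local ring $O_d$. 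Another way: use $H^1(\Gal(F_N/F_d),O_N^\times)=0$.
\end{itemize}

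\textbf{A wording slip.} The phrase identifying $\Gal(F_d/F)$ with $\Gal(\breve{F}/F_d)$ is garbled. The group that fixes $\alpha$ is $\Gal(\breve{F}/F_d)$, which is topologically generated by $\sigma^d$.
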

\begin{proof}
    Take a $\sigma$-orbit $\alpha, \sigma \alpha, \ldots, \sigma^{t-1}\alpha$ with $\alpha \in \Phi$ and $t \geq 1$. If we choose $i_\alpha$, there is a unique choice of $i_{\sigma^i\alpha}$ for $0\leq i < t$ so that the given diagram for $\sigma^i\alpha$ commutes for $0 \leq i \leq t - 2$. Take $c \in O_{\breve{F}}^\times$ so that $i_{\sigma^{t-1}\alpha}(t) = \sigma^*_{\cl{G}}(i_{\alpha}(ct))$. There is an element $x \in O_{\breve{F}}^\times$ such that $\sigma^t(x) = c^{-1}x$. Then, we may replace the coordinate of $\cl{U}_{\sigma^i\alpha}$ by $\sigma^i(x)u_\alpha$ and this choice makes the given diagram commute for every $\sigma^i\alpha$. 
\end{proof}

\begin{thm}\textup{(\cite[Theorem 5.3.5]{Ito25a})} \label{thm:univdef}
    There is an isomorphism
    \[
        (\cl{M}^\ints_{\cl{G},b,\mu})^\wedge_{/[1]} \cong \Spd(R_{\cl{G}, \mu}). 
    \]
\end{thm}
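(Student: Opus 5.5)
This statement is cited from \cite[Theorem 5.3.5]{Ito25a}, so the plan is to recall how such a formal-completion statement is proved rather than to build it from scratch. The argument is Grothendieck--Messing type deformation theory for local $\cl{G}$-shtukas, carried out through the explicit $\cl{G}$-BKF module (or display) over the universal deformation ring. Concretely, I would write down a candidate universal family over $R_{\cl{G}, \mu}$ and then show that it is universal.

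\textbf{Step 1: the candidate family.} Let $u \in \cl{\breve{G}}^{\mu < 0}(R_{\cl{G}, \mu})$ be the tautological point. Using the coordinates $i_\alpha$ of \Cref{lem:choiceia}, which are chosen compatibly with $\sigma$, I would choose a Frobenius lift $\varphi$ on $R_{\cl{G}, \mu}$ with $\varphi(u_\alpha) = u_\alpha^q$. Over the Breuil--Kisin-type ring attached to $(R_{\cl{G}, \mu}, \varphi)$, I would form the $\cl{G}$-BKF module whose Frobenius is given by $u \cdot \mu(E)^{\pm 1} \cdot b'$. Here $E$ is the Eisenstein element, and $b'$ is chosen so that the reduction at the closed point recovers the shtuka at $[1] \in X_\mu(b)$. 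Because $b = \mu(-\pi)$ is length-zero, it lies in $\cl{G}(O_{\breve{F}})\mu(\pi)\cl{G}(O_{\breve{F}})$, so the reduction is the standard one.

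\textbf{Step 2: producing the map.} Evaluating this module on perfectoid points gives a local $\cl{G}$-shtuka bounded by $-\mu$, together with a quasi-isogeny to $\cl{E}^b$. This yields a map $\Spd(R_{\cl{G}, \mu}) \to (\cl{M}^\ints_{\cl{G}, b, \mu})^\wedge_{/[1]}$. The quasi-isogeny is obtained by the usual rigidity of quasi-isogenies along nilpotent thickenings (the Dieudonné-style Frobenius iteration).

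\textbf{Step 3: showing it is an isomorphism.} Both sides are formally smooth of the same dimension $\lvert \Phi_{\mu<0}\rvert = \dim \Flag_{G,\mu}$, so I would check the isomorphism in three parts:
\begin{enumerate}
\item On tangent spaces, deformations of the shtuka at $[1]$ are governed by the Hodge filtration. They are identified with $\Lie(\cl{\breve{G}}^{\mu<0})\otimes \ov{k}$, which is exactly the cotangent data of $R_{\cl{G},\mu}$.
\item Obstructions vanish by smoothness of $\cl{G}$ together with the minuscule hypothesis on $\mu$.
\item To pass from the ring-level statement to $v$-sheaves, I would use Gleason's theory of prekimberlites \cite{Gle24}. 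This reduces the comparison of formal completions to checking on $\Spd$ of complete local rings with perfect residue field, where the deformation theory of shtukas is known \cite{Gle26_local_Shimura}.
\end{enumerate}

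The main obstacle is the last part: comparing the $v$-sheaf formal completion with $\Spd$ of the deformation ring. This requires the representability result (the integral model being a formal scheme in the $v$-sheaf sense, and the tubular neighborhood agreeing with the completion) that is established in \cite{Ito25a} and relies on the Pappas--Rapoport/Scholze--Weinstein representability conjecture. In the paper I would therefore cite that result rather than reprove it.
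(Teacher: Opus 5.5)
The paper gives no argument for this statement beyond citing \cite[Theorem 5.3.5]{Ito25a}, so your conclusion (cite it) matches the paper. The sketch of the cited proof that you attach, however, would fail in two places.

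\textbf{The candidate family.} In the convention the paper uses, isomorphisms of deformations act by $Y\mapsto gY\sigma(g)^{-1}$ (see \Cref{prop:isom_deformation}). The Hodge filtration is recorded by the class of $Y$ in $\cl{G}(A)\backslash \cl{G}(A)\mu(E)^{\pm 1}\cl{G}(A)$, i.e.\ by the factor to the right of $\mu(E)$. Left multiplication by $u\in\cl{G}(A)$ does not change this class. For example, $u\,\mu(E)=\mu(E)\prod_{\alpha} i_\alpha(E\,[u_\alpha^\flat])$, and the right factor is $\equiv 1 \bmod E$. So your family $u\,\mu(E)^{\pm1}b'$, with $b'$ constant, has constant Hodge filtration. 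It therefore cannot be universal: an isomorphism with $g$ trivial at the base point changes the filtration only by $\sigma(g)^{-1}$, which is trivial to first order. The tautological unipotent must sit to the right of $\mu(E)$. The universal family is $\mu([\pi^\flat]-\pi)\prod_{\alpha\in\Phi_{\mu<0}} i_\alpha([u_\alpha^\flat])\,\sigma$, as in \Cref{lem:expBKF}.

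\textbf{Step 3.} The left-hand side is the formal completion of the prekimberlite $\cl{M}^{\ints}_{\cl{G},b,\mu}$. It is a $v$-sheaf and cannot see Artinian thickenings such as $\ov{k}[\epsilon]$. So ``tangent spaces'', ``formal smoothness'' and ``obstructions'' of that side are not available unless representability is already known. The comparison is instead made on perfectoid test objects $R^+$:
\begin{itemize}
\item points of the formal completion correspond to $\cl{G}$-BKF modules over $R^+$ deforming the one at $[1]$, with the quasi-isogeny forced by rigidity \cite[Proposition 5.2.6]{Ito25a};
\item these deformations correspond to maps $R_{\cl{G},\mu}\to R^+$, by the (Perfd) property of Ito's universal display. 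The special case used in this paper is \Cref{lem:Perfd_universal_deformation}.
\end{itemize}
No representability is assumed. You have the dependence on the Scholze--Weinstein conjecture backwards: identifying formal completions with $\Spd(R_{\cl{G},\mu})$ is an input to proving representability of $\cl{M}^{\ints}_{\cl{G},b,\mu}$ in the reductive, minuscule case, not a consequence of it.

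A minor point: the length-zero condition plays no role in your Step 1. $\mu(-\pi)\in\cl{G}(O_{\breve{F}})\mu(\pi)\cl{G}(O_{\breve{F}})$ holds simply because $\mu(-1)\in\cl{G}(O_{\breve{F}})$. Ito's result also holds at every point of $X_\mu(b)$.
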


Ito's theorem also holds at every geometric point of $X_\mu(b)$, but we will only apply it to $[1] \in X_\mu(b)$. As we will see later, $[1]$ is chosen to be the reduction of a CM point in $\cl{M}_{\cl{G}, b, \mu}$, which is necessary for \Cref{prop:restatement_LSV} (3). Now, the essential property in calculation is the following explicit form of the local shtuka over $R_{\cl{G}, \mu}$. Let $\cl{P}^\univ$ be the universal local $\cl{G}$-shtuka over $\Spd(R_{\cl{G}, \mu})$ via the identification in \Cref{thm:univdef}. 

\begin{prop}\textup{(\cite[Corollary 2.9]{Tak25z})} \label{lem:expBKF}
    Let $(R, R^+)$ be a perfectoid Huber pair over $R_{\cl{G}, \mu}$ with a choice of elements $\pi^\flat, u_\alpha^\flat \in R^\flat$ such that $(\pi^\flat)^\sharp = \pi$ and $(u_\alpha^\flat)^\sharp=u_\alpha$. Then, $\cl{P}^\univ(\Spd(R^+))$ is given by the $\cl{G}$-Breuil-Kisin-Fargues \textup{($\cl{G}$-BKF)} module 
    \[
        \bigl(\cl{G}\otimes W_{O_F}(R^{\flat +}), \mu([\pi^\flat]-\pi) \prod_{\alpha \in \Phi_{\mu<0}} i_\alpha([u_\alpha^\flat]) \sigma\bigr). 
    \]
    In particular, $\cl{P}^\univ(R, R^+)$ is given by the restriction of this $\cl{G}$-BKF module to $\cl{Y}_{(R, R^+)}$. 
\end{prop}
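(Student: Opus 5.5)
The plan is to deduce the formula from the explicit form of the universal deformation in \cite{Ito25a}, pulled back along a Frobenius-equivariant map into $W_{O_F}(R^{\flat+})$.

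\textbf{Step 1: Ito's universal object.} I would first recall how \Cref{thm:univdef} is proved. The deformation space at $[1]$ is described by a universal $\cl{G}$-$\mu$-display, equivalently a prismatic $\cl{G}$-BKF module. It lives over the $\delta$-ring
\[
    A = O_{\breve{F}}\llbracket u_\alpha \vert \alpha \in \Phi_{\mu<0}\rrbracket, \qquad \varphi(u_\alpha) = u_\alpha^q
\]
(with $\varphi = \sigma$ on $O_{\breve{F}}$), equipped with the prism $(A\llbracket t\rrbracket, (t-\pi))$ or an equivalent Breuil–Kisin frame. The Frobenius of this universal object has the shape
\[
    \mu(t-\pi)\prod_{\alpha}i_\alpha(u_\alpha)\,\sigma ,
\]
up to normalization. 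This is the usual "Frobenius $=$ Hodge cocharacter times universal unipotent element of $\cl{\breve{G}}^{\mu<0}$" description. At the closed point all $u_\alpha = 0$, and specializing $t \mapsto 0$ recovers the Frobenius $\mu(-\pi)\sigma = b\sigma$ of $\cl{E}^b$. This is why the paper normalizes $b = \mu(-\pi)$.

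Here the choice of coordinates $i_\alpha$ in \Cref{lem:choiceia} is essential. It makes $\sigma^*_{\cl{G}}\circ i_{\sigma\alpha} = i_\alpha\circ\sigma^*_{\bb{A}^1}$, so the semilinear operator $\prod_\alpha i_\alpha(u_\alpha)\sigma$ is compatible with the Frobenius lift $u_\alpha\mapsto u_\alpha^q$, and the product is well defined as an element of $\cl{\breve{G}}^{\mu<0}(A)$.

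\textbf{Step 2: pull back to a perfectoid point.} Given $(R,R^+)$ over $R_{\cl{G},\mu}$ with chosen $\pi^\flat$ and $u_\alpha^\flat$, define
\[
    A\llbracket t\rrbracket \to W_{O_F}(R^{\flat+}), \qquad t\mapsto [\pi^\flat], \quad u_\alpha\mapsto [u_\alpha^\flat].
\]
This map has the following properties.
\begin{itemize}
    \item It is $\varphi$-equivariant, because Teichmüller elements satisfy $\varphi[x] = [x]^q$.
    \item It sends $t-\pi$ to $[\pi^\flat]-\pi$, which generates $\Ker(\theta_R)$ since $([\pi^\flat])^\sharp = \pi$.
    \item Composed with $\theta_R$, it is the structure map $R_{\cl{G},\mu}\to R^+$, since $(u_\alpha^\flat)^\sharp = u_\alpha$.
\end{itemize}
So it is a map of prisms over the given point. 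Base change of the universal BKF module along it yields exactly
\[
    \mu([\pi^\flat]-\pi)\prod_\alpha i_\alpha([u_\alpha^\flat])\sigma .
\]
Restricting to $\cl{Y}_{(R,R^+)}$ then gives the last sentence of the statement, via the standard dictionary between BKF modules over $W_{O_F}(R^{\flat+})$ and shtukas with a leg at $\Spd R$.

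\textbf{Step 3: independence of choices.} The resulting shtuka must not depend on the choice of $u_\alpha^\flat$ and $\pi^\flat$. The point of $\cl{M}^\ints_{\cl{G},b,\mu}$ given by $\Spd(R^+)\to\Spd(R_{\cl{G},\mu})$ is intrinsic, and evaluating the universal prismatic object on any prism map over it gives a canonically isomorphic shtuka, by crystal/base-change compatibility. So changing the lifts changes the BKF module only by a canonical isomorphism.

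An alternative argument works in the perfect $\delta$-ring $W_{O_F}(R^{\flat+})$: there, elements with $\delta = 0$ are exactly Teichmüller lifts. Hence any prism map from $A\llbracket t\rrbracket$ over the point has the form above for some choice of roots, and different choices are related through the universal property of the perfect prism.

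\textbf{Main obstacle.} I expect the hardest step to be matching normalizations with \cite{Ito25a}: the sign in $\mu(-\pi)$ versus $\mu(\pi)$, the side on which the unipotent factor sits, and the fact that \cite{Ito25a} works with $\mu$ rather than our $-\mu$ convention. I would handle this by checking the formula at the closed point $[1]$ and on the tangent space $\Lie(\cl{\breve{G}}^{\mu<0})$, where \Cref{thm:univdef} identifies the coordinates $u_\alpha$.
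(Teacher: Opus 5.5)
Your proposal is correct and follows essentially the route the paper takes. The paper only says that the proof of \cite[Corollary 2.9]{Tak25z} carries over: the differences are the normalization of $\mu$ (conjugation by $\Ad(p_w)$, which removes the Weyl element $w$ appearing there) and the fact that $(\breve{\cl{G}}, \mu(-\pi)\sigma)$ is the shtuka at $[1]$, which is exactly the base point your deformation and Teichm\"{u}ller-pullback argument uses; one small correction is that $\prod_\alpha i_\alpha(u_\alpha)$ lies in $\breve{\cl{G}}^{\mu<0}(A)$ for any choice of the $i_\alpha$, so \Cref{lem:choiceia} is not needed for this statement and is fixed in the paper only for later computations of $\sigma$ applied to such products.
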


Here, we identify local shtukas over $\Spd(R^+)$ with $\cl{G}$-BKF modules over $R^+$ via \cite{Gut23}. 

\begin{proof}
    It looks different from \cite[Corollary 2.9]{Tak25z} by a Weyl element $w$, but the difference stems from the different choice of $\mu$. The one in \cite{Tak25z} is a cocharacter of a maximally $F$-split unramified maximal torus in $G$ and differs from our $\mu$ by $\Ad(p_w)$ (see \Cref{const:special_pair_from_Weyl_element}). After all, the same proof as in \cite[Corollary 2.9]{Tak25z} works here since $(\breve{\cl{G}}, \mu(-\pi)\sigma)$ is the local $\cl{G}$-shtuka associated to $[1] \in X_\mu(b)$. 
\end{proof}

Then, $\cl{U}(0)$ will be defined as a certain explicit open ball inside $\Spf(R_{\cl{G}, \mu})_\eta$. The following formulas again look different from \cite{Tak25z} by a Weyl element, but this is a consequence of our choice of $\mu \in X_*(T)$ (cf.\ the proof of \Cref{lem:expBKF}). 

\begin{defi}\textup{(\cite[Section 3.1]{Tak25z})} \label{defi:parabolic_mu}
    There is a unique rational cocharacter $\lambda \in X_*(T)_{\bb{Q}}$ with $\lambda - q\sigma \lambda = \mu$. The denominator $e$ of $\lambda$ is coprime to $p$. Let
    \[
        \msf{P} = \msf{G}_{\ov{k}}^{e\lambda \geq 0} ,\quad 
        \ov{\msf{P}} = \msf{G}_{\ov{k}}^{e\lambda \leq 0} ,\quad 
        \msf{N} = \msf{G}_{\ov{k}}^{e\lambda > 0} ,\quad
        \ov{\msf{N}} = \msf{G}_{\ov{k}}^{e\lambda < 0}, \quad 
        \msf{U}_{\mu>0} = \msf{G}_{\ov{k}}^{\mu > 0} ,\quad
        \msf{U}_{\mu<0} = \msf{G}_{\ov{k}}^{\mu < 0}.  
    \]
    By \cite[Lemma 3.2, Proposition 3.6]{Tak25z}, we have $\msf{M}_{\ov{k}} = \msf{G}^{e\lambda = 0}$ and $\msf{N} \cap \sigma(\msf{\ov{N}}) = \msf{U}_{\mu > 0}$. 
\end{defi}

\begin{lem}\textup{(\cite[Lemma 3.12]{Tak25z})} \label{lem:vareps}
    Let $r_\alpha = \langle \alpha, q\sigma \lambda \rangle$ for $\alpha \in \Phi$. For each $\alpha \in \Phi_{\mu<0}$, there is a unique positive integer $n_\alpha \geq 1$ such that 
    \[
        \langle \sigma^{-n_\alpha}\alpha, \mu \rangle = 1, \quad 
        \langle \sigma^{- i}\alpha, \mu \rangle = 0 \quad (0 < i < n_\alpha). 
    \]
    Let $\beta_\alpha = - \sigma^{-n_\alpha}\alpha$. The map $\alpha \mapsto \beta_\alpha$ is a permutation of $\Phi_{\mu < 0}$ and we have $r_\alpha = q^{n_\alpha}(1-r_{\beta_\alpha})$ for every $\alpha \in \Phi_{\mu<0}$. Moreover, $1 - q^{-1} <r_\alpha<1$ for every $\alpha \in \Phi_{\mu<0}$. 
\end{lem}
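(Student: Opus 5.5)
The plan is to make $\lambda$ explicit, so that every claim becomes a statement about the integer sequence $c_i(\alpha) = \langle \sigma^{-i}\alpha, \mu \rangle \in \{-1, 0, 1\}$. Since $\sigma$ has finite order on $X_*(T)$ and $q > 1$, the series
\[
    \lambda = -\sum_{i \geq 1} q^{-i} \sigma^{-i}\mu
\]
converges in $X_*(T)_{\bb{Q}}$. Substituting gives $q\sigma\lambda = -\sum_{j \geq 0} q^{-j}\sigma^{-j}\mu = \lambda - \mu$, so this is the unique solution of $\lambda - q\sigma\lambda = \mu$. Using $\sigma$-invariance of the pairing, we get
\[
    r_\alpha = -\sum_{i \geq 0} q^{-i} c_i(\alpha).
\]
I will use whichever normalization of $\langle \sigma\alpha, \nu \rangle$ versus $\langle \alpha, \sigma^{-1}\nu \rangle$ matches the paper's conventions. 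That normalization determines on which side the recursion below reads $r_\alpha = q^{n_\alpha}(1 - r_{\beta_\alpha})$ rather than the "dual" form $r_\alpha = 1 - q^{-n_\alpha}r_{\beta_\alpha}$. I expect this bookkeeping of conventions to be the main obstacle. I will fix it first by checking a small example, for instance $\GL_2$ with the Lubin--Tate $\mu$.

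Next I establish the structure of the sequence $c_i = c_i(\alpha)$ for $\alpha \in \Phi_{\mu<0}$, so that $c_0 = -1$. By \Cref{lem:automatic_minuscule}, every $\mu_i = \sum_{0 \leq j < i}\sigma^j\mu$ is minuscule. Pairing $\mu_i$ with a suitable $\sigma$-translate of $\alpha$ shows that the partial sums $S_i = \sum_{j < i} c_j$ lie in $\{-1, 0, 1\}$. Since $S_1 = -1$, the nonzero $c_j$ must alternate in sign, starting with $-1$.

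To see that a $+1$ actually occurs, I use that $(M, \mu)$ is unramified CM. If $N$ is the order of $\sigma$, then $\sum_{j<N}\sigma^{j}\mu$ is $\Gal$-invariant and lies in $X_*(Z_M^\circ)$, hence in $X_*(Z_G^\circ)$ because $Z_M^\circ/Z_G^\circ$ is anisotropic. Roots pair trivially with it, so $\sum_{j<N} c_j = 0$. Therefore there is a first index $n_\alpha \geq 1$ with $c_{n_\alpha} \neq 0$, and by alternation $c_{n_\alpha} = +1$. Uniqueness of $n_\alpha$ is immediate from the defining conditions.

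Then $\beta_\alpha = -\sigma^{-n_\alpha}\alpha$ satisfies $\langle \beta_\alpha, \mu \rangle = -1$, so $\beta_\alpha \in \Phi_{\mu<0}$. The same argument run in the forward direction ($\sigma^{+i}$ instead of $\sigma^{-i}$) produces an inverse map, recovering $\alpha$ as $-\sigma^{n}\beta$ for the first forward nonzero index. So $\alpha \mapsto \beta_\alpha$ is injective on the finite set $\Phi_{\mu<0}$, hence a permutation.

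For the recursion I split the series for $r_\alpha$ at $i = n_\alpha$. The terms with $0 < i < n_\alpha$ vanish and $c_0 = -1$ contributes $1$. The tail is governed by $c_{n_\alpha + k}(\alpha) = -c_k(\beta_\alpha)$, so it equals a $q^{-n_\alpha}$-multiple of $\mp r_{\beta_\alpha}$; rearranging gives the stated identity $r_\alpha = q^{n_\alpha}(1 - r_{\beta_\alpha})$ once the convention above is fixed.

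Finally, for the bounds I write
\[
    r_\alpha = 1 - q^{-m_1} + q^{-m_2} - \cdots
\]
with $1 \leq m_1 < m_2 < \cdots$. This series is infinite by the periodicity of $\sigma$, and its terms alternate in sign with strictly decreasing magnitude. Hence $1 - q^{-m_1} < r_\alpha < 1$, and since $m_1 \geq 1$ this gives $r_\alpha > 1 - q^{-1}$.
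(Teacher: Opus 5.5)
Your overall plan is sound: an explicit formula for $\lambda$, sign patterns coming from the minusculeness of the $\mu_i$, anisotropy of $Z_M^\circ/Z_G^\circ$ to produce $n_\alpha$, and an alternating series for the bounds. The recursion step, however, fails as you set it up, and the cause is an index error rather than an open choice of convention.

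You invoke $\sigma$-equivariance of the pairing, $\langle\sigma\alpha,\sigma\nu\rangle=\langle\alpha,\nu\rangle$. The paper uses the same convention, e.g.\ $q^i\langle\alpha,\lambda\rangle=\langle\sigma^i\alpha,(q\sigma)^i\lambda\rangle$ in the proof of \Cref{lem:val_alpha_s_alpha}. With it, your formula for $q\sigma\lambda$ gives
\[
    r_\alpha=-\sum_{i\geq 0}q^{-i}\langle\alpha,\sigma^{-i}\mu\rangle=-\sum_{i\geq 0}q^{-i}\langle\sigma^{i}\alpha,\mu\rangle .
\]
This is a series in the \emph{forward} sequence. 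By contrast, $n_\alpha$ is read off from the backward sequence $c_i(\alpha)=\langle\sigma^{-i}\alpha,\mu\rangle$. So your expansion $r_\alpha=-\sum q^{-i}c_i(\alpha)$ is wrong, and the splitting you describe would prove $r_\alpha=1-q^{-n_\alpha}r_{\beta_\alpha}$, which is false.

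Concretely, take $\GL_3$ with $\mu=(-1,0,0)$ and $\sigma$ permuting the coordinates cyclically. Then $\Phi_{\mu<0}=\{\alpha_1,\alpha_2\}$ with $n_{\alpha_1}=1$, $n_{\alpha_2}=2$, $\beta_{\alpha_1}=\alpha_2$ and $\beta_{\alpha_2}=\alpha_1$. The values are $r_{\alpha_1}=\tfrac{q^2+q}{q^2+q+1}$ and $r_{\alpha_2}=\tfrac{q^2}{q^2+q+1}$. These satisfy the stated identity, but $1-q^{-1}r_{\alpha_2}\neq r_{\alpha_1}$; in fact your backward series evaluated at $\alpha_1$ returns $r_{\alpha_2}$.

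No convention rescues the argument. If $\sigma$ acted on $X^*(T)$ so that $\langle\sigma\alpha,\nu\rangle=\langle\alpha,\sigma\nu\rangle$, your expansion would be correct, but the stated identity itself would then fail. Your proposed sanity check also cannot detect the problem: for $\GL_2$ one has $n_\alpha=1$ and $\beta_\alpha=\alpha$, and both identities give $r_\alpha=q/(q+1)$.

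The fix is to split the series of $r_{\beta_\alpha}$ instead of $r_\alpha$. Since $\langle\sigma^j\beta_\alpha,\mu\rangle=-\langle\sigma^{j-n_\alpha}\alpha,\mu\rangle$, the forward sequence of $\beta_\alpha$ is $-1$ at $j=0$, zero for $0<j<n_\alpha$, and $-\langle\sigma^k\alpha,\mu\rangle$ at $j=n_\alpha+k$. Hence $r_{\beta_\alpha}=1-q^{-n_\alpha}r_\alpha$, which is exactly the claim.

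A smaller point concerns alternation. Partial sums from index $0$ lying in $\{-1,0,1\}$ only show that the first nonzero term after the initial $-1$ is $+1$. That suffices for $n_\alpha$, but it does not force full alternation: the pattern $-1,+1,+1,-1$ is compatible with it. For the alternating-series bound you need every window sum $\sum_{j=k}^{k+i-1}\langle\sigma^{\pm j}\alpha,\mu\rangle$ to lie in $\{-1,0,1\}$. This follows by pairing $\mu_i$ with arbitrary $\sigma$-translates of $\alpha$, which are roots though not necessarily in $\Phi_{\mu<0}$.

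With that, the forward sequence of $\alpha$ also has the shape $-1,0,\dots,0,+1,\dots$ with alternating nonzero terms. So your bound $1-q^{-1}<r_\alpha<1$ goes through unchanged for the correct expansion. The existence of $n_\alpha$ and the permutation argument are fine as written.
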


Now, we may summarize the definition and properties of $\cl{U}(0)$ studied in \cite{Tak25z}. Here, we include some basic properties that are not stated loc. cit. but easy to verify. Let $\cl{G}(1)$ be the first congruence subgroup scheme of $\cl{G}$ such that
\[
    \cl{G}(1)(O_F) = \Ker(\cl{G}(O_F) \to \cl{G}(k)). 
\]

\begin{lem} \label{lem:unramified_CM_points}
    The $v$-sheaf $\cl{M}^\ints_{\cl{M}, b, \mu}$ is a disjoint union of copies of $\Spd(O_{\breve{F}})$. Let $X_\mu^{\cl{M}}(b) = (\cl{M}^\ints_{\cl{M}, b, \mu})_\red$ be the affine Deligne-Lusztig variety for $(\cl{M}, b, \mu)$. Then, $X^{\cl{M}}_\mu(b) \to X_\mu(b)$ is injective and $[1] \in X_\mu(b)$ lies in the image. Then, the composition
    \[
        \Spd(O_{\breve{F}}) \cong (\cl{M}^\ints_{\cl{M}, b, \mu})^\wedge_{/[1]} \to (\cl{M}^\ints_{\cl{G}, b, \mu})^\wedge_{/[1]} \cong \Spd(R_{\cl{G}, \mu})
    \]
    is a closed immersion given by the condition $u_\alpha = 0$ for all $\alpha \in \Phi_{\mu < 0}$. 
\end{lem}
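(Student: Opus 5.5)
We prove the three assertions in turn, each time reducing to explicit statements about $\cl{M}$-shtukas and the explicit $\cl{G}$-BKF module of \Cref{lem:expBKF}.

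\textbf{The structure of $\cl{M}^\ints_{\cl{M}, b, \mu}$.} Since $(M,\mu)$ is unramified, $\cl{M}$ is reductive over $O_F$ and $\mu$ is central in $\cl{M}$. Hence $\Gr_{M,\mu} = \Spd(\breve{E}) = \Spd(\breve{F})$. By \Cref{lem:zero_dimensional_LSV}, the generic fiber is $\und{M(F)/\cl{M}(O_F)} \times \Spd(\breve{F})$. For the integral model, we use \Cref{thm:univdef} applied to $(\cl{M}, b, \mu)$: the ring $R_{\cl{M}, \mu}$ has no variables, because $\mu$ is central in $M$ and so no roots of $\cl{M}$ pair nontrivially with $\mu$. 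Thus the formal completion at each point of $X_\mu^{\cl{M}}(b)$ is $\Spd(O_{\breve{F}})$. Moreover, $X_\mu^{\cl{M}}(b) = \{ m \in M(\breve{F})/\cl{M}(O_{\breve{F}}) \mid m^{-1} b\sigma(m) \in \cl{M}(O_{\breve{F}})\mu(\pi) \}$ is discrete, since it equals $M(F)/\cl{M}(O_F)$. The reason is that $m^{-1}b\sigma(m) \in b\,\cl{M}(O_{\breve{F}})$, and Lang's theorem for the connected group $\cl{M}$ lets us adjust $m$ so that $m\in M(F)$. Combining these, the prekimberlite is a disjoint union of copies of $\Spd(O_{\breve{F}})$, one for each point of $X^{\cl{M}}_\mu(b)$.

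\textbf{Injectivity and $[1]$ in the image.} Suppose $m_1,m_2 \in M(F)$ satisfy $m_1\cl{G}(O_{\breve{F}}) = m_2\cl{G}(O_{\breve{F}})$. Then $m_1^{-1}m_2 \in M(\breve{F}) \cap \cl{G}(O_{\breve{F}})$. Because $x$ is hyperspecial in both $\cl{B}(M,\breve{F})$ and $\cl{B}(G,\breve{F})$ under the admissible embedding, this intersection is $\cl{M}(O_{\breve{F}})$ (use \cite[Proposition 12.9.4]{KP23}-type compatibility of parahorics with twisted Levi subgroups). This gives injectivity. The point $[1]$ is the image of $1\in M(F)/\cl{M}(O_F)$, because $b = \mu(-\pi) \in \cl{M}(O_{\breve{F}})\mu(\pi)$.

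\textbf{The closed immersion.} The map of formal completions sends the unique $\cl{M}$-shtuka $(\cl{M}\otimes W_{O_F}(R^{\flat+}), \mu([\pi^\flat]-\pi)\sigma)$ to its pushout to $\cl{G}$. Comparing with the universal $\cl{G}$-BKF module of \Cref{lem:expBKF}, we see that this pushout is the specialization $u_\alpha = 0$ for all $\alpha \in \Phi_{\mu<0}$, since then $\prod_\alpha i_\alpha([u^\flat_\alpha]) = 1$. By the universal property in \Cref{thm:univdef}, the classifying map $\Spd(O_{\breve{F}}) \to \Spd(R_{\cl{G},\mu})$ corresponds to the quotient $R_{\cl{G},\mu} \to R_{\cl{G},\mu}/(u_\alpha)_\alpha = O_{\breve{F}}$, which is a closed immersion.

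The main obstacle is compatibility: the identification in \Cref{thm:univdef} (Ito's deformation coordinates) must be functorial in $\cl{M}\subset\cl{G}$, so that the point $u_\alpha=0$ really is the image of the canonical $\cl{M}$-deformation and not of some other lift. First, we would rigidify by the quasi-isogeny: both sides carry quasi-isogenies to $\cl{E}^b$. Then we would check that the BKF module with $u_\alpha = 0$ is the pushout from $\cl{M}$ with the same quasi-isogeny. Uniqueness then follows because $\Spd(O_{\breve{F}})$ has a single such lift.
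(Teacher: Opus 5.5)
Your proposal is correct and follows essentially the paper's route. For the first claim you apply Ito's theorem to $\cl{M}$ (where $\Phi_{\mu<0}$ is empty) and use zero-dimensionality of $X^{\cl{M}}_\mu(b)$; injectivity is a direct check; and for the last claim you compare with the explicit BKF module of \Cref{lem:expBKF} at $u_\alpha = 0$.

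The differences are minor. The paper gets zero-dimensionality from Zhu's dimension formula. Your identification of the $\ov{k}$-points with $M(F)/\cl{M}(O_F)$ also works, but you must add that each bounded piece of the affine Grassmannian contains only finitely many of these points: $\ov{k}$ is countable, so the point set alone does not detect dimension.

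For the closed immersion, the paper sidesteps the rigidification issue you flag by going in the opposite direction. It lifts the $O_{\bb{C}_p}$-point of $\Spd(R_{\cl{G}, \mu})$ with $u_\alpha = 0$ to the $\cl{M}$-completion, since its BKF module is visibly an Ito deformation of $(\breve{\cl{M}}, \mu(-\pi)\sigma)$. Surjectivity of $\Spd(O_{\bb{C}_p}) \to \Spd(O_{\breve{F}})$ then gives the factorization through $\{u_\alpha = 0\}$, and the closed immersion is automatic because the map is over $\Spd(O_{\breve{F}})$.
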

\begin{proof}
    By applying \cite[Theorem 5.3.5]{Ito25a} to $\cl{M}^\ints_{\cl{M}, b, \mu}$, we see that the formal completion of $\cl{M}^\ints_{\cl{M}, b, \mu}$ at each closed point is isomorphic to $\Spd(O_{\breve{F}})$. Then, we get the first claim since $X^{\cl{M}}_\mu(b)$ is zero-dimensional by \cite[Theorem 3.1]{Zhu17}. By definition, it is easy to check that $X^{\cl{M}}_\mu(b) \to X_\mu(b)$ is injective and $[1]$ lies in the image. Now, consider the composition
    \[
        \Spd(O_{\breve{F}}) \cong (\cl{M}^\ints_{\cl{M}, b, \mu})^\wedge_{/[1]} \to (\cl{M}^\ints_{\cl{G}, b, \mu})^\wedge_{/[1]} \cong \Spd(R_{\cl{G}, \mu}). 
    \]
    Since this map is defined over $\Spd(O_{\breve{F}})$, it is necessarily a closed immersion. Thus, it is enough to show that it factors through the closed locus $\{ u_\alpha = 0 \mid \alpha \in \Phi_{\mu < 0} \}$.  

    Take a map $f \colon \Spd(O_{\bb{C}_p}) \to \Spd(R_{\cl{G}, \mu})$ corresponding to $O_F \subset O_{\bb{C}_p}$ and the condition $u_\alpha = 0$ for every $\alpha \in \Phi_{\mu < 0}$. By \Cref{lem:expBKF}, the associated $\cl{G}$-BKF module over $O_{\bb{C}_p}$ naturally admits the $\cl{M}$-BKF module structure
    \[
        (\cl{M} \otimes W_{O_F}(O_{\bb{C}_p}^\flat), \mu([\pi^\flat] - \pi) \sigma).
    \] 
    Since this is a deformation of $(\cl{\breve{M}}, \mu(-\pi)\sigma)$ in the sense of \cite{Ito25a}, $f$ lifts to $(\cl{M}^\ints_{\cl{M}, b, \mu})^\wedge_{/[1]}$. The induced map $\Spd(O_{\bb{C}_p}) \to (\cl{M}^\ints_{\cl{M}, b, \mu})^\wedge_{/[1]} \cong \Spd(O_{\breve{F}})$ is surjective, so we get the claim since $f$ maps to the closed locus $\{ u_\alpha = 0 \mid \alpha \in \Phi_{\mu < 0} \}$. 
\end{proof}

\begin{prop}\textup{(cf.\ \cite[Theorem 1]{Tak25z})} \label{prop:U(0)_previous_work}
    The connected rational subdomain
    \[
        \cl{U}(0) = \{ \lvert u_\alpha \rvert \leq \lvert \pi \rvert^{r_\alpha} \neq 0 \mid \alpha \in \Phi_{\mu < 0} \} \subset \Spf(R_{\cl{G}, \mu})_\eta
    \] 
    satisfies the following conditions. 
    \begin{enumerate}
        \item $\cl{U}(0)$ is stable under $\cl{G}_b(O_F)$ and $j \cdot \cl{U}(0) \cap \cl{U}(0) = \emptyset$ for $j \in G_b(F) - \cl{G}_b(O_F)$. Moreover, 
        $
            \sigma_E(\cl{U}(0)) = z_\mu \cdot \cl{U}(0)
        $
        as open subsets of $\cl{M}_{\cl{G}, b, \mu}$. Here, $z_\mu$ acts via the $G_b(F)$-action. 
        \item There is a unique $\breve{F}$-valued point $x_\CM \in \cl{U}(0) \cap \cl{M}_{\cl{M}, b, \mu}$. 
        \item The $\cl{G}(k)$-cover 
        \[
            \cl{W}(0) = \cl{U}(0) \times_{\cl{M}_{\cl{G}, b, \mu}} \cl{M}_{\cl{G}(1), b, \mu}
        \]
        has good reduction over the tame extension $\breve{F}_e / \breve{F}$ of degree $e$. Moreover, its reduction is a parabolic Deligne-Lusztig variety 
        \[
            \msf{W}(0) =  \{g \in \msf{G}/\msf{N}\mid g^{-1} \sigma(g) \in \msf{N} \cdot \sigma(\msf{N}) \}. 
        \]
        We have $\msf{G}_b \cong \msf{M}$ and the action of $\cl{G}(O_F) \times \cl{G}_b(O_F)$ on $\msf{W}(0)$ is the inflation of the natural action $\msf{G} \times \msf{M} \circlearrowleft \msf{W}(0)$. 
    \end{enumerate}
\end{prop}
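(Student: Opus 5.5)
The proposition is essentially \cite[Theorem 1]{Tak25z} transported to the present normalization of $\mu$. The plan is to reduce each item to the corresponding statement there and to prove directly the few extra claims not stated there: the $\sigma_E$-compatibility in (1), and the identification of $x_\CM$ in (2). Throughout we use \Cref{lem:expBKF}. By that result, the universal $\cl{G}$-BKF module over $R_{\cl{G}, \mu}$ differs from the one in \cite[Corollary 2.9]{Tak25z} only by conjugation by the element $p_w$ of \Cref{const:special_pair_from_Weyl_element}. Since $p_w \in \cl{G}(O_{\breve{F}})$, this change of coordinates preserves all the norm conditions $\lvert u_\alpha \rvert \leq \lvert \pi \rvert^{r_\alpha}$ once roots are relabeled via $\Ad(p_w)$. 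The numbers $r_\alpha$ of \Cref{lem:vareps} are defined through $\lambda$, with $\lambda - q\sigma\lambda = \mu$, and this relation transforms compatibly. So $\cl{U}(0)$ is literally the rational subdomain of \cite[Theorem 1]{Tak25z}.

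For (3), we quote \cite[Theorem 1]{Tak25z} directly. The good reduction of $\cl{W}(0)$ over $\breve{F}_e$ comes from the explicit equations for the $\cl{G}(1)$-level structure. Its reduction is the parabolic Deligne-Lusztig variety for $\msf{P} = \msf{G}^{e\lambda \geq 0}$, using $\msf{N} \cap \sigma(\ov{\msf{N}}) = \msf{U}_{\mu > 0}$ from \Cref{defi:parabolic_mu}. We then check the isomorphism $\msf{G}_b \cong \msf{M}$: the length-zero condition (2) of \Cref{defi:unramified_length_0} forces $x$ to be a vertex of the $b\sigma$-facet containing $x_b$, and the reductive quotient at $x_b$ is then the $b\sigma$-fixed part of the Levi with roots orthogonal to all $\sigma^i\mu$, namely $\msf{M}$. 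The compatibility of the $\cl{G}(O_F)\times \cl{G}_b(O_F)$-action is the one proved loc. cit.

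For (1), stability under $\cl{G}_b(O_F)$ and the disjointness of $j \cdot \cl{U}(0)$ and $\cl{U}(0)$ for $j \notin \cl{G}_b(O_F)$ are again in \cite{Tak25z}. The only input is that $\cl{U}(0)$ lies in the tube over $[1] \in X_\mu(b)$, and the stabilizer of $[1]$ in $G_b(F)$ is $\cl{G}_b(O_F)$ because $x_b$ corresponds to $[1]$. For the Weil descent datum, we compute $\sigma_E$ on the tube using the explicit form in \cite[(3.1.6)]{PR24}, as in \Cref{lem:zero_dimensional_LSV}. Here $E = F_\un$, so $\sigma_E$ is the $[E:F]$-th power of the Frobenius twist. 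Applied to the BKF module of \Cref{lem:expBKF}, this twist sends the point $[1]$ to $z_\mu \cdot [1]$. On coordinates it acts by $u_\alpha \mapsto \sigma^{[E:F]}$-semilinear relabelings, which are compatible by the choice of $i_\alpha$ in \Cref{lem:choiceia}. The conditions $\lvert u_\alpha\rvert \leq \lvert\pi\rvert^{r_\alpha}$ are preserved because the relabeling $\alpha \mapsto \sigma^{[E:F]}\alpha$ permutes $\Phi_{\mu<0}$ (as $\sigma^{[E:F]}\mu = \mu$) and preserves $r_\alpha$ (as $\sigma^{[E:F]}$ fixes $\lambda$). Since $z_\mu \in Z_G(F)$ by \Cref{lem:zero_dimensional_LSV}, the $G_b(F)$-translate by $z_\mu$ is well-defined, and we get $\sigma_E(\cl{U}(0)) = z_\mu\cdot\cl{U}(0)$.

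For (2), \Cref{lem:unramified_CM_points} shows that the CM locus inside the tube at $[1]$ is the single point $u_\alpha = 0$, which lies in $\cl{U}(0)$. Other components of $\cl{M}_{\cl{M}, b, \mu}$ have reductions at other points of $X^{\cl{M}}_\mu(b)$, and these are distinct from $[1]$ by the injectivity in \Cref{lem:unramified_CM_points}. So they do not meet the tube, and $x_\CM$ is unique and $\breve{F}$-rational. The main obstacle I expect is the descent-datum bookkeeping in (1): matching the sign and normalization conventions (our $-\mu$ versus \cite{SW20}, $b = \mu(-\pi)$ versus $\Nm(\mu(\pi_E))$) so that the translate is exactly $z_\mu$ rather than some other central element. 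I would handle this by first checking it on the CM point via \Cref{lem:zero_dimensional_LSV}, then propagating to the whole of $\cl{U}(0)$ by connectedness and the fact that $\sigma_E$ commutes with the $G_b(F)$-action.
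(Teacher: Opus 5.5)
Your proposal is correct, and for (2), (3) and the $\cl{G}_b(O_F)$-part of (1) it matches the paper. Both arguments cite \cite{Tak25z}: the paper uses Lemma 5.5 and Proposition 5.7 there for stability and disjointness, and Theorem 3.16, Proposition 5.7 and Lemma 5.6 for (3). Both use \Cref{lem:unramified_CM_points} to see that the only CM point in the tube over $[1]$ is $u_\alpha = 0$.

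The one real difference is the Weil descent datum.

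\textbf{The paper's argument.} It only checks that $\sigma_E(\cl{U}(0))$ and $z_\mu \cdot \cl{U}(0)$ meet, using \Cref{lem:zero_dimensional_LSV} at $x_\CM$. It then invokes \cite[Proposition 5.14]{Tak25z}, together with connectedness of $\cl{U}(0)$ and the disjointness of distinct $G_b(F)$-translates, to get equality.

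\textbf{Your argument.} You compute $z_\mu^{-1} \circ \sigma_E$ directly on the tube as a $\sigma^{d}$-semilinear relabeling $u_\alpha \mapsto u_{\sigma^{\pm d}\alpha}$, with $d = [E:F]$. You then check that $\Phi_{\mu<0}$ and $r_\alpha$ are $\sigma^d$-invariant, which is correct: $\sigma^d\mu = \mu$, and uniqueness of $\lambda$ gives $\sigma^d\lambda = \lambda$.

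\textbf{Comparison.} Your route is more self-contained. It also yields the integral statement that the descent datum preserves the formal model of $\cl{U}(0)$, which the paper uses implicitly later in \Cref{prop:stability_U(n)_diagonal}. However, the key coordinate formula is asserted rather than derived. To justify it you must write out the descent datum of \cite[(3.1.6)]{PR24} on the universal $\cl{G}$-BKF module of \Cref{lem:expBKF}, including the quasi-isogeny. You must then use the rigidity of the identification in \Cref{thm:univdef} to see that the $\sigma^d$-twisted base point $(\breve{\cl{G}}, \mu(-\pi)\sigma)$ is identified with itself through a central element. Only then does \Cref{lem:choiceia} apply with no extra conjugation on the $i_\alpha$. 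This is precisely the content the paper outsources to \cite[Proposition 5.14]{Tak25z}.

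\textbf{Your fallback sentence.} Connectedness together with the fact that $\sigma_E$ commutes with $G_b(F)$ does not by itself propagate the statement from $x_\CM$ to all of $\cl{U}(0)$. You first need $\sigma_E(\cl{U}(0)) \subset \bigsqcup_j j\cdot\cl{U}(0)$, which is exactly what the coordinate computation (or \cite[Proposition 5.14]{Tak25z}) supplies. After that, the CM point only serves to identify the translate as $z_\mu\cdot\cl{U}(0)$.

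\textbf{Minor point.} You do not need $z_\mu \in Z_G(F)$ for the translate to make sense; $z_\mu \in Z_M(F) \subset G_b(F)$ already suffices.
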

\begin{proof}
    In \cite{Tak25z}, $\cl{G}_b(O_F)$ is denoted by $G_{b, \mfr{f}}$. First, \cite[Lemma 5.5]{Tak25z} implies $j \cdot \cl{U}(0) \cap \cl{U}(0) = \emptyset$ for $j \in G_b(F) - \cl{G}_b(O_F)$ since $\cl{G}_b(O_F)$ is the stabilizer of $[1] \in X_\mu(b)$. Moreover, $\cl{U}(0)$ is stable under $\cl{G}_b(O_F)$ by \cite[Proposition 5.7]{Tak25z} and (3) follows from \cite[Theorem 3.16, Proposition 5.7]{Tak25z}. Note that $\msf{G}_b \cong \msf{M}$ is proved in \cite[Lemma 5.6]{Tak25z}.
    
    For (2), $(\cl{M}^\ints_{\cl{G}, b, \mu})^\wedge_{/[1]} \cap \cl{M}_{\cl{M}, b, \mu}$ consists of at most one point by \Cref{lem:unramified_CM_points}. It also implies that the $\breve{F}$-valued point $x_\CM \in (\cl{M}^\ints_{\cl{M}, b, \mu})^\wedge_{/[1], \eta}$ is given by the condition $u_\alpha = 0$ for every $\alpha \in \Phi_{\mu < 0}$. Thus, $x_\CM \in \cl{U}(0)$. It remains to see $\sigma_E(\cl{U}(0)) = z_\mu \cdot \cl{U}(0)$. By \Cref{lem:zero_dimensional_LSV}, $\sigma_E(\cl{U}(0)) \cap z_\mu \cdot \cl{U}(0) \neq \emptyset$ since $x_\CM \in \cl{U}(0)$. Then, the claim follows from \cite[Proposition 5.14]{Tak25z} since $\cl{U}(0)$ is connected. 
\end{proof}

\Cref{prop:U(0)_previous_work} provides most of the information for \Cref{prop:restatement_LSV}. The only property remaining to be established is \Cref{prop:restatement_LSV} (2). 

\subsection{Computation of the Grothendieck-Messing period map}

In this section, we show that $\pi_\GM\vert_{\cl{U}(0)}$ is an open immersion, thereby establishing \Cref{prop:restatement_LSV} (2) for $\cl{U}(0)$. 

First, we review the construction of the quasi-isogeny $\iota$ to $\cl{E}^b$ on the tubular neighborhood $\Spf(R_{\cl{G}, \mu})_\eta$ (recall \Cref{defi:v_sheaf_integral_model}). Let $x \colon \Spa(C, C^+) \to \Spf(R_{\cl{G}, \mu})_\eta$ be a geometric point, i.e.\ a map from an algebraically closed perfectoid field, and let $P_x$ be the $\cl{G}$-BKF module associated to $x$ via \Cref{lem:expBKF}. By construction in \cite[Proposition 5.2.6]{Ito25a} and \cite[Lemma 2.24]{Gle26_local_Shimura}, $\iota$ is defined over
\[
    \cl{Y}_{(C, C^+), [r, \infty]} = \{ \lvert [\varpi] \rvert \leq \lvert \pi \rvert^r \neq 0 \} \subset \Spa(W_{O_F}(C^{\flat +})) - V(\pi)
\]
for sufficiently large $r$. In fact, the associated element can be explicitly described as follows. 

\begin{lem} \label{lem:quasi_isog_computation}
    Let $r, s$ be rational numbers such that $r > q^{-1}$ and $1 < s < \min(q, qr)$ and suppose that for each $\alpha \in \Phi_{\mu < 0}$, $x$ satisfies the inequality
    \[
        \lvert u_\alpha \rvert \leq \lvert \pi \rvert^r \neq 0. 
    \]
    Take $\pi^\flat, u_\alpha^\flat \in C^\flat$ so that $(\pi^\flat)^\sharp = \pi$ and $(u_\alpha^\flat)^\sharp=u_\alpha$ for $x$. Let $u = \prod_{\alpha \in \Phi_{\mu < 0}} i_\alpha([u_\alpha^\flat])$ and let $B_s = W_{O_F}(C^{\flat +})[\tfrac{[\pi^{\flat, s}]}{\pi}]^\wedge$ denote the $\pi$-adic completion. Then, there is a unique element $g \in \cl{G}(B_s)$ that is trivial modulo $[\varpi]$ for some pseudo-uniformizer $\varpi \in C^\flat$ and satisfies
    \begin{equation} \label{eq:g_condition}
        g = \Ad(\mu(-\pi)\sigma)(g) \left((\sigma\mu)\bigl(1 - \tfrac{[\pi^{\flat, q}]}{\pi}\bigr) \Ad(\mu_2(-\pi)\sigma)(u) \right)^{-1}
    \end{equation}
    in $G(B_s[\tfrac{1}{\pi}])$. Here, $\Ad(\mu(-\pi)\sigma)$ denotes the composition $\Ad(\mu(-\pi)) \circ \sigma$ and $\mu_2 = \mu + \sigma\mu$. 
\end{lem}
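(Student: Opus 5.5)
The equation \eqref{eq:g_condition} is a fixed-point equation $g = \Phi(g)$ with
\[
    \Phi(h) = \Ad(\mu(-\pi)\sigma)(h)\, c^{-1}, \qquad c = (\sigma\mu)\bigl(1 - \tfrac{[\pi^{\flat, q}]}{\pi}\bigr) \Ad(\mu_2(-\pi)\sigma)(u).
\]
I would solve it by successive approximation in $\cl{G}(B_s)$ with respect to the $[\pi^\flat]$-adic (equivalently $[\varpi]$-adic) topology. The plan is to show three things: $c \in \cl{G}(B_s)$, $c \equiv 1$ modulo a positive power of $[\pi^\flat]$, and $\Phi$ is a contraction on the subgroup of elements trivial modulo $[\varpi]$. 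Existence then follows from convergence of the iteration $g_0 = 1$, $g_{k+1} = \Phi(g_k)$, and uniqueness follows from contraction applied to $g_1 g_2^{-1}$-type differences.

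\textbf{Step 1: $c$ is integral and close to $1$.} First, $1 - [\pi^{\flat,q}]/\pi$ lies in $B_s$ since $q > s$, so $[\pi^{\flat,q}]/\pi = [\pi^{\flat,q-s}]\cdot[\pi^{\flat,s}]/\pi$. It is even a unit $\equiv 1$ modulo $[\pi^{\flat,q-s}]$, so $(\sigma\mu)(\cdots) \in \cl{T}(B_s)$ is close to $1$.

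For the second factor, write $\Ad(\mu_2(-\pi)\sigma)(u) = \prod_\alpha i_{\sigma\alpha}\bigl((-\pi)^{\langle \sigma\alpha, \mu_2\rangle}[u_\alpha^{\flat q}]\bigr)$ using \Cref{lem:choiceia}. Here $\langle\sigma\alpha,\mu_2\rangle = \langle \sigma\alpha,\mu\rangle + \langle\alpha,\mu\rangle = \langle \sigma\alpha,\mu\rangle - 1 \ge -2$, by minuscularity (\Cref{lem:automatic_minuscule}, applied to $\mu_2$, gives exponent $\ge -1$). So the exponent is $\ge -1$. The bound $|u_\alpha|\le|\pi|^r$ gives $[u_\alpha^{\flat q}] \in [\pi^{\flat, qr}]W_{O_F}(C^{\flat+})$. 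Since $qr > s$, the element $[u_\alpha^{\flat q}]/\pi$ lies in $[\pi^{\flat, qr - s}]B_s$, so each root group coordinate is integral and divisible by a positive power of $[\pi^\flat]$.

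\textbf{Step 2: contraction.} For $h \equiv 1 \bmod [\pi^{\flat,t}]$, I claim that $\Ad(\mu(-\pi)\sigma)(h) \equiv 1 \bmod [\pi^{\flat,qt}]\pi^{-1}$. The reason is that $\sigma$ raises Teichmüller exponents by $q$, while $\Ad(\mu(-\pi))$ multiplies root coordinates by $\pi^{\pm1}$ at worst ($\mu$ minuscule); torus and weight-zero parts are untouched.

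In $B_s$, dividing by $\pi$ costs $[\pi^{\flat,s}]$, so the congruence improves from $t$ to $qt - s$. This is a contraction toward the fixed point as long as $qt - s > t$, and the defect of $\Phi(1) = c^{-1}$ is already of order $\min(q-s, qr-s) > 0$. For differences, $\Phi(h)\Phi(h')^{-1} = \Ad(\mu(-\pi)\sigma)(hh'^{-1})$, so the error on differences maps $t \mapsto qt - s$. This needs $t > s/(q-1)$ to contract.

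I expect the main obstacle here. Near $t$ small the naive estimate fails, so one must either start with precision exceeding $s/(q-1)$ (the hypothesis $s < \min(q, qr)$ together with $r > q^{-1}$ should make the initial defect $\min(q-s,qr-s)$ large enough), or refine the estimate root by root. The refinement uses the fact that only roots with $\langle\alpha,\mu\rangle = -1$ actually lose a factor $\pi$ under $\Ad(\mu(-\pi))$, while others gain or keep integrality. I would try the root-by-root refinement, working with the Moy–Prasad-type filtration by the concave function $\alpha \mapsto t_\alpha$ and checking that the loss occurs only along a $\sigma$-orbit pattern controlled by \Cref{lem:vareps}.

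\textbf{Step 3: conclusion.} With contraction established, the iterates converge $[\varpi]$-adically in the complete ring $B_s$, since $\cl{G}$ is affine. The limit $g$ satisfies \eqref{eq:g_condition}. Uniqueness among elements trivial modulo some $[\varpi]$ follows because two solutions have a difference fixed by $h\mapsto \Ad(\mu(-\pi)\sigma)(h)$, whose precision strictly increases, forcing the difference to be $1$.
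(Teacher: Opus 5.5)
Your Step 1 estimates are correct, but the proof breaks exactly where you suspect, and the gap is not closed. The one-step bound $t \mapsto qt-s$ contracts only for $t > s/(q-1)$. The hypotheses do not put the initial defect above this threshold: $\min(q-s,\,qr-s) > s/(q-1)$ is equivalent to $s < q-1$ and $s < (q-1)r$. This fails whenever $q-1 \le s < q$, and in particular it always fails for $q=2$, since $s>1$. The same problem undermines your uniqueness argument in Step 3. The lemma asserts uniqueness among solutions trivial modulo $[\varpi]$ for \emph{some} pseudo-uniformizer $\varpi$, possibly of tiny valuation. For such elements your estimate gives nothing, since a single application of $\Ad(\mu(-\pi)\sigma)$ need not even land in $\cl{G}(B_s)$. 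Your root-by-root refinement is only a hope; you do not identify the mechanism that would make it work.

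The missing point is that the $\pi$-losses do not compound. Write $\mu_i = \sum_{0 \le j < i}\sigma^j\mu$. Then $\Ad(\mu(-\pi)\sigma)^i = \Ad(\mu_i(-\pi))\circ\sigma^i$, and \Cref{lem:automatic_minuscule} says that every $\mu_i$ is minuscule, not just $\mu$ and $\mu_2$. So after $i$ steps the total loss is at most one factor $\pi^{-1}$, while Frobenius multiplies the precision by $q^i$. The correct bound is $q^i t - s$, not your compounded $q^i t - s(q^i-1)/(q-1)$.

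The paper organizes this by iterating $d=[E:F]$ times. The cocharacter $\mu_d$ is $\sigma$-invariant in $X_*(Z_M^\circ)$, hence central in $G$ because $Z_M^\circ/Z_G^\circ$ is anisotropic. So the twist disappears entirely, and \eqref{eq:g_condition} gives $g = \sigma^d(g)X^{-1}$ with
\[
    X = \prod_{1 \le i \le d} (\sigma^i\mu)\bigl(1 - \tfrac{[\pi^{\flat, q^i}]}{\pi}\bigr)\Ad(\mu_{i+1}(-\pi)\sigma^i)(u).
\]
Each factor of $X$ lies in $\cl{G}(B_s)$ and is trivial modulo some $[\varpi]$. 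For the torus factors use $s<q$. For the unipotent factors use that $\mu_{i+1}$ is minuscule, so root coordinates are at worst $\pi^{-1}[u_\alpha^{\flat, q^i}]$, together with $q^i r \ge qr > s$.

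Then $Y = \prod_{i \ge 1}(\sigma^i\mu)\bigl(1 - \tfrac{[\pi^{\flat, q^i}]}{\pi}\bigr)\Ad(\mu_{i+1}(-\pi)\sigma^i)(u) = X\sigma^d(X)\sigma^{2d}(X)\cdots$ converges in $\cl{G}(B_s)$, and $g = Y^{-1}$ solves \eqref{eq:g_condition}. For uniqueness, let $g, g'$ be two solutions and set $\delta = g g'^{-1}$. Then $\delta = \Ad(\mu(-\pi)\sigma)(\delta)$, hence $\delta = \sigma^{kd}(\delta) \equiv 1 \bmod [\varpi^{q^{kd}}]$ for every $k$. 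This works for any $\varpi$ and forces $\delta = 1$.
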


\begin{proof}
    Let $\mu_i = \sum_{0 \leq j < i} \sigma^j \mu$ for $i \geq 0$ and let $d = [E \colon F]$. Since $Z_M^\circ  / Z_G^\circ$ is anisotropic, $\mu_d$ is central. By applying \eqref{eq:g_condition} repeatedly, we get
    \begin{equation} \label{eq:g_condition_sigma^d}
        g = \Ad(\sigma^d)(g) \left(\prod_{1 \leq i \leq d} (\sigma^i\mu)\bigl(1 - \tfrac{[\pi^{\flat, q^{i}}]}{\pi}\bigr) \Ad(\mu_{i+1}(-\pi) \sigma^i)(u) \right)^{-1}. 
    \end{equation}
    Since $s < q$, $(\sigma^i\mu)\bigl(1 - \tfrac{[\pi^{\flat, q^{i}}]}{\pi}\bigr) \in \cl{G}(B_s)$ is trivial modulo $[\varpi]$ for some pseudo-uniformizer $\varpi \in C^\flat$. Now, for each $i \geq 1$, we have
    \[
        \Ad(\mu_{i + 1}(-\pi) \sigma^i)(u) \in \cl{G}(B_s)
    \]
    and it is trivial modulo $[\varpi]$ for some $\varpi$. It follows from $\mu_i$ being minuscule (see \Cref{lem:automatic_minuscule}) because $\tfrac{[u_\alpha^{q^i}]}{\pi} \in B_s$ and it is trivial modulo $[\varpi]$ for some $\varpi$ since $s < qr$. 

    Now, $X = \prod_{1 \leq i \leq d} (\sigma^i\mu)\bigl(1 - \tfrac{[\pi^{\flat, q^{i}}]}{\pi}\bigr) \Ad(\mu_{i+1}(-\pi) \sigma^i)(u) \in \cl{G}(B_s)$ is trivial modulo $[\varpi]$ for some $\varpi$. Since $B_s$ is $[\varpi]$-adically complete, the infinite product 
    \begin{equation} \label{eq:definition_Y}
        Y = X \sigma^d(X) \sigma^{2d}(X) \cdots = \prod_{i \geq 1} (\sigma^i\mu)\bigl(1 - \tfrac{[\pi^{\flat, q^{i}}]}{\pi}\bigr) \Ad(\mu_{i+1}(-\pi) \sigma^i)(u)
    \end{equation}
    converges in $\cl{G}(B_s)$ and $g = Y^{-1}$ satisfies \eqref{eq:g_condition}. It is also easy to see the uniqueness of $g$ from \eqref{eq:g_condition_sigma^d} since $g$ is assumed to be trivial modulo $[\varpi]$ for some $\varpi$. 
\end{proof}

\begin{thm} \label{thm:computation_period_map}
    Let $r$ be a rational number with $r > q^{-1}$ and take an open ball 
    \[
        \cl{U}_r = \{ \lvert u_\alpha \rvert \leq \lvert \pi \rvert^r \neq 0 \} \subset \Spf(R_{\cl{G}, \mu})_\eta
    \]
    of radius $r$. The period map $\pi_\GM$ induces an open immersion 
    \[
        \cl{U}_r \to \Gr_{G_b, -\mu, \breve{F}} \cong \Flag_{G, -\mu, \breve{F}}. 
    \]
    Moreover, the image in $\Flag_{G, -\mu, \breve{F}}$ is exactly equal to
    \[
        \cl{V}_r = \{ \lvert u_\alpha \rvert \leq \lvert \pi \rvert^{r - 1} \neq 0 \} \subset U_{\mu} \subset \Flag_{G, -\mu, \breve{F}}. 
    \]
\end{thm}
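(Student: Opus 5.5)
We compute $\pi_\GM$ on geometric points using \Cref{lem:quasi_isog_computation}. Let $x \in \cl{U}_r(C, C^+)$ with BKF module $P_x = (\cl{G} \otimes W_{O_F}(C^{\flat+}), \mu([\pi^\flat]-\pi)u\sigma)$ from \Cref{lem:expBKF}. Take $g \in \cl{G}(B_s)$ from \Cref{lem:quasi_isog_computation} with $1 < s < \min(q, qr)$; this is possible because $r > q^{-1}$. Then $g$ is the quasi-isogeny $\iota$ to $\cl{E}^b$, up to the normalization $\mu([\pi^\flat]-\pi) = \mu(-\pi)\mu(1 - [\pi^\flat]/\pi)$ and the shift by one Frobenius built into \eqref{eq:g_condition}. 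Concretely, it intertwines the Frobenius $\mu([\pi^\flat]-\pi)u\sigma$ with $b\sigma = \mu(-\pi)\sigma$ on $\cl{Y}_{[s,\infty)}$, and it extends to a neighborhood of $\varphi(S^\sharp)$ because $\sigma$ moves $\xi$ to $\varphi(\xi)$. The period $\pi_\GM(x) \in \Gr_{G_b, -\mu}(C)$ is the relative position at the untilt $C$ of the lattice $\iota(P_x)$ against the trivial lattice. We therefore evaluate the infinite product $Y = g^{-1}$ of \eqref{eq:definition_Y}, together with the leftover factor $\mu(1 - [\pi^\flat]/\pi)\,u$, in $\BdR(C)$ via $\theta$.

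The key observation is that $\theta$ kills every factor of $Y$ with $i \geq 1$ modulo $\xi$. For $i \geq 1$, the factor $(\sigma^i\mu)(1 - [\pi^{\flat, q^i}]/\pi)$ is a unit at $\theta$, with value $1 - \pi^{q^i-1}$. The factor $\Ad(\mu_{i+1}(-\pi)\sigma^i)(u)$ has coordinates of size $|u_\alpha|^{q^i}|\pi|^{-1} < 1$. So $\theta(Y) \in \cl{G}(O_C)$ and $Y$ lies in $G(\BdRp)$, which means it does not change the lattice. The only contribution to the period therefore comes from the untwisted factor $\mu([\pi^\flat]-\pi)u$, whose first term vanishes to first order exactly along $\xi$. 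Its relative position is
\[
    \mu(\xi)\, u \cdot G(\BdRp),
\]
up to a unit. Under the Bialynicki-Birula isomorphism $\Gr_{G_b,-\mu} \cong \Flag_{G,-\mu}$ of \Cref{prop:BB_map}, this lattice maps to the point of $U_\mu$ represented by $\Ad(\mu(\pi)^{-1})\theta(\cdots)$ applied to $u$, up to a correction by $\theta(Y)$ that we must track. Conjugating the root coordinate $u_\alpha$ for $\langle \alpha, \mu \rangle = -1$ by $\mu(\pi)^{\pm 1}$ rescales it by $\pi^{-1}$. So the image coordinates are $v_\alpha = \pi^{-1}u_\alpha + (\text{higher terms})$, and the higher terms come from $\theta(Y)$ and have strictly smaller absolute value.

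From this we read off a map $\cl{U}_r \to \cl{V}_r$ of the form $v = \pi^{-1}u\cdot(1 + h(u))$, where $h$ is a power series convergent on $\cl{U}_r$ with $|h| < 1$. The explicit bound comes from $|u_\alpha|^{q}|\pi|^{-1} \leq |\pi|^{qr-1}$ and $qr > 1$. A nonarchimedean inverse function argument, namely successive approximation in the affinoid algebra of the ball, then shows that this map is an isomorphism of balls $\cl{U}_r \cong \cl{V}_r$. This gives both the open immersion and the equality of images. Since $\pi_\GM$ is étale, it then suffices to check the computation on geometric points together with the analytic formula.

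The main obstacle is the middle step. We must identify precisely how $\theta(Y)$ and the $\sigma$-shift in \eqref{eq:g_condition} enter the Flag coordinate, because of the $\sigma\mu$ versus $\mu$ bookkeeping. We must also show that the correction is a genuine analytic function of $u$ with norm $< |\pi|^{-1}|u|$, rather than something that merely converges pointwise. We would first verify the rank-one Lubin–Tate case to fix the normalizations, and then argue uniformly using the minuscule property of each $\mu_i$ from \Cref{lem:automatic_minuscule}.
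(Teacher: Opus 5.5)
Your overall route is the paper's. You compute $\pi_\GM$ on geometric points via \Cref{lem:quasi_isog_computation}, reduce modulo $\xi$ through $\Gr_{G_b,-\mu}\cong\Flag_{G,-\mu}$, and turn a coordinate congruence into an isomorphism of formal models of balls. The gap is in the middle step you flag, and the argument you sketch for it does not work.

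The quasi-isogeny is $Y^{-1}\mu(-\pi)u^{-1}\mu([\pi^\flat]-\pi)^{-1}$, so $Y^{-1}$ acts on the \emph{left}. Left multiplication by $G(\BdRp)$ is the positive loop group action, and it does move the point of the Grassmannian; only right multiplication is invisible. The period is therefore the class of $\mu(-\pi)\cdot\Ad(\mu(-\pi)^{-1})(\ov{Y})^{-1}\ov{u}^{-1}$ in $G(C)/P_{-\mu}(C)$, and $\ov{Y}$ genuinely enters. Your bound $|u_\alpha|^q|\pi|^{-1}\le|\pi|^{qr-1}<1$ only shows that the $\ov{Y}$-factor is integral and close to $1$. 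It does not show that its effect is smaller than the main term. The leading factor $A=\Ad((\sigma\mu)(-\pi))(\ov{\sigma u})$ has coordinates as large as $|\pi|^{qr-1}$. If $A$ had components in root groups of $U_\mu$, they would shift the normalized coordinate $v_\alpha/\pi^{r-1}$ by up to $|\pi|^{qr-1-r}$. That is not small when $r\le(q-1)^{-1}$; for $q=2$ this covers all of $(\tfrac{1}{2},1]$, including the radii near $1-q^{-1}$ needed for $\cl{U}(0)$. So your ansatz $v=\pi^{-1}u(1+h(u))$ with $|h|<1$ is not justified.

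The missing input is structural. The roots of $\cl{G}^{\sigma\mu<0}$ have nonnegative $\mu$-weight, because $\cl{G}^{\mu<0}\subset\cl{G}^{\sigma\lambda>0}$ and $\cl{G}^{\sigma\mu<0}\subset\cl{G}^{\sigma\lambda<0}$; this is the identity $\msf{N}\cap\sigma(\ov{\msf{N}})=\msf{U}_{\mu>0}$ of \Cref{defi:parabolic_mu}.
\begin{itemize}
\item Hence $A$ lies in $P_{-\mu}$, and it affects the class only through its commutator with $\ov{u}^{-1}$, which has size $|\pi|^{(q+1)r-1}$.
\item The torus factors and the factors with $i\ge2$ are trivial modulo $\pi$.
\item Together these give that the class of $\Ad(\mu(-\pi)^{-1})(\ov{Y})^{-1}\ov{u}^{-1}$ agrees with that of $\ov{u}^{-1}$ in $(\cl{\breve{G}}/\cl{P}_{-\mu})(C^+/\pi^t)$, with $t=\min(1,(q+1)r-1)$.
\item The condition $t>r$ is exactly where $qr>1$ is used.
\end{itemize}

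This argument works directly only for $r<1$. To reach every $r>q^{-1}$, the paper shows that the origin maps to the origin: for $u=1$ one has $g\in\cl{T}(B_s)$. It then rescales the congruence $v_\alpha/\pi^{r-1}\equiv u_\alpha/\pi^r$ to all $r'>r$. Your multiplicative form assumes this without proof. It also ignores that the corrections mix different coordinates; the correct formulation is a congruence of normalized coordinates in $C^+\langle u_\alpha/\pi^r\rangle$.

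Your worry about analyticity is not a real obstacle. $\pi_\GM$ is already a morphism of rigid spaces, so pointwise bounds on the ball give the congruence of functions.
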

\begin{proof}
    For simplicity, we begin with a proof under the assumption $q^{-1} < r < 1$. First, we show that the image of $\pi_\GM$ lies in $\cl{V}_r$. Let $x \in \cl{U}_r(C, C^+)$ be a geometric point and take $g \in \cl{G}(B_s)$ as in \Cref{lem:quasi_isog_computation}. Then, the quasi-isogeny $\iota$ at $x$ from the $\cl{G}$-BKF module $P_x$ to $\cl{E}^b$ is given by an element $h$ defined by
    \[
        g = h \mu([\pi^{\flat}] - \pi) u \mu(-\pi)^{-1}
    \]  
    (see \cite[Lemma 2.24]{Gle26_local_Shimura} and \cite[Proposition 5.2.6]{Ito25a} for the characterization of $\iota$) since it is trivial modulo $[\varpi]$ for some $\varpi$ and \Cref{eq:g_condition} implies
    \[
        h \cdot \mu([\pi^\flat] - \pi)u = \mu(-\pi)\sigma(h) \in G(A_s[\tfrac{1}{[\pi^\flat] - \pi}])
    \]
    by setting $A_s = B_s[\tfrac{1}{\pi}]$ and we get a commutative  diagram 
    \begin{center}
        \begin{tikzcd}[column sep=10em]
            \varphi^*P_x \otimes A_s \lbrack \tfrac{1}{\lbrack \pi^\flat \rbrack - \pi} \rbrack \ar[r, "x \mapsto \mu(\lbrack \pi^\flat \rbrack - \pi) u x"] \ar[d, "x \mapsto \sigma(h)x"] & P_x \otimes A_s \lbrack \tfrac{1}{\lbrack \pi^\flat \rbrack - \pi} \rbrack \ar[d, "x \mapsto hx"] \\
            \varphi^* \cl{E}^b \otimes A_s \lbrack \tfrac{1}{\lbrack \pi^\flat \rbrack - \pi} \rbrack \ar[r, "x \mapsto \mu(- \pi) x"] & \cl{E}^b \otimes A_s \lbrack \tfrac{1}{\lbrack \pi^\flat \rbrack - \pi} \rbrack. 
        \end{tikzcd}
    \end{center}
    By the construction of the period map, we have
    \[
        \pi_\GM(x) = [h] = [g \mu(-\pi) u^{-1}] \in \Flag_{G, -\mu, \breve{F}}(C). 
    \]
    where $[g \mu(-\pi) u^{-1}]$ denotes the class in $G(C) / P_{-\mu}(C)$ of the reduction along $\theta_{C^+} \colon A_s \to C$. We compute this reduction as 
    \[
        \mu(-\pi) \cdot \Ad(\mu(-\pi)^{-1})(\ov{Y})^{-1} \ov{u}^{-1}
    \]
    where the reduction along $\theta_{C^+}$ is denoted by $\overline{(-)}$. By \eqref{eq:definition_Y}, $\Ad(\mu(-\pi)^{-1})(\ov{Y})^{-1} \ov{u}^{-1}$ equals
    \[
        \left( \prod_{i \geq 2} (\sigma^i\mu)\bigl(1 - \pi^{q^i-1}\bigr) \Ad((\sigma\mu_{i})(-\pi))(\ov{\sigma^iu}) \right)^{-1} \cdot \Ad((\sigma\mu)(-\pi))(\ov{\sigma u})^{-1} (\sigma \mu)(1 - \pi^{q-1})^{-1} \ov{u}^{-1}. 
    \]
    It lies in $\cl{G}(C^+)$ and is congruent to $\Ad((\sigma\mu)(-\pi))(\ov{\sigma u})^{-1} \cdot \ov{u}^{-1}$ modulo $\pi$. Now, the $\mu$-weight on $\ov{\sigma u} \in \cl{G}^{\sigma \mu < 0}$ is nonnegative since we have
    \[
        \cl{G}^{\sigma \mu < 0} \subset \cl{G}^{\sigma \lambda < 0} ,\quad 
        \cl{G}^{\mu < 0} \subset \cl{G}^{\sigma \lambda > 0}
    \]
    by $\msf{N} \cap \sigma(\ov{\msf{N}}) = \msf{U}_{\mu > 0}$ (see \Cref{defi:parabolic_mu}). Since $\Ad((\sigma\mu)(-\pi))(\ov{\sigma u})$ is trivial modulo $\pi^{qr - 1}$, 
    \[
        \Ad((\sigma\mu)(-\pi))(\ov{\sigma u})^{-1} \cdot \ov{u}^{-1} \equiv \ov{u}^{-1} \bmod{(\pi^{(q + 1) r - 1}, \cl{P}_{-\mu}(C^+))}. 
    \]
    Let $t = \min(1, (q + 1) r - 1) > r$. Then, we get
    \[
        [\Ad(\mu(-\pi)^{-1})(\ov{Y})^{-1} \ov{u}^{-1}] \equiv [\ov{u}^{-1}] \in (\cl{\breve{G}} / \cl{P}_{-\mu})(C^+ / \pi^t). 
    \]
    In particular, $\pi_\GM(x) \in \cl{V}_r$. Now, we show that $\pi_\GM \colon \cl{U}_r \to \cl{V}_r$ is an isomorphism. To distinguish the coordinates of $\cl{U}_r$ and $\cl{V}_r$, let $v_\alpha = u_\alpha$ denote the coordinate of $\cl{V}_r$ associated to $\alpha \in \Phi_{\mu < 0}$. The base change of $\pi_{\GM}$ to $C^+$ extends to formal models $\mfr{U}_{r, C^+} \to \mfr{V}_{r, C^+}$ where
    \[
        \mfr{U}_{r, C^+} = \Spf(C^+\langle \tfrac{u_\alpha}{\pi^r} \mid \alpha \in \Phi_{\mu < 0} \rangle), \;
        \mfr{V}_{r, C^+} = \Spf(C^+\langle \tfrac{v_\alpha}{\pi^{r - 1}} \mid \alpha \in \Phi_{\mu < 0} \rangle). 
    \]
    By the above computations, $\tfrac{v_\alpha}{\pi^{r - 1}} \equiv \tfrac{u_\alpha}{\pi^r} \pmod{\pi^{t - r}}$, so $\mfr{U}_{r, C^+} \to \mfr{V}_{r, C^+}$ is an isomorphism. 

    Before removing the restriction on $r$, we will show that $\mfr{U}_{r, C^+} \to \mfr{V}_{r, C^+}$ sends the origin of $\cl{U}_r$ ($u_\alpha = 0$) to the origin of $\cl{V}_r$ ($v_\alpha = 0$). Suppose that $x \in \cl{U}_r$ is the origin. Then $u = 1$, so \eqref{eq:definition_Y} implies 
    \[
        g = \left( \prod_{i \geq 1} (\sigma^i\mu)\bigl(1 - \tfrac{[\pi^{\flat, q^{i}}]}{\pi}\bigr) \right)^{-1} \in \cl{T}(B_s). 
    \]
    Thus, $[g] \in \Flag_{G, -\mu}$ is trivial and we get the claim. In particular, $\mfr{U}_{r, C^+} \to \mfr{V}_{r, C^+}$ is given on coordinates by 
    \[
        \tfrac{v_\alpha}{\pi^{r-1}} \mapsto P_\alpha \in C^+\langle \tfrac{u_\alpha}{\pi^r} \mid \alpha \in \Phi_{\mu < 0} \rangle
    \]
    such that $P_\alpha \equiv \tfrac{u_\alpha}{\pi^r} \pmod{\pi^{t-r}}$ and $P_\alpha(0) = 0$. Thus, for every $r' > r$, 
    \[
        \tfrac{v_\alpha}{\pi^{r' - 1}} \mapsto \tfrac{P_\alpha}{\pi^{r' - r}} \in C^+\langle \tfrac{u_\alpha}{\pi^{r'}} \mid \alpha \in \Phi_{\mu < 0} \rangle
    \]
    provides an isomorphism $\mfr{U}_{r', C^+} \to \mfr{V}_{r', C^+}$. By construction, the generic fiber is the restriction of $\pi_\GM$, so $\pi_\GM$ induces an isomorphism $\cl{U}_{r'} \to \cl{V}_{r'}$ for every $r' > r$. 
\end{proof}

\begin{rmk}
    One cannot expect that the restriction of $\pi_\GM$ to the whole of $\Spf(R_{\cl{G}, \mu})_\eta$ is an open immersion, which already fails in the case of Lubin-Tate spaces. 
\end{rmk}

\subsection{Verification of \Cref{thm:explicit_geometry} at depth zero}

In this section, we deduce \Cref{thm:explicit_geometry} for $n = 0$ from \Cref{prop:U(0)_previous_work} and \Cref{thm:computation_period_map}. 

\begin{prop} \label{prop:verification_at_depth_zero}
    The open ball $\cl{U}(0) \subset \cl{M}_{\cl{G}, b, \mu}$ satisfies the conditions in \Cref{prop:restatement_LSV}. 
\end{prop}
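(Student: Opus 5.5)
We verify the four conditions of \Cref{prop:restatement_LSV} for $r = 0$, with $J_0 = K_0 = G(F)_{x,0} M(F)_x$ and $J_{b,0} = K_{b,0}$. Since $x$ is hyperspecial in $\cl{B}(M,F)$, \Cref{lem:hyperspecial_M(F)_x} gives $M(F)_x = Z_G(F)\cl{M}(O_F)$. Hence $J_0^0 = \cl{G}(O_F)$ and $J_{b,0}^0 = \cl{G}_b(O_F)$ (using that $x_b$ is fixed by $M(F)_{x,0}$). It follows that $J_0 = J_0^0 M(F)_x$ and $J_{b,0} = J_{b,0}^0 M(F)_x$. The torsion-freeness of $X_*(Z_M^\circ)_{I_F}$ holds because $(M,\mu)$ is unramified. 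Connectedness and quasicompactness of $\cl{U}(0)$ are clear, since it is a rational subdomain of a polydisc.

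\textbf{Condition (1).} Stability under $\cl{G}_b(O_F)$, disjointness of translates by $G_b(F) - \cl{G}_b(O_F)$, and $\sigma_E(\cl{U}(0)) = z_\mu\cdot\cl{U}(0)$ are \Cref{prop:U(0)_previous_work}(1). It remains to treat $\Delta(m)$ for $m \in M(F)_x$. Since $Z_G(F)$ acts trivially diagonally at infinite level (\Cref{lem:diagonal_central_action}), it suffices to take $m \in \cl{M}(O_F)$. Such $\Delta(m)$ fixes the CM point $x_\CM$ (it stabilizes $\cl{M}_{\cl{M},b,\mu}$ and, via the $\cl{M}(O_F)$-torsor structure of \Cref{lem:zero_dimensional_LSV}, fixes the point over $[1]$). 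It also preserves $[1]\in X_\mu(b)$, hence the tube $\Spf(R_{\cl{G},\mu})_\eta$.

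Next I would check that $\Delta(m)$ preserves the radii. On the universal $\cl{G}$-BKF module of \Cref{lem:expBKF}, $\Delta(m)$ acts by $\Ad(m)$ on $u$. Because $m$ centralizes $\mu$ and is $\sigma$-fixed, it normalizes $\breve{\cl{G}}^{\mu<0}$. The valuations $r_\alpha$ are constant along orbits of the Weyl group of $M$, as $\lambda$ is central in $M$ and so $r_\alpha = \langle\alpha, q\sigma\lambda\rangle$. A concavity/commutator argument as in \Cref{prop:definition_B_xr} then shows the polyradius condition is preserved. Alternatively, since $\cl{U}(0)$ is connected and contains $x_\CM$, I would use \cite[Proposition 5.7]{Tak25z}-type stability under the diagonal $\msf{M}$-action on the reduction $\msf{W}(0)$.

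\textbf{Condition (2).} By \Cref{lem:vareps}, $r_\alpha > 1-q^{-1} > q^{-1}$ when $q \geq 2$. Taking $r = \min_\alpha r_\alpha$, we get $\cl{U}(0) \subset \cl{U}_r$ with $r > q^{-1}$, so \Cref{thm:computation_period_map} makes $\pi_\GM|_{\cl{U}(0)}$ an open immersion. The union over $\cl{G}_b(O_F)/J^0_{b,0}$ is a single term, so nothing further is needed.

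\textbf{Condition (3).} Existence and uniqueness of $x_\CM$ is \Cref{prop:U(0)_previous_work}(2). Since $J_0^0 = \cl{G}(O_F)$, $\can_0$ is the identity, and its image is $J_{b,0}^0$-stable by (1).

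\textbf{Condition (4).} This is the main obstacle. Since $\can_0 = \id$, the complex is $R\Gamma_{(c)}(\cl{U}(0)_{\bb{C}_p}, \cl{L}_{R^{K_0}_M(\rho)})$. I would compute it through the $\cl{G}(k)$-cover $\cl{W}(0)$ with good reduction (\Cref{prop:U(0)_previous_work}(3)). By nearby cycles for smooth formal models, $R\Gamma_c(\cl{W}(0)_{\bb{C}_p}) \cong R\Gamma_c(\msf{W}(0))$, and dually for $R\Gamma$ using Poincaré duality on the smooth affinoid and on $\msf{W}(0)$. This yields
\[
R\Gamma_c(\cl{U}(0)_{\bb{C}_p},\cl{L}_{R^{K_0}_M(\rho)}) \cong \bigl(R\Gamma_c(\msf{W}(0),\Qla)\otimes R^{K_0}_M(\rho)\bigr)^{\msf{G}}.
\]
Taking the isotypic part for the $\msf{M}$-action, with $\msf{G}_b \cong \msf{M}$, Deligne-Lusztig theory finishes the computation. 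Concentration of $R\Gamma_c(\msf{Y}_{\msf{P}})\otimes_{\msf{M}}\rho$ in degree $d$ \cite[Corollaire 10.9]{BR03} shows the isotypic part is $R^{K_{b,0}}_M(\rho)[-d]$, using the Mackey/disjointness formula of \cite{BDR17} for the regular series. The same computation applies for ordinary cohomology. The forget-supports map on $\msf{W}(0)$ is an isomorphism on $(\msf{G},\msf{M})$-regular isotypic parts, since those parts of the boundary of a smooth compactification vanish by cuspidality/regularity. I expect verifying this last isomorphism to be the hardest step.
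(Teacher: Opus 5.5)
Your overall route matches the paper's. You read (1)--(3) off from \Cref{prop:U(0)_previous_work} and \Cref{thm:computation_period_map}, and reduce (4) to the Deligne--Lusztig variety $\msf{W}(0)$ via the cover $\cl{W}(0)$ and nearby cycles. Some steps are more roundabout than necessary, and one is not justified.

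\textbf{The diagonal action in (1).} You need no coordinate computation here. Take $m \in \cl{M}(O_F) \subset \cl{G}(O_F)$. Since $\cl{G}(O_F)$ is the level, the $G(F)$-side action of $m$ on $\cl{M}_{\cl{G}, b, \mu}$ is trivial. So $\Delta(m)$ acts simply as $m \in \cl{M}(O_F) \subset \cl{G}_b(O_F)$ through the $G_b(F)$-action, and stability is already part of \Cref{prop:U(0)_previous_work}~(1). Combined with \Cref{lem:hyperspecial_M(F)_x} and \Cref{lem:diagonal_central_action}, this is all the paper's one-line argument uses. Your suggested ``concavity/commutator argument as in \Cref{prop:definition_B_xr}'' does not apply as stated. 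That proposition concerns products of root groups in $U_{-\mu}$ on the flag variety, not the action on deformation coordinates.

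\textbf{A slip in (2).} The inequality $1-q^{-1} > q^{-1}$ fails for $q=2$. However, $r_\alpha > 1-q^{-1} \geq q^{-1}$ still gives what \Cref{thm:computation_period_map} requires.

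\textbf{The gap in (4).} You flag as hardest the claim that
$R\Gamma_c(\msf{W}(0), \Qla)[\rho\vert_{\msf{M}}] \to R\Gamma(\msf{W}(0), \Qla)[\rho\vert_{\msf{M}}]$
is an isomorphism. You support it only with a heuristic about boundary strata of a compactification, so the argument as written is incomplete. This is not something to prove by hand: it is exactly \cite[Th\'{e}or\`{e}me 11.7, B']{BR03}, which the paper invokes. For $(\msf{G}, \msf{M})$-regular $\rho\vert_{\msf{M}}$, that theorem gives both the isomorphism and the identification of the isotypic part with $R^{K_0}_M(\rho)^\vee\vert_{\msf{G}} \otimes \rho\vert_{\msf{M}}[-d]$. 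Tensoring over $\msf{G}$ with $R^{K_0}_M(\rho)\vert_{\msf{G}}$ and using $\msf{G}_b \cong \msf{M}$ then yields $\rho[-d]$. The separate Mackey/disjointness input you attribute to \cite{BDR17} is therefore unnecessary.

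\textbf{Transporting the map to the generic fiber.} To move the forget-supports map from $\msf{W}(0)$ to $\cl{W}(0)_{\bb{C}_p}$, you need a commutative square of nearby-cycle isomorphisms for $R\Gamma_c$ and $R\Gamma$ simultaneously. Getting the $R\Gamma$ isomorphism by Poincar\'{e} duality does not by itself give that compatibility. The paper obtains the commutative diagram from \cite[Lemma 4.5]{Tsu16}.

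\textbf{The central factor.} Your computation only sees $\cl{G}_b(O_F)$. To get the required isomorphism in $\cl{D}(J_{b,0}, \Qla)$, you must still note that $K_{b,0} = Z_G(F)\cl{G}_b(O_F)$. You must also note that $Z_G(F)$ acts on both complexes by the central character of $\rho$, because its diagonal action is trivial.
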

\begin{proof}
    First, condition (1) follows from \Cref{prop:U(0)_previous_work} (1) since $M(F)_x = Z_G(F) \cl{M}(O_F)$ and the diagonal action of $Z_G(F)$ is trivial (see \Cref{lem:hyperspecial_M(F)_x} and \Cref{lem:diagonal_central_action}). Next, condition (2) follows from \Cref{thm:computation_period_map} since $\cl{U}(0) \subset \cl{U}_{(1 - q^{-1})+}$ by \Cref{lem:vareps}. For condition (3), as $J_0^0 = \cl{G}(O_F)$, the existence of $\can_0$ is superfluous and the existence of $x_\CM$ follows from \Cref{prop:U(0)_previous_work} (2). It remains to deduce condition (4) from \Cref{prop:U(0)_previous_work} (3). 
    
    Let $\rho \in \Irr^\sm(M(F)_x)$ be a $(G, G_b, M)$-supergeneric smooth representation of depth $0$ and let $\chi \colon Z_G(F) \to \Qlax$ be the central character of $\rho$. We will compute the map 
    \[
        R\Gamma_c(\cl{U}(0)_{\bb{C}_p}, \cl{L}_{R^{K_0}_M(\rho)})[R^{J_{b, r}}_M(\rho)] \to R\Gamma(\cl{U}(0)_{\bb{C}_p}, \cl{L}_{R^{K_0}_M(\rho)})[R^{K_{b, 0}}_M(\rho)]. 
    \]
    Since $R_M^G(\rho) \vert_{\cl{G}(1)}$ is trivial, we have 
    \[
        R\Gamma_c(\cl{U}(0)_{\bb{C}_p}, \cl{L}_{R^{K_0}_M(\rho)}) \cong R\Gamma_c(\cl{W}(0)_{\bb{C}_p}, \Qla) \otimes_{\msf{G}} R^{K_0}_M(\rho)\vert_{\msf{G}} 
    \]
    and the same holds for $R\Gamma(\cl{U}(0)_{\bb{C}_p}, \cl{L}_{R^{K_0}_M(\rho)})$. Since $\cl{W}(0)_{\bb{C}_p}$ admits a $p$-adically smooth formal model $\mfr{W}(0)$ over $O_{\bb{C}_p}$ with reduction $\msf{W}(0)$, the nearby cycle provides a commutative diagram
    \begin{center}
        \begin{tikzcd}
            R\Gamma_c(\msf{W}(0), \Qla) \otimes_{\msf{G}} R^{K_0}_M(\rho)\vert_{\msf{G}} \ar[r, "\sim"] \ar[d] & R\Gamma_c(\cl{W}(0)_{\bb{C}_p}, \Qla) \otimes_{\msf{G}} R^{K_0}_M(\rho)\vert_{\msf{G}} \ar[d] \\
            R\Gamma(\msf{W}(0), \Qla) \otimes_{\msf{G}} R^{K_0}_M(\rho)\vert_{\msf{G}} \ar[r, "\sim"] & R\Gamma(\cl{W}(0)_{\bb{C}_p}, \Qla) \otimes_{\msf{G}} R^{K_0}_M(\rho)\vert_{\msf{G}} 
        \end{tikzcd}
    \end{center}
    by \cite[Lemma 4.5]{Tsu16}\footnote{The smooth algebraizability of $\mfr{W}(0)$ is imposed loc. cit., but this automatically follows from Elkik's algebraization (see \cite[footnote 6]{BS22} over the non-Noetherian base).}. Since $\rho\vert_{\msf{M}}$ is $(\msf{G}, \msf{M})$-superregular, it follows from \cite[Théorème 11.7, B']{BR03} that 
    \[
        R\Gamma_c(\msf{W}(0), \Qla)\lbrack \rho\vert_{\msf{M}} \rbrack \to R\Gamma(\msf{W}(0), \Qla)\lbrack \rho\vert_{\msf{M}} \rbrack
    \]
    is an isomorphism and isomorphic to $R_M^{K_0}(\rho)^\vee\vert_{\msf{G}} \otimes \rho\vert_{\msf{M}}[-d]$. Since $R^{K_{b, 0}}_M(\rho) \vert_{\cl{G}_b(O_F)}$ is the inflation of $\rho$ along $\cl{G}_b(O_F) \twoheadrightarrow \msf{G}_b \cong \msf{M}$, we see that
    \[
        R\Gamma_c(\cl{U}(0)_{\bb{C}_p}, \cl{L}_{R^{K_0}_M(\rho)})[R^{K_{b, 0}}_M(\rho) \vert_{\cl{G}_b(O_F)}] \to R\Gamma(\cl{U}(0)_{\bb{C}_p}, \cl{L}_{R^{K_0}_M(\rho)})[R^{K_{b, 0}}_M(\rho) \vert_{\cl{G}_b(O_F)}]
    \]
    is an isomorphism and isomorphic to $\rho[-d]$. Since the $Z_G(F)$-action on $R\Gamma_c(\cl{U}(0)_{\bb{C}_p}, \cl{L}_{R^{K_0}_M(\rho)})$ is given by $\chi$ and the diagonal $Z_G(F)$-action is trivial by \Cref{lem:diagonal_central_action}, the $Z_G(F)$-action on $R\Gamma_c(\cl{U}(0)_{\bb{C}_p}, \cl{L}_{R^{K_0}_M(\rho)})$ is also a scalar by $\chi$. Thus, we get condition (4) of \Cref{prop:restatement_LSV} since $K_{b, 0} = Z_G(F) \cl{G}_b(O_F)$ by \Cref{lem:hyperspecial_M(F)_x}. 
\end{proof}

\section{Special open balls for the positive-depth case} \label{sec:special_open_balls}

In this section, we introduce an open ball $\cl{U}(n) \subset \cl{M}_{\cl{G}, b, \mu}$ that we use in \Cref{thm:explicit_geometry} for each $n \geq 1$ and study its stability under group actions. From now on, we set
\[
    m = \lceil \tfrac{n}{2} \rceil. 
\]

\subsection{Radii of special open balls}

In this section, we specify the radius of $\cl{U}(n)$. The following definition is compatible with the case $n = 0$. 

\begin{defi} \label{defi:U(n)_radius}
    When $n = 2m - 1$, let
    \[
        \cl{U}(n) = \{ \lvert u_\alpha \rvert \leq \lvert \pi \rvert^{m} \neq 0 \mid \alpha \in \Phi_{\mu<0}\} \subset \Spf(R_{\cl{G}, \mu})_\eta
    \]
    and when $n = 2m$, let
    \[
        \cl{U}(n) = \{ \lvert u_\alpha \rvert \leq \lvert \pi \rvert^{m + r_\alpha} \neq 0 \mid \alpha \in \Phi_{\mu<0}\} \subset \Spf(R_{\cl{G}, \mu})_\eta. 
    \]
\end{defi}

\begin{lem}
    The CM point $x_\CM \in \cl{U}(0) \cap \cl{M}_{\cl{M}, b, \mu}$ lies in $\cl{U}(n)$. 
\end{lem}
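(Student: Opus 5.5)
The plan is to identify $x_\CM$ explicitly in the coordinates of $\Spf(R_{\cl{G}, \mu})_\eta$ and then check the defining inequalities of $\cl{U}(n)$ directly. By \Cref{lem:unramified_CM_points}, the closed immersion $(\cl{M}^\ints_{\cl{M}, b, \mu})^\wedge_{/[1]} \cong \Spd(O_{\breve{F}}) \to \Spd(R_{\cl{G}, \mu})$ is cut out by $u_\alpha = 0$ for all $\alpha \in \Phi_{\mu<0}$. In the proof of \Cref{prop:U(0)_previous_work} (2), the unique $\breve{F}$-valued point $x_\CM \in \cl{U}(0) \cap \cl{M}_{\cl{M}, b, \mu}$ is obtained as the generic fiber of this formal point. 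In particular, $x_\CM$ is the point of $\Spf(R_{\cl{G}, \mu})_\eta$ with all coordinates $u_\alpha(x_\CM) = 0$. I will take this identification as the first step.

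Next I check the inequalities in \Cref{defi:U(n)_radius}. When $n = 2m-1$, the conditions are $\lvert u_\alpha\rvert \leq \lvert \pi\rvert^{m}$. When $n = 2m$, they are $\lvert u_\alpha \rvert \leq \lvert \pi \rvert^{m + r_\alpha}$. Both are trivially satisfied when $u_\alpha = 0$. The side condition ``$\neq 0$'' in the notation refers to $\lvert \pi \rvert^{(\cdot)} \neq 0$, i.e.\ to working on the generic fiber. It is satisfied because $x_\CM$ is an $\breve{F}$-valued point, so $\lvert \pi(x_\CM)\rvert \neq 0$.

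The only step requiring care is making sure that ``the'' point $x_\CM$ of \Cref{prop:U(0)_previous_work} really lies in the residue disc $\Spf(R_{\cl{G}, \mu})_\eta$ at $[1]$ and is the origin there, rather than some other point of $\cl{M}_{\cl{M}, b, \mu}$. For this I would invoke \Cref{lem:unramified_CM_points} again: the intersection of the tube at $[1]$ with $\cl{M}_{\cl{M}, b, \mu}$ consists of at most one point, namely the origin, and $\cl{U}(0)$ lies in that tube. This suffices, since $\cl{U}(n)$ is defined as a subset of the same tube.
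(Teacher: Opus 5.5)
Your proposal is correct and matches the paper's argument. The paper likewise identifies $x_\CM$ with the origin $u_\alpha = 0$ of $\Spf(R_{\cl{G}, \mu})_\eta$, using \Cref{lem:unramified_CM_points} as in the proof of \Cref{prop:U(0)_previous_work}~(2), and then observes that the defining inequalities of $\cl{U}(n)$ in \Cref{defi:U(n)_radius} hold trivially there.
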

\begin{proof}
    As in the proof of \Cref{prop:U(0)_previous_work}, the origin of $\Spf(R_{\cl{G}, \mu})_\eta$ given by $u_\alpha = 0$ lies in $\cl{M}_{\cl{M}, b, \mu}$ and it is the CM point $x_\CM$. Thus, it is easy to see $x_\CM \in \cl{U}(n)$. 
\end{proof}

Since $0 < r_\alpha < 1$ (see \Cref{lem:vareps}), $\{ \cl{U}(n) \}_{n \geq 0}$ forms a decreasing family of open balls centered at $x_\CM$. The multiplication by $\lvert \pi \rvert$ of the radii of $\cl{U}(n)$ increases the depth $n$ by $2$. Another way to describe this construction is that $\cl{U}(n + 2)$ is constructed from the blowup of the formal model of $\cl{U}(n)$ at the reduction of $x_\CM$.

\subsection{Stability of special open balls} \label{ssec:stabilizer_open_balls}

In this section, we verify most parts of \Cref{prop:restatement_LSV} (1) for $\cl{U}(n)$. We begin with the inner action. In fact, it is essentially enough to keep track of the action on $x_\CM$ thanks to the existence of formal models. 

First, we will prove $j \cdot x_\CM \in \cl{U}(2m - 1)$ for $j \in K^0_{b, 2(m-1)} = G_b(F)_{x_b, m-1} M(F)_{x, 0}$. As seen from the defining inequalities of $\cl{U}(2m - 1)$, the following lemma plays an important role. 

\begin{lem}\textup{(\cite[Definition 4.1.6, Theorem 4.1.8]{Ito25a})} \label{lem:Perfd_universal_deformation}
    The set of homomorphisms
    \[
        f\colon R_{\cl{G}, \mu} \to O_{\bb{C}_p} / \pi^m \quad \text{such that} \quad f(u_\alpha) \equiv 0 \bmod{\pi} \quad (\alpha \in \Phi_{\mu < 0})
    \]
    is bijective to the set of deformations of the prismatic $(\cl{G}, \mu)$-display
    \[
        (\cl{G} \otimes W_{O_F}(O_{\bb{C}_p}^\flat)/[\pi^{\flat}], \mu(-\pi)\sigma)
    \]
    over the $O_F$-prism
    $
        (W_{O_F}(O_{\bb{C}_p}^\flat)/[\pi^{\flat, m}], [\pi^\flat] - \pi).
    $
\end{lem}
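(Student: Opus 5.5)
The plan is to deduce this from Ito's description of the universal deformation ring $R_{\cl{G}, \mu}$ as pro-representing deformations of the prismatic $(\cl{G}, \mu)$-display attached to $[1] \in X_\mu(b)$, by specializing to the particular prism in the statement. Concretely, I would invoke \cite[Theorem 4.1.8]{Ito25a}. It says that for a suitable (bounded, orientable) $O_F$-prism $(A, I)$ with $\overline{A} = A/I$, continuous homomorphisms $R_{\cl{G}, \mu} \to \overline{A}$ that reduce to the base point correspond to deformations of the given display over $(A, I)$ along a fixed nilpotent thickening. The first step is to check that $(A, I) = (W_{O_F}(O_{\bb{C}_p}^\flat)/[\pi^{\flat, m}], ([\pi^\flat] - \pi))$ is an $O_F$-prism of the required type (\cite[Definition 4.1.6]{Ito25a}). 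The ideal $([\pi^\flat] - \pi)$ is generated by a distinguished element. Moreover,
\[
A/I \cong W_{O_F}(O_{\bb{C}_p}^\flat)/([\pi^{\flat,m}], [\pi^\flat] - \pi) \cong O_{\bb{C}_p}/\pi^m,
\]
since $\theta([\pi^{\flat}]) = \pi$. The reduction $A/([\pi^\flat]) = W_{O_F}(O_{\bb{C}_p}^\flat)/[\pi^\flat]$ is the base over which the display $(\cl{G} \otimes W_{O_F}(O_{\bb{C}_p}^\flat)/[\pi^\flat], \mu(-\pi)\sigma)$ is defined. This display is the base change of the display of $[1]$ (cf.\ \Cref{lem:expBKF} with all $u_\alpha = 0$ and $[\pi^\flat]$ killed).

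The second step is to match the two sides of the bijection. On the ring side, a deformation along $A \to A/[\pi^\flat]$ corresponds to a map $R_{\cl{G}, \mu} \to A/I = O_{\bb{C}_p}/\pi^m$ whose composite to $A/(I, [\pi^\flat]) = O_{\bb{C}_p}/\pi$ is the base point. By \Cref{thm:univdef} and \Cref{lem:expBKF}, the base point is $u_\alpha \mapsto 0$, so this is exactly the congruence condition $f(u_\alpha) \equiv 0 \bmod \pi$. Conversely, \Cref{lem:expBKF} gives the explicit display for such an $f$, namely
\[
\mu([\pi^\flat] - \pi)\prod_{\alpha} i_\alpha([u_\alpha^\flat])\sigma,
\]
where $u_\alpha^\flat$ lifts $f(u_\alpha)$ modulo $[\pi^{\flat,m}]$. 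Reducing this modulo $[\pi^\flat]$ recovers $\mu(-\pi)\sigma$, because $[u_\alpha^\flat] \in [\pi^\flat]$ under the congruence. So the forward map lands in deformations, and Ito's theorem provides the bijectivity.

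The main obstacle is the first step: verifying that this non-perfect, $[\pi^{\flat,m}]$-truncated prism satisfies Ito's hypotheses. In particular, $A \to A/[\pi^\flat]$ must be a nilpotent thickening for which his deformation theory is valid ($[\pi^\flat]^m = 0$ in $A$, and it is compatible with $\delta$-structure/Frobenius since $\varphi([\pi^\flat]) = [\pi^{\flat,q}] \in ([\pi^\flat])$). The Witt-vector quotient should also be handled carefully when $F \neq \bb{Q}_p$ via $O_F$-typical Witt vectors. I would check these by reducing to Ito's own examples in \cite[Definition 4.1.6]{Ito25a}, which include exactly such truncated perfect prisms.
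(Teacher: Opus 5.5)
Your proposal is correct and is essentially the paper's argument. The paper observes that the statement is the special case, for the prism $(W_{O_F}(O_{\bb{C}_p}^\flat)/[\pi^{\flat, m}], [\pi^\flat] - \pi)$, of the property (Perfd), which the universal $(\cl{G}, \mu)$-display $\mfr{Q}^\univ$ over $R_{\cl{G}, \mu}$ satisfies by \cite[Theorem 4.1.8]{Ito25a}. One small correction: \cite[Definition 4.1.6]{Ito25a} is where (Perfd) itself is defined, not a list of admissible prisms, so the step you flag as the main obstacle is covered by citing that property.
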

\begin{proof}
    This is a special case of the property (Perfd) loc. cit. applied to the universal $(\cl{G}, \mu)$-display $\mfr{Q}^\univ$ over $R_{\cl{G}, \mu}$. 
\end{proof}

Here, prismatic $(\cl{G}, \mu)$-displays are a generalization of $\cl{G}$-BKF modules of type $\mu$ to $O_F$-prisms $(A, I)$. Roughly, they are pairs $(P, \varphi_P)$ where $P$ is a $\cl{G}$-torsor over $A$ and
\[
    \varphi_P \colon \varphi_A^* P[1/I] \cong P[1/I]
\]
is a Frobenius structure that is of type $\mu$ in a certain sense. For more detailed expositions in this style, we refer to \cite[Section 1.5.2]{IKY23} and \cite[Section 2.2]{Tak25z}. By \Cref{lem:Perfd_universal_deformation}, in order to see $j \cdot x_\CM \in \cl{U}(2m - 1)$, we would like to show that the deformations
\[
    (\cl{G} \otimes W_{O_F}(O_{\bb{C}_p}^\flat)/[\pi^{\flat ,m}], \mu([\pi^\flat] - \pi) \sigma) ,\;
    (\cl{G} \otimes W_{O_F}(O_{\bb{C}_p}^\flat)/[\pi^{\flat ,m}], j \mu([\pi^\flat] - \pi) \sigma(j)^{-1} \sigma)
\]
are isomorphic. An isomorphism between them is supplied by the following lemma. 

Recall that the product map $\cl{\breve{G}}^{\lambda \leq 0} \times\cl{\breve{G}}^{\lambda > 0} \to \cl{G}$ is an open immersion and the open image is called the big cell attached to $\lambda$. In the following, the composition 
\[
    \Ad(\mu([\pi^\flat] - \pi)) \circ \sigma
\]
is denoted by $\Ad(\mu([\pi^\flat] - \pi) \sigma)$. 

\begin{lem} \label{lem:AdbCM}
    For every $i \geq 0$ and $j \in G_b(F)_{x_b, m-1}$, we have 
    \[
        \Ad(\mu([\pi^\flat] - \pi) \sigma)^i(j) \bmod{[\pi^{\flat, m}]} \in \cl{G}(W_{O_F}(O_{\bb{C}_p}^\flat)/[\pi^{\flat, m}]). 
    \]
    Moreover, for each $0 \leq k \leq m$, consider an ideal
    \[
        I_k = (\pi^k, \pi^{k-1}[\pi^\flat], \cdots, [\pi^{\flat, k}]) \subset W_{O_F}(O_{\bb{C}_p}^\flat) / [\pi^{\flat, m}]. 
    \]
    Then, $\Ad(\mu([\pi^\flat]-\pi) \sigma)^i(j)$ lies in the big cell $\cl{\breve{G}}^{\lambda \leq 0} \times \cl{\breve{G}}^{\lambda > 0}$ and the associated decomposition
    \[
        \Ad(\mu([\pi^\flat]-\pi) \sigma)^i(j) = \ov{p} u ,\quad 
        \ov{p} \in \cl{\breve{G}}^{\lambda \leq 0} , \quad u \in \cl{\breve{G}}^{\lambda > 0}
    \]
    satisfies that $\ov{p}$ is trivial modulo $I_{m-1}$, and $u$ is trivial modulo $I_m$.  
\end{lem}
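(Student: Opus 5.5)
Work in the apartment of $\breve{T}$, where $T\subset M$ is the maximally $F$-split unramified torus with $x\in\cl{A}(M,T)$. Write $j$ (or rather its image in $G(\breve F)$) as a product over $\breve T(O_{\breve F})_{m-1}$ and root subgroups. By the description of $x_b$ as the $b\sigma$-fixed point of the facet $\mfr{f}$, the Moy-Prasad group $G_b(F)_{x_b,m-1}$ sits inside $G(\breve F)_{x_b,m-1}$. Its root-group part has entries $u_\alpha$ with valuation $\geq m-1-\alpha(x_b)$, where $\alpha(x)=0$.

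Since $x$ and $x_b$ lie in the same facet and $\mu$ is minuscule, we have $\alpha(x_b)=\langle\alpha,\lambda'\rangle$ for a small rational cocharacter. The natural candidate is $\lambda$ from \Cref{defi:parabolic_mu}, up to sign and normalization. So $|\alpha(x_b)|<1$, and its sign is governed by $\langle \alpha,\lambda\rangle$. Concretely, $j$ lies in $\cl{G}(O_{\breve F})$ with root coordinates in $\pi^{m-1}$ for $\lambda$-nonpositive roots, and in $\pi^{m}$ (up to a fractional shift absorbed by integrality) for $\lambda$-positive roots.

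The key step is to compute $\Ad(\mu([\pi^\flat]-\pi)\sigma)^i$ root by root. Each application multiplies the $\alpha$-coordinate by $([\pi^\flat]-\pi)^{\langle\alpha,\mu\rangle}$ and applies $\sigma$. The $i$-fold iterate therefore multiplies the $\alpha$-coordinate by $\prod_{k}\sigma$-twisted factors $([\pi^{\flat,q^k}]-\pi)^{\langle\sigma^{k}\alpha',\mu\rangle}$, with exponents bounded by the minuscule $\mu_i$ (\Cref{lem:automatic_minuscule}). Modulo $[\pi^{\flat,m}]$, the inverse $([\pi^\flat]-\pi)^{-1}$ is problematic, so I would use the identity
\[
([\pi^\flat]-\pi)^{-1}=-\pi^{-1}\textstyle\sum_{k<m}([\pi^\flat]/\pi)^k \pmod{[\pi^{\flat,m}]}.
\]
A coefficient $\pi^{a}$ multiplied by this lands in the span of $\pi^{a-1-k}[\pi^{\flat,k}]$, which lies in $I_{a-1}$. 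Multiplication by $[\pi^\flat]-\pi$ instead sends $I_k$ into $I_{k+1}$. The torus part stays in $\breve T$ and is trivial modulo $\pi^{m-1}$, hence modulo $I_{m-1}$.

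Integrality follows because each exponent $\langle\alpha,\mu_i\rangle\in\{-1,0,1\}$. A coordinate with valuation at least $m-1\ge0$, multiplied by at most one inverse factor, stays integral provided the coordinate already had valuation $\geq1$ whenever $\langle\alpha,\mu_i\rangle=-1$. This is exactly where the fractional shift $\alpha(x_b)$ must be used. One checks that $\langle\alpha,\mu_i\rangle=-1$ forces $\alpha(x_b)<0$, by $x,\mu_i(\pi)x\in\mfr{f}$.

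The main obstacle is the refined statement: the $\lambda$-positive part $u$ is trivial modulo $I_m$, while $\ov p$ is only trivial modulo $I_{m-1}$. I would prove that the roots acquiring a net factor $([\pi^\flat]-\pi)^{+1}$, or having an extra $\pi$ from $\alpha(x_b)$, are precisely the $\lambda>0$ ones. I would use $\lambda-q\sigma\lambda=\mu$, so that $\langle\alpha,\lambda\rangle>0$ is compatible with the iterated $\mu$-weights via $\msf N\cap\sigma(\ov{\msf N})=\msf U_{\mu>0}$. Since all coordinates lie in $I_{m-1}$ and are hence topologically nilpotent mod $[\pi^{\flat,m}]$, the element lies in the big cell. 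The big-cell decomposition is computed by the commutator relations. Commutators of two coordinates in $I_{m-1}$ land in $I_{2m-2}$, and by a short induction on $i$ these are harmless ($\supset$ or inside $I_m$ when $m\ge2$; the case $m=1$ is trivial since $I_0$ is everything). So the $\lambda$-positive factor inherits the $I_m$ bound from its individual root coordinates.
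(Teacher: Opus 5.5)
You are taking a genuinely different route from the paper. The paper never uses root coordinates. It inducts on $i$ and obtains integrality of each new step from \cite[Proposition 4.2.9]{Ito25a}: the image of $\sigma(\Ad(\mu([\pi^\flat]-\pi)\sigma)^i(j))$ in $\cl{G}(O_{\bb{C}_p}/\pi^m)$ lies in $\cl{G}^{\mu\geq 0}$, by the inductive bound and $\ov{\msf{N}}\cap\sigma(\msf{N})=\msf{U}_{\mu<0}$. It then transports the $I_{m-1}/I_m$ bounds through the $\mu$-big-cell decomposition of $\sigma(\ov{p}u)$, using that multiplication by $[\pi^\flat]-\pi$ pulls $I_m$ back to $I_{m-1}$. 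An explicit coordinate computation can replace this, but two of your steps fail as written.

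First, integrality. Put $\xi=[\pi^\flat]-\pi$. The $\sigma^i\alpha$-coordinate of $\Ad(\mu(\xi)\sigma)^i(j)$ is $\sigma^i(u_\alpha)\prod_{k<i}\sigma^k(\xi)^{e_k}$ with $e_k=\langle\sigma^i\alpha,\sigma^k\mu\rangle$, and the elements $\sigma^k(\xi)=[\pi^{\flat,q^k}]-\pi$ are pairwise distinct.

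\begin{itemize}
\item Minusculity of the $\mu_i$ bounds only the partial sums of the $e_k$. Since $\mu_{[E:F]}$ is central, every root outside $M$ picks up at least two factors $\sigma^k(\xi)^{-1}$ once $i\geq 2[E:F]$, and these do not collapse to a single $\xi^{-1}$. So ``at most one inverse factor'' is false.
\item Your two rules do not control such products without further bookkeeping. The rule $\pi^a\xi^{-1}\in I_{a-1}$ needs $a\geq m$ (so ``valuation $\geq 1$'' should read ``$\geq m$''), and $\xi I_k\subset I_{k+1}$ alone does not suffice.
\end{itemize}

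The clean fix is to split off the $\pi$-adic part via $\sigma^k(\xi)=-\pi(1-[\pi^{\flat,q^k}]/\pi)$. The coordinate becomes $w\prod_{k<i}(1-[\pi^{\flat,q^k}]/\pi)^{e_k}$, where $x_{\sigma^i\alpha}(w)=\Ad(b\sigma)^i(x_\alpha(u_\alpha))$.

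\begin{itemize}
\item Since $b\sigma$ fixes $x_b$, $x_{\sigma^i\alpha}(w)$ lies in the depth-$(m-1)$ root subgroup at $x_b$. Hence $v(w)\geq m-1-(\sigma^i\alpha)(x_b)$, so $v(w)\geq m-1$, and $v(w)\geq m$ when $\langle\sigma^i\alpha,\lambda\rangle>0$.
\item Every monomial of the remaining product is $[\pi^\flat]^N\pi^{-L}$ with $L\leq N$, and only $N<m$ survives modulo $[\pi^{\flat,m}]$.
\item So the coordinate is integral and lies in $I_{m-1}$, resp.\ $I_m$.
\end{itemize}

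This gives the first claim and the root-wise bounds together, read off from the value of the target root at $x_b$. Only the sign of that value matters, which the paper gets from $x-\varepsilon\lambda\in\mfr{f}^\circ$; note that $x_b-x$ is not a multiple of $\lambda$ in general. Your separate argument via $\lambda-q\sigma\lambda=\mu$ is therefore unnecessary. Your reordering step is then fine: commuting a $\lambda$-positive factor past a $\lambda$-nonpositive one produces coordinates in $I_{2m-1}\subset I_m$.

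Second, $m=1$ is not ``trivial since $I_0$ is everything''. That disposes only of $\ov{p}$; big-cell membership and $u\equiv 1\bmod I_1=(\pi)$ are real claims. Your method also has no footing there. $G_b(F)_{x_b,0}=\cl{G}_b(O_F)$ is a parahoric with reductive quotient $\msf{G}_b\cong\msf{M}$, so when $\msf{M}$ is not a torus, $j$ need not be a product of torus and root-group elements in a fixed order. Moreover, $I_0$-coordinates are not topologically nilpotent.

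The repair is one line. Modulo $[\pi^\flat]$ one has $\sigma^k(\xi)\equiv-\pi$ for every $k$, so $\Ad(\mu(\xi)\sigma)^i(j)\equiv\Ad(b\sigma)^i(j)=j$ because $j\in G_b(F)$ and $b=\mu(-\pi)$. Thus the case $m=1$ reduces to $i=0$.

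With both repairs your argument is a valid and more explicit proof of the lemma. The paper's argument is coordinate-free and uniform in $m$, at the cost of invoking Ito's integrality criterion.
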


Here, the first claim is meaningful since $[\pi^\flat] - \pi$ is a non-zero-divisor in $W_{O_F}(O_{\bb{C}_p}^\flat)/[\pi^{\flat, m}]$. 

\begin{proof}
    We will inductively prove the claim. For $i=0$, both claims automatically follow from $j \in G_b(F)_{x_b, m - 1}$ since the facet $\mfr{f}$ containing both $x$ and $x_b$ contains $x - \varepsilon \lambda$ in its interior for a sufficiently small $\varepsilon > 0$ (see \cite[Definition 5.1]{Tak25z}) and it implies
    \[  
        G_b(\breve{F})_{x_b, m - 1} = \{ \ov{p}u \in \cl{\breve{G}}^{\lambda \leq 0}(O_{\breve{F}}) \times \cl{\breve{G}}^{\lambda > 0}(O_{\breve{F}}) \mid p \equiv 1 \bmod{\pi^{m-1}}, \; u \equiv 1 \bmod{\pi^{m}} \}. 
    \] 
    
    Now, suppose that the claim holds for some $i \geq 0$. By the induction hypothesis and $\msf{\ov{N}} \cap \sigma(\msf{N}) = \msf{U}_{\mu < 0}$ (see \Cref{defi:parabolic_mu}), the image of
    \[
        \sigma(\Ad(\mu([\pi^\flat]-\pi) \sigma)^i(j)) \in \cl{G}(W_{O_F}(O_{\bb{C}_p}^\flat)/[\pi^{\flat, m}])
    \]
    along $W_{O_F}(O_{\bb{C}_p}^\flat)/[\pi^{\flat, m}] \to O_{\bb{C}_p} / \pi^m$ lies in $\cl{G}^{\mu \geq 0}(O_{\bb{C}_p}/\pi^m)$. By applying \cite[Proposition 4.2.9]{Ito25a} to $(W_{O_F}(O_{\bb{C}_p}^\flat)/[\pi^{\flat, m}], [\pi^\flat] - \pi)$, we get the first claim
    \begin{equation} \label{eq:first_induction_conseq}
        \Ad(\mu([\pi^\flat]-\pi) \sigma)^{i+1}(j) \in \cl{G}(W_{O_F}(O_{\bb{C}_p}^\flat)/[\pi^{\flat, m}]). 
    \end{equation}
    We will prove the second claim for $i+1$. First, we have
    \[
        \Ad(\mu([\pi^\flat]-\pi) \sigma)^{i+1}(j) = \Ad(\mu([\pi^\flat]-\pi))(\sigma(\ov{p} u)). 
    \]
    Since $u$ is trivial modulo $I_m$, $\sigma(\ov{p} u) \equiv \sigma(\ov{p})$ modulo $I_m$, so it lies in the big cell $\cl{\breve{G}}^{\mu<0} \times \cl{G}^{\mu = 0} \times \cl{G}^{\mu>0}$. Then, we have a decomposition
    \[
        \sigma(\ov{p} u) = u^{-} m_\mu u^+
    \]
    where $u^- \in \cl{\breve{G}}^{\mu<0}$ is trivial modulo $I_m$ and $u^+ \in \cl{G}^{\mu>0}$ is trivial modulo $I_{m-1}$. Moreover, we have a decomposition $m_\mu = \ov{p}_\mu u_\mu$ where $\ov{p}_\mu \in \cl{\breve{G}}^{\lambda \leq 0}$ is trivial modulo $I_{m-1}$ and $u_\mu \in \cl{\breve{G}}^{\lambda > 0}$ is trivial modulo $I_m$, for $\sigma(\msf{\ov{N}}) \cap \msf{G}^{\mu = 0} \subset \msf{\ov{N}}$. 
    
    Now, consider the multiplication map 
    \[
        a_\xi \colon W_{O_F}(O_{\bb{C}_p}^\flat)/[\pi^{\flat, m}] \to W_{O_F}(O_{\bb{C}_p}^\flat)/[\pi^{\flat, m}] ,\quad 
        a \mapsto ([\pi^\flat] - \pi) a. 
    \]
    It is easy to see $a_\xi^{-1}(I_m) = I_{m-1}$ from the Teichm\"{u}ller expansion. Then, by \eqref{eq:first_induction_conseq}, we have
    \[
        \Ad(\mu([\pi^\flat]-\pi))(u^-) \equiv 1 \bmod{I_{m-1}} ,\quad
        \Ad(\mu([\pi^\flat]-\pi))(u^+) \equiv 1 \bmod{I_m}. 
    \]
    Then, we get a desired decomposition for $i + 1$
    \[
        \Ad(\mu([\pi^\flat]-\pi) \sigma)^{i+1}(j) = \Ad(\mu([\pi^\flat]-\pi))(u^-) \ov{p}_\mu \cdot u_\mu \Ad(\mu([\pi^\flat]-\pi))(u^+). 
    \]
\end{proof}

\begin{prop} \label{prop:isom_deformation}
    For every $j \in G_b(F)_{x_b, m-1}$, the deformations
    \[
        (\cl{G} \otimes W_{O_F}(O_{\bb{C}_p}^\flat)/[\pi^{\flat ,m}], \mu([\pi^\flat] - \pi) \sigma) ,\;
        (\cl{G} \otimes W_{O_F}(O_{\bb{C}_p}^\flat)/[\pi^{\flat ,m}], j \mu([\pi^\flat] - \pi) \sigma(j)^{-1} \sigma)
    \]
    are isomorphic. In other words, there is $\iota \in \cl{G}(W_{O_F}(O_{\bb{C}_p}^\flat)/[\pi^{\flat ,m}])$ that is trivial modulo $[\pi^\flat]$ such that
    $
        \iota j \mu([\pi^\flat] - \pi) \sigma(\iota j)^{-1} = \mu([\pi^\flat] - \pi). 
    $
\end{prop}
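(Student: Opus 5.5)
The natural candidate is an infinite product. Write $\Phi = \Ad(\mu([\pi^\flat]-\pi)\sigma)$ as in \Cref{lem:AdbCM}. Since $j \in G_b(F)$, we have $b\sigma(j) = jb$ with $b = \mu(-\pi)$. Over the prism, however, the Frobenius is $\mu([\pi^\flat]-\pi)\sigma$ rather than $\mu(-\pi)\sigma$, so $j$ is no longer $\Phi$-fixed. The equation to solve,
\[
    \iota j \mu([\pi^\flat]-\pi) \sigma(\iota j)^{-1} = \mu([\pi^\flat]-\pi),
\]
can be rewritten as $\iota j = \Phi(\iota j)$. The plan is to solve this fixed-point equation by successive approximation,
\[
    \iota j = \lim_{i \to \infty} \Phi^i(j),
\]
or equivalently to set $\iota = \lim_i \Phi^i(j) j^{-1}$, and then show that the limit exists in $\cl{G}(W_{O_F}(O_{\bb{C}_p}^\flat)/[\pi^{\flat,m}])$ and that $\iota$ is trivial modulo $[\pi^\flat]$.

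First, the sequence is well defined: by \Cref{lem:AdbCM}, each $\Phi^i(j)$ is an integral point. Next, to get convergence, I compare consecutive terms through
\[
    \Phi^{i+1}(j)\Phi^i(j)^{-1} = \Phi^i\bigl(\Phi(j) j^{-1}\bigr).
\]
Modulo $[\pi^\flat]$, the element $[\pi^\flat]-\pi$ becomes $-\pi$, so $\Phi$ reduces to $\Ad(b\sigma)$, which fixes $j$. Hence $\Phi(j) j^{-1}$ is trivial modulo $[\pi^\flat]$. The key claim is that applying $\Phi$ strictly increases the $[\pi^\flat]$-adic triviality: if $h$ is integral and trivial modulo $[\pi^{\flat,k}]$, then $\Phi(h)$ is trivial modulo $[\pi^{\flat,k+1}]$, or at least modulo $[\pi^{\flat,qk}]$ up to the twist. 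The reason is that $\sigma$ raises $[\pi^{\flat,k}]$ to $[\pi^{\flat,qk}]$, while $\Ad(\mu([\pi^\flat]-\pi))$ loses at most one factor of $([\pi^\flat]-\pi)$, and a unit multiple of $([\pi^\flat]-\pi)$ can be traded against $\pi$ via the ideals $I_k$. Concretely, I would use the big-cell decompositions $\ov{p}u$ of \Cref{lem:AdbCM}, with $\ov{p} \equiv 1 \bmod I_{m-1}$ and $u \equiv 1 \bmod I_m$. I would track the $\mu$-weight pieces exactly as in the inductive step there, using $\msf{\ov{N}} \cap \sigma(\msf{N}) = \msf{U}_{\mu<0}$ and the identity $a_\xi^{-1}(I_m) = I_{m-1}$. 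Since $[\pi^{\flat,m}] = 0$ in the ring, finitely many steps suffice: $\Phi^i(j)$ stabilizes for $i \gg 0$, and no genuine limit is needed.

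Given stabilization, $\iota j := \Phi^N(j)$ is $\Phi$-fixed, which is exactly the required identity. Moreover, $\iota = \Phi^N(j) j^{-1}$ is a finite product of terms $\Phi^i(\Phi(j)j^{-1})$, each trivial modulo $[\pi^\flat]$, so $\iota$ itself is trivial modulo $[\pi^\flat]$. The isomorphism of deformations then follows from the definition of morphisms of prismatic $(\cl{G},\mu)$-displays.

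The main obstacle is the gain of $[\pi^\flat]$-adic precision under $\Phi$. The factor $\Ad(\mu([\pi^\flat]-\pi))$ divides by $[\pi^\flat]-\pi$ on the $\mu$-negative weights, and a priori this could cancel the gain coming from $\sigma$. My first attempt would be to refine the filtration to ideals such as $[\pi^{\flat,k}]\cdot I_\bullet$ and to run the same weight bookkeeping as in \Cref{lem:AdbCM}, separately on the $\msf{U}_{\mu<0}$, $\msf{G}^{\mu=0}$ and $\msf{U}_{\mu>0}$ parts. If this fails, the fallback is to invoke the rigidity part of \cite[Proposition 4.2.9]{Ito25a}, namely the uniqueness of deformations along nilpotent thickenings, to produce $\iota$ abstractly.
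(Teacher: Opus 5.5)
Your construction is the paper's. With $X=\Phi(j)j^{-1}$, the paper sets $\iota=\cdots\Phi^{2}(X)\,\Phi(X)\,X$. Since $\Phi^{i}(X)=\Phi^{i+1}(j)\Phi^{i}(j)^{-1}$, this telescopes to your $\iota=\Phi^{N}(j)j^{-1}$. The inputs are also the same: integrality of every $\Phi^{i}(j)$ from \Cref{lem:AdbCM}, $X\equiv 1 \bmod [\pi^\flat]$ because $j\in G_b(F)$, and termination because $[\pi^{\flat,m}]=0$.

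What your proposal lacks is a proof of the step you flag as the main obstacle, and neither remedy you suggest is what is needed. The ideals $I_k$ control integrality (that is their role in \Cref{lem:AdbCM}), not $[\pi^\flat]$-adic precision, so no finer weight bookkeeping is required. The fallback cannot work at all. By \Cref{lem:Perfd_universal_deformation}, deformations of $(\breve{\cl{G}},\mu(-\pi)\sigma)$ over this prism correspond to homomorphisms $R_{\cl{G},\mu}\to O_{\bb{C}_p}/\pi^m$, so they are far from unique. Rigidity can at best make $\iota$ unique, never produce it. Indeed, the paper uses this very proposition to show that the two deformations define the same point.

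The fact that closes the gap is that dividing by $\xi=[\pi^\flat]-\pi$ costs no $[\pi^\flat]$-adic precision. Put $W=W_{O_F}(O_{\bb{C}_p}^\flat)$ and $A=W/[\pi^{\flat,m}]$. Suppose $\xi y=[\pi^{\flat,k}]c$ in $W$. Applying $\theta\colon W/\xi W\cong O_{\bb{C}_p}$ gives $\pi^{k}\theta(c)=0$, so $c\in\xi W$. Cancelling $\xi$ in the domain $W$ then gives $y\in[\pi^{\flat,k}]W$. Hence $\xi$ is a nonzerodivisor on $W/[\pi^{\flat,k}]$ for every $k$.

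Now let $h\in\cl{G}(A)$ be trivial modulo $[\pi^{\flat,k}]$ with $k\le m$, and suppose $\Ad(\mu(\xi))(h)$ is known to be integral. Take a faithful representation on which $\mu$ acts diagonally. Each entry $z\in A$ of $\Ad(\mu(\xi))(h)-1$ then satisfies $\xi^{a}z\in[\pi^{\flat,k}]A$ for some $a\ge 0$, hence $z\in[\pi^{\flat,k}]A$.

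Apply this to $h=\sigma(\Phi^{i-1}(X))$, which is trivial modulo $[\pi^{\flat,\min(q^{i},m)}]$ by induction; integrality of $\Phi^{i}(X)$ comes from \Cref{lem:AdbCM}. This yields $\Phi^{i}(X)\equiv 1\bmod[\pi^{\flat,\min(q^{i},m)}]$, which is the full gain $k\mapsto qk$ from $\sigma$. The case $k=1$ also justifies your comparison of $\Phi(j)$ with $\Ad(b\sigma)(j)=j$ modulo $[\pi^\flat]$.

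The integrality hypothesis cannot be dropped, so your key claim should not be stated for arbitrary integral $h$. For example, if $h=i_\alpha([\pi^{\flat,k}])$ with $\langle\sigma\alpha,\mu\rangle=-1$ and $qk<m$, then $\Phi(h)=i_{\sigma\alpha}(\xi^{-1}[\pi^{\flat,qk}])$ is not even integral. The argument must therefore be run only on the elements $\Phi^{i}(X)$.
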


\begin{proof}
    Rewrite the desired equation as
    \[
        \iota = \Ad(\mu([\pi^\flat] - \pi)\sigma)(\iota) \cdot \Ad(\mu([\pi^\flat] - \pi) \sigma)(j) j^{-1}. 
    \]
    Let $X = \Ad(\mu([\pi^\flat] - \pi) \sigma)(j) j^{-1}$ modulo $[\pi^{\flat, m}]$. By \Cref{lem:AdbCM}, we get
    \[
        \Ad(\mu([\pi^\flat] - \pi)\sigma)^i(X) \in \cl{G}(W_{O_F}(O_{\bb{C}_p}^\flat)/[\pi^{\flat, m}])
    \]
    for every $i \geq 0$. Since $X$ is trivial modulo $[\pi^\flat]$ as $j \in G_b(F)$, $\Ad(\mu([\pi^\flat] - \pi)\sigma)^i(X)$ is trivial modulo $[\pi^{\flat, q^i}]$. Thus, $\Ad(\mu([\pi^\flat] - \pi)\sigma)^i(X)$ is trivial for sufficiently large $i$, so the infinite product
    \[
        \iota = \cdots \cdot \Ad(\mu([\pi^\flat] - \pi) \sigma)^2(X) \cdot  \Ad(\mu([\pi^\flat] - \pi) \sigma)(X) \cdot X 
    \]
    is well-defined. It satisfies $\iota = \Ad(\mu([\pi^\flat] - \pi)\sigma)(\iota) X$ and is trivial modulo $[\pi^\flat]$. Thus, we get the claim. 
\end{proof}

Now, we deduce the stability of special open balls from the action on CM points. In fact, our proof relies on the stability of $\cl{U}(0)$ and the algebraicity of $\cl{U}(n)$. 

\begin{prop} \label{prop:U(n)_stability}
    For every $n \geq 1$, $\cl{U}(n)$ is stable under $J^0_{b, n}$. 
\end{prop}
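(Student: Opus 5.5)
The plan is to separate the two pieces of $J^0_{b,n}$. For $n=2m-1$ we have $J^0_{b,n}=K^0_{b,2(m-1)}=G_b(F)_{x_b,m-1}M(F)_{x,0}$, and for $n=2m$ we have $J^0_{b,n}=G_b(F)_{x_b,m}M(F)_{x,0}$. I would treat the factor $M(F)_{x,0}$ first and then the Moy--Prasad factor.

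\textbf{The factor $M(F)_{x,0}=\cl{M}(O_F)$.} This group lies in $\cl{G}_b(O_F)$, so by \Cref{prop:U(0)_previous_work}~(1) it stabilizes $\cl{U}(0)$. It also preserves $\cl{M}_{\cl{M},b,\mu}$. By the uniqueness in \Cref{prop:U(0)_previous_work}~(2), it therefore fixes $x_\CM$. On the formal model, the action is by $O_{\breve F}$-algebra automorphisms of $R_{\cl{G},\mu}$, and these are conjugation by elements of $\cl{M}(O_{\breve F})$ on the explicit $\cl{G}$-BKF module of \Cref{lem:expBKF}. Such an automorphism permutes the root coordinates $u_\alpha$ compatibly with the $\breve{T}$-weights, up to units and higher-order terms. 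Since both $m$ and $m+r_\alpha$ depend only on $\alpha$ through data that is $\cl{M}$-invariant, this action preserves each defining inequality of $\cl{U}(n)$. Alternatively, one can apply the deformation argument below with $j\in\cl{M}(O_F)$, which fixes $\mu([\pi^\flat]-\pi)$ exactly.

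\textbf{The factor $G_b(F)_{x_b,m-1}$, case $n=2m-1$.} I would not track only $x_\CM$. Instead I would use \Cref{lem:Perfd_universal_deformation} for an arbitrary $\bb{C}_p$-point $y\in\cl{U}(0)$. The condition $y\in\cl{U}(2m-1)$, i.e.\ $u_\alpha(y)\equiv 0 \bmod \pi^m$, says exactly that the deformation over $W_{O_F}(O_{\bb{C}_p}^\flat)/[\pi^{\flat,m}]$ attached to $y$ is isomorphic to the CM deformation $\mu([\pi^\flat]-\pi)\sigma$. Acting by $j$ replaces the Frobenius $\Phi$ by $j\Phi\sigma(j)^{-1}$, which is the same transport of structure as in \Cref{prop:isom_deformation}. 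Composing an isomorphism $y\simeq x_\CM$, transported by $j$, with the isomorphism provided by \Cref{prop:isom_deformation} shows that $j\cdot y$ is again isomorphic to the CM deformation modulo $[\pi^{\flat,m}]$. Hence $j\cdot y\in\cl{U}(2m-1)$.

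For this step one must check two things. First, $j\cdot y$ still lies in the tube $\Spf(R_{\cl{G},\mu})_\eta$; this holds because $\cl{U}(0)$ is $\cl{G}_b(O_F)$-stable. Second, the identification of the $j$-action with $\Phi\mapsto j\Phi\sigma(j)^{-1}$ must hold on deformations modulo $[\pi^{\flat,m}]$, with the isomorphism trivial modulo $[\pi^\flat]$. Both the trivialization from \Cref{prop:isom_deformation} and the deformation of $y$ are trivial modulo $[\pi^\flat]$, so the composite has the required form. Stability under $J^0_{b,2m-1}$ then follows as a set-theoretic equality, using the inverse $j^{-1}$.

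\textbf{Main obstacle: $n=2m$, with non-uniform radii $m+r_\alpha$.} Here $j\in G_b(F)_{x_b,m}$, and the inequality $|u_\alpha|\le|\pi|^{m+r_\alpha}$ is not a pure ``modulo $\pi^k$'' condition. My first attempt would be to refine \Cref{lem:AdbCM} and \Cref{prop:isom_deformation} to level $m+1$, working modulo $[\pi^{\flat,m+1}]$. Then $j\in G_b(F)_{x_b,m}$ would give the decomposition $\ov{p}u$ with $\ov{p}\equiv 1 \bmod I_m$ and $u\equiv 1 \bmod I_{m+1}$. One then has to track exactly which $\lambda$-weight components of $\iota$ can be nonzero modulo $I_{m+1}$. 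Combining this with the relation $r_\alpha=q^{n_\alpha}(1-r_{\beta_\alpha})$ from \Cref{lem:vareps}, I expect to get the bound $|u_\alpha(j\cdot y)|\le|\pi|^{m+r_\alpha}$, in the same way that the radii $r_\alpha$ were produced from $\cl{U}(0)$ in \cite{Tak25z}.

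If this bookkeeping becomes unmanageable, I would fall back on an approach in which the $\alpha$-dependence enters through $\cl{U}(0)$ rather than through $[\pi^\flat]$-adic congruences. The even ball $\cl{U}(2m)$ is the rescaling by $\pi^m$ of $\cl{U}(0)$ around $x_\CM$. The idea is to show that the automorphism of $\cl{U}(2m-1)$ induced by $j$, with $j\cdot x_\CM\in\cl{U}(2m+1)$ by the odd case at level $m+1$, is congruent to a translation on the normalized coordinates $u_\alpha/\pi^m$. One would then reduce to the $\cl{G}_b(O_F)$-stability of $\cl{U}(0)$ through this rescaling.
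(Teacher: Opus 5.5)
\textbf{Odd case.} Your treatment of $n=2m-1$ is fine and is a legitimate variant of the paper's argument. You apply \Cref{lem:Perfd_universal_deformation} at every point $y\in\cl{U}(2m-1)$, using that the deformation of $y$ coincides with the CM one modulo $[\pi^{\flat,m}]$. The paper instead only shows $j\cdot x_\CM\in\cl{U}(2m-1)$. It then concludes because $j^*$ sends each $u_\alpha$ to a power series $P_\alpha$ with integral coefficients and $P_\alpha(0)\equiv 0\bmod\pi^m$.

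\textbf{Even case: the gap.} The case $n=2m$ is not proved; you leave it as an expectation. Your composition trick does not transfer. For $y\in\cl{U}(2m)$ you only know $u_\alpha^\flat\in\pi^{\flat,m+r_\alpha}O_{\bb{C}_p}^\flat$, so the deformation of $y$ modulo $[\pi^{\flat,m+1}]$ is no longer the CM deformation. Refining \Cref{prop:isom_deformation} to level $m+1$ only concerns $\mu([\pi^\flat]-\pi)$, so it says nothing about $j\cdot y$. The ingredients you propose (tracking $\lambda$-weights of $\iota$, the relation $r_\alpha=q^{n_\alpha}(1-r_{\beta_\alpha})$) are beside the point. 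The sentence about $\cl{M}(O_F)$ ``permuting root coordinates up to units and higher-order terms'' is not an argument. Your deformation-theoretic alternative for $M(F)_{x,0}$ only covers odd $n$, because $h\in\cl{M}(O_F)$ acts through $\Ad(h)$, not trivially modulo $\pi^{m+1}$.

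\textbf{The missing step.} Your fallback gestures at it but does not state it. Work in the coordinates $v_\alpha=u_\alpha/\pi^{r_\alpha}$ of the formal model $O_{\bb{C}_p}\langle v_\alpha\rangle$ of $\cl{U}(0)$. In these coordinates $\cl{U}(2m)=\{\lvert v_\alpha\rvert\le\lvert\pi\rvert^m\}$ is a polydisc of \emph{uniform} radius.
\begin{itemize}
\item Since $\cl{U}(0)$ is $\cl{G}_b(O_F)$-stable, every $j\in J^0_{b,2m}$ acts by $v_\alpha\mapsto P_\alpha(v)$ with $P_\alpha\in O_{\bb{C}_p}\langle v\rangle$.
\item For $j\in G_b(F)_{x_b,m}=G_b(F)_{x_b,(m+1)-1}$, the odd case at level $m+1$ gives $j\cdot x_\CM\in\cl{U}(2m+1)$. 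Hence $\lvert P_\alpha(0)\rvert\le\lvert\pi\rvert^{m+1-r_\alpha}\le\lvert\pi\rvert^m$.
\item For $j\in M(F)_{x,0}$ one has $P_\alpha(0)=0$, since $x_\CM$ is fixed.
\item Integrality of the nonconstant terms then gives $\lvert P_\alpha(y)\rvert\le\lvert\pi\rvert^m$ for $y\in\cl{U}(2m)$.
\end{itemize}
This is exactly the paper's proof.

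\textbf{An alternative for the factor $G_b(F)_{x_b,m}$.} You could prove the level-$(m+1)$ isomorphism for $\mu([\pi^\flat]-\pi)\prod_\alpha i_\alpha([u_\alpha^\flat])\sigma$ with arbitrary $u_\alpha^\flat\in\pi^{\flat,m}O_{\bb{C}_p}^\flat$. This is \Cref{lem:isom_deformation_arbit}, proved later in the paper. It gives $u_\alpha(j\cdot y)\equiv u_\alpha(y)\bmod\pi^{m+1}$, which already suffices because $r_\alpha<1$; no bookkeeping of the radii is needed.
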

\begin{proof}
    Let $j \in G_b(F)_{x_b, m-1}$. By the description of the inner action on $\Spf(R_{\cl{G}, \mu})_\eta$ in \cite[Proposition 2.15]{Tak25z}, the deformations associated to $x_\CM$ and $j \cdot x_\CM$ over $(W_{O_F}(O_{\bb{C}_p}^\flat), [\pi^\flat] - \pi)$ are
    \[
        (\cl{G} \otimes W_{O_F}(O_{\bb{C}_p}^\flat), \mu([\pi^\flat] - \pi) \sigma) ,\;
        (\cl{G} \otimes W_{O_F}(O_{\bb{C}_p}^\flat), j \mu([\pi^\flat] - \pi) \sigma(j)^{-1} \sigma). 
    \]
    By \Cref{prop:isom_deformation}, they are congruent modulo $[\pi^{\flat, m}]$. Thus, by \Cref{lem:Perfd_universal_deformation}, the composition
    \[
        R_{\cl{G},\mu} \xrightarrow{j^*} R_{\cl{G},\mu} \xrightarrow{u_\alpha \mapsto 0} O_{\bb{C}_p} \twoheadrightarrow O_{\bb{C}_p}/\pi^m
    \]
    is equal to $R_{\cl{G},\mu} \xrightarrow{u_\alpha \mapsto 0} O_{\bb{C}_p} \twoheadrightarrow O_{\bb{C}_p}/\pi^m$. This means $j \cdot x_\CM \in \cl{U}(2m - 1)$ by the explicit description of $\cl{U}(2m - 1)$ in \Cref{defi:U(n)_radius}. As $x_\CM$ is fixed by $M(F)_{x, 0}$ by the uniqueness in \Cref{prop:U(0)_previous_work} (2), we get $j \cdot x_\CM \in \cl{U}(2m - 1)$ for $j \in K^0_{b, 2(m-1)}$. 

    Now, we first prove that $\cl{U}(2m - 1)$ is stable under $K^0_{b, 2(m-1)}$. Let $j \in K^0_{b, 2(m-1)}$. Since $j \cdot x_\CM \in \cl{U}(2m - 1)$, the homomorphism
    \[
        R_{\cl{G},\mu} \xrightarrow{j^*} R_{\cl{G},\mu}
    \]
    associated to the action of $j$ sends each $u_\alpha$ to $P_\alpha \in R_{\cl{G}, \mu}$ with $P_\alpha(0) \equiv 0 \pmod{\pi^m}$. Then, it follows that $j \cdot \cl{U}(2m - 1) \subset \cl{U}(2m - 1)$, so $\cl{U}(2m - 1)$ is stable under $J^0_{b, 2m-1} = K^0_{b, 2(m-1)}$.  

    Similarly, we prove that $\cl{U}(2m)$ is stable under $K^0_{b, 2m}$. Let $j \in K^0_{b, 2m}$. Since $\cl{U}(0)$ is stable under $\cl{G}_b(O_F)$ (see \Cref{prop:U(0)_previous_work} (1)), we have a homomorphism 
    \[  
        O_{\bb{C}_p} \langle \tfrac{u_\alpha}{\pi^{r_\alpha}} \mid \alpha \in \Phi_{\mu < 0} \rangle \xrightarrow{j^*} O_{\bb{C}_p} \langle \tfrac{u_\alpha}{\pi^{r_\alpha}} \mid \alpha \in \Phi_{\mu < 0} \rangle
    \]  
    associated to the action of $j$ on $\cl{U}(0)_{\bb{C}_p}$. Since $j \cdot x_\CM \in \cl{U}(2m + 1) \subset \cl{U}(2m)$, $j^*$ sends each $\tfrac{u_\alpha}{\pi^{r_\alpha}}$ to $P_\alpha \in O_{\bb{C}_p} \langle \tfrac{u_\alpha}{\pi^{r_\alpha}} \mid \alpha \in \Phi_{\mu < 0} \rangle$ with $P_\alpha(0) \equiv 0 \pmod{\pi^m}$. Then, it follows that $j \cdot \cl{U}(2m) \subset \cl{U}(2m)$, so $\cl{U}(2m)$ is stable under $J^0_{b, 2m} = K^0_{b, 2m}$. 
\end{proof}

For \Cref{prop:restatement_LSV} (1), we need the converse direction. In other words, for $j \in G_b(F)$ such that $j \cdot \cl{U}(n) \cap \cl{U}(n) \neq \emptyset$, we would like to show $j \in J_{b, n}^0$. In this case, one needs to show that $j \cdot x_\CM \in \cl{U}(2m)$ implies $j \in K_{b, 2m}^0$. This will be proved in \Cref{prop:action_outside_J_bn}, where we use another interpretation of $j \cdot x_\CM \in \cl{U}(2m)$ as a replacement of \Cref{lem:Perfd_universal_deformation}. 

The remaining parts for \Cref{prop:restatement_LSV} (1) are easy to verify right now since the diagonal action and the Weil descent datum are easy to calculate on $x_\CM$. 

\begin{prop} \label{prop:stability_U(n)_diagonal}
    For every $n \geq 1$ and $m \in M(F)_x$, we have 
     \[
        \Delta(m)(\cl{U}(n)) = \cl{U}(n) \quad (m \in M(F)_x) ,\quad
        \sigma_E(\cl{U}(n)) = z_\mu \cdot \cl{U}(n). 
    \]
\end{prop}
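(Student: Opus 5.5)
Both identities reduce to tracking what happens to $x_\CM$, combined with connectedness and the existence of the explicit formal models used in \Cref{prop:U(n)_stability}.

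\emph{Diagonal action.} By \Cref{lem:hyperspecial_M(F)_x}, $M(F)_x = Z_G(F)\cl{M}(O_F)$. By \Cref{lem:diagonal_central_action}, the diagonal action of $Z_G(F)$ on $\cl{M}_{G, b, \mu, \infty}$ is trivial. Hence it is trivial on $\cl{M}_{\cl{G}, b, \mu}$, and we may assume $m \in \cl{M}(O_F) = M(F)_{x,0}$.

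For such $m$, the component $m \in \cl{G}(O_F)$ acts trivially on $\cl{M}_{\cl{G}, b, \mu}$, because $\cl{G}(O_F)$ is the level. So $\Delta(m)$ acts through $m \in \cl{G}_b(O_F)$, via the embedding $M \subset G_b$. We then argue as follows.
\begin{enumerate}
\item Since $M(F)_{x,0} \subset \cl{G}_b(O_F)$, \Cref{prop:U(0)_previous_work}(1) shows that $m$ stabilizes $\cl{U}(0)$.
\item The uniqueness statement in \Cref{prop:U(0)_previous_work}(2) shows that $m$ fixes $x_\CM$.
\item As in the proof of \Cref{prop:U(n)_stability}, $m^*$ acts on $O_{\bb{C}_p}\langle u_\alpha/\pi^{r_\alpha}\rangle$ and sends each $u_\alpha/\pi^{r_\alpha}$ to a power series $P_\alpha$ with $P_\alpha(0)=0$. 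The constant term vanishes because $m$ fixes the origin $x_\CM$.
\end{enumerate}
Step 3 gives $m\cdot\cl{U}(n)\subset \cl{U}(n)$ for every $n$. Indeed, $\cl{U}(n)$ is cut out by $|u_\alpha|\le |\pi|^{c_\alpha}$ with $c_\alpha\ge r_\alpha$, and the inequality propagates because the coefficients are integral. If needed, the scaling argument at the end of \Cref{thm:computation_period_map} gives the required integral rescaled formal models. Applying the same to $m^{-1}$ gives equality. Alternatively, one can simply observe that $M(F)_{x,0}\subset K^0_{b,n}\subset J^0_{b,n}$ and invoke \Cref{prop:U(n)_stability} directly; this is the cleanest route, and I would use it.

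\emph{Weil descent datum.} First I would check that $z_\mu$ lies in $Z_G(F)$ and fixes $x_\CM$. By \Cref{lem:zero_dimensional_LSV}, $z_\mu \in Z_G(F)$ in the unramified case, and the descent datum on $\cl{M}_{\cl{M}, b, \mu}$ is $m' \mapsto z_\mu m'$. Thus $\sigma_E(x_\CM) = z_\mu\cdot x_\CM$. Here $z_\mu$ acts through the $G_b(F)$-action; since it is central, its action through $G_b(F)$ agrees with its action through $G(F)$, by \Cref{lem:diagonal_central_action}.

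Next, consider the composite $\tau = z_\mu^{-1}\circ\sigma_E$. By \Cref{prop:U(0)_previous_work}(1) it maps $\cl{U}(0)$ to itself, and by the previous paragraph it fixes $x_\CM$. The remaining task is to show that $\tau$ preserves the radii of all the balls $\cl{U}(n)$ around $x_\CM$.

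This is the main obstacle, because $\tau$ is only $\sigma$-semilinear over $\breve{F}$. I would handle it as follows.
\begin{itemize}
\item Semilinearity is harmless for radii: $\sigma$ preserves absolute values and $O_{\breve{F}}$, and $\cl{U}(n)$ is defined by inequalities with rational exponents in $|\pi|$.
\item After base change to the formal model $O_{\breve{F}}\langle u_\alpha/\pi^{r_\alpha}\rangle$, or an extension containing the relevant $\pi^{r_\alpha}$, $\tau^*$ is a semilinear continuous map. It sends the coordinates $u_\alpha/\pi^{r_\alpha}$ to integral power series $P_\alpha$ with zero constant term, since $\tau$ fixes the origin $x_\CM$.
\item The same propagation argument as in step 3 then gives $\tau(\cl{U}(n))\subset\cl{U}(n)$.
\item The inverse direction uses $\tau^{-1}$ in the same way, which yields $\sigma_E(\cl{U}(n)) = z_\mu\cdot\cl{U}(n)$.
\end{itemize}
For the first bullet, I would cite the explicit form of $\sigma_E$ on $\Spf(R_{\cl{G},\mu})$ from \cite[Proposition 5.14]{Tak25z}. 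That description shows the map is defined integrally on $R_{\cl{G},\mu}$, which is exactly what is needed for the integrality claim in the second bullet.
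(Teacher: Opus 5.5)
\textbf{The diagonal action.} This part is fine, and the route you settle on is slicker than the paper's. For $m \in M(F)_{x,0} \subset \cl{G}(O_F)$, the $G(F)$-component of $\Delta(m)$ acts trivially at level $\cl{G}(O_F)$. So $\Delta(m)$ is the $G_b(F)$-action of an element of $J^0_{b,n}$, and \Cref{prop:U(n)_stability} applies directly; that proposition does not use the present one, so there is no circularity. The paper instead reruns the fixed-point-plus-integrality argument.

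\textbf{The gap.} It is in the Weil descent part. There $\tau = z_\mu^{-1}\circ\sigma_E$ is not an element of $J^0_{b,n}$, so you fall back on the propagation claim from your step 3, and that claim is false for odd $n$. Your first route for the diagonal action has the same defect, so it is good you did not rely on it.

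In the coordinates $v_\alpha = u_\alpha/\pi^{r_\alpha}$ of the integral model of $\cl{U}(0)$, the ball $\cl{U}(2m-1) = \{\lvert u_\alpha\rvert \le \lvert\pi\rvert^m\}$ becomes the polydisc $\lvert v_\alpha\rvert \le \lvert\pi\rvert^{m-r_\alpha}$. Its radii depend on $\alpha$: the $r_\alpha$ are generally distinct, and already for $\GL_3$ with $\mu = (-1,0,0)$ they are $\tfrac{q^3-q}{q^3-1}$ and $\tfrac{q^3-q^2}{q^3-1}$. An integral $P_\alpha(v)$ with $P_\alpha(0)=0$ only gives $\lvert P_\alpha(v)\rvert \le \max_\beta \lvert v_\beta\rvert$. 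A unit coefficient on $v_\beta$ with $r_\beta > r_\alpha$ then sends the point with $\lvert v_\beta\rvert = \lvert\pi\rvert^{m-r_\beta}$ (other coordinates zero) outside $\cl{U}(2m-1)$. Integrality in the $v$-coordinates only controls polydiscs of uniform $v$-radius, which is exactly the even case $\cl{U}(2m) = \{\lvert v_\alpha\rvert \le \lvert\pi\rvert^m\}$. The rescaling trick from \Cref{thm:computation_period_map} is uniform in all coordinates, so it does not repair this.

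\textbf{The fix.} This is what the paper does, mirroring the odd/even split in \Cref{prop:U(n)_stability}. For $n = 2m-1$, work in $R_{\cl{G},\mu}$ itself. The map $\tau$ is defined on $\cl{M}^\ints_{\cl{G},b,\mu}$ and fixes $x_\CM$, hence $[1]$. It therefore induces a semilinear endomorphism of $R_{\cl{G},\mu}$ sending $u_\alpha$ to some $Q_\alpha \in O_{\breve{F}}\llbracket u_\beta \rrbracket$ with $Q_\alpha(0)=0$. Since $\cl{U}(2m-1)$ has uniform radius $\lvert\pi\rvert^m$ in the $u$-coordinates, this gives $\tau(\cl{U}(2m-1)) \subset \cl{U}(2m-1)$. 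For $n = 2m$, use the integral model of $\cl{U}(0)\otimes F[\pi^{1/e}]$ as you did.

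You do mention integrality on $R_{\cl{G},\mu}$, but you feed it into the wrong coordinates. It does not imply integrality in the $v_\alpha$; that integrality comes for free from $\tau(\cl{U}(0)) = \cl{U}(0)$, since power-bounded functions on an affinoid are intrinsic. What integrality on $R_{\cl{G},\mu}$ is actually needed for is the odd case.
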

\begin{proof}
    Since $M(F)_x = Z_G(F) M(F)_{x, 0}$ and the diagonal action of $Z_G(F)$ is trivial, we may assume $m \in M(F)_{x, 0}$. Then, $\Delta(m)\cdot x_\CM = x_\CM$ by the uniqueness in \Cref{prop:U(0)_previous_work} (2). Since $\Delta(m)$ extends to the integral model $\cl{M}^\ints_{\cl{G}, b, \mu}$ and the formal completion $\Spf(R_{\cl{G}, \mu})$, $\Delta(m)$ acts on $R_{\cl{G}, \mu}$\footnote{Here, we use the full faithfulness of the $v$-sheafification functor on a suitable class of formal schemes proved in \cite[Proposition 18.4.1]{SW20}. This is implicitly used in many parts of this paper when one passes from $v$-sheaves to formal schemes.}. It also acts on $O_{\bb{C}_p} \langle \tfrac{u_\alpha}{\pi^{r_\alpha}} \mid \alpha \in \Phi_{\mu < 0} \rangle$ since $\cl{U}(0)$ is stable under $\Delta(m)$. Then, by the same argument as in \Cref{prop:U(n)_stability}, $\Delta(m)\cdot x_\CM = x_\CM$ automatically implies $\Delta(m)\cdot \cl{U}(n) = \cl{U}(n)$. Similarly, since $z_\mu^{-1} \sigma_E(x_\CM) = x_\CM$ and $z_\mu^{-1} \circ \sigma_E$ induces Weil descent data on $\Spf(R_{\cl{G}, \mu})$ and the integral model of $\cl{U}(0) \otimes F[\pi^{1/e}]$, we get $\sigma_E(\cl{U}(n)) = z_\mu \cdot \cl{U}(n)$. 
\end{proof}

\section{Canonical trivializations on special open balls} \label{sec:canonical_trivialization}

Recall that we set $J_n^0 = K_n^0$ for each $n \geq 1$. In this section, we construct a unique section 
\[
    \can_n \colon \cl{U}(n) \to \cl{M}_{G, b, \mu, K_n^0}
\]
sending $x_\CM$ to its natural lift via $x_\CM \in \cl{M}_{\cl{M}, b, \mu} \subset \cl{M}_{G, b, \mu, K_n^0}$. Our construction of $\can_n$ is of a different nature from \Cref{thm:canonical_level_structures} because the associated stacky level structure will be 
\[
    \Sht^b_{\cl{G}, -\mu}(n) = \cl{U}(n)^\diamond / \und{J_{b, n}^0} \to \Sht_{K_n, -\mu}
\]
and it takes a different form compared to $\Sht_{\cl{G}, -\mu}^{[M, \mu]}(r) = \und{K_r} \backslash \bb{B}_{x, r}^\diamond$ introduced in \Cref{sec:canonical_level_structures}. Our main tool is the explicit description of the universal $\cl{G}$-BKF module over $R_{\cl{G}, \mu}$ (see \Cref{lem:expBKF}).

\subsection{Independence of $p$-power roots} \label{ssec:indepedence_p-power}

To apply \Cref{lem:expBKF} to the construction of $\can_n$, one needs to resolve the choice of $\pi^\flat$ and $u_\alpha^\flat$. In this section, we present a geometric lemma resolving this issue via the theory of geometric quotients of $v$-sheaves developed in \cite[Section 3.3]{Tak26_rel}. We begin with the review of the basic facts on geometric quotients. 

\begin{defi}\textup{(\cite[Definition 3.9]{Tak26_rel})}
    Let $\Gamma$ be a group and let $X$ be a $v$-sheaf with a $\Gamma$-action. A map $\pi\colon X\to Y$ of $v$-sheaves is a geometric quotient of $X$ by $\Gamma$ if $\pi$ is surjective, and for every geometric point $x\in X(C,C^+)$ with $C$ an algebraically closed perfectoid field with an open and bounded valuation subring $C^+\subset C$, we have $\pi^{-1}(\pi(x)) = \{ \gamma \cdot x \vert \gamma \in \Gamma \}$. 
\end{defi}
\begin{prop}\textup{(\cite[Proposition 3.11]{Tak26_rel})}
    \label{prop:geomquotiscat}
    Let $X$ be a small $v$-sheaf with an action of a group $\Gamma$ over a small $v$-sheaf $S$. Let $\pi\colon X\to Y$ be a geometric quotient of $X$ by $\Gamma$ over $S$. If $Y$ is quasiseparated over $S$, then we have the following. 
    \begin{enumerate}
        \item For every $\gamma\in \Gamma$, we have $\gamma\cdot \pi=\pi$. 
        \item For every small $v$-sheaf $Z$ quasiseparated over $S$, a map $f\colon X \to Z$ over $S$ factors through $Y$ if and only if $\gamma\cdot f=f$ for every $\gamma\in \Gamma$. 
    \end{enumerate}
\end{prop}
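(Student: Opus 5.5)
Both claims come down to the fibre condition in the definition of a geometric quotient, together with the fact that maps between $v$-sheaves are determined, $v$-locally, by their values on geometric points once the target is quasiseparated.

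For (1), fix $\gamma\in\Gamma$ and compare $\pi$ and $\pi\circ\gamma$ as two maps $X\to Y$ over $S$. Since $Y$ is quasiseparated over $S$, the diagonal $Y\to Y\times_S Y$ is quasicompact injective. The equalizer $Z_0=X\times_{Y\times_S Y}Y$ is therefore a subsheaf of $X$ that is quasicompact over $X$. For every geometric point $x\in X(C,C^+)$, the fibre condition gives $\pi(\gamma x)=\pi(x)$, so $x$ lies in $Z_0$. A quasicompact injection of small $v$-sheaves that is surjective on all geometric points $\Spa(C,C^+)$ is an isomorphism. This is the $v$-sheaf analogue of ``quasicompact and surjective'' (cf.\ \cite[Lemma 12.11]{Sch17}); we apply it after pulling back to a totally disconnected cover of an affinoid chart of $X$. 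Hence $\pi\circ\gamma=\pi$.

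For (2), the forward direction follows at once from (1). For the converse, let $f\colon X\to Z$ satisfy $f\circ\gamma=f$ for every $\gamma\in\Gamma$. Since $\pi$ is a surjection of $v$-sheaves, $Y$ is the coequalizer of $X\times_Y X\rightrightarrows X$. It therefore suffices to show that $f\circ\mathrm{pr}_1=f\circ\mathrm{pr}_2$ on $X\times_Y X$. As in (1), the equalizer of these two maps is a quasicompact injection into $X\times_Y X$, because $Z$ is quasiseparated over $S$. So it is enough to check equality on geometric points. A geometric point of $X\times_Y X$ is a pair $(x,x')\in X(C,C^+)^2$ with $\pi(x)=\pi(x')$. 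The defining property of a geometric quotient then gives $x'=\gamma x$ for some $\gamma$, hence $f(x')=f(\gamma x)=f(x)$. Thus $f$ descends to $Y$, and uniquely so because $\pi$ is an epimorphism.

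The main obstacle is the reduction ``equality on all geometric points implies equality of maps''. This needs care with higher-rank points $(C,C^+)$, which is why the definition quantifies over all $C^+$. It also needs the quasicompactness of the equalizer, which is exactly where quasiseparatedness of $Y$ and $Z$ over $S$ enters. If the surjectivity criterion cannot be applied directly because $X$ is not quasicompact, I would first reduce to the case where $X$ is quasicompact by working locally on a cover of $X$ by affinoid perfectoids. Equality of maps is $v$-local on the source, so this reduction is harmless.
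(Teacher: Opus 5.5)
Your argument is correct. The equalizer of two maps into a target quasiseparated over $S$ is a quasicompact subsheaf, and the fibre condition puts every geometric point $\Spa(C,C^+)$ into it; since every point of $|X|$ or $|X\times_Y X|$ is represented by such a point, \cite[Lemma 12.11]{Sch17} makes it the whole source. In (2), effectivity of the $v$-epimorphism $\pi$ then reduces the descent of $f$ to this check on $X\times_Y X$.

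The paper gives no proof of its own here; it imports the statement from \cite[Proposition 3.11]{Tak26_rel}, and your route is the natural one. Your reductions to totally disconnected or quasicompact pieces are unnecessary, since Lemma 12.11 only needs the inclusion of the equalizer to be quasicompact.
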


The main property specific to our application is the existence of the following fixed points. 

\begin{defi}
    Let $U$ be a small $v$-sheaf and let $\Gamma$ be a finite group acting on $U$. We say that $U$ has a $\Gamma$-fixed geometric point if there is a geometric point $o\colon \Spa(C,O_C) \to U$ such that $o = \gamma \cdot o$ for every $\gamma \in \Gamma$.  
\end{defi}

Then, the following lemma enables us to automatically descend some trivializations of finite \'{e}tale covers along covers $U_\infty \to U_0$ of `infinite' type. 

\begin{prop} \label{prop:descent_finite_etale_map}
    Let $\{U_i\}_{i\geq 0}$ be a family of qcqs small $v$-sheaves with qcqs transition maps $\pi_i \colon U_{i} \to U_{i-1}$. Suppose that the following conditions hold for every $i\geq 1$. 
    \begin{enumerate}
        \item There is a finite group $\Gamma_i$ such that $\pi_i$ is a geometric quotient by $\Gamma_i$. 
        \item $U_i$ is connected and has a $\Gamma_i$-fixed geometric point. 
    \end{enumerate}
    Let $H$ be a finite group and let $X_{0}$ and $Y_{0}$ be finite \'{e}tale $H$-torsors over $U_{0}$. Let $U_\infty = \lim_{i \geq 0} U_i$ and let $X_{\infty} = X_{0} \times_{U_{0}} U_{\infty}$ (resp. $Y_{\infty} = Y_{0} \times_{U_{0}} U_{\infty}$). Then,
    \[
        \Hom_{U_{0}}^H(X_{0}, Y_{0}) \to \Hom_{U_{\infty}}^H(X_{\infty}, Y_{\infty}) 
    \]
    is bijective. Here, $\Hom^H$ denotes the set of $H$-equivariant morphisms. 
\end{prop}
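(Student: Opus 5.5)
The plan is to reduce to a statement about sections of a single finite \'{e}tale map, and then descend step by step along the tower using \Cref{prop:geomquotiscat}.

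\emph{Reduction to sections.} Give $X_{0} \times_{U_{0}} Y_{0}$ the diagonal $H$-action and set
\[
    Z_{0} = (X_{0} \times_{U_{0}} Y_{0}) / H .
\]
Since $H$ acts freely, $Z_{0} \to U_{0}$ is finite \'{e}tale. An $H$-equivariant morphism $f \colon X_{0} \to Y_{0}$ over $U_{0}$ is the same as a section of $Z_{0} \to U_{0}$: the section is the image of the graph of $f$. This identification is compatible with base change along $U_{\infty} \to U_{0}$. So it suffices to show that for a finite \'{e}tale $Z_{0} \to U_{0}$, the restriction map
\[
    \Gamma(U_{0}, Z_{0}) \to \Gamma(U_{\infty}, Z_{0} \times_{U_{0}} U_{\infty})
\]
is bijective.

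\emph{Passage to finite level.} I would use the compatibility of the finite \'{e}tale site with cofiltered limits of qcqs small $v$-sheaves along qcqs transition maps (the analogue of \cite[Proposition 11.23]{Sch17}, cf.\ \cite[Lemma 12.17]{Sch17}). This gives
\[
    \Gamma(U_{\infty}, Z_{\infty}) = \colim_{i} \Gamma(U_{i}, Z_{0} \times_{U_{0}} U_{i}) .
\]
Each $\pi_i$ is surjective, being a geometric quotient, so all maps $U_{\infty} \to U_{i}$ are surjective. Two sections that agree after restriction then have an equalizer which is surjected onto, hence everything; so all transition maps in the colimit are injective. It therefore remains to show that each restriction $\Gamma(U_{i-1}, Z_{i-1}) \to \Gamma(U_{i}, Z_{i})$ is surjective, where $Z_{i} = Z_{0} \times_{U_{0}} U_{i}$. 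I expect verifying the limit statement in the exact generality needed (qcqs small $v$-sheaves, not necessarily spatial) to be the main technical obstacle; if necessary I would pass to a $v$-cover by spatial diamonds and use descent for finite \'{e}tale maps.

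\emph{Descent along one step.} Let $s \colon U_{i} \to Z_{0}$ be a map over $U_{0}$ and let $\gamma \in \Gamma_{i}$. By \Cref{prop:geomquotiscat} (1) we have $\pi_i \circ \gamma = \pi_i$, so $s \circ \gamma$ is again a map over $U_{0}$; thus $s$ and $s \circ \gamma$ are two sections of the finite \'{e}tale map $Z_{i} \to U_{i}$. Their equalizer is open and closed in $U_{i}$. It is nonempty because at the $\Gamma_i$-fixed geometric point $o$ we have
\[
    s(\gamma \cdot o) = s(o) .
\]
Since $U_{i}$ is connected, $s \circ \gamma = s$ for every $\gamma \in \Gamma_{i}$. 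Now apply \Cref{prop:geomquotiscat} (2) with base $S = U_{0}$. Both $Z_{0}$ and $U_{i-1}$ are quasiseparated over $U_{0}$, the former being finite \'{e}tale and the latter qcqs. Hence $s$ factors through $U_{i-1}$, which gives a section of $Z_{i-1}$ restricting to $s$. Descending from level $i$ to level $0$ one step at a time proves surjectivity, and hence the claimed bijection.
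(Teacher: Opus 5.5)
Your proposal is correct and is essentially the paper's argument: descend to a finite level via \cite[Lemma 12.17]{Sch17}, then descend one step at a time by proving $\Gamma_i$-invariance through an open and closed equalizer that contains the $\Gamma_i$-fixed geometric point, and conclude with the universal property of the geometric quotient. The only difference is packaging. You pass to sections of the finite \'{e}tale sheaf $(X_0\times_{U_0}Y_0)/H$, so you take the equalizer directly in $U_i$ and apply \Cref{prop:geomquotiscat} to $U_i\to U_{i-1}$ over the base $U_0$; the paper instead descends an $H$-stable equalizer from $X_i$ to $U_i$ and applies the quotient property to the base change $X_i\to X_{i-1}$.
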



\begin{proof}
    Since each $\pi_i$ is surjective and the $v$-site $\Perf$ is replete (as in \cite[Lemma 2.6]{Heu21}), $U_\infty \to U_0$ is surjective, so the map in the statement is injective. We show the surjectivity, 

    Let $f_\infty\colon X_\infty \to Y_\infty$ be an $H$-equivariant map. By \cite[Lemma 12.17]{Sch17}, $f_\infty$ can be defined over $U_i$ for some $i\geq 0$. Let $f_i\colon X_i \to Y_i$ be the descent of $f_\infty$ to $U_i$. We will show that $f_i$ can be defined over $U_{i-1}$ if $i \geq 1$. 

    Since $X_i$ and $Y_i$ are defined over $U_{i-1}$, $\Gamma_i$ naturally acts on $X_i$ and $Y_i$. For each $\gamma \in \Gamma_i$, let $f_i^\gamma = \gamma_{Y_i}^{-1} \circ f_i \circ \gamma_{X_i}$. Since the actions of $H$ and $\Gamma_i$ commute and $f_i$ is $H$-equivariant, $f_i^\gamma$ is also $H$-equivariant. Thus, the equalizer $\Eq(f_i, f_i^\gamma)\subset X_i$ is $H$-stable. It is closed and open since $Y_i$ is finite \'{e}tale over $U_i$, so it descends to a closed and open subset of $U_i$. Let $o_i\colon \Spa(C,C^+) \to U_i$ be a $\Gamma_i$-fixed geometric point. Then, $f_i = f_i^\gamma$ over fibers of $o_i$, so $\Eq(f_i, f_i^\gamma)/H \subset U_i$ is a nonempty closed and open subsheaf. Since $U_i$ is connected, we have $\Eq(f_i, f_i^\gamma)/H = U_i$, so $f_i = f_i^\gamma$. In particular, $X_i \xrightarrow{f_i} Y_i \to Y_{i-1}$ is invariant under $\Gamma_i$. 
    
    Since $\pi_i$ is a geometric quotient by $\Gamma_i$, the composition factors through $X_{i-1}$ by \cite[Proposition 3.10]{Tak26_rel}. Let $f_{i-1}\colon X_{i-1} \to Y_{i-1}$ be the map obtained in this way. Since $Y_i = Y_{i-1} \times_{U_{i-1}} U_i$, $f_i$ is the base change of $f_{i-1}$. By repeating this procedure until $i=0$, it follows that $f_\infty$ is defined over $U_0$. 
\end{proof}

\begin{cor} \label{cor:trivatinf}
    In the setting of \Cref{prop:descent_finite_etale_map}, every map $U_\infty \to X_0$ over $U_0$ factors through $U_0$. 
\end{cor}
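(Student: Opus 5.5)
The plan is to reduce to \Cref{prop:descent_finite_etale_map} by turning the section into an $H$-equivariant map of torsors. Here $X_0 \to U_0$ is a finite \'{e}tale $H$-torsor. A map $s\colon U_\infty \to X_0$ over $U_0$ is the same as a section of $X_\infty = X_0 \times_{U_0} U_\infty \to U_\infty$. Set $Y_0 = H \times U_0$, the trivial $H$-torsor. A section $s$ then determines an $H$-equivariant map
\[
    f_\infty \colon Y_\infty = H \times U_\infty \to X_\infty, \quad (h, u) \mapsto h \cdot s(u).
\]
Conversely, $s$ is recovered from $f_\infty$ by restricting to $\{1\} \times U_\infty$. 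In this way sections of $X_\infty \to U_\infty$ correspond bijectively to $\Hom^H_{U_\infty}(Y_\infty, X_\infty)$, and sections of $X_0 \to U_0$ correspond bijectively to $\Hom^H_{U_0}(Y_0, X_0)$. The two bijections are compatible with base change along $U_\infty \to U_0$.

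Next apply \Cref{prop:descent_finite_etale_map} with the roles of $X_0$ and $Y_0$ swapped, which the statement allows since both are finite \'{e}tale $H$-torsors over $U_0$. It gives an $H$-equivariant $f_0\colon Y_0 \to X_0$ whose base change is $f_\infty$. Restricting $f_0$ to $\{1\}\times U_0$ gives a section $s_0\colon U_0 \to X_0$, and its base change to $U_\infty$ is the section of $X_\infty$ corresponding to $s$. Hence $s$ factors as $U_\infty \to U_0 \xrightarrow{s_0} X_0$.

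The factorization is unique, because $U_\infty \to U_0$ is surjective (repleteness, as in the proof of \Cref{prop:descent_finite_etale_map}) and $X_0 \to U_0$ is separated. The only point needing care is the routine check that the torsor–section dictionary commutes with pullback; no real obstacle is expected.
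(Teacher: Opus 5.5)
Your proposal is correct and is the paper's argument: the paper proves the corollary by applying \Cref{prop:descent_finite_etale_map} to the constant $H$-torsor $U_0 \times H$ and $X_0$. You spell out the torsor--section dictionary that the paper leaves implicit. Your uniqueness remark is extra; it needs only that $U_\infty \to U_0$ is surjective, hence an epimorphism of $v$-sheaves, and not separatedness of $X_0 \to U_0$.
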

\begin{proof}
    It follows by applying \Cref{prop:descent_finite_etale_map} to the constant $H$-torsor $U_0\times H$ and $X_0$. 
\end{proof}

Now, we will explain covers $U_\infty \to U_0$ appearing In applications. Let $\wtd{R}_{\cl{G}, \mu} = O_{\bb{C}_p}\llbracket u_\alpha \vert \alpha \in \Phi_{\mu < 0} \rrbracket$ and consider its finite cover
$
    \wtd{R}_{\cl{G}, \mu, i} = O_{\bb{C}_p}\llbracket u_\alpha^{1 / p^i} \vert \alpha \in \Phi_{\mu < 0} \rrbracket
$ for each $i \geq 0$. 

\begin{lem}
    Let $\wtd{R}_{\cl{G}, \mu, \infty} = \colim_{i}^\wedge \wtd{R}_{\cl{G}, \mu, i}$ be the completed colimit along the maximal ideal of $R_{\cl{G}, \mu}$. Then $\wtd{R}_{\cl{G}, \mu, \infty}$ is a complete adic perfectoid ring in the sense of \textup{\cite[Definition 4.4]{Tak26_rel}}.
\end{lem}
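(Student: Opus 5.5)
We show that $\wtd{R}_{\cl{G}, \mu, \infty}$ is, up to completion along the correct ideal, a power series ring over $O_{\bb{C}_p}$ in variables admitting all $p$-power roots, and then verify the defining conditions of \cite[Definition 4.4]{Tak26_rel} directly. Concretely, let $\mfr{m} = (\pi^{1/p^\infty}, u_\alpha \mid \alpha \in \Phi_{\mu<0})$ denote the extension of the maximal ideal of $R_{\cl{G}, \mu}$. Up to the radical of the defining ideal, it is generated by $\pi$ and the $u_\alpha$. We first identify
\[
    \wtd{R}_{\cl{G}, \mu, \infty} \cong O_{\bb{C}_p}\langle u_\alpha^{1/p^\infty} \mid \alpha \in \Phi_{\mu<0} \rangle^{\wedge}_{(\pi, u_\alpha)},
\]
the $(\pi, u_\alpha)$-adic completion of the perfected polynomial ring. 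This should be formal: each $\wtd{R}_{\cl{G}, \mu, i}$ is the $(\pi, u_\alpha)$-adic completion of $O_{\bb{C}_p}[u_\alpha^{1/p^i}]$, and completed colimits commute with completing the colimit of the polynomial rings.

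Next we check the perfectoid conditions. The ring $A = \wtd{R}_{\cl{G}, \mu, \infty}$ is $\pi$-adically complete, since it is complete for the finer $(\pi,u_\alpha)$-adic topology and that ideal is finitely generated. It is also $\pi$-torsion free, as one sees from a monomial basis argument. The element $\pi^{1/p} \in O_{\bb{C}_p}$ satisfies $(\pi^{1/p})^p = \pi$ up to a unit, and $\pi^{1/p}$ divides $p$. It remains to see that Frobenius $A/p \to A/p$ is surjective. Modulo $p$, $A$ is the $(\pi, u_\alpha)$-adic completion of $(O_{\bb{C}_p}/p)[u_\alpha^{1/p^\infty}]$, and every monomial $c\, u^{a}$ with $a \in \bb{Z}[1/p]_{\geq 0}^{\Phi_{\mu<0}}$ is a $p$-th power, using that $O_{\bb{C}_p}/p$ is semiperfect. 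For convergent sums we take $p$-th roots term by term. Convergence is preserved because the $p$-th root of a monomial lying in $(\pi, u_\alpha)^N$ lies in $(\pi, u_\alpha)^{\lfloor N/p \rfloor}$ up to the radical, and the completion only depends on the radical.

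Finally, "complete adic" in loc.\ cit.\ presumably also asks that the adic topology be given by a finitely generated ideal containing a pseudo-uniformizer-type element, with $A$ complete for it. Both hold with the ideal $(\pi, u_\alpha)$.

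The main obstacle is the bookkeeping of topologies. The colimit is completed along $\mfr{m}$ rather than $\pi$-adically, so we must make sure that the perfectoidness criterion of \cite{Tak26_rel} is formulated for this adic topology. We also need that the $\pi$-adic completeness and the surjectivity of Frobenius modulo $p$ survive the $(\pi,u_\alpha)$-adic completion. We handle both points uniformly by the radical comparison $(\pi,u_\alpha)^{p} \subset (\pi^{p},u_\alpha^{p}) \subset$ an ideal having the same radical. Under this comparison the term-by-term $p$-th root map is continuous.
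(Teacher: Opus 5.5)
Your first step matches the paper's: $\wtd{R}_{\cl{G},\mu,\infty}$ is the completion along $(\pi, u_\alpha)$ of $O_{\bb{C}_p}[u_\alpha^{1/p^\infty}]$, equivalently of the $p$-completed ring $O_{\bb{C}_p}\langle u_\alpha^{1/p^\infty} \mid \alpha \in \Phi_{\mu<0} \rangle$. After that, the paper checks nothing by hand on the completed ring. It notes that this $p$-completed ring is perfectoid (a standard fact) and invokes \cite[Corollary 4.11]{Tak26_rel} to pass to its completion along the maximal ideal. That corollary absorbs precisely the topological bookkeeping you identify as the main obstacle. Your route would make the lemma self-contained, but it amounts to reproving that corollary in this special case, and that needs more than you wrote.

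The concrete gap is that the conditions you verify do not add up to perfectoidness. Completeness with respect to $\varpi = \pi^{1/p}$ (with $\varpi^p \mid p$), together with surjectivity of Frobenius on $A/p$, is not sufficient. One also needs $\Ker(\theta_A)$ to be principal. Equivalently, given the other conditions and the $\pi$-torsion-freeness of $A$, Frobenius must induce an injective, hence bijective, map $A/\pi^{1/p}A \to A/\pi A$. Your proposal never addresses this.

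It can be supplied through the monomial description you allude to:
\begin{enumerate}
\item Show that $pA$, $\pi A$ and $\pi^{1/p}A$ are closed in $A$. This is also what justifies your description of $A/p$ as a completion.
\item Deduce that $A/\pi^{1/p}A$ and $A/\pi A$ are the $(u_\alpha)$-adic completions of $(O_{\bb{C}_p}/\pi^{1/p})[u_\alpha^{1/p^\infty}]$ and $(O_{\bb{C}_p}/\pi)[u_\alpha^{1/p^\infty}]$.
\item Frobenius then acts on coefficients through the bijection $O_{\bb{C}_p}/\pi^{1/p} \to O_{\bb{C}_p}/\pi$, and on exponents through the bijection $a \mapsto pa$ of $\bb{Z}[1/p]_{\geq 0}^{\Phi_{\mu<0}}$. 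Hence it is bijective.
\end{enumerate}

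Two smaller corrections. First, the $(\pi, u_\alpha)$-adic topology is coarser, not finer, than the $\pi$-adic one. So $\pi$-adic completeness should come from the standard fact that completeness for an ideal $J$ implies completeness for every finitely generated ideal $I \subset J$. Second, $(\pi, u_\alpha)^p \not\subset (\pi^p, u_\alpha^p)$ in general. The correct comparison is $(\pi, u_\alpha)^N \subset (\pi^p, u_\alpha^p)$ for $N \geq (\lvert \Phi_{\mu<0} \rvert + 1)(p-1) + 1$.
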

\begin{proof}
    Each $\wtd{R}_{\cl{G}, \mu, i}$ is the completion of the $p$-completed polynomial algebra 
    \[
        O_{\bb{C}_p} \langle u_\alpha^{1 / p^i} \vert \alpha \in \Phi_{\mu < 0} \rangle
    \]
    along the maximal ideal of $R_{\cl{G}, \mu}$. Thus, $\wtd{R}_{\cl{G}, \mu, \infty}$ is the completion of the $p$-completed colimit
    \[
        \colim_{i \geq 0}^\wedge O_{\bb{C}_p} \langle u_\alpha^{1 / p^i} \vert \alpha \in \Phi_{\mu < 0} \rangle, 
    \]
    which is easily seen to be perfectoid, so the claim follows from \cite[Corollary 4.11]{Tak26_rel}. 
\end{proof}

There is a canonical choice of $u_\alpha^\flat = (u_\alpha^{1/p^i})_{i \geq 0} \in \wtd{R}_{\cl{G}, \mu, \infty}^\flat$, so \Cref{lem:expBKF} can be applied to $\wtd{R}_{\cl{G}, \mu, \infty}$. \Cref{prop:descent_finite_etale_map} is designed to descend a trivialization over $\wtd{R}_{\cl{G}, \mu, \infty}$ to $\wtd{R}_{\cl{G}, \mu}$. 

\begin{lem} \label{lem:ballsatis}
    Let $q_\alpha > 0$ be a positive rational number for each $\alpha \in \Phi_{\mu<0}$ and let
    \[
        \cl{U}_{i} = \{ \lvert u_\alpha \rvert \leq \lvert \pi \rvert^{q_\alpha} \neq 0 \mid \alpha \in \Phi_{\mu<0} \} \subset \Spf(\wtd{R}_{\cl{G},\mu,i})_\eta
    \]
    for $i \geq 0$ including $i = \infty$. Then $\{\cl{U}_i^{\diamond}\}$ satisfies conditions (1) and (2) of \Cref{prop:descent_finite_etale_map}. Moreover, we have $\cl{U}_\infty^\diamond \cong \varprojlim \cl{U}_i^\diamond$. 
\end{lem}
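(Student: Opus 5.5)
The plan is to make everything explicit on the level of affinoid algebras. For $i<\infty$, $\cl{U}_i$ is the rational subdomain $\{\lvert u_\alpha\rvert\le\lvert\pi\rvert^{q_\alpha}\}$ of the generic fiber of $\Spf(\wtd{R}_{\cl{G},\mu,i})$. Writing $v_\alpha=u_\alpha^{1/p^i}$, it is identified with the closed polydisc
\[
    \Spa\bigl(\bb{C}_p\langle \tfrac{v_\alpha}{\pi^{q_\alpha/p^i}}\rangle, O_{\bb{C}_p}\langle\tfrac{v_\alpha}{\pi^{q_\alpha/p^i}}\rangle\bigr)
\]
with the locus $\pi\neq 0$ automatic over $\bb{C}_p$. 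The transition map $\pi_i\colon\cl{U}_i\to\cl{U}_{i-1}$ is then the finite map $v_\alpha\mapsto v_\alpha^p$ of polydiscs.

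For condition (1) I take $\Gamma_i=\prod_{\alpha\in\Phi_{\mu<0}}\mu_p(\bb{C}_p)$ acting by $v_\alpha\mapsto\zeta_\alpha v_\alpha$. This action is defined over $O_{\bb{C}_p}$, and the ring of invariants of $O_{\bb{C}_p}\langle v_\alpha/\pi^{q_\alpha/p^i}\rangle$ is exactly the previous level $O_{\bb{C}_p}\langle u_\alpha/\pi^{q_\alpha/p^{i-1}}\rangle$. So $\pi_i$ is finite and surjective, and on $(C,C^+)$-points the fibers are the $p$-th roots of the coordinates. Since $C$ is algebraically closed, these roots form exactly one $\Gamma_i$-orbit, giving the geometric quotient property on diamonds.

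For condition (2), $\cl{U}_i$ is a polydisc over $\bb{C}_p$ and hence connected, and connectedness is preserved by $(-)^\diamond$ because the underlying topological spaces agree. The origin $v_\alpha=0$, taken as a $\Spa(\bb{C}_p,O_{\bb{C}_p})$-point, is fixed by $\Gamma_i$. Both $\cl{U}_i$ and the transition maps are qcqs, being affinoid and finite.

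For the final claim, $\varprojlim_i\cl{U}_i^\diamond$ is represented by $\Spd$ of the completed colimit $\bigl(\colim_i O_{\bb{C}_p}\langle v_\alpha/\pi^{q_\alpha/p^i}\rangle\bigr)^\wedge$, since inverse limits of affinoid diamonds correspond to completed colimits of rings (as in \cite[Proposition 6.4]{Sch17}-type statements). I then compare this ring with the rational localization of $\Spf(\wtd{R}_{\cl{G},\mu,\infty})_\eta$ cut out by the same inequalities. On that rational subset $u_\alpha/\pi^{q_\alpha}$ is power bounded, and the $(\pi,u)$-adic completion defining $\wtd{R}_{\cl{G},\mu,\infty}$ becomes the $\pi$-adic completion after adjoining $u_\alpha^{1/p^i}/\pi^{q_\alpha/p^i}$. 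The two rings of integral elements should therefore agree up to almost isomorphism, which is enough because both sides are perfectoid and have the same adic spectrum.

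I expect this last identification to be the main obstacle: the completion along the maximal ideal of $R_{\cl{G},\mu}$ must be reconciled with $\pi$-adic completion on the rational subset. I would handle it by noting that $\pi^{q_\alpha}$ divides $u_\alpha$ there, so the $(\pi,u_\alpha)$-adic topology coincides with the $\pi$-adic one.
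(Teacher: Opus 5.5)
Your proposal is correct and follows the paper's proof: you rescale to the closed unit polydisc in the coordinates $(\pi^{-q_\alpha}u_\alpha)^{1/p^i}$, let $\Gamma=(\mu_p)^{\Phi_{\mu<0}}$ act with invariant ring the previous level, and use the origin as the fixed geometric point together with connectedness of the ball. The only differences are that you verify the fiber condition of a geometric quotient directly on $(C,C^+)$-points instead of citing \cite[Proposition 3.12, Lemma 3.9]{Tak26_rel}, and that you spell out the limit identification the paper calls easy. In that last step the rings of integral elements in fact coincide on the nose, both being $O_{\bb{C}_p}\langle (\pi^{-q_\alpha}u_\alpha)^{1/p^\infty}\rangle$. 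Indeed, the $+$-ring of the rational localization is closed, contains every $u_\alpha^{1/p^i}/\pi^{q_\alpha/p^i}$, and lies in the power-bounded elements. This exact equality is what you need: the almost isomorphism you hedge with would not by itself determine the higher-rank points of the adic spectrum.
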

\begin{proof}
    If we set $v_\alpha = \pi^{-q_\alpha}u_\alpha$, then $\cl{U}_i = \Spf(O_{\bb{C}_p} \langle v_\alpha^{1/p^i} \vert \alpha \in \Phi_{\mu<0} \rangle)_\eta$. Thus, $\cl{U}_i$ is a closed unit ball, so $\cl{U}_i^\diamond$ is connected by \cite[Lemma 15.6]{Sch17}. Moreover, it is easy to see $\cl{U}_\infty^\diamond \cong \varprojlim \cl{U}_i^\diamond$.
    
    Let $\mu_p \subset O_{\bb{C}_p}$ be the set of $p$-th roots of unity and let $\Gamma = (\mu_{p})^{\Phi_{\mu<0}}$. For each $i \geq 1$, $\Gamma$ acts on $\cl{U}_i$ over $\cl{U}_{i-1}$ so that
    \[
        (\gamma_\alpha)_{\alpha \in \Phi_{\mu<0}} \cdot v_{\alpha}^{1/p^i} = \gamma_\alpha v_{\alpha}^{1/p^i}
    \]
    for $(\gamma_\alpha)_{\alpha \in \Phi_{\mu<0}} \in \Gamma$. The origin of $\cl{U}_i$, the $\bb{C}_p$-valued point determined by $u_\alpha = 0$, is a $\Gamma$-fixed point. It is enough to show that $\cl{U}_i^\diamond \to \cl{U}_{i-1}^\diamond$ is a geometric quotient by $\Gamma$. First, we have 
    \[
        O_{\bb{C}_p} \langle v_\alpha^{1/p^i} \vert \alpha \in \Phi_{\mu<0} \rangle^{\Gamma} = O_{\bb{C}_p} \langle v_\alpha^{1/p^{i-1}} \vert \alpha \in \Phi_{\mu<0} \rangle. 
    \]  
    Thus, by \cite[Proposition 3.12]{Tak26_rel}, the induced map
    \[
        \Spd(O_{\bb{C}_p} \langle v_\alpha^{1/p^i} \vert \alpha \in \Phi_{\mu<0} \rangle) \to \Spd(O_{\bb{C}_p} \langle v_\alpha^{1/p^{i-1}} \vert \alpha \in \Phi_{\mu<0} \rangle)
    \]
    is a geometric quotient by $\Gamma$. Then, we get the assertion by restricting to the generic fibers (see \cite[Lemma 3.9]{Tak26_rel}). 
\end{proof}

\begin{cor} \label{prop:mapatinf}
    In the setting of \Cref{lem:ballsatis}, let $H$ be a finite group and let $X_{0}$ and $Y_{0}$ be finite \'{e}tale $H$-torsors over $\cl{U}_{0}$. Let $X_{\infty} = X_{0} \times_{\cl{U}_{0}} \cl{U}_{\infty}$ (resp. $Y_{\infty} = Y_{0} \times_{\cl{U}_{0}} \cl{U}_{\infty}$). Then,
    \[
        \Hom_{\cl{U}_{0}}^H(X_{0}, Y_{0}) \to \Hom_{\cl{U}_{\infty}}^H(X_{\infty}, Y_{\infty}), \quad \Hom_{\cl{U}_0}(\cl{U}_0, X_0) \to \Hom_{\cl{U}_0^\diamond}(\cl{U}_{\infty}^\diamond, X_0^\diamond)
    \]
    are bijective.
\end{cor}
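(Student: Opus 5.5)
The plan is to read off both bijections from \Cref{prop:descent_finite_etale_map} and \Cref{cor:trivatinf}, with the tower $\{\cl{U}_i^\diamond\}_{i \geq 0}$ as input. \Cref{lem:ballsatis} says exactly that this tower satisfies conditions (1) and (2) of \Cref{prop:descent_finite_etale_map} and that $\cl{U}_\infty^\diamond \cong \varprojlim \cl{U}_i^\diamond$. So the remaining work is bookkeeping.

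First we check the remaining hypotheses. Each $\cl{U}_i$ is an affinoid (a closed ball after the rescaling $v_\alpha = \pi^{-q_\alpha} u_\alpha$), so $\cl{U}_i^\diamond$ is a qcqs small $v$-sheaf. The transition maps come from finite ring maps, hence are qcqs. The finite étale $H$-torsors $X_0, Y_0$ over the rigid space $\cl{U}_0$ give finite étale $H$-torsors $X_0^\diamond, Y_0^\diamond$ over $\cl{U}_0^\diamond$. The identification $X_\infty = X_0 \times_{\cl{U}_0} \cl{U}_\infty$ is compatible with diamondification, since $(-)^\diamond$ commutes with fiber products. Applying \Cref{prop:descent_finite_etale_map} then gives a bijection
\[
    \Hom^H_{\cl{U}_0^\diamond}(X_0^\diamond, Y_0^\diamond) \to \Hom^H_{\cl{U}_\infty^\diamond}(X_\infty^\diamond, Y_\infty^\diamond).
\]

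Next we pass back from diamonds to adic spaces. The functor $(-)^\diamond$ is fully faithful on the finite étale site over the seminormal rigid space $\cl{U}_0$, by \cite[Lemma 15.6]{Sch17}, as used in \Cref{cor:canonical_filtration}. Hence the left-hand side equals $\Hom^H_{\cl{U}_0}(X_0, Y_0)$. Since $\cl{U}_\infty$ is perfectoid, the same applies to $\cl{U}_\infty$ for maps between spaces étale over it (the tilting equivalence). This yields the first bijection as stated.

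For the second bijection we apply \Cref{cor:trivatinf} to $X_0^\diamond$. Every map $\cl{U}_\infty^\diamond \to X_0^\diamond$ over $\cl{U}_0^\diamond$ factors uniquely through $\cl{U}_0^\diamond$; uniqueness holds because $\cl{U}_\infty^\diamond \to \cl{U}_0^\diamond$ is surjective. Such sections correspond to sections $\cl{U}_0 \to X_0$, again by full faithfulness of $(-)^\diamond$ on the finite étale site of $\cl{U}_0$. The only point needing some care is this comparison between diamonds and rigid spaces. The rest is a formal consequence of the earlier results, so we expect no serious obstacle.
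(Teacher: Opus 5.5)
Your proposal is correct and is essentially the paper's proof: apply \Cref{prop:descent_finite_etale_map} and \Cref{cor:trivatinf} to the tower $\{\cl{U}_i^\diamond\}_{i \geq 0}$ supplied by \Cref{lem:ballsatis}, and pass between adic spaces and $v$-sheaves using that $(-)^\diamond$ is an equivalence on \'{e}tale sites \cite[Lemma 15.6]{Sch17}. That lemma holds for any analytic adic space over $\bb{Z}_p$, so it also covers $\cl{U}_\infty$; your extra appeals to tilting and to seminormality are harmless but not needed.
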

\begin{proof}
    It follows from \Cref{prop:descent_finite_etale_map} and \Cref{cor:trivatinf} since the $v$-sheafification is fully faithful on finite \'{e}tale sites by \cite[Lemma 15.6]{Sch17}.
\end{proof}

\subsection{Trivialization of the $m$-th level structure} 

In this section, we apply \Cref{prop:mapatinf} to construct $\can_n$. In fact, we will further construct a trivialization 
\[
    \can_{m, m} \colon \cl{U}(n)_{\bb{C}_p} \to \cl{M}_{G, b, \mu, G(F)_{x, m}}. 
\]
For this, we first need to specify the image $\can_{m, m}(x_\CM) \in \cl{M}_{G, b, \mu, G(F)_{x, m}}(\bb{C}_p)$. We begin with the choice of an infinite-level lift $x_{\CM, \infty} \in \cl{M}_{M, b, \mu, \infty}(\bb{C}_p)$ of the CM point $x_\CM$.

\begin{defi}
    Let $X$ be a $v$-stack over $\Spd(F)$ and let $\cl{P}$ be a local $\cl{G}$-shtuka over $X$. An $m$-th level structure on $\cl{P}$ is a commutative diagram
    \begin{center}
        \begin{tikzcd}
            X \ar[r, "\cl{P}_m"] \ar[rd, "\cl{P}"'] & \Sht_{G(F)_{x, m}} \ar[d] \\
            & \Sht_{\cl{G}}. 
        \end{tikzcd}
    \end{center}
\end{defi}

Let $\cl{G}_m$ be the $m$-th congruence subgroup scheme of $\cl{G}$. For a $\pi$-torsion free $O_F$-algebra $A$, 
\[
    \cl{G}_m(A) = \Ker(\cl{G}(A) \to \cl{G}(A / \pi^m)). 
\]
In particular, $\cl{G}_m(O_F) = G(F)_{x, m} = \Ker(\cl{G}(O_F) \twoheadrightarrow \cl{G}(O_F / \pi^m))$. Then, an $m$-th level structure on $\cl{P}$ consists of a local $\cl{G}_m$-shtuka $\cl{P}_m$ over $X$ and an identification $\cl{P}_m \times^{\cl{G}_m} \cl{G} \cong \cl{P}$. 

\begin{lem} \label{lem:Gpsubtors}
    Let $A$ be a $\pi$-torsion free $\pi$-complete $O_F$-algebra. For a $\cl{G}$-torsor $P$ over $A$, the set of $\cl{G}_m$-subtorsors of $P$ is bijective to $P(A / \pi^m)$. 
\end{lem}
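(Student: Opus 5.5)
The plan is to build the bijection from reductions modulo $\pi^m$ in one direction and from preimages under the reduction map in the other. Write $\bar A = A/\pi^m$. Since $\cl{G}$ is smooth affine over $O_F$ and $A$ is $\pi$-complete, $\cl{G}(A) \to \cl{G}(\bar A)$ is surjective, and more generally $P(A) \to P(\bar A)$ is surjective whenever $P(A) \neq \emptyset$. Throughout, a $\cl{G}_m$-subtorsor of $P$ means a sheaf of sets $Q \subset P$, for the fpqc (or $\pi$-completely flat) topology on $A$-algebras, that is stable under $\cl{G}_m$ and is a $\cl{G}_m$-torsor.

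Given $s \in P(\bar A)$, set
\[
    Q_s(B) = \{ p \in P(B) \mid p \bmod \pi^m = s_B \}
\]
for $\pi$-torsion free $A$-algebras $B$; this is the fiber of $P \to \Res_{\bar A/A}(P_{\bar A})$ over $s$. Since $P$ is a $\cl{G}$-torsor, two lifts of $s$ differ by a unique element of $\Ker(\cl{G}(B) \to \cl{G}(B/\pi^m)) = \cl{G}_m(B)$, so $Q_s$ is a pseudo-torsor under $\cl{G}_m$. Nonemptiness is local: after an \'{e}tale cover of $A$ trivializing $P$ (possible by smoothness and Henselian-ness along $\pi$), $P \cong \cl{G}$, $s$ becomes some $\bar g \in \cl{G}(\bar A)$, and it lifts to $\cl{G}(A)$ by the surjectivity above. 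Hence $Q_s$ is a $\cl{G}_m$-subtorsor.

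Conversely, let $Q \subset P$ be a $\cl{G}_m$-subtorsor. The composite $Q \to P \to \Res_{\bar A/A}(P_{\bar A})$ is constant on $\cl{G}_m$-orbits, because $\cl{G}_m$ acts trivially modulo $\pi^m$. It therefore descends along the torsor $Q \to \Spec A$ to a section $s_Q \in P(\bar A)$, and $Q \subset Q_{s_Q}$ by construction. An inclusion of $\cl{G}_m$-torsors is an isomorphism, so $Q = Q_{s_Q}$. The two constructions are inverse: $s_{Q_s} = s$ holds tautologically, and $Q_{s_Q} = Q$ is what was just shown.

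The main subtlety is making sense of $\cl{G}_m$ as a group scheme, flat or at least a dilatation, and of torsors for it. I would realize $\cl{G}_m$ as the dilatation of $\cl{G}$ along the unit section modulo $\pi^m$, which is smooth affine, and evaluate only on $\pi$-torsion-free $A$-algebras, where $\cl{G}_m(B)$ agrees with the displayed kernel. Local triviality of $Q_s$ should then be checked in the \'{e}tale topology using Henselian lifting along $\pi$, which is exactly where the $\pi$-completeness and $\pi$-torsion-freeness hypotheses enter.
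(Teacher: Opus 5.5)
Your proposal is correct and follows essentially the same argument as the paper. The paper attaches to $s$ the $\cl{G}_m$-orbit of a single lift $x \in P(A)$ and checks it is independent of $x$, which is the same object as your fiber $Q_s$; for the converse it trivializes the subtorsor $\pi$-completely \'{e}tale locally and descends the common reduction modulo $\pi^m$ of its sections, which is your descent of the $\cl{G}_m$-invariant composite. One small point: your hedge ``whenever $P(A) \neq \emptyset$'' is unnecessary, since $P$ is smooth affine over the $\pi$-complete ring $A$, so $P(A) \to P(A/\pi^m)$ is surjective outright and $Q_s$ already has a global section without any local trivialization.
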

\begin{proof}
    Suppose that $s \in P(A / \pi^m)$ is given. Since $A$ is $\pi$-complete and $P$ is smooth and affine, $s$ admits a lift $x\in P(A)$. Then, the $\cl{G}_m$-subtorsor generated by $x$ is independent of the choice of $x$ because any other choice $x'\in P(A)$ can be written as $x' = x g_m$ with
    \[
        g_m \in \Ker(\cl{G}(A) \to \cl{G}(A / \pi^m A)) = \cl{G}_m(A). 
    \]
    Thus, we get one direction. For the converse, let $P_m \subset P$ be a $\cl{G}_m$-subtorsor. By localizing $\pi$-completely \'{e}tale locally, we may assume that $P_m$ can be trivialized. Then, every section of $P_m$ maps to the same element in $P(A / \pi^m)$. By descent, we get the converse map. 
\end{proof}

\begin{lem} \label{lem:finetdesc}
    Let $S = \Spa(R,R^+)$ be an affinoid perfectoid space over $F$ and let $\cl{P}$ be a local $\cl{G}$-shtuka over $S$. Let $P$ be the $\cl{G}$-torsor over $W_{O_F}(R^\flat)$ with a Frobenius $\varphi_P$ obtained by 
    \[
        P = \cl{P}\vert_{\cl{Y}_{S, [0, \varepsilon]}} \otimes_{B_{S, [0, \varepsilon]}} W_{O_F}(R^\flat), \quad
        \varphi_P = \varphi_{\cl{P}}\vert_{\cl{Y}_{S, [0, \varepsilon]}} \otimes \id \colon \varphi^*P \cong P
    \]
    for sufficiently small $\varepsilon > 0$. The set of $m$-th level structures on $\cl{P}$ is bijective to
    \[
        \{ 
            p \in P(W_{O_F}(R^\flat)/\pi^m) , \quad \varphi_P(p) = p
        \}
    \]
    and the right action of $\cl{G}(O_F / \pi^m)$ on each $p$ is given by $p \cdot g = pg$ for $g \in \cl{G}(O_F / \pi^m)$. 
\end{lem}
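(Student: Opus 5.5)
The plan is to reduce the statement to \Cref{lem:Gpsubtors}, applied to the $\cl{G}$-torsor $P$ over the $\pi$-torsion free, $\pi$-complete ring $A = W_{O_F}(R^\flat)$, and then to impose Frobenius compatibility. First, we identify local $\cl{G}$-shtukas over $S$ near $\pi = 0$ with their restriction to $\cl{Y}_{S, [0, \varepsilon]}$. Since the leg $S^\sharp$ lies where $\lvert \pi \rvert$ is not too small compared to $[\varpi]$, $\varphi_{\cl{P}}$ is an honest isomorphism on $\cl{Y}_{S,[0,\varepsilon]}$ for small $\varepsilon$. By \eqref{eq:YSn}, the $\pi$-adic completion of $B_{S,[0,\varepsilon]}$ is $W_{O_F}(R^\flat)$, so $(P, \varphi_P)$ is a $\cl{G}$-torsor over $W_{O_F}(R^\flat)$ with a genuine isomorphism $\varphi_P \colon \varphi^*P \cong P$. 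The same construction applies to $\cl{G}_m$-torsors. Reduction modulo $\pi^m$ only sees this completion, so $P(W_{O_F}(R^\flat)/\pi^m)$ is well defined. It is independent of $\varepsilon$ by the usual Beauville--Laszlo gluing along $\pi$.

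Second, we show that an $m$-th level structure, i.e.\ a local $\cl{G}_m$-shtuka $\cl{P}_m$ with $\cl{P}_m \times^{\cl{G}_m} \cl{G} \cong \cl{P}$, is the same as a $\varphi$-stable $\cl{G}_m$-subtorsor of $P$. Since $\cl{G}_m \to \cl{G}$ is an isomorphism after inverting $\pi$, the torsor $\cl{P}_m$ is determined by $\cl{P}$ away from $V(\pi)$. Hence the only data is a $\cl{G}_m$-reduction near $\pi = 0$, and we glue it with $\cl{P}$ on $\cl{Y}_S \setminus V(\pi)$. The Frobenius $\varphi_{\cl{P}_m}$ exists exactly when this reduction is preserved by $\varphi_P$. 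By \Cref{lem:Gpsubtors}, such subtorsors correspond to sections $p \in P(A/\pi^m)$. Because $\varphi_P$ is $\varphi$-semilinear and $\varphi$ preserves $\pi^m A$, stability of the subtorsor is the condition $\varphi_P(\varphi^* p) = p$, written $\varphi_P(p) = p$. The right action statement follows from the construction in \Cref{lem:Gpsubtors}: replacing the lift $x$ by $xg$ with $g \in \cl{G}(O_F)$ translates $p$ by $g \bmod \pi^m$. This matches the $\cl{G}(O_F)/\cl{G}_m(O_F) = \cl{G}(O_F/\pi^m)$-action on $\Sht_{G(F)_{x,m}} \to \Sht_{\cl{G}}$.

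The main obstacle is the second step: making the bijection between $\cl{G}_m$-structures on the adic space $\cl{Y}_S$ and $\cl{G}_m$-subtorsors over the completed ring $W_{O_F}(R^\flat)$ rigorous. We will handle it by a Beauville--Laszlo type gluing for the cover $\cl{Y}_{S,[0,\varepsilon]}$ and $\cl{Y}_S \setminus V(\pi)$. It suffices to work $v$-locally, or analytically locally, where $P$ is trivial. There the statement reduces to the identity
\[
\cl{G}(B_{S,[0,\varepsilon]}) / \cl{G}_m(B_{S,[0,\varepsilon]}) \cong \cl{G}(B_{S,[0,\varepsilon]}/\pi^m) = \cl{G}(W_{O_F}(R^\flat)/\pi^m).
\]
This holds since $B_{S,[0,\varepsilon]}/\pi^m \cong W_{O_F}(R^\flat)/\pi^m$ and $\cl{G}$ is smooth, so sections lift along the henselian pair. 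Descent then globalizes the correspondence, and compatibility with $\varphi$ is formal.
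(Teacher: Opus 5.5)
Your overall route is the paper's: restrict $\cl{P}$ near $V(\pi)$, pass to $W_{O_F}(R^\flat)$, apply \Cref{lem:Gpsubtors}, turn $\varphi_{\cl{P}}$-stability into $\varphi_P(p)=p$, and read off the $\cl{G}(O_F/\pi^m)$-action from $\cl{P}_m\cdot\tilde g$. However, the justification you give for the step you call the main obstacle is incorrect.

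Gluing along the open cover $\{\cl{Y}_{S,[0,\varepsilon]},\,\cl{Y}_S\setminus V(\pi)\}$ only reduces you to $\cl{G}_m$-reductions over $B=B_{S,[0,\varepsilon]}$. Your local identity $\cl{G}(B)/\cl{G}_m(B)\cong\cl{G}(B/\pi^m)$ then rests on $(B,\pi)$ being a henselian pair, and it is not: $\pi$ is not even in the Jacobson radical of $B$. Take an integer $k\geq 1/\varepsilon$. The element $\pi-[\varpi]^k$ cuts out an untilt of $S$ in $\cl{Y}_S$. On it $\lvert\pi\rvert^\varepsilon=\lvert[\varpi]\rvert^{k\varepsilon}\leq\lvert[\varpi]\rvert$, so it lies in $\cl{Y}_{S,[0,\varepsilon]}$, and $1-\pi[\varpi]^{-k}\in B$ vanishes there. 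Localizing until $P$ is trivial does not help either. The $\cl{G}_m$-reductions themselves need not be trivial $\cl{G}_m$-torsors over $B$, and surjectivity of $\cl{G}(B)\to\cl{G}(B/\pi^m)$ is exactly what remains unproven.

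The missing ingredient is the Beauville--Laszlo lemma for the non-zero-divisor $\pi$ on $B$ itself, with $\pi$-adic completion $B^\wedge\cong W_{O_F}(R^\flat)$ from \eqref{eq:YSn}. This is a gluing along a formal completion, not along an open cover. Since $\cl{G}_{m,F}=\cl{G}_F$, a $\cl{G}_m$-reduction of $\cl{P}\vert_{\cl{Y}_{S,[0,\varepsilon]}}$ is the same as a $\cl{G}_m$-subtorsor of $P$ over $W_{O_F}(R^\flat)$, glued with $\cl{P}$ over $B[1/\pi]$. Only after this passage should you apply \Cref{lem:Gpsubtors}. Its lifting step genuinely uses $\pi$-completeness, which $W_{O_F}(R^\flat)$ has and $B$ lacks. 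This is precisely the paper's argument (``by \eqref{eq:YSn} and the Beauville--Laszlo lemma''). With this replacement, the rest of your proof goes through as written.
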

\begin{proof}
    Since $\cl{G}_{m, F} = \cl{G}_F$, the set of $\cl{G}_m$-subtorsors of $\cl{P}$ is equivalent to the set of $\cl{G}_m$-subtorsors of $P$ by \eqref{eq:YSn} and the Beauville-Laszlo lemma. By \Cref{lem:Gpsubtors}, it is bijective to $P(W_{O_F}(R^\flat) / \pi^m)$. 

    An $m$-th level structure on $\cl{P}$ is equivalent to a $\cl{G}_m$-subtorsor $\cl{P}_m$ of $\cl{P}$ stable under $\varphi_{\cl{P}}$. The action of $\varphi_{\cl{P}}$ on the set of $\cl{G}_m$-subtorsors is given by $\varphi_P$ via the above bijection. Thus, we get the desired bijection. 

    For each $g \in \cl{G}(O_F / \pi^m)$ with a lift $\wtd{g} \in \cl{G}(O_F)$, $g$ sends a $\cl{G}_m$-subtorsor $\cl{P}_m \to \cl{P}$ to 
    \[
        \cl{P}_m\cdot \wtd{g} \to \cl{P}. 
    \]   
    Through the above bijection, this action is given as in the statement. 
\end{proof}

Now, we can use this explicit description to specify an infinite-level lift $x_{\CM, \infty}$. 

\begin{prop} \label{defi:tinfty}
    There exists an element $t_\infty \in Z^\circ_{\cl{M}}(W_{O_F}(\bb{C}_p^\flat))$ such that
    \[
        t_\infty \sigma(t_\infty)^{-1} = \mu([\pi^\flat] - \pi) ,\quad 
        t_\infty \equiv \lambda(\pi^{\flat}) \pmod{\pi}. 
    \]
\end{prop}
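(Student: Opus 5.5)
The plan is to write $t_\infty = \lambda([\pi^\flat])\, s$, where $\lambda \in X_*(T)_{\bb{Q}}$ is the rational cocharacter of \Cref{defi:parabolic_mu} with $\lambda - q\sigma\lambda = \mu$. The factor $\lambda([\pi^\flat])$ solves the equation modulo $\pi$, and $s \equiv 1 \pmod{\pi}$ is a correction found by $\pi$-adic successive approximation. Two preliminary remarks make this meaningful.

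\textbf{Preliminaries.} First, $\lambda$ lies in $X_*(Z^\circ_M)_{\bb{Q}}$. Since $\mu$ is central in $M$, its image lies in $Z_M^\circ$. Moreover $\lambda = (1 - q^d\sigma^d)^{-1}\sum_{0 \leq i < d} q^i\sigma^i\mu$ with $d = [E:F]$, so $\lambda$ lies in the span of the $\sigma^i\mu$, and these lie in $X_*(Z_M^\circ)$ because $Z_M^\circ$ is defined over $F$. Since the denominator $e$ of $\lambda$ is prime to $p$, we may interpret $\lambda([\pi^\flat])$ as $(e\lambda)([\pi^{\flat, 1/e}])$ using the fixed system of roots. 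Second, $\bb{C}_p^\flat$ is a field, so $[\pi^\flat]$ is a unit in $W_{O_F}(\bb{C}_p^\flat)$, and we may write
\[
\mu([\pi^\flat] - \pi) = \mu([\pi^\flat])\, \mu\bigl(1 - \pi[\pi^\flat]^{-1}\bigr)
\]
with both factors in $Z^\circ_{\cl{M}}(W_{O_F}(\bb{C}_p^\flat))$.

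\textbf{Reduction to a Lang-type equation.} Since $\sigma$ raises Teichm\"uller lifts to the $q$-th power, $\sigma(\lambda([\pi^\flat])) = (q\sigma\lambda)([\pi^\flat])$. Hence
\[
\lambda([\pi^\flat])\,\sigma(\lambda([\pi^\flat]))^{-1} = (\lambda - q\sigma\lambda)([\pi^\flat]) = \mu([\pi^\flat]).
\]
Because $Z^\circ_M$ is commutative, the required identity for $t_\infty = \lambda([\pi^\flat])\, s$ becomes
\[
s\,\sigma(s)^{-1} = c, \qquad c := \mu\bigl(1 - \pi[\pi^\flat]^{-1}\bigr) \equiv 1 \pmod{\pi}.
\]
Any solution with $s \equiv 1 \pmod{\pi}$ automatically gives $t_\infty \equiv \lambda(\pi^\flat) \pmod{\pi}$.

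\textbf{Successive approximation.} Since $M$ is unramified, the torus $Z^\circ_{\cl{M}}$ splits over $O_{\breve{F}}$. Hence $Z^\circ_{\cl{M}}(W_{O_F}(\bb{C}_p^\flat)) \cong X_*(Z^\circ_M) \otimes W_{O_F}(\bb{C}_p^\flat)^\times$, with $\sigma$ acting through the Frobenius on $X_*$ and on the coefficients. We construct $s_k \equiv 1 \pmod{\pi}$ with $s_k\sigma(s_k)^{-1} \equiv c \pmod{\pi^{k}}$ inductively. Given $s_k$, the error $c\,\sigma(s_k)s_k^{-1}$ is $\equiv 1 \pmod{\pi^k}$, and its image in $1 + \pi^k(\cdots)/\pi^{k+1}$ is an element $a \in X_*(Z^\circ_M) \otimes \bb{C}_p^\flat$. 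We must solve $y - \sigma(y) = a$ in $X_*(Z^\circ_M)\otimes \bb{C}_p^\flat$, where $\sigma$ permutes a basis (up to the Frobenius of the torus) and acts on coefficients by $x \mapsto x^q$. Decomposing into $\sigma$-orbits of length $d'$ reduces this to a single equation $y - y^{q^{d'}} = a'$. Such an equation is solvable because $\bb{C}_p^\flat$ is algebraically closed. Setting $s_{k+1} = s_k(1 + \pi^k\tilde{y})$ improves the congruence to level $k+1$. Since $W_{O_F}(\bb{C}_p^\flat)$ is $\pi$-adically complete, $s = \lim s_k$ exists.

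The only real obstacle is making sure the modulo-$\pi$ ansatz is legitimate: that $\lambda([\pi^\flat])$ is a point of $Z^\circ_{\cl{M}}$ (not merely of $T$), and that invertibility of $[\pi^\flat]$ keeps $c$ integral. Once these are checked, the rest is the standard surjectivity of a Lang/Artin--Schreier map over an algebraically closed field of characteristic $p$.
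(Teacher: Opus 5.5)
Your argument is correct and is essentially the paper's proof, which also constructs $t_\infty \bmod \pi^i$ inductively by applying Lang's theorem to truncated positive loop groups of $Z^\circ_{\cl{M}}$ over the algebraically closed field $\bb{C}_p^\flat$; you simply make the base case $\lambda([\pi^\flat])$ and the graded Artin--Schreier lifting steps explicit. One small correction: $\sigma$ need not permute a basis of $X_*(Z^\circ_M)$ (for an unramified norm-one torus it acts by $-1$), but $y \mapsto y - \sigma(y)$ is still surjective on $X_*(Z^\circ_M)\otimes \bb{C}_p^\flat$, by Lang's theorem for the vector group $X_*(Z^\circ_M)\otimes\bb{G}_a$ (equivalently, because a $\sigma$-fixed $\ov{k}$-basis exists), and that surjectivity is all your induction uses.
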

\begin{proof}
    Since $\bb{C}_p^\flat$ is algebraically closed, $t_\infty \bmod \pi^i$ can be constructed inductively on $i$ by applying Lang's theorem to truncations of positive loop groups of $Z^\circ_{\cl{M}}$. 
\end{proof}

The element $t_\infty$ defines an infinite-level structure on the $\cl{M}$-BKF module
\[
    (\cl{M} \otimes W_{O_F}(O_{\bb{C}_p}^\flat), \mu([\pi^\flat]-\pi)\sigma)
\]
by \Cref{lem:finetdesc}. By \Cref{lem:expBKF}, it is the $\cl{M}$-BKF module associated to $x_\CM$. 

\begin{defi} \label{defi:CM_points_in_LSV}
    Let $x_{\CM, \infty} \in \cl{M}_{M, b, \mu, \infty}(\bb{C}_p)$ be the lift of $x_\CM$ associated to the infinite-level structure given by $t_\infty$. The projection of $x_{\CM, \infty}$ to $\cl{M}_{M, b, \mu, M(F)_{x, m}}$ is denoted by $x_{\CM, m}$. 
\end{defi}

For each open subgroup $K \subset G(F)$, the projection of $x_{\CM, \infty}$ to $\cl{M}_{G, b, \mu, K}$ is denoted by $x_{\CM, K}$. When there is no confusion, it is also denoted by $x_{\CM, m}$ if $K \cap M(F) = M(F)_{x, m}$. 

\begin{lem}
    For each $m \geq 0$, $x_{\CM, m}$ is defined over a finite extension $\breve{F}_m$ of $\breve{F}$. 
\end{lem}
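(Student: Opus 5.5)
The plan is to realize $x_{\CM, m}$ as a point of a finite \'{e}tale cover of $\Spd(\breve{F})$. By \Cref{lem:zero_dimensional_LSV}, $\cl{M}_{\cl{M}, b, \mu} \cong \und{M(F)/\cl{M}(O_F)} \times \Spd(\breve{F})$, and $x_\CM$ is an $\breve{F}$-valued point of it. The level-$M(F)_{x,m}$ space $\cl{M}_{M, b, \mu, M(F)_{x,m}}$ is a finite \'{e}tale $\cl{M}(O_F)/M(F)_{x,m} = \cl{M}(O_F/\pi^m)$-torsor over $\cl{M}_{\cl{M}, b, \mu}$. The fiber over $x_\CM$ is therefore a finite \'{e}tale $\breve{F}$-scheme (via \cite[Lemma 15.6]{Sch17}, which identifies finite \'{e}tale covers of $\Spd(\breve{F})$ with finite \'{e}tale $\breve{F}$-algebras). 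Any $\bb{C}_p$-point of it, in particular $x_{\CM, m}$, is then defined over the residue field of the corresponding closed point, which is a finite extension $\breve{F}_m$ of $\breve{F}$.

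Concretely, I would carry out the following steps.

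\textbf{Step 1.} Check that the level map $\cl{M}_{M, b, \mu, M(F)_{x,m}} \to \cl{M}_{\cl{M}, b, \mu}$ is finite \'{e}tale with group $\cl{M}(O_F/\pi^m)$. This is the standard property of change of level for compact open subgroups, since $M(F)_{x,m}$ is normal of finite index in $\cl{M}(O_F)$. Alternatively, it follows from \Cref{lem:finetdesc} applied to $\cl{M}$, which exhibits $m$-th level structures as $\varphi$-fixed points in $P(W_{O_F}(R^\flat)/\pi^m)$, a finite \'{e}tale condition.

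\textbf{Step 2.} Pull this cover back along $x_\CM \colon \Spd(\breve{F}) \to \cl{M}_{\cl{M}, b, \mu}$. This gives a finite \'{e}tale $v$-sheaf over $\Spd(\breve{F})$, which is of the form $\Spd(A)$ for a finite \'{e}tale $\breve{F}$-algebra $A = \prod_i \breve{F}_i$.

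\textbf{Step 3.} The $\bb{C}_p$-point $x_{\CM, m}$ lies in this fiber, so it factors through some $\Spd(\breve{F}_i)$. I then set $\breve{F}_m = \breve{F}_i$.

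If one wants an explicit handle instead, the point is that $t_\infty \bmod \pi^m$ is determined by finitely many steps of Lang's theorem. These solve equations of the shape $t\sigma(t)^{-1} = \mu([\pi^\flat]-\pi)$ in the truncated loop group of $Z^\circ_{\cl{M}}$, and such equations only require finite separable extensions at each stage. This viewpoint is not needed for the proof, however.

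The only step needing care is Step 2: identifying the fiber with an honest finite \'{e}tale $\breve{F}$-algebra. This uses full faithfulness of diamondification on finite \'{e}tale sites, and I expect it to be routine.
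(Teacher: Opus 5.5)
Your argument is correct and is exactly the paper's: the paper simply observes that the fiber of the finite \'{e}tale level map $\cl{M}_{M, b, \mu, M(F)_{x, m}} \to \cl{M}_{\cl{M}, b, \mu}$ over the $\breve{F}$-point $x_\CM$ is finite \'{e}tale over $\breve{F}$, and your Steps 1--3 spell this out. You should drop the closing Lang's-theorem aside. Solving $t\sigma(t)^{-1} = \mu([\pi^\flat]-\pi)$ modulo $\pi^m$ takes place over $\bb{C}_p^\flat$, and that by itself does not produce a finite extension of $\breve{F}$.
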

\begin{proof}
    It is immediate since the fiber of $\cl{M}_{M, b, \mu, M(F)_{x, m}} \to \cl{M}_{\cl{M}, b, \mu}$ at $x_{\CM}$ is finite \'{e}tale. 
\end{proof}

Now, we will construct a solution to the equation $\varphi_P(p) = p$ in $P(W_{O_F}(R^\flat)/\pi^m)$ using the explicit description in \Cref{lem:expBKF}. 

\begin{prop} \label{lem:levelstrCp}
    Let $S = \Spa(R,R^+)$ be an affinoid perfectoid space over $\Spf(\wtd{R}_{\cl{G},\mu,\infty})_\eta$. An $m$-th level structure of $\cl{P}^\univ$ over $S$ corresponds to $g_m \in \cl{G}(W_{O_F}(R^\flat)/\pi^m)$ such that 
    \begin{equation}
        g_m \sigma(g_m)^{-1} \equiv \prod_{\alpha \in \Phi_{\mu<0}} i_\alpha(\alpha(\sigma(t_\infty)^{-1})\cdot [u_\alpha^\flat]) \pmod{\pi^m}
        \label{eq:mthlevelstructure}
    \end{equation}
    The corresponding $\varphi$-invariant section of $\cl{G}(W_{O_F}(R^\flat)/\pi^m)$ via \Cref{lem:finetdesc} is $t_\infty g_m$. In particular, the right action of $h \in \cl{G}(O_F / \pi^m)$ sends $g_m$ to $g_mh$. 
\end{prop}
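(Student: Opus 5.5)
The plan is to combine \Cref{lem:finetdesc} with the explicit $\cl{G}$-BKF module of \Cref{lem:expBKF}, and then change variables by $t_\infty$ to turn the Frobenius on $P$ into the standard one.

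First, since $S$ lies over $\Spf(\wtd{R}_{\cl{G},\mu,\infty})_\eta$, we have canonical $p$-power roots $u_\alpha^\flat$. \Cref{lem:expBKF} then identifies $P$ with the trivial torsor $\cl{G}\otimes W_{O_F}(R^\flat)$. Its Frobenius is $\varphi_P(p) = \mu([\pi^\flat]-\pi)\, u\, \sigma(p)$, where $u = \prod_{\alpha\in\Phi_{\mu<0}} i_\alpha([u_\alpha^\flat])$. Here I implicitly base change the BKF module over $R^+$ to $W_{O_F}(R^\flat)$; this agrees with the restriction to $\cl{Y}_{S,[0,\varepsilon]}$ tensored up, by \eqref{eq:YSn}. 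By \Cref{lem:finetdesc}, $m$-th level structures therefore correspond to $p \in \cl{G}(W_{O_F}(R^\flat)/\pi^m)$ with
\[
    p = \mu([\pi^\flat]-\pi)\, u\, \sigma(p) \pmod{\pi^m}.
\]
We must check that $\mu([\pi^\flat]-\pi)$ makes sense modulo $\pi^m$. Since $[\pi^\flat]$ is a unit in $W_{O_F}(R^\flat)$ (as $R^\flat$ lives over the generic fiber), so is $[\pi^\flat]-\pi$.

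Next, write $p = t_\infty g_m$, using $t_\infty$ from \Cref{defi:tinfty} pushed into $\cl{G}(W_{O_F}(R^\flat))$ via $O_{\bb{C}_p}^\flat \to R^{\flat}$. Because $t_\infty$ is invertible this is a bijection. Substituting and using $\mu([\pi^\flat]-\pi) = t_\infty\sigma(t_\infty)^{-1}$, the equation becomes
\[
    t_\infty g_m = t_\infty\sigma(t_\infty)^{-1}\, u\, \sigma(t_\infty)\sigma(g_m),
\]
that is, $g_m\sigma(g_m)^{-1} = \Ad(\sigma(t_\infty)^{-1})(u)$. Since $t_\infty$ lies in the torus $Z^\circ_{\cl{M}} \subset \cl{T}$, conjugation acts on each root group by the character: $\Ad(\sigma(t_\infty)^{-1})(i_\alpha(y)) = i_\alpha(\alpha(\sigma(t_\infty)^{-1})y)$. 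Applying this factor by factor gives \eqref{eq:mthlevelstructure}.

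The statement about the right action follows directly from \Cref{lem:finetdesc}: $h$ sends $p$ to $ph = t_\infty g_m h$. The only delicate point I expect is bookkeeping. One must make sure that the Frobenius convention of \Cref{lem:finetdesc} (namely $\varphi_P(p)$ versus its inverse, and the side on which the Frobenius matrix acts) matches the shape $g\sigma$ in \Cref{lem:expBKF}. One must also check that $\alpha(\sigma(t_\infty)^{-1})$ is integral, which holds because $t_\infty \in Z^\circ_{\cl{M}}(W_{O_F}(\bb{C}_p^\flat))$. I would verify the convention by checking the CM point $u=1$: there the equation reduces to $\sigma$-invariance of $g_m$, so $g_m=1$ is a solution, recovering $t_\infty$, consistent with \Cref{defi:CM_points_in_LSV}.
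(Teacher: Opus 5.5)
Your proposal is correct and is essentially the paper's own proof: combine \Cref{lem:finetdesc} with \Cref{lem:expBKF}, rewrite the Frobenius matrix as $t_\infty\cdot \Ad(\sigma(t_\infty)^{-1})(u)\cdot\sigma(t_\infty)^{-1}$, and substitute $p = t_\infty g_m$. One small precision: $t_\infty$ only has coefficients in $W_{O_F}(\bb{C}_p^\flat)$ (its Teichm\"{u}ller coefficients have negative valuation, cf.\ \Cref{lem:valsalpha}), so the map you need is $\bb{C}_p^\flat \to R^\flat$, which exists because $\pi^\flat$ is invertible in $R^\flat$; likewise, ``integral'' should mean only that the element lies in $W_{O_F}(R^\flat)$, which is all the equation requires.
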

\begin{proof}
    By \Cref{lem:expBKF} and \Cref{lem:finetdesc}, an $m$-th level structure over $S$ corresponds to an element of $\cl{G}(W_{O_F}(R^\flat)/\pi^m)$ fixed by $\mu([\pi^\flat]-\pi) \prod_{\alpha \in \Phi_{\mu<0}} i_\alpha([u_\alpha^\flat])\sigma$. 

    By the definition of $t_\infty$, $\mu([\pi^\flat]-\pi) \prod_{\alpha \in \Phi_{\mu<0}} i_\alpha([u_\alpha^\flat])$ is equal to 
    \[
        t_\infty \cdot\prod_{\alpha \in \Phi_{\mu<0}} i_\alpha(\alpha(\sigma(t_\infty)^{-1})\cdot [u_\alpha^\flat]) \cdot \sigma(t_\infty)^{-1}. 
    \]
    Thus, an $m$-th level structure is given by $t_\infty g_m$ with $g_m$ as in the statement. 
\end{proof}

\begin{prop}\label{lem:valsalpha}
    For each $\alpha \in \Phi_{\mu<0}$, consider the Teichm\"{u}ller expansion
    \[
        \alpha(\sigma(t_\infty)^{-1}) = \sum_{n\geq 0} [s_{\alpha,n}]\pi^n \in W_{O_F}(\bb{C}_p^\flat).
    \] 
    Then $\nu^\flat(s_{\alpha,n}) = -n - r_\alpha$ for every $n \geq 0$. 
\end{prop}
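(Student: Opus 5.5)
We use the defining relations of $t_\infty$ in \Cref{defi:tinfty} together with the combinatorics of \Cref{lem:vareps}. The idea is to turn the Frobenius equation for $t_\infty$ into a recursion among the elements $c_\alpha = \alpha(\sigma(t_\infty)^{-1})$ for $\alpha \in \Phi_{\mu<0}$, and then read off the Teichm\"{u}ller coefficients one power of $\pi$ at a time. Throughout, $\nu^\flat$ is normalized by $\nu^\flat(\pi^\flat) = 1$.

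\emph{Step 1: the constant term.} Since $t_\infty \equiv \lambda(\pi^\flat) \pmod{\pi}$, we get $\sigma(t_\infty) \equiv (q\sigma\lambda)(\pi^\flat) \pmod \pi$. Hence $c_\alpha \equiv [\pi^{\flat}]^{-\langle \alpha, q\sigma\lambda\rangle} = [\pi^\flat]^{-r_\alpha} \pmod{\pi}$, which gives $\nu^\flat(s_{\alpha,0}) = -r_\alpha$.

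\emph{Step 2: the recursion.} Apply the character $\sigma^{-i}\alpha$ to $t_\infty\sigma(t_\infty)^{-1} = \mu([\pi^\flat]-\pi)$, using $\alpha(\sigma(y)) = \sigma((\sigma^{-1}\alpha)(y))$. By \Cref{lem:vareps}, the exponents $\langle \sigma^{-i}\alpha,\mu\rangle$ vanish for $0<i<n_\alpha$ and equal $1$ for $i = n_\alpha$. Iterating therefore telescopes to
\[
    c_\alpha \cdot \sigma^{n_\alpha}(c_{\beta_\alpha}) = \bigl([\pi^{\flat}]^{q^{n_\alpha}} - \pi\bigr)^{-1},
\]
with $\beta_\alpha = -\sigma^{-n_\alpha}\alpha$; the equality holds in $W_{O_F}(\bb{C}_p^\flat)[\tfrac1\pi]$, and the left side lies in $W_{O_F}(\bb{C}_p^\flat)$. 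The exponents are consistent: Step 1 gives $-r_\alpha - q^{n_\alpha}r_{\beta_\alpha} = -q^{n_\alpha}$, which is exactly the relation $r_\alpha = q^{n_\alpha}(1-r_{\beta_\alpha})$ of \Cref{lem:vareps}.

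\emph{Step 3: normalization.} Write $c_\alpha = [\pi^\flat]^{-r_\alpha} y_\alpha$. The statement is then equivalent to
\[
    y_\alpha = \sum_{n\ge0} [v_{\alpha,n}]\,(\pi/[\pi^\flat])^n \quad\text{with every } v_{\alpha,n} \in O_{\bb{C}_p^\flat}^\times .
\]
Rewrite the recursion in terms of $y_\alpha$, $\sigma^{n_\alpha}(y_{\beta_\alpha})$ and the expansion of $([\pi^\flat]^{q^{n_\alpha}}-\pi)^{-1}$ in powers of $\pi/[\pi^\flat]^{q^{n_\alpha}}$. Then compare coefficients of $\pi^n$ by induction on $n$, for all $\alpha$ simultaneously; this is legitimate because $\beta_\alpha$ runs through a permutation of the finite set $\Phi_{\mu<0}$. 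At stage $n$, the coefficient $s_{\alpha,n}$ is given by an explicit dominant monomial plus terms already controlled by the induction hypothesis. The inequality $1-q^{-1}<r_\alpha<1$ from \Cref{lem:vareps} should guarantee that all other contributions have strictly larger valuation than $-n-r_\alpha$. The Frobenius twist multiplies valuations by $q^{n_\alpha}$, which pushes those terms far away.

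\emph{Main obstacle.} The delicate point is the carries in Witt-vector addition and multiplication when comparing Teichm\"{u}ller coefficients. Exact equality of valuations, not just a lower bound, must survive these carries. My first approach is to work modulo $\pi^{n+1}$ and use that, in the strict $O_F$-algebra $W_{O_F}(\bb{C}_p^\flat)$, the $\pi^n$-coefficient of a sum or product is a polynomial in the lower coefficients that is homogeneous of the correct weight. If a uniform coefficientwise argument resists, I would instead argue with the Gauss norms $\nu_\rho(x) = \min_n(\nu^\flat(x_n)+\rho n)$ for all $\rho$, together with a Newton-polygon uniqueness argument. The recursion forces the Newton polygon of $c_\alpha$ to be the straight line of slope $-1$ through $(0,-r_\alpha)$. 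The line having no break points would then force every vertex to be attained, which yields $\nu^\flat(s_{\alpha,n}) = -n-r_\alpha$ for every $n$.
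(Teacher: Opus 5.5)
Your Steps 1 and 2 are correct. You have $s_{\alpha,0} = \pi^{\flat,-r_\alpha}$, and the relation $c_\alpha\,\sigma^{n_\alpha}(c_{\beta_\alpha}) = ([\pi^\flat]^{q^{n_\alpha}}-\pi)^{-1}$ does follow from the definition of $t_\infty$ and \Cref{lem:vareps}. The gap is in Step 3. Taking $\pi^n$-coefficients of this relation gives
\[
    s_{\alpha,n}\,s_{\beta_\alpha,0}^{Q} + s_{\alpha,0}\,s_{\beta_\alpha,n}^{Q} + (\text{terms involving only } s_{\gamma,j} \text{ with } j<n) = \pi^{\flat,-Q(n+1)}, \qquad Q = q^{n_\alpha}.
\]
So each equation contains two unknowns of the \emph{same} index, $s_{\alpha,n}$ and $s_{\beta_\alpha,n}$, and whichever you solve for, the other is not controlled by the induction hypothesis. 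The dominant balance is between $\pi^{\flat,-Q(n+1)}$ and $s_{\alpha,0}s_{\beta_\alpha,n}^{Q}$. The competing term $s_{\alpha,n}s_{\beta_\alpha,0}^{Q}$ is negligible only if you already know $\nu^\flat(s_{\alpha,n}) > -Qn-r_\alpha$, which is a weak form of the statement you are proving at index $n$. A priori an abnormally large $s_{\alpha,n}$ could cancel the right-hand side. Hence your claim that ``$s_{\alpha,n}$ is given by an explicit dominant monomial plus terms already controlled by the induction hypothesis'' is circular.

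The carries you single out are in fact harmless. They involve only products of index $<n$, are homogeneous of degree one in them, and under the induction hypothesis have valuation $\geq -Qn > -Q(n+1)$. Your Newton-polygon fallback does not help either. A Newton polygon only records the lower convex hull of the points $(n,\nu^\flat(s_{\alpha,n}))$. Knowing that it is the line of slope $-1$ through $(0,-r_\alpha)$ gives only $\nu^\flat(s_{\alpha,n}) \geq -n-r_\alpha$, never equality for each $n$.

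What is missing is an argument that excludes such a cancellation. The paper gets it by iterating over a full Frobenius period. For $N$ with $\sigma^N$ trivial on $X_*(T)$, one obtains the single relation $\sigma^N(c_\alpha) = c_\alpha\cdot[\pi^{\flat,-(q^N-1)r_\alpha}]\sum_k[a_k]\pi^k$ with $a_0 = 1$ and $\nu^\flat(a_k) = -kq^N$. In it, $s_{\alpha,n}$ occurs as $s_{\alpha,n}^{q^N}$ on the left and linearly on the right. Assuming $\nu^\flat(s_{\alpha,n}) \leq -nq^N-r_\alpha$ contradicts a comparison of valuations. After that, $a_n s_{\alpha,0}$ dominates and $\nu^\flat(s_{\alpha,n}^{q^N}) = -q^N(n+r_\alpha)$.

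Alternatively, you can keep your two-term relation and argue around the cycles of $\alpha\mapsto\beta_\alpha$. Put $w_\gamma = \nu^\flat(s_{\gamma,n})+n+r_\gamma$, $A_\alpha = q^{n_\alpha}w_{\beta_\alpha}$ and $B_\alpha = w_\alpha+(q^{n_\alpha}-1)n$. Then $s_{\alpha,0}s_{\beta_\alpha,n}^Q$ and $s_{\alpha,n}s_{\beta_\alpha,0}^Q$ have valuations $-Q(n+1)+A_\alpha$ and $-Q(n+1)+B_\alpha$. Since the remaining terms have valuation $>-Q(n+1)$, the display forces either $A_\alpha = B_\alpha\leq 0$ or $\min(A_\alpha,B_\alpha) = 0$.

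\begin{itemize}
\item If some $w_\gamma<0$, take $\delta$ with $\beta_\delta=\gamma$. Then $A_\delta<0$, so $A_\delta = B_\delta$, i.e.\ $w_\delta = q^{n_\delta}w_\gamma-(q^{n_\delta}-1)n < w_\gamma$. Going backwards around the cycle gives $w_\gamma<w_\gamma$, a contradiction.
\item Once all $w_\gamma\geq 0$, a $w_\gamma>0$ would make both $A_\delta$ and $B_\delta$ positive for the same $\delta$, again a contradiction.
\end{itemize}

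Hence all $w_\gamma=0$, which is the inductive step. Some argument of this kind must be added for your induction to close.
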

\begin{proof}
    Take $N \geq 1$ so that $\sigma^N$ acts trivially on $X_*(T)$ and consider the Teichm\"{u}ller expansion 
    \[ 
        \prod_{1 \leq i \leq N} (1-[\pi^{\flat, -q^i}]\pi)^{\langle \alpha, \sigma^i \mu \rangle} = \sum_{n \geq 0} [c_n]\pi^n. 
    \] 
    Here, $\langle \alpha, \sigma^i \mu \rangle \in \{0, \pm 1\}$. In particular, we have
    \[
        (1-[\pi^{\flat, -q^i}]\pi)^{\langle \alpha, \sigma^i \mu \rangle} \in \bigl\{1,\; 1-[\pi^{\flat, -q^i}]\pi,\; \sum_{j \geq 0} [\pi^{\flat, -q^ij}]\pi^j \bigr\}. 
    \]
    Since $\langle \alpha, \sigma^N \mu \rangle = -1$, we get $\nu^\flat(c_n) = -nq^N$ via the standard analysis of Newton slopes. Since $\mu([\pi^\flat]-\pi) = t_\infty \sigma(t_\infty)^{-1}$ and $r_\alpha = \langle \alpha, q\sigma \lambda  \rangle = -\tfrac{1}{q^N-1}\sum_{1\leq i \leq N} q^i \langle \alpha, \sigma^i \mu \rangle$, we have
    \begin{equation} \label{eq:use_of_tinfty}
        \alpha(\sigma(t_\infty)) \sigma^N(\alpha(\sigma(t_\infty)))^{-1} = \prod_{1 \leq i \leq N} ([\pi^{\flat, q^i}]-\pi)^{\langle \alpha, \sigma^i \mu \rangle} = [\pi^{\flat, -(q^N-1)r_\alpha}] \sum_{n\geq 0} [c_n]\pi^n. 
    \end{equation} 
    Then, we have 
    \begin{equation*}
        \sum_{n\geq 0} [s_{\alpha,n}^{q^N}]\pi^n = \sigma^N(\alpha(\sigma(t_\infty)))^{-1} = [\pi^{\flat, -(q^N-1)r_\alpha}] \sum_{n\geq 0} [c_n]\pi^n \cdot \sum_{n\geq 0} [s_{\alpha,n}]\pi^n. \label{eq:salpha}
    \end{equation*}
    From this, we will show $\nu^\flat(s_{\alpha,n}) = -n - r_\alpha$ inductively on $n$. The base case $n=0$ follows directly. Suppose that $\nu^\flat(s_{\alpha,i}) = - i - r_\alpha$ for $0 \leq i < n$. For $i,j \geq 0$ with $i+j \leq n$, we have
    \begin{equation*}
        \nu^\flat(c_i s_{\alpha,j}) = \left\{ 
            \begin{alignedat}{3}
                & - q^N i - j - r_\alpha  & \; & (j < n) \\
                & \nu^\flat(s_{\alpha,n}) & \; & (j = n). 
            \end{alignedat}
        \right. 
    \end{equation*}
    The minimum value is taken when $(i,j) = (n,0)$ or $(0, n)$. If $\nu^\flat(s_{\alpha,n}) \leq - n q^N -r_\alpha$, we have 
    \[
        -(q^N-1)r_\alpha + \nu^\flat(s_{\alpha,n}) \leq \nu^\flat(s_{\alpha,n}^{q^N}) \leq (q^N - 1)(- n q^N -r_\alpha) + \nu^\flat(s_{\alpha,n}), 
    \]
    which is a contradiction. Thus, $\nu^\flat(s_{\alpha,n}) > - n q^N -r_\alpha$, so we have
    \[
        \nu^\flat(s_{\alpha,n}^{q^N}) = -(q^N-1)r_\alpha + (- n q^N -r_\alpha) = -q^N(n + r_\alpha), 
    \]
    so we get the claim for $n$. Thus, the claim follows by induction. 
\end{proof}

Now, we understand the right-hand side of \eqref{eq:mthlevelstructure}, so we may construct the canonical trivialization of the $m$-th level structure. 

\begin{lem} \label{lem:g<m}
    Let $S = (R,R^+)$ be an affinoid perfectoid space over a rational subdomain
    \[
        \cl{U} = \{ \lvert u_\alpha \rvert \leq \lvert \pi \rvert^{(m-1 + r_\alpha) + \varepsilon} \neq 0 \mid \alpha \in \Phi_{\mu<0} \} \subset \Spf(\wtd{R}_{\cl{G},\mu,\infty})_\eta
    \]
    for some rational number $\varepsilon > 0$. Let 
    \begin{equation}
        h_{<m} = \prod_{\alpha \in \Phi_{\mu<0}} i_\alpha\left( \sum_{0 \leq i < m} [s_{\alpha,i}u_\alpha^\flat] \pi^i \right) \in \breve{\cl{G}}^{\mu < 0}(W_{O_F}(R^{\flat +})).  \label{eq:h<n/2}
    \end{equation}
    Then, the infinite product 
    \begin{equation}
        g_{<m} = h_{<m} \sigma(h_{<m}) \sigma^2(h_{<m}) \cdots \label{eq:g<m}
    \end{equation}
    converges in $\cl{G}(W_{O_F}(R^{\flat+}))$ and is trivial modulo $[\pi^{\flat, \varepsilon}]$.
\end{lem}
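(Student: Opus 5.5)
The plan is to show that each factor of the infinite product \eqref{eq:g<m} lies in $\cl{G}(W_{O_F}(R^{\flat+}))$, and that $\sigma^k(h_{<m})$ is trivial modulo $[\pi^{\flat, \varepsilon q^k}]$. Convergence then follows from the $[\varpi]$-adic completeness of $W_{O_F}(R^{\flat+})$, exactly as for the product $Y$ in \Cref{lem:quasi_isog_computation} and $\iota$ in \Cref{prop:isom_deformation}.

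First I will check that $h_{<m}$ is integral and $[\pi^{\flat,\varepsilon}]$-adically small. On $\cl{U}$ we have $\nu^\flat(u_\alpha^\flat) \geq m-1+r_\alpha+\varepsilon$. By \Cref{lem:valsalpha}, $\nu^\flat(s_{\alpha,i}) = -i-r_\alpha$, so for $0\le i<m$ we get
\[
\nu^\flat(s_{\alpha,i}u_\alpha^\flat) \geq m-1-i+\varepsilon \geq \varepsilon .
\]
Each term $[s_{\alpha,i}u_\alpha^\flat]\pi^i$ therefore lies in $[\pi^{\flat,\varepsilon}] W_{O_F}(R^{\flat+})$. Since $i_\alpha\colon \bb{A}^1 \cong \cl{U}_\alpha$ is an isomorphism of $O_{\breve F}$-group schemes, each factor of $h_{<m}$ lies in $\cl{U}_\alpha(W_{O_F}(R^{\flat+}))$ and reduces to $1$ modulo $[\pi^{\flat,\varepsilon}]$. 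Hence $h_{<m} \in \breve{\cl{G}}^{\mu<0}(W_{O_F}(R^{\flat+}))$ with $h_{<m}\equiv 1 \pmod{[\pi^{\flat,\varepsilon}]}$.

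Next I will handle the Frobenius twists. By \Cref{lem:choiceia}, $\sigma$ sends $i_\alpha(a)$ to $i_{\sigma\alpha}(\varphi(a))$, so $\sigma^k(h_{<m})$ is a product of root group elements $i_{\sigma^k\alpha}(\varphi^k(a))$. Here $\varphi$ is the Witt vector Frobenius on $W_{O_F}(R^{\flat+})$, and it raises Teichmüller valuations by the factor $q$. It follows that
\[
\sigma^k(h_{<m}) \equiv 1 \pmod{[\pi^{\flat,\varepsilon q^k}]}
\]
and that it stays integral, since $\cl{G}$ is defined over $O_F$ and $\varphi$ preserves $W_{O_F}(R^{\flat+})$. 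The partial products $g_N = h_{<m}\sigma(h_{<m})\cdots\sigma^N(h_{<m})$ then form a Cauchy sequence: $g_{N+1}g_N^{-1}$ is a conjugate of $\sigma^{N+1}(h_{<m})$ by an element of $\cl{G}(W_{O_F}(R^{\flat+}))$, so it is trivial modulo $[\pi^{\flat,\varepsilon q^{N+1}}]$. Because $\cl{G}$ is affine and $W_{O_F}(R^{\flat+})$ is $(\pi,[\varpi])$-adically complete, the limit exists in $\cl{G}(W_{O_F}(R^{\flat+}))$. Every factor is trivial modulo $[\pi^{\flat,\varepsilon}]$, so the limit is as well.

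The main obstacle is the valuation bookkeeping in the first step. The key point is that the lower bound $\nu^\flat(u_\alpha)\ge m-1+r_\alpha+\varepsilon$ on $\cl{U}$ exactly compensates the worst negative valuation of $s_{\alpha,i}$, which occurs at $i=m-1$. I will also need to make sure that working over $R^{\flat+}$ rather than $O_{\bb{C}_p}^\flat$ causes no issue. This is fine because $s_{\alpha,i}\in\bb{C}_p^\flat$ and the products $s_{\alpha,i}u_\alpha^\flat$ land in $R^{\flat+}$ by the valuation bound, which holds pointwise and hence in $R^{\flat+}$.
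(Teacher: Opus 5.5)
Your proposal is correct and follows essentially the same route as the paper. The paper makes the same three moves: the bound $\nu^\flat(s_{\alpha,i}u_\alpha^\flat)\geq m-1-i+\varepsilon\geq\varepsilon$ from \Cref{lem:valsalpha} gives $h_{<m}\equiv 1 \pmod{[\pi^{\flat,\varepsilon}]}$, Frobenius upgrades this to $\sigma^k(h_{<m})\equiv 1 \pmod{[\pi^{\flat,\varepsilon q^k}]}$, and completeness yields convergence with the limit still trivial modulo $[\pi^{\flat,\varepsilon}]$; your remarks on integrality and on the closedness of the congruence kernel only spell out routine details it leaves implicit.
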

\begin{proof}
    By \Cref{lem:valsalpha}, 
    $
        s_{\alpha,i}u_\alpha^\flat \in \pi^{\flat, \varepsilon} R^{\flat+}
    $
    for $0 \leq i < m$. Thus, $h_{<m}$ is trivial modulo $[\pi^{\flat, \varepsilon}]$, so $\sigma^i(h_{<m})$ is trivial modulo $[\pi^{\flat, \varepsilon q^i}]$. Thus, $g_{<m}$ is well-defined and trivial modulo $[\pi^{\flat, \varepsilon}]$. 
\end{proof}

\begin{prop}  \label{prop:canmthtriv}
    Let $q_\alpha > m - 1 + r_\alpha$ be a rational number for each $\alpha \in \Phi_{\mu < 0}$ and let 
    \[
        \cl{U} = \{ \lvert u_\alpha \rvert \leq \lvert \pi \rvert^{q_\alpha} \neq 0 \mid \alpha \in \Phi_{\mu<0} \} \subset \Spf(R_{\cl{G},\mu})_\eta \times_{\Spd(\breve{F})} \Spd(\breve{F}_m). 
    \]
    Then, there is a unique section
    $
        \can_{m, m}\colon \cl{U}\to \cl{M}_{G, b, \mu, G(F)_{x, m}}
    $
    sending $x_{\CM}$ to $x_{\CM, m}$. 
\end{prop}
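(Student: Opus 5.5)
The plan is to build the section first on the perfectoid cover $\cl{U}_\infty$ of \Cref{lem:ballsatis}, by writing down an explicit solution of \eqref{eq:mthlevelstructure}, and then descend it to $\cl{U}$.

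Work first over $\bb{C}_p$. Take $q_\alpha$ as in the statement and let $\cl{U}_i \subset \Spf(\wtd{R}_{\cl{G},\mu,i})_\eta$ be the balls of \Cref{lem:ballsatis}, so that $\cl{U}_0 = \cl{U}_{\bb{C}_p}$. On $\cl{U}_\infty$ we have the canonical compatible systems $u_\alpha^\flat = (u_\alpha^{1/p^i})_i$, together with the fixed $\pi^\flat$, so \Cref{lem:expBKF} and \Cref{lem:levelstrCp} apply. Choose a rational $\varepsilon>0$ with $q_\alpha \geq m-1+r_\alpha+\varepsilon$ for all $\alpha$. Then \Cref{lem:g<m} produces the element $g_{<m} \in \cl{G}(W_{O_F}(R^{\flat+}))$, which is trivial modulo $[\pi^{\flat,\varepsilon}]$, and by construction it satisfies $g_{<m}\sigma(g_{<m})^{-1} = h_{<m}$.

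I claim that $g_m := g_{<m} \bmod \pi^m$ solves \eqref{eq:mthlevelstructure}. Since $\mu$ is minuscule, $\breve{\cl{G}}^{\mu<0}$ is a vector group. Hence $\prod_\alpha i_\alpha$ is an additive isomorphism and the ordering of the product is irrelevant. By \Cref{lem:valsalpha} we have
\[
    \alpha(\sigma(t_\infty)^{-1})[u_\alpha^\flat] = \sum_{n\geq 0}[s_{\alpha,n}u_\alpha^\flat]\pi^n
\]
in $W_{O_F}(R^\flat)$. Every term with $n\geq m$ vanishes modulo $\pi^m$ in $W_{O_F}(R^\flat)/\pi^m$; no integrality of these terms is needed. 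The remaining terms assemble exactly to $h_{<m}$. By \Cref{lem:levelstrCp}, $t_\infty g_m$ therefore defines an $m$-th level structure on $\cl{P}^\univ|_{\cl{U}_\infty}$, i.e.\ a map $\cl{U}_\infty^\diamond \to \cl{M}_{G,b,\mu,G(F)_{x,m}}$ over $\cl{U}_0^\diamond$. At the origin $u_\alpha=0$ we get $h_{<m}=1$ and hence $g_{<m}=1$. The level structure there is thus $t_\infty$, which is $x_{\CM,m}$ by \Cref{defi:CM_points_in_LSV}.

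Next I descend. The base change $X_0 = \cl{M}_{G,b,\mu,G(F)_{x,m}}\times_{\cl{M}_{\cl{G},b,\mu}}\cl{U}_0$ is a finite \'etale $\cl{G}(O_F/\pi^m)$-torsor over $\cl{U}_0$. The second bijection of \Cref{prop:mapatinf} then shows that the map $\cl{U}_\infty^\diamond\to X_0^\diamond$ comes from a unique section $\cl{U}_0\to X_0$. This section still sends the origin to $x_{\CM,m}$.

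Uniqueness over $\cl{U}_{\bb{C}_p}$ follows because $\cl{U}_{\bb{C}_p}$ is connected (it is a closed ball), while the equalizer of two sections of a finite \'etale cover is open and closed. For $\tau\in\Gal(\ov{\breve{F}}/\breve{F}_m)$, the $\tau$-conjugate of the section is again a section sending $x_\CM$ to $\tau(x_{\CM,m}) = x_{\CM,m}$, since $x_{\CM,m}$ is defined over $\breve{F}_m$. By uniqueness the section is $\tau$-invariant. Hence it descends by Galois descent for finite \'etale maps (continuity is handled by approximation through finite extensions) to $\cl{U}$ over $\breve{F}_m$, which gives both existence and uniqueness.

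The step I expect to be the main obstacle is justifying the use of \Cref{lem:levelstrCp} on $\cl{U}_\infty$ with $W_{O_F}(R^\flat)$ rather than $W_{O_F}(R^{\flat+})$, together with checking that the resulting map really lies over $\cl{U}_0$ compatibly with the universal shtuka. The explicit identity modulo $\pi^m$ itself is routine once additivity on $\breve{\cl{G}}^{\mu<0}$ is used.
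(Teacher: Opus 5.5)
Your proposal is correct and follows essentially the same route as the paper: build the level structure $t_\infty g_{<m}$ on $\cl{U}_\infty$ via \Cref{lem:g<m} and \Cref{lem:levelstrCp}, descend it to $\cl{U}_{\bb{C}_p}$ via \Cref{prop:mapatinf}, get uniqueness from connectedness and the normalization at $x_\CM$, and descend to $\breve{F}_m$ by Galois descent (the paper cites \cite[Lemma 11.22]{Sch17} for the finite-extension step you call approximation). The step you flag as the main obstacle is not one: \Cref{lem:levelstrCp} is already stated over $W_{O_F}(R^\flat)/\pi^m$, and the resulting map automatically lies over $\cl{U}_0$ because the level structure is defined on the pullback of $\cl{P}^\univ$.
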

\begin{proof}
    Let $\cl{U}_\infty = \{ \lvert u_\alpha \rvert \leq \lvert \pi \rvert^{q_\alpha} \neq 0 \mid\alpha \in \Phi_{\mu<0} \} \subset \Spf(\wtd{R}_{\cl{G},\mu,\infty})_\eta$. For each affinoid perfectoid space $S = \Spa(R,R^+)$ over $\cl{U}_\infty$, $g_{<m}$ as in \eqref{eq:g<m} defines an $m$-th level structure of $\cl{P}^\univ$ over $S$ by \Cref{lem:levelstrCp} and
    \[
        g_{<m} \sigma(g_{<m})^{-1} = h_{<m} \equiv \prod_{\alpha \in \Phi_{\mu<0}} i_\alpha(\alpha(\sigma(t_\infty)^{-1})\cdot [u_\alpha^\flat]) \pmod{\pi^m}. 
    \]
    This amounts to a map $\cl{U}_\infty^\diamond \to \cl{M}_{G, b, \mu, G(F)_{x, m}}^\diamond$. By applying \Cref{prop:mapatinf} to $X_0 = \cl{U}_{\bb{C}_p} \times_{\cl{M}_{\cl{G}, b, \mu}} \cl{M}_{G, b, \mu, G(F)_{x, m}}$, it factors through a trivialization 
    \[
        \can_{m, m}\colon \cl{U}_{\bb{C}_p} \to \cl{M}_{G, b, \mu, G(F)_{x, m}}. 
    \]
    When $u_{\alpha} = 0$ for every $\alpha \in \Phi_{\mu < 0}$, $g_{<m} = 1$ and the $\varphi$-invariant section associated via \Cref{lem:levelstrCp} is $t_\infty$. Thus, $\can_{m, m}(x_\CM) = x_{\CM, m}$. This property uniquely characterizes $\can_{m, m}$ since $\cl{U}_{\bb{C}_p}$ is connected and $\can_{m, m}$ induces a closed and open immersion $\cl{U}_{\bb{C}_p} \subset \cl{U}_{\bb{C}_p} \times_{\cl{M}_{\cl{G}, b, \mu}} \cl{M}_{G, b, \mu, G(F)_{x, m}}$. It remains to show that this inclusion is defined over $\breve{F}_m$. 
    
    By \cite[Lemma 11.22]{Sch17}, it can be defined over a finite extension of $\breve{F}_m$. Then, it descends to $\breve{F}_m$ by the Galois descent since $x_{\CM, m}$ is defined over $\breve{F}_m$ and $\can_{m, m}$ is uniquely characterized by the condition $\can_{m, m}(x_\CM) = x_{\CM, m}$.
\end{proof}

By taking a quotient under $M(F)_{x, 0}$, it descends to $\breve{F}$ and we get a desired map $\can_n$. 

\begin{cor} \label{cor:canonical_CM_structure}
    Let $q_\alpha > m - 1 + r_\alpha$ be a rational number for each $\alpha \in \Phi_{\mu < 0}$ and let 
    \[
        \cl{U} = \{ \lvert u_\alpha \rvert \leq \lvert \pi \rvert^{q_\alpha} \neq 0 \mid\alpha \in \Phi_{\mu<0} \} \subset \Spf(R_{\cl{G},\mu})_\eta. 
    \]
    Then, there is a unique section
    $
        \can_n\colon \cl{U}\to \cl{M}_{G, b, \mu, K_n^0}
    $
    sending $x_\CM$ to $x_{\CM, K_n^0}$. 
\end{cor}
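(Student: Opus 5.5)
The plan is to obtain $\can_n$ from $\can_{m,m}$ of \Cref{prop:canmthtriv} by pushing it to level $K_n^0$ and then descending from $\breve{F}_m$ to $\breve{F}$ using the $M(F)_{x,0}$-action.

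Since $m = \lceil n/2 \rceil$, we have $G(F)_{x,m} \subset G(F)_{x,n/2} \subset K_n^0$. Composing $\can_{m,m}$ with the projection $\cl{M}_{G,b,\mu,G(F)_{x,m}} \to \cl{M}_{G,b,\mu,K_n^0}$ gives a section
\[
    \cl{U} \times_{\Spd(\breve{F})} \Spd(\breve{F}_m) \to \cl{M}_{G,b,\mu,K_n^0}
\]
that sends $x_\CM$ to $x_{\CM,K_n^0}$. Uniqueness is proved exactly as in \Cref{prop:canmthtriv}. The cover $\cl{U} \times_{\cl{M}_{\cl{G},b,\mu}} \cl{M}_{G,b,\mu,K_n^0} \to \cl{U}$ is finite \'{e}tale, so the image of any section is closed and open. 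The base $\cl{U}_{\bb{C}_p}$ is a connected closed ball, so such a section is determined by its value at $x_\CM$.

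The main step is to show that the section descends to $\breve{F}$. Since the fiber of $\cl{M}_{G,b,\mu,K_n^0}$ over $x_\CM$ is finite \'{e}tale, $x_{\CM,K_n^0}$ is defined over a finite extension of $\breve{F}$. By uniqueness and Galois descent, it then suffices to show that $x_{\CM,K_n^0}$ is actually $\breve{F}$-rational, i.e.\ fixed by $\Gal(\ov{F}/\breve{F})$.

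To prove this rationality, I would use that $x_{\CM,\infty}$ lies in $\cl{M}_{M,b,\mu,\infty}$. Over $x_\CM$, the fiber of $\cl{M}_{M,b,\mu,\infty}$ is an $\und{\cl{M}(O_F)}$-torsor compatible with the $M(F)_x$-action, using \Cref{lem:zero_dimensional_LSV}. Hence inertia $I_{\breve{F}}$ acts on $x_{\CM,\infty}$ through some homomorphism $\tau \mapsto m_\tau \in \cl{M}(O_F)$, that is, $\tau \cdot x_{\CM,\infty} = x_{\CM,\infty} \cdot m_\tau$. Projecting to level $K_n^0$, the point $x_{\CM,K_n^0}$ is then Galois-fixed, because $\cl{M}(O_F) = M(F)_{x,0} \subset K_n^0$ and $\cl{M}(O_F)$ acts on the right.

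For the statement to hold as written, I expect to need that $K_n^0 \cap M(F) = M(F)_{x,0}$ absorbs the whole torsor ambiguity, and this is where the reference to "taking a quotient under $M(F)_{x,0}$" enters. The main obstacle I foresee is making this Galois-equivariance of the infinite-level CM lift rigorous. The alternative is to verify directly that $\Gal(\breve{F}_m/\breve{F})$ moves $t_\infty$ only by right multiplication by elements of $Z^\circ_{\cl{M}}(O_F)$, which follows from the uniqueness of $t_\infty$ up to $Z^\circ_{\cl{M}}(O_F)$ in \Cref{defi:tinfty}. Once this holds, descent along the finite Galois extension $\breve{F}_m/\breve{F}$ finishes the proof.
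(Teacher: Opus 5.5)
Your proposal is correct and follows the paper's argument: you push $\can_{m,m}$ down to level $K_n^0$, prove uniqueness by connectedness of $\cl{U}_{\bb{C}_p}$, and then descend from $\breve{F}_m$ to $\breve{F}$ using uniqueness together with the $\breve{F}$-rationality of $x_{\CM,K_n^0}$. The step you flag as the main obstacle is immediate, and this is how the paper handles it: $x_{\CM,K_n^0}$ is the image of the $\breve{F}$-point $x_\CM$ under the $\breve{F}$-morphism $\cl{M}_{\cl{M},b,\mu}\to\cl{M}_{G,b,\mu,K_n^0}$, which exists because $\cl{M}(O_F)\subset K_n^0$; your torsor argument is just a longer phrasing of this, and the alternative via $t_\infty$ is unnecessary.
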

\begin{proof}
    By \Cref{prop:canmthtriv}, $\can_n$ can be defined over $\breve{F}_m$. Since $x_{\CM, K_n^0}$ lies in the image of $\cl{M}_{\cl{M}, b, \mu} \subset \cl{M}_{G, b, \mu, K_n^0}$ and is defined over $\breve{F}$, $\can_n$ descends to $\breve{F}$ by the same argument as in the last part of \Cref{prop:canmthtriv}. 
\end{proof}

From the explicit radius of $\cl{U}(n)$, we see that \Cref{cor:canonical_CM_structure} can be applied to $\cl{U}(n)$.

\subsection{Stabilizer of special open balls}

In this section, we verify the remaining part of \Cref{prop:restatement_LSV} (1), i.e.\ for the action of $j \in \cl{G}_b(O_F) - J_{b, n}^0$ on $\cl{U}(n)$, by developing the following property of $\can_{m, m}$. 

\begin{lem} \label{lem:Adjoint_of_g<m}
    Keep the notation in \Cref{lem:g<m} and suppose $\cl{U} = \cl{U}(n)$. 
    \begin{enumerate}
        \item When $n = 2m - 1$, $\Ad(t_\infty)(g_{<m}) \bmod \pi^m \in \cl{G}(W_{O_F}(R^{\flat+})/\pi^{m})$ and it is trivial modulo $(\pi^{m-1}, [\pi^{\flat}])$. Moreover, 
        \[
            \Ad(t_\infty)(g_{<m}) \bmod{(\pi^m, [\varpi])} \in \breve{\cl{G}}^{\lambda < 0}(W_{O_F}(R^{\flat+})/(\pi^m, [\varpi]))
        \]
        for some pseudo-uniformizer $\varpi \in R^\flat$ via the Moy-Prasad isomorphism. 
        \item When $n = 2m$, $\Ad(t_\infty)(g_{<m}) \bmod \pi^m \in \cl{G}(W_{O_F}(R^{\flat+})/\pi^{m})$ and it is trivial modulo $[\varpi]$ for some pseudo-uniformizer $\varpi \in R^\flat$. 
    \end{enumerate}
\end{lem}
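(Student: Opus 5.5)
The plan is to compute $\Ad(t_\infty)$ factor by factor on the product \eqref{eq:g<m}, using that $t_\infty$ is central in $\cl{M}$. Each factor of $h_{<m}$ is a root group element $i_\alpha(y_\alpha)$ with $\alpha \in \Phi_{\mu<0}$. Hence $\Ad(t_\infty)(h_{<m}) = \prod_\alpha i_\alpha(\alpha(t_\infty) y_\alpha)$. More generally,
\[
    \Ad(t_\infty)(\sigma^k(h_{<m})) = \sigma^k\bigl(\Ad(\sigma^{-k}(t_\infty))(h_{<m})\bigr),
\]
so the $k$-th factor contributes root coordinates $\sigma^k(\alpha)(t_\infty)\,\sigma^k(y_\alpha)$. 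Write $\beta = \sigma^k\alpha$. The root coordinate is then $\beta(t_\infty)\,\sigma^k\bigl(\sum_{i<m}[s_{\alpha,i}u_\alpha^\flat]\pi^i\bigr)$. Using $t_\infty\sigma(t_\infty)^{-1} = \mu([\pi^\flat]-\pi)$ repeatedly, rewrite
\[
    \beta(t_\infty) = \beta(\sigma^{k+1}(t_\infty)) \prod_{0\le j\le k} \beta\bigl(\sigma^j\mu([\pi^{\flat,q^j}]-\pi)\bigr).
\]
Together with \Cref{lem:valsalpha}, this makes every Teichm\"uller coefficient explicit. Since $t_\infty \equiv \lambda(\pi^\flat) \pmod{\pi}$, the leading term of $\beta(t_\infty)$ has valuation $\langle\beta,\lambda\rangle$ in $\pi^\flat$.

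The first step is the valuation bookkeeping. Modulo $\pi^m$, the $\pi^i$-coefficient of each root coordinate has $\flat$-valuation at least a sum of three quantities. The first is $\langle \beta,\lambda\rangle$ together with the corrections coming from the factors $[\pi^{\flat,q^j}]-\pi$, which contribute negative valuations of at most order $-i'q^j$ on the $\pi^{i'}$ terms. The second is $q^k(-i-r_\alpha)$, coming from $\sigma^k(s_{\alpha,i})$. The third is $q^k\nu^\flat(u_\alpha)$. On $\cl{U}(n)$ the last quantity is at least $q^k(m + r_\alpha)$ when $n=2m$, and at least $q^k m$ when $n=2m-1$.

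For $n=2m$, the total is positive uniformly in $k$ and in the $\pi$-degree $<m$, and it grows with $k$. Hence every factor lies in $\cl{G}(W_{O_F}(R^{\flat+})/\pi^m)$ and is trivial modulo some $[\varpi]$. The same holds for the convergent product, by the argument of \Cref{lem:g<m}. For $n=2m-1$, the radius is smaller by $\lvert\pi\rvert^{r_\alpha}$. The degree-$i$ coefficients with $i \le m-2$ remain integral and are divisible by $[\pi^\flat]$, up to the valuation slack $1-r_\alpha > 0$ given by \Cref{lem:vareps}. The top coefficient $i=m-1$ at $k=0$ may only be integral, and it has exact valuation $\langle\alpha,\lambda\rangle - r_\alpha + \nu^\flat(u_\alpha) \ge \langle\alpha,\lambda\rangle$. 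This is why the result is only trivial modulo $(\pi^{m-1},[\pi^\flat])$.

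The remaining claim in (1) is the $\lambda<0$ assertion. Modulo $(\pi^m,[\varpi])$, only the $\pi^{m-1}$-coefficients survive, and only for those roots $\beta$ and those $k$ where the valuation bound is attained with equality. I would show that equality forces $\langle\beta,\lambda\rangle<0$. For $k=0$ this uses that $\Phi_{\mu<0}$ consists of roots in $\msf{G}^{\sigma\lambda>0}$. For $k \ge 1$ the excess $q^k(\cdots)$ kills the contribution, and $\msf{N}\cap\sigma(\ov{\msf{N}}) = \msf{U}_{\mu>0}$ from \Cref{defi:parabolic_mu} relates the signs of $\lambda$ and $\sigma\lambda$ on $\Phi_{\mu<0}$. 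Via the Moy-Prasad isomorphism, the surviving element then lies in the Lie algebra of $\breve{\cl{G}}^{\lambda<0}$.

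I expect the main obstacle to be exactly this sign and equality analysis. I would first check it against the identity $r_\alpha = q^{n_\alpha}(1-r_{\beta_\alpha})$ of \Cref{lem:vareps}, and against the facet description $x-\varepsilon\lambda \in \mfr{f}^\circ$ used in \Cref{lem:AdbCM}.
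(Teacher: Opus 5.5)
Your reduction matches the paper's. Conjugate $g_{<m}$ factor by factor and telescope $t_\infty\sigma^{k+1}(t_\infty)^{-1}=\prod_{0\le t\le k}([\pi^{\flat,q^t}]-\pi)^{\sigma^t\mu}$. Then, modulo $\pi^m$, the factor $\Ad(t_\infty)(\sigma^k(h_{<m}))$ has on the root $\sigma^k\alpha$ the coordinate $(\sigma^k\alpha)(t_\infty\sigma^{k+1}(t_\infty)^{-1})\,[u_\alpha^{\flat,q^k}]=\sum_{d\ge0}[c^d_{\alpha,k}u_\alpha^{\flat,q^k}]\pi^d$.

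In this form $\sigma^k(s_{\alpha,i})$ has already cancelled against $(\sigma^k\alpha)(\sigma^{k+1}(t_\infty))$. So adding $\langle\beta,\lambda\rangle$ and $q^k(-i-r_\alpha)$ as separate contributions double counts. It is what produces your bound $\ge\langle\alpha,\lambda\rangle=r_\alpha-1<0$ for the top coefficient at $k=0$, which would not even give integrality; the true value is $\nu^\flat(u_\alpha^\flat)-m\ge0$. The correct input is
\[
    \nu^\flat(c^d_{\alpha,k})=-q^k(d+1)+\sum_{1\le t\le k}q^{k-t}\langle\sigma^t\alpha,\mu\rangle .
\]
By \Cref{lem:vareps}, the nonzero values $\langle\sigma^t\alpha,\mu\rangle$ for $t\ge1$ alternate in sign, starting with $+1$. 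Hence the sum is $0$ if $\langle\sigma^t\alpha,\mu\rangle=0$ for all $1\le t\le k$, and $\ge1$ otherwise. With $\nu^\flat(u_\alpha^\flat)\ge m$ this gives integrality and triviality modulo $(\pi^{m-1},[\pi^\flat])$ in (1). With $\nu^\flat(u_\alpha^\flat)\ge m+r_\alpha$ it gives triviality modulo $[\pi^{\flat,\min r_\alpha}]$ in (2).

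The genuine gap is the last assertion of (1): your claim that for $k\ge1$ ``the excess $q^k(\cdots)$ kills the contribution'' is false. Suppose $\langle\sigma^t\alpha,\mu\rangle=0$ for all $1\le t\le k$. Then the coordinate is $([\pi^{\flat,q^k}]-\pi)^{-1}[u_\alpha^{\flat,q^k}]$, whose $\pi^{m-1}$-coefficient $\pi^{\flat,-q^km}u_\alpha^{\flat,q^k}$ is a unit at every point of $\cl{U}(2m-1)$ with $\nu^\flat(u_\alpha^\flat)=m$. This happens for every $k$ below the first $t\ge1$ with $\langle\sigma^t\alpha,\mu\rangle\ne0$, and that index exceeds $1$ already in the Lubin--Tate case of height $\ge3$. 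These terms survive modulo $(\pi^m,[\varpi])$ for every $\varpi$, on the roots $\sigma^k\alpha\ne\alpha$, so your $k=0$ check says nothing about them.

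The missing step is showing that such $\sigma^k\alpha$ lie in $\Phi_{\ov{\msf{N}}}$. There are two ways:
\begin{itemize}
    \item Directly, iterating $\lambda=q\sigma\lambda+\mu$ gives $\langle\sigma^k\alpha,\lambda\rangle=q^k\langle\alpha,\lambda\rangle<0$.
    \item As in the paper, induct from $\alpha\in\Phi_{\mu<0}\subset\Phi_{\ov{\msf{N}}}$. If $\sigma^t\alpha\in\Phi_{\ov{\msf{N}}}$ and $\langle\sigma^{t+1}\alpha,\mu\rangle\ne1$, then $\sigma^{t+1}\alpha\in\sigma(\Phi_{\ov{\msf{N}}})$ lies neither in $\Phi_{\msf{M}}$ nor in $\Phi_{\msf{N}}$, because $\msf{N}\cap\sigma(\ov{\msf{N}})=\msf{U}_{\mu>0}$.
\end{itemize}
Also, for $k=0$ the relevant fact is $\Phi_{\mu<0}\subset\Phi_{\ov{\msf{N}}}$, i.e.\ $\langle\alpha,\lambda\rangle<0$, rather than $\langle\alpha,\sigma\lambda\rangle>0$. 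Only with this sign argument for the surviving $k\ge1$ terms does the reduction modulo $(\pi^m,[\varpi])$ land in $\breve{\cl{G}}^{\lambda<0}$.
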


\begin{proof}
    First, suppose $n = 2m - 1$. Since $h_{<m} \equiv \Ad(\sigma(t_\infty)^{-1})(\prod_{\alpha \in \Phi_{\mu<0}} i_\alpha([u_\alpha^\flat])) \pmod{\pi^m}$, 
    \[
        \Ad(t_\infty)(\sigma^j(h_{<m})) \equiv \Ad(t_\infty \sigma^{j+1}(t_\infty)^{-1})\Bigl(\prod_{\alpha \in \Phi_{\mu<0}} i_{\sigma^j\alpha}([u_\alpha^{\flat, q^j}])\Bigr) \pmod{\pi^m}
    \]
    for every $j \geq 0$ by the choice in \Cref{lem:choiceia}. By \Cref{defi:tinfty}, 
    \[
        t_\infty \sigma^{j+1}(t_\infty)^{-1}  = \prod_{0 \leq t \leq j} ([\pi^{\flat, q^t}] - \pi)^{\sigma^t \mu}. 
    \]
    Thus, for each $\alpha \in \Phi_{\mu<0}$, $(\sigma^j\alpha)( t_\infty \sigma^{j+1}(t_\infty)^{-1}) = \prod_{0 \leq t \leq j} ([\pi^{\flat, q^t}] - \pi)^{\langle \sigma^{j - t} \alpha, \mu \rangle}$. 
    
    Let us write the right-hand side as $\sum_{d \geq 0} [c_{\alpha,j}^d] \pi^d$. Since $\langle \mu, \alpha \rangle = -1$, we have
    \[
        \nu^\flat(c_{\alpha,j}^d) =  - q^j(d + 1) + \sum_{0 \leq t < j} \langle \sigma^{j - t}  \alpha, \mu \rangle q^t \geq -q^j(d+1). 
    \]
    Here, the first equality can be proved as in the proof of \Cref{lem:valsalpha} and the second inequality follows from \Cref{lem:vareps}. 
    
    Since $u_\alpha^\flat \in \pi^{\flat, m} R^{\flat+}$, $c_{\alpha,j}^d u_{\alpha}^{\flat, q^j} \in R^{\flat +}$ for every $d < m$. Moreover, 
    \[
        c_{\alpha,j}^d u_{\alpha}^{\flat, q^j} \in \pi^{\flat} R^{\flat +}
    \]
    for $d < m-1$, and 
    \[
        c_{\alpha,j}^d u_{\alpha}^{\flat, q^j} \in \varpi \cdot R^{\flat +}
    \]
    for some pseudo-uniformizer $\varpi \in R^\flat$ if $\langle \sigma^{t} \alpha, \mu \rangle = 1$ for some $1 \leq t \leq j$. As a result,
    \[
        \Ad(t_\infty)(\sigma^j(h_{<m})) \bmod{\pi^m} \in \cl{G}(W_{O_F}(R^{\flat+})/\pi^{m}) ,
    \]
    \[
        \Ad(t_\infty)(\sigma^j(h_{<m})) \equiv 1 \pmod{(\pi^{m-1}, [\pi^{\flat}])}. 
    \]
    Moreover, we have 
    \[
        \Ad(t_\infty)(\sigma^j(h_{<m})) \bmod{(\pi^m, [\varpi])} \in \breve{\cl{G}}^{\lambda < 0}(W_{O_F}(R^{\flat+})/(\pi^m, [\varpi]))
    \]
    for some pseudo-uniformizer $\varpi \in R^\flat$ because $\langle \sigma^{t} \alpha, \mu \rangle \neq 1$ for every $1 \leq t \leq j$ implies $\langle \sigma^j \alpha, \lambda \rangle < 0$ since $\msf{N} \cap \sigma(\msf{\ov{N}}) = \msf{U}_{\mu > 0}$ (see \Cref{defi:parabolic_mu}). 
    
    Next, suppose $n = 2m$. Since $\cl{U}(2m) \subset \cl{U}(2m - 1)$, the first claim follows from the previous case. Since $u_\alpha^\flat \in \pi^{\flat, m + r_\alpha} R^{\flat+}$, $c_{\alpha,j}^d u_{\alpha}^{\flat, q^j} \in \pi^{\flat, r_\alpha} R^{\flat +}$ for $d < m$. Thus, if we set $\varepsilon = \min r_\alpha$, then $\Ad(t_\infty)(\sigma^j(h_{<m}))$ modulo $\pi^m$ is trivial modulo $[\pi^{\flat, \varepsilon}]$, so the same holds for $\Ad(t_\infty)(g_{<m})$. 
\end{proof}

As in \Cref{ssec:stabilizer_open_balls}, it is essentially enough to keep track of the image of $x_\CM$. 

\begin{lem} \label{lem:image_CM_outside}
    Let $j \in G_b(F)$ be an element such that $j \cdot x_\CM \in \cl{U}(n)$. Then, $j \in J_{b, n}^0$. 
\end{lem}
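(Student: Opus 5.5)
The plan is to compare two $m$-th level structures at the single point $j \cdot x_\CM$. The first comes from the canonical trivialization $\can_{m, m}$ of \Cref{prop:canmthtriv}, which is described explicitly by $t_\infty g_{<m}$ through \Cref{lem:levelstrCp}. The second is the translate $j \cdot x_{\CM, m}$ of the CM level structure. Both lie over the same point of $\cl{M}_{\cl{G}, b, \mu}$, so they differ by a right translation by some $h \in \cl{G}(O_F / \pi^m)$. From this identity I will read off the position of $j$ relative to $x_b$ using \Cref{lem:Adjoint_of_g<m}.

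First I reduce to $j \in \cl{G}_b(O_F)$. Since $\cl{U}(n) \subset \cl{U}(0)$ and $x_\CM \in \cl{U}(0)$, the point $j \cdot x_\CM$ lies in $j \cdot \cl{U}(0) \cap \cl{U}(0)$. By \Cref{prop:U(0)_previous_work} (1) this forces $j \in \cl{G}_b(O_F)$.

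Next I write down the $\cl{G}$-BKF module at $j \cdot x_\CM$. By the description of the inner action used in the proof of \Cref{prop:U(n)_stability}, it is $(\cl{G} \otimes W_{O_F}(O_{\bb{C}_p}^\flat), j \mu([\pi^\flat]-\pi)\sigma(j)^{-1}\sigma)$. Let $u_\alpha$ be the coordinates of $j \cdot x_\CM$. By \Cref{lem:expBKF} and the rigidity of deformations, there is an isomorphism $\iota$, trivial modulo $[\varpi]$, from this BKF module to $\bigl(\cl{G}\otimes W_{O_F}(O_{\bb{C}_p}^\flat), \mu([\pi^\flat]-\pi)\prod_\alpha i_\alpha([u_\alpha^\flat])\sigma\bigr)$. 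Under $\iota$, the $\varphi$-invariant section $j t_\infty$ from \Cref{lem:finetdesc} and \Cref{defi:tinfty} maps to $\iota j t_\infty$. The canonical section is $t_\infty g_{<m}$. Hence, modulo $\pi^m$,
\[
    \iota j t_\infty = t_\infty g_{<m} h, \quad\text{that is,}\quad j \equiv \iota^{-1}\, \Ad(t_\infty)(g_{<m})\, \Ad(t_\infty)(h),
\]
for some $h \in \cl{G}(O_F / \pi^m)$.

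Now I reduce modulo $[\varpi]$ for a small pseudo-uniformizer, where $\iota$ becomes trivial. By \Cref{lem:Adjoint_of_g<m}, $\Ad(t_\infty)(g_{<m})$ is then either trivial (case $n = 2m$) or lies in $\breve{\cl{G}}^{\lambda<0}$ and is trivial modulo $\pi^{m-1}$ (case $n = 2m - 1$). Since $t_\infty \equiv \lambda(\pi^\flat) \pmod \pi$, conjugation by $t_\infty$ rescales each root group $\cl{U}_\beta$ by $[\pi^{\flat, \langle \beta, \lambda\rangle}]$. This is exactly the shift between $x$ and $x_b$ on the apartment of the facet $\mfr{f}$. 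Integrality of both sides, together with $j \in G_b(F)$ (so $\sigma$-fixed under $b\sigma$) and an induction on the big-cell decomposition of $j$ as in \Cref{lem:AdbCM}, should then force a decomposition $j = \ov{p} u$ with $\ov{p} \in \breve{\cl{G}}^{\lambda \leq 0}$ and $u \in \breve{\cl{G}}^{\lambda > 0}$. I expect the bounds to be: $u \equiv 1 \pmod{\pi^m}$ in both cases; $\ov{p}$ lies in $M(F)_{x,0}$ modulo $\pi^{m}$ when $n = 2m$ and modulo $\pi^{m-1}$ when $n = 2m - 1$. By the description of $G_b(\breve{F})_{x_b, k}$ given in the proof of \Cref{lem:AdbCM}, together with an application of \Cref{lem:modify_h} to absorb the $M$-part, this gives $j \in K^0_{b, 2m}$, respectively $j \in K^0_{b, 2(m-1)}$, which is $J^0_{b, n}$.

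The main obstacle is this last extraction. I must control the unknown element $h$, whose $t_\infty$-conjugate need not be integral. The first thing I will try is to compare leading Teichmüller terms root by root, using the valuations $\nu^\flat(s_{\alpha, i}) = -i - r_\alpha$ from \Cref{lem:valsalpha}. The goal is to show that the $\lambda<0$ root components of $h$ must vanish to the required $\pi$-adic order, and that the remaining components are absorbed into $M(F)_{x,0}$.
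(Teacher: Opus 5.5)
Your setup matches the paper's. You reduce to $j \in \cl{G}_b(O_F)$, compare the two $m$-th level structures at $j\cdot x_\CM$ to get $\iota j t_\infty = t_\infty g_{<m} h$ with $h \in \cl{G}(O_F/\pi^m)$, and use \Cref{lem:Adjoint_of_g<m} to make $\Ad(t_\infty)(g_{<m})$ harmless modulo $[\varpi]$. But the step you call the main obstacle is the whole content of the lemma, and the route you propose through it does not work.

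\begin{itemize}
\item The valuations $\nu^\flat(s_{\alpha,i})$ only describe $g_{<m}$, which is already disposed of; they say nothing about $h$.
\item Integrality of $\Ad(t_\infty)(h)$, with $t_\infty \equiv \lambda(\pi^\flat) \pmod{\pi}$, only kills the $\cl{\breve{G}}^{\lambda<0}$-components of $h \bmod \pi$. In other words, it only gives $h \bmod \pi \in \msf{P}$.
\item The $\msf{N}$-components are scaled by positive powers of $\pi^\flat$, so first-order integrality does not see them.
\item These components are not in $M$, so they cannot be ``absorbed into $M(F)_{x,0}$''. Nor can you remove them by modifying $j$: only elements of $\cl{M}(O_F)$ both commute with $t_\infty$ and lie in $G_b(F)$.
\item If left in place, they meet the higher Teichm\"uller coefficients of $t_\infty$ at the next $\pi$-adic level, so your induction cannot proceed.
\end{itemize}

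The missing idea is the rationality of $h$. It has coefficients in $O_F/\pi^m$, so it is $\sigma$-invariant, whereas $\msf{P}$ is not $\sigma$-stable and $\bigcap_{i\ge 0}\sigma^i(\msf{P}) = \msf{M}$. Hence $h \bmod \pi \in \msf{M}$.

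Since $t_\infty \in Z^\circ_{\cl{M}}$, replacing $(j,h)$ by $(jm', hm')$ for a lift $m' \in \cl{M}(O_F)$ preserves the relation. So you may assume $h \equiv 1 \pmod{\pi}$, which forces $j \equiv 1 \pmod{\pi}$ in $\cl{G}(O_{\breve F})$.

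At the next level the relation linearizes on graded pieces, where $\Ad(t_\infty)$ acts through $\Ad(\lambda(\pi^\flat))$. Integrality puts the level-$i$ piece of $h$ in $\Lie(\msf{P})$, $\sigma$-invariance puts it in $\Lie(\msf{M})$, and you modify again. This gives $h\in\cl{M}(O_F/\pi^m)$ and, after modification, $j \equiv 1 \pmod{\pi^m}$.

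The last level on the $\cl{\breve G}^{\lambda>0}$-part needed for $G_b(F)_{x_b,m}$ requires $u\equiv 1 \pmod{\pi^{m+1}}$, not $\pi^m$ as in your expected bound. It does not come from the level-$m$ comparison. It comes from $j \in G_b(F)$ via $G_b(F)\cap G(\breve F)_{x,m} = G_b(F)_{x_b,m}$.

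Two smaller points. \Cref{lem:modify_h} is unnecessary, since lifting from $\cl{M}(O_F/\pi^i)$ follows from smoothness. The odd case is simplest handled by reducing to the even case $n-1$, because $J^0_{b,2m-1}=J^0_{b,2m-2}$ and $\cl{U}(2m-1)\subset \cl{U}(2m-2)$.
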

\begin{proof}
    Since $J_{b, 2m}^0 = J_{b, 2m + 1}^0$ and $\cl{U}(2m + 1) \subset \cl{U}(2m)$, we may assume $n = 2m$. The case $m = 0$ follows from \Cref{prop:verification_at_depth_zero}, so we may assume $m \geq 1$. For each $\alpha \in \Phi_{\mu < 0}$, take $u_\alpha^\flat \in O_{\bb{C}_p}^\flat$ so that the coordinate of $j \cdot x_\CM \in \cl{U}(n)(\bb{C}_p)$ is given by $u_\alpha = (u_\alpha^\flat)^\sharp$. By \Cref{lem:expBKF} and the inner action on $x_\CM$ given in \cite[Remark 2.11, Proposition 2.15]{Tak25z}, there is $\iota \in \cl{G}(W_{O_F}(O_{\bb{C}_p}^\flat))$ that is trivial modulo $[\varpi]$ for some pseudo-uniformizer $\varpi \in \bb{C}_p^\flat$ and satisfies 
    \[
        \iota j \mu([\pi^\flat] - \pi) \sigma(\iota j)^{-1} = \mu([\pi^\flat]-\pi) \prod_{\alpha \in \Phi_{\mu<0}} i_\alpha([u_\alpha^\flat]). 
    \]
    Since both $\iota j t_\infty \bmod \pi^m$ and $t_\infty g_{<m}$ define $m$-th level structures at $j \cdot x_\CM$, there is an element $g \in \cl{G}(O_F / \pi^m)$ such that 
    \begin{equation} \label{eq:jgrel}
        \iota j t_\infty = t_\infty g_{<m} g \Leftrightarrow \Ad(t_\infty)(g_{<m})^{-1} \iota j = \Ad(t_\infty)(g).  
    \end{equation}
    We would like to prove $g \in \cl{M}(O_F / \pi^m)$ and $j \in K_{b, 2m}^0$. We will inductively show $g \bmod \pi^i \in \cl{M}(O_F / \pi^i)$ for each $1 \leq i \leq m$. 
    
    First, we prove the base case $i = 1$. By \Cref{lem:Adjoint_of_g<m} (2), the left-hand side of \eqref{eq:jgrel} lies in $\cl{G}(W_{O_F}(O_{\bb{C}_p}^\flat) / \pi^m)$ and is congruent to $j$ modulo $[\varpi]$ for some $\varpi$. Then, its reduction modulo $(\pi, [\varpi])$ lies in $\msf{\ov{P}}$ by the description of $j \bmod \pi$ in \cite[Lemma 5.6]{Tak25z}, so $\Ad(t_\infty)(g)$ lies in the big cell $\cl{\breve{G}}^{\lambda \leq 0} \times \cl{\breve{G}}^{\lambda > 0}$. Since $t_\infty \in Z_{\cl{M}}^\circ$ and $t_\infty \equiv \lambda(\pi^\flat) \pmod{\pi}$, it follows that $g$ also lies in the big cell $\cl{\breve{G}}^{\lambda \leq 0} \times \cl{\breve{G}}^{\lambda > 0}$ and $g \bmod \pi \in \msf{P}$. Since $g \bmod \pi$ is stable under $\sigma$ and $\bigcap_{i \geq 0} \sigma^i(\msf{P}) = \msf{M}$, we get $g \bmod \pi \in \msf{M}$. 

    Next, suppose $g \bmod \pi^i \in \cl{M}(O_F / \pi^i)$ for some $1 \leq i < m$. If one replaces $j$ by $jm$ for $m \in \cl{M}(O_F)$, then $g$ would be replaced by $gm$ as $t_\infty \in Z_\cl{M}^\circ$. Thus, we may assume that $g$ is trivial modulo $\pi^i$. By \Cref{lem:Adjoint_of_g<m} (2) and \eqref{eq:jgrel}, $j$ is trivial modulo $\pi^i$ as an element of $\cl{G}(O_{\breve{F}})$. Since $b = \mu(-\pi)$, we have 
    \[
        j \in G_b(F) \cap G(F)_{x, i} = G_b(F)_{x_b, i}. 
    \]  
    In other words, $j \bmod \pi^{i+1} \in \Lie(\msf{\ov{P}})$ via the Moy-Prasad isomorphism. Then, \eqref{eq:jgrel} implies $g \bmod \pi^{i+1} \in \Lie(\msf{\ov{P}})$. Since $g \bmod \pi^{i+1}$ is stable under $\sigma$, we get $g \bmod \pi^{i+1} \in \Lie(\msf{M})$. 

    Thus, we get $g \in \cl{M}(O_F / \pi^m)$ by induction on $i$. As previously, by replacing $g$ and $j$ under $\cl{M}(O_F)$, we may assume $g = 1$. We also have $j \in G_b(F)_{x_b, m}$ by the previous argument. Since $G_b(F)_{x_b, m} \subset K_{b, 2m}^0$, we get $j \in J_{b, n}^0$. 
\end{proof}

\begin{prop} \label{prop:action_outside_J_bn}
    For every $j \in G_b(F) - J_{b, n}^0$, we have $\cl{U}(n) \cap j \cdot \cl{U}(n) = \emptyset$. 
\end{prop}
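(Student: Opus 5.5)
Suppose $y \in \cl{U}(n) \cap j \cdot \cl{U}(n)$ for some $j \in G_b(F)$; the plan is to show $j \in J_{b, n}^0$ by moving everything to the CM point. \Cref{lem:image_CM_outside} already says that $j \cdot x_\CM \in \cl{U}(n)$ forces $j \in J_{b, n}^0$. So it suffices to prove that $j \cdot \cl{U}(n) \cap \cl{U}(n) \neq \emptyset$ implies $j \cdot x_\CM \in \cl{U}(n)$.

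\textbf{Step 1: reduce to $\cl{G}_b(O_F)$.} First, $\cl{U}(n) \subset \cl{U}(0)$, and $j \cdot \cl{U}(0) \cap \cl{U}(0) = \emptyset$ for $j \notin \cl{G}_b(O_F)$ by \Cref{prop:U(0)_previous_work} (1). Hence we may assume $j \in \cl{G}_b(O_F)$, so that $j$ preserves $\cl{U}(0)$. It therefore acts on the integral model through a continuous automorphism $j^*$ of $A = O_{\bb{C}_p} \langle \tfrac{u_\alpha}{\pi^{r_\alpha}} \mid \alpha \in \Phi_{\mu < 0} \rangle$, as in the proof of \Cref{prop:U(n)_stability}.

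\textbf{Step 2: write the action on coordinates.} Write $j^*(\tfrac{u_\alpha}{\pi^{r_\alpha}}) = P_\alpha$ with $P_\alpha \in A$. The constant terms $P_\alpha(0)$ are the rescaled coordinates of $j \cdot x_\CM$, up to the convention of left versus right action; using $j^{-1}$ if necessary handles this symmetrically. The ball $\cl{U}(n)$ corresponds to the conditions $\lvert \tfrac{u_\alpha}{\pi^{r_\alpha}} \rvert \leq \lvert \pi \rvert^{c_\alpha}$, with $c_\alpha = m$ when $n = 2m$ and $c_\alpha = m - r_\alpha$ when $n = 2m - 1$. These radii are smaller than the radius of $\cl{U}(0)$, so both points $y$ and $j^{-1}y$ lie near the origin.

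\textbf{Step 3: compare $j \cdot y$ with $j \cdot x_\CM$.} We have $j \cdot y - j \cdot x_\CM = P_\alpha(y) - P_\alpha(0)$. Since $P_\alpha$ has coefficients in $O_{\bb{C}_p}$ in the normalized variables, this difference is bounded by $\max_\beta \lvert y_\beta / \pi^{r_\beta} \rvert \leq \lvert \pi \rvert^{\min c_\beta}$. Because $\cl{U}(n)$ is a polydisc centered at $x_\CM$, the ultrametric inequality then gives $\lvert P_\alpha(0) \rvert \leq \max(\lvert P_\alpha(y) \rvert, \lvert \pi \rvert^{\min_\beta c_\beta})$. This yields $j \cdot x_\CM \in \cl{U}(n)$ directly whenever all the $c_\alpha$ coincide; for even $n$ they equal $m$ for every $\alpha$, and that is exactly the case \Cref{lem:image_CM_outside} reduces to.

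\textbf{Step 4 (main obstacle): odd $n$.} For $n = 2m - 1$ the radii $c_\alpha = m - r_\alpha$ differ between coordinates, so Step 3 does not apply verbatim. One option is to work on the formal model of $\cl{U}(2m - 1)$ itself, via $\{\lvert u_\alpha \rvert \leq \lvert \pi \rvert^m\}$ with coordinates $u_\alpha / \pi^m$. Stability of $\cl{U}(0)$ does not immediately give integrality of $j^*$ there. The simpler route is to use $J^0_{b, 2m-1} = J^0_{b, 2m-2}$ together with the inclusion $\cl{U}(2m-1) \subset \cl{U}(2m-2)$. Applying the even case gives $j \in J^0_{b, 2m-2} = J^0_{b, 2m-1}$. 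The even case thus settles all $n$, and the only real input is Step 3 combined with \Cref{lem:image_CM_outside}.
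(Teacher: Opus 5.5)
Your proof is correct. For even $n = 2m$ it is the paper's argument. You reduce to $j \in \cl{G}_b(O_F)$ by \Cref{prop:U(0)_previous_work} (1), where $\cl{G}_b(O_F) = J^0_{b,0}$. You then use the integral action $j^*$ on $O_{\bb{C}_p}\langle \tfrac{u_\alpha}{\pi^{r_\alpha}} \rangle$, in which $\cl{U}(2m)$ has uniform radius $\lvert \pi \rvert^m$. Comparing the constant terms $P_\alpha(0)$ with $P_\alpha(x)$ gives $j \cdot x_\CM \in \cl{U}(2m)$, and \Cref{lem:image_CM_outside} finishes.

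For odd $n = 2m - 1$ you take a different route. You deduce the claim from the even case $2m-2$ via two facts:
\begin{itemize}
\item $\cl{U}(2m-1) \subset \cl{U}(2m-2)$, which holds because $r_\alpha < 1$;
\item $J^0_{b, 2m-1} = K^0_{b, 2m-2} = J^0_{b, 2m-2}$.
\end{itemize}
For $m = 1$ this collapses to \Cref{prop:U(0)_previous_work} (1) alone.

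The paper instead treats the odd case directly. It sidesteps the integrality worry you raise by not using a model of $\cl{U}(2m-1)$ at all. Since $j \in \cl{G}_b(O_F)$ fixes $[1]$, it acts on the formal completion $R_{\cl{G}, \mu}$. In the unnormalized coordinates $u_\alpha$ the odd ball has uniform radius $\lvert \pi \rvert^m$, so the same constant-term comparison works verbatim there.

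Your monotonicity reduction is more economical. It needs only the one integral structure coming from the stability of $\cl{U}(0)$. It is also the same trick the paper already uses, in the other direction, at the start of \Cref{lem:image_CM_outside}. The paper's version simply keeps the two parities parallel.
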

\begin{proof}
    Suppose $\cl{U}(n) \cap j \cdot \cl{U}(n) \neq \emptyset$. Since $\cl{U}(n) \subset \cl{U}(0)$, $j \in \cl{G}_b(O_F)$ by \Cref{prop:U(0)_previous_work} (1). Take a geometric point $x \in \cl{U}(n)(\bb{C}_p)$ with $j \cdot x \in \cl{U}(n)$ and let $x_\alpha \in O_{\bb[C]_p}$ be the $u_\alpha$-coordinate of $x$ for each $\alpha \in \Phi_{\mu < 0}$. 

    First, we treat the case $n = 2m - 1$. The homomorphism
    \[
        R_{\cl{G},\mu} \xrightarrow{j^*} R_{\cl{G},\mu}
    \]
    associated to the action of $j$ sends each $u_\alpha$ to $P_\alpha \in R_{\cl{G}, \mu}$ with $P_\alpha(x) \equiv 0 \pmod{\pi^m}$. Since $x \in \cl{U}(n)$, each $x_\alpha$ is trivial modulo $\pi^m$. Thus, $P_\alpha(0) \equiv P_\alpha(x) \equiv 0 \pmod{\pi^m}$. In particular, $j \cdot x_\CM \in \cl{U}(n)$, so we get $j \in J_{b, n}^0$ by \Cref{lem:image_CM_outside}. 

    Similarly, we treat the case $n = 2m$. The homomorphism 
    \[  
        O_{\bb{C}_p} \langle \tfrac{u_\alpha}{\pi^{r_\alpha}} \mid \alpha \in \Phi_{\mu < 0} \rangle \xrightarrow{j^*} O_{\bb{C}_p} \langle \tfrac{u_\alpha}{\pi^{r_\alpha}} \mid \alpha \in \Phi_{\mu < 0} \rangle
    \]  
    associated to the action of $j$ on $\cl{U}(0)_{\bb{C}_p}$ sends each $\tfrac{u_\alpha}{\pi^{r_\alpha}}$ to $P_\alpha \in O_{\bb{C}_p} \langle \tfrac{u_\alpha}{\pi^{r_\alpha}} \mid \alpha \in \Phi_{\mu < 0} \rangle$ with $P_\alpha(x) \equiv 0 \pmod{\pi^m}$. Since $x \in \cl{U}(n)$, each $\tfrac{x_\alpha}{\pi^{r_\alpha}}$ is trivial modulo $\pi^m$. Thus, $P_\alpha(0) \equiv P_\alpha(x) \equiv 0 \pmod{\pi^m}$. In particular, $j \cdot x_\CM \in \cl{U}(n)$, so we get $j \in J_{b, n}^0$ by \Cref{lem:image_CM_outside}.  
\end{proof}

Next, we study the inner action on $\Img(\can_{m ,m}) \subset \cl{M}_{G, b, \mu, G(F)_{x, m}}$ to deduce the required stability of $\Img(\can_n)$ in \Cref{prop:restatement_LSV} (3). By \Cref{prop:U(n)_stability}, $J_{b, n}^0$ acts on the inverse image of $\cl{U}(n)$ in $\cl{M}_{G, b, \mu, G(F)_{x, m}}$, which is the disjoint union 
\begin{equation} \label{eq:inverse_image_U(n)_first}
    \bigsqcup_{g \in G(F)_{x, 0} / G(F)_{x, m}} g \cdot \Img(\can_{m, m}).
\end{equation}
Since $\Img(\can_{m, m})$ is connected, it is essentially enough to keep track of the CM point to determine the stabilizer of $\Img(\can_{m, m})$. Now, we will use the following index for the Moy-Prasad filtration
\[
    m_b = \left\{ \begin{alignedat}{4}
        & m & \quad & (n = 2m) \\
        & (m - 1)+ & \quad & (n = 2m - 1). 
    \end{alignedat}
    \right.
\]

\begin{lem} \label{lem:intermediate_image_x_CM}
    For each $j \in G_b(F)_{x_b, m_b}$, we have $j \cdot x_{\CM, m} \in \Img(\can_{m, m})$. 
\end{lem}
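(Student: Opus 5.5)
We will test membership in $\Img(\can_{m, m})$ on $m$-th level structures. The point $j\cdot x_{\CM,m}$ lies over $j\cdot x_\CM\in\cl{U}(n)$, by \Cref{prop:U(n)_stability}, since $G_b(F)_{x_b,m_b}\subset J^0_{b,n}$. By \eqref{eq:inverse_image_U(n)_first} it therefore lies in $g\cdot\Img(\can_{m,m})$ for a unique class $g\in G(F)_{x,0}/G(F)_{x,m}$, and it suffices to show $g\equiv 1$.

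\textbf{Step 1: the level structure at $j\cdot x_\CM$.} Let $u_\alpha^\flat$ be coordinates of $j\cdot x_\CM$. As in the proof of \Cref{lem:image_CM_outside}, the inner action gives $\iota\in\cl{G}(W_{O_F}(O_{\bb{C}_p}^\flat))$, trivial modulo some $[\varpi]$, with
\[
\iota j\mu([\pi^\flat]-\pi)\sigma(\iota j)^{-1}=\mu([\pi^\flat]-\pi)\prod_{\alpha} i_\alpha([u_\alpha^\flat]).
\]
The level structure of $j\cdot x_{\CM,m}$ is $\iota j t_\infty \bmod \pi^m$. The one of $\can_{m,m}(j\cdot x_\CM)$ is $t_\infty g_{<m}$, by \Cref{lem:levelstrCp} and the proof of \Cref{prop:canmthtriv}. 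Hence \eqref{eq:jgrel} holds:
\[
\Ad(t_\infty)(g_{<m})^{-1}\,\iota j=\Ad(t_\infty)(g),\qquad g\in\cl{G}(O_F/\pi^m).
\]
The goal is to show $g=1$, rather than merely $g\in\cl{M}(O_F/\pi^m)$ as in \Cref{lem:image_CM_outside}.

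\textbf{Step 2: the case $n=2m$.} Here $j\in G_b(F)_{x_b,m}$, and since $b=\mu(-\pi)$ this means $j\equiv 1 \pmod{\pi^m}$ in $\cl{G}(O_{\breve F})$. By \Cref{lem:Adjoint_of_g<m}(2), $\Ad(t_\infty)(g_{<m})\bmod\pi^m$ is trivial modulo $[\varpi]$, and so is $\iota$. Thus the left side of \eqref{eq:jgrel}, taken modulo $\pi^m$, is congruent to $1$ modulo $[\varpi]$. The right side $\Ad(t_\infty)(g)$ with $g$ constant must then satisfy $g\equiv 1$. To see this, I would reduce modulo $(\pi^i,[\varpi])$ inductively on $i$. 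At each step $t_\infty\equiv\lambda(\pi^\flat)$ modulo $\pi$, so the $\lambda$-weight components of $\Lie$ are scaled by powers of $\pi^\flat$. A constant graded piece $X\in\Lie(\msf{G})$ then satisfies $\Ad(\lambda(\pi^\flat))X\equiv 0$ modulo $[\varpi]$ only if each weight component vanishes. For nonpositive weights this uses that $[\varpi]$ has small valuation compared with constants. The positive-weight components would need the $\sigma$-stability trick of \Cref{lem:image_CM_outside}: $\sigma$-stability of $g\bmod\pi^{i+1}$ together with $\bigcap\sigma^i(\msf{P})=\msf{M}$ forces the components into $\Lie(\msf{M})$, where the weight is zero.

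\textbf{Step 3: the case $n=2m-1$.} Here $j\in G_b(F)_{x_b,(m-1)+}$. \Cref{lem:Adjoint_of_g<m}(1) gives $\Ad(t_\infty)(g_{<m})$ trivial modulo $(\pi^{m-1},[\pi^\flat])$ and lying in $\breve{\cl{G}}^{\lambda<0}$ modulo $(\pi^m,[\varpi])$. Combined with the description of $G_b(F)_{x_b,(m-1)+}$ via the big cell used in \Cref{lem:AdbCM}, Step 2's argument gives $g\equiv1$ modulo $\pi^{m-1}$. The top graded piece of $g$ must then lie in $\Lie(\msf{\ov{N}})+\Lie(\msf{\ov{P}})$ pieces, and $\sigma$-invariance should cut this down to $\Lie(\msf{M})$ intersected with a strictly-negative part, which is zero.

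\textbf{Main obstacle.} I expect the main difficulty to be the last graded step in the odd case. There the relevant pieces are only trivial modulo $[\varpi]$ rather than modulo $[\pi^\flat]$, and the weight bookkeeping coming from $\msf{N}\cap\sigma(\ov{\msf{N}})=\msf{U}_{\mu>0}$ must exclude an $\msf{M}$-component of $g$. I would first try to show that this $\msf{M}$-component is forced to be killed by the fact that $j\in G_b(F)_{x_b,(m-1)+}$ has trivial $\msf{M}$-part in the depth $m-1$ graded piece.
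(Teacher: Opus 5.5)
Your overall route is the paper's: reduce to $g=1$ in \eqref{eq:jgrel}, then control the left side via \Cref{lem:Adjoint_of_g<m} and $\iota\equiv 1 \pmod{[\varpi]}$. Next use that $\Ad(t_\infty)$ acts on the constant $g$ through $\lambda(\pi^\flat)$ modulo $\pi$, and finish by Frobenius-stability.

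Step 2 is essentially correct. The paper does it in one stroke: integrality and triviality modulo $[\varpi]$ of $\Ad(t_\infty)(g)$ force $g\in\cl{\breve{G}}^{\lambda>0}(O_{\breve F}/\pi^m)$, and $\bigcap_{i\ge 0}\sigma^i(\cl{\breve{G}}^{\lambda>0})=\{1\}$. Your graded induction is the same argument, with the first step done at group level via the big cell as in \Cref{lem:image_CM_outside}. Two small inaccuracies there. First, your claim that $\Ad(\lambda(\pi^\flat))X\equiv 0$ forces every weight component to vanish is false for positive weights, which are automatically small; you implicitly correct this in the next sentence. Second, $j\equiv 1 \pmod{\pi^m}$ holds because $x_b$ lies in a facet having $x$ as a vertex, not merely because $b=\mu(-\pi)$.

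Step 3, however, has the weights reversed and is left open. Let $X\in\mfr{g}\otimes k$ be the depth-$(m-1)$ piece of $g$; you correctly get $g\equiv 1 \pmod{\pi^{m-1}}$. The corresponding piece of the left side of \eqref{eq:jgrel} is $\Ad(\lambda(\pi^\flat))(X)$. You know it is integral and lies in $\Lie(\msf{\ov{N}})\otimes O_{\bb{C}_p}^\flat$ modulo $\varpi$. Integrality kills the $\Lie(\msf{\ov{N}})$-components of $X$, since they get multiplied by negative powers of $\pi^\flat$, while the $\Lie(\msf{N})$-components are unconstrained. So $X$ lies in $\Lie(\msf{M})\oplus\Lie(\msf{N})$, not in $\Lie(\msf{\ov{P}})$, and the closing $\sigma$-argument must use the strictly positive part.

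The $\Lie(\msf{M})$-component, which you flag as the main obstacle, is killed by exactly the fact you point to. Since $\alpha(x_b)\in(0,1)$ precisely for $\alpha\in\Phi_{\msf{\ov{N}}}$ and $\alpha(x_b)\le 0$ otherwise (with $\alpha(x)=0$), every element of $G_b(F)_{x_b,(m-1)+}$ has the form $\ov{u}\,m'u$. Here $\ov{u}\in\cl{\breve{G}}^{\lambda<0}$ and $m',u\equiv 1 \pmod{\pi^m}$. Combined with \Cref{lem:Adjoint_of_g<m}(1) and $\iota\equiv 1 \pmod{[\varpi]}$, the whole left side lies in $\cl{\breve{G}}^{\lambda<0}$ modulo $(\pi^m,[\varpi])$.

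Because $t_\infty\in Z^\circ_{\cl{M}}$, $\Ad(t_\infty)$ fixes $\Lie(\msf{M})$. So the $\Lie(\msf{M})$-component of $X$ is a constant lying in $\varpi O_{\bb{C}_p}^\flat$, hence zero. Then $X\in\Lie(\msf{N})$ is $\sigma$-fixed, so $X\in\bigcap_{i\ge 0}\sigma^i\Lie(\msf{N})=0$. With this correction your Step 3 becomes the paper's argument.
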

\begin{proof}
    For each $\alpha \in \Phi_{\mu < 0}$, take $u_\alpha^\flat \in O_{\bb{C}_p}^\flat$ so that the coordinate of $j \cdot x_\CM \in \cl{U}(n)(\bb{C}_p)$ is given by $u_\alpha = (u_\alpha^\flat)^\sharp$. By \Cref{lem:expBKF} and the inner action on $x_\CM$ given in \cite[Remark 2.11, Proposition 2.15]{Tak25z}, there is $\iota \in \cl{G}(W_{O_F}(O_{\bb{C}_p}^\flat))$ that is trivial modulo $[\varpi]$ for some pseudo-uniformizer $\varpi \in \bb{C}_p^\flat$ and satisfies 
    \[
        \iota j \mu([\pi^\flat] - \pi) \sigma(\iota j)^{-1} = \mu([\pi^\flat]-\pi) \prod_{\alpha \in \Phi_{\mu<0}} i_\alpha([u_\alpha^\flat]). 
    \]
    As in the proof of \Cref{lem:image_CM_outside}, there is an element $g \in \cl{G}(O_F / \pi^m)$ such that 
    \begin{equation} \label{eq:jgrel_2}
        \iota j t_\infty = t_\infty g_{<m} g \Leftrightarrow \Ad(t_\infty)(g) = \Ad(t_\infty)(g_{<m})^{-1} \iota j.  
    \end{equation}

    We would like to show $g = 1$ to deduce $j \cdot x_\CM \in \Img(\can_{m, m})$. First, we treat the case $n = 2m$. Then $j \in G_b(F)_{x_b, m}$ implies $j \equiv 1 \pmod{\pi^m}$ in $\cl{G}(O_{\breve{F}})$ since $x$ and $x_b$ lie in the same facet $\mfr{f}$. By \Cref{lem:Adjoint_of_g<m} (2), \eqref{eq:jgrel_2} implies that $\Ad(t_\infty)(g)$ lies in $\cl{G}(W_{O_F}(O_{\bb{C}_p}^\flat) / \pi^m)$ and is trivial modulo $[\varpi]$ for some pseudo-uniformizer $\varpi \in \bb{C}_p^\flat$. Since $g \in \cl{G}(O_F / \pi^m)$, it follows that $g \in \cl{\breve{G}}^{\lambda \leq 0} \times \cl{\breve{G}}^{\lambda > 0}$ and its component on $\cl{\breve{G}}^{\lambda \leq 0}$ vanishes. Since $g$ is stable under the Frobenius and $\bigcap_{i \geq 0} \sigma^i(\cl{\breve{G}}^{\lambda > 0}) = \{ 1 \}$, we get $g = 1$. 

    Next, we treat the case $n = 2m - 1$. By $j \in G_b(F)_{x_b, (m-1)+}$ and \Cref{lem:Adjoint_of_g<m} (1), \eqref{eq:jgrel_2} implies that $\Ad(t_\infty)(g)$ lies in $\cl{G}(W_{O_F}(O_{\bb{C}_p}^\flat) / \pi^m)$, lies in $\cl{\breve{G}}^{\lambda < 0}$ modulo $[\varpi]$ and is trivial modulo $(\pi^{m-1}, [\varpi])$ for some pseudo-uniformizer $\varpi \in \bb{C}_p^\flat$. Then, it follows that $g \in \cl{\breve{G}}^{\lambda \leq 0} \times \cl{\breve{G}}^{\lambda > 0}$ and its component on $\cl{\breve{G}}^{\lambda \leq 0}$ vanishes. Then we get $g = 1$ as previously. 
\end{proof}

\begin{prop} \label{prop:stability_Img_can_n}
    The open image $\Img(\can_{m, m}) \subset \cl{M}_{G, b, \mu, G(F)_{x, m}}$ is stable under $G_b(F)_{x_b, m_b}$. Moreover, $\Img(\can_n) \subset \cl{M}_{G, b, \mu, K_n^0}$ is stable under $J_{b, n}^0$. 
\end{prop}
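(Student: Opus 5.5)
The plan is to derive both assertions from \Cref{lem:intermediate_image_x_CM}, using the connectedness of $\Img(\can_{m,m})$ and the decomposition \eqref{eq:inverse_image_U(n)_first}. Throughout, only the behaviour of the CM point needs to be tracked.

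\textbf{First assertion.} Let $j \in G_b(F)_{x_b, m_b}$. Since $G_b(F)_{x_b, m_b} \subset J_{b,n}^0$, \Cref{prop:U(n)_stability} shows that $j$ preserves $\cl{U}(n)$. Hence $j$ preserves its inverse image in $\cl{M}_{G, b, \mu, G(F)_{x, m}}$, which is the disjoint union \eqref{eq:inverse_image_U(n)_first}.

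Each piece $g \cdot \Img(\can_{m,m})$ of this union is closed and open, and connected, because $\cl{U}(n)_{\bb{C}_p}$ is connected. The $G_b(F)$-action commutes with the $G(F)_{x,0}$-action, so $j$ permutes these pieces. Thus $j \cdot \Img(\can_{m,m})$ is again one of the pieces.

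To decide which piece, I evaluate on the CM point. By \Cref{lem:intermediate_image_x_CM}, $j \cdot x_{\CM, m}$ lies in $\Img(\can_{m,m})$. Since $x_{\CM,m} = \can_{m,m}(x_\CM)$ also lies in $\Img(\can_{m,m})$, the piece $j\cdot \Img(\can_{m,m})$ meets $\Img(\can_{m,m})$. Because the union is disjoint, the two pieces coincide, which proves the first claim.

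\textbf{Second assertion.} Let $\pi_n$ denote the projection $\cl{M}_{G,b,\mu,G(F)_{x,m}} \to \cl{M}_{G,b,\mu,K_n^0}$. By the uniqueness in \Cref{cor:canonical_CM_structure}, $\can_n$ agrees with $\pi_n \circ \can_{m,m}$ after base change to $\breve{F}_m$. Therefore $\Img(\can_n)$ is the image of $\Img(\can_{m,m})$ under $\pi_n$.

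The projection $\pi_n$ is $G_b(F)$-equivariant, so the first assertion gives stability of $\Img(\can_n)$ under $G_b(F)_{x_b,m_b}$.

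It remains to handle the rest of $J_{b,n}^0$. I will show $J_{b,n}^0 = G_b(F)_{x_b,m_b}\, M(F)_{x,0}$.
\begin{itemize}
\item When $n = 2m$, this is the definition $K^0_{b,2m} = G_b(F)_{x_b,m} M(F)_{x,0}$.
\item When $n = 2m-1$, I use $J^0_{b,n} = K^0_{b,2(m-1)} = G_b(F)_{x_b,m-1} M(F)_{x,0}$. I must show that $G_b(F)_{x_b,m-1}$ is contained in $G_b(F)_{x_b,(m-1)+} M(F)_{x,m-1}$. This should follow from the Moy–Prasad description: the graded piece at depth $m-1$ is $\Lie(\msf{\ov{P}})$ for the twisted Levi pair, and it is generated by $\msf{m}_{x,m-1}$ together with the part lying in $G_b(F)_{x_b,(m-1)+}$. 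This uses that $x_b$ sits in the facet $\mfr{f}$ as in the proof of \Cref{lem:AdbCM}.
\end{itemize}
I expect this group-theoretic factorization in the odd case to be the main delicate step.

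Given the factorization, it suffices to treat $h \in M(F)_{x,0}$. The $G_b$-action of $h$ equals $\Delta(h)$ composed with the inverse of the $G(F)$-action of $h$. The $G(F)$-action of $h$ preserves $K_n^0$, since $M(F)_{x,0}$ normalizes $K_n^0$.

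\begin{itemize}
\item $\Delta(h)$ fixes $x_\CM$ by \Cref{prop:U(0)_previous_work}. It also fixes the lift $x_{\CM, K_n^0}$, because this lift comes from $\cl{M}_{\cl{M},b,\mu}$, on which the diagonal action of $M(F)_{x,0}$ is trivial.
\item The $G(F)$-action of $h$ fixes $x_{\CM, K_n^0}$, since $h$ stabilizes this point by \Cref{lem:level_structure_special_points}.
\end{itemize}
Hence the $G_b$-action of $h$ fixes $x_{\CM, K_n^0}$.

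Finally, the $G_b$-action of $h$ preserves the inverse image of $\cl{U}(n)$, by \Cref{prop:U(n)_stability}. This inverse image is a disjoint union of connected translates of $\Img(\can_n)$. The translate $h \cdot \Img(\can_n)$ contains $x_{\CM, K_n^0}$, so it meets $\Img(\can_n)$ and therefore equals it. This proves that $\Img(\can_n)$ is stable under $J_{b,n}^0$.
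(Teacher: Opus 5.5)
Your proposal is correct and follows the paper's argument. The first claim comes from \Cref{lem:intermediate_image_x_CM} and the decomposition \eqref{eq:inverse_image_U(n)_first} into connected clopen pieces; the second comes from projecting to level $K_n^0$, the factorization $J^0_{b,n} = G_b(F)_{x_b,m_b}M(F)_{x,0}$ (which the paper simply asserts), and the fact that $M(F)_{x,0}$ fixes the CM lift (which the paper phrases via uniqueness of $\can_n$).

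Three small corrections.
First, the diagonal action of $M(F)_{x,0}$ on $\cl{M}_{\cl{M},b,\mu}$ need not be trivial when $M$ is non-abelian; it does fix $x_\CM$ by the uniqueness in \Cref{prop:U(0)_previous_work}, and that is all you use.
Second, $h \in M(F)_{x,0} \subset K_n^0$ acts trivially on level $K_n^0$ through $G(F)$, so no appeal to \Cref{lem:level_structure_special_points} is needed.
Third, the odd-case factorization is not delicate, but your description of the graded piece is off. Every root outside $M$ takes a non-integral value at $x_b$ (for instance $0<\alpha(x_b)<1$ on $\Phi_{\msf{\ov{N}}}$). Hence $G_b(F)_{x_b,m-1}/G_b(F)_{x_b,(m-1)+} \cong M(F)_{x,m-1}/M(F)_{x,(m-1)+}$, not $\Lie(\msf{\ov{P}})$, and this gives $G_b(F)_{x_b,m-1} = M(F)_{x,m-1}\,G_b(F)_{x_b,(m-1)+}$ directly.
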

\begin{proof}
    Since $G_b(F)_{x_b, m_b}$ acts on \eqref{eq:inverse_image_U(n)_first}, it stabilizes $\Img(\can_{m, m})$ by \Cref{lem:intermediate_image_x_CM}. Then, $\Img(\can_n)$ is stable under $G_b(F)_{x_b, m_b}$. Since $x_\CM$ is fixed under the inner action of $M(F)_{x, 0}$, the uniqueness of $\can_n$ implies that $\Img(\can_n)$ is also stable under $M(F)_{x, 0}$. Since $J_{b, n}^0 = G_b(F)_{x_b, m_b} M(F)_{x, 0}$, we get the second claim. 
\end{proof}

\subsection{Refinement of canonical trivializations}

In this section, we refine $\can_{m, m}$ to a map
\[
    \can_{n, m} \colon \cl{U}(n)_{\breve{F}_n} \to \cl{M}_{G, b, \mu, G(F)_{x, n, m}}. 
\]
Here, $G(F)_{x, n, m}$ denotes a Moy-Prasad subgroup for $M \subset G$ used in Yu's construction (see \Cref{ssec:representation_theory_preparation} for our notation). This refinement is needed in the analysis of cohomology for \Cref{prop:restatement_LSV} (4).

Since we will frequently use the Moy-Prasad isomorphism, we first set up notation for Lie algebras. Let $\mfr{g} = \Lie(\cl{G})$ and $\mfr{m} = \Lie(\cl{M})$. The orthogonal complement of $\mfr{m} \subset \mfr{g}$ with respect to the toral action of $Z_{\cl{M}}^\circ$ is denoted by $\mfr{m}^\perp$ and let $\pi_{\mfr{m}} \colon \mfr{g} \to \mfr{m}$ denote the natural projection. Then, let $du_\alpha \in \breve{\mfr{m}}^\perp$ denote the image of $dt \in \Lie(\bb{A}^1)$ under $\Lie(i_\alpha) \colon \Lie(\bb{A}^1) \cong \Lie(\cl{U}_\alpha) \subset \mfr{\breve{g}}$. Here, the breve accent denotes the base change to $O_{\breve{F}}$. 

\begin{prop} \label{lem:levelVn}
    Let $\cl{U}_\infty(n) = \cl{U}(n)\times_{\Spf(R_{\cl{G}, \mu})_\eta} \Spf(\wtd{R}_{\cl{G},\mu,\infty})_\eta$ and let $S = \Spa(R,R^+)$ be an affinoid perfectoid space over $\cl{U}_\infty(n)$. Let
    \begin{equation} \label{eq:definition_h_mn}
        h_{[m, n)} = \sum_{\alpha \in \Phi_{\mu<0}} \sum_{m \leq i < n} [s_{\alpha,i}u_\alpha^\flat] \pi^{i-m} du_\alpha \in \mfr{g} \otimes W_{O_F}(R^\flat)/\pi^{n - m}. 
    \end{equation}
    Then, the set of lifts $S \to \cl{M}_{G, b, \mu, G(F)_{x, n, m}}$ of $\can_{m, m}$ is bijective to the set of elements $x \in \mfr{m} \otimes W_{O_F}(R^\flat) / \pi^{n-m}$ such that 
    \begin{equation}
        x - \sigma(x) = \pi_{\mfr{m}}(\Ad(\sigma(g_{<m})^{-1})(h_{[m, n)})) \label{eq:levelVn}
    \end{equation}
    where $g_{<m}$ is defined as in \eqref{eq:g<m}. 
\end{prop}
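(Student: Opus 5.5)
Writing $K = G(F)_{x,n,m}$, I will reduce the statement to a computation in $\cl{G}$ modulo $\pi^n$, extending \Cref{lem:levelstrCp} from $G(F)_{x,m}$ to $K$.

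\textbf{Step 1 (level-$K$ structures).} Since $K \subset \cl{G}(O_F)$ contains $G(F)_{x,n}$, a $K$-level structure is a $\varphi$-stable $K$-orbit in $P(W_{O_F}(R^\flat)/\pi^n)$. This follows from \Cref{lem:finetdesc} with $m$ replaced by $n$, together with the Galois-descent bookkeeping of $\cl{G}(O_F/\pi^n)$-orbits. A lift of $\can_{m,m}$ is then such an orbit whose image modulo $G(F)_{x,m}$ is the class of $t_\infty g_{<m}$. Concretely, write the section as $t_\infty g_{<m}\,y$. Here $y$ is an element of $\cl{G}(W_{O_F}(R^\flat)/\pi^n)$ that is trivial modulo $\pi^m$, taken up to right multiplication by $K/G(F)_{x,n}$.

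\textbf{Step 2 (Moy--Prasad description of $K$).} Because $x$ is hyperspecial in $M$ and $M$ is unramified, $G(F)_{x,n,m} = M(F)_{x,m}\,G(F)_{x,n}$ on the $\mfr{m}$-part and $G(F)_{x,m}$ on the $\mfr{m}^\perp$-part. Hence, modulo $\pi^n$, $K$ is identified with the group $\exp(\pi^m(\mfr{m}\oplus\mfr{m}^\perp))$ with its $\mfr{m}$-component reduced modulo $\pi^{n-m}$. Since $n-m \le m$, the Moy--Prasad isomorphism makes the kernel of reduction modulo $\pi^m$ in $\cl{G}(\,\cdot\,/\pi^n)$ abelian and isomorphic to $\mfr{g}\otimes(\,\cdot\,)/\pi^{n-m}$ via $1+\pi^m X \mapsto X$. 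So $y$ corresponds to $X \in \mfr{g}\otimes W_{O_F}(R^\flat)/\pi^{n-m}$. The $K$-orbit of $y$ is determined by $\pi_{\mfr{m}}$ of the variable that is invariant under the $\mfr{m}^\perp$-ambiguity; this uses that $K$ acts on the right.

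\textbf{Step 3 (Frobenius equation).} I will rewrite the $\varphi$-invariance condition for $t_\infty g_{<m} y$ under $\mu([\pi^\flat]-\pi)\prod_\alpha i_\alpha([u_\alpha^\flat])\sigma$ as in \Cref{lem:levelstrCp}. This gives $g_{<m}y\,\sigma(g_{<m}y)^{-1} \equiv \prod_\alpha i_\alpha(\alpha(\sigma(t_\infty)^{-1})[u^\flat_\alpha])$ modulo $\pi^n$, up to right multiplication by $K$. Since $g_{<m}\sigma(g_{<m})^{-1}=h_{<m}$, and the right-hand side equals $h_{<m}$ times the correction $1+\pi^m h_{[m,n)}$ from the Teichmüller terms with $m\le i<n$ (using commutativity in the abelian kernel), the condition becomes $y\sigma(y)^{-1} \equiv \Ad(\sigma(g_{<m})^{-1})(1+\pi^m h_{[m,n)})$ modulo the $\mfr{m}^\perp$-part. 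I will then pass to a Lie-algebra variable adapted to the right $K$-action, so that the $\mfr{m}^\perp$-component is absorbed into the quotient. Projecting with $\pi_{\mfr{m}}$ yields exactly \eqref{eq:levelVn}. Conversely, any solution $x$ gives $y=1+\pi^m x$, and the $K$-orbit is then well defined.

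\textbf{Main obstacle.} The delicate step is the bookkeeping in Step 3: showing that the $\mfr{m}^\perp$-part of the equation imposes no condition modulo $K$, while the $\mfr{m}$-part is exactly \eqref{eq:levelVn}. This is where the orthogonal decomposition under $Z^\circ_{\cl{M}}$ is used; it requires checking that $\Ad(t_\infty)$ acts trivially on $\mfr{m}$ and that conjugation by $\sigma(g_{<m})$ keeps things integral modulo $\pi^{n-m}$. For integrality I will invoke \Cref{lem:Adjoint_of_g<m}, and for the valuations of $h_{[m,n)}$ I will invoke \Cref{lem:valsalpha}. I will also confirm that the conjugation entering \eqref{eq:levelVn} is by $\sigma(g_{<m})$ rather than by $\sigma(t_\infty g_{<m})$, precisely because $\Ad(t_\infty)$ acts trivially on $\mfr{m}$.
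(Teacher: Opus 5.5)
Up to level $G(F)_{x,n}$ your computation agrees with the paper. \Cref{lem:levelstrCp}, the commutativity of $\breve{\cl{G}}^{\mu<0}$ and $g_{<m}\sigma(g_{<m})^{-1}=h_{<m}$ reduce $\varphi$-invariance to $y\sigma(y)^{-1}=\Ad(\sigma(g_{<m})^{-1})(1+\pi^m h_{[m,n)})$. Moy--Prasad then turns this into $X-\sigma(X)=c$ with $c=\Ad(\sigma(g_{<m})^{-1})(h_{[m,n)})\in\mfr{g}\otimes W_{O_F}(R^\flat)/\pi^{n-m}$.

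The gap is in passing from $G(F)_{x,n}$ to $K$, and concretely in your converse: $y=1+\pi^m x$ is not a level structure. Every $du_\alpha$ with $\alpha\in\Phi_{\mu<0}$ lies in $\mfr{m}^\perp$, so $h_{[m,n)}$ lies entirely in $\mfr{m}^\perp$ and $\pi_{\mfr{m}^\perp}(c)$ is in general nonzero. Then $X=x$ violates the $\mfr{m}^\perp$-component $Y-\sigma(Y)=\pi_{\mfr{m}^\perp}(c)$ of the $\varphi$-invariance condition. That component must actually be solved, and it is an Artin--Schreier--Witt equation. It is in general solvable only after a finite \'{e}tale cover of $S$, and its solutions form a torsor under $\{Y : Y=\sigma(Y)\}=\und{\mfr{m}^\perp\otimes O_F/\pi^{n-m}}(S)=\und{K/G(F)_{x,n}}(S)$. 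This is the real reason the $\mfr{m}^\perp$-part imposes no condition modulo $K$, and why $\pi_{\mfr{m}}(X)$ determines the $K$-orbit among solutions.

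So you must first prove the bijection for strictly totally disconnected $S$, as the paper does. There these equations are solvable, and every map $S\to\cl{M}_{G,b,\mu,G(F)_{x,n,m}}$ lifts through the $\und{K/G(F)_{x,n}}$-torsor $\cl{M}_{G,b,\mu,G(F)_{x,n}}\to\cl{M}_{G,b,\mu,G(F)_{x,n,m}}$, so $X\mapsto\pi_{\mfr{m}}(X)$ realizes the quotient. Then pass to arbitrary $S$ by $v$-descent, since both sides are $v$-sheaves in $S$. (If you instead quotient by $\mfr{m}^\perp\otimes W_{O_F}(R^\flat)/\pi^{n-m}$ rather than by the constant group, $y=1+\pi^m x$ does give a $\varphi$-stable coset. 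However, identifying such cosets with $K$-level structures needs the same \'{e}tale-local solvability.)

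The tools you list for the ``main obstacle'' do not address this.
\begin{itemize}
\item Integrality via \Cref{lem:Adjoint_of_g<m} and \Cref{lem:valsalpha} is irrelevant here, because $x$ is allowed coefficients in $W_{O_F}(R^\flat)$. It only enters later, in \Cref{lem:compute_h_mn}.
\item No property of $\Ad(t_\infty)$ is needed: \Cref{lem:levelstrCp} has already factored out $t_\infty$, and $K$ acts simply by $g_m\mapsto g_m k$.
\item The $Z_{\cl{M}}^\circ$-weight decomposition only supplies the $\sigma$-equivariant projection $\pi_{\mfr{m}}$.
\end{itemize}

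Two statements in your setup are also wrong as written. In Step 1, a $\varphi$-stable $K$-orbit is not a level structure. If, on connected $S$, $\varphi_P(p)=pk$ with $k\in K$ nontrivial modulo $G(F)_{x,n}$, then $pK$ is $\varphi$-stable but contains no $\varphi$-fixed element. A $K$-level structure is only \'{e}tale-locally the $K$-orbit of a $\varphi$-fixed element. In Step 2, the $\mfr{m}$-part of $K$ has depth $n$, not $m$: $K/G(F)_{x,n}\cong\mfr{m}^\perp\otimes O_F/\pi^{n-m}$ with trivial $\mfr{m}$-component. Your description would instead give $K=G(F)_{x,m}$, which contradicts your own conclusion that only the $\mfr{m}^\perp$-component is ambiguous.
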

\begin{proof}
    By \Cref{lem:levelstrCp}, the set of lifts $S \to \cl{M}_{G, b, \mu, G(F)_{x, n}}$ of $\can_{m, m}$ is bijective to the set of elements $g_{\geq m}\in \cl{G}(W_{O_F}(R^\flat)/\pi^n)$ that are trivial modulo $\pi^{m}$ and satisfy
    \[
        g_{<m} g_{\geq m} = \prod_{\alpha \in \Phi_{\mu<0}} i_\alpha\left(\sum_{0 \leq i < n} [s_{\alpha,i}u_\alpha^\flat]\pi^i\right) \cdot \sigma(g_{<m}g_{\geq m}). 
    \]
    Since $\mu$ is minuscule and $\breve{\cl{G}}^{\mu < 0}$ is abelian, $g_{<m} \sigma(g_{<m})^{-1} = h_{<m}$ implies that the above equation is equivalent to
    \[
        g_{\geq m} = \Ad(\sigma(g_{<m})^{-1})\left(\prod_{\alpha \in \Phi_{\mu<0}} i_\alpha\left(\sum_{m \leq i < n} [s_{\alpha,i}u_\alpha^\flat]\pi^i\right) \right)\cdot \sigma(g_{\geq m}). 
    \]
    Since every term is trivial modulo $\pi^{m}$, $g_{\geq m}$ uniquely corresponds to an element $x \in \mfr{\breve{g}} \otimes W_{O_F}(R^\flat)/\pi^{n -m}$ such that 
    \[
        x - \sigma(x) = \Ad(\sigma(g_{<m})^{-1})(h_{[m, n)})
    \]
    via the Moy-Prasad isomorphism. Now, recall  
    \[
        \cl{M}_{G, b, \mu, G(F)_{x, n, m}} = \cl{M}_{G, b, \mu, G(F)_{x, n}} / (\mfr{m}^\perp/\pi^{n-m}). 
    \]
    When $S$ is strictly totally disconnected, any finite \'{e}tale cover of $S$ splits, so the map $x \mapsto \pi_{\mfr{m}}(x)$ realizes the quotient by $\underline{\mfr{m}^\perp/\pi^{n-m}}(S)$ and the set of $\pi_{\mfr{m}}(x)$ is bijective to the set of lifts $S \to \cl{M}_{G, b, \mu, G(F)_{x, n, m}}$ of $\can_{m, m}$. It is also bijective to the solutions of \eqref{eq:levelVn}, so the claim holds for strictly totally disconnected spaces. Then it extends to every $S$ by $v$-descent. 
\end{proof}

Now, we analyze the right-hand side of \eqref{eq:levelVn} for each summand of $h_{[m, n)}$. 

\begin{lem} \label{lem:compute_h_mn}
    For every $\alpha \in \Phi_{\mu < 0}$ and $m \leq i < n$, 
    \[
        \pi_{\mfr{m}}(\Ad(\sigma(g_{<m})^{-1})([s_{\alpha,i}u_\alpha^\flat]\pi^{i-m}du_\alpha)) \bmod \pi^{n-m} \in \mfr{m} \otimes W_{O_F}(R^{\flat +}) / \pi^{n-m}. 
    \]
    Moreover, it is trivial modulo $[\pi^{\flat, 1/N}]$ for sufficiently large $N$. 
\end{lem}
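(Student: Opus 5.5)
We plan to expand $\Ad(\sigma(g_{<m})^{-1})$ using the infinite product \eqref{eq:g<m}, so that $\sigma(g_{<m}) = \sigma(h_{<m})\sigma^2(h_{<m})\cdots$. Since $\breve{\cl{G}}^{\mu<0}$ is abelian and $\mu$ is minuscule, each factor $\sigma^j(h_{<m})$ lies in the root subgroup $\breve{\cl{G}}^{\sigma^{j}\mu<0}$ generated by the $\cl{U}_{\sigma^j\beta}$ with $\beta\in\Phi_{\mu<0}$. Applying the exponential expansion $\Ad(i_\beta(a))(du_\alpha) = du_\alpha + a[du_\beta, du_\alpha] + \tfrac{a^2}{2}[\ldots] + \cdots$ (finite, since root strings have bounded length), we can write $\Ad(\sigma(g_{<m})^{-1})([s_{\alpha,i}u_\alpha^\flat]\pi^{i-m}du_\alpha)$ as a convergent sum of monomials. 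Each monomial is a product of Teichm\"{u}ller-type coefficients $[s_{\alpha,i}u_\alpha^\flat]\pi^{i-m}$ and $\sigma^{j_k}$ of the coefficients $\sum_{0\le l<m}[s_{\beta_k,l}u_{\beta_k}^\flat]\pi^l$ of $h_{<m}$, times an iterated bracket landing in a root space $\mfr{g}_{\gamma}$ with $\gamma = \alpha + \sum_k \sigma^{j_k}\beta_k$ (or in $\Lie(\cl{T})$). After applying $\pi_{\mfr{m}}$, only those $\gamma$ that are roots of $M$ (or toral) survive.

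The first key step is integrality. By \Cref{lem:valsalpha}, $\nu^\flat(s_{\alpha,i}) = -i-r_\alpha$. On $\cl{U}(n)$ we have $\nu^\flat(u^\flat_\alpha)\ge m$ for $n=2m-1$ and $\ge m+r_\alpha$ for $n = 2m$. Hence each $[s_{\alpha,i}u_\alpha^\flat]\pi^{i-m}$ becomes integral modulo $\pi^{n-m}$ once we also use $i<n\le 2m$, and likewise each factor of $h_{<m}$ is integral. However, the single term $[s_{\alpha,i}u_\alpha^\flat]\pi^{i-m}$ with $i\ge m$ need not be integral by itself: for $n=2m-1$ its valuation is only $\ge m-i-r_\alpha$. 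We therefore need to combine it with the Teichm\"{u}ller rule, reading $[a]\pi^k$ with $\nu^\flat(a)\ge -k$ as an element of $W_{O_F}(R^{\flat+})[\tfrac{[\pi^\flat]}{\pi}]$. Reducing modulo $\pi^{n-m}$ then uses the bound $\nu^\flat \ge -(i-m)$, which holds because $i-m < n-m$. This is the bookkeeping already carried out in \Cref{lem:Adjoint_of_g<m}, and we would redo it termwise.

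The main obstacle is the second step: showing that the surviving $\mfr{m}$-components are actually divisible by $[\pi^{\flat,1/N}]$, that is, that there is strict positivity rather than mere integrality. For the bare term ($j$-sum empty) we have $du_\alpha\in\mfr{m}^\perp$, so $\pi_{\mfr{m}}$ kills it. Every surviving monomial therefore involves at least one bracket with some $\sigma^{j}(h_{<m})$, where $j\ge1$ and $\gamma\in\Phi(M)$. For such $\gamma$ we need some $\sigma^{j}\beta$ with $\langle \sigma^{j}\beta,\mu\rangle=1$ or a comparable sign change. Here we would use \Cref{lem:vareps} together with $\msf{N}\cap\sigma(\ov{\msf{N}}) = \msf{U}_{\mu>0}$ to show that the Frobenius twist $\sigma^j$ raises the exponent of $u_\beta^\flat$ to $q^j$. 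This yields an extra valuation $(q^j-1)\nu^\flat(u^\flat_\beta) - (\text{loss from } s_{\beta,l}^{q^j})$, which we must show is $\ge r_\beta(q^j-1)>0$ up to a uniform constant. We would first check this in the leading-order case of a single bracket, and then argue that further brackets only add nonnegative valuation. Finally, $N$ can be chosen uniformly as a common denominator of the finitely many $r_\beta$ and of $1/e$.
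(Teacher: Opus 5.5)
\textbf{There is a genuine gap.} Your overall shape matches the paper: discard the zeroth-order term because $\pi_{\mfr{m}}(du_\alpha)=0$, then show every correction is integral and divisible by some $[\pi^{\flat,1/N}]$. But the quantitative step that carries the lemma is missing, and the inequality you set out to prove would not suffice.

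\emph{What has to be compensated.} The loss comes from the $\alpha$-coefficient: $\nu^\flat(s_{\alpha,i}u_\alpha^\flat)\ge\tau_\alpha$, with $\tau_\alpha=m-i-r_\alpha$ for $n=2m-1$ and $\tau_\alpha=m-i$ for $n=2m$. So a correction term must gain more than $i-m+r_\alpha$ (resp.\ $i-m$), an amount that grows with $i$. A lower bound like $r_\beta(q^j-1)$, independent of $i$, cannot beat this; already for $q=2$, $n=2m$, $i=m+1$ it is smaller than $1$.

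\emph{Why a naive estimate fails.} Without further input, the coefficients of $h_{<m}$ of $\pi$-degree $l$ close to $m-1$ only satisfy $\nu^\flat(s_{\beta,l}u_\beta^\flat)\ge m-l-r_\beta$ (resp.\ $m-l$). That is about $1$, so even after Frobenius the gain is bounded independently of $i$.

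\emph{The missing idea: $\pi$-adic truncation.} The term lies in $\pi^{i-m}\mfr{g}$ modulo $\pi^{n-m}$. Hence its adjoint depends only on $\sigma(g_{<m})$ modulo $\pi^{n-i}$, i.e.\ only on the coefficients $[s_{\beta,l}u_\beta^\flat]\pi^l$ with $l<n-i$. On $\cl{U}(n)$, \Cref{defi:U(n)_radius} and \Cref{lem:valsalpha} give these $\nu^\flat\ge\nu_\alpha:=i-m+1$ in both parities. So $\sigma(g_{<m})^{-1}$ is trivial modulo $(\pi^{n-i},[\pi^{\flat,q\nu_\alpha}])$. The difference between the adjoint and the bare term therefore lies in $[\pi^{\flat,q\nu_\alpha+\tau_\alpha}]\cdot\breve{\mfr{g}}\otimes W_{O_F}(R^{\flat+})/\pi^{n-m}$, where $q\nu_\alpha+\tau_\alpha=(q-1)(i-m)+q-r_\alpha>0$ (resp.\ $(q-1)(i-m)+q>0$). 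This one estimate gives both integrality and divisibility by $[\pi^{\flat,1/N}]$. In your monomial language, you must discard every monomial of total $\pi$-degree at least $n-m$ before estimating valuations, and the proposal never does this.

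\emph{Your integrality step is muddled.} When $\tau_\alpha<0$ the bare term $[s_{\alpha,i}u_\alpha^\flat]\pi^{i-m}du_\alpha$ is genuinely non-integral. Your bound $\nu^\flat\ge-(i-m)$ is false for odd $n$, and a bound of that shape would not put the term in $W_{O_F}(R^{\flat+})$ anyway. The bare term does not need to be integral: $\pi_{\mfr{m}}$ kills it, and integrality of the projection follows from the estimate above.

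\emph{The root-theoretic analysis is unnecessary.} You do not need to look for $\sigma^j\beta$ with $\langle\sigma^j\beta,\mu\rangle=1$ or invoke $\msf{N}\cap\sigma(\ov{\msf{N}})=\msf{U}_{\mu>0}$. Every correction term is small regardless of which root space it lands in, so $\pi_{\mfr{m}}$ is used only to discard the zeroth-order term.
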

\begin{proof}
    The adjoint action on $[s_{\alpha,i}u_\alpha^\flat]\pi^{i-m}du_\alpha$ factors through 
    \[
        \cl{G}(W_{O_F}(R^{\flat +})/\pi^{m}) \twoheadrightarrow \cl{G}(W_{O_F}(R^{\flat +})/\pi^{n-i}). 
    \]
    By \Cref{defi:U(n)_radius} and \Cref{lem:valsalpha}, $h_{<m}$ is trivial modulo $(\pi^{n - i}, [\pi^{\flat, \nu_\alpha}])$ with
    \[
        \nu_\alpha = \left\{
            \begin{alignedat}{4}
                & i-(n-m) & \quad & (n = 2m - 1) \\
                & i-(n-m)+1 & \quad & (n = 2m).  
            \end{alignedat}
        \right.
    \]
    By \eqref{eq:g<m}, $\sigma(g_{<m})^{-1}$ is trivial modulo $(\pi^{n - i}, [\pi^{\flat, q \nu_\alpha}])$. Since $s_{\alpha,i}u_\alpha^\flat \in \pi^{\flat, \tau_\alpha} \cdot R^{\flat +}$ with 
    \[
        \tau_\alpha = \left\{
            \begin{alignedat}{4}
                & m - i - r_\alpha & \quad & (n = 2m - 1) \\
                & m - i & \quad & (n = 2m), 
            \end{alignedat}
        \right.
    \]
    it follows that 
    \[
        \Ad(\sigma(g_{<m})^{-1})([s_{\alpha,i}u_\alpha^\flat]\pi^{i-m}du_\alpha) - [s_{\alpha,i}u_\alpha^\flat]\pi^{i-m}du_\alpha
    \]
    lies in 
    \[
        [\pi^{\flat, q \nu_\alpha}] \cdot [\pi^{\flat, \tau_{\alpha}}] \cdot \mfr{\breve{g}}\otimes W_{O_F}(R^{\flat +})/\pi^{n - m}. 
    \]
    Since $\pi_{\mfr{m}}(du_\alpha) = 0$, the claim follows from $q\nu_\alpha + \tau_\alpha > 0$, which can be directly verified from the explicit descriptions. 
\end{proof}

\begin{thm} \label{thm:refined_trivialization}
    There is a unique section 
    $
        \can_{n, m} \colon \cl{U}(n)_{\breve{F}_n} \to \cl{M}_{G, b, \mu, G(F)_{x, n, m}}
    $
    sending $x_\CM$ to $x_{\CM, n}$. 
\end{thm}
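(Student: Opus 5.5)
We will construct $\can_{n, m}$ by the same strategy as \Cref{prop:canmthtriv}: produce a canonical lift over the perfectoid cover $\cl{U}_\infty(n)$, then descend it with \Cref{prop:mapatinf}. Fix $S = \Spa(R, R^+)$ affinoid perfectoid over $\cl{U}_\infty(n)$. By \Cref{lem:levelVn}, lifts of $\can_{m, m}$ to level $G(F)_{x, n, m}$ over $S$ are exactly solutions $x \in \mfr{m} \otimes W_{O_F}(R^\flat)/\pi^{n-m}$ of \eqref{eq:levelVn}. Set
\[
    y = \pi_{\mfr{m}}\bigl(\Ad(\sigma(g_{<m})^{-1})(h_{[m, n)})\bigr).
\]
By \Cref{lem:compute_h_mn}, applied summand by summand, $y$ is integral, i.e.\ it lies in $\mfr{m} \otimes W_{O_F}(R^{\flat +})/\pi^{n-m}$, and it is trivial modulo $[\pi^{\flat, 1/N}]$. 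Hence $\sigma^k(y)$ is trivial modulo $[\pi^{\flat, q^k/N}]$, and the series
\[
    x_{\can} = \sum_{k \geq 0} \sigma^k(y)
\]
converges $[\varpi]$-adically in $\mfr{m} \otimes W_{O_F}(R^{\flat +})/\pi^{n-m}$. It satisfies $x_{\can} - \sigma(x_{\can}) = y$ and is itself trivial modulo $[\pi^{\flat, 1/N}]$. It is functorial in $S$, so it defines a map $\cl{U}_\infty(n)^\diamond \to \cl{M}_{G, b, \mu, G(F)_{x, n, m}}^\diamond$ lifting $\can_{m, m}$. Unlike \Cref{lem:levelVn}, which only needs $R^\flat$, we use the integral ring $R^{\flat +}$ here so that the convergence argument makes sense.

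Next we descend. The map $\cl{M}_{G, b, \mu, G(F)_{x, n, m}} \to \cl{M}_{G, b, \mu, G(F)_{x, m}}$ is finite \'{e}tale. Pulling it back along $\can_{m, m}$ gives a finite \'{e}tale cover $X_0 \to \cl{U}(n)_{\bb{C}_p}$, namely a torsor under $\mfr{m}/\pi^{n-m}$ (or a quotient of one). Our map is a section of $X_0$ over $\cl{U}_\infty(n)$. The radii of $\cl{U}(n)$ have the shape required in \Cref{lem:ballsatis}, so \Cref{prop:mapatinf} shows that this section comes from a unique section $\cl{U}(n)_{\bb{C}_p} \to X_0$, and composing gives $\can_{n, m}$ over $\bb{C}_p$.

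At $x_\CM$ we have $u_\alpha^\flat = 0$, so $h_{[m, n)} = 0$, $y = 0$, and $x_{\can} = 0$. Since $g_{<m} = 1$ there as well, the corresponding $\varphi$-invariant section is $t_\infty$, and therefore $\can_{n, m}(x_\CM) = x_{\CM, n}$. Uniqueness follows as before: a section of a finite \'{e}tale map over the connected space $\cl{U}(n)_{\bb{C}_p}$ is a clopen immersion, so it is determined by its value at one point. For the field of definition, \cite[Lemma 11.22]{Sch17} puts the section over a finite extension of $\breve{F}_n$. Galois descent to $\breve{F}_n$ then follows from uniqueness, since $x_{\CM, n}$ is defined over $\breve{F}_n$.

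The step I expect to be delicate is the integrality and convergence estimate for $y$, in particular controlling the conjugation by $\sigma(g_{<m})^{-1}$ modulo $\pi^{n-m}$. That control is exactly the content of \Cref{lem:compute_h_mn}, and with it in hand the remaining steps are formal.
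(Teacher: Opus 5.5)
Your proposal is correct and follows the paper's proof. The paper likewise solves \eqref{eq:levelVn} by the convergent series $\sum_{i\geq 0}\sigma^i\bigl(\pi_{\mfr{m}}(\Ad(\sigma(g_{<m})^{-1})(h_{[m,n)}))\bigr)$ in $\mfr{m}\otimes W_{O_F}(R^{\flat+})/\pi^{n-m}$, justified by \Cref{lem:compute_h_mn}. It then descends via \Cref{prop:mapatinf} applied to the pullback of the $G(F)_{x,n,m}$-cover to $\Img(\can_{m,m})$, and handles the CM point, uniqueness and the descent to $\breve{F}_n$ exactly as in \Cref{prop:canmthtriv}.
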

\begin{proof}
    For each affinoid perfectoid space $S = \Spa(R,R^+)$ over $\cl{U}_\infty(n)$, the infinite sum
    \[
        x = \sum_{i \geq 0} \sigma^i(\pi_{\mfr{m}}(\Ad(\sigma(g_{<m})^{-1})(h_{[m, n)})))
    \]
    is convergent in $\mfr{m} \otimes W_{O_F}(R^{\flat +})/\pi^{n - m}$ by \Cref{lem:compute_h_mn}. This is a functorial solution to \eqref{eq:levelVn}, so this amounts to a map $\cl{U}_\infty(n)^\diamond \to \cl{M}_{G, b, \mu, G(F)_{x, n, m}}^\diamond$ over $\can_{m, m}$. By applying \Cref{prop:mapatinf} to $X_0 = \Img(\can_{m, m}) \times_{\cl{M}_{G, b, \mu, G(F)_{x, m}}} \cl{M}_{G, b, \mu, G(F)_{x, n, m}}$, it factors through a trivialization 
    \[
        \can_{n, m}\colon \cl{U}(n)_{\bb{C}_p} \to \cl{M}_{G, b, \mu, G(F)_{x, n, m}}. 
    \]
    When $u_{\alpha} = 0$ for every $\alpha \in \Phi_{\mu < 0}$, we have $h_{[m, n)} = 0$. Thus, $\can_{n, m}$ maps $x_\CM$ to $x_{\CM, n}$ and this property uniquely characterizes $\can_{n, m}$. Then, the claim follows from the same argument as in \Cref{prop:canmthtriv} since $x_{\CM, n}$ is defined over $\breve{F}_n$. 
\end{proof}

Next, we study the inner action on $\Img(\can_{n, m}) \subset \cl{M}_{G, b, \mu, G(F)_{x, n, m}}$. By \Cref{prop:stability_Img_can_n}, $G_b(F)_{x_b, m_b}$ acts on the inverse image of $\Img(\can_{m, m})$ in $\cl{M}_{G, b, \mu, G(F)_{x, n, m}}$, which is the disjoint union 
\begin{equation} \label{eq:inverse_image_U(n)}
    \bigsqcup_{g \in M(F)_{x, m} / M(F)_{x, n}} g \cdot \Img(\can_{n, m}).
\end{equation}
Here, $M(F)_{x, m}$ acts via the inclusion into $G(F)_{x, 0}$. Since $\Img(\can_{n, m})$ is connected, it is essentially enough to keep track of the CM point to determine the stabilizer of $\Img(\can_{n, m})$. For example, it is easy to see the stability under the diagonal action. 

\begin{prop} \label{prop:stability_under_diagonal_action}
    The open subspace $\Img(\can_{n, m}) \subset \cl{M}_{G, b, \mu, G(F)_{x, n, m}, \breve{F}_n}$ is stable under the diagonal action of $M(F)_x$. 
\end{prop}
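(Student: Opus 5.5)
The argument follows the uniqueness pattern already used for $\Img(\can_n)$ in \Cref{prop:stability_Img_can_n}. First we reduce to $M(F)_{x,0}$. By \Cref{lem:hyperspecial_M(F)_x} we have $M(F)_x = Z_G(F)\cl{M}(O_F)$, and the diagonal action of $Z_G(F)$ on $\cl{M}_{G,b,\mu,\infty}$ is trivial by \Cref{lem:diagonal_central_action}. It is therefore enough to treat $\Delta(m')$ for $m' \in M(F)_{x,0}$. Such an element normalizes $G(F)_{x,n,m}$, since $M(F)_{x,0}$ stabilizes both $x$ and the Moy-Prasad filtrations attached to $M \subset G$. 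Hence $\Delta(m')$ acts on $\cl{M}_{G,b,\mu,G(F)_{x,n,m},\breve{F}_n}$, compatibly with the projection to $\cl{M}_{\cl{G},b,\mu}$.

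Next, the composition $\Delta(m') \circ \can_{n,m} \circ \Delta(m')^{-1}$ restricted to $\cl{U}(n)_{\breve{F}_n}$ is again a section of $\cl{M}_{G,b,\mu,G(F)_{x,n,m}} \to \cl{M}_{\cl{G},b,\mu}$ over $\cl{U}(n)_{\breve{F}_n}$. This uses \Cref{prop:stability_U(n)_diagonal}, which gives $\Delta(m')(\cl{U}(n)) = \cl{U}(n)$. By the uniqueness in \Cref{thm:refined_trivialization}, it coincides with $\can_{n,m}$ as soon as it sends $x_\CM$ to $x_{\CM,n}$. We know $\Delta(m')x_\CM = x_\CM$ by the uniqueness in \Cref{prop:U(0)_previous_work} (2). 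So the claim reduces to showing $\Delta(m') x_{\CM,n} = x_{\CM,n}$. Once that holds, $\Delta(m')$ maps $\Img(\can_{n,m})$ onto itself. Equivalently, $\Delta(m')\Img(\can_{n,m})$ is one of the connected components in \eqref{eq:inverse_image_U(n)}, and it contains $x_{\CM,n}$, so it must be $\Img(\can_{n,m})$ itself.

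The main step is verifying $\Delta(m') x_{\CM,n} = x_{\CM,n}$. The point $x_{\CM,n}$ is the image of $x_{\CM,\infty} \in \cl{M}_{M,b,\mu,\infty}(\bb{C}_p)$, so it suffices to check the statement on the $M$-side under the natural map $\cl{M}_{M,b,\mu,M(F)_{x,n}} \to \cl{M}_{G,b,\mu,G(F)_{x,n,m}}$, which is equivariant for $M(F)_{x,0}$ acting on both factors. I would use the explicit description from \Cref{lem:finetdesc} and \Cref{defi:CM_points_in_LSV}. The point $x_{\CM,\infty}$ corresponds to the $\varphi$-invariant trivialization $t_\infty$ of the $\cl{M}$-BKF module $(\cl{M}\otimes W_{O_F}(O_{\bb{C}_p}^\flat), \mu([\pi^\flat]-\pi)\sigma)$. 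The $G_b(F)$-action by $m'$ changes the quasi-isogeny by $m'$, and the level action multiplies the trivialization on the right. Since $t_\infty \in Z^\circ_{\cl{M}}$ is central in $\cl{M}$, the diagonal element $(m',m')$ sends $t_\infty$ to $m' t_\infty m'^{-1} = t_\infty$, with $\iota = 1$ as the identifying isomorphism. So $x_{\CM,\infty}$ is fixed by $\Delta(m')$, and hence so are all its projections.

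The delicate point is matching conventions: left versus right actions (\Cref{conv:group_action}) and the description of the inner action in \cite[Proposition 2.15]{Tak25z}. Whichever convention is used, the transformation is conjugation by $m'$ on a central element, so the fixed-point property should hold. Finally, since $\breve{F}_n$ is the field of definition of $x_{\CM,n}$ and both maps are defined over it, the equality of images descends from $\bb{C}_p$ to $\breve{F}_n$.
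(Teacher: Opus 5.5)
Your proposal is correct and follows the paper's own proof. You reduce to $M(F)_{x,0}$ via \Cref{lem:hyperspecial_M(F)_x} and \Cref{lem:diagonal_central_action}. You then use the $\Delta$-stability of $\cl{U}(n)$ together with connectedness to reduce to $\Delta(h)\cdot x_{\CM,n} = x_{\CM,n}$, which you verify from $h$ commuting with $\mu([\pi^\flat]-\pi)\sigma$ and $t_\infty \in Z^\circ_{\cl{M}}$. Your use of the uniqueness of the conjugated section $\Delta(h)\circ\can_{n,m}\circ\Delta(h)^{-1}$ is just a repackaging of the paper's connected-component argument on \eqref{eq:inverse_image_U(n)}.
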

\begin{proof}
    By \Cref{lem:hyperspecial_M(F)_x}, it is enough to treat the action of $M(F)_{x, 0}$ (see \Cref{lem:diagonal_central_action}). By \Cref{prop:stability_U(n)_diagonal}, $\cl{U}(n)$ is stable under the diagonal action of $M(F)_{x, 0}$, so it acts diagonally on \eqref{eq:inverse_image_U(n)}. Since $\cl{U}(n)$ is connected, it is enough to show $\Delta(h) \cdot x_{\CM, n} \in \Img(\can_{n, m})$ for $h \in M(F)_{x, 0}$. Recall that $x_\CM$ is associated to the $\cl{M}$-BKF module
    \[
        (\cl{M} \otimes W_{O_F}(O_{\bb{C}_p}^\flat), \mu([\pi^\flat]-\pi)\sigma)
    \]
    and $x_{\CM, n} \in \cl{M}_{M, b, \mu, M(F)_{x, n}}$ is the lift of $x_\CM$ associated to $t_\infty \bmod \pi^n$. Since 
    \[
        h \cdot \mu([\pi^\flat]-\pi)\sigma \cdot h^{-1} = \mu([\pi^\flat]-\pi)\sigma ,\quad 
        h \cdot t_\infty \cdot h^{-1} = t_\infty
    \]
    as $t_\infty \in Z_{\cl{M}}^\circ$, we get $\Delta(h) \cdot x_{\CM, n} = x_{\CM, n}$ (see \cite[Proposition 2.15]{Tak25z} for the inner action and \Cref{lem:levelstrCp} for the other action). Thus, we get the claim. 
\end{proof}

Recall that we set
\[
    m_b = \left\{ \begin{alignedat}{4}
        & m & \quad & (n = 2m) \\
        & (m - 1)+ & \quad & (n = 2m - 1). 
    \end{alignedat}
    \right.
\]
Then, we may set $G_b(F)_{x_b, n, m_b} = G_b(F)_{x_b, 2m_b, m_b}$ since $M(F)_{x, 2m_b} = M(F)_{x, n}$. Our proof consists of several computational lemmas on $\cl{G}$-BKF modules. 

\begin{prop} \label{prop:image_x_CM_n_mb}
    For each $j \in G_b(F)_{x_b, n, m_b}$, we have $j \cdot x_{\CM, n} \in \Img(\can_{n, m})$. 
\end{prop}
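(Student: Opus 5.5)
The plan is to follow the argument of \Cref{lem:intermediate_image_x_CM}, now one level deeper. Since $G_b(F)_{x_b, n, m_b} \subset G_b(F)_{x_b, m_b}$, \Cref{lem:intermediate_image_x_CM} and \Cref{prop:stability_Img_can_n} show that $j \cdot x_{\CM, n}$ lies over $\Img(\can_{m, m})$. So it lies in one component $g \cdot \Img(\can_{n, m})$ of \eqref{eq:inverse_image_U(n)}, and we must show that $g \in M(F)_{x, m} / M(F)_{x, n}$ is trivial.

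First, as before, choose $u_\alpha^\flat$ for the coordinates of $j \cdot x_\CM$. Take $\iota \in \cl{G}(W_{O_F}(O_{\bb{C}_p}^\flat))$, trivial modulo some $[\varpi]$, that conjugates $j\mu([\pi^\flat]-\pi)\sigma(j)^{-1}$ to $\mu([\pi^\flat]-\pi)\prod_\alpha i_\alpha([u_\alpha^\flat])$. Now work modulo $\pi^n$. By \Cref{lem:levelstrCp}, both $\iota j t_\infty \bmod \pi^n$ and $t_\infty g_{<m} g_{\geq m}$ define $n$-th level structures at $j\cdot x_\CM$. Here $g_{\geq m}$ corresponds, as in the proof of \Cref{lem:levelVn}, to the canonical solution
\[
    x = \sum_{i \geq 0}\sigma^i\bigl(\pi_{\mfr{m}}(\Ad(\sigma(g_{<m})^{-1})(h_{[m,n)}))\bigr)
\]
constructed in \Cref{thm:refined_trivialization}, plus some $\mfr{m}^\perp$-component. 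Hence
\[
    \iota j t_\infty = t_\infty g_{<m} g_{\geq m}\, g', \qquad g' \in \cl{G}(O_F/\pi^n),
\]
and we already know $g' \equiv 1 \pmod{\pi^m}$. The image of $g'$ in $\mfr{m}\otimes O_F/\pi^{n-m}$ under the Moy-Prasad isomorphism is exactly the class $g$ to be shown trivial, since the $\mfr{m}^\perp$-part is quotiented out in $G(F)_{x,n,m}$.

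Second, project the identity
\[
    \Ad(t_\infty)(g')=\Ad(t_\infty)(g_{\geq m})^{-1}\Ad(t_\infty)(g_{<m})^{-1}\iota j
\]
to $\mfr{m}$ modulo $\pi^{n-m}$. Because $t_\infty \in Z_{\cl{M}}^\circ$, $\Ad(t_\infty)$ acts trivially on $\mfr{m}$. The conditions needed are the following.
\begin{enumerate}
    \item $\pi_{\mfr{m}}$ of the image of $j$ is zero. This holds because $j \in G_b(F)_{x_b,n,m_b}$ has its $\mfr{m}$-component in depth $\geq n$, and $x, x_b$ lie in a common facet, so the congruences in $\cl{G}(O_{\breve{F}})$ are the naive ones.
    \item The $\mfr{m}$-component of $g_{\geq m}$ is $x$, which is trivial modulo some $[\varpi]$ by \Cref{lem:compute_h_mn}.
    \item The $\mfr{m}$-components contributed by $\Ad(t_\infty)(g_{<m})$ and by $\iota$ are trivial modulo $[\varpi]$, by \Cref{lem:Adjoint_of_g<m} and the choice of $\iota$.
\end{enumerate}
It then follows that $\pi_{\mfr{m}}(g')$ is a $\sigma$-fixed element of $\mfr{m}\otimes O_F/\pi^{n-m}$, viewed inside $\mfr{m}\otimes W_{O_F}(O_{\bb{C}_p}^\flat)/\pi^{n-m}$, that is trivial modulo $[\varpi]$. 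Constants from $O_F/\pi^{n-m}$ that vanish modulo $[\varpi]$ must vanish, so $g=1$.

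The main obstacle is the bookkeeping in the second step. Cross terms between the $\mfr{m}^\perp$-components at depth $\geq m$ and the depth-$m_b$ part of $j$ could contribute to $\pi_{\mfr{m}}$ through commutators, because the product is noncommutative. I would first try to show that modulo $\pi^n$ all these elements lie in the abelian quotient $G(F)_{x,m}/G(F)_{x,n}$ (valid since $2m \geq n$), so the projection to $\mfr{m}$ is additive. For $n = 2m-1$ and the factors $g_{<m}$ and $\iota$, which are only trivial modulo $\pi^{m-1}$, I would handle the remaining depth-$(m-1)$ terms by the big-cell argument with respect to $\lambda$, as in \Cref{lem:intermediate_image_x_CM}. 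The key input there is that the $\lambda$-negative part of $\Ad(t_\infty)(g_{<m})$ is killed when paired against the $\lambda$-positive part of $j$, so that their commutator contributes nothing to $\mfr{m} = \mfr{g}^{\lambda=0}$ modulo $[\varpi]$.
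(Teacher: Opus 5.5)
Your reduction is correct and matches the paper's. Write $g_n=\Ad(t_\infty^{-1})(\iota j)=g_{<m}g_{\geq m}g'$. The component of \eqref{eq:inverse_image_U(n)} containing $j\cdot x_{\CM,n}$ is then read off from $\pi_{\mfr{m}}(g_{<m}^{-1}g_n)$, which equals $x$ plus the constant class of $g'$. A constant in $\mfr{m}\otimes O_F/\pi^{n-m}$ that is trivial modulo $[\varpi]$ vanishes.

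The gap is your second step, which carries all the content. There are two problems with it.
\begin{enumerate}
    \item The factors $\iota$, $\Ad(t_\infty)(g_{<m})$ and, for $n=2m-1$, also $j$ are not trivial modulo $\pi^m$. In general $\iota$ and $g_{<m}$ are nontrivial already modulo $\pi$; they are only trivial modulo $[\varpi]$. So these factors have no additive $\mfr{m}$-components, and your plan to place them in an abelian quotient cannot work.
    \item More seriously, \Cref{lem:Adjoint_of_g<m} gives integrality of $\Ad(t_\infty)(g_{<m})$ only modulo $\pi^m$. Modulo $\pi^n$ this element (equivalently $g_n$, in the other frame) has genuinely non-integral coefficients in $\pi$-adic depths $m,\ldots,n-1$. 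The reason is that the Teichm\"uller coefficients of $\alpha(t_\infty^{\pm 1})$ have valuations decreasing linearly in the $\pi$-adic index (\Cref{lem:valsalpha}, \Cref{lem:valuation_on_calpha+-}).
\end{enumerate}
Conjugating such a $\lambda$-negative element with poles by an integral $\lambda$-nonnegative element that is merely trivial modulo $[\varpi]$ produces $\mfr{m}$-components, since $[\mfr{g}^{\lambda>0},\mfr{g}^{\lambda<0}]$ meets $\mfr{m}$. There is no reason for these components to be trivial modulo $[\varpi]$. So item (3) of your list is unsupported. The ``key input'' you propose, a weight or commutator cancellation against the $\lambda$-positive part of $j$, does not address these terms.

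What is missing is a quantitative comparison of pole orders against vanishing orders. The paper proceeds as follows.
\begin{enumerate}
    \item It decomposes $g_n=\ov{u}_-\ov{u}_+\,m_n\,u_-u_+$ in the $\lambda$-big cell. Here $\ov{u}_-,m_n,u_-$ are integral and trivial modulo $[\varpi]$, while $\ov{u}_+,u_+\equiv 1\pmod{\pi^m}$ are possibly non-integral.
    \item This reduces the claim to $\pi_{\mfr{m}}\bigl(\Ad(m_nu_-)^{-1}(\ov{u}_+)-\ov{u}_+\bigr)\equiv 0\pmod{[\varpi]}$.
    \item It shows that $m_nu_-$ is trivial modulo $(\pi^{n-m-i},[\pi^{\flat,q(i+1)}])$.
    \item It shows that the $\pi^i$-coefficients of $\ov{u}_+$ have valuation at least $-q^{-1}(i+1)-1$, which exceeds $-q(i+1)$. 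This bound comes from the identity $\ov{u}_+=\Ad(t_\infty)^{-1}\bigl(\Ad(t_\infty)(\ov{u}_-)^{-1}\wtd{u}\bigr)$, where $\wtd{u}=\Ad(t_\infty)(\ov{u}_-\ov{u}_+)$ is integral, combined with the explicit valuations of the coefficients of $\alpha(t_\infty^{\pm1})$ and of $\ov{u}_-$.
\end{enumerate}
An estimate of this kind is indispensable, and your proposal contains no substitute for it.
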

\begin{proof}
    We keep the notation in the proof of \Cref{lem:intermediate_image_x_CM}. Let $g_n \in \cl{G}(W_{O_F}(\bb{C}_p^\flat) / \pi^n)$ be the element representing the $n$-th level structure of $j \cdot x_\CM$ via \Cref{lem:levelstrCp}. As in the proof of \Cref{lem:intermediate_image_x_CM}, we have 
    \begin{equation} \label{eq:characterization_t_infty}
        \iota j t_\infty = t_\infty g_n \Leftrightarrow g_n = \Ad(t_\infty^{-1})(\iota j).
    \end{equation}
    By \Cref{lem:intermediate_image_x_CM}, $g_n \equiv g_{<m} \pmod{\pi^m}$. In particular, $g_n$ lies in the big cell $\cl{\breve{G}}^{\lambda < 0} \times \cl{\breve{M}} \times \cl{\breve{G}}^{\lambda > 0}$, so that we get a decomposition 
    \[
        g_n = \ov{u}_- \ov{u}_+ \cdot m_n \cdot u_- u_+
    \]
    where $u_{\pm} \in \cl{\breve{G}}^{\lambda > 0}$, $m_n \in \cl{\breve{M}}$ and $\ov{u}_{\pm} \in \cl{\breve{G}}^{\lambda < 0}$. Moreover, we may suppose that
    \begin{enumerate}
        \item $u_+ \equiv 1 \pmod{\pi^m}$ and $\ov{u}_+ \equiv 1 \pmod{\pi^m}$. We will identify them as 
        \[
            u_+ \in \mfr{\breve{g}}^{\lambda > 0} \otimes W_{O_F}(\bb{C}_p^\flat) / \pi^{n - m} ,\quad 
            \ov{u}_+ \in \mfr{\breve{g}}^{\lambda < 0} \otimes W_{O_F}(\bb{C}_p^\flat) / \pi^{n - m}
        \]
        via the Moy-Prasad isomorphism. 
        \item $u_-$, $m_n$ and $\ov{u}_-$ have coefficients in $W_{O_F}(O_{\bb{C}_p}^\flat) / \pi^n$, and they are trivial modulo $[\varpi]$ for some pseudo-uniformizer $\varpi \in \bb{C}_p^\flat$. 
    \end{enumerate}
    This choice is possible by using $g_n \equiv g_{<m} \pmod{\pi^m}$ and \eqref{eq:characterization_t_infty}, for $g_{<m}$ is trivial modulo $[\varpi]$ for some $\varpi$ by \Cref{lem:g<m} and $j \in G_b(F)_{x_b, n, m_b}$. 
    
    Let $h \in M(F)_{x, m} / M(F)_{x, n} \cong \mfr{m} / \pi^{n - m}$ be the element such that $g_n \bmod G(F)_{x, n, m} = h \cdot \can_{n, m}(j \cdot x_\CM)$ in $\cl{M}_{G, b, \mu, G(F)_{x, n, m}}$. By the construction of $\can_{n, m}$ in the proof of \Cref{thm:refined_trivialization}, 
    \[
        \pi_{\mfr{m}}(g_{<m}^{-1} g_n) \in \mfr{m} \otimes W_{O_F}(O_{\bb{C}_p}^\flat) / \pi^{n - m}
    \]
    and it is congruent to $h$ modulo $[\varpi]$ for some $\varpi \in \bb{C}_p^\flat$. Now, we have
    \begin{equation} \label{eq:restatement_u+}
        \pi_{\mfr{m}}(g_{<m}^{-1} g_n) \equiv \pi_{\mfr{m}}(\Ad(m_n u_-)^{-1}(\ov{u}_+) - \ov{u}_+) \pmod{[\varpi]}
    \end{equation}
    for some $\varpi \in \bb{C}_p^\flat$. Thus, it is enough to show that the right-hand side of \eqref{eq:restatement_u+} is trivial modulo $[\varpi]$ for some $\varpi$. 
    
\begin{lem} \label{lem:valmn<m}
    For each $0 \leq i < n - m$, $m_n u_-$ is trivial modulo $(\pi^{n - m - i}, [\pi^{\flat, q(i+1)}])$. 
\end{lem}
\begin{proof}
    First, $m_n u_- \bmod \pi^{n - m - i}$ is the projection of $g_{< m} \bmod \pi^{n-m-i}$ along 
    \[
        \pr_2 \colon \cl{\breve{G}}^{\lambda < 0} \times \cl{\breve{G}}^{\lambda \geq 0} \to \cl{\breve{G}}^{\lambda \geq 0}
    \]
    as $g_n \equiv g_{<m} \pmod{\pi^m}$. Recall from \eqref{eq:h<n/2} that $h_{<m} \in \cl{\breve{G}}^{\lambda < 0}$. Thus, $\pr_2(g_{<m}) = \pr_2(h_{<m}^{-1}g_{<m})$. By \eqref{eq:g<m}, we have 
    \[
        h_{<m}^{-1} g_{<m} = \sigma(h_{<m}) \sigma^2(h_{<m}) \cdots. 
    \]
    Now, $h_{<m}$ is trivial modulo  $(\pi^{n - m - i}, [\pi^{\flat, i+1}])$ because \Cref{lem:valsalpha} and $j \cdot x_{\CM} \in \cl{U}(n)$ imply that for $0 \leq t < n - m - i$, we have 
    \[
        \nu^\flat(s_{\alpha, t}u_\alpha^\flat) \geq m - t - r_\alpha > m - t - 1 \geq i + 1
    \]
    when $n = 2m - 1$ and
    \[
        \nu^\flat(s_{\alpha, t}u_\alpha^\flat) \geq m - t \geq i + 1
    \]
    when $n = 2m$. Thus, $h_{<m}^{-1} g_{<m}$ is trivial modulo $(\pi^{n - m - i}, [\pi^{\flat, q(i+1)}])$ and we get the claim. 
\end{proof}
    Let $\Phi_{\msf{\ov{N}}} \subset \Phi$ be the set of roots inside $\msf{\ov{N}}$ and write 
    \[
        \ov{u}_+ = \sum_{\alpha \in \Phi_{\msf{\ov{N}}}} \sum_{0 \leq i < n - m} [\ov{u}_{\alpha,i}] \pi^i du_\alpha \in \mfr{\breve{g}}^{\lambda < 0} \otimes W_{O_F}(\bb{C}_p^\flat)/\pi^{n-m}. 
    \]
    For the right-hand side of \eqref{eq:restatement_u+} to be trivial modulo $[\varpi]$ for some $\varpi$, it is enough to show $\nu^\flat(\ov{u}_{\alpha, i}) > -q(i+1)$ for $0 \leq i < n - m$ by \Cref{lem:valmn<m}. Now, consider the element
    \[
        \wtd{u} = \Ad(t_\infty)(\ov{u}_-\ov{u}_+) \in \cl{\breve{G}}^{\lambda < 0 }(W_{O_F}(O_{\bb{C}_p}^\flat) / \pi^n)
    \]
    (see \eqref{eq:characterization_t_infty}). The key identity to evaluate $\nu^\flat(\ov{u}_{\alpha, i})$ is 
    \begin{equation} \label{eq:expression_u+}
        \ov{u}_+ = \Ad(t_\infty)^{-1}(\Ad(t_\infty)(\ov{u}_-)^{-1} \wtd{u}). 
    \end{equation}
    We begin with the estimation of the valuation of each coefficient of $\Ad(t_\infty)(\ov{u}_-)^{-1}$. 

\begin{lem} \label{lem:valuation_on_calpha+-}
    For each $\alpha \in \Phi_{\ov{\msf{N}}}$, let $d_\alpha \geq 1$ be the minimum positive integer such that $\langle \alpha, \sigma^{-d_\alpha} \mu \rangle = 1$. Then, the coefficients of
    \[
        \alpha(t_\infty^{-1}) = \sum_{i \geq 0} [c_{\alpha, i}^{-}] \pi^i, \quad
        \alpha(t_\infty) = \sum_{i \geq 0} [c_{\alpha, i}^{+}] \pi^i
    \]
    satisfy $\nu^\flat(c_{\alpha, i}^{-}) = - \langle \lambda, \alpha \rangle - q^{-d_\alpha} i$ and $\nu^\flat(c_{\alpha, i}^{+}) \geq \langle \lambda, \alpha \rangle - q^{-d_\alpha} i$ for every $i \geq 0$. 
\end{lem}
\begin{proof}
    The second inequality follows from the first one since $\alpha(t_\infty) = \alpha(t_\infty^{-1})^{-1}$. We will prove the first one as in the proof of \Cref{lem:valsalpha}. 

    Take $N \geq 1$ so that $\sigma^N$ acts trivially on $X_*(T)$. Then, we have (cf.\ \eqref{eq:use_of_tinfty})
    \[
        \alpha(t_\infty^{-1}) \sigma^N(\alpha(t_\infty)) = \prod_{0 \leq i < N} ([\pi^{\flat, q^i}]-\pi)^{\langle \alpha, - \sigma^i \mu \rangle}. 
    \]
    By \cite[Lemma 3.2]{Tak25z}, $\langle \alpha, \sigma^i \mu \rangle = 0$ for $i > N - d_\alpha$ and $\langle \alpha, \sigma^i \mu \rangle = 1$ for $i = N - d_\alpha$. Thus, the claim follows by induction as in the proof of \Cref{lem:valsalpha}. 
\end{proof}

For later use, fix an ordering of $\Phi_{\msf{\ov{N}}}$ and take a product $\cl{\breve{G}}^{\lambda < 0} \cong \prod_{\alpha \in \Phi_{\ov{\msf{N}}}} \cl{U}_\alpha$ in this order. 

\begin{lem} \label{lem:valuation_on_n-<m}
    Let
    \[
        \ov{u}_- = \prod_{\alpha \in \Phi_{\ov{\msf{N}}}} i_{\alpha}\Bigl(\sum_{i \geq 0} [\ov{u}_{-, \alpha, i}]\pi^i \Bigr). 
    \]
    be the decomposition of $\ov{u}_-$. Then, $\nu^\flat(\ov{u}_{-, \alpha, i}) \geq \max(n - m - i, 0)$ for every $i \geq 0$. 
\end{lem}
\begin{proof}
    By the proof of \Cref{lem:valmn<m}, $h_{<m}$ and $g_{<m}$ are trivial modulo $(\pi^i, [\pi^{\flat, n - m - i + 1}])$ for every $0 \leq i \leq m$. Since $g_n \equiv g_{<m} \pmod{\pi^m}$, $\ov{u}_-$ is also trivial modulo $(\pi^i, [\pi^{\flat, n - m - i + 1}])$ for every $0 \leq i \leq m$. Thus, we get the claim. 
\end{proof} 

\begin{lem} \label{lem:coefmodm+i+1}
    For every $i \geq 0$, the $u_\alpha$-coordinate of
    \[
        \Ad(t_\infty)(\ov{u}_-) \bmod{\pi^{m + i + 1}} \in \breve{\cl{G}}^{\lambda < 0}(W_{O_F}(\bb{C}_p^\flat) / \pi^{m + i +1})
    \]
    lies in $[\pi^{\flat, - q^{-d_\alpha}(i + 1) - 1}] \cdot W_{O_F}(O_{\bb{C}_p}^\flat) / \pi^{m + i +1}$ for each $\alpha \in \Phi_{\msf{\ov{N}}}$. 
\end{lem}
\begin{proof}
    We have 
    \[
        \Ad(t_\infty)(\ov{u}_-) = \prod_{\alpha \in \Phi_{\ov{\msf{N}}}} i_{\alpha}\Bigl(\sum_{i_1 \geq 0} [c_{\alpha, i_1}^{+}] \pi^{i_1} \cdot \sum_{i_2 \geq 0} [\ov{u}_{-, \alpha, i_2}]\pi^{i_2} \Bigr). 
    \]
    For every $i_1, i_2 \geq 0$, \Cref{lem:valuation_on_calpha+-} and \Cref{lem:valuation_on_n-<m} imply
    \[
        \nu^\flat(c_{\alpha, i_1}^+ \ov{u}_{-, \alpha, i_2}) \geq \langle \lambda, \alpha \rangle - q^{-d_\alpha} i_1 + \max(n - m - i_2, 0) \geq - q^{-d_\alpha} (i_1 + i_2 - n + m) - 1. 
    \]
    Here, we use $\lvert  \langle \lambda, \alpha \rangle \rvert \leq 1$, which follows from the fact that $(q^N - 1)\lambda = - \sum_{0 \leq k < N} (q\sigma)^k\mu$ for some $N\geq 1$ and $\mu$ is minuscule. Then, the claim follows since $n \geq 2m - 1$. 
\end{proof}

\begin{cor} \label{cor:valuation_last_step}
    Let us write 
    \[
        \Ad(t_\infty)(\ov{u}_-)^{-1} \wtd{u} = \sum_{\alpha \in \Phi_{\msf{\ov{N}}}} \sum_{0 \leq i < n - m} [\wtd{u}_{\alpha,i}] \pi^i du_\alpha \in \mfr{\breve{g}}^{\lambda < 0} \otimes W_{O_F}(\bb{C}_p^\flat)/\pi^{n-m}
    \]
    via the Moy-Prasad isomorphism. Then, we have $\nu^\flat(\wtd{u}_{\alpha, i}) \geq - q^{-1}(i + 1) - 1$.
\end{cor}
\begin{proof}
    Consider the subset $S \subset \cl{\breve{G}}^{\lambda < 0}(W_{O_F}(\bb{C}_p^\flat) / \pi^n)$ consisting of elements $A$ admitting a decomposition $A = A_{<m} A_{\geq m}$ such that $A_{\geq m}$ is trivial modulo $\pi^m$ and we have 
    \[
        A_{<m} \in \cl{\breve{G}}^{\lambda < 0}(W_{O_F}(O_{\bb{C}_p}^\flat) / \pi^n)
    \]
    \[
        A_{\geq m} \bmod{\pi^{m + i +1}} \in [\pi^{\flat, - q^{-1}(i + 1) - 1}]\cdot \mfr{\breve{g}}^{\lambda < 0} \otimes W_{O_F}(O_{\bb{C}_p}^\flat)/\pi^{i + 1}
    \]
    for every $0 \leq i < n-m$. 
    Then, $S$ is closed under multiplication since for $A , B \in S$, 
    \[
        AB = A_{<m} B_{<m} \cdot \Ad(B_{<m}^{-1})(A_{\geq m}) B_{\geq m}
    \]
    is a desired decomposition. Moreover, $S$ is also closed under the inverse. 

    Since $\wtd{u} \in \cl{\breve{G}}^{\lambda < 0}(W_{O_F}(O_{\bb{C}_p}^\flat) / \pi^n)$, we have $\wtd{u} \in S$. Moreover, \eqref{eq:characterization_t_infty} implies 
    \[
        \Ad(t_\infty)(\ov{u}_-) \bmod \pi^m \in \breve{\cl{G}}^{\lambda < 0}(W_{O_F}(O_{\bb{C}_p}^\flat) / \pi^m). 
    \]
    Together with \Cref{lem:coefmodm+i+1}, we have $\Ad(t_\infty)(\ov{u}_-) \in S$. Thus, $\Ad(t_\infty)(\ov{u}_-)^{-1} \wtd{u} \in S$. 
\end{proof}

    Now, we return to the proof of $\nu^\flat(\ov{u}_{\alpha, i}) > -q(i+1)$ for $0 \leq i < n - m$. 
    By \eqref{eq:expression_u+}, we have    
    \[
        \ov{u}_+ = \sum_{\alpha \in \Phi_{\msf{\ov{N}}}} \Bigl(\sum_{0 \leq i_1 < n - m} [c_{\alpha, i_1}^{-}] \pi^{i_1} \cdot \sum_{0 \leq i_2 < n-m} [\wtd{u}_{\alpha, i_2}]\pi^{i_2}\Bigr) du_\alpha. 
    \]
    For every $0 \leq i_1, i_2 < n-m$, it follows from \Cref{lem:valuation_on_calpha+-} and \Cref{cor:valuation_last_step} that 
    \[
        \nu^\flat(c_{\alpha, i_1}^- \wtd{u}_{\alpha, i_2}) \geq - \langle \lambda, \alpha \rangle - q^{-d_\alpha} i_1 - q^{-1}(i_2 +1) - 1 \geq - q^{-1} (i_1 + i_2 + 1) - 1. 
    \]
    Thus, $\ov{u}_+$ is trivial modulo $(\pi^{i+1}, [\pi^{\flat, - q^{-1}(i + 1) -1}])$ for every $0 \leq i < n - m$. Then, the claim follows since
    $
        \nu^\flat(\ov{u}_{\alpha,i}) \geq - q^{-1}(i + 1) -1 > -q(i+1). 
    $
\end{proof}

Then, we get the desired stability of $\Img(\can_{n, m})$ from \eqref{eq:inverse_image_U(n)}.

\begin{cor} \label{cor:stability_Img_can_nm}
    The subspace $\Img(\can_{n, m}) \subset \cl{M}_{G, b, \mu, G(F)_{x, n, m}, \breve{F}_n}$ is stable under $G_b(F)_{x_b, n, m_b}$. 
\end{cor}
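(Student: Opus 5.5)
The plan is to copy the connectedness argument already used for \Cref{prop:stability_Img_can_n}, with \Cref{prop:image_x_CM_n_mb} playing the role that \Cref{lem:intermediate_image_x_CM} played there.

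First, I check that $G_b(F)_{x_b, n, m_b}$ acts on the disjoint union \eqref{eq:inverse_image_U(n)}. Since $M(F)_{x, 2m_b} = M(F)_{x, n}$, we have
\[
    G_b(F)_{x_b, n, m_b} \subset G_b(F)_{x_b, m_b}.
\]
The inner action of $G_b(F)$ commutes with the change-of-level maps. By \Cref{prop:stability_Img_can_n}, $\Img(\can_{m, m})$ is stable under $G_b(F)_{x_b, m_b}$. Hence its inverse image in $\cl{M}_{G, b, \mu, G(F)_{x, n, m}, \breve{F}_n}$, which is exactly \eqref{eq:inverse_image_U(n)}, is stable under $G_b(F)_{x_b, n, m_b}$.

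Next, fix $j \in G_b(F)_{x_b, n, m_b}$. The map $j$ is an automorphism of \eqref{eq:inverse_image_U(n)}. Each piece $g \cdot \Img(\can_{n, m})$ is open and closed there and is connected, being isomorphic to $\cl{U}(n)_{\breve{F}_n}$, a closed ball. So $j$ permutes these connected components, and $j \cdot \Img(\can_{n, m})$ is a single component $g \cdot \Img(\can_{n, m})$. By \Cref{prop:image_x_CM_n_mb}, the point $j \cdot x_{\CM, n}$ lies in $\Img(\can_{n, m})$. Since $x_{\CM, n} = \can_{n, m}(x_\CM) \in \Img(\can_{n, m})$, the component $j \cdot \Img(\can_{n, m})$ meets $\Img(\can_{n, m})$. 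Disjointness of the union then forces $g = 1$, so $j \cdot \Img(\can_{n, m}) = \Img(\can_{n, m})$.

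The only subtle point is the base field. The action of $j$ is defined over $\breve{F}$, so it preserves the $\breve{F}_n$-structure. Connectedness of $\Img(\can_{n, m})$ over $\breve{F}_n$ follows from $\can_{n, m}$ being a section with connected source. All the real work sits in \Cref{prop:image_x_CM_n_mb}, so this corollary is formal.
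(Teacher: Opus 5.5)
Your proof is correct and follows the paper's argument. The group $G_b(F)_{x_b, n, m_b}$ acts on the disjoint union \eqref{eq:inverse_image_U(n)}, and connectedness of $\cl{U}(n)$ reduces the stability of the component $\Img(\can_{n, m})$ to the statement $j \cdot x_{\CM, n} \in \Img(\can_{n, m})$ of \Cref{prop:image_x_CM_n_mb}; your use of \Cref{prop:stability_Img_can_n} to get the action on \eqref{eq:inverse_image_U(n)} is the right input, the same one the paper invokes when it introduces that disjoint union.
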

\begin{proof}
    Recall from \eqref{eq:inverse_image_U(n)} that $G_b(F)_{x_b, n, m_b}$ acts on the disjoint union 
    \[
        \bigsqcup_{g \in M(F)_{x, m} / M(F)_{x, n}} g \cdot \Img(\can_{n, m})
    \]
    since $\cl{U}(n)$ is stable under $G_b(F)_{x_b, n, m_b}$ by \Cref{prop:U(n)_stability}. Since $\cl{U}(n)$ is connected, it follows from \Cref{prop:image_x_CM_n_mb} that $j \cdot \Img(\can_{n, m}) = \Img(\can_{n, m})$ for every $j \in G_b(F)_{x_b, n, m_b}$. 
\end{proof}

\section{Cohomology of local systems on special open balls} \label{sec:cohomology_computation}

In this section, we introduce special affinoids $\cl{W}(n)_\phi$ using the canonical trivialization $\can_{n, m}$ (see \Cref{defi:special_affinoids_positive_depth}). We show that $\cl{W}(n)_\phi$ has good reduction in \Cref{ssec:reduction_odd} and \Cref{ssec:reduction_even} (separately according to the parity of $n$). Then, we compute the \'{e}tale cohomology of reductions in \Cref{ssec:cohomology_reduction_even} and \Cref{ssec:cohomology_reduction_odd} and complete the transfer computation for \Cref{prop:restatement_LSV} (4) via the nearby cycles functor. 


\subsection{Special affinoids at positive depth} \label{ssec:special_affinoids_at_positive_depth}

In this section, we set up our notation and introduce special affinoids $\cl{W}(n)_\phi$. 

Let $\rho \in \Irr^\sm(M(F)_x)$ be a $(G, G_b, M)$-supergeneric irreducible representation of depth $n$. By definition (see \Cref{defi:supergeneric_representation}), we can take a smooth character
\[
    \phi \colon M(F) \to \Qlax
\]
that is $(G, M)$-supergeneric and $(G_b, M)$-supergeneric of depth $n$. Note that the same $\phi$ can be used for both inclusions $M \subset G$ and $M\subset G_b$ since the supergenericity only depends on $\phi \vert_{M(F)_{x, n} / M(F)_{x, n+}}$ (see \cite[Definition 3.1]{Tak26_Yu}). We fix a nontrivial additive character $\psi \colon k \to \Qlax$ and an $M$-stable $k$-linear map
\[
    X \colon \msf{m}_{x, n} \cong M(F)_{x, n} / M(F)_{x, n+} \to k
\]
with $\phi \vert_{\msf{m}_{x, n}} = \psi \circ X$. Precisely, we assume that $X$ admits a lift $\wtd{X} \in \Lie^*(M)^M(F)$ so that $\wtd{X}\vert_{\Lie(M)_{x, n}}$ takes values in $O_F$ and $X = \wtd{X}\vert_{\Lie(M)_{x, n}} \bmod \pi$ (see \cite[Definition 4.1.1]{FS25}). 

\begin{lem} \label{lem:vanish_X}
    For each $\alpha \in \Phi$, we have $(X \circ \pi_{\mfr{m}})(du_\alpha) = 0$.  
\end{lem}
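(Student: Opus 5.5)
The plan is to show that $\pi_{\mfr{m}}(du_\alpha) = 0$ for every $\alpha \in \Phi$; the claim then follows immediately since $X \circ \pi_{\mfr{m}}$ vanishes on the zero vector. (If the intended reading of $X\circ\pi_{\mfr{m}}$ is as a functional on the Moy--Prasad graded piece $\msf{g}_{x,n}$, the same reasoning applies after identifying $du_\alpha$ with the corresponding element of $\msf{g}_{x,n}$ via the Moy--Prasad isomorphism at the hyperspecial point $x$.)

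Recall the setup. $T \subset M$ is a maximally $F$-split unramified maximal torus with $x \in \cl{A}(M,T)$. The element $du_\alpha$ is the image of $dt$ under $\Lie(i_\alpha)$, so it spans the root line $\Lie(\cl{U}_\alpha) \subset \breve{\mfr{g}}$ and has $T$-weight $\alpha$. Here $\Phi$ denotes the roots of $\breve{G}$ with respect to $\breve{T}$. The projection $\pi_{\mfr{m}}$ is the projection onto the weight-zero part for the toral action of $Z_{\cl{M}}^\circ$, with kernel $\mfr{m}^\perp$.

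The key step is therefore a case split according to whether $\alpha|_{Z_M^\circ}$ is trivial.

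\emph{Case $\alpha|_{Z_M^\circ}\neq 1$.} Then $du_\alpha$ has nonzero $Z_M^\circ$-weight, so $du_\alpha \in \breve{\mfr{m}}^\perp$. This is in fact how the paper introduces $du_\alpha$, and the proof of \Cref{lem:compute_h_mn} already uses $\pi_{\mfr{m}}(du_\alpha)=0$. Hence $\pi_{\mfr{m}}(du_\alpha)=0$ and the claim holds.

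\emph{Case $\alpha|_{Z_M^\circ}=1$.} Then $\alpha$ is a root of $\breve{M}$ and $du_\alpha \in \breve{\mfr{m}}$, so $\pi_{\mfr{m}}(du_\alpha) = du_\alpha$. Here I would use that $X$ is $M$-stable: it is the reduction of some $\wtd{X} \in \Lie^*(M)^M(F)$. Invariance under the adjoint action of $T$ gives $\wtd{X}(\Ad(t)Y) = \wtd{X}(Y)$, while $\Ad(t)\,du_\alpha = \alpha(t)\,du_\alpha$. Choosing $t \in \breve{T}(O_{\breve{F}})$ with $\alpha(t)-1$ a unit (possible after enlarging the residue field, since $\alpha \ne 0$ on the special fiber torus) forces $\wtd{X}(du_\alpha)=0$. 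Reducing modulo $\pi$ then gives $X(du_\alpha)=0$.

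The main obstacle is bookkeeping over $O_{\breve{F}}$ versus $O_F$. The element $du_\alpha$ is only defined over $O_{\breve{F}}$, so $X$ must first be extended $\ov{k}$-linearly and the torus-invariance argument run after base change. This is legitimate because $\wtd{X}$ is $\Ad(M)$-invariant as an algebraic functional, hence invariant under $T(\breve F)$ as well.
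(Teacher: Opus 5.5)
Your argument is correct and is essentially the paper's: the paper just observes that $\wtd{X}$ is $T$-invariant and therefore vanishes on every nontrivial $T$-weight space. Your case split is harmless but unnecessary, since $\pi_{\mfr{m}}$ is $T$-equivariant, so $\pi_{\mfr{m}}(du_\alpha)$ always lies in the weight-$\alpha$ space, where $T$-invariance of $\wtd{X}$ kills it in one step.
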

\begin{proof}
    As $\wtd{X} \in \Lie^*(M)^T$, $\wtd{X}$ vanishes on the weight space for each nontrivial weight.  
\end{proof}

\begin{defi}
    Let $K_\phi \subset G(F)_{x, n, n/2}$ be a subgroup containing $G(F)_{x, n+, n/2 +}$ such that 
    \[
        K_\phi / G(F)_{x, n+, n/2+} \cong \Ker(X) \subset \msf{m}_{x, n} \subset G(F)_{x, n, n/2} / G(F)_{X, n+, n/2+}. 
    \]
\end{defi}

Then, $R^{K_n}_M(\rho)$ is trivial on $K_\phi$. 
In particular, the computation of \eqref{eq:map_from_compact_to_usual} can be realized as an isotypic part of the cohomology of the constant sheaf on the following affinoid. 

\begin{defi} \label{defi:special_affinoids_positive_depth}
    Let $\cl{W}(n)_\phi \subset \cl{M}_{G, b, \mu, K_\phi, \breve{F}_n}$ be the inverse image of $\Img(\can_{n, m})$. 
\end{defi}

We regard $\cl{W}(n)_\phi$ as a \textit{special affinoid} of the local Shimura variety at positive depth, i.e.\ a positive-depth analogue of $\cl{W}(0)$ (see \Cref{prop:U(0)_previous_work}). As in the proof of \Cref{prop:verification_at_depth_zero}, we will apply the nearby cycles functor to compute the map
\[
    R\Gamma_c(\cl{W}(n)_{\phi, \bb{C}_p}, \Qla) \to R\Gamma(\cl{W}(n)_{\phi, \bb{C}_p}, \Qla). 
\]
For this, we will show that $\cl{W}(n)_{\phi, \bb{C}_p}$ has good reduction and explicitly describe its reduction $\msf{W}(n)_\phi$. As in the Lubin-Tate case (see \cite[Section 3.8, 3.9]{BW16}), the reduction $\msf{W}(n)_\phi$ takes different forms according to whether
\[
    n = 1, \quad n = 2m - 1 \ (m \geq 2) \ \text{or} \ n = 2m. 
\]


\subsection{Reductions of special affinoids for $n = 2m - 1$} \label{ssec:reduction_odd}

In this section, we treat the case $n = 2m - 1$. Later, we will treat $m = 1$ and $m \geq 2$ separately. In this case, $G(F)_{x, 2m, m} \subset K_{\phi}$, so we may apply \Cref{lem:levelVn} to study the fiber of $\cl{W}(n)_{\phi} \to \Img(\can_{n, m}) \cong \cl{U}(n)$. We use the same notation as there. 

\begin{prop} \label{lem:levelVn_odd}
    Let $S = \Spa(R,R^+)$ be an affinoid perfectoid space over $\cl{U}_\infty(n)$. The set of lifts $S \to \cl{M}_{G, b, \mu, K_\phi}$ of $\can_{n, m}$ is bijective to the set of elements
    \[
        x \in (\mfr{m} \otimes W_{O_F}(R^\flat) / \pi^m) / \Ker(X) \otimes R^\flat
    \]
    such that $x \bmod \pi^{m - 1} \in \mfr{m} \otimes W_{O_F}(R^{\flat+}) / \pi^{m - 1}$ and it is trivial modulo $[\varpi]$ for some pseudo-uniformizer $\varpi \in R^\flat$ and satisfies
    \begin{equation} \label{eq:levelVn_odd}
        x - \sigma(x) \equiv \pi_{\mfr{m}}(\Ad(\sigma(g_{<m})^{-1})(h_{[m, 2m)})) \pmod{\Ker(X) \otimes R^{\flat}}. 
    \end{equation}
\end{prop}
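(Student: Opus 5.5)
We deduce the statement from \Cref{lem:levelVn} applied with $n = 2m-1$, together with the description of $K_\phi$ as an intermediate group between $G(F)_{x, 2m, m}$ and $G(F)_{x, n, m}$. For $n = 2m - 1$ we have $G(F)_{x,n,n/2} = G(F)_{x, 2m-1, m}$ up to the usual jump conventions. The inclusions $G(F)_{x, 2m, m} \subset K_\phi \subset G(F)_{x, 2m-1, m}$ show that $\cl{M}_{G, b, \mu, K_\phi}$ is the quotient of $\cl{M}_{G, b, \mu, G(F)_{x, 2m, m}}$ by $K_\phi / G(F)_{x, 2m, m}$. Under the Moy-Prasad isomorphism, this group is the preimage of $\Ker(X) \subset \msf{m}_{x, 2m-1}$ in $M(F)_{x, 2m-1}/M(F)_{x,2m} \cong \pi^{m-1}\mfr{m}/\pi^{m}\mfr{m}$. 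The first step is to apply \Cref{lem:levelVn} at the auxiliary depth $2m$. This is legitimate because $\cl{U}(2m-1)$ has radius $m = (m-1) + 1 > m-1+r_\alpha$, so $g_{<m}$ and $\can_{m,m}$ are defined. It shows that lifts $S \to \cl{M}_{G,b,\mu,G(F)_{x,2m,m}}$ of $\can_{m,m}$ correspond to $x \in \mfr{m}\otimes W_{O_F}(R^\flat)/\pi^{m}$ solving \eqref{eq:levelVn} with $h_{[m,2m)}$.

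Next, we single out the lifts of $\can_{n,m}$ among the lifts of $\can_{m,m}$. We pass from level $G(F)_{x, n, m}$ to $K_\phi$ by reducing $x$ modulo $\pi^{m-1}$ and modulo $\Ker(X)$ in the top graded piece. A lift of $\can_{m,m}$ lies over $\Img(\can_{n,m})$ exactly when its image at level $G(F)_{x,n,m}$, i.e.\ $x \bmod \pi^{m-1}$, agrees with the section constructed in \Cref{thm:refined_trivialization}. That section is the convergent sum $\sum_i \sigma^i(\pi_{\mfr{m}}(\cdots))$. By \Cref{lem:compute_h_mn} it is integral and trivial modulo $[\pi^{\flat,1/N}]$. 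Conversely, any solution of \eqref{eq:levelVn} modulo $\pi^{m-1}$ that is integral and topologically nilpotent must coincide with this sum. Indeed, the difference $y$ of two such solutions satisfies $y = \sigma(y)$ and is trivial modulo $[\varpi]$, which forces $y=0$. This is the uniqueness argument used in \Cref{lem:quasi_isog_computation}. This yields the condition that $x \bmod \pi^{m-1}$ is integral and trivial modulo some $[\varpi]$.

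Finally, we quotient the top layer. We pass to strictly totally disconnected $S$, so that the $\und{\Ker(X)}$-action can be realized on points, exactly as in the proof of \Cref{lem:levelVn}. The quotient by $K_\phi/G(F)_{x,2m,m}$ then replaces $x$ by its class modulo $\Ker(X)\otimes R^\flat$ in the top graded piece. Here $\Ker(X) \otimes R^\flat$ is embedded in $\pi^{m-1}$-torsion via $\pi^{m-1}$, as in the statement. Equation \eqref{eq:levelVn} holds modulo $\pi^{m-1}$ on the nose, and holds only modulo $\Ker(X)\otimes R^\flat$ at the top. This uses that $\sigma$ preserves $\Ker(X)\otimes R^\flat$, since $X$ is $k$-rational. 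The bijection then extends to all $S$ by $v$-descent.

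The main obstacle is the identification of $\Ker(X)\otimes R^\flat$ with the image of the level group $\und{\Ker(X)}(S)$. A priori, only the locally constant $\Ker(X)$-valued functions act. The point to check is that, after imposing the Artin--Schreier-type equation \eqref{eq:levelVn_odd}, the differences of solutions in the top layer satisfy $y-\sigma(y) \in \Ker(X)\otimes R^\flat$. For strictly totally disconnected $S$, such $y$ are exactly locally constant $\Ker(X)$-valued functions modulo $\Ker(X)\otimes R^\flat$-solutions of $y=\sigma(y)$. I would first try to make this precise via Lang's theorem for the vector group $\Ker(X)$ over $R^\flat$.
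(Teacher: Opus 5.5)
Your proposal is correct and follows the paper's route. The paper proves this by saying that the argument of \Cref{lem:levelVn}, applied at level $G(F)_{x,2m,m} \subset K_\phi$, goes through: \eqref{eq:levelVn_odd} encodes lifts of $\can_{m,m}$, and the integrality and $[\varpi]$-triviality of $x \bmod \pi^{m-1}$ single out the lifts of $\can_{n,m}$. Your additional details are exactly what that ``same argument'' implicitly uses: the vanishing of a $\sigma$-fixed, $[\varpi]$-trivial difference, and Artin--Schreier (Lang) solvability on strictly totally disconnected $S$, which matches $\und{\Ker(X)}(S)$-orbits of exact solutions with classes modulo $\Ker(X)\otimes R^\flat$.
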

\begin{proof}
    It follows by the same argument as \Cref{lem:levelVn}: \eqref{eq:levelVn_odd} is the condition for $x$ to be a lift of $\can_{m, m}$, and the condition on $x \bmod \pi^{m-1}$ ensures that $x$ is a lift of $\can_{n, m}$. 
\end{proof}

Now, we will analyze the right-hand side of \eqref{eq:levelVn_odd} for each summand of $h_{[m, 2m)}$. 

\begin{lem} \label{lem:computepit_odd}
    For every $\alpha \in \Phi_{\mu < 0}$ and $m < i < 2m$, 
    \[
        \pi_{\mfr{m}}(\Ad(\sigma(g_{<m})^{-1})([s_{\alpha,i}u_\alpha^\flat]\pi^{i-m}du_\alpha)) \bmod \pi^m \in \mfr{m}\otimes W_{O_F}(R^{\flat +}) / \pi^m. 
    \]
    Moreover, it is trivial modulo $[\pi^{\flat, 1/N}]$ for sufficiently large $N$. 
\end{lem}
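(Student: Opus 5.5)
The plan is to repeat the argument of \Cref{lem:compute_h_mn} for $n = 2m - 1$, now with the shorter truncation $\pi^m$ and the sharper index range $m < i < 2m$. Put $A = [s_{\alpha,i}u_\alpha^\flat]\pi^{i-m}du_\alpha$. Since $\pi_{\mfr{m}}(du_\alpha) = 0$, we have
\[
    \pi_{\mfr{m}}(\Ad(\sigma(g_{<m})^{-1})(A)) = \pi_{\mfr{m}}\bigl(\Ad(\sigma(g_{<m})^{-1})(A) - A\bigr),
\]
so it is enough to bound the difference $\Ad(\sigma(g_{<m})^{-1})(A) - A$ by a positive power of $[\pi^\flat]$ in $\mfr{\breve{g}} \otimes W_{O_F}(R^{\flat+})/\pi^m$. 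Because $A$ is already divisible by $\pi^{i-m}$, the adjoint action on $A$ modulo $\pi^m$ factors through $\cl{G}(W_{O_F}(R^{\flat+})/\pi^{2m-i})$, and here $2m - i \leq m - 1$.

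First I will estimate the coefficient of $A$. On $\cl{U}(2m-1)$ we have $\nu^\flat(u_\alpha^\flat) \geq m$, and \Cref{lem:valsalpha} gives $\nu^\flat(s_{\alpha,i}) = -i - r_\alpha$. Hence
\[
    \nu^\flat(s_{\alpha,i}u_\alpha^\flat) \geq \tau_\alpha = m - i - r_\alpha,
\]
which is negative because $i > m$. This bound is uniform on $\cl{U}(n)$, so $A$ has coefficients in $[\pi^{\flat,\tau_\alpha}] \cdot W_{O_F}(R^{\flat+})$.

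Next I will bound $\sigma(g_{<m})^{-1}$ modulo $\pi^{2m-i}$. For $0 \leq t < 2m - i$, the $t$-th Teichm\"uller coefficient of $h_{<m}$ has valuation $\nu^\flat(s_{\alpha,t}u_\alpha^\flat) \geq m - t - r_\alpha > m - t - 1 \geq i - m$. So $h_{<m}$ is trivial modulo $(\pi^{2m-i}, [\pi^{\flat,\nu}])$ for some $\nu > i - m$, exactly as in the proof of \Cref{lem:valmn<m}. By \eqref{eq:g<m}, $\sigma(g_{<m})$ is then trivial modulo $(\pi^{2m-i}, [\pi^{\flat, q\nu}])$. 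The commutator estimate then puts $\Ad(\sigma(g_{<m})^{-1})(A) - A$ in $[\pi^{\flat, q\nu + \tau_\alpha}] \cdot \mfr{\breve{g}} \otimes W_{O_F}(R^{\flat+})/\pi^m$.

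The main step, and the one I expect to be the real obstacle, is checking that $q\nu + \tau_\alpha > 0$. With $\nu$ close to $i - m$ this becomes $q(i-m) + m - i - r_\alpha = (q-1)(i-m) - r_\alpha$, which is positive since $i - m \geq 1$, $q \geq 2$ and $r_\alpha < 1$ (\Cref{lem:vareps}). This positivity is exactly why the index $i = m$ is excluded from the statement. Once $q\nu + \tau_\alpha > 0$ holds, the projected element has integral coefficients and is divisible by $[\pi^{\flat,1/N}]$ for large $N$. That gives both the integrality claim and the triviality modulo $[\pi^{\flat,1/N}]$.
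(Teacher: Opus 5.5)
Your proposal is correct and is essentially the paper's own argument. The paper likewise uses $\pi_{\mathfrak{m}}(du_\alpha)=0$ to reduce to bounding $\Ad(\sigma(g_{<m})^{-1})(A)-A$; it notes that $\sigma(g_{<m})^{-1}$ is trivial modulo $(\pi^{2m-i},[\pi^{\flat,q(i-m)}])$ and that $s_{\alpha,i}u_\alpha^\flat\in\pi^{\flat,m-i-r_\alpha}R^{\flat+}$, and concludes from $q(i-m)+m-i-r_\alpha>0$. Your slightly sharper exponent $q\nu$ with $\nu>i-m$ is harmless but not needed.
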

\begin{proof}
    It follows by the same argument as \Cref{lem:compute_h_mn}: in this case, $\sigma(g_{<m})^{-1}$ is trivial modulo $(\pi^{2m - i}, [\pi^{\flat, q(i-m)}])$ and since $s_{\alpha,i}u_\alpha^\flat \in \pi^{\flat, m - i - r_{\alpha}} R^{\flat +}$, 
    \[
        \Ad(\sigma(g_{<m})^{-1})([s_{\alpha,i}u_\alpha^\flat]\pi^{i-m}du_\alpha) - [s_{\alpha,i}u_\alpha^\flat]\pi^{i-m}du_\alpha 
    \]
    modulo $\pi^m$ lies in 
    \[
        [\pi^{\flat, q(i-m) + (m - i - r_{\alpha})}] \cdot \mfr{\breve{g}}\otimes W_{O_F}(R^{\flat +})/\pi^{m}. 
    \]
    The claim follows since 
    $
        q(i-m) + (m - i - r_{\alpha}) > 0. 
    $
\end{proof}


\begin{lem} \label{lem:redtoh_odd}
    Let 
    \[
        h_{m-1} = \prod_{\alpha \in \Phi_{\mu<0}} i_\alpha(\pi^{m-1}[s_{\alpha,m-1}u_\alpha^\flat]). 
    \]
    Then, the infinite product
    \[
        g_{m-1} = h_{m-1} \sigma(h_{m-1}) \sigma^2(h_{m-1}) \cdots
    \]
    converges in $\cl{G}(W_{O_F}(R^{\flat+}))$. For every $\alpha \in \Phi_{\mu<0}$, 
    \begin{equation} \label{eq:main_term_sigma_g_du_alpha}
        \pi_{\mfr{m}}(\Ad(\sigma(g_{<m})^{-1})([s_{\alpha,m}u_\alpha^\flat]du_\alpha)) \equiv \pi_{\mfr{m}}(\Ad(\sigma(g_{m-1})^{-1})([s_{\alpha,m}u_\alpha^\flat]du_\alpha))
    \end{equation}
    modulo $[\pi^{\flat}]\cdot \mfr{m} \otimes W_{O_F}(R^{\flat+})$. 
\end{lem}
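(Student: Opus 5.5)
Convergence of $g_{m-1}$ follows the pattern of \Cref{lem:g<m}. On $\cl{U}(2m-1)$ we have $\nu^\flat(u_\alpha^\flat)\geq m$, and by \Cref{lem:valsalpha} $\nu^\flat(s_{\alpha,m-1}) = -(m-1)-r_\alpha$. Hence $s_{\alpha,m-1}u_\alpha^\flat \in \pi^{\flat,1-r_\alpha}R^{\flat+}$, and since $r_\alpha<1$ this gives a positive exponent $\varepsilon=\min_\alpha(1-r_\alpha)$. So $h_{m-1}$ is trivial modulo $[\pi^{\flat,\varepsilon}]$, and $\sigma^i(h_{m-1})$ is trivial modulo $[\pi^{\flat,q^i\varepsilon}]$. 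The infinite product therefore converges $[\varpi]$-adically in $\cl{G}(W_{O_F}(R^{\flat+}))$.

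For the congruence, the plan is to compare $g_{<m}$ and $g_{m-1}$ after reducing modulo $\pi$. First, $[s_{\alpha,m}u_\alpha^\flat]du_\alpha$ is multiplied by no power of $\pi$, and we only need the adjoint action modulo $\pi^m$. However, the relevant error terms will come with $\pi$-adic factors, so the main point is to work modulo $\pi$. Write $h_{<m}=h_{m-1}\cdot h'$ with $h'=\prod_\alpha i_\alpha(\sum_{i<m-1}[s_{\alpha,i}u_\alpha^\flat]\pi^i)$; this splitting uses that $\breve{\cl{G}}^{\mu<0}$ is abelian. For $i<m-1$ we have $\nu^\flat(s_{\alpha,i}u_\alpha^\flat)\geq m-i-r_\alpha>1$, so $h'$ is trivial modulo $[\pi^{\flat,1+\delta}]$ for some $\delta>0$.

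The key estimate is then $\nu^\flat(s_{\alpha,m}u_\alpha^\flat)\geq -r_\alpha$. Consequently the term $[s_{\alpha,m}u_\alpha^\flat]du_\alpha$ has valuation at worst $-r_\alpha>-1$. Any correction coming from replacing $\sigma(g_{<m})$ by $\sigma(g_{m-1})$ is multiplied by an element trivial modulo $[\pi^{\flat,q(1+\delta)}]$ (after applying $\sigma$). The sum of exponents satisfies $q(1+\delta)-r_\alpha>1$. Concretely, I would show that $\sigma(g_{<m})\sigma(g_{m-1})^{-1}$ is trivial modulo $[\pi^{\flat,q}]$ together with the relevant $\pi$-adic part. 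This would be proved by expanding both infinite products and commuting the factors, with commutators controlled using abelianness of $\breve{\cl{G}}^{\mu<0}$ and the valuations of the $\sigma^i$-twists.

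The main obstacle is that the factors $\sigma^i(h_{<m})$ and $\sigma^j(h_{m-1})$ live in different root groups $\sigma^i(\breve{\cl{G}}^{\mu<0})$, which do not commute. So I must bound the commutator terms in $g_{<m}g_{m-1}^{-1}$. I would try to show inductively that, modulo $\pi$ and modulo $[\pi^{\flat,1+\delta}]$, the partial products agree up to left and right multiplication by elements trivial modulo $[\pi^{\flat,1+\delta}]$. Using the bound $\nu^\flat(s_{\alpha,m}u_\alpha^\flat)\geq -r_\alpha>-1$ and the fact that $\pi$-components of the adjoint action are absorbed modulo $\pi$, the difference in \eqref{eq:main_term_sigma_g_du_alpha} then lands in $[\pi^{\flat}]\cdot\mfr{m}\otimes W_{O_F}(R^{\flat+})$. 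Finally I apply $\pi_{\mfr{m}}$. If the $\pi$-adic terms turn out not to vanish modulo $\pi$, I would fall back on the explicit valuation bookkeeping used in \Cref{lem:compute_h_mn} and \Cref{lem:computepit_odd}.
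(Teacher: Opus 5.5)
Your convergence argument, and the overall shape of your congruence argument, coincide with the paper's. You split $h_{<m}=h_{m-1}h'$ with $h'$ trivial modulo $[\pi^{\flat,1+\delta}]$, aim to show that $\sigma(g_{<m})\sigma(g_{m-1})^{-1}$ is trivial modulo $[\pi^{\flat,q}]$, and combine this with $\nu^\flat(s_{\alpha,m}u_\alpha^\flat)\geq -r_\alpha$ and $q-r_\alpha>1$.

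The gap is that you never prove the middle step. You call it the main obstacle and propose an inductive commutator estimate for it, but no such estimate is needed. Reduction $\cl{G}(W_{O_F}(R^{\flat+}))\to\cl{G}(W_{O_F}(R^{\flat+})/[\pi^{\flat}])$ is a group homomorphism. So the factorwise congruences $\sigma^i(h_{<m})\equiv\sigma^i(h_{m-1})$ give congruence of all partial products, whether or not the factors commute. The tails $\prod_{i\geq N}\sigma^i(\cdot)$ are trivial modulo $[\pi^\flat]$ once $q^N\varepsilon\geq 1$. Hence $g_{<m}\equiv g_{m-1}$ modulo $[\pi^\flat]$, and applying $\sigma$ gives the statement modulo $[\pi^{\flat,q}]$.

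The rest is then formal. Write $\sigma(g_{<m})^{-1}=\sigma(g_{m-1})^{-1}k$ with $k\equiv 1$ modulo $[\pi^{\flat,q}]$, and set $X=[s_{\alpha,m}u_\alpha^\flat]du_\alpha$. Apply $\Ad(k)\equiv\mathrm{id}$ to the integral element $[\pi^{\flat,r_\alpha}]X$ to get $\Ad(k)(X)-X\in[\pi^{\flat,q-r_\alpha}]\cdot\mfr{g}\otimes W_{O_F}(R^{\flat+})$. Since $g_{m-1}$ is integral, $\Ad(\sigma(g_{m-1})^{-1})$ preserves this lattice, and you finish by applying $\pi_{\mfr{m}}$. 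This is exactly the paper's proof.

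You should also drop the ``work modulo $\pi$'' framing, because it points the wrong way. The comparison must be purely $[\pi^\flat]$-adic. First, $h'$ is in general not trivial modulo $\pi$, since its $\pi^0$-coefficient is $[s_{\alpha,0}u_\alpha^\flat]$. Second, $X$ itself is not integral; only $[\pi^{\flat,r_\alpha}]X$ is. So if you only knew $k\equiv 1$ modulo the ideal $(\pi,[\pi^{\flat,q}])$, the error would contain terms like $\pi\cdot[\pi^{\flat,-r_\alpha}]$, which do not lie in $[\pi^\flat]\cdot\mfr{m}\otimes W_{O_F}(R^{\flat+})$. The fallback to explicit valuation bookkeeping is likewise unnecessary.
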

\begin{proof}
    The first claim follows by the same argument as \Cref{lem:g<m}. Since $h_{m-1}\equiv h_{< m}$ modulo $[\pi^{\flat}]$, $g_{<m} \equiv g_{m-1}$ modulo $[\pi^{\flat}]$. Thus, $\sigma(g_{m-1}) \cdot \sigma(g_{<m})^{-1}$ is trivial modulo $[\pi^{\flat, q}]$. Since $s_{\alpha,m}u_\alpha^\flat \in \pi^{\flat, - r_{\alpha}} R^{\flat+}$, 
    \[
        \Ad(\sigma(g_{<m})^{-1})([s_{\alpha,m}u_\alpha^\flat]du_\alpha) - \Ad(\sigma(g_{m-1})^{-1})([s_{\alpha,m}u_\alpha^\flat]du_\alpha)
    \]
    lies in $[\pi^{\flat, q - r_{\alpha}}] \cdot \mfr{\breve{g}} \otimes W_{O_F}(R^{\flat+})/\pi^{m}$. The claim follows from $r_{\alpha} < 1$. 
\end{proof}

The right-hand side of \eqref{eq:levelVn_odd} is now approximated by $\pi_{\mfr{m}}(\Ad(\sigma(g_{m-1})^{-1})([s_{\alpha,m}u_\alpha^\flat]du_\alpha))$ in \eqref{eq:main_term_sigma_g_du_alpha}. For this computation, we need to treat the cases $m = 1$ and $m \geq 2$ separately. 

\subsubsection{The case $m \geq 2$}

First, we treat the simpler case $m \geq 2$. In this case, we may apply the Moy-Prasad isomorphism: since $h_{m-1}$ is trivial modulo $\pi^{m-1}$, 
\[
    \cl{G}_{m-1}(W_{O_F}(R^{\flat+}))/\cl{G}_{m}(W_{O_F}(R^{\flat+})) \cong \mfr{g} \otimes R^{\flat +}
\]
sends $\sigma(g_{m-1})^{-1}$ to $- \sum_{1\leq i < \infty} \sum_{\beta \in \Phi_{\mu<0}} \sigma^i([s_{\beta,m-1}u_\beta^\flat] du_\beta)$. 
Then, 
\[
    \Ad(\sigma(g_{m-1})^{-1})([s_{\alpha,m}u_\alpha^\flat]du_\alpha) - [s_{\alpha,m}u_\alpha^\flat]du_\alpha
\]
lies in $\mfr{g} \otimes \pi^{m-1} W_{O_F}(R^\flat)/\pi^{m} W_{O_F}(R^\flat) \cong \mfr{g} \otimes R^{\flat}$ and can be expressed as
\begin{equation}\label{eq:breven}
    - \left[\sum_{1\leq i < \infty} \sum_{\beta \in \Phi_{\mu<0}} [s_{\beta,m-1}u_\beta^\flat]^{q^i} du_{\sigma^i\beta}, [s_{\alpha,m}u_\alpha^\flat]du_\alpha \right]_{\mfr{g}} 
\end{equation}
using the Lie bracket $[-,-]_{\mfr{g}}$ thanks to the choice of \Cref{lem:choiceia}. 

\begin{lem} \label{lem:expcompeven}
    The projection of \eqref{eq:breven} along $X \circ \pi_{\mfr{m}}$ lies in $R^{\flat +}$ and is congruent to
    \[
        s_{\alpha,m}u_\alpha^\flat (s_{\beta_\alpha,m-1}u_{\beta_\alpha}^\flat)^{q^{n_\alpha}} \cdot X([du_\alpha, du_{-\alpha}]_{\mfr{g}}) 
    \]
    modulo $\pi^{\flat, 1/N}$ for sufficiently large $N$ for every $\alpha \in \Phi_{\mu<0}$. 
\end{lem}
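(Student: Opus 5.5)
The plan is to expand \eqref{eq:breven} by bilinearity into the terms $-[s_{\beta,m-1}u_\beta^\flat]^{q^i}[s_{\alpha,m}u_\alpha^\flat]\,[du_{\sigma^i\beta}, du_\alpha]_{\mfr{g}}$ for $\beta\in\Phi_{\mu<0}$ and $i\geq 1$, and to apply $X\circ\pi_{\mfr{m}}$ term by term. By \Cref{lem:vanish_X}, $X\circ\pi_{\mfr{m}}$ kills every weight space of nonzero $T$-weight. Since $[du_{\sigma^i\beta},du_\alpha]_{\mfr{g}}$ has weight $\sigma^i\beta+\alpha$, only the terms with $\sigma^i\beta=-\alpha$ can survive, and for them $[du_{-\alpha},du_\alpha]_{\mfr{g}}\in\Lie(\cl{T})\subset\mfr{m}$, so $\pi_{\mfr{m}}$ acts as the identity on them. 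The condition $\sigma^i\beta=-\alpha$ means $\beta=-\sigma^{-i}\alpha$ with $\beta\in\Phi_{\mu<0}$, that is, $\langle\sigma^{-i}\alpha,\mu\rangle=1$.

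The next step is to sort the surviving indices using \Cref{lem:vareps}. For $i=n_\alpha$ we get $\beta=\beta_\alpha$, which is the claimed main term; flipping the bracket $[du_{-\alpha},du_\alpha]=-[du_\alpha,du_{-\alpha}]$ cancels the leading minus sign. The other indices are $i>n_\alpha$ with $\langle\sigma^{-i}\alpha,\mu\rangle=1$; since $\sigma$ has finite order on $\Phi$, there are infinitely many of them. For each such $i$ I will bound the valuation of $(s_{\beta,m-1}u^\flat_\beta)^{q^i}s_{\alpha,m}u^\flat_\alpha$.
\begin{itemize}
\item On $\cl{U}(2m-1)$ we have $\nu^\flat(u^\flat_\gamma)\geq m$.
\item By \Cref{lem:valsalpha}, $\nu^\flat(s_{\gamma,k})=-k-r_\gamma$.
\item Hence $\nu^\flat(s_{\beta,m-1}u^\flat_\beta)\geq 1-r_\beta>0$ and $\nu^\flat(s_{\alpha,m}u^\flat_\alpha)\geq -r_\alpha>-1$.
\end{itemize}
So each term has valuation at least $q^i(1-r_\beta)-r_\alpha$. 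Using $r_\alpha=q^{n_\alpha}(1-r_{\beta_\alpha})$, the $i=n_\alpha$ term has valuation at least $0$, so it lies in $R^{\flat+}$. For $i>n_\alpha$, the bound $1-r_\beta>0$ together with $r_\beta>1-q^{-1}$, i.e.\ $1-r_\beta<q^{-1}$, gives valuation at least $q^{i}(1-r_\beta)-r_\alpha$. This should be strictly positive once $i>n_\alpha$. I would check this by iterating the relation in \Cref{lem:vareps} along the chain of roots $-\sigma^{-i}\alpha$, which expresses $q^i(1-r_\beta)$ as $q^{n_\alpha}$ times something at least $q(1-r_{\beta'})\cdot$(positive), and this exceeds $r_\alpha$ by a uniform positive margin. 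Since $\beta$ ranges over a finite set and the exponents $q^i$ grow, all terms with $i>n_\alpha$ then lie in $\pi^{\flat,1/N}R^{\flat+}$ for one fixed $N$, and their sum converges there. This proves both the integrality and the congruence.

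The main obstacle is the uniform positivity for $i>n_\alpha$, i.e.\ showing $q^i(1-r_{-\sigma^{-i}\alpha})>r_\alpha$. If the recursive approach is awkward, I would fall back on the explicit formula $r_\gamma=\langle\gamma,q\sigma\lambda\rangle$ with $\lambda-q\sigma\lambda=\mu$. This writes $r_\alpha$ as a geometric series in the values $\langle\sigma^{-k}\alpha,\mu\rangle$. The term $q^i(1-r_\beta)$ for $\beta=-\sigma^{-i}\alpha$ is then the tail of the same series starting at $k=i$, which reduces the inequality to comparing a tail with the full sum.
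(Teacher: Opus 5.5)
Your reduction is the same as the paper's. By \Cref{lem:vanish_X} only the terms with $\sigma^i\beta=-\alpha$ survive. The index $i=n_\alpha$ gives the main term, which is integral because $r_\alpha=q^{n_\alpha}(1-r_{\beta_\alpha})$. Everything else rests on the strict inequality $q^i(1-r_\beta)>r_\alpha$ for $i>n_\alpha$ and $\beta=-\sigma^{-i}\alpha\in\Phi_{\mu<0}$.

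That inequality is exactly the step you leave unproved, and neither sketch closes it as written. The bound you quote, $1-r_\beta<q^{-1}$, bounds $q^i(1-r_\beta)$ from above, so it cannot give a lower bound. Your fallback, carried out correctly, yields the identity
\[
q^i(1-r_\beta)-r_\alpha=-\sum_{k=1}^{i-1}q^k\langle\sigma^{-k}\alpha,\mu\rangle .
\]
Its positivity is not a matter of ``tail versus full sum''; it depends on the sign pattern of the values $\langle\sigma^{-k}\alpha,\mu\rangle$, which you never establish.

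The missing fact is that the nonzero values of $\langle\sigma^{-k}\alpha,\mu\rangle$ for $k\geq1$ alternate $+1,-1,+1,\dots$, starting with $+1$ at $k=n_\alpha$. This follows by applying \Cref{lem:vareps} successively to $\alpha,\beta_\alpha,\beta_{\beta_\alpha},\dots$, since $\langle\sigma^{-n_\alpha-k}\alpha,\mu\rangle=-\langle\sigma^{-k}\beta_\alpha,\mu\rangle$.

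Given this, let $j<i$ be the largest index with $\langle\sigma^{-j}\alpha,\mu\rangle\neq0$. Its value is $-1$ and $j>n_\alpha$, so $\gamma=\sigma^{-j}\alpha\in\Phi_{\mu<0}$ satisfies $n_\gamma=i-j$ and $\beta_\gamma=\beta$. Then \Cref{lem:vareps} gives
\[
q^i(1-r_\beta)=q^jr_\gamma\geq q^{n_\alpha+1}r_\gamma>q^{n_\alpha}(q-1)\geq1 ,
\]
using the \emph{lower} bound $r_\gamma>1-q^{-1}$. Hence every term with $i>n_\alpha$ has valuation greater than $1-r_\alpha>0$, a margin independent of $i$, which also gives the uniform $N$ you wanted. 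This is precisely the paper's argument. Equivalently, in your displayed identity the alternation lets you pair consecutive nonzero terms, and each pair contributes a positive amount because the weights $q^k$ increase.
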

\begin{proof}
    If $\sigma^i\beta \neq - \alpha$, $[du_{\sigma^i\beta}, du_\alpha]_{\mfr{g}}$ is a linear combination of $du_\gamma$ over linear combinations $\gamma \in \Phi$ of $\alpha$ and $\sigma^i\beta$. Thus, the terms for such $(i, \beta)$ vanish after the projection along $X \circ \pi_{\mfr{m}}$ by \Cref{lem:vanish_X}. Thus, it is enough to treat terms for $(i, \beta)$ with $\sigma^i\beta = - \alpha$. 

    Recall the notation in \Cref{lem:vareps}. We show that if $\beta=-\sigma^{-i}\alpha \in \Phi_{\mu < 0}$ and $i>n_\alpha$, then
    \[
        (s_{\beta,m-1}u_\beta^\flat)^{q^i} \cdot (s_{\alpha,m}u_\alpha^\flat) \in \pi^{\flat, 1/N} \cdot R^{\flat+}
    \]
    for sufficiently large $N$. Since $i > n_\alpha$, $\beta = \beta_\gamma$ for some $\gamma = \sigma^{-j}\alpha \in \Phi_{\mu<0}$ with $n_\alpha < j < i$. 
    By \Cref{lem:vareps}, we have
    \[
        q^{i}(1-r_\beta) = q^{j}r_\gamma \geq q^{n_\alpha+1} r_\gamma > 1. 
    \]
    Since $\pi^{\flat, -(1-r_{\beta})} s_{\beta,m-1}u_{\beta}^\flat, \pi^{\flat, r_{\alpha}} s_{\alpha,m}u_\alpha^\flat \in R^{\flat +}$ and $r_{\alpha} < 1$, the claim follows from 
    \[
        (s_{\beta,m-1}u_\beta^\flat)^{q^i} \cdot (s_{\alpha,m}u_\alpha^\flat) \in \pi^{\flat, q^i(1-r_\beta) - r_{\alpha}} \cdot R^{\flat+}. 
    \]
    Thus, the nontrivial contribution comes only from $(i, \beta) = (n_\alpha, \beta_{\alpha})$. By \Cref{lem:vareps}, 
    \[
        (s_{\beta_\alpha,m-1}u_{\beta_\alpha}^\flat)^{q^{n_\alpha}} \cdot (s_{\alpha,m}u_\alpha^\flat) \in \pi^{\flat, q^{n_\alpha}(1-r_{\beta_\alpha}) - r_{\alpha}} \cdot R^{\flat+} = R^{\flat +}, 
    \]
    so the claim follows. 
\end{proof}

\begin{prop} \label{prop:levelVn_odd_modulo}
    For $m \geq 2$, the right-hand side of \eqref{eq:levelVn_odd} lies in 
    \[
        (\mfr{m} \otimes W_{O_F}(R^{\flat+}) / \pi^{m}) / \Ker(X) \otimes R^{\flat +}
    \]
    and is congruent to 
    \[
        \sum_{\alpha \in \Phi_{\mu<0}} s_{\alpha,m}u_\alpha^\flat (s_{\beta_\alpha,m-1}u_{\beta_\alpha}^\flat)^{q^{n_\alpha}} \cdot X([du_\alpha, du_{-\alpha}]_{\mfr{g}}) \in R^{\flat +}
    \]
    modulo $[\pi^{\flat,1/N}]$ for sufficiently large $N$. 
\end{prop}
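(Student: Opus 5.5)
The plan is to expand the right-hand side of \eqref{eq:levelVn_odd} term by term using the definition \eqref{eq:definition_h_mn} of $h_{[m, 2m)}$ for $n = 2m-1$:
\[
    h_{[m,2m)} = \sum_{\alpha \in \Phi_{\mu<0}} \sum_{m \leq i < 2m} [s_{\alpha,i}u_\alpha^\flat]\pi^{i-m} du_\alpha .
\]
Since $\pi_{\mfr{m}}$ and $\Ad(\sigma(g_{<m})^{-1})$ are additive in the Lie algebra variable, it is enough to control each summand separately and then add.

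The summands with $m < i < 2m$ are handled by \Cref{lem:computepit_odd}. Each of them lies in $\mfr{m} \otimes W_{O_F}(R^{\flat+})/\pi^m$ and is trivial modulo $[\pi^{\flat,1/N}]$, so it contributes nothing to the claimed congruence. This leaves the summands with $i = m$, namely $\pi_{\mfr{m}}(\Ad(\sigma(g_{<m})^{-1})([s_{\alpha,m}u_\alpha^\flat]du_\alpha))$.

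For these, \Cref{lem:redtoh_odd} lets us replace $g_{<m}$ by $g_{m-1}$ up to an error in $[\pi^\flat]\cdot \mfr{m}\otimes W_{O_F}(R^{\flat+})$. We then split
\[
    \Ad(\sigma(g_{m-1})^{-1})([s_{\alpha,m}u_\alpha^\flat]du_\alpha) = [s_{\alpha,m}u_\alpha^\flat]du_\alpha + \Bigl(\Ad(\sigma(g_{m-1})^{-1})([s_{\alpha,m}u_\alpha^\flat]du_\alpha) - [s_{\alpha,m}u_\alpha^\flat]du_\alpha\Bigr).
\]
The first piece is killed by $\pi_{\mfr{m}}$ because $du_\alpha \in \breve{\mfr{m}}^\perp$. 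The second piece is the commutator \eqref{eq:breven}, which lives in $\pi^{m-1}$-graded degree, identified with $\mfr{g}\otimes R^\flat$ via the Moy-Prasad isomorphism. Here we use $m \geq 2$: $h_{m-1}$ is trivial modulo $\pi^{m-1}$, and $\Ad$ of an element in $\cl{G}_{m-1}$ on a class modulo $\pi^m$ only sees the first-order commutator.

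We then take the quotient by $\Ker(X)\otimes R^\flat$, which amounts to applying $X \circ \pi_{\mfr{m}}$. \Cref{lem:expcompeven} shows that the image of \eqref{eq:breven} lies in $R^{\flat+}$ and is congruent to $s_{\alpha,m}u_\alpha^\flat (s_{\beta_\alpha,m-1}u_{\beta_\alpha}^\flat)^{q^{n_\alpha}} X([du_\alpha,du_{-\alpha}]_{\mfr{g}})$ modulo $\pi^{\flat,1/N}$. Summing over $\alpha$ gives the formula in the statement.

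The integrality claim follows in the same way. The $i = m$ terms have image in $R^{\flat+}$ after projection. The $i > m$ terms are integral by \Cref{lem:computepit_odd}. The components of the bracket orthogonal to $X$ disappear in the quotient by $\Ker(X)$, using \Cref{lem:vanish_X}.

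The main obstacle is the bookkeeping between the two filtrations. The $\pi^{m-1}$ versus $\pi^m$ layer must be identified with $\mfr{g}\otimes R^\flat$, and the error from replacing $g_{<m}$ by $g_{m-1}$ (which has size $[\pi^\flat]$) must be checked to land in the part killed modulo $[\pi^{\flat,1/N}]$ after the $X$-projection. I expect this to work because $r_\alpha < 1$, as already exploited in \Cref{lem:redtoh_odd}.
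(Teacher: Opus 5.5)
Your proposal is correct and is essentially the paper's proof, which simply combines \Cref{lem:computepit_odd}, \Cref{lem:redtoh_odd} and \Cref{lem:expcompeven}. What you add is the glue the paper leaves implicit: $\pi_{\mfr{m}}$ kills $[s_{\alpha,m}u_\alpha^\flat]du_\alpha$, and the remaining commutator \eqref{eq:breven} sits in the top $\pi^{m-1}$-layer, where the quotient by $\Ker(X)\otimes R^\flat$ is computed by $X\circ\pi_{\mfr{m}}$. The error you single out as the main obstacle is already handled by \Cref{lem:redtoh_odd}, whose error term is integral and divisible by $[\pi^\flat]$.
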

\begin{proof}
    It follows from the combination of \Cref{lem:computepit_odd}, \Cref{lem:redtoh_odd} and \Cref{lem:expcompeven}. 
\end{proof}

For the computation of the reduction of $\cl{W}(n)_\phi$, we introduce the relevant polynomial $f_n$. 

\begin{defi} \label{defi:formal_model_U(n)}
    Let $u_{\alpha,n} = \pi^{-m} u_\alpha$ and let 
    \[
        \mfr{U}(n) = \Spf(O_{\bb{C}_p}\langle u_{\alpha, n} \vert \alpha \in \Phi_{\mu < 0} \rangle) \to \Spf(R_{\cl{G}, \mu}) 
    \]
    be the smooth formal model of $\cl{U}(n)_{\bb{C}_p}$. Let $\msf{U}(n) = \mfr{U}(n)_\red = \Spec(\ov{k}[u_{\alpha, n} \vert \alpha \in \Phi_{\mu < 0}])$. 
\end{defi}

\begin{defi} \label{defi:constant_c_alpha_odd}
    For $\alpha \in \Phi_{\mu<0}$, let 
    \[
        c_{\alpha,n} = - \pi^{\flat, (q^{n_\alpha} + 1)m} s_{\alpha,m}s_{\beta_\alpha,m-1}^{q^{n_\alpha}} \in O_{\bb{C}_p^\flat}^\times. 
    \]
    Here, $c_{\alpha, n} \in O_{\bb{C}_p^\flat}^\times$ follows from \Cref{lem:valsalpha}. 
    Let
    \[
        f_{n} = \sum_{\alpha \in \Phi_{\mu<0}} c_{\alpha,n} X([du_\alpha, du_{-\alpha}]_{\mfr{g}}) \cdot u_{\alpha,n} u_{\beta_\alpha,n}^{q^{n_\alpha}} \colon \mfr{U}(n) \otimes (O_{\bb{C}_p}/\pi) \to \bb{A}^1. 
    \]
    Here, we use the identification $O_{\bb{C}_p}/\pi \cong O_{\bb{C}_p^\flat}/\pi^\flat$ to have $c_{\alpha, n} \in O_{\bb{C}_p} / \pi$. 
\end{defi}

\subsubsection{The case $m = 1$}

Next, we treat the case $m = 1$. In this case, we directly work with 
\[
    h_0 = \prod_{\alpha \in \Phi_{\mu < 0}} i_\alpha(s_{\alpha, 0} u_\alpha^\flat) , \quad g_0 = h_0 \sigma(h_0) \cdots \in \msf{G}(R^{\flat +})
\]
and compute $\pi_{\msf{m}}(\Ad(\sigma(g_0)^{-1})(s_{\alpha, 1} u_\alpha^\flat du_\alpha))$ for each $\alpha \in \Phi_{\mu < 0}$. To simplify our notation, the condition $s \in \pi^{\flat, r} R^{\flat +}$ is denoted by $\nu^\flat(s) \geq r$ for $s \in R^\flat$ and $r \in \bb{Q}$. For each $\alpha \in \Phi_{\mu < 0}$, let $\gamma_\alpha \in \Phi_{\mu < 0}$ be the element with $\beta_{\gamma_\alpha} = \alpha$. 

\begin{lem} \label{lem:val_alpha_s_alpha}
    For each $i \geq 1$ and $\alpha \in \Phi_{\mu < 0}$, we have $\nu^\flat(\sigma^i(s_{\alpha, 0} u_\alpha^\flat)) \geq \langle \sigma^i \alpha, -q \sigma \lambda \rangle$. Moreover, if $i > n_{\gamma_\alpha}$, we have
    \[
        \nu^\flat(\sigma^i(s_{\alpha, 0} u_\alpha^\flat)) \geq \langle \sigma^i \alpha, -q \sigma \lambda \rangle + 1. 
    \]
\end{lem}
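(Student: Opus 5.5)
We work in the setting $m = 1$, so $n = 1$ and $\cl{U}(1) = \{\lvert u_\alpha\rvert \leq \lvert \pi\rvert^{1}\}$, i.e.\ $\nu^\flat(u_\alpha^\flat) \geq 1$. By \Cref{lem:valsalpha}, $\nu^\flat(s_{\alpha,0}) = -r_\alpha$. Since $\sigma$ raises $\flat$-coordinates to the $q$-th power, the plan is to reduce both inequalities to
\[
    \nu^\flat(\sigma^i(s_{\alpha,0}u_\alpha^\flat)) = q^i\nu^\flat(s_{\alpha,0}u_\alpha^\flat) \geq q^i(1 - r_\alpha),
\]
and then compare $q^i(1-r_\alpha)$ with $\langle \sigma^i\alpha, -q\sigma\lambda\rangle = -r_{\sigma^i\alpha}$.

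The key step is a recursion for $r_{\sigma^i\alpha}$. From $\lambda - q\sigma\lambda = \mu$ we get $q\sigma\lambda = \lambda - \mu$. Pairing with $\sigma^{i}\alpha$ and using the $\sigma$-equivariance of the pairing gives
\[
    r_{\sigma^{i}\alpha} = q\, r_{\sigma^{i-1}\alpha} - q\langle \sigma^{i-1}\alpha, \mu\rangle.
\]
Iterating from $r_\alpha$ yields
\[
    -r_{\sigma^i\alpha} = -q^i r_\alpha + \sum_{0 \leq t < i} q^{i-t}\langle \sigma^{t}\alpha, \mu\rangle.
\]
The claim thus reduces to the inequality $\sum_{0\leq t<i} q^{i-t}\langle\sigma^t\alpha,\mu\rangle \leq q^i$. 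The $t = 0$ term equals $-q^i$, since $\langle\alpha,\mu\rangle = -1$, so it suffices to show
\[
    S_i := \sum_{1\leq t<i} q^{i-t}\langle\sigma^t\alpha,\mu\rangle \leq 2q^i.
\]
For the second claim, with the improved bound $+1$, it suffices to show $S_i \leq 2q^i - 1$ whenever $i > n_{\gamma_\alpha}$.

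To bound $S_i$ we use minusculity. By \Cref{lem:automatic_minuscule}, every partial sum $\mu_k = \sum_{j<k}\sigma^j\mu$ is minuscule, so after dualizing, the partial sums $\sum_{a\leq t< b}\langle\sigma^t\alpha,\mu\rangle$ take values in $\{-1,0,1\}$. In other words, the nonzero values of $\langle\sigma^t\alpha,\mu\rangle$ alternate in sign. A weighted sum with decreasing weights $q^{i-t}$ and alternating signs is bounded by its first positive term. The first positive term occurs at the first $t \geq 1$ with $\langle\sigma^t\alpha,\mu\rangle = 1$, so $S_i \leq q^{i-1} < 2q^i$. This proves the first inequality, with room to spare.

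The second inequality is the main obstacle. The margin above is $q^i(1-r_\alpha)$ against the target, and since $r_\alpha<1$ this margin is positive, but we need it to be at least $1$. Here we use $\gamma_\alpha$: by \Cref{lem:vareps}, $\alpha = \beta_{\gamma_\alpha} = -\sigma^{-n_{\gamma_\alpha}}\gamma_\alpha$, hence $\sigma^{n_{\gamma_\alpha}}\alpha = -\gamma_\alpha$ and $r_{\gamma_\alpha} = q^{n_{\gamma_\alpha}}(1-r_\alpha)$. Combining this with $r_{\gamma_\alpha} > 1-q^{-1}$ gives $q^{n_{\gamma_\alpha}+1}(1-r_\alpha) > q - 1 \geq 1$. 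Hence for $i > n_{\gamma_\alpha}$ we have $q^i(1-r_\alpha) \geq q^{n_{\gamma_\alpha}+1}(1-r_\alpha) > 1$. I expect the delicate point to be combining this bound with the exact identity above, so that the slack coming from $S_i$ is not double counted. As a fallback, I would rederive the estimate directly: write $\nu^\flat \geq q^i(1-r_\alpha)$, express $-r_{\sigma^i\alpha}$ through the recursion started from $\sigma^{n_{\gamma_\alpha}}\alpha = -\gamma_\alpha$, and use $r_{-\gamma} = -r_\gamma$ together with $\lvert r_\beta\rvert < 1$ for all roots $\beta$.
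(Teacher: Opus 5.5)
Your reduction has a sign error that reverses the inequality you need. From $\lambda=\mu+q\sigma\lambda$ and the $\sigma$-invariance of the pairing, the recursion is
\[
r_{\sigma^i\alpha}=q\langle\sigma^{i-1}\alpha,\lambda\rangle=q\,r_{\sigma^{i-1}\alpha}+q\langle\sigma^{i-1}\alpha,\mu\rangle ,
\]
with a plus sign. Already for $i=1$ your version gives $r_{\sigma\alpha}=qr_\alpha+q$, whereas $\langle\alpha,\lambda\rangle=r_\alpha-1$ forces $r_{\sigma\alpha}=-q(1-r_\alpha)$. With the correct sign, the exact identity is
\[
q^i(1-r_\alpha)-\langle\sigma^i\alpha,-q\sigma\lambda\rangle=S_i=\sum_{1\le t<i}q^{i-t}\langle\sigma^t\alpha,\mu\rangle .
\]
So the first claim is equivalent to $S_i\ge 0$ and the second to $S_i\ge 1$. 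These are lower bounds, not the upper bounds $S_i\le 2q^i$ and $S_i\le 2q^i-1$ you set out to prove. Your estimate $S_i\le q^{i-1}$ therefore establishes neither claim.

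The ``room to spare'' is an artifact of the sign: for $i\le n_{\gamma_\alpha}$ the first inequality is actually an equality. Likewise, the slack in the second claim is $S_i-1$; it is not controlled by $q^i(1-r_\alpha)$ alone. The target $1-r_{\sigma^i\alpha}$ exceeds $1$ whenever $r_{\sigma^i\alpha}<0$. Knowing only $q^i(1-r_\alpha)>1$ and $\lvert r_\beta\rvert<1$ (which bounds the target by $2$) does not imply the claim, so your fallback does not close the gap as stated.

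Your ingredients do finish the proof once the sign is fixed. Since $\sigma^{n_{\gamma_\alpha}}\alpha=-\gamma_\alpha$, \Cref{lem:vareps} gives $\langle\sigma^t\alpha,\mu\rangle=0$ for $0<t<n_{\gamma_\alpha}$ and $\langle\sigma^{n_{\gamma_\alpha}}\alpha,\mu\rangle=1$.
\begin{itemize}
\item For $i\le n_{\gamma_\alpha}$ this gives $S_i=0$. That is exactly the paper's observation that $\langle\sigma^i\alpha,(q\sigma)^i\lambda\rangle=\langle\sigma^i\alpha,q\sigma\lambda\rangle$ in this range.
\item For $i>n_{\gamma_\alpha}$, the first nonzero term of $S_i$ is $+q^{i-n_{\gamma_\alpha}}$. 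By your alternation argument from \Cref{lem:automatic_minuscule}, the later nonzero terms alternate in sign with strictly decreasing weights. Hence $S_i\ge q^{i-n_{\gamma_\alpha}}-q^{i-n_{\gamma_\alpha}-1}\ge q-1\ge 1$.
\end{itemize}
The paper compresses this second case into the estimate $q^i(1-r_\alpha)\ge qr_{\gamma_\alpha}>1$ from \Cref{lem:vareps}. The corrected identity, combined with alternation, makes the comparison with $\langle\sigma^i\alpha,-q\sigma\lambda\rangle+1$ explicit.
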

\begin{proof}
    By \Cref{defi:U(n)_radius}, it follows from $q\sigma \lambda + \mu = \lambda$ that 
    \[
        \nu^\flat(\sigma^i(s_{\alpha, 0} u_\alpha^\flat)) \geq q^i (1 - \langle \alpha, q \sigma \lambda \rangle) = - q^i \langle \alpha, \lambda \rangle = - \langle \sigma^i \alpha, (q\sigma)^i \lambda \rangle. 
    \]
    First, suppose $i \leq n_{\gamma_\alpha}$. Then, $\langle \sigma^j \alpha, \mu \rangle = 0$ for $1 \leq j < i$, so we inductively get $\langle \sigma^i \alpha, (q\sigma)^i \lambda \rangle = \langle \sigma^i \alpha, q\sigma \lambda \rangle$ using $q\sigma \lambda = \lambda - \mu$. Then, for $i > n_{\gamma_\alpha}$, the claim follows from \Cref{lem:vareps} since 
    \[
        q^i (1 - r_\alpha) \geq q r_{\gamma_\alpha} > 1. 
    \]
    \vspace{-\topsep}
\end{proof}

Let $\msf{G}(R^{\flat +})_1 = \Ker(\msf{G}(R^{\flat +}) \to \msf{G}(R^{\flat +} / \pi^\flat))$ and let
\[
    \msf{G}(R^{\flat})^{-q \sigma \lambda}_1 = \Ad(\pi^{\flat, -q \sigma \lambda})(\msf{G}(R^{\flat +})_1) \subset \Ad(\pi^{\flat, -q \sigma \lambda})(\msf{G}(R^{\flat +})) = \msf{G}(R^{\flat})^{-q \sigma \lambda}_0. 
\]
Then, \Cref{lem:val_alpha_s_alpha} implies $\sigma(g_0) \in \msf{G}(R^{\flat})_0^{-q \sigma \lambda}$. Moreover, if $i > n_{\gamma_\alpha}$, we have 
\[
    \sigma^i(i_\alpha(s_{\alpha, 0} u_\alpha^\flat)) \in \msf{G}(R^{\flat})^{-q \sigma \lambda}_1. 
\]
This enables us to approximate $\pi_{\msf{m}}(\Ad(\sigma(g_0)^{-1})(s_{\alpha, 1} u_\alpha^\flat du_\alpha))$. 

\begin{lem} \label{lem:reduced_term_m=1}
    For each $i \geq 1$, we set
    \[
        h_0^{(i)} = \prod_{\substack{\alpha \in \Phi_{\mu < 0} \\ i \leq n_{\gamma_\alpha}}} \sigma^i(i_\alpha(\pi^{\flat,- 1} u_\alpha^\flat)) \in \sigma(\msf{\ov{N}})(R^{\flat+}), \quad g^+_0 = \prod_{i \geq 1} h_0^{(i)}, \quad g_0^- = (g^+_0)^{-1}. 
    \]
    Then $\pi_{\msf{m}}(\Ad(\sigma(g_0)^{-1})(s_{\alpha, 1} u_\alpha^\flat du_\alpha)) \in \msf{m} \otimes R^{\flat +}$ and it is congruent to
    \[
        \pi_{\msf{m}}(\Ad(g^-_0)(\pi^{\flat, r_\alpha} s_{\alpha, 1} u_\alpha^\flat du_\alpha))
    \]
    modulo $\pi^\flat$. Here, $\Ad(g^-_0)(\pi^{\flat, r_\alpha} s_{\alpha, 1} u_\alpha^\flat du_\alpha) \in \msf{g} \otimes R^{\flat +}$. 
\end{lem}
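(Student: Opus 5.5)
We split $\sigma(g_0)^{-1}$ into a factor that is negligible after conjugation and the factor $g_0^-$. By \Cref{lem:val_alpha_s_alpha}, every factor of $\sigma(g_0)=\prod_{i\geq 1}\prod_{\alpha}\sigma^i(i_\alpha(s_{\alpha,0}u_\alpha^\flat))$ lies in $\msf{G}(R^\flat)^{-q\sigma\lambda}_0$. The factors with $i>n_{\gamma_\alpha}$ lie in the congruence subgroup $\msf{G}(R^\flat)^{-q\sigma\lambda}_1$.

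\textbf{Step 1: reduce to $g_0^+$.} For the remaining factors ($i\leq n_{\gamma_\alpha}$), the equality case of \Cref{lem:val_alpha_s_alpha} gives $\nu^\flat(\sigma^i(s_{\alpha,0}u_\alpha^\flat))\geq\langle\sigma^i\alpha,-q\sigma\lambda\rangle$. We compare $\sigma^i(s_{\alpha,0}u_\alpha^\flat)$ with $\sigma^i(\pi^{\flat,-1}u_\alpha^\flat)$. By \Cref{lem:valsalpha}, $s_{\alpha,0}=\pi^{\flat,-r_\alpha}\cdot(\text{unit})$, and after conjugating by $\pi^{\flat,q\sigma\lambda}$ the two agree modulo the congruence subgroup. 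Here we may need to absorb the unit into the coordinate, or note that only the leading term matters modulo $\pi^\flat$. Since $\msf{G}(R^\flat)^{-q\sigma\lambda}_1$ is normal in $\msf{G}(R^\flat)^{-q\sigma\lambda}_0$, we can reorder the product and obtain
\[
\sigma(g_0)\in g_0^+\cdot \msf{G}(R^\flat)^{-q\sigma\lambda}_1 .
\]
The membership $h_0^{(i)}\in\sigma(\msf{\ov N})(R^{\flat+})$ should come from $\sigma^i\alpha$ having $\langle\sigma^i\alpha,\lambda\rangle<0$-type sign for $i\leq n_{\gamma_\alpha}$, via $\msf{\ov N}\cap\sigma(\msf N)=\msf U_{\mu<0}$ and the argument of \Cref{lem:Adjoint_of_g<m}.

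\textbf{Step 2: control the vector $s_{\alpha,1}u_\alpha^\flat du_\alpha$.} By \Cref{lem:valsalpha} and $\lvert u_\alpha\rvert\leq\lvert\pi\rvert$ on $\cl U(1)$, we have $\nu^\flat(s_{\alpha,1}u_\alpha^\flat)\geq -r_\alpha=\langle\alpha,-q\sigma\lambda\rangle$. Hence $s_{\alpha,1}u_\alpha^\flat du_\alpha$ lies in the lattice $\Ad(\pi^{\flat,-q\sigma\lambda})(\msf g\otimes R^{\flat+})$, which is stable under $\msf{G}(R^\flat)^{-q\sigma\lambda}_0$. The subgroup $\msf{G}(R^\flat)^{-q\sigma\lambda}_1$ acts trivially on this lattice modulo $\pi^\flat$ times it.

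\textbf{Step 3: project to $\msf m$.} The projection $\pi_{\msf m}$ commutes with $\Ad(\pi^{\flat,-q\sigma\lambda})$, because $\lambda$ is central in $\msf M$. Thus $\Ad(\pi^{\flat,-q\sigma\lambda})(\msf g\otimes R^{\flat+})$ projects into $\msf m\otimes R^{\flat+}$. Combining Steps 1 and 2 gives
\[
\pi_{\msf m}(\Ad(\sigma(g_0)^{-1})(v))\equiv\pi_{\msf m}(\Ad(g_0^-)(v)) \pmod{\pi^\flat}.
\]
The scaling by $\pi^{\flat,r_\alpha}$ then amounts to conjugating back by $\Ad(\pi^{\flat,q\sigma\lambda})$. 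It turns $v$ into $\pi^{\flat,r_\alpha}s_{\alpha,1}u_\alpha^\flat du_\alpha$, which lies in $R^{\flat+}du_\alpha$. It also turns $g_0^-$ into the integral element $g_0^-$ as defined; this is exactly why $h_0^{(i)}$ uses $\pi^{\flat,-1}u_\alpha^\flat$. The final claim, $\Ad(g_0^-)(\pi^{\flat,r_\alpha}s_{\alpha,1}u_\alpha^\flat du_\alpha)\in\msf g\otimes R^{\flat+}$, follows from $g_0^-\in\msf G(R^{\flat+})$.

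\textbf{Main obstacle.} The hardest part is the bookkeeping in Step 1. We must verify that conjugation by $\pi^{\flat,q\sigma\lambda}$ really normalizes each factor of $g_0^+$ into $\msf G(R^{\flat+})$, using the equality cases in \Cref{lem:val_alpha_s_alpha}. We must also check that the unit discrepancy between $s_{\alpha,0}$ and $\pi^{\flat,-r_\alpha}$ is harmless modulo $\pi^\flat$. To handle both, we would track the exact exponents using $q\sigma\lambda=\lambda-\mu$ iterated $i$ times.
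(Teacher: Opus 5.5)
Your strategy is the paper's: conjugate by $\pi^{\flat,q\sigma\lambda}$, use that $q\sigma\lambda$ is central in $\msf{M}$ so that $\pi_{\msf m}\circ\Ad(\pi^{\flat,q\sigma\lambda})=\pi_{\msf m}$, and compare $\Ad(\pi^{\flat,q\sigma\lambda})(\sigma(g_0)^{-1})$ with $g_0^-$ modulo $\msf G(R^{\flat+})_1$. However, the point you defer to ``bookkeeping'' is the actual content of the lemma, and neither of your proposed ways of handling the unit works.

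For $1\le i\le n_{\gamma_\alpha}$ one has $\langle\sigma^k\alpha,\mu\rangle=0$ for $1\le k<i$. Iterating $q\sigma\lambda=\lambda-\mu$ therefore gives $\langle\sigma^i\alpha,q\sigma\lambda\rangle=q^i\langle\alpha,\lambda\rangle=q^i(r_\alpha-1)$, and by \Cref{lem:choiceia}
\[
\Ad(\pi^{\flat,q\sigma\lambda})\bigl(\sigma^i(i_\alpha(s_{\alpha,0}u_\alpha^\flat))\bigr)=i_{\sigma^i\alpha}\bigl(\pi^{\flat,q^i(r_\alpha-1)}s_{\alpha,0}^{q^i}u_\alpha^{\flat,q^i}\bigr).
\]
This equals the factor $i_{\sigma^i\alpha}(\pi^{\flat,-q^i}u_\alpha^{\flat,q^i})=\sigma^i(i_\alpha(\pi^{\flat,-1}u_\alpha^\flat))$ of $h_0^{(i)}$ precisely when $s_{\alpha,0}=\pi^{\flat,-r_\alpha}$. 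Suppose instead $s_{\alpha,0}=c\,\pi^{\flat,-r_\alpha}$ with $c$ a unit. Then you get $i_{\sigma^i\alpha}(c^{q^i}\pi^{\flat,-q^i}u_\alpha^{\flat,q^i})$, which is not congruent to that factor modulo $\pi^\flat$ when $c\not\equiv 1$ and $\nu^\flat(u_\alpha^\flat)=1$. So ``only the leading term matters'' is false: the residue of the unit survives. Absorbing the unit into the coordinate is also not an option, since it changes $g_0^{\pm}$, which the statement fixes.

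The missing input is that there is no unit. \Cref{lem:valsalpha} only records valuations, but the normalization $t_\infty\equiv\lambda(\pi^\flat)\pmod{\pi}$ in \Cref{defi:tinfty} gives $\sigma(t_\infty)\equiv(q\sigma\lambda)(\pi^\flat)\pmod{\pi}$. Hence the zeroth Teichm\"uller coefficient of $\alpha(\sigma(t_\infty)^{-1})$ is exactly $\pi^{\flat,-r_\alpha}$. With this, the surviving factors match those of $g_0^+$ exactly. The factors with $i>n_{\gamma_\alpha}$ are trivial modulo $\msf G(R^{\flat+})_1$ by \Cref{lem:val_alpha_s_alpha}, and the lemma follows.

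You should also keep the two frames apart. Step 1 should read $\Ad(\pi^{\flat,q\sigma\lambda})(\sigma(g_0))\in g_0^+\cdot\msf G(R^{\flat+})_1$, not $\sigma(g_0)\in g_0^+\cdot\msf G(R^\flat)^{-q\sigma\lambda}_1$. The reason is that $g_0^+$ is the image of the surviving factors under $\Ad(\pi^{\flat,q\sigma\lambda})$, not those factors themselves. Likewise, the congruence displayed in Step 3 needs $\Ad(\pi^{\flat,-q\sigma\lambda})(g_0^-)$ in place of $g_0^-$ until you conjugate back. If you work in the conjugated frame throughout, as the paper does, Step 2 reduces to the observation $\pi^{\flat,r_\alpha}s_{\alpha,1}u_\alpha^\flat\in R^{\flat+}$.
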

\begin{proof}
    Since $q\sigma \lambda$ is central in $\msf{M}$, we have 
    \[
        \pi_{\msf{m}}(\Ad(\sigma(g_0)^{-1})(s_{\alpha, 1} u_\alpha^\flat du_\alpha)) = \pi_{\msf{m}}(\Ad(\Ad(\pi^{\flat, q \sigma \lambda})(\sigma(g_0)^{-1}))(\pi^{\flat, r_\alpha} s_{\alpha, 1} u_\alpha^\flat du_\alpha)). 
    \]
    By the above discussions, $\Ad(\pi^{\flat, q \sigma \lambda})(\sigma(g_0)^{-1}) \in \msf{G}(R^{\flat +})$ and 
    \[
        \Ad(\pi^{\flat, q \sigma \lambda})(\sigma(g_0)^{-1}) \equiv g_0^- \bmod{\msf{G}(R^{\flat +})_1}
    \]
    since $s_{\alpha, 0} = \pi^{\flat, -r_\alpha}$. Since $\nu^\flat(s_{\alpha, 1} u_\alpha^\flat) \geq - r_\alpha$, we get the claim. 
\end{proof}

\begin{prop} \label{prop:levelVn_m=1_modulo}
    For $m = 1$, the right-hand side of \eqref{eq:levelVn_odd} is in 
    $
        (\msf{m} / \Ker(X)) \otimes R^{\flat +} \cong R^{\flat +}
    $
    and congruent modulo $\pi^{\flat}$ to 
    \[
        (X \circ \pi_{\msf{m}})\biggl(\sum_{\alpha \in \Phi_{\mu < 0}} \Ad(g^-_0)(\pi^{\flat, r_\alpha} s_{\alpha, 1} u_\alpha^\flat du_\alpha)\biggr) \in R^{\flat +}. 
    \]
\end{prop}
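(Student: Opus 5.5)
The plan is to assemble the statement from the three preceding approximation lemmas, in the same way as \Cref{prop:levelVn_odd_modulo} handled $m \geq 2$. For $n = 1$ we have $m = 1$ and $2m = 2$, so $h_{[1, 2)}$ has only the summand $i = 1$:
\[
    h_{[1,2)} = \sum_{\alpha \in \Phi_{\mu<0}} [s_{\alpha,1}u_\alpha^\flat] du_\alpha .
\]
Everything is computed modulo $\pi^m = \pi$, so $W_{O_F}(R^\flat)/\pi = R^\flat$. The right-hand side of \eqref{eq:levelVn_odd} is then
\[
    \sum_{\alpha \in \Phi_{\mu<0}} \pi_{\msf{m}}\bigl(\Ad(\sigma(g_0)^{-1})(s_{\alpha,1}u_\alpha^\flat du_\alpha)\bigr) \bmod \Ker(X)\otimes R^\flat .
\]
Here $\sigma(g_{<1}) \bmod \pi$ equals $\sigma(g_0) \in \msf{G}(R^{\flat+})$, because $h_{<1} \bmod \pi = h_0$. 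The quotient $(\msf{m}/\Ker(X)) \otimes R^\flat$ is identified with $R^\flat$ via $X$, since $X \colon \msf{m} \to k$ is nonzero by genericity.

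Next apply \Cref{lem:reduced_term_m=1} to each summand. It shows that $\pi_{\msf{m}}(\Ad(\sigma(g_0)^{-1})(s_{\alpha,1}u_\alpha^\flat du_\alpha))$ lies in $\msf{m} \otimes R^{\flat+}$, and that it is congruent modulo $\pi^\flat$ to $\pi_{\msf{m}}(\Ad(g_0^-)(\pi^{\flat,r_\alpha}s_{\alpha,1}u_\alpha^\flat du_\alpha))$, with integral argument. Summing over the finitely many $\alpha \in \Phi_{\mu<0}$ and applying the $k$-linear map $X$ gives both assertions. Integrality of the sum yields membership in $(\msf{m}/\Ker(X)) \otimes R^{\flat+} \cong R^{\flat+}$. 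The congruence modulo $\pi^\flat\cdot\msf{m}\otimes R^{\flat+}$ survives $X$ because $X$ is $R^{\flat+}$-linear after base change. Note also that $X$ factors through the reduction since it has coefficients in $k$, and $\pi^\flat$-adic congruences are preserved.

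The main care point is compatibility of the identifications. Concretely, the $\pi_{\mfr{m}}$ of \eqref{eq:levelVn_odd}, taken modulo $\pi$ via the Moy-Prasad isomorphism for $m=1$, must agree with $\pi_{\msf{m}}$ on $\msf{g}\otimes R^\flat$. Then $X \circ \pi_{\msf{m}}$ must compute the class modulo $\Ker(X)$. This follows because $\pi_{\mfr{m}}$ is the projection onto the trivial weight space for $Z_{\cl{M}}^\circ$, and that projection commutes with reduction modulo $\pi$.

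One genuine subtlety is that, unlike \Cref{lem:expcompeven}, no Lie-bracket expansion is available here. The reason is that $g_0$ is not close to $1$, so the statement keeps the full $\Ad(g_0^-)$-expression rather than simplifying it. Consequently no further estimate beyond \Cref{lem:val_alpha_s_alpha} and \Cref{lem:reduced_term_m=1} should be needed.
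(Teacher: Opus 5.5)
Your proposal is correct and matches the paper's argument. The paper proves this statement by summing \Cref{lem:reduced_term_m=1} over $\Phi_{\mu<0}$ and applying $X$, leaving implicit the identifications you spell out ($\sigma(g_{<1}) \bmod \pi = \sigma(g_0)$, $\pi_{\mfr{m}} \bmod \pi = \pi_{\msf{m}}$, and $(\msf{m}/\Ker(X)) \otimes R^{\flat+} \cong R^{\flat+}$ via $X$).

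One small correction to your closing remark: $g_0$ is in fact $\pi^\flat$-adically close to $1$. The actual obstruction to linearizing is that this closeness is too weak to absorb the negative valuation $-r_\alpha$ of $s_{\alpha,1}u_\alpha^\flat$, and that is why \Cref{lem:reduced_term_m=1} rescales by $\Ad(\pi^{\flat, q\sigma\lambda})$ instead.
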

\begin{proof}
    It follows directly from \Cref{lem:reduced_term_m=1} by summing up over $\Phi_{\mu < 0}$. 
\end{proof}

As in the case $m \geq 2$, we introduce a polynomial $f_1$ relevant to the computation of the reduction. We define the formal model $\mfr{U}(1)$ and the parameter $u_{\alpha, 1}$ as in \Cref{defi:formal_model_U(n)}. 

\begin{defi} \label{defi:constant_c_alpha_m=1}
    For $\alpha \in \Phi_{\mu<0}$, let $c_{\alpha, 1} = - \pi^{\flat, r_\alpha + 1} s_{\alpha, 1}$ be elements in $O_{\bb{C}_p}^\flat$ and let 
    \[
        h_0^{(i)} = \prod_{\substack{\alpha \in \Phi_{\mu < 0} \\ i \leq n_{\gamma_\alpha}}} \sigma^i(i_\alpha(u_{\alpha, 1})) \in \sigma(\msf{\ov{N}})(O_{\bb{C}_p} / \pi), \quad g^+_0 = \prod_{i \geq 1} h_0^{(i)}, \quad g_0^- = (g_0^+)^{-1}
    \]
    via $O_{\bb{C}_p}/\pi \cong O_{\bb{C}_p^\flat}/\pi^\flat$. Then, we define
    \[
        f_1 = (X \circ \pi_{\msf{m}})\biggl(\sum_{\alpha \in \Phi_{\mu < 0}} \Ad(g^-_0)(c_{\alpha, 1} u_{\alpha, 1} du_\alpha)\biggr) \colon \mfr{U}(1) \otimes (O_{\bb{C}_p}/\pi) \to \bb{A}^1. 
    \]
\end{defi}

\subsubsection{Good reductions}

Now, we will compute the reduction of $\cl{W}(n)_\phi$ using \Cref{prop:levelVn_odd_modulo} and \Cref{prop:levelVn_m=1_modulo}. Here, we treat $m = 1$ and $m \geq 2$ simultaneously. Following the depth-zero case \cite[Theorem 3.16]{Tak25z}, we begin with the construction of formal models. 

\begin{defi} \label{defi:reduction_of_specaff_odd}
    Let $\msf{W}(n)_\phi$ be a finite \'{e}tale cover of $\msf{U}(n)$ given by the Cartesian diagram
    \begin{center}
        \begin{tikzcd}
            \msf{W}(n)_\phi \ar[r] \ar[d] & \bb{A}^1 \ar[d, "t \mapsto t^q - t"] \\
            \msf{U}(n) \ar[r, "f_n"] & \bb{A}^1. 
        \end{tikzcd}
    \end{center}
    By the topological invariance of finite \'{e}tale sites, there is a unique $k$-torsor lift
    \[
        \mfr{W}(n)_\phi \to \mfr{U}(n)
    \]
    of the $k$-torsor $\msf{W}(n)_\phi \to \msf{U}(n)$. By the uniqueness, we have a Cartesian diagram
    \begin{center}
        \begin{tikzcd}
            \mfr{W}(n)_\phi \otimes (O_{\bb{C}_p}/\pi) \ar[r] \ar[d] & \msf{t} \ar[d, "t \mapsto t^q - t"] \\
            \mfr{U}(n) \otimes (O_{\bb{C}_p}/\pi) \ar[r, "f_n"] & \msf{t}. 
        \end{tikzcd}
    \end{center}
\end{defi}

\begin{prop} \label{prop:good_reduction_odd}
    There is a $k$-equivariant isomorphism $\mfr{W}(n)_{\phi, \eta} \cong \cl{W}(n)_{\phi, \bb{C}_p}$ over $\cl{U}(n)$. 
\end{prop}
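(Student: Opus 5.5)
The plan is to compare two finite étale $k$-torsors over the connected affinoid $\cl{U}(n)_{\bb{C}_p}$: the generic fiber $\mfr{W}(n)_{\phi, \eta}$, and $\cl{W}(n)_{\phi, \bb{C}_p} \to \Img(\can_{n, m}) \cong \cl{U}(n)_{\bb{C}_p}$. The latter is a torsor under $G(F)_{x, n, m}/K_\phi \cong \msf{m}_{x, n}/\Ker(X) \cong k$. Once both torsors are known to be $k$-torsors, an isomorphism between them is produced by exhibiting a $k$-equivariant map after pulling back to the perfectoid cover $\cl{U}_\infty(n)$. This map is then descended with \Cref{prop:mapatinf}, whose hypotheses hold for $\cl{U}(n)$ by \Cref{lem:ballsatis}, exactly as in the constructions of $\can_{m, m}$ and $\can_{n, m}$.

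\textbf{Step 1: the torsor $\cl{W}(n)_\phi$ as an Artin--Schreier equation.} Over an affinoid perfectoid $S = \Spa(R, R^+)$ mapping to $\cl{U}_\infty(n)$, \Cref{lem:levelVn_odd} describes the lifts of $\can_{n, m}$ to level $K_\phi$. They are the elements $x \in (\mfr{m} \otimes W_{O_F}(R^\flat)/\pi^m)/\Ker(X) \otimes R^\flat$ satisfying the integrality condition and the equation \eqref{eq:levelVn_odd}. Applying $X$ identifies the quotient with the top-graded piece $R^\flat$. Because the condition on $x \bmod \pi^{m-1}$ pins down the lower part, the only unknown is the coordinate $y = X(\text{top part of } x) \in R^\flat$. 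In this coordinate \eqref{eq:levelVn_odd} becomes $y - y^q = c$, where $c$ is the right-hand side. By \Cref{prop:levelVn_odd_modulo} (for $m \geq 2$) and \Cref{prop:levelVn_m=1_modulo} (for $m = 1$), $c$ lies in $R^{\flat +}$. Modulo $\pi^{\flat, 1/N}$ it agrees with the pullback of $f_n$ (up to sign), after the substitution $u_{\alpha, n} = \pi^{-m}u_\alpha$ and the identification $O_{\bb{C}_p}/\pi \cong O_{\bb{C}_p^\flat}/\pi^\flat$. The constants $c_{\alpha, n}$ of \Cref{defi:constant_c_alpha_odd} and \Cref{defi:constant_c_alpha_m=1} were defined precisely to make this match. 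Since $c$ is integral, every solution $y$ is integral, and $y^q - y \equiv -c$ modulo the maximal ideal.

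\textbf{Step 2: the comparison map over $\cl{U}_\infty(n)$.} The $R^+$-points of $\mfr{W}(n)_\phi$ lying over $S$ are the solutions $t$ of $t^q - t = f_n$ modulo $\pi$, lifted uniquely by the étale deformation property. I would send a solution $y$ to the class of $y^\sharp$ (possibly $-y^\sharp$, according to the sign convention) reduced modulo $\pi^{1/N}$. This gives an approximate solution of the Artin--Schreier equation defining $\mfr{W}(n)_\phi$ modulo $\pi^{1/N}$. Because finite étale torsors lift uniquely over nilpotent thickenings (topological invariance of the étale site), this determines a unique point of $\mfr{W}(n)_\phi$. Both sides carry the action of $k$ by $y \mapsto y + a$ for $a \in k$, and the map commutes with it; on the level side, this action is the translation action of $\msf{m}_{x, n}/\Ker(X)$. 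The construction is functorial in $S$, so it defines a $k$-equivariant map of $k$-torsors over $\cl{U}_\infty(n)^\diamond$. Such a map is automatically an isomorphism.

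\textbf{Step 3: descent.} Apply \Cref{prop:mapatinf} with $H = k$, $X_0 = \mfr{W}(n)_{\phi, \eta}$ and $Y_0 = \cl{W}(n)_{\phi, \bb{C}_p}$. This descends the isomorphism to $\cl{U}(n)_{\bb{C}_p}$. Full faithfulness of $v$-sheafification on finite étale covers (\cite[Lemma 15.6]{Sch17}) then makes it an isomorphism of rigid spaces.

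\textbf{Main obstacle.} The delicate step is Step 2. One has to check that the error terms in $c$, which are only controlled modulo $\pi^{\flat, 1/N}$ after the integrality estimates, vanish in the reduction of $\mfr{W}(n)_\phi \otimes O_{\bb{C}_p}/\pi^{1/N}$. This needs the uniqueness of torsor lifts along $O_{\bb{C}_p}/\pi \to O_{\bb{C}_p}/\pi^{1/N}$, or more precisely working with $\mfr{W}(n)_\phi \otimes O_{\bb{C}_p}/\pi^{1/N}$ throughout. One also has to verify that the normalization of $X$ on the top graded piece (via $\wtd{X}$) matches the additive $k$-action with the correct sign.
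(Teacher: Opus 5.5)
Your proposal is correct and is essentially the paper's argument: it compares the two $k$-torsors over the perfectoid cover $\cl{U}_\infty(n)$ using \Cref{lem:levelVn_odd} and the congruences of \Cref{prop:levelVn_odd_modulo} and \Cref{prop:levelVn_m=1_modulo}, then descends with \Cref{prop:mapatinf}. The one difference is the direction of the comparison map: the paper maps $\mfr{W}_\infty(n)_{\phi,\eta}$ to $\cl{W}(n)_\phi$ by lifting the mod-$\pi$ Artin--Schreier solution $t_0$ to an exact solution of \eqref{eq:levelVn_odd} via $[\pi^\flat]$-adic completeness (as in \cite[Lemma 3.5]{Tak25z}), whereas you reduce an exact solution modulo $\pi^{1/N}$ and lift it through the finite \'{e}tale formal model, which works equally well; the sign you leave open is $+$ (i.e.\ $t = y^\sharp$), because both $t - t^q$ and the right-hand side of \eqref{eq:levelVn_odd} are congruent to the same lift of $-f_n$.
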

\begin{proof}
    Let $\mfr{U}_\infty(n) = \Spf(O_{\bb{C}_p}\langle u_{\alpha,n}^{1/p^\infty} \vert \alpha \in \Phi_{\mu < 0} \rangle )$ be a formal model of $\cl{U}_\infty(n)$. Let 
    \[
        \mfr{W}_\infty(n)_\phi = \mfr{U}_\infty(n)\times_{\mfr{U}(n)} \mfr{W}(n)_\phi
    \]
    be a finite \'{e}tale cover of $\mfr{U}_\infty(n)$. By \Cref{prop:mapatinf}, it is enough to construct a $k$-equivariant map $\mfr{W}_\infty(n)_{\phi, \eta}^\diamond \to \cl{W}(n)^\diamond_\phi$. Let $S = \Spa(R,R^+)$ be an affinoid perfectoid space over $\mfr{W}_\infty(n)_{\phi, \eta}$. Since $\mfr{W}(n)_\phi$ is affine, $S$ admits a unique extension $\Spf(R^+) \to \mfr{W}(n)_\phi$. Its restriction to $\Spec(R^+/\pi)$ gives an element $t_0 \in R^+/\pi$ such that
    \[
        t_0 - t_0^q = - f_n(u_{\alpha, n}(S)) \in R^+ / \pi \cong R^{\flat +} / \pi^\flat. 
    \]
    Now, the right-hand side admits a lift
    \[
        \sum_{\alpha \in \Phi_{\mu<0}} s_{\alpha,m}s_{\beta_\alpha,m-1}^{q^{n_\alpha}} u_\alpha^\flat u_{\beta_\alpha}^{\flat, q^{n_\alpha}}X([du_\alpha, du_{-\alpha}]_{\mfr{g}}) ,\quad (X \circ \pi_{\msf{m}})\biggl(\sum_{\alpha \in \Phi_{\mu < 0}} \Ad(g^-_0)(\pi^{\flat, r_\alpha} s_{\alpha, 1} u_\alpha^\flat du_\alpha)\biggr)
    \]
    to $R^{\flat +}$ depending on $m \geq 2$ or $m = 1$. By \Cref{prop:levelVn_odd_modulo} and \Cref{prop:levelVn_m=1_modulo}, $t_0 - t_0^q$ is congruent to the right-hand side of \eqref{eq:levelVn_odd} modulo $[\pi^{\flat, 1/N}]$ for sufficiently large $N$. In particular, $t_0$ is a solution of \eqref{eq:levelVn_odd} modulo $[\pi^{\flat, 1/N}]$. Since $W_{O_F}(R^{\flat +})/\pi^{m}$ is $[\pi^{\flat}]$-adically complete, $t_0$ lifts uniquely to a solution $x$ of \eqref{eq:levelVn_odd} by the same argument as in \cite[Lemma 3.5]{Tak25z}. By construction, $x \bmod \pi^{m - 1} \in \mfr{m} \otimes W_{O_F}(R^{\flat+}) / \pi^{m - 1}$ and it is trivial modulo $[\varpi]$ for some pseudo-uniformizer $\varpi \in R^\flat$. Thus, we get a map
    \[
        x \colon S \to \cl{W}(n)_\phi. 
    \]
    Since $x$ is functorially defined over $\mfr{W}_\infty(n)_{\phi, \eta}$, this gives $\mfr{W}_\infty(n)_{\phi, \eta}^\diamond \to \cl{W}(n)^\diamond_\phi$. The map is $k$-equivariant since the action $t \mapsto t + a$ for $a \in k$ sends the solution $x$ to $x + a$.  
\end{proof}

\subsection{Reductions of special affinoids for $n = 2m$} \label{ssec:reduction_even}

In this section, we treat the case $n = 2m$. In this case, $G(F)_{x, m} / K_\phi$ is non-abelian and one needs to modify \Cref{lem:levelVn}. 

First, there are natural inclusions
\[
    \mfr{m}^\perp \otimes W_{O_F}(R^\flat) / \pi^m, \Ker(X) \subset \mfr{g} \otimes W_{O_F}(R^\flat) / \pi^m \subset \cl{G}(W_{O_F}(R^\flat) / \pi^{2m+1})
\]
via the Moy-Prasad isomorphism. Moreover, the subgroup
\[
    \mfr{g} \otimes W_{O_F}(R^\flat) / \pi^m \subset \cl{G}_m(W_{O_F}(R^\flat) / \pi^{m + 1})
\]
is central. Here, $\cl{G}_m$ is the $m$-th congruence subgroup scheme of $\cl{G}$ and the right-hand side consists of elements of $\cl{G}(W_{O_F}(R^\flat) / \pi^{2m+1})$ that are trivial modulo $\pi^m$. 

\begin{prop} \label{lem:levelVn_even}
    Let $S = \Spa(R,R^+)$ be an affinoid perfectoid space over $\cl{U}_\infty(n)$. Let
    \begin{equation} \label{eq:definition_h_m_2m}
        h_{[m, 2m]} =  \prod_{\alpha \in \Phi_{\mu<0}} i_\alpha\bigl(\sum_{m \leq i \leq 2m} [s_{\alpha,i}u_\alpha^\flat] \pi^i \bigr) \in \cl{G}_m(W_{O_F}(R^{\flat}) / \pi^{m + 1}). 
    \end{equation}
    The set of lifts $S \to \cl{M}_{G, b, \mu, K_\phi}$ of $\can_{n, m}$ is bijective to the set of elements
    \[
        x \in \cl{G}_m(W_{O_F}(R^\flat) / \pi^{m + 1}) / (\mfr{m}^\perp \otimes W_{O_F}(R^\flat) / \pi^m \oplus \Ker(X) \otimes R^{\flat})
    \]
    such that $\pi_{\mfr{m}}(x \bmod \pi^m) \in \mfr{m} \otimes W_{O_F}(R^{\flat+}) / \pi^{m}$ and it is trivial modulo $[\varpi]$ for some pseudo-uniformizer $\varpi \in R^\flat$ and satisfies
    \begin{equation} \label{eq:levelVn_even}
        x \sigma(x)^{-1} \equiv \Ad(\sigma(g_{<m})^{-1})(h_{[m, 2m]}) \pmod{\mfr{m}^\perp \otimes W_{O_F}(R^\flat) / \pi^m \oplus \Ker(X) \otimes R^{\flat}}. 
    \end{equation}
\end{prop}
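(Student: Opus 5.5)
We follow the argument of \Cref{lem:levelVn} and \Cref{lem:levelVn_odd}, this time in the non-abelian group $\cl{G}_m(W_{O_F}(R^\flat)/\pi^{m+1})$.

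First we describe all lifts at level $G(F)_{x, 2m+1}$. By \Cref{lem:levelstrCp}, a lift $S \to \cl{M}_{G, b, \mu, G(F)_{x, 2m+1}}$ of $\can_{m, m}$ corresponds to a product $g_{<m} g_{\geq m}$ with $g_{\geq m} \in \cl{G}(W_{O_F}(R^\flat)/\pi^{2m+1})$ trivial modulo $\pi^m$ satisfying
\[
    g_{<m} g_{\geq m} = \prod_{\alpha \in \Phi_{\mu<0}} i_\alpha\Bigl(\sum_{0 \leq i \leq 2m} [s_{\alpha,i}u_\alpha^\flat]\pi^i\Bigr) \cdot \sigma(g_{<m} g_{\geq m}).
\]
Two facts simplify this equation: $\breve{\cl{G}}^{\mu<0}$ is abelian, and $g_{<m}\sigma(g_{<m})^{-1} = h_{<m}$. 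Using them, the equation becomes
\[
    g_{\geq m} = \Ad(\sigma(g_{<m})^{-1})(h_{[m,2m]})\,\sigma(g_{\geq m}).
\]
Only the $i \geq m$ part matters here, and the product of the $i_\alpha$ terms splits as $h_{<m} h_{[m,2m]}$ because the root groups commute.

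Next we pass to the relevant quotient. The subgroup $\cl{G}_{2m+1}$-part needs care. Since $K_\phi \supset G(F)_{x, 2m+1, m+1} \supset G(F)_{x,2m+1}$, the relevant quotient of the fibre is
\[
    \cl{G}_m/\cl{G}_{2m+1} \big/ \bigl(\mfr{m}^\perp/\pi^m \oplus \Ker(X)\bigr),
\]
where we use $K_\phi/G(F)_{x,2m+1} = (\mfr{m}^\perp\text{-part of depth} \geq m) \cdot \Ker(X)$. Concretely, $G(F)_{x,2m,m} = \cl{G}_{2m}\cdot(\text{root part at depth} \geq m)$, and $\mfr{m}$-directions of depth $\geq 2m+1$ are trivial in $\cl{G}_m(-/\pi^{m+1})$ after the identification. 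We write elements of $\cl{G}_m(W_{O_F}(R^\flat)/\pi^{m+1})$ via the Moy-Prasad isomorphism $\cl{G}_m/\cl{G}_{2m} \cong \mfr{g}\otimes(-)/\pi^m$, extended by the central layer $\mfr{g}\otimes R^\flat$ at depth $2m$. We then check that $\mfr{m}^\perp\otimes W_{O_F}(R^\flat)/\pi^m$ together with $\Ker(X)\otimes R^\flat$ corresponds exactly to the image of $K_\phi$, and that this image is normal. Normality uses that the commutators of depth-$m$ elements land in the central depth-$2m$ layer, where $\Ker(X)$ absorbs all $\mfr{m}^\perp$-brackets; this is the Heisenberg structure from \cite{Tak26_Yu}.

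We then take the quotient on strictly totally disconnected $S$, exactly as in \Cref{lem:levelVn}. There finite \'etale covers split, so passing to the quotient by the sections of $\underline{K_\phi/G(F)_{x,2m+1}}$ turns the equation above into \eqref{eq:levelVn_even} for the class $x$ of $g_{\geq m}$. Well-definedness needs two checks. First, right multiplication by an element $k$ of the quotient group changes $x\sigma(x)^{-1}$ by $x k \sigma(k)^{-1} x^{-1}$. Second, $\sigma$ preserves $K_\phi$, since $K_\phi$ is $F$-rational. The step I expect to be the main obstacle is the first check: showing that $x k\sigma(k)^{-1}x^{-1}$ stays in the subgroup modulo which we work. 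This is where the centrality of the depth-$2m$ layer inside $\cl{G}_m(-/\pi^{m+1})$ and the $M$-invariance of $X$ (\Cref{lem:vanish_X}) are used. The same points justify stating the congruence modulo the subgroup rather than as an equality.

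Finally, the integrality condition singles out lifts of $\can_{n,m}$ among lifts of $\can_{m,m}$. Requiring that $\pi_{\mfr{m}}(x \bmod \pi^m)$ be integral and trivial modulo some $[\varpi]$ picks out the component $\Img(\can_{n,m})$ in the decomposition \eqref{eq:inverse_image_U(n)}. This follows from the explicit construction of $\can_{n,m}$ in \Cref{thm:refined_trivialization}, which produced an integral solution trivial modulo $[\varpi]$; any other solution differs from it by an element of $\mfr{m}/\pi^m$, which is not trivial modulo $[\varpi]$ unless it is zero. General $S$ then follows by $v$-descent.
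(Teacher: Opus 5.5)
Your outline follows the paper's proof. You describe the lifts at level $G(F)_{x,2m+1}$ via \Cref{lem:levelstrCp}, and cancel $g_{<m}$ using $g_{<m}\sigma(g_{<m})^{-1}=h_{<m}$ and the commutativity of $\breve{\cl{G}}^{\mu<0}$. You then pass to the quotient by $K_\phi/G(F)_{x,2m+1}$ on strictly totally disconnected $S$, single out $\can_{n,m}$ by the integrality condition, and conclude by $v$-descent.

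\textbf{The error.} The step you flag as the main obstacle uses the wrong subgroup. You read $\mfr{m}^\perp\otimes W_{O_F}(R^\flat)/\pi^m$ through $\cl{G}_m/\cl{G}_{2m}\cong\mfr{g}\otimes W_{O_F}(R^\flat)/\pi^m$, which puts the $\mfr{m}^\perp$-part of $K_\phi$ at depths $\geq m$. You then justify normality by saying that $\Ker(X)$ absorbs all $\mfr{m}^\perp$-brackets. Neither is right.
\begin{itemize}
\item $K_\phi$ contains $G(F)_{x,2m+1,m+1}$, whose $\mfr{m}^\perp$-part starts at depth $m+1$. The quotient $K_\phi/G(F)_{x,2m+1,m+1}\cong\Ker(X)\subset\msf{m}_{x,2m}$ has no depth-$m$ $\mfr{m}^\perp$-component. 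This is exactly why $G(F)_{x,2m,m}/K_\phi\cong\msf{H}_{x,n,\phi}$ is a non-abelian Heisenberg group.
\item Genericity of $\phi$ makes $X\circ\pi_{\mfr{m}}\circ[-,-]$ non-degenerate on $\msf{m}^\perp_{x,m}$. So the $\mfr{m}$-projections of brackets of depth-$m$ $\mfr{m}^\perp$-elements are not contained in $\Ker(X)$.
\end{itemize}
Hence the set you describe is not closed under commutators modulo $\Ker(X)\otimes R^\flat$, and it is not the image of $K_\phi$. The well-definedness check $xk\sigma(k)^{-1}x^{-1}\in(\text{subgroup})$ cannot succeed in your setup.

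\textbf{The fix.} Use the identification the paper records just before the statement, $\cl{G}_{m+1}/\cl{G}_{2m+1}\cong\mfr{g}\otimes W_{O_F}(R^\flat)/\pi^m$. In it:
\begin{itemize}
\item $\mfr{m}^\perp\otimes W_{O_F}(R^\flat)/\pi^m$ occupies depths $m+1,\dots,2m$.
\item $\Ker(X)\otimes R^\flat$ sits in the top layer $\mfr{g}\otimes\pi^{m-1}W_{O_F}(R^\flat)/\pi^m\cong\mfr{g}\otimes R^\flat$.
\item Their sum is exactly the image of $K_\phi/G(F)_{x,2m+1}\cong\mfr{m}^\perp/\pi^m\oplus\Ker(X)$.
\end{itemize}
Since $[\cl{G}_m,\cl{G}_{m+1}]\subset\cl{G}_{2m+1}$, this subgroup is central in $\cl{G}_m(W_{O_F}(R^\flat)/\pi^{m+1})$, so normality is automatic. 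Replacing $x$ by $xk$ multiplies $x\sigma(x)^{-1}$ by the central element $k\sigma(k)^{-1}$. That element lies in the subgroup because the subgroup is $\sigma$-stable: $\mfr{m}^\perp$ is defined over $O_F$ and $\Ker(X)$ is $k$-rational. Neither the Heisenberg structure nor the $M$-invariance of $X$ is needed here. With this correction, the rest of your argument, including the integrality criterion, goes through as in the paper.
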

\begin{proof}
    It follows by the same argument as \Cref{lem:levelVn}. By \Cref{lem:levelstrCp}, the set of lifts $S \to \cl{M}_{G, b, \mu, G(F)_{x, 2m + 1}}$ of $\can_{m, m}$ is bijective to the set of elements $g_{\geq m}\in \cl{G}_m(W_{O_F}(R^\flat)/\pi^{m+1})$ satisfying
    \[
        g_{<m} g_{\geq m} = \prod_{\alpha \in \Phi_{\mu<0}} i_\alpha\left(\sum_{0 \leq i \leq 2m} [s_{\alpha,i}u_\alpha^\flat]\pi^i\right) \cdot \sigma(g_{<m}g_{\geq m}). 
    \]
    Then $g_{<m} \sigma(g_{<m})^{-1} = h_{<m}$ implies that the above equation is equivalent to
    \[
        g_{\geq m} \sigma(g_{\geq m})^{-1} = \Ad(\sigma(g_{<m})^{-1})(h_{[m, 2m]}). 
    \]
    Now, recall  
    \[
        \cl{M}_{G, b, \mu, K_\phi} = \cl{M}_{G, b, \mu, G(F)_{x, 2m + 1}} / (\mfr{m}^\perp/\pi^m \oplus \Ker(X)). 
    \]
    When $S$ is strictly totally disconnected, the map
    \[
        g_{\geq m} \mapsto x = g_{\geq m} \bmod \mfr{m}^\perp \otimes W_{O_F}(R^\flat) / \pi^m \oplus \Ker(X) \otimes R^{\flat}
    \]
    realizes the quotient by $\underline{\mfr{m}^\perp/\pi^m \oplus \Ker(X)}(S)$ and the set of $x$ is bijective to the set of lifts $S \to \cl{M}_{G, b, \mu, K_\phi}$ of $\can_{m, m}$. Each $x$ corresponds to a solution of \eqref{eq:levelVn_even} and it is a lift $\can_{n, m}$ exactly when $\pi_{\mfr{m}}(x \bmod \pi^m) \in \mfr{m} \otimes W_{O_F}(R^{\flat+}) / \pi^{m}$ and it is trivial modulo $[\varpi]$ for some pseudo-uniformizer $\varpi \in R^\flat$. Thus, the claim holds for strictly totally disconnected spaces, and it extends to every $S$ by $v$-descent. 
\end{proof}

Now, we will analyze the right-hand side of \eqref{eq:levelVn_even} for each term of \eqref{eq:definition_h_m_2m}. 

\begin{lem} \label{lem:computepit_even}
    For every $\alpha \in \Phi_{\mu < 0}$ and $m < i \leq 2m$, 
    \[
        \pi_{\mfr{m}}(\Ad(\sigma(g_{<m})^{-1})([s_{\alpha,i}u_\alpha^\flat]\pi^{i-m-1}du_\alpha)) \bmod \pi^m \in \mfr{m}\otimes W_{O_F}(R^{\flat +}) / \pi^m. 
    \]
    Moreover, it is trivial modulo $[\pi^{\flat, 1/N}]$ for sufficiently large $N$. 
\end{lem}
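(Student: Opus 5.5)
The argument follows the template of \Cref{lem:compute_h_mn} and \Cref{lem:computepit_odd}, adapted to the even case $n = 2m$ with the shifted exponent $\pi^{i-m-1}$. The point is that $[s_{\alpha,i}u_\alpha^\flat]\pi^{i}du_\alpha$, viewed in $\cl{G}_m(W_{O_F}(R^\flat)/\pi^{m+1})$ and then in the central subgroup $\mfr{g}\otimes W_{O_F}(R^\flat)/\pi^m$ via the Moy-Prasad isomorphism (index shift by $m+1$), becomes $[s_{\alpha,i}u_\alpha^\flat]\pi^{i-m-1}du_\alpha$. Since $i-m-1 \geq 0$, this term only sees $\sigma(g_{<m})^{-1}$ modulo $\pi^{2m+1-i}$. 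So the adjoint action factors through $\cl{G}(W_{O_F}(R^{\flat+})/\pi^{2m+1-i})$.

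The first step is to bound $h_{<m}$ modulo $\pi^{2m+1-i}$. On $\cl{U}(2m)$ we have $\nu^\flat(u_\alpha^\flat) \geq m + r_\alpha$. By \Cref{lem:valsalpha}, $\nu^\flat(s_{\alpha,t}u_\alpha^\flat) \geq m - t$. Hence the coefficients $[s_{\alpha,t}u_\alpha^\flat]$ with $t < 2m+1-i$ have valuation at least $m - (2m-i) = i-m \geq 1$. Therefore $h_{<m}$ is trivial modulo $(\pi^{2m+1-i}, [\pi^{\flat, i-m}])$. Consequently $\sigma(g_{<m})^{-1} = \bigl(\sigma(h_{<m})\sigma^2(h_{<m})\cdots\bigr)^{-1}$ is trivial modulo $(\pi^{2m+1-i}, [\pi^{\flat, q(i-m)}])$.

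The second step is to bound the term itself, which gives $\nu^\flat(s_{\alpha,i}u_\alpha^\flat) \geq m - i$. It follows that
\[
    \Ad(\sigma(g_{<m})^{-1})([s_{\alpha,i}u_\alpha^\flat]\pi^{i-m-1}du_\alpha) - [s_{\alpha,i}u_\alpha^\flat]\pi^{i-m-1}du_\alpha
\]
lies in $[\pi^{\flat, q(i-m) + m - i}]\cdot \mfr{\breve{g}}\otimes W_{O_F}(R^{\flat+})/\pi^m$. The correction term $[s_{\alpha,i}u_\alpha^\flat]\pi^{i-m-1}du_\alpha$ itself is killed by $\pi_{\mfr{m}}$, because $\pi_{\mfr{m}}(du_\alpha)=0$. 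The exponent satisfies $q(i-m) + m - i = (q-1)(i-m) > 0$ since $i > m$. This gives both integrality and triviality modulo $[\pi^{\flat,1/N}]$.

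The main thing to check is the bookkeeping of the coefficient bound in the first step. The bound must hold uniformly for all $t < 2m+1-i$, including $t = m, \ldots, 2m-i$ when $i$ is small; this is where the $+1$ in the shift is consumed. A secondary check is that the Moy-Prasad identification is compatible with the adjoint action modulo $\pi^m$. This is the same compatibility already used in \Cref{lem:compute_h_mn}.
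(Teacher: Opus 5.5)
Your argument is correct and is essentially the paper's proof: it uses the same four ingredients, namely that $\sigma(g_{<m})^{-1}$ is trivial modulo $(\pi^{2m+1-i}, [\pi^{\flat, q(i-m)}])$, the bound $s_{\alpha,i}u_\alpha^\flat \in \pi^{\flat, m-i}R^{\flat+}$, the vanishing $\pi_{\mfr{m}}(du_\alpha)=0$, and the positivity of $q(i-m)+(m-i)$. One small correction: your worry about the indices $t = m, \ldots, 2m-i$ is vacuous, because $i > m$ gives $2m-i < m$, so every $t < 2m+1-i$ already satisfies $t < m$ and lies in the range defining $h_{<m}$.
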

\begin{proof}
    It follows by the same argument as \Cref{lem:compute_h_mn}: in this case, $\sigma(g_{<m})^{-1}$ is trivial modulo $(\pi^{2m - i + 1}, [\pi^{\flat, q(i-m)}])$ and since $s_{\alpha,i}u_\alpha^\flat \in \pi^{\flat, m - i} R^{\flat +}$, 
    \[
        \Ad(\sigma(g_{<m})^{-1})([s_{\alpha,i}u_\alpha^\flat]\pi^{i-m-1}du_\alpha) - [s_{\alpha,i}u_\alpha^\flat]\pi^{i-m-1}du_\alpha 
    \]
    modulo $\pi^m$ lies in 
    \[
        [\pi^{\flat, q(i-m) + (m - i)}] \cdot \mfr{\breve{g}}\otimes W_{O_F}(R^{\flat +})/\pi^{m}. 
    \]
    The claim follows since 
    $
        q(i-m)  + (m - i) > 0. 
    $
\end{proof}

\begin{lem} \label{lem:redtoh_even}
    Let 
    $
        h_m = \prod_{\alpha \in \Phi_{\mu<0}} i_\alpha([s_{\alpha,m}u_\alpha^\flat]\pi^m) \in \cl{G}_m(W_{O_F}(R^{\flat +}) / \pi^{m + 1}). 
    $
    Then
    \[
        \Ad(\sigma(g_{<m})^{-1})(h_m) \equiv h_m \pmod{[\pi^{\flat, q}]}. 
    \]
\end{lem}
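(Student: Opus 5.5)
The plan is to imitate \Cref{lem:redtoh_odd}, using that $h_m$ lies in the central subgroup $\mfr{g}\otimes W_{O_F}(R^\flat)/\pi^m \subset \cl{G}_m(W_{O_F}(R^\flat)/\pi^{m+1})$. Via the Moy-Prasad isomorphism, $h_m$ corresponds to $\sum_{\alpha} [s_{\alpha,m}u_\alpha^\flat]\, du_\alpha$, and this lies in $\mfr{\breve{g}}\otimes W_{O_F}(R^{\flat+})/\pi$ because $u_\alpha^\flat \in \pi^{\flat, m + r_\alpha}R^{\flat+}$ on $\cl{U}(2m)$ and $\nu^\flat(s_{\alpha,m}) = -m - r_\alpha$ by \Cref{lem:valsalpha}. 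Since conjugation on $\cl{G}_m/\cl{G}_{m+1}$ only sees the reduction modulo $\pi$ of the conjugating element, $\Ad(\sigma(g_{<m})^{-1})(h_m)$ depends only on $\sigma(g_{<m}) \bmod \pi$, acting through the adjoint representation of $\msf{G}$ on $\mfr{g}\otimes R^{\flat}$.

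Next I would bound $\sigma(g_{<m}) \bmod \pi$. On $\cl{U}(2m)$ we have $\nu^\flat(s_{\alpha,i}u_\alpha^\flat) \ge m - i \ge 1$ for $0 \le i < m$ (as in the proof of \Cref{lem:Adjoint_of_g<m} (2)), so $h_{<m}$ is trivial modulo $[\pi^\flat]$. Hence $g_{<m} = h_{<m}\sigma(h_{<m})\cdots$ is trivial modulo $[\pi^\flat]$, and $\sigma(g_{<m})^{-1}$ is trivial modulo $[\pi^{\flat,q}]$, i.e. its image in $\msf{G}(R^{\flat+})$ lies in the kernel of reduction modulo $\pi^{\flat,q}$.

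It follows that $\Ad(\sigma(g_{<m})^{-1})(X) - X \in \pi^{\flat,q}\cdot \mfr{g}\otimes R^{\flat+}$ for every $X \in \mfr{g}\otimes R^{\flat+}$. Applying this to $X = \sum_\alpha s_{\alpha,m}u_\alpha^\flat\, du_\alpha$, which is integral, gives the congruence modulo $[\pi^{\flat,q}]$.

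The step needing the most care is the integrality of $X$. If only $\nu^\flat(s_{\alpha,m}u_\alpha^\flat)\ge -r_\alpha$ were available, as in the odd case, the error term would be $\pi^{\flat,q-r_\alpha}$ rather than $\pi^{\flat,q}$. The even radius $m + r_\alpha$ in \Cref{defi:U(n)_radius} is exactly what removes this loss, so I would check that bound explicitly.
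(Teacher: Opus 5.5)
You use the same two inputs as the paper: $s_{\alpha,m}u_\alpha^\flat \in R^{\flat+}$, which follows from the radius $m+r_\alpha$ of $\cl{U}(2m)$ and \Cref{lem:valsalpha}, and $g_{<m}\equiv 1 \pmod{[\pi^\flat]}$, which gives $\sigma(g_{<m})^{-1}\equiv 1\pmod{[\pi^{\flat,q}]}$. Your integrality check is correct, and it is indeed what separates this case from \Cref{lem:redtoh_odd}. The gap is in how you combine the two inputs. The lemma lives in $\cl{G}_m(W_{O_F}(R^{\flat+})/\pi^{m+1})$, the elements of $\cl{G}(W_{O_F}(R^{\flat+})/\pi^{2m+1})$ that are trivial modulo $\pi^m$. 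This group is non-abelian; its commutator pairing is the bracket into level $2m$. Its central subgroup $\mfr{g}\otimes W_{O_F}(R^\flat)/\pi^m$ is the image of the $(m+1)$-st congruence subgroup.

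Your element $h_m$ is built from $\pi^m$-multiples, so it is \emph{not} in that central subgroup: it has nonzero image in $\cl{G}_m/\cl{G}_{m+1}\cong \mfr{g}\otimes W_{O_F}(R^{\flat})/\pi$. Your claim that conjugation depends only on $\sigma(g_{<m}) \bmod \pi$ is true on $\cl{G}_m/\cl{G}_{m+1}$. It is false on $\cl{G}_m/\cl{G}_{2m+1}$: since $[\cl{G}_k,\cl{G}_m]\subset \cl{G}_{k+m}$, conjugation there depends on the conjugating element modulo $\pi^{m+1}$, and $g_{<m}$ has nontrivial components up to $\pi^{m-1}$. So your computation with $\sum_\alpha s_{\alpha,m}u_\alpha^\flat\,du_\alpha$ only proves the congruence for the image in $\cl{G}_m/\cl{G}_{m+1}$. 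It says nothing about the central part, which carries the depth-$2m$ $\mfr{m}$-component seen by the functional $X$. That central part is exactly what \Cref{prop:levelVn_even_modulo} and \Cref{prop:good_reduction_even} use.

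The fix is to not pass to a Lie algebra at all. You already proved the full statement that $\sigma(g_{<m})^{-1}\in \cl{G}(W_{O_F}(R^{\flat+}))$ is trivial modulo $[\pi^{\flat,q}]$, not just that its image in $\msf{G}(R^{\flat+})$ is. The conjugation action $\cl{G}\times \cl{G}_m\to \cl{G}_m$ is a morphism of $O_F$-schemes, so it commutes with reduction along $W_{O_F}(R^{\flat+})/\pi^{m+1}\to W_{O_F}(R^{\flat+})/(\pi^{m+1},[\pi^{\flat,q}])$. By integrality, $h_m$ is a genuine point of $\cl{G}_m$ over $W_{O_F}(R^{\flat+})/\pi^{m+1}$. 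Conjugating it by an element that reduces to $1$ gives something with the same reduction as $h_m$. This is the paper's one-line proof.
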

\begin{proof}
    The claim follows from \Cref{lem:valsalpha} since $g_{<m}$ is trivial modulo $[\pi^{\flat}]$. 
\end{proof}

Now, the functor on perfect $k$-algebras
\[
    H_{[m, 2m], \phi} \colon A \mapsto \cl{G}_m(W_{O_F}(A) / \pi^{m + 1}) / (\mfr{m}^\perp \otimes W_{O_F}(A) / \pi^m \oplus \Ker(X) \otimes A)
\]
is a perfect group $k$-scheme perfectly of finite type (cf.\ \cite[Section 2.2]{Tak26_Yu}). By \cite[Lemma A.26]{Zhu17}, there is a smooth $k$-group scheme $H_{[m, 2m], \phi}^\pre$ with $H_{[m, 2m], \phi} = (H_{[m, 2m], \phi}^\pre)^\perf$. This deperfection is technically needed to consider the congruence on this functor. 

\begin{prop} \label{prop:levelVn_even_modulo}
    The right-hand side of \eqref{eq:levelVn_even} lies in 
    \[
        \cl{G}_m(W_{O_F}(R^{\flat +}) / \pi^{m + 1}) / (\mfr{m}^\perp \otimes W_{O_F}(R^{\flat +}) / \pi^m \oplus \Ker(X) \otimes R^{\flat +})
    \]
    and is congruent to $h_m$ modulo $[\pi^{\flat,1/N}]$ for sufficiently large $N$. 
\end{prop}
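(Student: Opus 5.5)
We split $h_{[m, 2m]}$ according to \eqref{eq:definition_h_m_2m} as $h_m \cdot h_{(m, 2m]}$, where
\[
    h_{(m, 2m]} = \prod_{\alpha \in \Phi_{\mu<0}} i_\alpha\bigl(\textstyle\sum_{m < i \leq 2m} [s_{\alpha,i}u_\alpha^\flat]\pi^i\bigr).
\]
Since $\breve{\cl{G}}^{\mu<0}$ is abelian, this is a genuine factorization inside $\cl{G}_m(W_{O_F}(R^{\flat})/\pi^{m+1})$. Conjugating by $\sigma(g_{<m})^{-1}$ gives
\[
    \Ad(\sigma(g_{<m})^{-1})(h_{[m, 2m]}) = \Ad(\sigma(g_{<m})^{-1})(h_m)\cdot \Ad(\sigma(g_{<m})^{-1})(h_{(m,2m]}).
\]
We will show that the first factor is congruent to $h_m$ modulo $[\pi^{\flat,q}]$, while the second factor becomes trivial modulo $[\pi^{\flat,1/N}]$ after passing to the quotient $H_{[m,2m],\phi}$.

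For the first factor, we invoke \Cref{lem:redtoh_even} directly. We also check integrality: by \Cref{lem:valsalpha} and $u_\alpha^\flat \in \pi^{\flat, m + r_\alpha}R^{\flat+}$ (the radius of $\cl{U}(2m)$ in \Cref{defi:U(n)_radius}), we have $\nu^\flat(s_{\alpha,m}u_\alpha^\flat) \geq 0$. Hence $h_m$ has coefficients in $W_{O_F}(R^{\flat+})$, and so does its conjugate, because $g_{<m} \in \cl{G}(W_{O_F}(R^{\flat+}))$ by \Cref{lem:g<m}.

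For the second factor, observe that $h_{(m,2m]}$ is trivial modulo $\pi^{m+1}$ and therefore lies in the central subgroup $\mfr{g} \otimes W_{O_F}(R^\flat)/\pi^m$. Under the Moy-Prasad identification it corresponds to $\sum_\alpha\sum_{m<i\le 2m}[s_{\alpha,i}u_\alpha^\flat]\pi^{i-m-1}du_\alpha$. Its $\mfr{m}^\perp$-component dies in the quotient. \Cref{lem:computepit_even} shows that the $\mfr{m}$-projection of each conjugated summand is integral and trivial modulo $[\pi^{\flat,1/N}]$.

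One subtlety concerns the top coefficient $i=2m$, which sits at the $\pi^{m-1}$-level, i.e.\ in the $\msf{m}_{x,n}$-layer. Modulo $\Ker(X)\otimes R^\flat$, only its image under $X\circ\pi_{\mfr m}$ survives. That image vanishes on $du_\alpha$ by \Cref{lem:vanish_X}, so only the correction term coming from the conjugation matters, and that term is small by the estimate in \Cref{lem:computepit_even}. The same reasoning shows that the part of $h_{(m,2m]}$ with negative valuation lies in $\mfr m^\perp$ and is killed in the quotient. This gives integrality of the second factor in $H_{[m,2m],\phi}(R^{\flat+})$.

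Finally, we combine the two congruences. The second factor is central, and the quotient by $\mfr m^\perp\otimes W_{O_F}/\pi^m\oplus\Ker(X)\otimes R^\flat$ is a group quotient. It therefore suffices to check integrality and the congruence factorwise; the product then lies in $H_{[m,2m],\phi}(R^{\flat+})$ and is congruent to $h_m$ modulo $[\pi^{\flat,1/N}]$. To make sense of congruence modulo $[\pi^{\flat,1/N}]$, we use the deperfection $H^\pre_{[m,2m],\phi}$ so that reduction modulo an ideal of $R^{\flat+}$ is meaningful.

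The main obstacle we expect is the careful bookkeeping at the top layer $i=2m$. There one must verify that the non-integral parts genuinely land in $\mfr m^\perp\oplus\Ker(X)$ and not just up to error terms, and that the commutator corrections from $\Ad(\sigma(g_{<m})^{-1})$ have valuation $q(i-m)+(m-i)>0$. This is exactly the content of \Cref{lem:computepit_even}, so the remaining work is mostly organizational.
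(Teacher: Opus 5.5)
Your proposal is correct and is the paper's proof: split off $h_m$, use \Cref{lem:redtoh_even} for $h_m$ and \Cref{lem:computepit_even} for the terms with $m < i \leq 2m$, and combine in the quotient by a central subgroup. The one step you leave vague is the passage through the deperfection. The paper makes it precise by checking the equality in $H_{[m,2m],\phi}(R^{\flat+}/R^{\flat,\circ\circ})$, a perfect ring that receives your Witt-vector congruences directly, and then using that $H^\pre_{[m,2m],\phi}$ is finitely presented to get equality in $H^\pre_{[m,2m],\phi}(R^{\flat+}/\pi^{\flat,1/N})$ for some $N$.
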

\begin{proof}
    It follows from the combination of \Cref{lem:computepit_even} and \Cref{lem:redtoh_even}. Precisely, the second claim is an equality in $H_{[m, 2m], \phi}^\pre(R^{\flat+} / \pi^{\flat, 1 / N})$ and the claim follows from the equality in $H_{[m, 2m], \phi}(R^{\flat+} / R^{\flat, \circ \circ})$ since $H_{[m, 2m], \phi}^\pre$ is finitely presented. 
\end{proof}

Now, we will compute the reduction of $\cl{W}(n)_\phi$ using \Cref{prop:levelVn_even_modulo}. Following \cite[Theorem 3.16]{Tak25z} in the depth-zero case, we begin with the construction of formal models. 

\begin{defi}
    Let $u_{\alpha,n} = - s^\sharp_{\alpha, m} u_\alpha$ and let 
    \[
        \mfr{U}(n) = \Spf(O_{\bb{C}_p}\langle u_{\alpha, n} \vert \alpha \in \Phi_{\mu < 0} \rangle) \to \Spf(R_{\cl{G}, \mu}) 
    \]
    be the smooth formal model of $\cl{U}(n)_{\bb{C}_p}$. Let $\msf{U}(n) = \mfr{U}(n)_\red = \Spec(\ov{k}[u_{\alpha, n} \vert \alpha \in \Phi_{\mu < 0}])$. 
\end{defi}

As in \Cref{defi:reduction_of_specaff_odd}, we first introduce a finite \'{e}tale cover of $\msf{U}(n)$. For this, we use the Heisenberg torsor of $\msf{U}(n)$ introduced in \cite[Definition 5.2]{Tak26_Yu}. We review this construction. 

\begin{defi}\textup{(\cite[Definition 2.11]{Tak26_Yu})} \label{defi:Heisenberg_group_scheme}
    Let $H_{x, n, \phi}$ be the perfect group scheme perfectly of finite type over $k$ defined as the quotient
    \[
        H_{x, n, \phi} = (L^+\cl{G}_{x, n, m} / L^+ \cl{G}_{x, n+, m+}) / \Ker(X).     
    \]
    Here, $\cl{G}_{x, n, m}$ and $\cl{G}_{x, n+, m+}$ are Moy-Prasad group schemes and $L^+(-)$ denotes the positive loop group functor (see \cite[Definition 2.4]{Tak26_Yu}). We set $\msf{H}_{x, n, \phi} = H_{x, n, \phi}(k)$. 
\end{defi}

By construction, $\msf{H}_{x, n, \phi} \cong G(F)_{x, n, m} / K_\phi$. Now, consider the decomposition
\[
    \Phi = \Phi_{\msf{N}} \sqcup \Phi_{\msf{M}} \sqcup \Phi_{\msf{\ov{N}}}
\]
in terms of roots inside $\msf{N}$, $\msf{M}$ and $\msf{\ov{N}}$ respectively. Let $\Phi^{\msf{M}} = \Phi - \Phi_{\msf{M}}$. To apply the Deligne-Lusztig construction of $H_{x, r, \phi}$ (see \cite[Definition 5.2]{Tak26_Yu}), one needs a polarization $\Phi^{\msf{M}} = \Phi^{\msf{M}+} \sqcup \Phi^{\msf{M}-}$. Here, we use the opposite decomposition 
\[
    \Phi^{\msf{M}+} = \Phi_{\msf{\ov{N}}}, \quad \Phi^{\msf{M}-} = \Phi_{\msf{N}}. 
\] 
Since $M$ is elliptic, the characteristic index set (see \cite[Definition 4.10]{Tak26_Yu}) is $\Phi_{\msf{\ov{N}}} \cap \sigma(\Phi_{\msf{N}}) = \Phi_{\mu < 0}$ (cf.\ \cite[Lemma 6.1]{Tak26_Yu}). The inclusion $\breve{\cl{G}}_{x, m}^{\mu < 0} \subset \cl{\breve{G}}_{x, n, m}$ induces a closed immersion 
\[
    \msf{U}(n)^\perf \subset H_{x, r, \phi, \ov{k}}. 
\]
Then, we use the Lang torsor of $H_{x, r, \phi}^\pre$ to construct a finite \'{e}tale cover. 

\begin{defi} \label{defi:reduction_of_specaff_even}
    Let $\msf{W}(n)_\phi$ be a finite \'{e}tale cover of $\msf{U}(n)$ filling the Cartesian diagram
    \begin{center}
        \begin{tikzcd}
            \msf{W}(n)_\phi^\perf \ar[r] \ar[d] & H_{x, n, \phi, \ov{k}}^\pre \ar[d, "h \mapsto \sigma(h) h^{-1}"] \\
            \msf{U}(n)^\perf \ar[r, hook] & H_{x, n, \phi, \ov{k}}^\pre. 
        \end{tikzcd}
    \end{center}
    By the topological invariance of finite \'{e}tale sites, $\msf{W}(n)_\phi$ is uniquely determined and $\msf{W}(n)_\phi \to \msf{U}(n)$ uniquely lifts to a finite \'{e}tale $\msf{H}_{x, n, \phi}$-torsor
    \[
        \mfr{W}(n)_\phi \to \mfr{U}(n). 
    \]
\end{defi}

\begin{prop} \label{prop:good_reduction_even}
    There is an $\msf{H}_{x, n, \phi}$-equivariant isomorphism $\mfr{W}(n)_{\phi, \eta} \cong \cl{W}(n)_{\phi, \bb{C}_p}$ over $\cl{U}(n)$. 
\end{prop}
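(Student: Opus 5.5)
I will mirror the proof of \Cref{prop:good_reduction_odd}, replacing the Artin--Schreier torsor by the Lang torsor of $H_{x, n, \phi}^\pre$. By \Cref{prop:mapatinf} applied to the finite \'{e}tale $\msf{H}_{x, n, \phi}$-torsors $\mfr{W}(n)_{\phi, \eta}$ and $\cl{W}(n)_{\phi, \bb{C}_p}$ over $\cl{U}(n)_{\bb{C}_p}$, it suffices to construct an $\msf{H}_{x, n, \phi}$-equivariant map
\[
    \mfr{W}_\infty(n)_{\phi, \eta}^\diamond \to \cl{W}(n)_\phi^\diamond,
\]
where $\mfr{W}_\infty(n)_\phi = \mfr{U}_\infty(n) \times_{\mfr{U}(n)} \mfr{W}(n)_\phi$ and $\mfr{U}_\infty(n)$ is the perfectoid formal model obtained by adjoining $p$-power roots of the $u_{\alpha, n}$. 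Such a map is equivariant automatically once it is functorial, since the $\msf{H}_{x, n, \phi}$-action on both sides is by right translation of the level datum $x$ in \Cref{lem:levelVn_even}.

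Let $S = \Spa(R, R^+)$ be affinoid perfectoid over $\mfr{W}_\infty(n)_{\phi, \eta}$. Since $\mfr{W}(n)_\phi$ is affine, $S$ extends to $\Spf(R^+) \to \mfr{W}(n)_\phi$. Reducing modulo $\pi$ and using $R^+/\pi \cong R^{\flat +}/\pi^\flat$ together with the defining Cartesian square of \Cref{defi:reduction_of_specaff_even}, this gives an element
\[
    x_0 \in H_{x, n, \phi}^\pre(R^{\flat +}/\pi^\flat) \quad\text{with}\quad \sigma(x_0) x_0^{-1} = \iota(u_{\alpha, n}(S))
\]
for the closed immersion $\iota \colon \msf{U}(n) \hookrightarrow H_{x, n, \phi}$ induced by $\breve{\cl{G}}^{\mu<0}_{x, m} \subset \breve{\cl{G}}_{x, n, m}$. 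Since $u_{\alpha, n} = -s_{\alpha, m}^\sharp u_\alpha$, the point $\iota(u_{\alpha, n})$ is the image of $h_m^{-1}$ (up to the harmless sign/inverse convention) in $H_{[m, 2m], \phi}$. I expect one must identify $H_{[m, 2m], \phi}$ with $H_{x, n, \phi}$ via the Moy--Prasad isomorphism for $\cl{G}_{x, 2m, m}$ at hyperspecial $x$. Then $x_0^{-1}$, or $x_0$ after matching conventions, satisfies \eqref{eq:levelVn_even} modulo $[\pi^{\flat, 1/N}]$ by \Cref{prop:levelVn_even_modulo}.

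Next I will lift $x_0$ to an exact solution $x$ of \eqref{eq:levelVn_even} in $H_{[m, 2m], \phi}^\pre(R^{\flat +})$. This is a successive approximation: $\sigma$ multiplies $[\pi^\flat]$-adic valuations by $q$, so the error $\epsilon$ in $x\sigma(x)^{-1} = (\text{RHS})\,\epsilon$ can be killed by the convergent product $\epsilon\,\sigma(\epsilon)\cdots$, as in \cite[Lemma 3.5]{Tak25z}. This uses $[\pi^\flat]$-adic completeness of $W_{O_F}(R^{\flat+})/\pi^{m+1}$, and smoothness and finite presentation of $H^\pre_{[m, 2m], \phi}$ to pass between the perfect group and its deperfection. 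Uniqueness of the lift gives functoriality in $S$. The solution $x$ satisfies the integrality and triviality conditions on $\pi_{\mfr{m}}(x \bmod \pi^m)$: the reduction of $x$ modulo $\pi^m$ solves the equation of \Cref{lem:levelVn}, and its $[\varpi]$-triviality follows because the right-hand side becomes trivial modulo $\pi^m$ after projection, by \Cref{lem:compute_h_mn}. By \Cref{lem:levelVn_even}, $x$ therefore defines $S \to \cl{W}(n)_\phi$ lying over $\can_{n, m}$.

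The main obstacle is the bookkeeping in the non-abelian setting. First, one must check that the quotient by $\mfr{m}^\perp \otimes W_{O_F}/\pi^m \oplus \Ker(X)$ is compatible with the Frobenius twisting, so that the Lang map on $H^\pre_{x, n, \phi}$ really matches the map $x \mapsto x\sigma(x)^{-1}$ in \eqref{eq:levelVn_even}. Second, one must confirm that the embedding $\msf{U}(n) \subset H_{x, n, \phi}$ with the polarization $\Phi^{\msf{M}+} = \Phi_{\msf{\ov{N}}}$ agrees exactly with the image of $h_m$.
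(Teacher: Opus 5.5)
Your outline is the paper's proof. You reduce via \Cref{prop:mapatinf} to a functorial construction over $\mfr{W}_\infty(n)_{\phi,\eta}$, read off a point of the Lang torsor from the special fibre, treat it as an approximate solution of \eqref{eq:levelVn_even} via \Cref{prop:levelVn_even_modulo}, and lift it uniquely as in \cite[Lemma 3.5]{Tak25z}.

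The paper reduces modulo $R^{\circ\circ}$, where the perfect Cartesian square of \Cref{defi:reduction_of_specaff_even} applies directly, and then lifts the point to $H^{\pre}_{[m,2m],\phi}(R^{\flat+}/\pi^{\flat,1/N})$. Your reduction modulo $\pi$ also works, but it needs one extra remark. By topological invariance (as in \Cref{defi:reduction_of_specaff_odd}), $\mfr{W}(n)_\phi\otimes O_{\bb{C}_p}/\pi$ is the pullback of the Lang torsor along the retraction onto $\msf{U}(n)$. Also, with your conventions it is $x_0$ itself, not $x_0^{-1}$, that satisfies $x_0\sigma(x_0)^{-1}=h_m$.

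Two of your justifications are wrong as stated, and they are linked.

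\begin{enumerate}
\item $H_{[m,2m],\phi}$ is not identified with $H_{x,n,\phi}$. Besides the $\mfr{m}^\perp$-component at level $m$ and $\msf{m}_{x,n}/\Ker(X)$ at level $2m$, it also carries the $\mfr{m}$-components at levels $m,\dots,2m-1$. What is true is that $H_{x,n,\phi}$ embeds, as the image of $L^+\cl{G}_{x,n,m}$, as the $\sigma$-stable subgroup on which these components vanish. That embedding is what lets you view $x_0$ and $h_m$ in $H_{[m,2m],\phi}$.
\item The requirement that $\pi_{\mfr{m}}(x\bmod\pi^m)$ be integral and trivial modulo some $[\varpi]$ does not follow from \Cref{lem:compute_h_mn}. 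Solutions of \eqref{eq:levelVn} are determined only up to adding locally constant $\mfr{m}\otimes O_F/\pi^m$-valued functions. These shifts correspond exactly to the other components $g\cdot\Img(\can_{n,m})$, $g\in M(F)_{x,m}/M(F)_{x,n}$, in \eqref{eq:inverse_image_U(n)}, i.e.\ to lifts of $\can_{m,m}$ that are not lifts of $\can_{n,m}$. The correct reason, which is the paper's ``by construction'', is twofold. We have $x\equiv x_0\pmod{[\pi^{\flat,1/N}]}$, and $\pi_{\mfr{m}}(x_0\bmod\pi^m)=0$ because $x_0$ lies in $H_{x,n,\phi}$.
\end{enumerate}

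Similarly, equivariance does not follow from functoriality alone. It holds because right translation by $a\in\msf{H}_{x,n,\phi}$ preserves $x\sigma(x)^{-1}$ (as $\sigma(a)=a$), and therefore commutes with the unique lift.
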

\begin{proof}
    Let $\mfr{U}_\infty(n) = \Spf(O_{\bb{C}_p}\langle u_{\alpha,n}^{1/p^\infty} \vert \alpha \in \Phi_{\mu < 0} \rangle )$ be a formal model of $\cl{U}_\infty(n)$. Let 
    \[
        \mfr{W}_\infty(n)_\phi = \mfr{U}_\infty(n)\times_{\mfr{U}(n)} \mfr{W}(n)_\phi
    \]
    be a finite \'{e}tale cover of $\mfr{U}_\infty(n)$. By \Cref{prop:mapatinf}, it is enough to construct an $\msf{H}_{x, n, \phi}$-equivariant map $\mfr{W}_\infty(n)_{\phi, \eta}^\diamond \to \cl{W}(n)^\diamond_\phi$. Let $S = \Spa(R,R^+)$ be an affinoid perfectoid space over $\mfr{W}_\infty(n)_{\phi, \eta}$. Since $\mfr{W}(n)_\phi$ is affine, $S$ admits a unique extension $\Spf(R^+) \to \mfr{W}(n)_\phi$. The restriction to $\Spec(R^+/R^{\circ \circ})$ provides $h \in H_{x, n, \phi}(R^+/R^{\circ \circ})$ with
    \[
       \sigma(h) h^{-1} \equiv h_m^{-1} \bmod R^{\circ \circ}
       \in H_{x, n, \phi}(R^{\flat +}/R^{\circ \circ}). 
    \]
    For sufficiently large $N$, $h$ admits a lift $h_N \in H_{[m, 2m], \phi}^\pre(R^{\flat +} / \pi^{\flat, 1 / N})$ with 
    \[
        h_N \sigma(h_N)^{-1} \equiv h_m \bmod \pi^{\flat, 1 / N}  \in H_{[m, 2m], \phi}^\pre(R^{\flat +} / \pi^{\flat, 1 / N}). 
    \]
    By \Cref{prop:levelVn_even_modulo}, $h_N \sigma(h_N)^{-1}$ is congruent to the right-hand side of \eqref{eq:levelVn_even} modulo $[\pi^{\flat, 1/N}]$ for sufficiently large $N$. In particular, $h_N$ is a solution of \eqref{eq:levelVn_even} modulo $[\pi^{\flat, 1/N}]$. Since $R^{\flat +}$ is $\pi^{\flat}$-adically complete, $h_N$ lifts uniquely to a solution 
    \[
        x \in H_{[m, 2m], \phi}^\pre(R^{\flat +}) = H_{[m, 2m], \phi}(R^{\flat +}). 
    \]
    of \eqref{eq:levelVn_even} by the same argument as in \cite[Lemma 3.5]{Tak25z}. By construction, $\pi_{\mfr{m}}(x \bmod \pi^m) \in \mfr{m} \otimes W_{O_F}(R^{\flat+}) / \pi^{m}$ and it is trivial modulo $[\pi^{\flat, 1 / N}]$ for sufficiently large $N$. Thus, we get
    \[
        x \colon S \to \cl{W}(n)_\phi. 
    \]
    Since $x$ is functorially defined over $\mfr{W}_\infty(n)_{\phi, \eta}$, this gives $\mfr{W}_\infty(n)_{\phi, \eta}^\diamond \to \cl{W}(n)_\phi^\diamond$. The map is $\msf{H}_{x, r, \phi}$-equivariant since the action $h \mapsto h \cdot a$ for $a \in \msf{H}_{x, r, \phi}$ sends the solution $x$ to $x \cdot a$.  
\end{proof}

\subsection{Generalities on transfer computations}

In this section, we set up a general way to compute \eqref{eq:map_from_compact_to_usual} using the description $\cl{W}(n)_{\phi, \bb{C}_p} \cong \mfr{W}(n)_{\phi, \eta}$. 

By \Cref{prop:stability_under_diagonal_action} and the definition of $\cl{W}(n)_\phi$ (see \Cref{defi:special_affinoids_positive_depth}), $\cl{W}(n)_\phi$ is stable under the diagonal action of $M(F)_x$. Combining with \Cref{cor:stability_Img_can_nm}, $\cl{W}(n)_\phi$ admits an action of 
\[
    \cl{S}_n = (G(F)_{x, n, m} \times G_b(F)_{x_b, n, m_b}) \cdot \Delta(M(F)_{x, 0}) \subset J_n^0 \times J_{b, n}^0. 
\]
Moreover, the $(J_n^0 \times J_{b, n}^0)$-orbit of $\cl{W}(n)_\phi$ equals
\begin{equation} \label{eq:inverse_image_W(n)}
    \bigsqcup_{g \in M(F)_{x, 0} / M(F)_{x, n}} g \cdot \cl{W}(n)_\phi, 
\end{equation}
which is the inverse image of $\Img(\can_n)$ along $\cl{M}_{G, b, \mu, K_\phi} \to \cl{M}_{G, b, \mu, J_n^0}$ (cf.\ \eqref{eq:inverse_image_U(n)}). 

Recall from \Cref{defi:R_induction} (2) that a Heisenberg-Weil representation $\kappa_{x, n, \phi}$ is an irreducible representation of $(G(F)_{x, n, m} / K_\phi) \rtimes M(F)_x$ whose restriction to $G(F)_{x, n, m} / K_\phi \cong \msf{H}_{x, n, \phi}$ is a Heisenberg representation with central character $\psi$. Note that $\msf{H}_{x, n, \phi} \cong k$ when $n = 2m - 1$. Now, $K_\phi \subset J_n^0$ is normal and we have 
\[
    R^{J_n}_M(\rho)\vert_{J_n^0} = \kappa_{x, n, \phi} \otimes \rho
\]
by construction. In particular, it follows formally that 
\begin{align*}
    R\Gamma_c(\cl{U}(n)_{\bb{C}_p}, \can_n^*\cl{L}_{R^{K_n}_M(\rho)}) &\cong R\Gamma_c(\sqcup_{g \in M(F)_{x, 0} / M(F)_{x, n}} g \cdot \cl{W}(n)_{\phi, \bb{C}_p}, \Qla) \otimes_{J_n^0} (\kappa_{x, n, \phi} \otimes \rho) \\
    & \cong (R\Gamma_c(\cl{W}(n)_{\phi, \bb{C}_p}, \Qla) \otimes_{G(F)_{x, n, m}} \kappa_{x, n, \phi}) \otimes \rho, 
\end{align*}
where we regard the $J_n^0$-action on \eqref{eq:inverse_image_W(n)} as a right action. This isomorphism is equivariant under $G_b(F)_{x_b, n, m_b}$. Moreover, for each $j \in M(F)_{x, 0}$, the inner action of $j$ is given diagonally on each factor, i.e.\ 
\[
    j \cdot (a \otimes b \otimes c) = \Delta(j)\cdot a \otimes jb \otimes jc
\]
for $a \in H^\bullet_c(\cl{W}(n)_\phi, \Qla)$, $b \in \kappa_{x, n, \phi}$ and $c \in \rho$. As in the proof of \Cref{prop:verification_at_depth_zero}, the following holds via the nearby cycle functor. 

\begin{prop} \label{prop:nearby_cycle_positive_depth}
    The natural map 
    \[
        R\Gamma_c(\cl{U}(n)_{\bb{C}_p}, \can_n^*\cl{L}_{R^{K_n}_M(\rho)}) \to R\Gamma(\cl{U}(n)_{\bb{C}_p}, \can_n^*\cl{L}_{R^{K_n}_M(\rho)})
    \]
    is isomorphic (as complexes of $J_{b, n}$-representations) to 
    \[
        (R\Gamma_c(\msf{W}(n)_\phi, \Qla) \otimes_{G(F)_{x, n, m}} \kappa_{x, n, \phi}) \otimes \rho \to (R\Gamma(\msf{W}(n)_\phi, \Qla) \otimes_{G(F)_{x, n, m}} \kappa_{x, n, \phi}) \otimes \rho, 
    \]
    where the $M(F)_x$-action on each term is given diagonally. 
\end{prop}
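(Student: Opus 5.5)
The proof follows the depth-zero argument in \Cref{prop:verification_at_depth_zero}, combining the formal identification with good reduction and the nearby cycles comparison. First, I would record the formal isomorphism already derived just before the statement:
\[
    R\Gamma_{(c)}(\cl{U}(n)_{\bb{C}_p}, \can_n^*\cl{L}_{R^{K_n}_M(\rho)}) \cong (R\Gamma_{(c)}(\cl{W}(n)_{\phi, \bb{C}_p}, \Qla) \otimes_{G(F)_{x, n, m}} \kappa_{x, n, \phi}) \otimes \rho,
\]
for both compactly supported and ordinary cohomology. This holds because $\can_n^*$ pulls the local system back to the $J_n^0$-cover \eqref{eq:inverse_image_W(n)}, and $R^{J_n}_M(\rho)\vert_{J_n^0} = \kappa_{x, n, \phi} \otimes \rho$ is trivial on $K_\phi$. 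The isomorphism is natural in the forget-supports map, since both sides arise from the same finite étale cover $\cl{W}(n)_\phi \to \cl{U}(n)$ via the trace and pullback formalism. It is equivariant for $G_b(F)_{x_b, n, m_b}$ and for the diagonal $M(F)_{x,0}$-action. The $Z_G(F)$-part of $M(F)_x = Z_G(F)\cl{M}(O_F)$ acts by the central character, by \Cref{lem:diagonal_central_action} and \Cref{lem:hyperspecial_M(F)_x}, so the comparison is $J_{b,n}$-equivariant.

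Next, I would replace $\cl{W}(n)_{\phi,\bb{C}_p}$ by $\mfr{W}(n)_{\phi,\eta}$ using \Cref{prop:good_reduction_odd} and \Cref{prop:good_reduction_even}. The formal model $\mfr{W}(n)_\phi$ is a finite étale cover of the smooth formal ball $\mfr{U}(n)$, so it is $p$-adically smooth with special fiber $\msf{W}(n)_\phi$. As in the depth-zero case, \cite[Lemma 4.5]{Tsu16}, with algebraizability supplied by Elkik, gives a commutative square whose horizontal maps are isomorphisms
\[
    R\Gamma_c(\msf{W}(n)_\phi,\Qla) \xrightarrow{\sim} R\Gamma_c(\cl{W}(n)_{\phi,\bb{C}_p},\Qla), \qquad R\Gamma(\msf{W}(n)_\phi,\Qla) \xrightarrow{\sim} R\Gamma(\cl{W}(n)_{\phi,\bb{C}_p},\Qla),
\]
and whose vertical maps are the forget-supports maps. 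Tensoring this square with $\kappa_{x,n,\phi}$ over $G(F)_{x,n,m}$ and then with $\rho$ gives the claim.

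The main obstacle is equivariance. The group $\cl{S}_n$ acts on $\cl{W}(n)_\phi$, and one must check that this action extends to the formal model $\mfr{W}(n)_\phi$, so that the nearby cycle isomorphism intertwines the induced action on $\msf{W}(n)_\phi$. The approach I would take is this. The action on $\cl{U}(n)$ already extends to $\mfr{U}(n)$: by the proof of \Cref{prop:U(n)_stability}, the action of each element is given by power series preserving the integral coordinates. Since $\mfr{W}(n)_\phi$ is the normalization of $\mfr{U}(n)$ in $\cl{W}(n)_\phi$, being finite étale over a normal formal scheme, functoriality of normalization extends the action uniquely. If necessary, I would also invoke full faithfulness of the generic fiber on finite étale covers of $\mfr{U}(n)$, via the topological invariance used in \Cref{defi:reduction_of_specaff_odd}, to transport the action. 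Naturality of the nearby cycle comparison under automorphisms of the formal model then gives the equivariance.
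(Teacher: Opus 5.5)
Your proposal is correct and follows the paper's proof. It uses the same four steps: the formal identification with $(R\Gamma_{c}(\cl{W}(n)_{\phi,\bb{C}_p},\Qla)\otimes_{G(F)_{x,n,m}}\kappa_{x,n,\phi})\otimes\rho$ (and likewise without supports), good reduction from \Cref{prop:good_reduction_odd} and \Cref{prop:good_reduction_even}, Tsuji's nearby-cycle comparison, and $J_{b,n}=Z_G(F)J_{b,n}^0$ to pass from $J_{b,n}^0$ to $J_{b,n}$.

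The one addition is your justification that the $\cl{S}_n$-action extends to $\mfr{W}(n)_\phi$, which the paper leaves implicit when it asserts the square is $\cl{S}_n$-equivariant. You identify the ring of $\mfr{W}(n)_\phi$ with the integral closure of $O_{\bb{C}_p}\langle u_{\alpha,n}\rangle$, i.e.\ the power-bounded functions on the affinoid, and every automorphism of the affinoid preserves that ring. This argument is sound.
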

\begin{proof}
    The previous argument works as well to $R\Gamma(\cl{U}(n)_{\bb{C}_p}, \can_n^*\cl{L}_{R^{K_n}_M(\rho)})$, so the claim follows as in the proof of \Cref{prop:verification_at_depth_zero}, using \cite[Lemma 4.5]{Tsu16} to get a commutative diagram
    \begin{center}
        \begin{tikzcd}
            R\Gamma_c(\msf{W}(n)_\phi, \Qla) \ar[r, "\sim"] \ar[d] & R\Gamma_c(\cl{W}(n)_{\phi, \bb{C}_p}, \Qla)\ar[d] \\
            R\Gamma(\msf{W}(n)_\phi, \Qla) \ar[r, "\sim"] & R\Gamma(\cl{W}(n)_{\phi, \bb{C}_p}, \Qla). 
        \end{tikzcd}
    \end{center}
    equivariant under $\cl{S}_n$. Here, we use \Cref{prop:good_reduction_odd} and \Cref{prop:good_reduction_even}, which imply that $\cl{W}(n)_{\phi, \bb{C}_p}$ has good reduction and $\msf{W}(n)_\phi$ is its reduction. Note that we only keep track of the $J_{b, n}^0$-action previously, but it is enough for the claim since $J_{b, n} = Z_G(F)J_{b, n}^0$ by \Cref{lem:hyperspecial_M(F)_x} (see also \Cref{lem:diagonal_central_action}). 
\end{proof}

Now, let $\kappa_{x_b, n, \phi}$ be the Heisenberg-Weil representation of
\[
    (G_b(F)_{x_b, n, n/2} / G_b(F)_{x_b, n+, n/2+}) \rtimes M(F)_x. 
\]
Since it might be $G_b(F)_{x_b, n, m_b} \neq G_b(F)_{x_b, n, n/2}$ when $n = 2m - 1$, one needs the following variant of $\kappa_{x_b, n, \phi}$. 

\begin{defi}
    Let $\kappa_{b, n, \phi}$ be a smooth representation of $G_b(F)_{x_b, n, m_b} \rtimes M(F)_x$ such that 
    \[
        \kappa_{b, n, \phi} = \cInd_{G_b(F)_{x_b, n, n/2}}^{G_b(F)_{x_b, n, m_b}} \kappa_{x_b, n, \phi}. 
    \]
\end{defi}

By construction, $R_M^{J_{b, n}}(\rho) = \kappa_{b, n, \phi} \otimes \rho$. Since $R_M^{J_{b, n}}$ is irreducible, $\kappa_{b, n, \phi}$ is also irreducible in this context. By \Cref{prop:nearby_cycle_positive_depth}, \Cref{prop:restatement_LSV} (4) is now reduced to the following. 

\begin{prop} \label{prop:cohomology_reduction}
    The natural map 
    \[
        R\Gamma_c(\msf{W}(n)_\phi, \Qla) \otimes_{G(F)_{x, n, m}} \kappa_{x, n, \phi} \to R\Gamma(\msf{W}(n)_\phi, \Qla) \otimes_{G(F)_{x, n, m}} \kappa_{x, n, \phi}
    \]
    is an isomorphism and isomorphic to $\kappa_{b, n, \phi}[-d]$ as representations of $G_b(F)_{x_b, n, m_b} \rtimes M(F)_x$. 
\end{prop}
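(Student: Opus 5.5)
We treat the even case $n = 2m$ and the odd case $n = 2m-1$ separately, following the structure of $\msf{W}(n)_\phi$ from \Cref{defi:reduction_of_specaff_even} and \Cref{defi:reduction_of_specaff_odd}. In both cases the key input is that $\msf{W}(n)_\phi \to \msf{U}(n)$ is a finite \'{e}tale torsor over the affine space $\msf{U}(n)$. It is the pullback of a Lang torsor along the closed immersion $\msf{U}(n)^\perf \subset H_{x,n,\phi,\ov{k}}$ given by the characteristic root subgroups $\Phi_{\mu<0} = \Phi_{\msf{\ov{N}}} \cap \sigma(\Phi_{\msf{N}})$.

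For $n = 2m$, we identify $\msf{W}(n)_\phi$ with the Heisenberg Deligne-Lusztig variety of \cite[Definition 5.2]{Tak26_Yu} attached to the polarization $\Phi^{\msf{M}+} = \Phi_{\msf{\ov{N}}}$, $\Phi^{\msf{M}-} = \Phi_{\msf{N}}$. We then invoke \cite[Theorem 6]{Tak26_Yu}, which computes the $\kappa_{x,n,\phi}$-isotypic part of its cohomology. The claim is that $R\Gamma_c(\msf{W}(n)_\phi,\Qla)\otimes_{\msf{H}_{x,n,\phi}}\kappa_{x,n,\phi}$ is concentrated in degree $d = \lvert\Phi_{\mu<0}\rvert$, that the forget-support map is an isomorphism there, and that the result is the Heisenberg representation of the opposite (``$b$-side'') Heisenberg group. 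The remaining work is a compatibility check. The stabilizer of $\msf{W}(n)_\phi$ inside $G_b(F)_{x_b,n,m} \rtimes M(F)_x$ acts through the Frobenius-twisted commutant of the Lang torsor. Via $b = \mu(-\pi)$, that commutant is identified with $G_b(F)_{x_b,n,n/2}/G_b(F)_{x_b,n+,n/2+}$. This matches the action of \Cref{cor:stability_Img_can_nm} and the diagonal $M(F)_x$-action, so the output is $\kappa_{x_b,n,\phi} = \kappa_{b,n,\phi}$, since $m_b = m = n/2$. Uniqueness of Heisenberg-Weil representations with given central character, \cite[Lemma 3.2]{Tak26_Yu}, pins down the $M(F)_x$-action up to a character. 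We fix that character by evaluating traces at $\Delta(M(F)_{x,0})$, using the fixed CM point $x_{\CM,n}$ and the Lefschetz formula.

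For $n = 2m-1$, $\msf{H}_{x,n,\phi} = k$ and $\kappa_{x,n,\phi} = \psi$. The left side is therefore the $\psi$-isotypic part of the Artin-Schreier cover defined by $f_n$, namely $R\Gamma_c(\msf{U}(n), f_n^*\mathcal{L}_\psi)$. We handle $m \geq 2$ and $m = 1$ separately.

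For $m \geq 2$, $f_n$ is a quadratic-type form $\sum_\alpha c_{\alpha,n} X([du_\alpha,du_{-\alpha}])\, u_{\alpha} u_{\beta_\alpha}^{q^{n_\alpha}}$. Here $\alpha \mapsto \beta_\alpha$ is a permutation by \Cref{lem:vareps}, and the coefficients $X([du_\alpha,du_{-\alpha}])$ are nonzero by the genericity of $X$. Such ``twisted quadratic'' forms have an isotypic part concentrated in degree $d$, with compact-support and ordinary cohomology agreeing, of dimension $q^{d/2}$-type (cf.\ \cite[Section 3.8]{BW16}). We expect to reduce this to \cite[Theorem 6]{Tak26_Yu} by exhibiting $\msf{W}(n)_\phi$ as a quotient of a Heisenberg Deligne-Lusztig variety for the odd-depth group.

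For $m = 1$, $f_1$ involves the conjugation by $g_0^-$. We would show that $\msf{W}(1)_\phi$ is a quotient of the depth-zero-type variety $\msf{W}(0)$-like torsor, and use the $(\msf{G},\msf{M})$-regularity of the induced character, following \cite[Section 3.9]{BW16}.

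The group acting on the answer is $G_b(F)_{x_b,(m-1)+}$, which is larger than $G_b(F)_{x_b,n,n/2}$. We identify the result with $\cInd\,\kappa_{x_b,n,\phi} = \kappa_{b,n,\phi}$ by comparing dimensions: $q^{d/2}$ times the index equals $q^{d}$, if the count works out. We then apply Frobenius reciprocity against the stabilized line.

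The main obstacle is this odd case, and above all $m = 1$, where $f_1$ is not explicit. The concentration in degree $d$ and the ``$c = \ast$'' isomorphism should come from a Fourier-transform or stationary-phase argument along the linear variables. Irreducibility of $\kappa_{b,n,\phi}$ then lets us identify the resulting representation by its central character and dimension alone.
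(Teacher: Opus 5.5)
Your case split matches the paper: \cite[Theorem 6]{Tak26_Yu} for $n=2m$, and the Artin--Schreier sum of $f_n$ for $n=2m-1$ with $m=1$ treated separately. But there is a genuine gap: you never determine the $M(F)_x$-action, which is part of the statement.

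Since $\kappa_{b,n,\phi}$ is irreducible under $G_b(F)_{x_b,n,m_b}$, the data you use (dimension, scalar action of $G_b(F)_{x_b,n,m}$, Frobenius reciprocity) give at best $\kappa_{b,n,\phi}\otimes\chi$ for an unknown character $\chi$ of $\msf{M}$. Your odd-case plan (``central character and dimension alone'') cannot see $\chi$. In the even case, a Lefschetz formula at the single fixed point $x_{\CM,n}$ does not compute $\chi(\gamma)$, because the fixed locus of $\gamma$ is not just that point.

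Even the scalar action requires proving that the inner action of $G_b(F)_{x_b,2m,m}$ for $n=2m-1$ (resp.\ $G_b(F)_{x_b,n+,m+}$ for $n=2m$) is trivial, and that the diagonal action of $M(F)_{x,0+}$ is trivial. You replace this with an unproved matching to a ``commutant of the Lang torsor''. The paper gets all of it from a rigidity principle: an automorphism of the finite \'{e}tale torsor $\msf{W}(n)_\phi$ over the connected $\msf{U}(n)$ that commutes with $G(F)_{x,n,m}$ and fixes one point is the identity. The fixed point is either the reduction of $x_{\CM,K_\phi}$ or the one-point image of $\msf{U}(n+1)\to\msf{W}(n)_\phi$.
\begin{itemize}
\item For $n=2m$, rigidity yields the $G(F)_{x,n,m}\rtimes M(F)_x$-equivariant isomorphism $\msf{W}(n)_\phi^{\perf}\cong\msf{W}_{\Phi_{\mu<0},n}$ (\Cref{prop:equivariance_under_HW_even}, using that $M(F)_x$ acts on $\msf{U}(n)$ by $\Ad$). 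Theorem 6 then gives $H^d_c(\msf{W}(n)_\phi,\Qla)[\psi^{-1}]\cong\kappa^*_{x,n,\phi}$ together with its $M(F)_x$-action.
\item For $n=2m-1$, the paper kills $\chi$ by a trace computation. It rescales $u_{\alpha,n}$ by constants depending only on $\alpha\vert_{Z_M^\circ}$, so that $f_n(u)=\langle u,\sigma(i_{\mu<0}(u))\rangle_X$ and the Artin--Schreier square is $\msf{M}$-equivariant. It then gets $\tr(\gamma)=(-1)^{d_\gamma}\sqrt{\lvert\msf{m}^{\perp,\gamma}\rvert}$ from \cite[Proposition 5.3, Theorem 5.14]{Tak26_Yu}. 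Finally it compares with $\kappa_{x_b,n,\phi}$ via $\lvert\tr\rvert\le\lvert J_\gamma\rvert\sqrt{\lvert\msf{m}^{\perp,\gamma}_{x_b,n/2}\rvert}\le\sqrt{\lvert\msf{m}^{\perp,\gamma}\rvert}$, where every inequality is forced to be an equality.
\end{itemize}

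The case $m=1$ is left without an argument, and depth-zero $(\msf{G},\msf{M})$-regularity is not the relevant input: $\rho$ has depth $1$, and what matters is the genericity of $X$. The paper orders the variables by $\alpha(x_b)$. It shows that the terms of $f_r$ containing $u_{\alpha,1}$ with $\alpha(x_b)=r$ are $a_1u_{\alpha,1}u_{\beta_\alpha,1}^{q^{n_\alpha}}+a_2u_{\gamma_\alpha,1}u_{\alpha,1}^{q^{n_{\gamma_\alpha}}}$ (\Cref{prop:highest_term_f_r}), so variables can be eliminated pairwise. Nonzero points in the resulting support are handled by two facts: $f_r^*\cl{L}_\psi$ is $G_b(F)_{x_b,1-r}$-equivariant, with inner action $u\mapsto\Ad(\ov{n})(u)+\ov{n}_{\mu<0}$, and the orbit map on $\msf{m}^\perp_{x_b,1-r}$ is injective. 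Together these show the support is a single orbit, which reduces everything to $f_{1/2}$ and an induction from $G_b(F)_{x_b,1/2}$.

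Your numerics also need fixing.
\begin{itemize}
\item For $n=2m$: $\msf{m}^\perp_{x_b,m}=0$, since $\alpha(x_b)\notin\bb{Z}$ for $\alpha\in\Phi^{\msf{M}}$. So $\kappa_{b,n,\phi}=\kappa_{x_b,n,\phi}$ is a character and the answer is one-dimensional, not a nondegenerate Heisenberg representation.
\item For $n=2m-1$: the dimension is $\sqrt{\lvert\msf{m}^\perp\rvert}=\prod_{\alpha\in\Phi_{\mu<0}}q^{n_\alpha}$, not $q^d$. Also, $\msf{W}(n)_\phi^{\perf}$ is non-canonically the depth-$2m$ variety $\msf{W}_{\Phi_{\mu<0},2m}$, not a quotient of an odd-depth one.
\end{itemize}
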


\begin{proof}
    As previously, it is enough to keep track of the action of $G_b(F)_{x_b, n, m_b} \rtimes M(F)_{x, 0}$ by \Cref{lem:hyperspecial_M(F)_x}. The claim will be proved in \Cref{prop:cohmology_reduction_even} when $n = 2m$ and in \Cref{prop:cohomology_reduction_odd} and \Cref{prop:cohomology_reduction_n=1} when $n = 2m - 1$.
\end{proof}

Note that $d = \dim \cl{M}_{\cl{G}, b, \mu} = \dim \msf{U}(n) = \dim \msf{W}(n)_\phi$. We devote the following sections to the proof of \Cref{prop:cohomology_reduction}. 

\subsection{Group actions on reductions of special open balls}

In this section, we study the group action on $\msf{U}(n)$. Let $\msf{u}_{\mu < 0} = \Lie(\msf{U}_{\mu < 0})$. Then, there is a natural isomorphism
\begin{equation} \label{eq:U(n)_isomorphism}
    \msf{U}(n) = \Spec(\ov{k}[u_{\alpha, n} \vert \alpha \in \Phi_{\mu < 0}]) \to \msf{u}_{\mu < 0}, \quad (u_{\alpha, n}) \mapsto \sum_{\alpha \in \Phi_{\mu < 0}} u_{\alpha, n}du_\alpha.
\end{equation}
When $n = 0$, we set $u_{\alpha, 0} = \pi^{-r_\alpha} u_\alpha$ and it is conceptually better to interpret \eqref{eq:U(n)_isomorphism} as 
\begin{equation} \label{eq:U(0)_isomorphism}
    \msf{U}(0) = \Spec(\ov{k}[u_{\alpha, 0} \vert \alpha \in \Phi_{\mu < 0}]) \to \msf{U}_{\mu < 0}, \quad (u_{\alpha, 0}) \mapsto \prod_{\alpha \in \Phi_{\mu < 0}} i_\alpha(u_{\alpha, 0}).
\end{equation}
The two versions are the same as $\msf{U}_{\mu < 0}$ is abelian. The first objective is to compare the $M(F)_x$-action on both sides. Here, the action on $\msf{u}_{\mu < 0}$ (resp.\ $\msf{U}_{\mu < 0}$) is the adjoint action and the action on $\msf{U}(n)$ is the reduction of the action on the generic fiber $\cl{U}(n)_{\bb{C}_p}$.

\begin{lem} \label{lem:equivariance_M_x_depth_zero}
    The isomorphism \eqref{eq:U(0)_isomorphism} is equivariant under $M(F)_x$.  
\end{lem}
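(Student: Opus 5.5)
The plan is to compute the $M(F)_x$-action on $\cl{U}(0)$ via its effect on the universal $\cl{G}$-BKF module of \Cref{lem:expBKF}, and then reduce modulo the maximal ideal using the formal model with coordinates $u_{\alpha,0}=\pi^{-r_\alpha}u_\alpha$.

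By \Cref{lem:hyperspecial_M(F)_x}, $M(F)_x=Z_G(F)\cl{M}(O_F)$. The central part acts trivially on $\msf{U}(0)$ because the diagonal action of $Z_G(F)$ is trivial (\Cref{lem:diagonal_central_action}), and $Z_G$ acts trivially on $\msf{U}_{\mu<0}$ by the adjoint action. So it suffices to treat $h\in\cl{M}(O_F)=M(F)_{x,0}$, acting diagonally as $\Delta(h)$.

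Given a point with coordinates $u_\alpha$ and BKF module $\mu([\pi^\flat]-\pi)u\sigma$, where $u=\prod i_\alpha([u_\alpha^\flat])$, the point $\Delta(h)\cdot x$ is represented, following \cite[Proposition 2.15]{Tak25z}, by conjugating by $h$:
\[
h\mu([\pi^\flat]-\pi)u\sigma(h)^{-1}=\mu([\pi^\flat]-\pi)\cdot \Ad(h)(u)\cdot h\sigma(h)^{-1}.
\]
This uses that $h$ centralizes $\mu$, since $\mu$ is central in $M$. The step I expect to be the main obstacle is the following. The factor $h\sigma(h)^{-1}\in\cl{M}(W_{O_F}(O^\flat))$ is generally nontrivial, and one must absorb it by a change of trivialization $\iota$ that is trivial modulo some $[\varpi]$, as in \Cref{prop:isom_deformation}. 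This should leave $\Ad(h)(u)$ modulo higher-order terms.

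To handle this, I would use Lang's theorem to pick $\wtd h\in\cl{M}(W_{O_F}(O^\flat_{\bb{C}_p}))$ with $\wtd h^{-1}\sigma(\wtd h)$ related to $h$. Alternatively, I would argue on reductions: in $\msf{M}$ the element $h\bmod\pi$ is $\sigma$-fixed. After twisting by $\lambda(\pi^\flat)$, which is central in $\msf{M}$ because $\msf{M}=\msf{G}^{e\lambda=0}$, the conjugation $\Ad(h)$ scales $u_\alpha$ compatibly with the normalization $\pi^{-r_\alpha}$.

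Once the action is written as $u\mapsto \Ad(h)(u)\cdot(\text{error})$, the error must be controlled. I would show that it lies in $\cl{G}^{\lambda>0}$ or is trivial modulo $\pi^{\flat,r_\alpha+\varepsilon}$, in the manner of the estimates in \Cref{lem:Adjoint_of_g<m} and \Cref{lem:valsalpha}. Then it vanishes after reducing the rescaled coordinates $u_{\alpha,0}$.

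Finally, $\Ad(h)$ preserves $\msf{U}_{\mu<0}$ because $h$ centralizes $\mu$. Moreover, $\Ad(\bar h)$ on $\prod i_\alpha(u_{\alpha,0})$ is exactly the induced action under \eqref{eq:U(0)_isomorphism}, which gives the equivariance.
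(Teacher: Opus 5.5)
Your route is genuinely different from the paper's and can be completed: compute $\Delta(h)$ on the universal $\cl{G}$-BKF module of \Cref{lem:expBKF}, then rescale. However, the step you single out as the main obstacle rests on an error. An element $h\in\cl{M}(O_F)=M(F)_{x,0}$ is $F$-rational, so $\sigma(h)=h$ in $\cl{G}(W_{O_F}(R^{\flat+}))$ and $h\sigma(h)^{-1}=1$. This is why the paper writes $h^{-1}$ in place of $\sigma(h)^{-1}$ in the proof of \Cref{prop:equivariance_reduction_odd}. Moreover $h\in\cl{G}(O_F)$, so its $G(F)$-component acts trivially at level $\cl{G}(O_F)$, and $\Delta(h)$ is just the inner action of $h\in M(F)\subset G_b(F)$. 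The transformed module is therefore exactly $\mu([\pi^\flat]-\pi)\Ad(h)(u)\sigma$. There is nothing to absorb, and the Lang-theorem lift $\wtd h$ and the $\cl{G}^{\lambda>0}$-type error analysis modeled on \Cref{lem:Adjoint_of_g<m} address a problem that does not arise.

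Two points do need an argument, and your sketch only gestures at them.

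\emph{Non-Teichm\"uller coordinates.} Let $h_{\beta\alpha}\in O_{\breve F}$ be the matrix of $\Ad(h)$ on $\breve{\mfr{g}}^{\mu<0}$. Then $\Ad(h)(u)\in\breve{\cl{G}}^{\mu<0}(W_{O_F}(R^{\flat+}))$ has coordinates $\sum_\alpha h_{\beta\alpha}[u_\alpha^\flat]$, which are not Teichm\"uller. So the coordinates $v_\beta$ of $\Delta(h)\cdot x$ are determined only through a comparison isomorphism $\iota$ with the normal form. You must show $v_\beta\equiv\sum_\alpha h_{\beta\alpha}u_\alpha$ modulo $\pi^{r_\beta+\varepsilon}$. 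This follows from the projection argument in the proof of \Cref{prop:equivariance_reduction_odd}, adapted to fractional radii. Since every $r_\alpha>1-q^{-1}\geq\tfrac12$, you can work in the abelian quotient modulo $[\pi^{\flat,\rho'}]$ for some $\rho'\in(\max_\alpha r_\alpha,1]$, where $\iota$ is trivial modulo $[\pi^{\flat,\min r_\alpha}]$ and $\sigma(\iota)$ is already trivial. This is where an estimate like your ``trivial modulo $\pi^{\flat,r_\alpha+\varepsilon}$'' genuinely belongs.

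\emph{Compatibility with the rescaling $u_{\alpha,0}=\pi^{-r_\alpha}u_\alpha$.} We have $h_{\beta\alpha}\neq0$ only if $\alpha|_{Z_M^\circ}=\beta|_{Z_M^\circ}$. Also $r_\alpha=\langle\alpha,q\sigma\lambda\rangle$ depends only on $\alpha|_{Z_M^\circ}$, because $\lambda\in X_*(Z_M^\circ)_{\bb{Q}}$. Hence $u_{\beta,0}\mapsto\sum_\alpha h_{\beta\alpha}u_{\alpha,0}$ integrally, which reduces to $\Ad(\bar h)$. Your remark that $\lambda(\pi^\flat)$ is central in $\msf{M}$ is the right mechanism, but it belongs here, not to handling $h\sigma(h)^{-1}$.

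The paper avoids both computations by working on the reduction. By \cite[Proposition 3.10]{Tak25z}, $\msf{W}(0)=\{g\in\msf{G}\mid g^{-1}\sigma(g)\in\msf{U}_{\mu<0}\}$ surjects onto $\msf{U}(0)$ via $g\mapsto g^{-1}\sigma(g)$. By \cite[Proposition 5.7]{Tak25z}, $\Delta(h)$ acts on $\msf{W}(0)$ by $g\mapsto hgh^{-1}$. Since $\sigma(h)=h$, this descends to $\Ad(h)$ on $\msf{U}_{\mu<0}$. That proof is much shorter because it reuses the depth-zero good-reduction results. Your repaired route is self-contained on the BKF side and is essentially the method the paper later uses for \Cref{prop:equivariance_reduction_odd}.
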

\begin{proof}
    Since $M(F)_x = M(F)_{x, 0} Z_G(F)$ (see \Cref{lem:hyperspecial_M(F)_x}), it is enough to keep track of the action of $M(F)_{x, 0}$. Recall that the reduction $\msf{W}(0)$ of $\cl{W}(0)_{\bb{C}_p}$ is a finite \'{e}tale cover 
    \[
        \msf{W}(0) = \{ g \in \msf{G} \mid g^{-1} \sigma(g) \in \msf{U}_{\mu < 0} \} \to \msf{U}_{\mu < 0} \cong \msf{U}(0) ,\quad g \mapsto g^{-1} \sigma(g)
    \]
    (see \cite[Proposition 3.10]{Tak25z}\footnote{The absence of $w$ stems from the difference in the normalization of $M$ (see the proof of \Cref{lem:expBKF}).}). Here, the second isomorphism is the inverse to \eqref{eq:U(0)_isomorphism}. The description of the inner action on $\msf{W}(0)$ in \cite[Proposition 5.7]{Tak25z} implies that the diagonal $M(F)_{x, 0}$-action on $\msf{W}(0)$ is given by 
    \[
        \Delta(h) \cdot g = hgh^{-1} ,\quad h \in M(F)_{x, 0},\quad g \in \msf{W}(0). 
    \]
    Since $\msf{W}(0) \to \msf{U}(0)$ is surjective and we have
    \[
        (\Delta(h) \cdot g)^{-1} \sigma(\Delta(h) \cdot g) = \Ad(h)(g^{-1} \sigma(g)), 
    \]
    the action on $\msf{U}(0)$ is identified with the adjoint action on $\msf{U}_{\mu < 0}$ via \eqref{eq:U(0)_isomorphism}.
\end{proof}

Now, the case $n = 2m$ almost formally follows from the depth-zero case. 

\begin{prop} \label{prop:equivariance_reduction_even}
    The isomorphism \eqref{eq:U(n)_isomorphism} is equivariant under $M(F)_x$ when $n = 2m$. 
\end{prop}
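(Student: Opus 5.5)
We reduce to the depth-zero statement \Cref{lem:equivariance_M_x_depth_zero}, using that for $n = 2m$ the ball $\cl{U}(n)$ is obtained from $\cl{U}(0)$ by the rescaling $u_\alpha \mapsto \pi^m u_\alpha$ of the coordinates. First, as in the proof of \Cref{lem:equivariance_M_x_depth_zero}, \Cref{lem:hyperspecial_M(F)_x} and \Cref{lem:diagonal_central_action} let us restrict to $h \in M(F)_{x,0}$. By \Cref{prop:stability_U(n)_diagonal}, $\Delta(h)$ stabilizes both $\cl{U}(0)$ and $\cl{U}(2m)$ and fixes $x_\CM$, the origin. Moreover, $\Delta(h)$ acts on $O_{\bb{C}_p}\langle u_{\alpha,0}\mid \alpha\in\Phi_{\mu<0}\rangle$ with $u_{\alpha,0} = \pi^{-r_\alpha}u_\alpha$, by
\[
    \Delta(h)^*(u_{\alpha,0}) = P_\alpha(u_{\beta,0}), \qquad P_\alpha(0)=0 .
\]

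Next, write $P_\alpha = L_\alpha + Q_\alpha$, where $L_\alpha$ is the linear part and $Q_\alpha$ consists of the terms of degree $\geq 2$. By \Cref{lem:equivariance_M_x_depth_zero}, the reduction of $P_\alpha$ modulo $\mfr{m}_{O_{\bb{C}_p}}$ is the adjoint action of $h$ on $\msf{U}_{\mu<0}$. Since $\msf{U}_{\mu<0}$ is abelian and $\Ad(h)$ acts on it linearly, the reduction of $L_\alpha$ is the matrix of $\Ad(h)$ on $\msf{u}_{\mu<0}$ in the basis $\{du_\alpha\}$, and $Q_\alpha \equiv 0$ modulo $\mfr{m}_{O_{\bb{C}_p}}$. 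Now substitute the depth-$2m$ coordinate. On $\cl{U}(2m)$ we have $u_{\alpha} = \pi^{m+r_\alpha}\cdot(\text{unit coordinate})$, so $u_{\alpha,0} = \pi^m v_\alpha$ for the normalized coordinate $v_\alpha = \pi^{-m-r_\alpha}u_\alpha$. The definition of $u_{\alpha,2m} = -s^\sharp_{\alpha,m}u_\alpha$ differs from $v_\alpha$ by the constant $-\pi^{m+r_\alpha}s^\sharp_{\alpha,m}$, which is a unit by \Cref{lem:valsalpha}.

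In the coordinates $v_\alpha$, the action reads
\[
    v_\alpha \mapsto \pi^{-m}P_\alpha(\pi^m v) = L_\alpha(v) + \pi^{-m}Q_\alpha(\pi^m v) .
\]
The nonlinear terms are divisible by $\pi^{m}$ (degree $\geq 2$ gives at least $\pi^{2m}$, divided by $\pi^m$), so they vanish in the reduction. Hence the reduced action on $\msf{U}(2m)$ is the linear map $\bar L$, which is $\Ad(h)$ on $\msf{u}_{\mu<0}$ in the basis $v_\alpha \leftrightarrow du_\alpha$.

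The remaining point, which I expect to be the main obstacle, is that \eqref{eq:U(n)_isomorphism} uses $u_{\alpha,2m}$ rather than $v_\alpha$. The rescaling constants $c_\alpha = -\pi^{m+r_\alpha}s^\sharp_{\alpha,m} \bmod \mfr{m}$ must therefore be compatible with $\Ad(h)$. The claim needed is $\bar c_\alpha = \bar c_\beta$ whenever $\Ad(h)$ mixes $du_\alpha$ and $du_\beta$, i.e. whenever $\alpha-\beta$ is a root of $\msf{M}$. I would prove this from $s_{\alpha,m}$ being the coefficient of $\alpha(\sigma(t_\infty)^{-1})$ with $t_\infty\in Z^\circ_{\cl{M}}$. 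If $\alpha-\beta$ is trivial on $Z_M^\circ$, then $\alpha(\sigma(t_\infty)) = \beta(\sigma(t_\infty))$, so the Teichmüller coefficients agree, $s_{\alpha,m}=s_{\beta,m}$, and also $r_\alpha = r_\beta$. Thus the diagonal rescaling commutes with $\Ad(h)$ for $h\in \msf{M}$, and \eqref{eq:U(n)_isomorphism} is $M(F)_x$-equivariant.
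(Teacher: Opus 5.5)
Your proposal is correct and follows essentially the same route as the paper. You reduce to $M(F)_{x,0}$, use \Cref{lem:equivariance_M_x_depth_zero} together with the vanishing at the origin (because $x_\CM$ is fixed) to see that after rescaling by $\pi^{-m}$ only $\Ad(h)$ survives in the reduction, and then observe that the unit constants $-\pi^{m+r_\alpha}s^\sharp_{\alpha,m}$ depend only on $\alpha\vert_{Z_M^\circ}$ since $t_\infty\in Z^\circ_{\cl{M}}$. Splitting $P_\alpha$ into linear and higher-degree parts, rather than writing the action as $\Ad(h)+\varpi P_{\alpha,h}$, is only cosmetic. Note also that the precise condition for $\Ad(h)$ to mix $du_\alpha$ and $du_\beta$ is $\alpha\vert_{Z_M^\circ}=\beta\vert_{Z_M^\circ}$ (not ``$\alpha-\beta$ is a root of $\msf{M}$''), and this is the condition your argument actually uses.
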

\begin{proof}
    By \Cref{lem:equivariance_M_x_depth_zero}, the action of $h \in M(F)_x$ on the coordinate ring of $\mfr{U}(0)_{\bb{C}_p}$ is 
    \[
        h \cdot u_{\alpha, 0} = \Ad(h)(u_{\alpha, 0}) + \varpi P_{\alpha, h}
    \]
    for some $P_{\alpha, h} \in O_{\bb{C}_p} \langle u_{\alpha, 0} \vert \alpha \in \Phi_{\mu < 0} \rangle$ and a pseudo-uniformizer $\varpi \in \bb{C}_p$. Then, on the coordinate ring of $\cl{U}(n)_{\bb{C}_p}$, we have
    \[
        h \cdot \pi^{-m} u_{\alpha, 0} = \Ad(h)(\pi^{-m}u_{\alpha, 0}) + \varpi (\pi^{-m} P_{\alpha, h}). 
    \]
    Now, $P_{\alpha, h}(0) = 0$ since $x_\CM$ is fixed by $h$ by the uniqueness in \Cref{prop:U(0)_previous_work} (2). Thus $\pi^{-m} P_{\alpha, h} \in O_{\bb{C}_p} \langle u_{\alpha, n} \vert \alpha \in \Phi_{\mu < 0} \rangle$ and the action of $h$ on $\msf{U}(n)$ is given by 
    \begin{equation} \label{eq:coordinate_pi_power_to_s_alpha}
        \pi^{-m} u_{\alpha, 0} \mapsto \Ad(h)(\pi^{-m}u_{\alpha, 0}).
    \end{equation}
    By \Cref{lem:valsalpha}, $u_{\alpha, n}$ and $\pi^{-m}u_{\alpha, 0}$ differ only by a constant $c_{\alpha, n} = - \pi^{m + r_\alpha} s^\sharp_{\alpha, m} \in O_{\bb{C}_p}^\times$, and since $t_\infty \in Z_{\cl{M}}^\circ$, $c_{\alpha, n}$ depends only on the restriction of $\alpha$ to $Z_M^\circ$. Since the weight decomposition with respect to $Z_M^\circ$ 
    \[
        \msf{u}_{\mu < 0} = \bigoplus_{\nu \in X^*(Z_M^\circ)} \msf{u}_{\mu < 0}^\nu
    \]
    is stable under the adjoint action of $M(F)_{x, 0}$ and $t_\infty$ acts by a scalar on each $\msf{u}_{\mu < 0}^\lambda$, \eqref{eq:coordinate_pi_power_to_s_alpha} implies that the action of $h$ on $\msf{U}(n)$ is $\Ad(h)$ via the identification $u_{\alpha, n} = du_\alpha$. 
\end{proof}

Next, we treat the case $n = 2m - 1$. For this, we carry out a concrete computation of the inner action on explicit $\cl{G}$-BKF modules via \Cref{lem:expBKF} and \cite[Proposition 2.15]{Tak25z}. 

\begin{prop} \label{prop:equivariance_reduction_odd}
    The isomorphism \eqref{eq:U(n)_isomorphism} is equivariant under $M(F)_x$ when $n = 2m - 1$. 
\end{prop}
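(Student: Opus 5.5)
We reduce, as in \Cref{prop:equivariance_reduction_even}, to $h \in M(F)_{x, 0}$ using \Cref{lem:hyperspecial_M(F)_x} and the triviality of the diagonal action of $Z_G(F)$ (\Cref{lem:diagonal_central_action}). The shortcut of the even case does not apply directly. For $n = 2m-1$ the coordinates are $u_{\alpha, n} = \pi^{-m} u_\alpha$, whereas the depth-zero coordinates are $u_{\alpha, 0} = \pi^{-r_\alpha} u_\alpha$. Rescaling from $u_{\alpha,0}$ to $u_{\alpha,n}$ multiplies by $\pi^{r_\alpha - m}$, which is not a uniform power of $\pi$ across the weight spaces of $Z_M^\circ$. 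The error term $\varpi P_{\alpha, h}$ from \Cref{lem:equivariance_M_x_depth_zero} might therefore fail to be integral after rescaling. We therefore compute the action directly on the universal $\cl{G}$-BKF module of \Cref{lem:expBKF}.

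\textbf{Step 1: the action of $h$ on the universal BKF module.} By \cite[Proposition 2.15]{Tak25z}, the diagonal action of $h$ on a point with coordinates $(u_\alpha)$ sends the $\cl{G}$-BKF module $\mu([\pi^\flat]-\pi)\prod_\alpha i_\alpha([u_\alpha^\flat])\sigma$ to its conjugate by $h$. Since $h \in \cl{M}(O_F)$ commutes with $\mu$ and is $\sigma$-fixed, this conjugate is
\[
    \mu([\pi^\flat]-\pi)\, \Ad(h)\Bigl(\prod_{\alpha} i_\alpha([u_\alpha^\flat])\Bigr)\sigma .
\]
Since $\breve{\cl{G}}^{\mu<0}$ is abelian and stable under $\Ad(h)$, $\Ad(h)(\prod_\alpha i_\alpha([u_\alpha^\flat]))$ is again an element $\prod_\alpha i_\alpha(w_\alpha)$ of $\breve{\cl{G}}^{\mu<0}(W_{O_F}(O_{\bb{C}_p}^\flat))$. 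Each $w_\alpha$ is a $W_{O_F}(\bar k)$-linear combination of the $[u_\beta^\flat]$, with coefficients given by the matrix of $\Ad(h)$ on $\mfr{u}_{\mu<0}$.

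\textbf{Step 2: identifying the new point.} This BKF module is not yet in the normal form of \Cref{lem:expBKF}, because the $w_\alpha$ are not Teichm\"uller. We follow the strategy of \Cref{lem:Perfd_universal_deformation}: to read off the new coordinates $u'_\alpha$ modulo $\pi^m\cdot(\text{something small})$, it suffices to produce an isomorphism of deformations modulo a suitable ideal. Writing $w_\alpha = \sum_i [w_{\alpha,i}]\pi^i$, the higher Teichm\"uller components $[w_{\alpha,i}]\pi^i$ with $i\geq 1$ are divisible by $\pi$. We would absorb these through a change of trivialization $\iota$ constructed as the infinite product in \Cref{prop:isom_deformation}. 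Then $u'_\alpha \equiv w_{\alpha,0}^\sharp$ modulo terms of size $|\pi|^{m}\cdot|\varpi|$.

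\textbf{Step 3: reduction.} Dividing by $\pi^m$ then yields $h\cdot u_{\alpha,n} \equiv \Ad(h)(u_{\alpha,n})$ modulo the maximal ideal, which is the claim. The reduction of $h$ enters only through $\msf{M}$ acting on $\msf{u}_{\mu<0}$.

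\textbf{Main obstacle.} The hard part is the estimate in Step 2. We must show that the correction $\iota$ and the non-Teichm\"uller terms change each $u'_\alpha$ only by something in $\pi^m\varpi O_{\bb{C}_p}$. This uses $|u_\alpha|\le|\pi|^m$, so each $[u^\flat_\beta]$ has valuation at least $m$. It also requires controlling $\Ad(\mu([\pi^\flat]-\pi)\sigma)^i$ on the correction terms as in \Cref{lem:AdbCM}, where each application of $\sigma$ raises the $\flat$-valuation by a factor of $q$. I would first try to reduce the whole estimate to congruences modulo $(\pi^{m+1}, [\pi^{\flat, m+\varepsilon}])$ and check that all cross terms land there.
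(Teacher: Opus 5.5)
Your reduction to $h\in M(F)_{x,0}$ and your Step 1 are fine, and so is your diagnosis of why the shortcut of \Cref{prop:equivariance_reduction_even} fails: the rescaling by $\pi^{r_\alpha-m}$ is not uniform, so $\varpi P_{\alpha,h}$ need not stay integral. The overall plan is also viable: compare $\mu(\xi)\prod_\alpha i_\alpha(w_\alpha)\sigma$, with $\xi=[\pi^\flat]-\pi$, to a module in normal form modulo $[\pi^{\flat,m+1}]$; read off coordinates via \Cref{lem:Perfd_universal_deformation} with $m+1$ in place of $m$; then divide by $\pi^m$. However, Step 2, which you defer as the ``main obstacle'', is the whole proof, and the mechanism you propose does not address the real difficulty.

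Write $A=W_{O_F}(O_{\bb{C}_p}^\flat)$, $w_\alpha=\sum_\beta a_{\alpha\beta}[u^\flat_\beta]$ and $w_\alpha=[w_{\alpha,0}]+\pi z_\alpha$. Divisibility by $\pi$ is not what makes the correction negligible. The term $\pi z_\alpha$ has $\flat$-valuation only $m$, so it does not vanish modulo $[\pi^{\flat,m+1}]$. Moreover, if you set up the fixed-point equation of \Cref{prop:isom_deformation} over $A$, its input is $\Ad(\mu(\xi))\bigl(\prod_\alpha i_\alpha(-\pi z_\alpha)\bigr)=\prod_\alpha i_\alpha(-\xi^{-1}\pi z_\alpha)$, which in general has a pole along $\xi$. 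The $q$-power growth under $\sigma$ and the big-cell bookkeeping of \Cref{lem:AdbCM} only address the terms involving $\sigma$. Those are trivial modulo $[\pi^{\flat,qm}]\subset[\pi^{\flat,m+1}]$ anyway.

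The missing idea is the $\mu$-weight of the coordinates. Since every $[u^\flat_\beta]$ lies in $[\pi^{\flat,m}]A$, so does $z_\alpha$. Split $\pi z_\alpha=[\pi^\flat]z_\alpha-\xi z_\alpha$.
\begin{itemize}
\item The first term vanishes modulo $[\pi^{\flat,m+1}]$.
\item The second term is absorbed, because $\langle\alpha,\mu\rangle=-1$ gives $\mu(\xi)\,i_\alpha(-\xi z_\alpha)=i_\alpha(-z_\alpha)\,\mu(\xi)$.
\end{itemize}
Hence $\iota=\prod_\alpha i_\alpha(z_\alpha)$, which is trivial modulo $[\pi^{\flat,m}]$, satisfies $\iota\,\mu(\xi)\prod_\alpha i_\alpha(w_\alpha)\,\sigma(\iota)^{-1}\equiv\mu(\xi)\prod_\alpha i_\alpha([w_{\alpha,0}])$ modulo $[\pi^{\flat,m+1}]$. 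No infinite product is needed. You should work in the prism $A/[\pi^{\flat,m+1}]$ of \Cref{lem:Perfd_universal_deformation}, not modulo $(\pi^{m+1},[\pi^{\flat,m+\varepsilon}])$. That lemma then gives $u'_\alpha\equiv w_{\alpha,0}^\sharp\pmod{\pi^{m+1}}$. Since $\sharp$ is not additive, you also need $w_{\alpha,0}^\sharp=\theta(w_\alpha)-\pi\theta(z_\alpha)\equiv\sum_\beta a_{\alpha\beta}u_\beta\pmod{\pi^{m+1}}$; your Step 3 skips this.

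The paper exploits the same weight phenomenon more directly, without rewriting into Teichm\"uller normal form or using \Cref{lem:Perfd_universal_deformation}:
\begin{itemize}
\item It takes the unique isomorphism $\iota$ over $A$ between $h\mu(\xi)\prod_\alpha i_\alpha([u^\flat_\alpha])h^{-1}\sigma$ and the normal form at $h\cdot x$, and notes $\iota\equiv1$ modulo $[\pi^{\flat,m}]$.
\item It reduces the defining equation into the abelian group $\mfr{g}\otimes[\pi^{\flat,m}]A/[\pi^{\flat,m+1}]A$ and projects modulo $\xi$.
\item It observes that the $\iota$-terms land in $\breve{\mfr{g}}^{\mu\geq0}$, so comparing $\mu<0$ components gives the equivariance.
\end{itemize}
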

\begin{proof}
    We study the action of $h \in M(F)_{x, 0}$. Take an arbitrary geometric point $x \in \cl{U}(n)(\bb{C}_p)$ and let $u_\alpha^\flat, v_\alpha^\flat \in \bb{C}_p^\flat$ be elements such that $u_\alpha(x) = (u_\alpha^\flat)^\sharp$ and $u_\alpha(hx) = (v_\alpha^\flat)^\sharp$. Then, there is a unique isomorphism between 
    \[
        (W_{O_F}(O_{\bb{C}_p}^\flat) \otimes \cl{G}, h \mu([\pi^\flat] - \pi) \prod_{\alpha \in \Phi_{\mu < 0}} i_\alpha([u_\alpha^\flat]) h^{-1} \cdot \sigma)
    \]
    and
    \[
        (W_{O_F}(O_{\bb{C}_p}^\flat) \otimes \cl{G}, \mu([\pi^\flat] - \pi) \prod_{\alpha \in \Phi_{\mu < 0}} i_\alpha([v_\alpha^\flat]) \cdot \sigma)
    \]
    as deformations of the prismatic $(\cl{G}, \mu)$-display $(\breve{\cl{G}}, \mu(-\pi) \sigma)$. It is given by a unique element $\iota \in \cl{G}(W_{O_F}(O_{\bb{C}_p}^\flat))$ that is trivial modulo $[\varpi]$ for some pseudo-uniformizer $\varpi \in \bb{C}_p^\flat$ and satisfies
    \begin{equation} \label{eq:inner_action_for_reduction}
        \iota h \mu([\pi^\flat] - \pi) \prod_{\alpha \in \Phi_{\mu < 0}} i_\alpha([u_\alpha^\flat]) h^{-1} \sigma(\iota)^{-1} = \mu([\pi^\flat] - \pi) \prod_{\alpha \in \Phi_{\mu < 0}} i_\alpha([v_\alpha^\flat]). 
    \end{equation}
    In fact, $\iota$ is trivial modulo $[\pi^{\flat, m}]$ by the uniqueness of $\iota$ and $u_\alpha^\flat, v_\alpha^\flat \in \pi^{\flat, m} \cdot O_{\bb{C}_p}^\flat$ (see \cite[Remark 2.11]{Tak25z}). Now, we rewrite \eqref{eq:inner_action_for_reduction} after taking the congruence modulo $[\pi^{\flat, m + 1}]$. Then, since $h \in M(F)_{x, 0}$ commutes with $\mu([\pi^\flat] - \pi)$, we have 
    \begin{equation} \label{eq:inner_action_modulo_pi_flat_m+1}
        \mu([\pi^\flat] - \pi)^{-1} \iota \mu([\pi^\flat] - \pi) \cdot \sigma(\iota)^{-1} \equiv \prod_{\alpha \in \Phi_{\mu < 0}} i_\alpha([v_\alpha^\flat]) \Ad(h)\bigl(\prod_{\alpha \in \Phi_{\mu < 0}} i_\alpha([u_\alpha^\flat])\bigr)^{-1} \hspace{-5pt}\pmod{[\pi^{\flat, m + 1}]}. 
    \end{equation}
    Note that this equation can be regarded as an equation in the abelian group
    \[
        \cl{G}([\pi^{\flat, m}]) / \cl{G}([\pi^{\flat, m + 1}]) \cong \mfr{g} \otimes [\pi^{\flat, m}] \cdot W_{O_F}(O_{\bb{C}_p}^\flat) / [\pi^{\flat, m + 1}] \cdot W_{O_F}(O_{\bb{C}_p}^\flat)
    \]
    where $\cl{G}([\varpi]) = \Ker(\cl{G}(W_{O_F}(O_{\bb{C}_p}^\flat)) \to \cl{G}(W_{O_F}(O_{\bb{C}_p}^\flat) / [\varpi]))$ for $\varpi \in O_{\bb{C}_p}$. Then, consider the projection 
    \[
        \pr \colon \mfr{g} \otimes [\pi^{\flat, m}] \cdot W_{O_F}(O_{\bb{C}_p}^\flat) / [\pi^{\flat, m + 1}] \cdot W_{O_F}(O_{\bb{C}_p}^\flat) \to \mfr{g} \otimes \pi^{m} O_{\bb{C}_p} / \pi^{m + 1} O_{\bb{C}_p}
    \]
    modulo $[\pi^\flat] - \pi$. By \eqref{eq:inner_action_modulo_pi_flat_m+1}, it is easy to see 
    \[
        \pr(\mu([\pi^\flat] - \pi)^{-1} \iota \mu([\pi^\flat] - \pi)) \in \breve{\mfr{g}}^{\mu \geq 0}, \quad 
        \pr(\iota) \in \breve{\mfr{g}}^{\mu \leq 0}. 
    \]
    In particular, the image of the left-hand side of \eqref{eq:inner_action_modulo_pi_flat_m+1} under $\pr$ lies in $\mfr{\breve{g}}^{\mu \geq 0}$ since $\msf{N} \cap \sigma(\ov{\msf{N}}) = \msf{U}_{\mu > 0}$ (see \Cref{defi:parabolic_mu}). It follows that \eqref{eq:inner_action_modulo_pi_flat_m+1} implies 
    \[
        \sum_{\alpha \in \Phi_{\mu < 0}} u_\alpha(hx) du_\alpha \equiv \Ad(h)\bigl(\sum_{\alpha \in \Phi_{\mu < 0}} u_\alpha(x) du_\alpha\bigr) \pmod{\pi^{m + 1}}
    \]
    in $\breve{\mfr{g}}^{\mu < 0} \otimes \pi^{m} O_{\bb{C}_p} / \pi^{m + 1} O_{\bb{C}_p}$. Since $u_{\alpha, n} = \pi^{-m}u_\alpha$, we have 
    \[
        \bigl(\sum_{\alpha \in \Phi_{\mu < 0}} u_{\alpha, n} du_\alpha\bigr) \circ h \equiv \Ad(h) \circ \bigl(\sum_{\alpha \in \Phi_{\mu < 0}} u_{\alpha, n} du_\alpha \bigr) \pmod{\pi}
    \]
    as maps from $\mfr{U}(n) \otimes (O_{\bb{C}_p} / \pi)$ to $\msf{u}_{\mu < 0}$. Thus, we get the desired equivariance. 
\end{proof}

Next, we study the inner action on $\msf{U}(n)$. For our purpose, it is enough to consider the inner action of $G_b(F)_{x_b, m}$. We will deduce the triviality of the inner action from the following computation of explicit $\cl{G}$-BKF modules. 

\begin{lem} \label{lem:isom_deformation_arbit}
    Take an element $u_\alpha^\flat \in \pi^{\flat, m} O_{\bb{C}_p}^\flat$ for each $\alpha \in \Phi_{\mu < 0}$ and let
    \[
        b_u = \mu([\pi^\flat] - \pi) \prod_{\alpha \in \Phi_{\mu < 0}} i_\alpha([u_\alpha^\flat]). 
    \]
    Then, for every $j \in G_b(F)_{x_b, m}$, the deformations
    \[
        (\cl{G} \otimes W_{O_F}(O_{\bb{C}_p}^\flat)/[\pi^{\flat ,m + 1}],b_u \sigma) ,\quad
        (\cl{G} \otimes W_{O_F}(O_{\bb{C}_p}^\flat)/[\pi^{\flat ,m + 1}], jb_u\sigma(j)^{-1} \sigma)
    \]
    are isomorphic. In other words, there is $\iota \in \cl{G}(W_{O_F}(O_{\bb{C}_p}^\flat)/[\pi^{\flat ,m + 1}])$ that is trivial modulo $[\pi^\flat]$ such that
    $
        \iota j b_u \sigma(\iota j)^{-1} = b_u. 
    $
\end{lem}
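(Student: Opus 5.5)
We follow the proof of \Cref{prop:isom_deformation} and look for $\iota$ as a convergent infinite product. Rewrite the equation $\iota j b_u \sigma(\iota j)^{-1} = b_u$ as
\[
    \iota = \Ad(b_u\sigma)(\iota) \cdot X_u, \qquad X_u = \Ad(b_u\sigma)(j)\, j^{-1},
\]
and set $\iota = \cdots \Ad(b_u\sigma)^2(X_u)\,\Ad(b_u\sigma)(X_u)\, X_u$. Two things have to be checked. First, every $\Ad(b_u\sigma)^i(X_u)$ is integral modulo $[\pi^{\flat, m+1}]$. Second, these factors become trivial for large $i$. The second point is automatic once the first holds: $X_u$ is trivial modulo $[\pi^\flat]$ because $j \in G_b(F)$ satisfies $\Ad(\mu(-\pi)\sigma)(j) = j$ and $b_u \equiv \mu(-\pi) \pmod{[\pi^\flat]}$. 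Since $\sigma$ raises $[\pi^\flat]$-adic valuations by a factor $q$, the $i$-th factor is then trivial modulo $[\pi^{\flat, q^i}]$, so the product is finite modulo $[\pi^{\flat, m+1}]$.

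The main step is an analogue of \Cref{lem:AdbCM} for $b_u$ in place of $\mu([\pi^\flat]-\pi)$, with $I_{m+1}$ in place of $I_m$. We use that $j \in G_b(F)_{x_b, m}$, which is the level one step deeper than $G_b(F)_{x_b, m-1}$ in \Cref{lem:AdbCM}, and that $x, x_b$ lie in the common facet $\mfr{f}$. These give a big-cell decomposition $j = \ov{p} u$ with $\ov{p} \in \cl{\breve{G}}^{\lambda \leq 0}$ trivial modulo $\pi^m$ and $u \in \cl{\breve{G}}^{\lambda > 0}$ trivial modulo $\pi^{m+1}$. The induction on $i$ then runs as in \Cref{lem:AdbCM}:
\begin{enumerate}
    \item Apply $\sigma$.
    \item Decompose with respect to $\mu$ using $\ov{\msf{N}} \cap \sigma(\msf{N}) = \msf{U}_{\mu<0}$.
    \item Conjugate by $\mu([\pi^\flat]-\pi)$, using $a_\xi^{-1}(I_{m+1}) = I_m$ and \cite[Proposition 4.2.9]{Ito25a} for integrality.
    \item Conjugate additionally by $u_b := \prod_\alpha i_\alpha([u_\alpha^\flat])$.
\end{enumerate}

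The extra factor $u_b$ in step (4) is where we expect the main difficulty. Since $u_\alpha^\flat \in \pi^{\flat,m}O_{\bb{C}_p}^\flat$, the element $u_b$ is trivial modulo $[\pi^{\flat,m}] \subset I_m$, and it lies in $\cl{\breve{G}}^{\mu<0} \subset \cl{\breve{G}}^{\sigma\lambda>0}$. We must check that conjugation by $u_b$ keeps each component inside the required ideal.

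Step (3) places the $\lambda \leq 0$ component in $I_m$. Conjugation by $u_b$ only multiplies this component by commutators lying in $I_m \cdot I_m \subset I_{m+1}$, at least when $m \geq 1$. It also multiplies the $\lambda > 0$ component by terms divisible by $[\pi^{\flat,m}]$ coming from $\cl{\breve{G}}^{\mu<0}$. For these terms one uses that $[\pi^{\flat,m}] \subset I_m$, but we need them modulo $I_{m+1}$. Here we plan to argue that the $\mu<0$ part enters after the conjugation by $\mu([\pi^\flat]-\pi)$. Pulled back through the next $\sigma$, it gains a factor $[\pi^\flat]^{q-1}$ or a factor $([\pi^\flat]-\pi)$, which pushes it into $I_{m+1}$. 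Concretely, $u_b$ commutes with the $\mu<0$ components, and a commutator $[u_b, \ov{p}]$ lies in $I_m \cdot I_m$.

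If the bookkeeping with the ideals $I_k$ becomes too delicate, the fallback is to work directly modulo $[\pi^{\flat,m+1}]$. There, $u_b$ is congruent to the identity modulo $[\pi^{\flat,m}]$ and contributes only a central, abelian error $\sum u_\alpha^\flat\,du_\alpha$. We would then solve the linearised equation $y - \Ad(\mu(-\pi)\sigma)(y) = \text{error}$ in $\mfr{g}\otimes[\pi^{\flat,m}]/[\pi^{\flat,m+1}]$ by the same contracting series, using that $\sigma$ raises valuations.
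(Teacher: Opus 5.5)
Your outer framework matches the paper's. You use the same infinite product $\iota=\cdots\Ad(b_u\sigma)(X_u)\,X_u$, note that its factors are trivial modulo $[\pi^{\flat,q^i}]$, and reduce to showing that each $\Ad(b_u\sigma)^i(X_u)$ is integral modulo $[\pi^{\flat,m+1}]$.

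\textbf{The gap is in step (4).} This is the only place where the lemma goes beyond \Cref{lem:AdbCM}. Since $b_u=\mu([\pi^\flat]-\pi)\,u_b$, we have $\Ad(b_u)=\Ad(\mu([\pi^\flat]-\pi))\circ\Ad(u_b)$. So conjugation by $u_b$ happens \emph{before} conjugation by $\mu([\pi^\flat]-\pi)$, not after it as in your analysis. Moving it past step (3) replaces $u_b$ by $\Ad(\mu([\pi^\flat]-\pi))(u_b)$, whose coordinates $[u_\alpha^\flat]/([\pi^\flat]-\pi)$ are not integral. Indeed, $[\pi^{\flat,m}]$ is not divisible by $[\pi^\flat]-\pi$ modulo $[\pi^{\flat,m+1}]$, because its image $\pi^m$ in $O_{\bb{C}_p}/\pi^{m+1}$ is nonzero.

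Your estimate $I_m\cdot I_m\subset I_{m+1}$ for commutators with the integral element $u_b$ therefore misses the real question. That question is whether the $\mu<0$ components of the $u_b$-commutator survive division by $[\pi^\flat]-\pi$ in step (3). The remark about gaining a factor through ``the next $\sigma$'' only disposes of the $u_b$'s from \emph{earlier} iterations, not the current one.

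The fallback has the same hole. $u_b$ is not central modulo $[\pi^{\flat,m+1}]$: it commutes neither with $\mu([\pi^\flat]-\pi)$ nor with $\sigma(j)$, which is only $\equiv 1 \bmod \pi^m$. Suppose you first remove $j$ using \Cref{prop:isom_deformation} at level $m+1$, giving $\iota_0$. Then $\iota_0 j b_u\sigma(\iota_0 j)^{-1}=d\,b_u$ with $d=\Ad(\mu([\pi^\flat]-\pi))([\sigma(j),u_b])$; the error is not $\sum u_\alpha^\flat du_\alpha$. Since $\sigma$ kills $[\pi^{\flat,m}]$ modulo $[\pi^{\flat,m+1}]$, your ``linearised equation'' is trivial, and one can take $\iota=d^{-1}\iota_0$. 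The entire content is then the integrality of $d$, which you do not check. It holds because $[\sigma(j),u_b]\equiv 1$ modulo $\pi^m[\pi^{\flat,m}]$ and $\pi[\pi^{\flat,m}]\equiv-([\pi^\flat]-\pi)[\pi^{\flat,m}]$ modulo $[\pi^{\flat,m+1}]$.

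\textbf{How the paper avoids this.} The key observation is that $u_b$ enters only once. In $G(W_{O_F}(O_{\bb{C}_p}^\flat)[1/\pi]/[\pi^{\flat,m+1}])$ one has $\sigma(u_b)=1$, since $qm\ge m+1$. Hence $\sigma\circ\Ad(b_u)=\sigma\circ\Ad(\mu([\pi^\flat]-\pi))$, and for $i\ge 1$
\[
\Ad(b_u\sigma)^i(X_u)=\Ad(b_u\sigma)(X_i),\qquad X_i=\Ad(\mu([\pi^\flat]-\pi)\sigma)^i(j)\,\Ad(\mu([\pi^\flat]-\pi)\sigma)^{i-1}(j)^{-1}.
\]
Apply \Cref{lem:AdbCM} with $m+1$ in place of $m$, which is allowed since $j\in G_b(F)_{x_b,m}$. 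It shows that $X_i$ is integral and lies in $\breve{\cl{G}}^{\lambda\le 0}$ modulo $I_{m+1}$, an ideal that dies in $O_{\bb{C}_p}/\pi^{m+1}$. A single use of \cite[Proposition 4.2.9]{Ito25a} then reduces the integrality of $\Ad(b_u\sigma)(X_i)$ to one check: the image of $\Ad(u_b\sigma)(X_i)$ in $O_{\bb{C}_p}/\pi^{m+1}$ lies in $\breve{\cl{G}}^{\mu\ge 0}$. Since $u_b$ and $\sigma(X_i)$ are both trivial modulo $\pi^m$ there, this image equals that of $\sigma(X_i)$. That lies in $\sigma(\breve{\cl{G}}^{\lambda\le0})\subset\breve{\cl{G}}^{\mu\ge0}$ by $\ov{\msf{N}}\cap\sigma(\msf{N})=\msf{U}_{\mu<0}$. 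The case $i=0$ is identical, with $\sigma(j)$ in place of $\sigma(X_i)$.

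Your inductive route can also be completed, but only with the estimate you are missing. If $Y\equiv 1$ modulo $(\pi,[\pi^\flat])$, then $[u_b,Y]\equiv 1$ modulo $\pi[\pi^{\flat,m}]\equiv-([\pi^\flat]-\pi)[\pi^{\flat,m}]$ (mod $[\pi^{\flat,m+1}]$). So $\Ad(\mu([\pi^\flat]-\pi))([u_b,Y])$ is integral. Its $\mu<0$ components, which are $\lambda<0$ components, lie in $[\pi^{\flat,m}]\subset I_m$, and its $\mu\ge 0$ components lie in $I_{m+1}$.
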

\begin{proof}
    Let $\Ad(b_u \sigma) = \Ad(b_u) \circ \sigma$. Then, the desired equation is rewritten as 
    \[
        \iota = \Ad(b_u\sigma)(\iota) \cdot \Ad(b_u \sigma)(j) j^{-1}. 
    \]
    Let $X = \Ad(b_u \sigma)(j) j^{-1} \in G(W_{O_F}(O_{\bb{C}_p}^\flat)[\tfrac{1}{\pi}]/[\pi^{\flat, m + 1}])$. Since $j \in G_b(F)$, $X$ is trivial modulo $[\pi^\flat]$. Then, $\Ad(b_u\sigma)^i(X)$ is trivial modulo $[\pi^{\flat, q^i}]$, so it is trivial for sufficiently large $i$. Now, we will show that for every $i \geq 0$, we have
    \begin{equation} \label{eq:Ad(bu_sigma)}
        \Ad(b_u\sigma)^i(X) \in \cl{G}(W_{O_F}(O_{\bb{C}_p}^\flat)/[\pi^{\flat, m + 1}]). 
    \end{equation}
    Then, the infinite product
    \[
        \iota = \cdots \Ad(b_u \sigma)^2(X) \cdot  \Ad(b_u \sigma)(X) \cdot X 
    \]
    is well-defined and satisfies the desired conditions. Thus, it is enough to prove \eqref{eq:Ad(bu_sigma)}. 

    First, suppose $i = 0$. Then, it is enough to show $\Ad(b_u \sigma)(j) \in \cl{G}(W_{O_F}(O_{\bb{C}_p}^\flat)/[\pi^{\flat, m + 1}])$. Let
    \[
        A = \Ad\biggl(\prod_{\alpha \in \Phi_{\mu < 0}} i_\alpha([u_\alpha^\flat]) \sigma\biggr)(j) \in \cl{G}(W_{O_F}(O_{\bb{C}_p}^\flat))
    \]
    and let $\bar{A}$ denote the pushforward of $A$ along $W_{O_F}(O_{\bb{C}_p}^\flat) \to O_{\bb{C}_p} / \pi^{m + 1}$. By applying \cite[Proposition 4.2.9]{Ito25a} to $(W_{O_F}(O_{\bb{C}_p}^\flat)/[\pi^{\flat, m + 1}], [\pi^\flat] - \pi)$, it is enough to show that $\bar{A}$ lies in $\cl{\breve{G}}^{\mu \geq 0}(O_{\bb{C}_p}/\pi^{m + 1})$. Since $j \in G_b(F)_{x_b, m}$ and $u_\alpha^\flat \in \pi^{\flat, m} O_{\bb{C}_p}^\flat$, $\bar{A}$ is trivial modulo $\pi^m$ and $\bar{A} = \sigma(j)$. We also have $\sigma(j) \bmod \pi^{m + 1} \in \breve{\cl{G}}^{\mu \geq 0}(O_{\breve{F}}/\pi^{m + 1})$, so we get the claim. 

    Next, suppose $i > 0$. Since $b_u \equiv \mu([\pi^\flat] - \pi)$ modulo $[\pi^{\flat, m}]$, 
    \[
        \Ad(b_u\sigma)^i(X) \equiv \Ad(b_u \sigma)(\Ad(\mu([\pi^\flat] - \pi) \sigma)^i(j) \Ad(\mu([\pi^\flat] - \pi) \sigma)^{i-1}(j)^{-1} ) \bmod{[\pi^{\flat, m+1}]}. 
    \]
    By \Cref{lem:AdbCM}, we have
    \[
        X_i = \Ad(\mu([\pi^\flat] - \pi) \sigma)^i(j) \Ad(\mu([\pi^\flat] - \pi) \sigma)^{i-1}(j)^{-1} \in \cl{G}(W_{O_F}(O_{\bb{C}_p}^\flat) / [\pi^{\flat, m+1}]). 
    \]
    Let
    \[
        A = \Ad\biggl(\prod_{\alpha \in \Phi_{\mu < 0}} i_\alpha([u_\alpha^\flat]) \sigma \biggr)(X_i) \in \cl{G}(W_{O_F}(O_{\bb{C}_p}^\flat) / [\pi^{\flat, m+1}])
    \]
    and let $\bar{A}$ denote the pushforward of $A$ along $W_{O_F}(O_{\bb{C}_p}^\flat) / [\pi^{\flat, m+1}] \to O_{\bb{C}_p} / \pi^{m + 1}$. As previously,  it is enough to show that $\bar{A}$ lies in $\cl{\breve{G}}^{\mu \geq 0}(O_{\bb{C}_p}/\pi^{m + 1})$. By \Cref{lem:AdbCM}, the pushforward $\bar{X}_i$ of $X_i$ along $W_{O_F}(O_{\bb{C}_p}^\flat) / [\pi^{\flat, m+1}] \to O_{\bb{C}_p} / \pi^{m + 1}$ lies in $\breve{\cl{G}}^{\lambda \leq 0}(O_{\bb{C}_p} / \pi^{m + 1})$. Since $u_\alpha^\flat \in \pi^{\flat, m} O_{\bb{C}_p}^\flat$, we get $\bar{A} = \sigma(X_i)$, so the claim follows from $\msf{\ov{N}} \cap \sigma(\msf{N}) = \msf{U}_{\mu < 0}$ (see \Cref{defi:parabolic_mu}). 
\end{proof}

\begin{prop} \label{prop:inner_action_U(n)_reduction}
    The action of $G_b(F)_{x_b, m}$ on $\msf{U}(n)$ is trivial. 
\end{prop}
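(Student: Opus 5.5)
The plan is to reduce to a pointwise statement on geometric points, using \Cref{lem:isom_deformation_arbit} in the same way \Cref{prop:U(n)_stability} used \Cref{prop:isom_deformation}.

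\textbf{Step 1: pointwise congruence.} Fix $j \in G_b(F)_{x_b, m}$ and a geometric point $x \in \cl{U}(n)(\bb{C}_p)$ with coordinates $u_\alpha(x) = (u_\alpha^\flat)^\sharp$. By \Cref{defi:U(n)_radius}, we have $u_\alpha^\flat \in \pi^{\flat, m} O_{\bb{C}_p}^\flat$ for both parities of $n$. By \Cref{lem:expBKF}, $x$ corresponds to the BKF module with Frobenius $b_u\sigma$. By the description of the inner action in \cite[Proposition 2.15]{Tak25z}, $j \cdot x$ corresponds to $j b_u \sigma(j)^{-1}\sigma$. \Cref{lem:isom_deformation_arbit} says these two deformations agree over the prism $(W_{O_F}(O_{\bb{C}_p}^\flat)/[\pi^{\flat, m+1}], [\pi^\flat]-\pi)$. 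Applying the (Perfd) property of \cite[Theorem 4.1.8]{Ito25a}, exactly as in \Cref{lem:Perfd_universal_deformation} but with $m$ replaced by $m+1$, the two points induce the same homomorphism $R_{\cl{G},\mu} \to O_{\bb{C}_p}/\pi^{m+1}$. Hence
\[
u_\alpha(j\cdot x) \equiv u_\alpha(x) \pmod{\pi^{m+1}}.
\]
One caveat needs checking: this homomorphism must satisfy $u_\alpha \equiv 0 \bmod \pi$, which holds because $m \geq 1$.

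\textbf{Step 2: pass to the reduction.} For $n = 2m-1$, we have $u_{\alpha,n} = \pi^{-m}u_\alpha$. The congruence from Step 1 then gives $u_{\alpha,n}(jx) \equiv u_{\alpha,n}(x) \pmod{\pi}$. For $n = 2m$, we have $u_{\alpha,n} = -s^\sharp_{\alpha,m}u_\alpha$, and $s^\sharp_{\alpha,m}$ has valuation $-m-r_\alpha$ by \Cref{lem:valsalpha}. A congruence modulo $\pi^{m+1}$ therefore only yields $u_{\alpha,n}(jx) \equiv u_{\alpha,n}(x)$ modulo $\pi^{1-r_\alpha}$. Since $r_\alpha<1$, this is still modulo some element of positive valuation, and that suffices: both points have coordinates in $O_{\bb{C}_p}$ and $\msf{U}(n)$ is the reduction modulo the maximal ideal.

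\textbf{Step 3: globalize.} The action of $j$ extends to the smooth formal model $\mfr{U}(n)$. This uses the stability of $\cl{U}(n)$ from \Cref{prop:U(n)_stability}, the fact that the coordinate ring of $\mfr{U}(n)$ is the ring of power-bounded functions, and that $G_b(F)_{x_b,m} \subset J^0_{b,n}$. Write $j^*u_{\alpha,n} = P_\alpha$. By Step 2, $P_\alpha - u_{\alpha,n}$ takes values in the maximal ideal of $O_{\bb{C}_p}$ at every $\bb{C}_p$-point. Since $\mfr{U}(n)$ is a closed unit polydisc, the supremum norm coincides with the Gauss norm. Hence $P_\alpha - u_{\alpha,n}$ reduces to $0$ in $\ov{k}[u_{\alpha,n}]$, so $j$ acts trivially on $\msf{U}(n)$.

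The main obstacle is Step 1. Specifically, it requires justifying that the (Perfd) bijection applies at level $m+1$ to arbitrary points $x$ rather than only to $x_\CM$, and that the inner action is literally given by conjugating $b_u$ by $j$. The same input was already used in \Cref{prop:U(n)_stability} and \Cref{prop:equivariance_reduction_odd}, so I expect it to go through.
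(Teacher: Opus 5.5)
Your proposal is correct and follows essentially the same route as the paper. The paper also combines \Cref{lem:isom_deformation_arbit} with the (Perfd) bijection at level $m+1$ to get the pointwise congruence $u_\alpha(j\cdot x)\equiv u_\alpha(x)$ modulo $\pi^{m+1}$. After rescaling, this becomes a congruence modulo $\pi$ when $n=2m-1$, and modulo $\pi^{\varepsilon}$ for some $\varepsilon>0$ when $n=2m$. The paper then concludes from the arbitrariness of $x$. Your Step 3, with sup norm equal to Gauss norm on the closed unit polydisc, simply spells out the globalization the paper leaves implicit.
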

\begin{proof}
    First, suppose $n = 2m - 1$. Take a geometric point $x \in \cl{U}(n)(\bb{C}_p)$ and choose $u_\alpha^\flat \in \pi^{\flat, m}O_{\bb{C}_p}^\flat$ so that $u_\alpha(x) = (u_\alpha^\flat)^\sharp$. By \Cref{lem:Perfd_universal_deformation} and \Cref{lem:isom_deformation_arbit}, the composition
    \[
        R_{\cl{G},\mu} \xrightarrow{j^*} R_{\cl{G},\mu} \xrightarrow{x} O_{\bb{C}_p} \twoheadrightarrow O_{\bb{C}_p}/\pi^{m + 1}
    \]
    is equal to $R_{\cl{G},\mu} \xrightarrow{x} O_{\bb{C}_p} \twoheadrightarrow O_{\bb{C}_p}/\pi^{m + 1}$ for every $j \in G_b(F)_{x_b, m}$. In other words,
    \[
        O_{\bb{C}_p} \langle u_{\alpha, n} \rangle \xrightarrow{j^*} O_{\bb{C}_p} \langle u_{\alpha, n} \rangle \xrightarrow{x} O_{\bb{C}_p} \twoheadrightarrow O_{\bb{C}_p}/\pi
    \]
    is equal to $ O_{\bb{C}_p} \langle u_{\alpha, n} \rangle \xrightarrow{x} O_{\bb{C}_p} \twoheadrightarrow O_{\bb{C}_p}/\pi$. It follows that $G_b(F)_{x_b, m}$ acts trivially on $\msf{U}(n)$ since the choice of $x$ is arbitrary. 

    The case $n = 2m$ can be proved in the same way. Take a geometric point $x \in \cl{U}(n)(\bb{C}_p)$ and choose $u_\alpha^\flat \in \pi^{\flat, m + r_\alpha}O_{\bb{C}_p}^\flat$ so that $u_\alpha(x) = (u_\alpha^\flat)^\sharp$.  By \Cref{lem:Perfd_universal_deformation} and \Cref{lem:isom_deformation_arbit}, 
    \[
        R_{\cl{G},\mu} \xrightarrow{j^*} R_{\cl{G},\mu} \xrightarrow{x} O_{\bb{C}_p} \twoheadrightarrow O_{\bb{C}_p}/\pi^{m + 1}
    \]
    is equal to $R_{\cl{G},\mu} \xrightarrow{x} O_{\bb{C}_p} \twoheadrightarrow O_{\bb{C}_p}/\pi^{m + 1}$ for every $j \in G_{b, \mfr{f}, m}$. In other words,
    \[
        O_{\bb{C}_p} \langle u_{\alpha, n} \rangle \xrightarrow{j^*} O_{\bb{C}_p} \langle u_{\alpha, n} \rangle \xrightarrow{x} O_{\bb{C}_p} \twoheadrightarrow O_{\bb{C}_p}/\pi^\varepsilon
    \]
    is equal to $ O_{\bb{C}_p} \langle u_{\alpha, n} \rangle \xrightarrow{x} O_{\bb{C}_p} \twoheadrightarrow O_{\bb{C}_p}/\pi^\varepsilon$ for sufficiently small rational $\varepsilon > 0$. It follows that $G_b(F)_{x_b, m}$ acts trivially on $\msf{U}(n)$ since the choice of $x$ is arbitrary. 
\end{proof}

\subsection{\'{E}tale cohomology of reductions for $n = 2m$} \label{ssec:cohomology_reduction_even}

In this section, we treat the case $n = 2m$. In this case, $m_b = m$ and $\kappa_{b, n, \phi} = \kappa_{x_b, n, \phi}$ is a character that is trivial on $M(F)_x$ and given by $\psi \circ X$ on $G_b(F)_{x, n, m} / G_b(F)_{x, n+, m+} \cong \msf{m}_{x, n}$ (see \cite[Theorem 3]{Tak26_Yu}). 

In fact, \Cref{prop:cohomology_reduction} almost follows from \cite[Theorem 6]{Tak26_Yu} since $\msf{W}(n)_\phi \to \msf{U}(n)$ can be identified with the Heisenberg-torsor introduced there. We briefly review the construction in \cite{Tak26_Yu}. 

\begin{defi}\textup{(\cite[Definition 5.2]{Tak26_Yu})}
    The inclusion $\breve{\cl{G}}_{x, m}^{\mu < 0} \subset \cl{\breve{G}}_{x, n, m}$ induces a closed immersion $\msf{u}_{\mu < 0}^\perf \subset H_{x, r, \phi, \ov{k}}$ via the Moy-Prasad isomorphism. Let $\msf{W}_{\Phi_{\mu < 0}, n}$ be a finite \'{e}tale cover of $\msf{u}_{\mu < 0}^\perf$ filling the Cartesian diagram
    \begin{center}
        \begin{tikzcd}
            \msf{W}_{\Phi_{\mu < 0}, n} \ar[r] \ar[d] & H_{x, n, \phi, \ov{k}} \ar[d, "h \mapsto \sigma(h) h^{-1}"] \\
            \msf{u}_{\mu < 0}^\perf \ar[r, hook] & H_{x, n, \phi, \ov{k}}. 
        \end{tikzcd}
    \end{center}
    Here, the $M(F)_x$-actions on $\msf{u}_{\mu < 0}$ and $\msf{W}_{\Phi_{\mu < 0}, n}$ are induced from the adjoint action on $H_{x, n, \phi}$. 
\end{defi}

This definition is almost the same as \Cref{defi:reduction_of_specaff_even}: we have an isomorphism $\msf{W}(n)_\phi^\perf \cong \msf{W}_{\Phi_{\mu < 0}, n}$ by the base change along \eqref{eq:U(n)_isomorphism}. The subtlety is that the diagonal $M(F)_x$-action on $\msf{W}(n)_\phi$ comes from the generic fiber $\cl{W}(n)_{\phi, \bb{C}_p}$, while the action on $\msf{W}_{\Phi_{\mu < 0}, n}$ comes from $H_{x, n, \phi}$. We identify the two actions via \Cref{prop:equivariance_reduction_even}.

\begin{prop} \label{prop:equivariance_under_HW_even}
    We have $\msf{W}(n)_\phi^\perf \cong \msf{W}_{\Phi_{\mu < 0}, n}$ equivariantly under $G(F)_{x, n, m} \rtimes M(F)_x$. 
\end{prop}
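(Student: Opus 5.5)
The plan is to compare the two actions separately on the two factors of $G(F)_{x, n, m} \rtimes M(F)_x$, using the isomorphism $\msf{W}(n)_\phi^\perf \cong \msf{W}_{\Phi_{\mu < 0}, n}$ obtained by base change of \Cref{defi:reduction_of_specaff_even} along \eqref{eq:U(n)_isomorphism}. Both spaces are finite \'{e}tale torsors under $\msf{H}_{x, n, \phi} \cong G(F)_{x, n, m} / K_\phi$ over $\msf{U}(n)^\perf \cong \msf{u}_{\mu < 0}^\perf$, cut out from the Lang map $h \mapsto \sigma(h)h^{-1}$ on $H_{x, n, \phi, \ov{k}}$.

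First, I will treat the $G(F)_{x, n, m}$-action. On $\cl{W}(n)_{\phi, \bb{C}_p} \cong \mfr{W}(n)_{\phi, \eta}$, this action factors through $\msf{H}_{x, n, \phi}$, and \Cref{prop:good_reduction_even} says the isomorphism is $\msf{H}_{x, n, \phi}$-equivariant with $\msf{H}_{x, n, \phi}$ acting on the reduction by right multiplication $h \mapsto h\cdot a$. This is exactly the torsor action on $\msf{W}_{\Phi_{\mu<0}, n}$, so equivariance for this factor is formal.

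Second, I will treat $M(F)_x$. By \Cref{lem:hyperspecial_M(F)_x} and \Cref{lem:diagonal_central_action}, it suffices to consider $h \in M(F)_{x, 0}$. By \Cref{prop:equivariance_reduction_even}, the action of $h$ on the base $\msf{U}(n)$ is $\Ad(h)$ under \eqref{eq:U(n)_isomorphism}. Since $\Ad(h)$ commutes with $\sigma$ on $H_{x,n,\phi}$ ($h$ being $F$-rational), the adjoint action lifts to $\msf{W}_{\Phi_{\mu<0},n}$. Thus there are two lifts of the same automorphism of the base to the $\msf{H}_{x,n,\phi}$-torsor: the one coming from the generic fiber and the one coming from $\Ad(h)$. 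Both are compatible with the torsor action twisted by $\Ad(h)$, so they differ by a locally constant map to $\msf{H}_{x, n, \phi}$. Because $\msf{U}(n)$ is connected, this difference is a single constant $c_h$.

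To show $c_h = 1$, I will evaluate at the fiber over the origin, which is the reduction of the CM point. By \Cref{prop:stability_under_diagonal_action}, $\Delta(h)$ fixes $x_{\CM, n}$, and in particular fixes the lift over $x_\CM$ given by the solution $x=1$ of \eqref{eq:levelVn_even} (here $h_{[m,2m]}=0$). On the $\msf{W}_{\Phi_{\mu<0},n}$ side, the point $1 \in H_{x,n,\phi}(k)$ over $0$ is fixed by $\Ad(h)$. I therefore need to check that these two base points correspond under the isomorphism of \Cref{prop:good_reduction_even}, by tracing the construction there with $u_\alpha = 0$. Granting that, $c_h = 1$. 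Compatibility with the semidirect product structure then follows, since both actions are group actions.

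The main obstacle is this normalization at the origin: verifying that the isomorphism $\mfr{W}(n)_{\phi,\eta} \cong \cl{W}(n)_{\phi,\bb{C}_p}$, which was built through \Cref{prop:mapatinf} and the lifting of $h_N$, sends the identity section to $x_{\CM,n}$. A secondary point is ensuring that the $\Ad(h)$-action on $H_{x,n,\phi}$ matches the diagonal action through $\Delta(h)$ on level structures, via \Cref{lem:levelstrCp} and $t_\infty \in Z_{\cl{M}}^\circ$.
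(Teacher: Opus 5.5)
Your proposal is the paper's proof: $G(F)_{x,n,m}$-equivariance is built into the construction, and for $h\in M(F)_{x,0}$ the paper compares the two lifts of $\Ad(h)$ via the equalizer of $\iota^{-1}\circ h^{-1}\circ\iota\circ h$ and $\id$ (with $\iota\colon \msf{W}(n)_\phi^{\mathrm{perf}}\cong\msf{W}_{\Phi_{\mu<0},n}$), an open, closed, $G(F)_{x,n,m}$-stable subset that is therefore everything once nonempty. This is the clean form of your ``single constant $c_h$'', which for the non-abelian $\msf{H}_{x,n,\phi}$ is really a locally constant $c$ with $c(wg)=g^{-1}c(w)g$, so your conclusion still holds. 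The normalization you flag is settled as you expect: by \Cref{defi:CM_points_in_LSV} the CM point $x_{\CM,K_\phi}$ carries level structure $t_\infty$, so $g_{<m}=1$, $h_{[m,2m]}=0$ and its $K_\phi$-level structure in \Cref{lem:levelVn_even} is $x=\id$; hence its $\Delta(h)$-fixed reduction corresponds to $\id\in H_{x,n,\phi}$, which is $\Ad(h)$-fixed.
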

\begin{proof}
    We already have a $G(F)_{x, n, m}$-equivariant isomorphism $\iota \colon \msf{W}(n)_\phi^\perf \cong \msf{W}_{\Phi_{\mu < 0}, n}$ by the above discussions, and it is enough to show the equivariance under $M(F)_{x, 0}$ by \Cref{lem:hyperspecial_M(F)_x}. For every $h \in M(F)_{x, 0}$, consider the endomorphism 
    \[
        f_h = \iota^{-1} \circ h^{-1} \circ \iota \circ h \colon \msf{W}(n) \to \msf{W}(n). 
    \]
    We would like to show $f_h = \id$. Since $\iota$ is $G(F)_{x, n, m}$-equivariant, $f_h$ is also equivariant under $G(F)_{x, n, m}$. Moreover, $f_h$ is a morphism over $\msf{U}(n)$ by \Cref{prop:equivariance_reduction_even}. Since $\msf{W}(n) \to \msf{U}(n)$ is finite \'{e}tale, the equalizer $\Eq(f_h, \id) \subset \msf{W}(n)$ is a $G(F)_{x, n, m}$-equivariant closed and open subset of $\msf{W}(n)$. Since $\msf{U}(n)$ is connected, $\Eq(f_h, \id)$ is $\emptyset$ or $\msf{W}(n)$, so it is enough to show $\Eq(f_h, \id) \neq \emptyset$. 

    Now, consider the CM point $x_{\CM, K_\phi} \in \cl{W}(n)_\phi$. As in the proof of \Cref{prop:stability_under_diagonal_action}, $x_{\CM, K_\phi}$ is fixed by the diagonal action of $M(F)_{x, 0}$. Then, its reduction $\msf{x}_\phi \in \msf{W}(n)_\phi$ is also fixed by $M(F)_{x, 0}$, so it is enough to show that $\iota(\msf{x}_\phi)$ is a fixed point under $M(F)_{x, 0}$. 
    Now, recall from \Cref{defi:CM_points_in_LSV} that the $\cl{G}$-BKF module and the level structure at $x_{\CM, K_\phi}$ are explicitly given by 
    \[
        (\cl{G} \otimes W_{O_F}(O_{\bb{C}_p}^\flat), \mu([\pi^\flat] - \pi) \sigma) ,\quad t_\infty \in Z_{\cl{M}}^\circ(W_{O_F}(\bb{C}_p^\flat)). 
    \]
    Thus, the $K_\phi$-level structure of $x_{\CM, K_\phi}$ associated via \Cref{lem:levelstrCp} and \Cref{lem:levelVn_even} is given by $x = \id$. Thus, $\iota(\msf{x}_\phi) = \id \in \msf{W}_{\Phi_{\mu < 0}, n} \subset H_{x, n, \phi, \ov{k}}$ and it is fixed under the adjoint action of $M(F)_{x, 0}$. 
\end{proof}

Similarly, we study the inner action on $\msf{W}(n)$ via \Cref{prop:inner_action_U(n)_reduction}. 

\begin{prop} \label{prop:trivial_inner_action_even}
    The inner action of $G_b(F)_{x_b, n+, m+}$ on $\msf{W}(n)_\phi$ is trivial when $n = 2m$. 
\end{prop}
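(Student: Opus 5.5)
Write $\msf{W} = \msf{W}(n)_\phi$ and fix $j \in G_b(F)_{x_b, n+, m+}$. The plan is to show that the automorphism $j \colon \msf{W} \to \msf{W}$ commutes with the $\msf{H}_{x, n, \phi}$-action and covers the identity of $\msf{U}(n)$. Then the connectedness argument of \Cref{prop:equivariance_under_HW_even} reduces $j = \id$ to finding a single fixed point.

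\textbf{Step 1: equivariance and compatibility with the base.} The inner action of $G_b(F)$ commutes with the $G(F)$-action on the tower, so $j$ is $\msf{H}_{x, n, \phi}$-equivariant on $\cl{W}(n)_{\phi, \bb{C}_p}$, and hence on the reduction via \Cref{prop:good_reduction_even}. For $n = 2m$ we have $m_b = m$, so $G_b(F)_{x_b, n+, m+} \subset G_b(F)_{x_b, m}$. By \Cref{prop:inner_action_U(n)_reduction}, $j$ therefore acts trivially on $\msf{U}(n)$, and $j$ is a morphism of finite \'{e}tale $\msf{H}_{x, n, \phi}$-torsors over $\msf{U}(n)$. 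The equalizer $\Eq(j, \id)$ is open and closed and $\msf{H}_{x, n, \phi}$-stable, so it descends to an open and closed subset of the connected $\msf{U}(n)$. It thus suffices to exhibit one point of $\msf{W}$ fixed by $j$.

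\textbf{Step 2: the fixed point.} We take the reduction $\msf{x}_\phi$ of the CM point $x_{\CM, K_\phi}$ and aim to show $j \cdot x_{\CM, K_\phi} = x_{\CM, K_\phi}$ already in the generic fiber. The argument follows \Cref{prop:image_x_CM_n_mb}, but is carried one step deeper.

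\begin{itemize}
\item \emph{The point $j \cdot x_\CM$.} Since $j \equiv 1$ modulo the relevant filtration, \Cref{lem:isom_deformation_arbit} (with $u_\alpha^\flat = 0$) and the proof of \Cref{lem:AdbCM} give $\iota$ trivial modulo $[\varpi]$ with $\iota j \mu([\pi^\flat]-\pi)\sigma(\iota j)^{-1} = \mu([\pi^\flat]-\pi)$. Hence $j \cdot x_\CM$ has coordinates $u_\alpha = 0$, i.e.\ $j \cdot x_\CM = x_\CM$ in $\cl{U}(n)$.
\item \emph{The level structure.} The $K_\phi$-level structure of $j \cdot x_{\CM, K_\phi}$ is represented, via \Cref{lem:levelstrCp}, by $g = \Ad(t_\infty^{-1})(\iota j)$. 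By \Cref{lem:levelVn_even}, this is the image $x$ of $g$ in $H_{[m, 2m], \phi}$.
\item \emph{Reduction to the $\msf{m}$-component.} The $\msf{m}^\perp$-components are killed in the quotient defining $H_{[m, 2m], \phi}$. It remains to see that the $\msf{m}$-component of $g$ modulo $\pi^{m+1}$ lies in $\Ker(X)$ up to an element vanishing modulo $[\varpi]$. By uniqueness of the lift in \Cref{prop:good_reduction_even}, this forces $x = \id$.
\item \emph{Vanishing of the $\msf{m}$-component.} The element $j$ lies in $G_b(F)_{x_b, n+, m+}$, and $G_b(F)$ and $G(F)$ share $M$ with $\msf{m}_{x, n}$ identified. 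So the $\msf{m}_{x,n}$-component of $j$ itself vanishes. Since $t_\infty \in Z^\circ_{\cl{M}}$, $\Ad(t_\infty^{-1})$ preserves this vanishing.
\end{itemize}

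\textbf{Main obstacle.} The hardest part is controlling $\iota$ modulo $\pi^{m+1}$. One must show that $\pi_{\mfr{m}}(\iota) \equiv 0$ modulo $[\varpi]$ at depth $n$. I would try to get this from the explicit infinite-product formula for $\iota$ in \Cref{lem:isom_deformation_arbit}. Each factor $\Ad(\mu([\pi^\flat]-\pi)\sigma)^i(X)$ is trivial modulo $[\pi^{\flat, q^i}]$, and its $\mfr{m}$-part at depth $n$ should be computed via the big-cell decompositions of \Cref{lem:AdbCM}, pushed one level further using $j \in G_b(F)_{x_b, n+, m+}$.

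With the fixed point in hand, $\Eq(j, \id) \neq \emptyset$, so $j$ acts trivially on $\msf{W}(n)_\phi$.
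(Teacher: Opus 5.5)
Your Step 1 matches the paper's reduction. Step 2, however, rests on a false claim.

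\textbf{The gap.} You assert that $j \cdot x_\CM = x_\CM$ in $\cl{U}(n)$, and hence that $x_{\CM, K_\phi}$ is fixed by $j$ already in the generic fiber. This fails for every $j \in G_b(F)_{x_b, n+, m+}$ outside $M(F)$. Such $j$ exist whenever $\mu$ is non-central, because this is an open subgroup of $G_b(F)$.

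The reason is that $\pi_\GM$ is $G_b(F)$-equivariant. So $j \cdot x_\CM = x_\CM$ would force $j$ to stabilize $\pi_\GM(x_\CM)$. By the argument of \Cref{lem:level_structure_special_points} applied to the CM pair $(M, -\mu)$ of $G_b$, that stabilizer is $M(F)$.

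What \Cref{lem:isom_deformation_arbit} (with $u_\alpha^\flat = 0$) actually gives is an isomorphism of deformations only modulo $[\pi^{\flat, m+1}]$. Via \Cref{lem:Perfd_universal_deformation} this yields $u_\alpha(j \cdot x_\CM) \equiv 0 \bmod \pi^{m+1}$, i.e.\ $j \cdot x_\CM \in \cl{U}(2m+1)$, not equality.

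So only the reduction $\msf{x}_\phi$ can be fixed. Proving that requires controlling $\iota$ and the $K_\phi$-level structure at a point with nonzero coordinates. That is exactly the ``main obstacle'' you leave open, and it amounts to redoing \Cref{prop:image_x_CM_n_mb} at depth $n+1$.

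\textbf{How the paper avoids it.} The paper uses the next depth directly and never redoes this computation.
\begin{enumerate}
\item Since $G(F)_{x, 2m+1, m+1} \subset K_\phi$, and by uniqueness $\can_{2m+1, m+1}$ lies over $\can_{2m, m}\vert_{\cl{U}(2m+1)}$, there is a map $\Img(\can_{n+1, m+1}) \to \cl{W}(n)_\phi$.
\item This map is $G_b(F)_{x_b, n+, m+}$-equivariant by \Cref{cor:stability_Img_can_nm} at depth $2m+1$. There $m_b = m+$, so the relevant group is $G_b(F)_{x_b, 2m+, m+}$.
\item Passing to reductions gives an equivariant map $\msf{U}(n+1) \to \msf{W}(n)_\phi$ lying over the origin of $\msf{U}(n)$.
\item Since $\msf{W}(n)_\phi \to \msf{U}(n)$ is finite \'{e}tale and $\msf{U}(n+1)$ is connected, the image is a single point. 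By equivariance that point is fixed by $G_b(F)_{x_b, n+, m+}$.
\end{enumerate}
That point is in fact $\msf{x}_\phi$, so your choice of fixed point was correct; only the justification was wrong.
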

\begin{proof}
    By \Cref{prop:inner_action_U(n)_reduction}, the action of each $j \in G_b(F)_{x_b, n+, m+}$ on $\msf{W}(n)_\phi$ is an action over $\msf{U}(n)$. Since the inner action commutes with the action of $G(F)_{x, n, m}$, the action of $j$ is trivial once we have $\Eq(j \vert_{\msf{W}(n)_\phi}, \id_{\msf{W}(n)_\phi}) \neq \emptyset$ as in the proof of \Cref{prop:equivariance_under_HW_even}. 

    For this, consider the sequence 
    \[
        \cl{M}_{G, b, \mu, G(F)_{x, 2m + 1, m + 1}} \to \cl{M}_{G, b, \mu, K_\phi} \to \cl{M}_{G, b, \mu, G(F)_{x, 2m, m}}. 
    \]
    It induces a sequence $\Img(\can_{n + 1, m + 1}) \to \cl{W}(n)_\phi \to \Img(\can_{n, m})$ since $\can_{n, m}$ and $\can_{n + 1, m + 1}$ are compatible thanks to the matching of the image of $x_\CM$. By \Cref{cor:stability_Img_can_nm}, this sequence is equivariant under $G_b(F)_{x_b, n+, m+}$. By passing to the reduction, we get a map 
    \[
        \msf{U}(n + 1) \to \msf{W}(n)_\phi \to \msf{U}(n)
    \]
    equivariant under $G_b(F)_{x_b, n+, m+}$. It is easy to see from the definition that $\msf{U}(n + 1) \to \msf{U}(n)$ maps to the origin $u_{\alpha, n} = 0$, so the image of $\msf{U}(n + 1) \to \msf{W}(n)_\phi$ is finite since $\msf{W}(n)_\phi \to \msf{U}(n)$ is finite \'{e}tale. Since $\msf{U}(n + 1)$ is connected, $\msf{U}(n + 1) \to \msf{W}(n)_\phi$ maps to a single point $\msf{x} \in \msf{W}(n)_\phi$. Then $\msf{x}$ is fixed by $G_b(F)_{x_b, n+, m+}$ since $\msf{U}(n + 1) \to \msf{W}(n)_\phi$ is equivariant under $G_b(F)_{x_b, n+, m+}$, so we get the claim. 
\end{proof}

Now, the desired cohomological property follows from the computation in \cite{Tak26_Yu}. 

\begin{prop} \label{prop:cohmology_reduction_even}
    \Cref{prop:cohomology_reduction} holds for $n = 2m$. 
\end{prop}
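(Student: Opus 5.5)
The plan is to transport the statement along \Cref{prop:equivariance_under_HW_even} and then invoke the cohomological computation for Heisenberg Deligne-Lusztig varieties in \cite[Theorem 6]{Tak26_Yu}. By \Cref{prop:equivariance_under_HW_even}, $\msf{W}(n)_\phi^\perf \cong \msf{W}_{\Phi_{\mu < 0}, n}$ equivariantly under $G(F)_{x, n, m} \rtimes M(F)_x$. Étale cohomology is insensitive to perfection. Hence the map in \Cref{prop:cohomology_reduction} is identified with
\[
    R\Gamma_c(\msf{W}_{\Phi_{\mu<0}, n}, \Qla) \otimes_{\msf{H}_{x, n, \phi}} \kappa_{x, n, \phi} \to R\Gamma(\msf{W}_{\Phi_{\mu<0}, n}, \Qla) \otimes_{\msf{H}_{x, n, \phi}} \kappa_{x, n, \phi},
\]
compatibly with the $M(F)_x$-action. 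Here the characteristic index set is $\Phi_{\msf{\ov{N}}} \cap \sigma(\Phi_{\msf{N}}) = \Phi_{\mu<0}$ and we use the polarization $\Phi^{\msf{M}+} = \Phi_{\msf{\ov{N}}}$. Since $\rho$ is $(G,G_b,M)$-supergeneric, $X$ is generic, so the hypotheses of \cite[Theorem 6]{Tak26_Yu} hold. That theorem should give three things: the $\kappa_{x,n,\phi}$-part is concentrated in degree $d = \lvert \Phi_{\mu<0} \rvert$, the forget-support map is an isomorphism, and the result is one-dimensional with $M(F)_x$ acting through an explicit character. I expect the main bookkeeping point to be checking that this character is trivial, which matches $\kappa_{b,n,\phi}\vert_{M(F)_x}$ by \cite[Theorem 3]{Tak26_Yu}.

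It remains to identify the $G_b(F)_{x_b, n, m}$-action, which the Heisenberg computation does not see directly. The quotient $G_b(F)_{x_b,n,m}/G_b(F)_{x_b,n+,m+} \cong \msf{m}_{x,n}$ is the relevant piece. By \Cref{prop:trivial_inner_action_even}, $G_b(F)_{x_b,n+,m+}$ acts trivially on $\msf{W}(n)_\phi$, so the inner action factors through this quotient. I would then show that an element of $\msf{m}_{x,n} \subset M(F)_{x,n}$ acts on $\msf{W}(n)_\phi$ exactly as the corresponding central element of $\msf{H}_{x,n,\phi}$ acts through the $G(F)_{x,n,m}$-action. This should follow from the triviality of the diagonal action of $M(F)_{x,n} \subset M(F)_x$ on $\cl{W}(n)_\phi$ up to the center, that is, from \Cref{prop:stability_under_diagonal_action} together with the fact that $M(F)_{x,n}$ acts trivially on $\msf{U}(n)$. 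The point is that $\Delta(h)$ acts trivially, so $h$ on the $G_b$-side equals $h^{-1}$ on the $G$-side. Since $h^{-1}$ acts on $\kappa_{x,n,\phi}$ by the scalar $\psi(X(h))^{-1}$, after tensoring it acts by $\psi \circ X$ on the output, as in \cite[Theorem 3]{Tak26_Yu}. The sign convention here is the step I expect to be the main obstacle, because the $J_n^0$-action is taken as a right action. I would fix it by testing on the CM point $\msf{x}_\phi$, as in the proof of \Cref{prop:equivariance_under_HW_even}.

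Combining the two steps, the natural map is an isomorphism onto a one-dimensional space in degree $d$. On this space $G_b(F)_{x_b,n,m}$ acts by $\psi\circ X$ and $M(F)_{x,0}$ acts trivially, which is exactly $\kappa_{b,n,\phi}[-d]$. Finally, \Cref{lem:hyperspecial_M(F)_x} together with central-character matching extends the conclusion from $M(F)_{x,0}$ to all of $M(F)_x$.
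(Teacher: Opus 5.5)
Your proposal is correct and follows essentially the same route as the paper. You transport along \Cref{prop:equivariance_under_HW_even} and apply \cite[Theorem 6]{Tak26_Yu} to get middle concentration, the forget-support isomorphism, one-dimensionality and triviality of the diagonal $M(F)_x$-action; then, via \Cref{prop:trivial_inner_action_even} and the triviality of the diagonal action of $M(F)_{x,n}$, the inner action of $\msf{m}_{x,n}$ is inverse to the action of $M(F)_{x,n} \subset G(F)_{x,n,m}$. The sign you flag needs no test on the CM point: since $H_c^d(\msf{W}(n)_\phi, \Qla)[\psi^{-1}] \cong \kappa_{x,n,\phi}^*$ as $\msf{H}_{x,n,\phi} \rtimes M(F)_x$-representations, $M(F)_{x,n}$ acts there by $(\psi \circ X)^{-1}$, and inverting gives $\psi \circ X$ directly.
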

\begin{proof}
    By \Cref{prop:equivariance_under_HW_even}, \cite[Theorem 6]{Tak26_Yu} implies that we have
    \[
        H_c^i(\msf{W}(n)_\phi, \Qla)[\psi^{-1}] \cong 
        \left\{ \begin{alignedat}{4}
            & \kappa_{x, n, \phi}^* & \quad & (i = d) \\
            & 0 & \quad & (i \neq d)
        \end{alignedat}
        \right.
    \]
    as a representation of $\msf{H}_{x, r, \phi} \rtimes M(F)_x$ and the natural map  
    \[
        H_c^d(\msf{W}(n)_\phi, \Qla)[\psi^{-1}] \to H^d(\msf{W}(n)_\phi, \Qla)[\psi^{-1}]
    \]
    is an isomorphism (see also \cite[Lemma 6.1]{Tak26_Yu} for the middle concentration). Thus
    \[
        R\Gamma_c(\msf{W}(n)_\phi, \Qla) \otimes_{G(F)_{x, n, m}} \kappa_{x, n, \phi} \to R\Gamma(\msf{W}(n)_\phi, \Qla) \otimes_{G(F)_{x, n, m}} \kappa_{x, n, \phi}
    \]
    is an isomorphism and both sides are concentrated in degree \(d\), where the cohomology is one-dimensional. Moreover, the diagonal action of $M(F)_x$ on each side is trivial. Then, it remains to show that $G_b(F)_{x_b, n, m}$ acts on $H_c^d(\msf{W}(n)_\phi, \Qla) \otimes_{G(F)_{x, n, m}} \kappa_{x, n, \phi}$ via the character 
    \[
        G_b(F)_{x_b, n, m} \to G_b(F)_{x_b, n, m} / G_b(F)_{x_b, n+, m+} \cong \msf{m}_{x, n} \xrightarrow{\psi \circ X} \Qlax. 
    \]
    First, the inner action factors through $\msf{m}_{x, n}$ by \Cref{prop:trivial_inner_action_even}. Then, since the diagonal action of $M(F)_x$ is trivial and the composition
    \[
        M(F)_{x, n} \to G_b(F)_{x_b, n, m} \to \msf{m}_{x, n}
    \]
    is surjective, the inner action of $\msf{m}_{x, n}$ is inverse to the action of $M(F)_{x, n} \subset G(F)_{x, n, m}$. Then, the claim follows since $M(F)_{x, n}$ acts on $\kappa_{x, n, \phi}^*$ via $(\psi \circ X)^{-1}$. 
\end{proof}

\subsection{\'{E}tale cohomology of reductions for $n = 2m - 1$} \label{ssec:cohomology_reduction_odd}

In this section, we treat the case $n = 2m - 1$. In this case, $m_b = (m-1)+$ and $\kappa_{b, n, \phi}$ is a nontrivial induction of $\kappa_{x_b, n, \phi}$. First, we give a characterization of $\kappa_{b, n, \phi}$ as an irreducible representation of $G_b(F)_{x_b, n, m_b}$.

\begin{prop} \label{prop:Heisenberg_odd}
    There is a unique irreducible representation of $G_b(F)_{x_b, n, m_b}$ such that $G_b(F)_{x_b, n, m}$ acts by a scalar via the character 
    \[
        G_b(F)_{x_b, n, m} \to G_b(F)_{x_b, n, m} / G_b(F)_{x_b, 2m, m} \cong \msf{m}_{x, n} \xrightarrow{\psi \circ X} \Qlax. 
    \]
    It is isomorphic to $\kappa_{b, n, \phi}$ and we have $\dim \kappa_{b, n, \phi} = \sqrt{\lvert \msf{m}^\perp \rvert}$. Moreover, 
    \[
        \kappa_{x_b, n, \phi} \cong \kappa_{b, n, \phi}^{G_b(F)_{x_b, n+, n/2+}}. 
    \]
\end{prop}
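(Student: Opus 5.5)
We set $n = 2m-1$, so the Moy--Prasad jumps involved are $n/2 = m - \tfrac12$, $m_b = (m-1)+$ and $m$. The plan is to reduce everything to finite Heisenberg group theory. Put $H = G_b(F)_{x_b, n, m_b}/G_b(F)_{x_b, 2m, m}$. Since $x_b$ lies in the interior of the facet $\mfr{f}$ and $x$ is a vertex of it, the Moy--Prasad filtration of $\mfr{m}^\perp$ at $x_b$ has jumps at non-integral points. So in the index range $(m-1, m]$ the only $\mfr{m}^\perp$-jump is $n/2$, and the $\mfr{m}$-part contributes only at depth $n$.

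First I will determine the structure of $H$. The quotient $V = G_b(F)_{x_b, n, m_b}/G_b(F)_{x_b, n, m}$ should be identified, through the Moy--Prasad isomorphism, with $\msf{m}^\perp_{x_b, n/2}$, which has the same cardinality as $\msf{m}^\perp$. The subgroup $Z = G_b(F)_{x_b, n, m}/G_b(F)_{x_b, 2m, m} \cong \msf{m}_{x, n}$ should be central. The commutator map $V \times V \to Z$, composed with $X$, is $(v, w) \mapsto X([v,w])$. By the $(G_b, M)$-genericity of $X$ (\cite[Definition 3.1]{Tak26_Yu}) this pairing is nondegenerate symplectic. It follows that $H/\Ker(X)$ is a Heisenberg group with center $k$ (or its standard $p = 2$ variant). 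Stone--von Neumann theory then gives a unique irreducible representation with central character $\psi \circ X$, and its dimension is $\sqrt{\lvert V \rvert} = \sqrt{\lvert \msf{m}^\perp \rvert}$. This proves both uniqueness and the dimension formula.

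Next I will identify this representation with $\kappa_{b, n, \phi}$. The group $G_b(F)_{x_b, n, n/2}$ is the preimage of a subgroup $L \subset V$. Modulo $G_b(F)_{x_b, n+, n/2+}$ the space $\msf{m}^\perp_{x_b, n/2}$ should split into the depth-$n/2$ part and the part beyond it, and I expect $L$ to be a Lagrangian, or at least a coisotropic subspace whose radical is the part beyond depth $n/2$. On the part beyond $n/2$ the representation $\kappa_{x_b, n, \phi}$ is inflated from $G_b(F)_{x_b, n+, n/2+}$, acting trivially, and on $Z$ it has central character $\psi \circ X$. Standard Heisenberg theory then shows that inducing along $L \subset V$ yields an irreducible representation with central character $\psi \circ X$ on $Z$. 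By uniqueness it is the representation of the first step, so $\kappa_{b, n, \phi}$ is that representation.

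The Weil-representation part (the $M(F)_x$-action) only needs to be compatible. Uniqueness fixes it up to a character, and the definition of $\kappa_{b, n, \phi}$ as an induction pins it down. The last claim follows from Mackey theory: the $G_b(F)_{x_b, n+, n/2+}$-fixed vectors of $\cInd$ form exactly the summand supported on the identity coset, which is $\kappa_{x_b, n, \phi}$, since the fixed part of $V$ is a maximal isotropic subspace complementary to $L$.

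The main obstacle is the first step. One must check precisely that the commutator pairing on $V$ lands in $Z$, and also check the parity and facet bookkeeping, i.e. which roots have $\alpha(x_b)$ at which non-integral depth. Nondegeneracy of $X([v,w])$ must be verified using genericity only for $G_b$. I would first do this for a $T$-split torus in the apartment of $\mfr{f}$, pairing $\alpha$ with $-\alpha$.
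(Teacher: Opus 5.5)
There is a genuine gap in your first step, and it sits exactly in the case the paper needs. Non-integrality of the jumps does not make $n/2$ the only $\mfr{m}^\perp$-jump of $x_b$ in $(m-1,m]$. The jumps are $m-1+\{\alpha(x_b)\}$ for every fractional part occurring for $\alpha\in\Phi^{\msf{M}}$, and these need not equal $\tfrac12$; in the Lubin--Tate case they are $1/h,\dots,(h-1)/h$. So $V=G_b(F)_{x_b,n,m_b}/G_b(F)_{x_b,n,m}$ is not $\msf{m}^\perp_{x_b,n/2}$. That space is in general strictly smaller than $\msf{m}^\perp$, which is why $\kappa_{b,n,\phi}$ is a nontrivial induction of $\kappa_{x_b,n,\phi}$. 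Instead, $V$ is filtered with graded pieces $\msf{m}^\perp_{x_b,r}$ for $m-1<r<m$.

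For $m\ge 2$ this is repairable. Commutators of elements of depth $>m-1$ have depth $>2m-2\ge m$, so they lie in $G_b(F)_{x_b,n,m}$ and $V$ is abelian. The form $X\circ[-,-]$ pairs $\msf{m}^\perp_{x_b,r}$ nondegenerately with $\msf{m}^\perp_{x_b,n-r}$, and $\lvert V\rvert=\lvert\msf{m}^\perp\rvert$. Your Stone--von Neumann argument, which gives uniqueness and dimension at once, together with your coisotropic-$L$ induction, is then a valid and more direct alternative to the paper's route. Your second step already uses this correct picture.

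Your reason for the last claim also needs fixing. The image of $G_b(F)_{x_b,n+,n/2+}$ in $V$ is $L^\perp=\bigoplus_{n/2<r<m}\msf{m}^\perp_{x_b,r}$. This is the radical of $L$ and lies inside $L$; it is not a complement to $L$. It acts on $j\cdot\kappa_{x_b,n,\phi}$ by $h\mapsto\psi(X([j^{-1},h]))$, which is trivial if and only if $j\in L$.

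For $m=1$, i.e.\ $n=1$, the argument breaks down, and this is the case used in \Cref{prop:cohomology_reduction_n=1_partial}. Here $V=G_b(F)_{x_b,1,0+}/G_b(F)_{x_b,1,1}$ is the $\mfr{m}^\perp$-part of the pro-unipotent radical modulo depth $1$. It is nonabelian as soon as two roots in $\Phi^{\msf{M}}$ whose sum is again in $\Phi^{\msf{M}}$ have depths summing to less than $1$. For example, for $G_b=D^\times$ with $h\ge 3$, elements of depth $1/h$ have commutators of depth $2/h<1$. Then $H/\Ker(X)$ is not a Heisenberg group, and neither Stone--von Neumann nor the fully-ramified-character argument applies.

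What is missing is an argument that does not rely on two-step nilpotence. The paper takes an arbitrary irreducible $\rho$ with the given central behaviour. It shows $\rho$ has a $G_b(F)_{x_b,n+,r}$-fixed vector for $r$ descending from $m$ to $(n/2)+$. At each step it uses the nondegenerate pairing $\msf{m}^\perp_{x_b,n-r}\times\msf{m}^\perp_{x_b,r}\to k$ to translate a $\chi$-eigenvector into a fixed vector. Hence $\rho$ contains $\kappa_{x_b,n,\phi}$. The paper then proves $\kappa_{b,n,\phi}$ irreducible via the normalizer criterion \cite[Lemma 6.3.1]{BW16}, and concludes $\rho\cong\kappa_{b,n,\phi}$ by Frobenius reciprocity. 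This works uniformly in $m$.
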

\begin{proof}
    Let $\rho$ be an irreducible representation of $G_b(F)_{x_b, n, m_b}$ satisfying the given condition. First, we show that $\rho$ contains a fixed vector under $G_b(F)_{x_b, n+, n/2+}$. 

    By induction on $\tfrac{n}{2} < r \leq m$, we show that $\rho$ contains a $G_b(F)_{x_b, n+, r}$-fixed vector. For $r = m$, the claim follows from the assumption. Suppose that $\rho$ contains a $G_b(F)_{x_b, n+, r+}$-fixed vector for some $\tfrac{n}{2} < r < m$. Since $G_b(F)_{x_b, n+, r+} \subset G_b(F)_{x_b, n+, r}$ is normal and $G_b(F)_{x_b, n+, r} / G_b(F)_{x_b, n+, r+} \cong \msf{m}^\perp_{x_b, r}$ is abelian via the Moy-Prasad isomorphism, $\rho^{G_b(F)_{x_b, n+, r+}}$ contains a nonzero vector $v$ on which $G_b(F)_{x_b, n+, r} / G_b(F)_{x_b, n+, r+}$ acts by a character $\chi$. 

    The Moy-Prasad isomorphism induces identifications 
    \[
        \msf{m}^\perp_{x_b, r} \cong G_b(F)_{x_b, n+, r} / G_b(F)_{x_b, n+, r+} ,\quad 
        \msf{m}^\perp_{x_b, n - r} \cong G_b(F)_{x_b, n, n - r} / G_b(F)_{x_b, n, (n - r)+}. 
    \]
    The commutator pairing between $G_b(F)_{x_b, n, n - r}$ and $G_b(F)_{x_b, n+, r}$ induces a bilinear form
    \[      
        [-, -]_r \colon \msf{m}^\perp_{x_b, n - r} \times \msf{m}^\perp_{x_b, r} \to G_b(F)_{x_b, n} / G_b(F)_{x_b, n+} \cong \msf{m}_{x, n} \xrightarrow{X} k
    \]
    (see \cite[Proposition 13.2.5]{KP23}). Since $\phi$ is $(G, M)$-generic, this pairing is non-degenerate. Then, $\psi \circ [-, -]_r$ induces $\msf{m}^\perp_{x_b, n - r} \cong \Hom(\msf{m}^\perp_{x_b, r}, \Qlax)$, so we can take $j \in \msf{m}^\perp_{x_b, n - r}$ that maps to $\chi$ via the isomorphism. Then, for every $h \in G_b(F)_{x_b, n+, r}$, we have 
    \[
        h \cdot jv = j\cdot [j^{-1}, h] \cdot hv = j \cdot \psi([j^{-1}, h]_r) \chi(h) v = jv. 
    \]
    Thus, $jv$ is fixed by $G_b(F)_{x_b, n+, r}$. By induction, $\rho$ contains a $G_b(F)_{x_b, n+, n/2+}$-fixed vector. 
    
    Now, $G_b(F)_{x_b, n, n/2} / G_b(F)_{x_b, n+, n/2+}$ is a Heisenberg group, so it admits a unique irreducible representation $\kappa_{x_b, n, \phi}$ of central character $\psi \circ X$ by \cite[Lemma 1.2]{Ger77}. Since $\msf{m}_{x, n}  \cong G_b(F)_{x_b, n, n/2+} / G_b(F)_{x_b, n+, n/2+}$ acts on $\rho^{G_b(F)_{x_b, n+, n/2+}}$ by a scalar $\psi \circ X$, $\rho$ contains $\kappa_{x_b, n, \phi}$ as a $G_b(F)_{x_b, n, n/2}$-representation. By Frobenius reciprocity, we get $\kappa_{b, n, \phi} \subset \rho$ once we know the irreducibility of $\kappa_{b, n, \phi}$. Its dimension can be computed as
    \[  
        \dim \kappa_{b, n, \phi} = \dim \kappa_{x_b, n, \phi} \cdot [G_b(F)_{x_b, n, m_b} \colon G_b(F)_{x_b, n, n/2}] = \sqrt{\lvert \msf{m}^\perp_{x_b, n/2} \rvert} \cdot \lvert \msf{m}^\perp_{x_b, [m_b, n/2)} \rvert = \sqrt{\lvert \msf{m}^\perp \rvert}. 
    \]

    Now, we will show that $\kappa_{b, n, \phi}$ is irreducible (without resorting to the existence of $(G_b, M)$-supergeneric representations of depth $n$). We deduce the claim from \cite[Lemma 6.3.1]{BW16}: it is enough to show that the normalizer of $\psi \circ X$ in $G_b(F)_{x_b, n, m_b}$ as a character of $G_b(F)_{x_b, n, n/2+}$ is $G_b(F)_{x_b, n, n/2}$. It is easy to see that $G_b(F)_{x_b, n, n/2}$ normalizes $\psi \circ X$.
    
    Let $j \in G_b(F)_{x_b, n, m_b}$ be an element such that $j \in G_b(F)_{x_b, n, r} \backslash G_b(F)_{x_b, n, r+}$ for some $m_b \leq r < \tfrac{n}{2}$. Then, we can take $h \in G_b(F)_{x_b, n+, n - r}$ so that $(\psi \circ X)([j, h]_{n-r}) \neq 1$, and we have
    \[
        (\psi \circ X)(j h j^{-1}) = (\psi \circ X)([j, h]_{n-r} h) \neq (\psi \circ X)(h).
    \]
    Thus, $j$ does not normalize $\psi \circ X$ and we get the claim. 

    By the same argument, we see that $h \in G_b(F)_{x_b, n+, n/2+}$ acts by a nontrivial scalar on $j \cdot \kappa_{x_b, n, \phi} \subset \kappa_{b, n, \phi}$. Then we get $\kappa_{x_b, n, \phi} = \kappa_{b, n, \phi}^{G_b(F)_{x_b, n+, n/2+}}$ since we have
    \[
        \kappa_{b, n, \phi} = \bigoplus_{j \in G_b(F)_{x_b, n, m_b} / G_b(F)_{x_b, n, n/2}} j \cdot \kappa_{x_b, n, \phi}.  
    \]
\end{proof}

Now, we first identify $H_c^d(\msf{W}(n)_\phi, \Qla)[\psi^{-1}]$ as a representation of $G_b(F)_{x_b, n, m_b}$. First, we study the inner action on $\msf{W}(n)_\phi$ in the same way as \Cref{prop:trivial_inner_action_even}. 

\begin{prop} \label{prop:trivial_inner_action_odd}
    The inner action of $G_b(F)_{x_b, 2m, m}$ on $\msf{W}(n)_\phi$ is trivial when $n = 2m - 1$. 
\end{prop}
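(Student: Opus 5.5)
We follow the proof of \Cref{prop:trivial_inner_action_even}. First, \Cref{prop:inner_action_U(n)_reduction} shows that $G_b(F)_{x_b, m}$, and hence $G_b(F)_{x_b, 2m, m} \subset G_b(F)_{x_b, m}$, acts trivially on $\msf{U}(n)$. So each $j \in G_b(F)_{x_b, 2m, m}$ acts on $\msf{W}(n)_\phi$ by an automorphism over $\msf{U}(n)$. This automorphism commutes with the $G(F)_{x, n, m}$-action, in particular with the $k$-torsor structure of $\msf{W}(n)_\phi \to \msf{U}(n)$ from \Cref{defi:reduction_of_specaff_odd}. The torsor is finite \'{e}tale, so the equalizer $\Eq(j\vert_{\msf{W}(n)_\phi}, \id)$ is closed and open and $k$-stable, and it descends to a closed and open subset of $\msf{U}(n)$. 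Since $\msf{U}(n)$ is connected, it suffices to find a single point of $\msf{W}(n)_\phi$ fixed by $G_b(F)_{x_b, 2m, m}$.

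To produce the fixed point, use the tower $\cl{M}_{G, b, \mu, G(F)_{x, 2m, m}} \to \cl{M}_{G, b, \mu, K_\phi} \to \cl{M}_{G, b, \mu, G(F)_{x, 2m-1, m}}$, where $K_\phi \supset G(F)_{x, 2m, m}$ because $n+ = 2m$ and $n/2+ = m$. The sections $\can_{2m, m}$ and $\can_{n, m}$ send $x_\CM$ to compatible lifts $x_{\CM, 2m}$ and $x_{\CM, n}$, so by uniqueness they are compatible. Since $\cl{U}(2m) \subset \cl{U}(n)$, this gives maps $\Img(\can_{2m, m}) \to \cl{W}(n)_\phi \to \Img(\can_{n, m})$. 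By \Cref{cor:stability_Img_can_nm} (for depth $2m$, with $m_b = m$), $\Img(\can_{2m, m})$ is stable under $G_b(F)_{x_b, 2m, m}$, so the first map is equivariant. Passing to reductions of the smooth formal models gives an equivariant map $\msf{U}(2m) \to \msf{W}(n)_\phi$ over $\msf{U}(2m) \to \msf{U}(n)$.

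It remains to check that $\msf{U}(2m) \to \msf{U}(n)$ collapses to the origin. The coordinates satisfy $u_{\alpha, n} = \pi^{-m}u_\alpha$, while on $\cl{U}(2m)$ we have $\lvert u_\alpha \rvert \leq \lvert \pi\rvert^{m + r_\alpha}$ with $r_\alpha > 0$. Hence $u_{\alpha, n}$ restricts to a topologically nilpotent function, and its reduction is $0$. Because $\msf{W}(n)_\phi \to \msf{U}(n)$ is finite \'{e}tale, the image of the connected scheme $\msf{U}(2m)$ is a single point $\msf{x}$. By equivariance, $\msf{x}$ is fixed by $G_b(F)_{x_b, 2m, m}$, which completes the argument.

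The main thing to verify is that the reduction step is legitimate: the map $\Img(\can_{2m,m}) \to \cl{W}(n)_\phi$ must extend to the formal models $\mfr{U}(2m) \to \mfr{W}(n)_\phi$ equivariantly. We expect this to hold because $\mfr{W}(n)_\phi$ is finite \'{e}tale over $\mfr{U}(n)$ and the map of generic fibers lies over $\mfr{U}(2m) \to \mfr{U}(n)$, so the extension follows from \Cref{prop:good_reduction_odd} and normality of $\mfr{U}(2m)$. The case $m = 1$ is covered by the same argument, since nothing above depended on $m \geq 2$.
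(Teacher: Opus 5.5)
Your proposal is correct and is essentially the paper's proof. You use \Cref{prop:inner_action_U(n)_reduction} and connectedness of $\msf{U}(n)$ to reduce to finding one fixed point, which you obtain as the image of the connected $\msf{U}(2m)$ under the equivariant map induced by $\Img(\can_{2m, m}) \to \cl{W}(n)_\phi$; this map lands in the finite fiber over the origin. Your explicit checks (that $u_{\alpha,n}$ is topologically nilpotent on $\cl{U}(2m)$, and that the generic-fiber maps extend to the formal models because $O_{\bb{C}_p}\langle u_{\alpha,2m} \rangle$ is integrally closed in its generic fiber) fill in details the paper leaves implicit.
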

\begin{proof}
    By the same argument as in \Cref{prop:trivial_inner_action_even}, it is enough to show that $\msf{W}(n)_\phi$ has a $G_b(F)_{x_b, 2m, m}$-fixed point. Consider the sequence 
    \[
        \cl{M}_{G, b, \mu, G(F)_{x, 2m, m}} \to \cl{M}_{G, b, \mu, K_\phi} \to \cl{M}_{G, b, \mu, G(F)_{x, n, m}}. 
    \]
    It induces a sequence $\Img(\can_{n + 1, m}) \to \cl{W}(n)_\phi \to \Img(\can_{n, m})$ since $\can_{n, m}$ and $\can_{n + 1, m}$ are compatible thanks to the matching of the image of $x_\CM$. By \Cref{cor:stability_Img_can_nm}, this sequence is equivariant under $G_b(F)_{x_b, 2m, m}$. By passing to the reduction, we get a map 
    \[
        \msf{U}(n + 1) \to \msf{W}(n)_\phi \to \msf{U}(n)
    \]
    equivariant under $G_b(F)_{x_b, 2m, m}$. By the same argument as in \Cref{prop:trivial_inner_action_even}, the image of $\msf{U}(n + 1) \to \msf{W}(n)_\phi$ is a single point that is fixed by $G_b(F)_{x_b, 2m, m}$. Thus, we get the claim. 
\end{proof}

\begin{prop} \label{prop:positive_depth_diagonal_action_odd}
    The diagonal action of $M(F)_{x, 0+}$ on $\msf{W}(n)_\phi$ is trivial when $n = 2m - 1$. 
\end{prop}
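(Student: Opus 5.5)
We will use the same rigidity argument as in \Cref{prop:equivariance_under_HW_even} and \Cref{prop:trivial_inner_action_even}: show that each $h \in M(F)_{x, 0+}$ acts on $\msf{W}(n)_\phi$ by an automorphism over $\msf{U}(n)$ that commutes with the $k$-torsor structure, and that it has a fixed point. Since $\msf{W}(n)_\phi \to \msf{U}(n)$ is a finite \'{e}tale $k$-torsor and $\msf{U}(n)$ is connected, the equalizer of $\Delta(h)$ and $\id$ is then a nonempty, $k$-stable, closed and open subset. It must therefore be all of $\msf{W}(n)_\phi$.

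\textbf{Action over $\msf{U}(n)$.} By \Cref{prop:equivariance_reduction_odd}, the action of $h$ on $\msf{U}(n) \cong \msf{u}_{\mu<0}$ is $\Ad(h)$. For $h \in M(F)_{x, 0+}$, the image of $h$ in $\msf{M} = M(F)_{x,0}/M(F)_{x,0+}$ is trivial, so $\Ad(h)$ acts trivially on $\msf{u}_{\mu<0}$. Hence $\Delta(h)$ covers the identity of $\msf{U}(n)$.

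\textbf{Compatibility with the torsor structure.} The diagonal action commutes with the $G(F)_{x,n,m}$-action only up to conjugation, $\Delta(h)\circ g = \Ad(h)(g) \circ \Delta(h)$. The $k$-torsor structure comes from $G(F)_{x,n,m}/K_\phi \cong \msf{H}_{x,n,\phi} \cong k$ via $\msf{m}_{x,n} \xrightarrow{X} k$. Since $X$ is $M$-stable, conjugation by $h \in M(F)_x$ induces the identity on this quotient, so $\Delta(h)$ is $k$-equivariant.

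\textbf{Fixed point.} The CM point $x_{\CM, K_\phi} \in \cl{W}(n)_\phi$ is fixed by $\Delta(M(F)_{x,0})$, exactly as in \Cref{prop:stability_under_diagonal_action}. The reason is that $h$ commutes with $\mu([\pi^\flat]-\pi)$ and with $t_\infty \in Z_{\cl{M}}^\circ$, so the level structure $t_\infty$ is preserved. Its reduction $\msf{x}_\phi$ is therefore fixed, which makes the equalizer nonempty and finishes the argument.

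The main obstacle is the torsor-compatibility step. The $k$-torsor is defined via the lift $\mfr{W}(n)_\phi$ from \Cref{defi:reduction_of_specaff_odd}, and it must be checked that the isomorphism of \Cref{prop:good_reduction_odd} intertwines the $G(F)_{x,n,m}/K_\phi$-action with translation $t \mapsto t+a$; that proposition asserts exactly this. One must also verify that the conjugation twist is trivial, i.e.\ that $\Ad(h)$ acts trivially on $G(F)_{x,n,m}/K_\phi$. If the twist is not obviously trivial, a fallback is available: compose with the element of $G(F)_{x,n,m}$ that corrects it. Equivalently, one can argue that the set of $h$ acting trivially is a subgroup and check it on generators of $M(F)_{x,0+}$.
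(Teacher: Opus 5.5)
Your proposal is correct and follows the paper's argument. The action is trivial on $\msf{U}(n)$ by \Cref{prop:equivariance_reduction_odd}, it is compatible with the $G(F)_{x,n,m}$-action, and the reduction of $x_{\CM, K_\phi}$ gives a fixed point; these combine through the connectedness/equalizer argument of \Cref{prop:trivial_inner_action_even}. Your remark that the conjugation twist is killed by the $M$-stability of $X$ is exactly what justifies the paper's brief claim that the diagonal action commutes with $G(F)_{x,n,m}$, so the fallback you mention is not needed.
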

\begin{proof}
    By \Cref{prop:equivariance_reduction_odd}, the diagonal action of $M(F)_{x, 0+}$ is trivial on $\msf{U}(n)$. Moreover, it commutes with the action of $G(F)_{x, n, m}$ on $\msf{W}(n)_\phi$. Thus, by the same argument as in \Cref{prop:trivial_inner_action_even}, it is enough to show that $\msf{W}(n)_\phi$ has a $M(F)_{x, 0+}$-fixed point. Since $x_{\CM, K_\phi}$ is stable under $M(F)_{x}$ by the proof of \Cref{prop:stability_under_diagonal_action}, its reduction is a fixed point under $M(F)_{x, 0+}$. Thus, we get the claim. 
\end{proof}

From now on, we compute the map in \Cref{prop:cohomology_reduction}. When $n = 2m - 1$, $\kappa_{x, n, \phi}$ is a character 
\[
    G(F)_{x, n, m} \to G(F)_{x, n, m} / G(F)_{x, n+1, m} \cong \msf{m}_{x, n} \xrightarrow{\psi \circ X} \Qlax. 
\] 
Moreover, $\msf{W}(n)_\phi \to \msf{U}(n)$ is a torsor under $k$. As in \Cref{ssec:reduction_odd}, we treat the cases $m = 1$ and $m \geq 2$ separately since the defining equations of reductions are different as in the Lubin-Tate case \cite[Section 3.9]{BW16}.

\subsubsection{The case $m \geq 2$}

Here, we treat the simpler case $m \geq 2$. In this case, $\msf{W}(n)_\phi$ is (non-canonically) isomorphic to a Heisenberg Deligne-Lusztig variety, so we may apply the computation in \cite{Tak26_Yu}. 

\begin{prop} \label{prop:cohomology_reduction_odd_partial}
    For $n = 2m - 1$ with $m \geq 2$, the natural map 
    \[
        R\Gamma_c(\msf{W}(n)_\phi, \Qla)[\psi^{-1}] \to R\Gamma(\msf{W}(n)_\phi, \Qla)[\psi^{-1}]
    \]
    is an isomorphism and isomorphic to $\kappa_{b, n, \phi}[-d]$ as representations of $G_b(F)_{x_b, n, m_b} \rtimes M(F)_{x, 0+}$. 
\end{prop}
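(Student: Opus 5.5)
The plan is to identify $\msf{W}(n)_\phi$ with the Heisenberg Deligne--Lusztig variety of \cite[Theorem 6]{Tak26_Yu} for the odd-depth group $H_{x_b, n, \phi}$ attached to $G_b$, and then import the cohomology computation from there, as in the proof of \Cref{prop:cohmology_reduction_even}. By \Cref{defi:reduction_of_specaff_odd} and \Cref{defi:constant_c_alpha_odd}, $\msf{W}(n)_\phi$ is the Artin--Schreier cover $t^q - t = f_n$ of $\msf{U}(n) \cong \msf{u}_{\mu<0}$. The first step is to rewrite $f_n$ intrinsically. By \Cref{lem:vareps}, $\alpha \mapsto \beta_\alpha$ is a permutation of $\Phi_{\mu<0}$ with $-\alpha = \sigma^{n_\alpha}\beta_\alpha$. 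Hence
\[
\textstyle\sum_\alpha c_{\alpha,n} X([du_\alpha,du_{-\alpha}]_{\mfr{g}})\, u_{\alpha,n} u_{\beta_\alpha,n}^{q^{n_\alpha}}
\]
should be the quadratic form $v \mapsto X\bigl(\pi_{\msf{m}}[v, \sum_{i\ge1}\sigma^i(v)]\bigr)$ on $\msf{u}_{\mu<0}$, up to rescaling the coordinates by the units $c_{\alpha,n}$. These units depend only on $\alpha|_{Z_M^\circ}$, as in the proof of \Cref{prop:equivariance_reduction_even}, so they can be absorbed by a $Z^\circ_M$-rescaling commuting with $\msf{M}$. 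That quadratic form is exactly the one defining the Lang-torsor pullback $\msf{W}_{\Phi_{\mu<0},n}$ of $\msf{u}_{\mu<0}^\perf$ inside $H_{x,n,\phi}$ (with characteristic index set $\Phi_{\mu<0}$), after splitting the Heisenberg commutator via the polarization $\Phi_{\msf{\ov N}}\sqcup\Phi_{\msf N}$. This gives a $k$-equivariant isomorphism $\msf{W}(n)_\phi^\perf \cong \msf{W}_{\Phi_{\mu<0},n}$.

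The second step is to match the group actions. The $k$-action corresponds to $G(F)_{x,n,m}/K_\phi \cong k$. For $M(F)_{x,0+}$, \Cref{prop:positive_depth_diagonal_action_odd} shows the diagonal action is trivial, so only the inner action of $G_b(F)_{x_b,n,m_b}$ needs care. By \Cref{prop:trivial_inner_action_odd} this action factors through
\[
G_b(F)_{x_b,n,m_b}/G_b(F)_{x_b,2m,m},
\]
a Heisenberg-type group with center $\msf{m}_{x,n}$. The center acts through the inverse of the $M(F)_{x,n}$-action, by the same argument as in \Cref{prop:cohmology_reduction_even}, so on the $\psi^{-1}$-isotypic part $G_b(F)_{x_b,n,m}$ acts by $\psi\circ X$.

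The third step applies \cite[Theorem 6]{Tak26_Yu} together with \cite[Lemma 6.1]{Tak26_Yu}. These give that $R\Gamma_c(\msf{W}(n)_\phi)[\psi^{-1}] \to R\Gamma(\msf{W}(n)_\phi)[\psi^{-1}]$ is an isomorphism, concentrated in degree $d$, of dimension $\sqrt{\lvert\msf{m}^\perp\rvert}$. Given this, \Cref{prop:Heisenberg_odd} finishes the proof: the degree-$d$ cohomology is a representation of $G_b(F)_{x_b,n,m_b}$ on which $G_b(F)_{x_b,n,m}$ acts by $\psi\circ X$, and it has dimension $\sqrt{\lvert \msf{m}^\perp\rvert}$. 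Any irreducible constituent is $\kappa_{b,n,\phi}$, which has exactly that dimension, so the cohomology equals $\kappa_{b,n,\phi}$. The $M(F)_{x,0+}$-action is compatible because it is trivial on the cohomology and trivial on $\kappa_{b,n,\phi}$ after restriction to the relevant subgroup; I expect this to need a short check using the uniqueness in \Cref{prop:Heisenberg_odd}.

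The main obstacle is the first step: showing that the rescaled $f_n$ really is the Heisenberg quadratic form of \cite{Tak26_Yu}, including the normalization of the bracket terms $X([du_\alpha,du_{-\alpha}])$. It also requires that the $G_b$ inner action on $\msf{W}(n)_\phi$ has the form the theorem requires. That action is not directly algebraic here, since it arises from the generic fiber. I would first try to pin it down by \Cref{lem:isom_deformation_arbit} modulo $[\pi^{\flat,m+1}]$, tracking how $j$ shifts the Artin--Schreier coordinate by a function linear in $u_{\alpha,n}$. If this proves too delicate, the fallback avoids identifying the action explicitly. One would compute only the dimension and the middle-degree concentration from \cite{Tak26_Yu}, which depend only on the variety, and then rely on \Cref{prop:Heisenberg_odd} to force the isomorphism type.
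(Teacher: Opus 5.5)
\textbf{There is a gap in Step 1.} Your Steps 2 and 3, once the cohomology is known, are the paper's argument: the diagonal $M(F)_{x,0+}$-action is trivial (\Cref{prop:positive_depth_diagonal_action_odd}), the inner $G_b(F)_{x_b,2m,m}$-action is trivial (\Cref{prop:trivial_inner_action_odd}), the center acts by $\psi\circ X$ as the inverse of the $M(F)_{x,n}$-action, and the dimension count plus the uniqueness in \Cref{prop:Heisenberg_odd} finishes. But your appeal to \cite[Theorem 6, Lemma 6.1]{Tak26_Yu} in Step 3 rests on Step 1, and Step 1 fails for $n=2m-1$: there is no depth-$n$ Heisenberg Deligne--Lusztig variety that $\msf{W}(n)_\phi$ is.

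On the $G$-side, the group governing the cover is $G(F)_{x,n,m}/K_\phi\cong k$, because $K_\phi\supset G(F)_{x,2m,m}$ already contains the whole $\mfr{m}^\perp$-part. So there is no commutator to split, and $\msf{W}(n)_\phi$ is just the Artin--Schreier $k$-torsor $t^q-t=f_n$, not a Lang-torsor pullback of $\msf{u}_{\mu<0}^\perf$ in a Heisenberg group.

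On the $G_b$-side, the depth-$n$ Heisenberg group only sees $\msf{m}^\perp_{x_b,n/2}$. Its variety therefore has $\psi$-isotypic cohomology $\kappa_{x_b,n,\phi}$ of dimension $\sqrt{\lvert\msf{m}^\perp_{x_b,n/2}\rvert}$. This is strictly smaller than the $\sqrt{\lvert\msf{m}^\perp\rvert}$ you need whenever $\msf{m}^\perp_{x_b,[m_b,n/2)}\neq 0$; for instance, in the Lubin--Tate case for $\GL_3$ one has $\msf{m}^\perp_{x_b,n/2}=0$.

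Indeed $f_n$ does not come from a depth-$n$ commutator at all. By \Cref{lem:expcompeven} it pairs the Teichm\"uller coefficients $s_{\beta_\alpha,m-1}$ and $s_{\alpha,m}$, and only the first return $\sigma^{n_\alpha}\beta_\alpha=-\alpha$ survives. So your sum $\sum_{i\ge1}\sigma^i(v)$ has to be truncated; the intrinsic form is $u\mapsto\langle u,\sigma(i_{\mu<0}(u))\rangle_X$ as in \Cref{lem:M_action_on_W(n)_odd}. Your fallback does not escape the problem, since it still needs the variety identified with one whose cohomology is known.

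\textbf{The fix is that no identification is needed.} Write $f_n=\sum_\alpha a_\alpha u_{\alpha,n}u_{\beta_\alpha,n}^{q^{n_\alpha}}$ with $a_\alpha=c_{\alpha,n}X([du_\alpha,du_{-\alpha}]_{\mfr{g}})$. Each $a_\alpha$ is nonzero by $(G,M)$-genericity of $\phi$, and $\alpha\mapsto\beta_\alpha$ is a permutation of $\Phi_{\mu<0}$ (\Cref{lem:vareps}). Thus $f_n$ is a sum of cyclic polynomials in disjoint variables. Applying \cite[Theorem 4.1]{Tak26_Yu} to each cycle shows directly that $R\Gamma_c(\msf{U}(n),f_n^*\cl{L}_\psi)\to R\Gamma(\msf{U}(n),f_n^*\cl{L}_\psi)$ is an isomorphism, concentrated in degree $d$, of dimension $\prod_\alpha q^{n_\alpha}=\sqrt{\lvert\msf{m}^\perp\rvert}$. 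This is exactly what the paper does. It needs no rescaling of coordinates and no information on the $G_b$-action beyond your Step 2. The rescaling of \Cref{lem:normalization_t_factor} and the non-canonical comparison with the even-depth variety $\msf{W}_{\Phi_{\mu<0},2m}$ only enter later, for the $\msf{M}$-traces in \Cref{prop:cohomology_reduction_odd}.

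\textbf{A smaller point on the $M(F)_{x,0+}$-action.} Uniqueness in \Cref{prop:Heisenberg_odd} fixes only the $G_b(F)_{x_b,n,m_b}$-module, hence the extension to $G_b(F)_{x_b,n,m_b}\rtimes M(F)_{x,0+}$ only up to a character of $M(F)_{x,0+}$. To pin it down you need that $M(F)_{x,0+}$ acts trivially on $\kappa_{x_b,n,\phi}$ \cite[Theorem 2.27]{Tak26_Yu}, which is the input the paper uses.
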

\begin{proof}
    Recall the construction in \Cref{defi:reduction_of_specaff_odd}
    \begin{center}
        \begin{tikzcd} 
            \msf{W}(n)_\phi \ar[r] \ar[d] & \bb{A}^1 \ar[d, "t \mapsto t^q - t"] \\
            \msf{U}(n) \ar[r, "f_n"] & \bb{A}^1. 
        \end{tikzcd}
    \end{center}
    with a polynomial $f_{n} = \sum_{\alpha \in \Phi_{\mu<0}} c_{\alpha,n}X([du_\alpha, du_{-\alpha}]_{\mfr{g}}) \cdot u_{\alpha,n} u_{\beta_\alpha,n}^{q^{n_\alpha}}$. Here, $c_{\alpha,n}X([du_\alpha, du_{-\alpha}]_{\mfr{g}}) \in \ov{k}$ is nonzero since $\phi$ is $(G, M)$-generic. Now, the map $\alpha \mapsto \beta_\alpha$ is a permutation in $\Phi_{\mu < 0}$ and we may apply \cite[Theorem 4.1]{Tak26_Yu} to each cycle of this permutation to get that 
    \[
        R\Gamma_c(\msf{W}(n)_\phi, \Qla)[\psi^{-1}] \to R\Gamma(\msf{W}(n)_\phi, \Qla)[\psi^{-1}]
    \]
    is an isomorphism concentrated in degree $d$ and the $d$-th cohomology is of dimension 
    \[
        \prod_{\alpha \in \Phi_{\mu < 0}} q^{n_\alpha} = \sqrt{\lvert \msf{m}^\perp \rvert}. 
    \]
    It remains to identify the action of $G_b(F)_{x_b, n, m_b} \rtimes M(F)_{x, 0+}$ on $H_c^d(\msf{W}(n)_\phi, \Qla)[\psi^{-1}]$. First, the action of $M(F)_{x, 0+}$ is trivial by \Cref{prop:positive_depth_diagonal_action_odd}. Moreover, the action of $G_b(F)_{x_b, 2m, m}$ is also trivial by \Cref{prop:trivial_inner_action_odd}. Then, $G_b(F)_{x_b, n, m}$ acts on $H_c^d(\msf{W}(n)_\phi, \Qla)[\psi^{-1}]$ via a character 
    \[
        G_b(F)_{x_b, n, m} \to G_b(F)_{x_b, n, m} / G_b(F)_{x_b, 2m, m} \cong \msf{m}_{x, n} \xrightarrow{\psi \circ X} \Qlax
    \]
    since the diagonal action of $M(F)_{x, n}$ is trivial and $M(F)_{x, n} \subset G(F)_{x, n, m}$ acts by $(\psi \circ X)^{-1}$. Thus, $H_c^d(\msf{W}(n)_\phi, \Qla)[\psi^{-1}] \cong \kappa_{b, n, \phi}$ as a representation of $G_b(F)_{x_b, n, m_b}$ by \Cref{prop:Heisenberg_odd} since both sides have the same dimension $\sqrt{\lvert \msf{m}^\perp \rvert}$. Since $M(F)_{x, 0+}$ acts trivially on $\kappa_{x_b, n, \phi}$ by \cite[Theorem 2.27]{Tak26_Yu}, we get the desired claim. 
\end{proof}

Since $\kappa_{b, n, \phi}$ is irreducible as a representation of $G_b(F)_{x_b, n, m_b}$, \Cref{prop:cohomology_reduction_odd_partial} implies that $H_c^d(\msf{W}(n)_\phi, \Qla)[\psi^{-1}]$ is a twist of $\kappa_{b, n, \phi}$ by a character of $\msf{M} = M(F)_{x, 0} / M(F)_{x, 0+}$. Thus, it is enough to identify $H_c^d(\msf{W}(n)_\phi, \Qla)[\psi^{-1}]$ with $\kappa_{b, n, \phi}$ as an $\msf{M}$-representation. 

\begin{lem} \label{lem:normalization_t_factor}
    We may choose $t_\alpha \in \ov{k}^\times$ for each $\alpha \in \Phi_{\mu < 0}$ so that $\ov{c}_{\alpha, n} = t_\alpha t_{\beta_\alpha}^{q^{n_\alpha}}$ for every $\alpha \in \Phi_{\mu < 0}$ and $t_\alpha = t_\beta$ if $\alpha\vert_{Z_M^\circ} = \beta\vert_{Z_M^\circ}$. Here, $\ov{c}_{\alpha, n} \in \ov{k}$ is the reduction of $c_{\alpha, n}$. 
\end{lem}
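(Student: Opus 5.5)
The plan is to make the constants $\ov{c}_{\alpha,n}$ explicit in terms of $t_\infty$ and to read off the $t_\alpha$ from a Teichm\"{u}ller-type leading coefficient of a character of $t_\infty$. By \Cref{defi:constant_c_alpha_odd}, $c_{\alpha,n} = -\pi^{\flat,(q^{n_\alpha}+1)m} s_{\alpha,m} s_{\beta_\alpha,m-1}^{q^{n_\alpha}}$. Here $s_{\alpha,i}$ are the Teichm\"{u}ller coefficients of $\alpha(\sigma(t_\infty)^{-1})$. Since $t_\infty \in Z_{\cl{M}}^\circ$, the element $\alpha(\sigma(t_\infty)^{-1})$, and hence every $s_{\alpha,i}$, depends only on $\alpha\vert_{Z_M^\circ}$. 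So any definition of $t_\alpha$ through the $s_{\alpha,i}$ alone automatically gives $t_\alpha = t_\beta$ whenever $\alpha\vert_{Z_M^\circ}=\beta\vert_{Z_M^\circ}$.

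The first step is to relate $s_{\alpha,m}$ and $s_{\alpha,m-1}$ modulo the maximal ideal, after normalizing valuations. By \Cref{lem:valsalpha}, $\pi^{\flat, n+r_\alpha}s_{\alpha,n}$ is a unit for each $n$. I would show, by the recursion used in the proof of \Cref{lem:valsalpha}, that the reductions $\ov{\pi^{\flat,n+r_\alpha}s_{\alpha,n}}$ are independent of $n$ up to an explicit sign. In that recursion, the dominant term for $s_{\alpha,n}$ comes from $(i,j)=(0,n)$ against $c_0=1$, and the perturbation $c_n s_{\alpha,0}$ has strictly larger valuation. Concretely, I would expand $\alpha(\sigma(t_\infty)^{-1})$ as $[s_{\alpha,0}]\cdot(\text{unit})$ and check that the unit has the form $\sum [a_n]\pi^n$ with $a_n \equiv \epsilon^n \pi^{\flat,-n}$ to leading order, where $\epsilon=\pm1$. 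Write $e_\alpha = \ov{\pi^{\flat,r_\alpha}s_{\alpha,0}}$ and $\ov{\pi^{\flat,n+r_\alpha}s_{\alpha,n}} = \epsilon^n e_\alpha$. This yields
\[
\ov{c}_{\alpha,n} = -\epsilon^{m+(m-1)q^{n_\alpha}}\, e_\alpha\, e_{\beta_\alpha}^{q^{n_\alpha}} \cdot \ov{\pi^{\flat,\,(q^{n_\alpha}+1)m - (m+r_\alpha) - q^{n_\alpha}(m-1+r_{\beta_\alpha})}}.
\]
The exponent vanishes by the identity $r_\alpha = q^{n_\alpha}(1-r_{\beta_\alpha})$ of \Cref{lem:vareps}.

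It then remains to absorb the sign $-\epsilon^{m+(m-1)q^{n_\alpha}}$. If $\epsilon^{m+(m-1)q^{n_\alpha}}=-1$, or in any case, I would set $t_\alpha = \zeta_\alpha e_\alpha$ with $\zeta_\alpha \in \ov{k}^\times$ satisfying $\zeta_\alpha\zeta_{\beta_\alpha}^{q^{n_\alpha}} = -\epsilon^{m+(m-1)q^{n_\alpha}}$. Along each cycle of the permutation $\alpha\mapsto\beta_\alpha$, this system reduces to a single equation of the form $\zeta^{1\pm Q}=c$ for a large $p$-power $Q$, which is solvable in $\ov{k}$. The solution is unique up to a finite choice, so we can make it depend only on the restriction to $Z_M^\circ$. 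To see this, note that $\alpha\mapsto\beta_\alpha$ and $n_\alpha$ are compatible with restriction to $Z_M^\circ$, because they are defined via pairings with $\sigma^{-i}\mu$ and $\mu$ is central in $M$. One can therefore solve the system on the set of restrictions and pull back.

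I expect the main obstacle to be the leading-order comparison of $s_{\alpha,m}$ with $s_{\alpha,m-1}$, namely pinning down the sign $\epsilon$ and verifying that no cross terms contribute at the relevant valuation. If that computation proves too messy, a fallback is to avoid it entirely. Define the units $\ov{c}_{\alpha,n}$ as given, note that they depend only on $\alpha\vert_{Z_M^\circ}$, and solve $\ov{c}_{\alpha,n}=t_\alpha t_{\beta_\alpha}^{q^{n_\alpha}}$ directly cycle by cycle in $\ov{k}$ on the level of restrictions. This works because each cycle again gives one equation $t^{N}=c$ with $N$ prime to $p$ up to sign, which is solvable since $\ov{k}$ is algebraically closed.
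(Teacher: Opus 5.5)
Your fallback is exactly the paper's proof, and it is complete on its own. The constants $\ov{c}_{\alpha,n}$, $n_\alpha$ and $\beta_\alpha\vert_{Z_M^\circ}$ depend only on $\alpha\vert_{Z_M^\circ}$, and the induced map $\alpha\vert_{Z_M^\circ}\mapsto\beta_\alpha\vert_{Z_M^\circ}$ is a permutation of a finite set. On each cycle of length $\ell$, with $Q=\prod_i q^{n_{\alpha_i}}>1$, the system collapses to $t^{1-(-1)^\ell Q}=C$ with $C\in\ov{k}^\times$, which is solvable in $\ov{k}$. This works for arbitrary nonzero constants. So the explicit evaluation of $\ov{c}_{\alpha,n}$ in your main route buys nothing: you end up solving the same cyclic system for the $\zeta_\alpha$ anyway.

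The key step of your main route is also wrong as written. In the recursion from the proof of \Cref{lem:valsalpha} (with $N$ as there), the $(0,n)$ term has valuation $\nu^\flat(s_{\alpha,n})=-n-r_\alpha$. This is strictly larger than $\nu^\flat(c_ns_{\alpha,0})=-nq^N-r_\alpha$, so the $(0,n)$ term is the subdominant one. The dominant balance is $s_{\alpha,n}^{q^N}\approx \pi^{\flat,-(q^N-1)r_\alpha}c_ns_{\alpha,0}$. A balance against $c_0s_{\alpha,n}$ would instead force $\nu^\flat(s_{\alpha,n})=-r_\alpha$, contradicting \Cref{lem:valsalpha}.

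Done correctly, you use two facts. First, $c_n$ has leading term $\pi^{\flat,-nq^N}$ with coefficient $1$, coming from the factor $(1-[\pi^{\flat,-q^N}]\pi)^{-1}$. Second, $s_{\alpha,0}=\pi^{\flat,-r_\alpha}$, since $t_\infty\equiv\lambda(\pi^\flat)$. Together these give $(\pi^{\flat,n+r_\alpha}s_{\alpha,n})^{q^N}\equiv 1$, hence $\ov{\pi^{\flat,n+r_\alpha}s_{\alpha,n}}=1$. So your $\epsilon$ equals $1$ and in fact $\ov{c}_{\alpha,n}=-1$, but none of this is needed for the lemma.
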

\begin{proof}
    Since $t_\infty \in Z_{\cl{M}}^\circ$ and $\lambda \in X^*(Z_M^\circ)_{\bb{Q}}$, the constants $s_{\alpha, m - 1}$, $s_{\alpha, m}$, $r_\alpha$ and $n_\alpha$ depend only on $\alpha\vert_{Z_M^\circ}$. In particular, $c_{\alpha, n}$ depends only on $\alpha\vert_{Z_M^\circ}$ by \Cref{defi:constant_c_alpha_odd}. Now, the map $\alpha\vert_{Z_M^\circ} \mapsto \beta_\alpha \vert_{Z_M^\circ}$ is decomposed into permutations of characters of $Z_M^\circ$. For each cycle $\alpha_1, \alpha_2, \ldots, \alpha_\ell$, it is enough to take $t_1,\ldots,t_\ell \in \ov{k}$ so that 
    \[
        t_1 t_2^{q^{n_{\alpha_1}}} = \ov{c}_{\alpha_1, n}, \; t_2 t_3^{q^{n_{\alpha_2}}} = \ov{c}_{\alpha_2, n},\; \ldots,\; t_\ell t_1^{q^{n_{\alpha_\ell}}} = \ov{c}_{\alpha_\ell, n}. 
    \]
    The existence of such elements is immediate since $\ov{c}_{\alpha, n} \neq 0$ for every $\alpha$. 
\end{proof}

We take $t_\alpha \in \ov{k}^\times$ for each $\alpha \in \Phi_{\mu < 0}$ as in \Cref{lem:normalization_t_factor} and set $v_{\alpha} = t_\alpha u_{\alpha, n}$. Then, 
\[
    \msf{U}(n) = \Spec(\ov{k}[v_\alpha \vert \alpha \in \Phi_{\mu < 0}]) \to \msf{u}_{\mu < 0} ,\quad (v_\alpha) \mapsto \sum_{\alpha \in \Phi_{\mu < 0}} v_\alpha du_\alpha
\]
is an $\msf{M}$-equivariant isomorphism by \Cref{prop:equivariance_reduction_odd} and the requirement $t_\alpha = t_\beta$ if $\alpha\vert_{Z_M^\circ} = \beta\vert_{Z_M^\circ}$. In these coordinates, we have 
\[
    f_n = \sum_{\alpha \in \Phi_{\mu < 0}} X([du_\alpha, du_{-\alpha}]_{\msf{g}}) v_\alpha v_{\beta_\alpha}^{q^{n_\alpha}}. 
\]
Let $\cl{L}_\psi$ be the Artin-Schreier local system on $\bb{A}^1$ associated to an additive character $\psi$ (see \cite[Section 4.1]{Tak26_Yu}). Then, we would like to identify the trace of the $\msf{M}$-representation 
\[
    H_c^d(\msf{W}(n), \Qla)[\psi^{-1}] \cong H_c^d(\msf{U}(n), f_n^*\cl{L}_{\psi}). 
\]
We first try to understand the $\msf{M}$-action on the right-hand side. In fact, we will interpret the right-hand side as in the form of \cite[Proposition 4.15]{Tak26_Yu}. Let $\msf{m}^\perp \subset \Lie(\msf{G})$ be the orthogonal complement of $\msf{m} = \Lie(\msf{M})$ with respect to the $Z_{\msf{M}}^\circ$-action and we equip $\msf{m}^\perp$ with an $\msf{M}$-equivariant symplectic structure 
\[
    \langle -, - \rangle_X = X \circ \pi_{\msf{m}} \circ [-, -]_{\msf{g}} \colon \msf{m}^\perp \times \msf{m}^\perp \to \msf{g} \xrightarrow{\pi_{\msf{m}}} \msf{m} \xrightarrow{X} k. 
\]
Let $\Phi^{\msf{M}} \subset \Phi$ be the set of roots outside $\msf{M}$ and let $\msf{\ov{m}}^\perp$ be the vectorial group scheme over $\ov{k}$ associated to $\msf{m}^\perp$.

\begin{lem} \label{lem:M_action_on_W(n)_odd}
    Let $L_{\msf{m}^\perp} \colon \msf{\ov{m}}^\perp \to \msf{\ov{m}}^\perp$ be the Lang torsor $x \mapsto \sigma(x) - x$ and let $\pi_{\mu < 0}\colon \msf{\ov{m}}^\perp \to \msf{u}_{\mu < 0}$ be the weight projection. Then, we have the following. 
    \begin{enumerate}
        \item The projection $\pi_{\mu < 0}$ restricts to an isomorphism $L_{\msf{m}^\perp}^{-1}(\msf{u}_{\mu < 0}) \xrightarrow{\sim} \msf{u}_{\mu < 0}$. 
        \item Let $i_{\mu < 0} \colon \msf{u}_{\mu < 0} \to L_{\msf{m}^\perp}^{-1}(\msf{u}_{\mu < 0})$ be the inverse of $\pi_{\mu < 0}$. Then, $f_n$ is identified with 
        \[
            \msf{u}_{\mu < 0} \to \bb{A}^1 ,\quad 
            u \mapsto \langle u, \sigma(i_{\mu < 0}(u)) \rangle_X. 
        \]
        In particular, $f_n$ is invariant under $\msf{M}$. 
        \item The Cartesian diagram
        \begin{center}
            \begin{tikzcd} 
                \msf{W}(n)_\phi \ar[r] \ar[d] & \bb{A}^1 \ar[d, "t \mapsto t^q - t"] \\
                \msf{U}(n) \ar[r, "f_n"] & \bb{A}^1. 
            \end{tikzcd}
        \end{center}
        is equivariant under $\msf{M}$. Here, $\msf{M}$ acts trivially on $\bb{A}^1$. 
    \end{enumerate}
\end{lem}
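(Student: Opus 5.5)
We will treat the three claims in order, working in the normalized coordinates $v_\alpha = t_\alpha u_{\alpha,n}$ from \Cref{lem:normalization_t_factor}. Under these coordinates $\msf{U}(n)\cong \msf{u}_{\mu<0}$ $\msf{M}$-equivariantly, and
\[
    f_n = \sum_{\alpha\in\Phi_{\mu<0}} X([du_\alpha,du_{-\alpha}]_{\msf{g}})\, v_\alpha v_{\beta_\alpha}^{q^{n_\alpha}}.
\]

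\emph{Step (1).} Decompose $\msf{m}^\perp = \bigoplus_{\gamma\in\Phi^{\msf{M}}} \ov{k}\, du_\gamma$ into root lines; the Frobenius $\sigma$ permutes these lines via $\gamma\mapsto\sigma\gamma$, by the choice in \Cref{lem:choiceia}. Let $x\in\msf{\ov{m}}^\perp$ with $\sigma(x)-x=u\in\msf{u}_{\mu<0}$. Comparing coefficients gives $x_{\sigma\gamma}^{\text{new}} = x_\gamma^q$ on every $\sigma$-orbit, except where $\sigma\gamma\in\Phi_{\mu<0}$, at which point we add $u_{\sigma\gamma}$. Each $\sigma$-orbit in $\Phi^{\msf{M}}$ meets $\Phi_{\mu<0}$ because $M$ is elliptic; this is essentially \Cref{lem:vareps}, or the characteristic index set $\Phi_{\msf{\ov{N}}}\cap\sigma(\Phi_{\msf{N}})=\Phi_{\mu<0}$. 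One then checks that the coordinates of $x$ at roots of $\Phi_{\mu<0}$ determine all other coordinates by iterating Frobenius, and the equation forces these determined values to be consistent. Concretely, for $\gamma$ lying between consecutive hits $\alpha$ and $\sigma^{k}\alpha$ of $\Phi_{\mu<0}$ along an orbit, we get $x_{\sigma^i\alpha}=x_\alpha^{q^i}$ for $0\le i<k$. At $\sigma^k\alpha$ the equation reads $x_{\sigma^k\alpha}=x_{\sigma^{k-1}\alpha}^q+u_{\sigma^k\alpha}$. Hmm — there is one more constraint here: $x_{\sigma^k\alpha}$ itself should be the free coordinate. I expect the precise normalization is that $x$ restricted to $\Phi_{\mu<0}$ equals $u$ plus a correction, and the point is that the map to the $\Phi_{\mu<0}$-part is bijective. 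I will verify bijectivity as a polynomial map whose inverse is also polynomial, obtained by solving the triangular system along each orbit. This gives the isomorphism $i_{\mu<0}$, and it is $\msf{M}$-equivariant since $\sigma$, the Lang map and the weight projection all commute with $\Ad(\msf{M})$ for $\msf{M}$ defined over $k$.

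\emph{Step (2).} Compute $\langle u,\sigma(i_{\mu<0}(u))\rangle_X$. The pairing $X\circ\pi_{\msf{m}}\circ[-,-]$ pairs only opposite root lines, by \Cref{lem:vanish_X} as in the proof of \Cref{lem:expcompeven}. So only the coefficient of $\sigma(i_{\mu<0}(u))$ at $-\alpha$ for $\alpha\in\Phi_{\mu<0}$ contributes. Since $-\alpha=\sigma^{n_\alpha}\beta_\alpha$ with no hit of $\Phi_{\mu<0}$ in between (\Cref{lem:vareps}), Step (1) gives this coefficient as $v_{\beta_\alpha}^{q^{n_\alpha}}$. Summing over $\alpha$ reproduces $f_n$. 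The main obstacle is bookkeeping: sign and Frobenius-index conventions, and confirming the orbit segment from $\beta_\alpha$ to $-\alpha$ avoids $\Phi_{\mu<0}$. Once the formula is established, $\msf{M}$-invariance of $f_n$ follows because $\langle-,-\rangle_X$ is $\msf{M}$-invariant ($X$ is $M$-stable) and $i_{\mu<0}$ and $\sigma$ are $\msf{M}$-equivariant.

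\emph{Step (3).} The diagonal $\msf{M}$-action on $\msf{W}(n)_\phi$ covers the action on $\msf{U}(n)$, and that action is the adjoint one by \Cref{prop:equivariance_reduction_odd}. It commutes with the $k$-torsor structure, since it commutes with $G(F)_{x,n,m}$. Thus each $h$ acts on the Artin–Schreier torsor defined by $f_n=f_n\circ h$ by a translation by a locally constant, hence constant, element $a_h\in k$. The reduction of $x_{\CM,K_\phi}$ is fixed by $M(F)_{x,0}$ (as in \Cref{prop:positive_depth_diagonal_action_odd}), which forces $a_h=0$. Therefore the diagram is equivariant with trivial action on $\bb{A}^1$.
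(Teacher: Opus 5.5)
Your proposal is correct and follows the paper's proof essentially step for step. Step (1) rests on $L_{\msf{m}^\perp}^{-1}(\msf{u}_{\mu<0})$ being cut out by $x_\beta = x_{\sigma^{-1}\beta}^q$ for $\beta \in \Phi^{\msf{M}} \setminus \Phi_{\mu<0}$, together with every $\sigma$-orbit of $\Phi^{\msf{M}}$ meeting $\Phi_{\mu<0}$. Step (2) is the computation $v_{-\alpha}(\sigma(i_{\mu<0}(u))) = v_{\beta_\alpha}(u)^{q^{n_\alpha}}$ from \Cref{lem:vareps}. Step (3) shows that the true action differs from $(u,t)\mapsto(\Ad(h)u,t)$ by a constant translation, which the fixed point coming from $x_{\CM, K_\phi}$ over the origin forces to vanish.

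The hesitation in your Step (1) is unnecessary. At a root $\sigma^k\alpha \in \Phi_{\mu<0}$ the equation $\sigma(x)-x=u$ imposes no condition on $x$; it only determines $u_{\sigma^k\alpha} = x_{\sigma^{k-1}\alpha}^q - x_{\sigma^k\alpha}$ (note the sign). So the coordinates of $x$ on $\Phi_{\mu<0}$ are free, and the remark about ``$u$ plus a correction'' should simply be dropped.
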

\begin{proof}
    By \cite[Lemma 3.2]{Tak25z}, each $\sigma$-orbit of $\Phi^{\msf{M}}$ intersects with $\Phi_{\mu < 0}$. Let $v_\beta$ be the function on $\msf{\ov{m}}^\perp$ representing the coefficient of $du_\beta$. Then, $L_{\msf{m}^\perp}^{-1}(\msf{u}_{\mu < 0}) \subset \msf{\ov{m}}^\perp$ is defined by 
    \[
        v_{\beta} = v_{\sigma^{-1}\beta}^q, \quad \beta \in \Phi^{\msf{M}} - \Phi_{\mu < 0}. 
    \]
    Thus, we get (1). For (2), we have 
    \[
        v_{-\alpha}(\sigma(i_{\mu < 0}(u))) = v_{\beta_\alpha}(u)^{q^{n_\alpha}}
    \]
    by \Cref{lem:vareps}. Then, it is easy to see $f_n(u) = \langle u, \sigma(i_{\mu < 0}(u)) \rangle_X$. Since $\pi_{\mu < 0}$ and $\langle -, - \rangle_X$ are equivariant under the adjoint action of $\msf{M}$, it follows that $f_n$ is invariant under $\msf{M}$. 

    Finally, we prove (3). Now, $\msf{W}(n)_\phi$ admits two $\msf{M}$-actions: one is the reduction of the action on the generic fiber $\cl{W}(n)_\phi$ (denoted by $i_1$) and the other is defined by the Cartesian diagram (denoted by $i_2$). By construction, both actions commute with the $k$-action on $\msf{W}(n)_\phi$. Now, 
    \[
        i_1(h) \circ i_2(h^{-1}) \colon \msf{W}(n)_\phi \to \msf{W}(n)_\phi
    \]
    is an automorphism over $\msf{U}(n)$ commuting with the $k$-action for $h \in \msf{M}$. As in the proof of \Cref{prop:equivariance_under_HW_even}, it is enough (for deducing $i_1 = i_2$) to show that $i_1(h) \circ i_2(h)^{-1}$ has a fixed point. By the proof of \Cref{prop:stability_under_diagonal_action}, the reduction $\msf{x}_\phi \in \msf{W}(n)_\phi$ of the CM point $x_{\CM, K_\phi}$ is fixed by $i_1(h)$. Now, the $K_\phi$-level structure of $x_{\CM, K_\phi}$ associated via \Cref{lem:levelstrCp} and \Cref{lem:levelVn_odd} is given by $x = \id$. Thus, the coordinate of $\msf{x}_\phi$ via $\msf{W}(n)_\phi \subset \msf{U}(n) \times \bb{A}^1$ is given by $u_\alpha = 0$ and $t = 0$, so $\msf{x}_\phi$ is fixed by $i_2(h)$. Thus, we get the claim. 
\end{proof}

Now, the computation in \cite[Section 5]{Tak26_Yu} can be applied to our situation due to the description in \Cref{lem:M_action_on_W(n)_odd} (2). 

\begin{defi}
    Let $\Phi_0^{\msf{M}} \subset X^*(Z_{\msf{M}}^\circ)$ be the set of $Z_{\msf{M}}^\circ$-weights in $\msf{\ov{m}}^\perp$ and let $\msf{\ov{m}}^{\perp, \lambda} \subset \msf{\ov{m}}^\perp$ be the weight space for each $\lambda \in \Phi_0^{\msf{M}}$. Let $\Phi_{\sigma}^{\msf{M}}$ be the set of $\sigma$-orbits in $\Phi_0^{\msf{M}}$ and for each $C \in \Phi_{\sigma}^{\msf{M}}$, let $\msf{m}^{\perp, C} \subset \msf{m}^\perp$ be the $k$-linear subspace such that $\ov{\msf{m}}^{\perp, C} = \msf{m}^{\perp, C} \otimes \ov{k}$ is given by
    \[
        \ov{\msf{m}}^{\perp, C} = \bigoplus_{\lambda \in C} \msf{\ov{m}}^{\perp, \lambda}. 
    \]
    Then, $\msf{m}^{\perp, C}$ admits a $k_C$-linear structure for a finite extension $k_C / k$ of degree $\lvert C \rvert$. Let $\Sigma = \langle \sigma \rangle \times \{ \pm 1 \}$ and let $\Phi_{\Sigma}^{\msf{M}}$ be the set of $\Sigma$-orbits in $\Phi_0^{\msf{M}}$. We take the decomposition 
    \[
        \Phi_\Sigma^{\msf{M}} = \Phi_{\Sigma, \asym}^{\msf{M}} \sqcup \Phi_{\Sigma, \sym}^{\msf{M}}
    \]
    so that $C \in \Phi_{\Sigma, \sym}^{\msf{M}}$ if and only if $C \in \Phi_\Sigma^{\msf{M}}$ consists of a single $\sigma$-orbit. 
\end{defi}

\begin{prop} \label{prop:trace_computation_cohomology}
    For each $\gamma \in \msf{M}$, let 
    \[
        d_\gamma = \sum_{C \in \Phi_{\Sigma, \sym}^{\msf{M}}} (\dim_{k_C} \msf{m}^{\perp, C} - \dim_{k_C} \msf{m}^{\perp, C, \gamma}). 
    \]
    Here, the superscript $\gamma$ denotes the fixed subspace under the action of $\gamma$. Then 
    \[
        \tr(\gamma \vert H_c^d(\msf{u}_{\mu < 0}, f_n^*\cl{L}_{\psi})) = (-1)^{d_\gamma} \sqrt{\lvert \msf{m}^{\perp, \gamma} \rvert}. 
    \]
\end{prop}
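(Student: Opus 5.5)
The plan is to compute the trace via the Grothendieck--Lefschetz trace formula on the $\gamma$-twisted Frobenius. First decompose $\msf{u}_{\mu<0}$ and the quadratic-type form $f_n$ along the $\Sigma$-orbits $C\in\Phi_\Sigma^{\msf{M}}$. By \Cref{lem:M_action_on_W(n)_odd} (2), $f_n(u)=\langle u,\sigma(i_{\mu<0}(u))\rangle_X$. Since $\langle -,-\rangle_X$ pairs the weight space of $\lambda$ only with that of $-\lambda$, and $\gamma\in\msf{M}$ commutes with $Z_{\msf{M}}^\circ$, both $f_n$ and the $\gamma$-action decompose as an external sum over $\Sigma$-orbits. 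By Künneth, $H_c^d(\msf{u}_{\mu<0},f_n^*\cl{L}_\psi)$ is the tensor product of the corresponding middle-degree cohomologies $H_c^{d_C}(\msf{u}^C_{\mu<0},f_{n,C}^*\cl{L}_\psi)$ (concentrated in the middle by \cite[Theorem 4.1]{Tak26_Yu}, as in \Cref{prop:cohomology_reduction_odd_partial}). It therefore suffices to prove the trace formula orbit by orbit, with the sign contributed only by symmetric orbits. Here $\msf{u}^C_{\mu<0}$ is the summand of $\msf{u}_{\mu<0}$ with weights in $C$, $f_{n,C}$ is the restriction of $f_n$ to it, and $d_C=\dim\msf{u}^C_{\mu<0}$.

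Next, identify each piece with the model computed in \cite[Proposition 4.15]{Tak26_Yu} and \cite[Section 5]{Tak26_Yu}. Via the isomorphism $i_{\mu<0}$ of \Cref{lem:M_action_on_W(n)_odd} (1), $\msf{u}^C_{\mu<0}$ is identified with $L^{-1}_{\msf{m}^\perp}(\msf{u}^C_{\mu<0})$, and $f_{n,C}$ becomes the function $v\mapsto\langle v,\sigma(v)\rangle_X$ restricted to this Lang preimage. This is exactly the Heisenberg Deligne--Lusztig model attached to the symplectic $k$-space $\msf{m}^{\perp}$ restricted to $C$, with $\msf{M}$ acting through its symplectic action. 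The cited computation then gives the trace of $\gamma$ as follows:
\begin{itemize}
\item for an asymmetric orbit, where $C$ splits into two $\sigma$-orbits $C_+\sqcup C_-$ with $C_-=-C_+$ giving a Lagrangian splitting, the trace is $\lvert(\msf{m}^{\perp,C_+})^\gamma\rvert=\sqrt{\lvert(\msf{m}^{\perp}_C)^\gamma\rvert}$;
\item for a symmetric orbit, where $k_C$ carries an involution, the trace is $(-1)^{\dim_{k_C}\msf{m}^{\perp,C}-\dim_{k_C}\msf{m}^{\perp,C,\gamma}}\sqrt{\lvert\msf{m}^{\perp,C,\gamma}\rvert}$, a Gérardin-type formula for the unitary-group Weil representation.
\end{itemize}
Multiplying over all orbits gives $(-1)^{d_\gamma}\sqrt{\lvert\msf{m}^{\perp,\gamma}\rvert}$.

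The main obstacle is making the identification with \cite{Tak26_Yu} precise, namely matching normalizations and the $\gamma$-action. This requires that the coordinate change $v_\alpha=t_\alpha u_{\alpha,n}$ of \Cref{lem:normalization_t_factor} is $\msf{M}$-equivariant, which \Cref{prop:equivariance_reduction_odd} and the choice $t_\alpha=t_\beta$ for equal $Z_M^\circ$-restrictions provide. If the cited result is stated only for semisimple $\gamma$, I would first reduce to that case: write $\gamma=\gamma_s\gamma_u$ and use that the unipotent part acts trivially on the trace up to replacing the space by its $\gamma_s$-fixed locus, as in the standard Deligne--Lusztig argument. If needed, I would instead compute directly by counting points of the $\gamma F$-twisted Artin--Schreier cover, reducing to a Gauss-sum evaluation of the quadratic form $\langle v,\sigma v\rangle_X$ on the $\gamma$-fixed part; the sign of that Gauss sum is the source of $(-1)^{d_\gamma}$ for the symmetric orbits.
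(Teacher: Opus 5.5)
Your strategy is the same as the paper's: pass to the Lang preimage via \Cref{lem:M_action_on_W(n)_odd}, identify $H_c^d(\msf{u}_{\mu<0}, f_n^*\cl{L}_\psi)$ $\msf{M}$-equivariantly with the cohomology of the Heisenberg Deligne--Lusztig variety of \cite{Tak26_Yu}, and read off the trace from the Heisenberg--Weil character. However, the identification step is wrong as you state it.

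With $v = i_{\mu<0}(u)$ you have $u = \pi_{\mu<0}(v)$. So \Cref{lem:M_action_on_W(n)_odd} (2) gives $f_n = \langle \pi_{\mu<0}(v), \sigma(v)\rangle_X$ on the Lang preimage, not $\langle v, \sigma(v)\rangle_X$, and the projection cannot be dropped. Already for $G = \GL_2$ with $M$ the unramified elliptic torus this fails. There $\Phi = \{\pm\alpha\}$, $\sigma\alpha = -\alpha$, $\Phi_{\mu<0} = \{\alpha\}$, $\beta_\alpha = \alpha$ and $n_\alpha = 1$. The Lang preimage consists of $v = x\,du_\alpha + x^q\,du_{-\alpha}$, and $c = X([du_\alpha, du_{-\alpha}]_{\msf{g}})$ satisfies $c^q = -c$. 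Hence $f_n = c x^{q+1}$, while
\[
    \langle v, \sigma(v) \rangle_X = c x^{q+1} - c x^{q^2 + q} = f_n + f_n^q,
\]
which is Artin--Schreier equivalent to $2 f_n$. Your sheaf is therefore $f_n^*\cl{L}_{\psi(2\,\cdot)}$. For $p = 2$ it is constant, with cohomology in degree $2$ rather than the middle degree $1$, whereas the result is meant for all $p$.

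The correct matching, which is what the paper does, is $f(v) = \langle \sigma(v), \pi_{\mu<0}(v) \rangle_X = -f_n(u)$. This is exactly the function of \cite[Proposition 5.3]{Tak26_Yu} for the polarization $\Phi^{\msf{M}+} = \Phi_{\ov{\msf{N}}}$. The sign is absorbed by replacing $\psi$ with $\psi^{-1}$, giving $\msf{M}$-equivariantly
\[
    H_c^d(\msf{u}_{\mu<0}, f_n^*\cl{L}_\psi) \cong H_c^d(\msf{u}_{\mu<0}, f^*\cl{L}_{\psi^{-1}}) \cong H_c^d(\msf{W}_{\Phi_{\mu<0}, 2m}, \Qla)[\psi].
\]

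With that correction the rest goes through, but your route is longer than needed. \cite[Theorem 5.14]{Tak26_Yu} identifies the right-hand side with the Heisenberg--Weil representation of the whole symplectic space $\msf{m}^\perp$. Then \cite[Theorem 2.27]{Tak26_Yu} gives its character $(-1)^{d_\gamma}\sqrt{\lvert \msf{m}^{\perp,\gamma} \rvert}$ at once.

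Your K\"unneth splitting over $\Sigma$-orbits is valid, since both $f_n$ and the $\gamma$-action respect it. But using orbit-by-orbit versions of the cited results would require checking that they apply to a single orbit rather than to a datum $(G, M)$, which the global citation avoids.

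The equivariance of $v_\alpha = t_\alpha u_{\alpha, n}$, which you flag as the main obstacle, is already established before the statement. No reduction to semisimple $\gamma$ or point counting is needed.
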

\begin{proof}
    We apply \cite[Proposition 5.3]{Tak26_Yu} to $r = 2m$: the notation loc. cit. is translated into the notation of \Cref{lem:M_action_on_W(n)_odd} as $\pi_{I_1} = \pi_{\mu < 0}$, $\wtd{\msf{u}}_{I, r/2} = L_{\msf{m}^\perp}^{-1}(\msf{u}_{\mu < 0})$ and 
    \[
        f(v) = \langle \sigma(v), \pi_{I_1}(v) \rangle_\phi = - \langle u, \sigma(i_{\mu < 0}(u)) \rangle_X = - f_n(u)
    \]
    for $v = i_{\mu < 0}(u)$. Thus, we have an $\msf{M}$-equivariant isomorphism 
    \[
        H_c^d(\msf{u}_{\mu < 0}, f_n^*\cl{L}_{\psi}) \cong H_c^d(\msf{u}_{\mu < 0}, f^*\cl{L}_{\psi^{-1}}) \cong H_c^d(\msf{W}_{\Phi_{\mu < 0}, 2m}, \Qla)[\psi]. 
    \]
    Then, the claim follows from \cite[Theorem 2.27, Theorem 5.14]{Tak26_Yu}. 
\end{proof}
\begin{rmk} \label{rmk:comparison_Heisenberg_Deligne_Lusztig}
    The proof also shows that $\msf{W}(n)_\phi^\perf$ is \textit{non-canonically} (i.e.\ after making the choices in \Cref{lem:normalization_t_factor}) isomorphic to $\msf{W}_{\Phi_{\mu < 0}, 2m}$. 
\end{rmk}

Now, it remains to compare this formula with the trace of $\kappa_{b, n, \phi}$. 

\begin{prop} \label{prop:cohomology_reduction_odd}
    When $m \geq 2$, \Cref{prop:cohomology_reduction} holds for $n = 2m - 1$. 
\end{prop}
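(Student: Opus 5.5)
By \Cref{prop:cohomology_reduction_odd_partial}, we already know three things. The map in \Cref{prop:cohomology_reduction} is an isomorphism after taking $\psi^{-1}$-isotypic parts. Both sides are concentrated in degree $d$. The $d$-th cohomology is isomorphic to $\kappa_{b, n, \phi}$ as a representation of $G_b(F)_{x_b, n, m_b} \rtimes M(F)_{x, 0+}$. Since $\kappa_{x, n, \phi}$ is the character $\psi \circ X$ of $G(F)_{x, n, m}$, tensoring over $G(F)_{x, n, m}$ with $\kappa_{x, n, \phi}$ simply extracts the $\psi^{-1}$-isotypic part. The remaining task is therefore to pin down the action of $\msf{M} = M(F)_{x, 0}/M(F)_{x, 0+}$. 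By \Cref{lem:hyperspecial_M(F)_x} and \Cref{lem:diagonal_central_action}, this suffices for the full group $G_b(F)_{x_b, n, m_b} \rtimes M(F)_x$.

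Because $\kappa_{b, n, \phi}$ is irreducible for $G_b(F)_{x_b, n, m_b}$, Schur's lemma shows that $H_c^d(\msf{W}(n)_\phi, \Qla)[\psi^{-1}] \cong \kappa_{b, n, \phi} \otimes \chi$ for a character $\chi$ of $\msf{M}$. We determine $\chi$ by comparing traces of $\gamma \in \msf{M}$. Here we rely on the identification $H_c^d(\msf{W}(n)_\phi, \Qla)[\psi^{-1}] \cong H_c^d(\msf{U}(n), f_n^*\cl{L}_\psi)$, which is $\msf{M}$-equivariant by \Cref{lem:M_action_on_W(n)_odd} (3). \Cref{prop:trace_computation_cohomology} then gives the cohomology-side trace
\[
\tr(\gamma) = (-1)^{d_\gamma}\sqrt{\lvert \msf{m}^{\perp,\gamma}\rvert}.
\]

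Next we compute $\tr(\gamma \mid \kappa_{b, n, \phi})$. By \Cref{prop:Heisenberg_odd}, $\kappa_{b, n, \phi}$ is the induction of $\kappa_{x_b, n, \phi}$ from $G_b(F)_{x_b, n, n/2}$. The element $\gamma$ normalizes this subgroup, so the Mackey-type character formula for induced representations applies. In it, $\gamma$ permutes the summands $j\cdot \kappa_{x_b, n, \phi}$ indexed by $G_b(F)_{x_b, n, m_b}/G_b(F)_{x_b, n, n/2} \cong \msf{m}^\perp_{x_b,[m_b,n/2)}$. Only the $\gamma$-fixed cosets contribute, and on each of them $\gamma$ acts through $\kappa_{x_b, n, \phi}$ twisted by $j$. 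On the fixed summands we use the known trace of the Heisenberg–Weil representation from \cite[Theorem 2.27]{Tak26_Yu}, which is $\pm\sqrt{\lvert \msf{m}^{\perp}_{x_b, n/2}{}^{\gamma}\rvert}$ with a sign governed by the symmetric orbits. The number of fixed cosets is $\lvert (\msf{m}^\perp_{[m_b, n/2)})^\gamma \rvert$. Combining the two counts should give
\[
\tr(\gamma\mid\kappa_{b,n,\phi}) = (-1)^{d'_\gamma}\sqrt{\lvert \msf{m}^{\perp,\gamma}\rvert},
\]
with a sign $d'_\gamma$ defined by the same recipe for $G_b$.

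The main obstacle is matching the two signs, that is, showing $d_\gamma \equiv d'_\gamma \pmod 2$. This needs a dictionary between the $\Sigma$-orbit decompositions of the $Z_M^\circ$-weights for $G$ and for $G_b$. These weights coincide as $\msf{M}$-representations, since $\msf{G}_b$ and $\msf{G}$ share $\msf{M}$ and the root data agree over $\breve F$. The Frobenius structures, however, differ by twisting with $b=\mu(-\pi)$. The twist only shifts which graded piece a weight lives in, not the $\sigma$-orbit structure of the weights, so the symmetric/asymmetric classification agrees; the work is in checking that the fixed-space dimensions $\dim_{k_C}\msf{m}^{\perp,C,\gamma}$ agree orbit by orbit. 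Once the traces agree for all $\gamma$, the character $\chi$ is trivial, because $\sqrt{\lvert\msf{m}^{\perp,\gamma}\rvert}\neq0$ and so traces determine the twist. This proves \Cref{prop:cohomology_reduction} for $n = 2m-1$ with $m \geq 2$.
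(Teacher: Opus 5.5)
Your reduction is the same as the paper's. By \Cref{prop:cohomology_reduction_odd_partial} and irreducibility you get $H_c^d(\msf{W}(n)_\phi,\Qla)[\psi^{-1}]\cong\kappa_{b,n,\phi}\otimes\chi$, and you kill $\chi$ by comparing with the cohomological trace $(-1)^{d_\gamma}\sqrt{\lvert\msf{m}^{\perp,\gamma}\rvert}$ from \Cref{prop:trace_computation_cohomology}.

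The gap is your direct Mackey evaluation of $\tr(\gamma\mid\kappa_{b,n,\phi})$. A $\gamma$-fixed coset $j\,G_b(F)_{x_b,n,n/2}$ contributes $\tr(j^{-1}\Ad(\gamma)(j)\cdot\gamma\mid\kappa_{x_b,n,\phi})$, not $\tr(\gamma\mid\kappa_{x_b,n,\phi})$. In general $j^{-1}\Ad(\gamma)(j)$ is a nontrivial element of $G_b(F)_{x_b,n,n/2}$. For such twisted elements you cannot read off $\pm\sqrt{\lvert\msf{m}^{\perp,\gamma}_{x_b,n/2}\rvert}$ with a sign independent of $j$. A priori you only have the bound $\lvert\tr\rvert\le\sqrt{\lvert\msf{m}^{\perp,\gamma}_{x_b,n/2}\rvert}$, which is how the paper uses \cite[Theorem 2.27]{Tak26_Yu}. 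Likewise, the filtration by the $\msf{m}^\perp_{x_b,r}$ only yields $\lvert J_\gamma\rvert\le\lvert\msf{m}^{\perp,\gamma}_{x_b,[m_b,n/2)}\rvert$ for the set $J_\gamma$ of fixed cosets. Both become equalities when $\gamma$ has order prime to $p$, since coprime action gives $\gamma$-fixed representatives $j$. But $\chi$ must be tested on all of $\msf{M}$, including elements of order divisible by $p$, and there your ``should give'' is unsupported.

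The paper never computes $\tr(\gamma\mid\kappa_{b,n,\phi})$. Let $V$ be the $G_b(F)_{x_b,n+,n/2+}$-invariants of the cohomology; by \Cref{prop:Heisenberg_odd}, $V$ is $\kappa_{x_b,n,\phi}\otimes\chi$. The paper then:
\begin{enumerate}
\item writes $\tr(\gamma\mid H_c^d)=\sum_{j\in J_\gamma}\tr(j^{-1}\Ad(\gamma)(j)\gamma\mid V)$;
\item applies the two upper bounds above, whose product is exactly $\sqrt{\lvert\msf{m}^{\perp,\gamma}\rvert}=\lvert\tr(\gamma\mid H_c^d)\rvert$;
\item concludes that every inequality is an equality, so all terms equal $(-1)^{d_\gamma}\sqrt{\lvert\msf{m}^{\perp,\gamma}_{x_b,n/2}\rvert}$, and the $j=1$ term gives $\tr(\gamma\mid V)$.
\end{enumerate}

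The comparison with $\tr(\gamma\mid\kappa_{x_b,n,\phi})$ is then \Cref{lem:trace_for_kappa_odd}, which resolves your sign obstacle using a fact your proposal does not supply. Since $x_b$ is fixed by $\mu(-\pi)\sigma$, the fractional part $\{\alpha(x_b)\}$ is constant on $\sigma$-orbits. A symmetric orbit contains both $\alpha$ and $-\alpha$, so it has $\{\alpha(x_b)\}=\tfrac12$. Hence $\msf{m}^{\perp,C}=\msf{m}^{\perp,C}_{x_b,n/2}$ for every $C\in\Phi^{\msf{M}}_{\Sigma,\sym}$, and the sign in \cite[Theorem 2.27]{Tak26_Yu} for $\kappa_{x_b,n,\phi}$ is exactly $(-1)^{d_\gamma}$. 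Without this observation, your $d'_\gamma$ sees only the middle graded piece and cannot be matched with $d_\gamma$.
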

\begin{proof}
    By \Cref{prop:cohomology_reduction_odd_partial} and the subsequent comments, we have an isomorphism
    \[
        H_c^d(\msf{u}_{\mu < 0}, f_n^*\cl{L}_{\psi}) \cong H_c^d(\msf{W}(n)_\phi, \Qla)[\psi^{-1}] \cong \kappa_{b, n, \phi} \otimes \chi 
    \]
    equivariant under $G_b(F)_{x_b, n, m_b} \rtimes M(F)_{x, 0}$ for some character $\chi \colon \msf{M} \to \Qlax$. It is enough to show that $\chi$ is trivial. Let $V = H_c^d(\msf{u}_{\mu < 0}, f_n^*\cl{L}_{\psi})^{G_b(F)_{x_b, n+, n/2+}}$. By \Cref{prop:Heisenberg_odd}, $V \cong \kappa_{x_b, n, \phi}$ as a $G_b(F)_{x_b, n, n/2}$-representation. Since $V$ and $\kappa_{x_b, n, \phi}$ are stable under $\msf{M}$, it is enough to show 
    \begin{equation} \label{eq:trace_equality_M_odd}
        \tr(\gamma\vert V) = \tr(\gamma \vert \kappa_{x_b, n, \phi}) \quad (\gamma \in \msf{M}). 
    \end{equation}

\begin{lem} \label{lem:trace_for_kappa_odd}
    For each $\gamma \in \msf{M}$, $\tr(\gamma \vert \kappa_{x_b, n, \phi}) = (-1)^{d_\gamma} \cdot \sqrt{\lvert \msf{m}^{\perp, \gamma}_{x_b, n/2} \rvert}$. 
\end{lem}
\begin{proof}
    By \cite[Theorem 2.27]{Tak26_Yu}, it is enough to show 
    \[
        d_\gamma = \sum_{C \in \Phi_{\Sigma, \sym}^{\msf{M}}} (\dim_{k_C} \msf{m}_{x_b, n/2}^{\perp, C} - \dim_{k_C} \msf{m}_{x_b, n/2}^{\perp, C, \gamma}). 
    \]
    Here, $\msf{m}^\perp_{x_b, n/2}$ denotes the graded piece of the Moy-Prasad filtration on the orthogonal complement $\mfr{m}_b^\perp \subset \Lie(\cl{G}_b)$ of $\mfr{m} = \Lie(\cl{M})$. In fact, there is an identification 
    \[
        \msf{m}_{x_b, n/2}^\perp \subset \msf{m}^\perp
    \]
    as a subspace consisting of weight spaces for $\alpha \in \Phi^{\msf{M}}$ with $\alpha(x_b) \in \{ \pm \tfrac{1}{2} \}$ (normalized so that $\alpha(x) = 0$). For each $C \in \Phi_\sigma^{\msf{M}}$, the fractional part $\{ \alpha(x_b) \}$ is constant over $\alpha \in C$, for $x_b$ is fixed by $\mu(-\pi)\sigma$. Since $\{ (-\alpha)(x_b) \} = 1 - \{ \alpha(x_b) \}$, we have $\msf{m}^{\perp, C} \subset \msf{m}_{x_b, n/2}^\perp$ for every $C \in \Phi_{\Sigma, \sym}^{\msf{M}}$. In particular, $\msf{m}^{\perp, C}_{x_b, n/2} = \msf{m}^{\perp, C}$ and we get the claim. 
\end{proof}
    
    Next, we study $\tr(\gamma \vert V)$ for $\gamma \in \msf{M}$. Since $H_c^d(\msf{u}_{\mu < 0}, f_n^*\cl{L}_{\psi}) \cong \kappa_{b, n, \phi}$, we have 
    \[
        H_c^d(\msf{u}_{\mu < 0}, f_n^*\cl{L}_{\psi}) = \bigoplus_{j \in G_b(F)_{x_b, n, m_b} / G_b(F)_{x_b, n, n/2}} j \cdot V. 
    \]
    Then, $\tr(\gamma\vert H_c^d(\msf{u}_{\mu < 0}, f_n^*\cl{L}_{\psi}))$ is written as a sum over the $\gamma$-fixed locus 
    \[
        J_\gamma = \{ j \in G_b(F)_{x_b, n, m_b} / G_b(F)_{x_b, n, n/2} \mid j^{-1} \Ad(\gamma)(j) \in G_b(F)_{x_b, n, n/2} \}
    \]
    so that $\tr(\gamma\vert H_c^d(\msf{u}_{\mu < 0}, f_n^*\cl{L}_{\psi})) = \sum_{j \in J_\gamma} \tr(j^{-1} \Ad(\gamma)(j) \cdot \gamma \mid V)$. Now, via the left exact sequence
    \[
        0 \to G_b(F)_{x_b, n, r+} \to G_b(F)_{x_b, n, r} \to G_b(F)_{x_b, n, r} / G_b(F)_{x_b, n, r+} \cong \msf{m}_{x_b, r}^\perp, 
    \]  
    we get $\lvert J_\gamma \rvert \leq \prod_{n/2 < r < m} \lvert \msf{m}_{x_b, r}^{\perp, \gamma} \rvert = \lvert \msf{m}^{\perp, \gamma}_{x_b, [m_b, n/2)} \rvert$. Moreover, by \cite[Theorem 2.27]{Tak26_Yu}, we get 
    \[
        \lvert \tr(j^{-1} \Ad(\gamma)(j) \cdot \gamma \mid V) \rvert \leq \sqrt{\lvert \msf{m}^{\perp, \gamma}_{x_b, n/2} \rvert}. 
    \]
    By \Cref{prop:trace_computation_cohomology}, we have 
    \[
        \sqrt{\lvert \msf{m}^{\perp, \gamma} \rvert} = \lvert \tr(\gamma\vert H_c^d(\msf{u}_{\mu < 0}, f_n^*\cl{L}_{\psi})) \rvert \leq \lvert J_\gamma \rvert \cdot \sqrt{\lvert \msf{m}^{\perp, \gamma}_{x_b, n/2} \rvert} \leq \sqrt{\lvert \msf{m}^{\perp, \gamma} \rvert}. 
    \]
    Since both sides are equal, all inequalities should be equalities. In particular, since the sign of $\tr(\gamma\vert H_c^d(\msf{u}_{\mu < 0}, f_n^*\cl{L}_{\psi}))$ is $(-1)^{d_\gamma}$, we have $\lvert J_\gamma \rvert = \lvert \msf{m}^\perp_{x_b, [m_b, n/2)} \rvert$ and 
    \[
        \tr(j^{-1} \Ad(\gamma)(j) \cdot \gamma \mid V) = (-1)^{d_\gamma} \cdot \sqrt{\lvert \msf{m}^{\perp, \gamma}_{x_b, n/2} \rvert}. 
    \]
    By setting $j = 1$, we get the desired equality \eqref{eq:trace_equality_M_odd} from \Cref{lem:trace_for_kappa_odd}. 
\end{proof}

\subsubsection{The case $m = 1$}

Here, we treat the case $m = 1$. Though the polynomial $f_1$ looks different from the polynomials $f_n$ for $m \geq 2$, we may compute the cohomology similarly using the following induction principles. To simplify the exposition, we say that a polynomial $f \in \ov{k}[x_1, \ldots, x_N]$ satisfies the property (M) if the natural map 
\[
    H_c^\bullet(\bb{A}^N, f^* \cl{L}_\psi) \to H^\bullet(\bb{A}^N, f^* \cl{L}_\psi)
\]
is an isomorphism for every nontrivial additive character $\psi \colon k \to \Qlax$. In the following reduction procedure, we ignore the difference of Tate twists. 

\begin{lem} \label{lem:reduction_N_>=3}
    Let $f_N \in \ov{k}[x_1, \ldots, x_N]$ be a polynomial with $N \geq 3$ variables whose terms containing $x_N$ are given by
    \[
        a_{N - 1} x_{N - 1} x_N^{q^{d_{N - 1}}} + a_N x_N x_1^{q^{d_N}} \quad (a_{N-1}, a_N \in \ov{k}^\times,\; d_{N-1}, d_N \in \bb{Z}_{\geq 1}). 
    \]
    Let $\pr \colon \bb{A}^N \to \bb{A}^{N - 1}$ be the projection to the first $(N - 1)$-coordinates. 
    \begin{enumerate}
        \item $\pr_{!} f_N^* \cl{L}_\psi$ is concentrated on the closed locus $x_{N - 1} = - a_{N - 1}^{-1} a_N^{q^{d_{N-1}}} x_1^{q^{d_{N - 1} + d_N}}$. 
        \item Let $f_{N - 2} \in \ov{k}[x_1, \ldots, x_{N - 2}]$ be the restriction of $f_N$ by setting
        \[
            x_{N - 1} = - a_{N - 1}^{-1} a_N^{q^{d_{N - 1}}} x_1^{q^{d_{N - 1} + d_N}}, \quad x_N = 0. 
        \]
        Then, $f_N$ satisfies (M) if and only if $f_{N - 2}$ satisfies (M). Moreover, 
        \[
            H_c^\bullet(\bb{A}^N, f_N^* \cl{L}_\psi) \cong H_c^{\bullet- 2}(\bb{A}^{N-2}, f_{N-2}^* \cl{L}_\psi). 
        \]
    \end{enumerate}
\end{lem}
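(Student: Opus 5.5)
We use the standard Artin–Schreier integration trick, fibering $\bb{A}^N$ over $\bb{A}^{N-1}$ along the variable $x_N$. Write
\[
f_N = g(x_1,\ldots,x_{N-1}) + a_{N-1}x_{N-1}x_N^{q^{d_{N-1}}} + a_N x_N x_1^{q^{d_N}}.
\]
Since $\cl{L}_\psi$ is a character sheaf and $\psi$ is trivial on $\{t^q - t\}$, we have $\psi(a x^{q^d}) = \psi(a^{q^{-d}} x)$ on $k$-points. On the sheaf level, $\cl{L}_\psi(a t^{q^d}) \cong \cl{L}_\psi(a^{q^{-d}} t)$ as sheaves in $t$, because $a t^{q^d} - a^{q^{-d}}t$ is of the form $s^{q^d}-s$ with $s = a^{q^{-d}} t$, and $\cl{L}_\psi$ is trivial along $s\mapsto s^q - s$, hence also along $s \mapsto s^{q^d}-s$. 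The restriction of $f_N^*\cl{L}_\psi$ to the fiber over $(x_1,\ldots,x_{N-1})$ is therefore $\cl{L}_\psi(g)\otimes\cl{L}_\psi(\ell\, x_N)$, where
\[
\ell = (a_{N-1}x_{N-1})^{q^{-d_{N-1}}} + a_N x_1^{q^{d_N}}
\]
is a linear coefficient. To make this honest over the base, pass to the perfection, or equivalently apply a universal homeomorphism; étale cohomology is insensitive to this.

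For (1), proper base change together with the vanishing $R\Gamma_c(\bb{A}^1, \cl{L}_\psi(\ell t)) = 0$ for $\ell\ne 0$ (and $=\Qla[-2]$ for $\ell=0$) shows that $\pr_!f_N^*\cl{L}_\psi$ is supported on $\{\ell=0\}$. Setting $\ell = 0$ and raising to the $q^{d_{N-1}}$-th power gives exactly $x_{N-1} = -a_{N-1}^{-1}a_N^{q^{d_{N-1}}}x_1^{q^{d_{N-1}+d_N}}$. On this locus the sheaf is $i_*\bigl(\cl{L}_\psi(g|_{\ell=0})\bigr)[-2]$, and $g|_{\ell=0}$ equals the restriction of $f_N$ at $x_N=0$ with $x_{N-1}$ substituted, namely $f_{N-2}$. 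Here the locus $\{\ell=0\}$ is identified with $\bb{A}^{N-2}$ via the coordinates $x_1,\ldots,x_{N-2}$; this uses $N\ge 3$, so that $x_1$ is not eliminated. This gives the isomorphism $H_c^\bullet(\bb{A}^N,f_N^*\cl{L}_\psi)\cong H_c^{\bullet-2}(\bb{A}^{N-2},f_{N-2}^*\cl{L}_\psi)$, up to Tate twist.

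For (M) we need the same statement for ordinary cohomology, compatibly with the forget-supports map. We would apply Verdier duality: $\bb{D}(f^*\cl{L}_\psi) = f^*\cl{L}_{\psi^{-1}}[2N](N)$, so $H^\bullet(\bb{A}^N,f_N^*\cl{L}_\psi)$ is dual to $H_c^{2N-\bullet}(\bb{A}^N,f_N^*\cl{L}_{\psi^{-1}})$, and the same reduction applies with $\psi^{-1}$. Alternatively, we would compute $\pr_*$ directly: $R\Gamma(\bb{A}^1,\cl{L}_\psi(\ell t))$ also vanishes for $\ell\neq0$ and equals $\Qla$ for $\ell=0$, and $\pr_! \to \pr_*$ restricted to $\ell = 0$ is the forget-supports map for $\bb{A}^1$ with constant coefficients. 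We expect the main obstacle to be the naturality check: that under these identifications the map $H_c\to H$ for $f_N$ corresponds, after the shift $[-2]$ and the twist, to the map $H_c\to H$ for $f_{N-2}$. For this we would argue through the diagram given by the closed immersion $\bb{A}^{N-2}\times\bb{A}^1\hookrightarrow\bb{A}^N$ (with $\ell=0$), over which $f_N$ restricts to $f_{N-2}\circ\pr$. By (1) and its $*$-analogue, the inclusion induces isomorphisms on both $H_c$ and $H$, provided the $\pr_*$ analogue of (1) holds. The claim then reduces to the Künneth formula with the factor $H_c(\bb{A}^1)\to H(\bb{A}^1)$. This map is zero in degrees, but it becomes an isomorphism after the duality normalization, since we ignore Tate twists and identify $H_c^{2}(\bb{A}^1)$ with the dual of $H^0$. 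So (M) for $f_N$ is equivalent to (M) for $f_{N-2}$, with $\psi$ ranging over all nontrivial characters.
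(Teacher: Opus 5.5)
Your proof of (1) and of the isomorphism $H_c^\bullet(\bb{A}^N,f_N^*\cl{L}_\psi)\cong H_c^{\bullet-2}(\bb{A}^{N-2},f_{N-2}^*\cl{L}_\psi)$ is sound. It is the Artin--Schreier integration along $x_N$ after passing to perfections, and the paper itself only cites the inductive argument of \cite{Tak26_Yu}. The gap is in the equivalence of (M), and there are three problems.

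First, your ``alternative'' computation of $\pr_*$ works fibre by fibre. But $R\pr_*$ does not commute with restriction to the closed locus $Z=\{\ell=0\}$. For the universal kernel $\cl{L}_\psi(\ell t)$ on $\bb{A}^1\times\bb{A}^1\to\bb{A}^1_\ell$, one has $R\pr_*\cl{L}_\psi(\ell t)\cong\bb{D}R\pr_!\bb{D}\cong\delta_0[-2](-1)$. So the stalks of $R\pr_*f_N^*\cl{L}_\psi$ along $Z$ sit in degree $2$, not $0$.

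Second, restriction along $Z\times\bb{A}^1\hookrightarrow\bb{A}^N$ is an isomorphism on $H_c$ but not on $H$. On ordinary cohomology it factors through the base-change map $i_Z^*R\pr_*F\to R\pr_*(F|_{Z\times\bb{A}^1})$, where $F=f_N^*\cl{L}_\psi$. This is a morphism $\cl{L}_\psi(f_{N-2})[-2]\to\cl{L}_\psi(f_{N-2})$ on $Z\cong\bb{A}^{N-2}$, hence zero, since $H^2(\bb{A}^{N-2},\Qla)=0$. This also clashes with your own duality computation: that gives a shift by $2$ on $H$, whereas $H^\bullet(Z\times\bb{A}^1)=H^\bullet(Z)$ has no shift. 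Your commutative diagram therefore carries no information. Taken at face value, it would show that the forget-supports map for $f_N$ is zero, because the Künneth factor $H_c(\bb{A}^1)\to H(\bb{A}^1)$ is zero. No ``duality normalization'' turns a zero map into an isomorphism; ignoring Tate twists does not change degrees.

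Third, Verdier duality alone gives only abstract isomorphisms of the groups $H^\bullet$. It does not identify the two forget-supports maps, so it cannot give the ``if and only if''.

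The missing input is that the natural map $R\pr_!F\to R\pr_*F$ is an isomorphism. Here is how to get it.
\begin{enumerate}
\item On perfections, the condition $N\ge 3$ lets you take $(x_1,\dots,x_{N-2},\ell,x_N)$ as coordinates. The inverse is $x_{N-1}=a_{N-1}^{-1}(\ell-a_Nx_1^{q^{d_N}})^{q^{d_{N-1}}}$.
\item In these coordinates $F\cong\pr^*\cl{L}_\psi(g)\otimes\cl{L}_\psi(\ell x_N)$, with $\ell$ now a coordinate.
\item The projection formula and smooth base change reduce both $R\pr_!$ and $R\pr_*$ to the Fourier kernel $\cl{L}_\psi(\ell t)$.
\item For that kernel, the forget-supports map $\delta_0[-2](-1)\to\delta_0[-2](-1)$ is an isomorphism. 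This is Laumon's theorem that the forget-supports map for the Fourier--Deligne transform is an isomorphism.
\end{enumerate}
Consequently $R\pr_!F\xrightarrow{\sim}R\pr_*F\cong i_{Z*}\cl{L}_\psi(f_{N-2})[-2](-1)$. The map $R\Gamma_c(\bb{A}^N,F)\to R\Gamma(\bb{A}^N,F)$ is then identified, up to the shift $[-2]$, with $R\Gamma_c(Z,\cl{L}_\psi(f_{N-2}))\to R\Gamma(Z,\cl{L}_\psi(f_{N-2}))$. That gives both directions of the equivalence.
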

\begin{proof}   
    This is an extraction of the inductive argument in \cite[Section 4.3.1]{Tak26_Yu}. 
\end{proof}

\begin{lem} \label{lem:reduction_N_=2}
    Let $f_N \in \ov{k}[x_1, \ldots, x_N]$ be a polynomial with $N \geq 2$ variables whose terms containing $x_N$ are given by
    \[
        a_{N - 1} x_{N - 1} x_N^{q^{d_{N - 1}}} + a_N x_N x_{N - 1}^{q^{d_N}} \quad (a_{N-1}, a_N \in \ov{k}^\times,\; d_{N-1}, d_N \in \bb{Z}_{\geq 1}). 
    \]
    Let $\pr \colon \bb{A}^N \to \bb{A}^{N - 1}$ be the projection to the first $(N - 1)$-coordinates. 
    \begin{enumerate}
        \item Let $A_{N - 1} \subset \ov{k}$ be the set of solutions $a \in \ov{k}$ such that 
        \[
            a_{N - 1}^{-1} a_N^{q^{d_{N-1}}} a^{q^{d_{N - 1} + d_N}} + a = 0. 
        \]
        Then $\pr_{!} f_N^* \cl{L}_\psi$ is concentrated on the closed locus $x_{N - 1} \in A_{N - 1}$. 
        \item For each $a \in A_{N - 1}$, let $f_{N - 2, a} \in \ov{k}[x_1, \ldots, x_{N - 2}]$ be the restriction of $f_N$ by setting $x_{N - 1} = a$ and $x_N = 0$. Then, $f_N$ satisfies (M) if and only if $f_{N - 2, a}$ satisfies (M) for every $a \in A_{N - 1}$. Moreover, 
        \[
            H_c^\bullet(\bb{A}^N, f_N^* \cl{L}_\psi) \cong \bigoplus_{a \in A_{N - 1}} H_c^{\bullet- 2}(\bb{A}^{N-2}, f_{N-2, a}^* \cl{L}_\psi). 
        \]
    \end{enumerate}
\end{lem}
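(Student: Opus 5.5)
We argue exactly as in \Cref{lem:reduction_N_>=3}, i.e.\ by integrating out the last variable $x_N$ along $\pr$. We write $f_N = g + a_{N-1} x_{N-1} x_N^{q^{d_{N-1}}} + a_N x_N x_{N-1}^{q^{d_N}}$, where $g \in \ov{k}[x_1, \ldots, x_{N-1}]$. The first step is to make the $x_N$-dependence additive. Over $\ov{k}$ we use the congruence $\psi(c^{q}) = \psi(c)$ for the Artin-Schreier sheaf, i.e.\ $\cl{L}_\psi$ is invariant under pullback by Frobenius up to the Lang torsor. This lets us replace $a_{N-1} x_{N-1} x_N^{q^{d_{N-1}}}$ by $(a_{N-1} x_{N-1})^{q^{-d_{N-1}}} x_N$, working with perfections to make sense of $q$-th roots. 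After this replacement, $f_N^*\cl{L}_\psi \cong g^*\cl{L}_\psi \otimes \cl{L}_\psi(\ell(x_{N-1}) x_N)$, where
\[
    \ell(x_{N-1}) = (a_{N-1} x_{N-1})^{q^{-d_{N-1}}} + a_N x_{N-1}^{q^{d_N}}.
\]

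The second step computes $\pr_!$ fiberwise via the standard Fourier vanishing: $R\Gamma_c(\bb{A}^1, \cl{L}_\psi(c x))$ vanishes for $c \neq 0$ and equals $\Qla[-2]$ for $c = 0$. Hence $\pr_! f_N^*\cl{L}_\psi \cong i_* (g^*\cl{L}_\psi|_{Z})[-2]$, where $Z = \{\ell = 0\}$ and $i \colon Z \hookrightarrow \bb{A}^{N-1}$ is the inclusion. Raising $\ell = 0$ to the $q^{d_{N-1}}$-th power gives the equation
\[
    a_{N-1} x_{N-1} + a_N^{q^{d_{N-1}}} x_{N-1}^{q^{d_{N-1}+d_N}} = 0.
\]
Dividing by $a_{N-1}$, this is exactly the defining equation of $A_{N-1}$, so $Z = \bb{A}^{N-2} \times A_{N-1}$. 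The polynomial is separable (its derivative is $1$), so $A_{N-1}$ is a reduced finite set of points, and $Z$ is a disjoint union of copies of $\bb{A}^{N-2}$. This proves (1). It also yields the formula in (2), since on the component with $x_{N-1} = a$ we have $g|_{x_{N-1} = a} = f_N|_{x_{N-1} = a,\, x_N = 0} = f_{N-2,a}$.

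For the (M) statement, we run the same computation for $R\Gamma$, i.e.\ for $\pr_*$. Here we use Verdier duality, $\pr_* = D \pr_! D$ and $D\cl{L}_\psi = \cl{L}_{\psi^{-1}}[2](1)$, which is why (M) is imposed for all nontrivial $\psi$. The outcome is $\pr_* f_N^*\cl{L}_\psi \cong i_* (g^*\cl{L}_\psi|_Z)[-2]$ up to twist, and we check that the forget-supports map $\pr_! \to \pr_*$ becomes an isomorphism under these identifications. Pushing forward along $\bb{A}^{N-1} \to \ast$ then gives a commutative square relating $H_c \to H$ for $f_N$ to the direct sum over $a \in A_{N-1}$ of $H_c \to H$ for the $f_{N-2,a}$. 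Hence the first map is an isomorphism if and only if each summand is.

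The main obstacle is this last compatibility, namely that $\pr_! \to \pr_*$ is an isomorphism fiberwise, including along the boundary where $\ell$ degenerates. We would handle it as in \cite[Section 4.3.1]{Tak26_Yu}, citing that extraction exactly as in the proof of \Cref{lem:reduction_N_>=3}. The only new feature relative to that lemma is that the locus $Z$ is a finite union of linear subspaces rather than a graph; this is harmless because $A_{N-1}$ is a finite reduced set of points and everything decomposes into the components.
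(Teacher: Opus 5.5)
Your linearization of the $x_N$-dependence and your Fourier-vanishing computation of $\pr_!$ are correct. They give (1) and the displayed formula in (2). This is the standard argument; the paper itself only refers to \cite[Section 4.3.2]{Tak26_Yu} for it.

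The step you need to fix is the (M)-equivalence. Put $K = f_N^*\cl{L}_\psi$. The map $\pr_! K \to \pr_* K$ cannot be checked on fibers, for two reasons. First, $\pr_*$ does not commute with restriction to a fiber. Second, over a point of $Z$ the fiberwise forget-supports map is $R\Gamma_c(\bb{A}^1, \Qla) = \Qla[-2] \to R\Gamma(\bb{A}^1, \Qla) = \Qla$, which is zero, so a fiberwise isomorphism is false. What you actually need is that $\pr_! K \to \pr_* K$ is an isomorphism of complexes on $\bb{A}^{N-1}$. Equivalently, $i_\infty^* Rj_* K = 0$, where $j \colon \bb{A}^1 \times \bb{A}^{N-1} \hookrightarrow \bb{P}^1 \times \bb{A}^{N-1}$ compactifies the $x_N$-line. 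Granting this, $i \colon Z \hookrightarrow \bb{A}^{N-1}$ is a closed immersion, so the forget-supports map for $f_N$ is identified with the direct sum over $a \in A_{N-1}$ of those for the $f_{N-2,a}$, shifted by $2$. That gives the equivalence for each $\psi$.

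The isomorphism does hold, and you can prove it instead of citing it.
\begin{enumerate}
\item Use the projection formula for the lisse factor $g^*\cl{L}_\psi$ and smooth base change along $\bb{A}^{N-1} \to \bb{A}^1_{x_{N-1}}$. This reduces you to the sheaf $\cl{L}_\psi(\ell(u) v)$ on $(\bb{A}^1_u)^{\perf} \times \bb{A}^1_v$, pushed forward to the $u$-line.
\item The function $\ell^{q^{d_{N-1}}} = a_{N-1} u + a_N^{q^{d_{N-1}}} u^{q^{d_{N-1}+d_N}}$ has derivative $a_{N-1} \neq 0$. Hence $\ell$ is \'{e}tale on perfections, and passing to perfections does not change \'{e}tale cohomology. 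So your sheaf is the pullback of $\cl{L}_\psi(wv)$ along the \'{e}tale map $\ell \times \id$.
\item Both $\pr_!$ and $\pr_*$ commute with \'{e}tale base change, compatibly with $\pr_! \to \pr_*$.
\item For $\cl{L}_\psi(wv)$, the map $\pr_! \to \pr_*$ is the forget-supports map of the Fourier--Deligne transform of $\Qla$ on $\bb{A}^1_v$. This is an isomorphism by Laumon's theorem (Katz--Laumon).
\end{enumerate}
This also makes your Verdier duality computation of $\pr_*$ unnecessary. Incidentally, that computation does not explain why (M) is imposed for all $\psi$: the computation of $\pr_!$ for $\psi^{-1}$ holds unconditionally.
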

\begin{proof}
    This is an extraction of the inductive argument in \cite[Section 4.3.2]{Tak26_Yu}. 
\end{proof}

Now, we will apply this reduction procedure to the given polynomial $f_1$. For this, we need to equip the index set $\Phi_{\mu < 0}$ with a suitable order. We will use the value $\alpha(x_b)$ for $\alpha \in \Phi$. Recall that the linear function $\alpha$ on $\cl{A}(\breve{G}, \breve{T})$ is normalized so that $\alpha(x) = 0$ and we have $0 < \alpha(x_b) < 1$ for $\alpha \in \Phi_{\msf{\ov{N}}}$. 

\begin{defi}
    For each subset $I \subset [0, 1]$, let 
    \[
        \Phi_{\mu < 0}^{I} = \{ \alpha \in \Phi_{\mu < 0} \mid \alpha(x_b) \in I \}
    \]
    and let 
    $
    \bb{A}_{I} = \Spec(\ov{k}[u_{\alpha, 1} \vert \alpha \in \Phi_{\mu < 0}^{I}]) \subset \msf{U}(1)
    $
    be the closed subvariety defined by $u_{\alpha, 1} = 0$ for $\alpha \notin \Phi_{\mu < 0}^{I}$. When $I = \{r\}$, the sub/superscript $I$ is simply denoted by $r$. 
\end{defi}

Now, the main property of $f_1$ enabling our induction procedure is as follows. 

\begin{prop} \label{prop:highest_term_f_r}
    For each $\tfrac{1}{2} < r < 1$, let $f_r$ be the restriction of $f_1$ to $\bb{A}_{[1-r, r]}$. For each $\alpha \in \Phi^{r}_{\mu < 0}$, the terms of $f_{r}$ containing $u_{\alpha, 1}$ are given by 
    \[
        a_1 u_{\alpha, 1} u_{\beta_\alpha, 1}^{q^{n_\alpha}} + a_2 u_{\gamma_\alpha, 1} u_{\alpha, 1}^{q^{n_{\gamma_\alpha}}}
    \]
    for some $a_1, a_2 \in \ov{k}^\times$. 
\end{prop}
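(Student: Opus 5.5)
We will expand $f_1$ explicitly from \Cref{defi:constant_c_alpha_m=1}. Write $g_0^- = (g_0^+)^{-1}$ with $g_0^+ = \prod_{i\geq 1} h_0^{(i)}$, where $h_0^{(i)}$ is a product of root elements $i_{\sigma^i\delta}(u_{\delta,1}^{q^i})$ over $\delta \in \Phi_{\mu<0}$ with $i \leq n_{\gamma_\delta}$. Since everything lives in the unipotent group $\sigma(\msf{\ov{N}})$, the adjoint action $\Ad(g_0^-)(c_{\alpha,1}u_{\alpha,1}du_\alpha)$ expands via iterated brackets as a finite sum of monomials of the form
\[
    c_{\alpha,1}\, u_{\alpha,1} \prod_k u_{\delta_k,1}^{q^{i_k}} \cdot [du_{\sigma^{i_1}\delta_1},[\cdots,[du_{\sigma^{i_s}\delta_s}, du_\alpha]]],
\]
with numerical structure constants. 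By \Cref{lem:vanish_X}, such a term contributes to $f_1$ only when $\alpha + \sum_k \sigma^{i_k}\delta_k$ restricts to $0$ on $Z_M^\circ$, i.e.\ the bracket lands in weight zero. So the first step is to record every monomial of $f_1$ as such a weight-zero configuration.

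Next we will use the function $\beta \mapsto \beta(x_b)$, normalized by $\alpha(x)=0$. Because $x_b$ is fixed by $\mu(-\pi)\sigma$, we have $(\sigma\delta)(x_b) = \delta(x_b) + \langle \delta, \mu\rangle$ after the appropriate shift. Along the chain $\delta, \sigma\delta, \ldots, \sigma^{n_{\gamma_\delta}}\delta$ the $\mu$-pairing vanishes except at the endpoints, which are described by \Cref{lem:vareps}. Hence each $\sigma^{i}\delta$ occurring in $g_0^+$ satisfies $(\sigma^i\delta)(x_b) = \delta(x_b) - 1$ modulo the constraint $\sigma^i\delta \in \Phi_{\msf{\ov{N}}}$, so that $(\sigma^i\delta)(x_b) \in (0,1)$. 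The weight-zero condition then becomes $\alpha(x_b) + \sum_k(\sigma^{i_k}\delta_k)(x_b) \in \bb{Z}$. Since each root in the bracket has value in $(-1,0)$ after accounting for the sign of the $\sigma(\msf{\ov{N}})$-roots, this forces an additivity constraint on the values $\delta_k(x_b) \in (0,1)$.

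Now we restrict to $\bb{A}_{[1-r,r]}$ and fix $\alpha \in \Phi^r_{\mu<0}$ with $r > \tfrac12$, which is the maximal allowed value. A monomial containing $u_{\alpha,1}$ arises in one of two ways. Either $\alpha$ is the ``$du_\alpha$'' slot, or $\alpha$ is one of the $\delta_k$ inside $g_0^-$. In both cases the remaining roots must have values summing to $1 - r$ (or $r - 1$), up to the integer. Because all variables lie in $[1-r, r]$ and each value is at least $1-r$, the remaining roots can be only a single root of value exactly $1-r$. Moreover, the chain structure forces that root to be $\beta_\alpha$ (first case, $s = 1$, exponent $q^{n_\alpha}$) or $\gamma_\alpha$ (second case, with $\alpha = \delta_1$ and exponent $q^{n_{\gamma_\alpha}}$). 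This uses $\beta_\alpha = -\sigma^{-n_\alpha}\alpha$ from \Cref{lem:vareps}.

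Finally we check that the coefficients are nonzero. For the first term the coefficient is $\ov{c}_{\alpha,1}\,X([du_\alpha, du_{-\alpha}])$ up to sign, and for the second it is $\ov{c}_{\gamma_\alpha,1}\,X([du_{\gamma_\alpha},du_{-\gamma_\alpha}])$ up to sign. Both are nonzero: $c_{\alpha,1}$ is a unit by \Cref{lem:valsalpha}, and $X([du_\alpha,du_{-\alpha}]) \neq 0$ by the genericity of $\phi$. One must also confirm that the two terms are distinct, or that they combine without cancellation when $\beta_\alpha = \gamma_\alpha$. We expect the main obstacle to be the bookkeeping of the values $\alpha(x_b)$ along $\sigma$-chains, which must exclude all longer brackets. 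We would verify this first in the $\GL_n$ case, where $\alpha(x_b) \in \tfrac{1}{n}\bb{Z}$.
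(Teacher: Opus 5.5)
Your overall strategy is sound: grade every monomial of $f_1$ by values at $x_b$, and use that on $\bb{A}_{[1-r,r]}$ all surviving variables have value in $[1-r,r]$. But the decisive counting step does not go through as written.

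\textbf{The value bookkeeping is off.} The factors of $g_0^+$ are $i_{\sigma^i\delta}(u_{\delta,1}^{q^i})$ for $1\le i\le n_{\gamma_\delta}$. For $i<n_{\gamma_\delta}$ one has $(\sigma^i\delta)(x_b)=\delta(x_b)\in(0,1)$, and these roots have $\mu$-weight $0$. Only at the endpoint $i=n_{\gamma_\delta}$, where $\sigma^{i}\delta=-\gamma_\delta\in\Phi_{\mu>0}$, is the value $\delta(x_b)-1\in(-1,0)$. Your sentence ``$(\sigma^i\delta)(x_b)=\delta(x_b)-1$, so that $(\sigma^i\delta)(x_b)\in(0,1)$'' conflates these two cases.

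\textbf{The weight condition is imposed only modulo $\bb{Z}$.} You write ``summing to $1-r$ \dots up to the integer,'' and a congruence gives no bound. A remaining sum of $2-r$ can be realized by several roots with values in $[1-r,r]$. So ``only a single root of value exactly $1-r$'' does not follow. What you need is an exact identity, and the missing input is a $\mu$-weight count.

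\textbf{The repair.} \Cref{lem:vanish_X} says $X\circ\pi_{\msf{m}}$ kills every root vector. Hence a surviving term has total $T$-weight exactly $0$, not merely trivial restriction to $Z_M^\circ$. Now pair with $\mu$:
\begin{itemize}
\item $du_\epsilon$ contributes $-1$;
\item interior chain factors contribute $0$;
\item each endpoint factor $-\gamma_\delta$ contributes $+1$.
\end{itemize}
So exactly one endpoint factor occurs, with multiplicity one. Since the roots are normalized by $\alpha(x)=0$, evaluation at $x_b$ is additive. Evaluating the zero weight at $x_b$ therefore gives
\[
    \epsilon(x_b)+\sum_k \delta_k(x_b)=1
\]
exactly, where $\delta_k$ runs over the variables of the factors used, with multiplicity. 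In other words, $f_1$ is homogeneous of degree $1$ if each factor $u_{\delta,1}^{q^i}$ is given degree $\delta(x_b)$, independently of $i$. This also disposes of repeated factors and Frobenius exponents.

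\textbf{Finishing the argument.} Now your counting works. A monomial containing $u_{\alpha,1}$ with $\alpha(x_b)=r$ contains $u_{\alpha,1}$ only once, since $2r>1$. It has exactly one other factor, of value $1-r$, and that factor must be the endpoint factor. Exact $T$-weight zero then forces one of two cases:
\begin{itemize}
\item $\alpha$ in the $du$-slot: the factor is $-\gamma_{\beta_\alpha}=-\alpha$ with variable $u_{\beta_\alpha,1}^{q^{n_\alpha}}$;
\item $\alpha$ inside $g_0^-$: the factor is $-\gamma_\alpha$, with $\epsilon=\gamma_\alpha$ and variable $u_{\alpha,1}^{q^{n_{\gamma_\alpha}}}$.
\end{itemize}
These two monomials are distinct, since $u_{\alpha,1}$ appears to the first power in one and to the power $q^{n_{\gamma_\alpha}}>1$ in the other. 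So no cancellation is possible, and you need not leave this as something to ``confirm.'' Your nonvanishing argument for the coefficients is fine: $c_{\alpha,1}$ is a unit and $X$ is generic. Over $\ov{k}$, read ``iterated brackets'' as the Chevalley formulas for $\Ad(i_\theta(y))$; only the weights matter.

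\textbf{Comparison with the paper.} Once repaired, your route differs from the paper's. The paper factors $g_0^-=u\ov{n}$ with $u\in\msf{U}_{\mu>0}$ and $\ov{n}\in\msf{\ov{N}}\cap\sigma(\msf{\ov{N}})$. It then tracks separately where $u_{\alpha,1}$ enters the coordinates of $u$ and of $\Ad(\ov{n})(\cdot)$, leaving that bookkeeping as ``easy to see.'' Your quasi-homogeneity statement handles all of it at once, and is arguably the cleaner justification of those claims.
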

\begin{proof}
    Recall the definition of $f_1$ as in \Cref{defi:constant_c_alpha_m=1} and we will use the same notation as there. Since $x_b$ is fixed by $\mu(-\pi)\sigma$, we have 
    \[
        (\sigma^i \alpha)(x_b) = \left\{ \begin{alignedat}{4}
            & \alpha(x_b) & \quad & (i < n_{\gamma_\alpha}) \\
            & \alpha(x_b) - 1 & \quad & (i = n_{\gamma_\alpha})
        \end{alignedat}
        \right.
    \]
    for each $\alpha \in \Phi_{\mu < 0}$. Then, it can be easily verified that if we take the decomposition 
    \[
        g_0^- = u \ov{n}, \quad u \in \msf{U}_{\mu > 0} = \msf{N} \cap \sigma(\ov{\msf{N}}) ,\quad \ov{n} \in \msf{\ov{N}} \cap \sigma(\msf{\ov{N}})
    \]
    over the closed locus $\bb{A}_{[1-r, r]} \subset \msf{U}(1)$, then the coordinate $u_{-\alpha}$ of $u$ for each $- \alpha \in \Phi_{\mu > 0}$ is 
    \[
        u_{-\alpha} = \left\{ \begin{alignedat}{4}
            & 0 & \quad & (\alpha(x_b) > r) \\
            & - u_{\beta_{\alpha}, 1}^{q^{n_{\alpha}}} & \quad & (\alpha(x_b) = r). 
        \end{alignedat}
        \right.
    \]  
    When $\alpha(x_b) < r$, $u_{-\alpha}$ is complicated. 
    Nevertheless, it is easy to see that the contribution of $u_{\alpha, 1}$ for $\alpha \in \Phi_{\mu < 0}^r$ is concentrated on $u_{-\beta}$ with $\beta \in \Phi_{\mu < 0}^{< 1 - r} \sqcup \{ \gamma_\alpha \}$ and the contribution to $u_{-\gamma_\alpha}$ is $- u_{\alpha, 1}^{q^{n_{\gamma_\alpha}}}$. On the other hand, the coefficient $v_\alpha$ of $\Ad(\ov{n})(\sum c_{\alpha, 1} u_{\alpha, 1} du_\alpha) \in \msf{u}_{\mu < 0}$ for each $du_\alpha$ ($\alpha \in \Phi_{\mu < 0}$) satisfies 
    \[
        v_\alpha = \left\{ \begin{alignedat}{4}
            & 0 & \quad & (\alpha(x_b) < 1 - r) \\
            & c_{\alpha, 1}u_{\alpha, 1} & \quad & (\alpha(x_b) = 1 - r). 
        \end{alignedat}
        \right.
    \]  
    When $\alpha(x_b) > 1 - r$, $v_\alpha$ is complicated. Nevertheless, the contribution of $u_{\alpha, 1}$ for $\alpha \in \Phi_{\mu < 0}^r$ is concentrated on $v_{\beta}$ with $\beta \in \Phi_{\mu < 0}^{> r} \sqcup \{ \alpha \}$ and the contribution to $v_{\alpha}$ is $c_{\alpha, 1}u_{\alpha, 1}$.

    As a result, the contribution of $u_{\alpha, 1}$ ($\alpha \in \Phi^r_{\mu < 0}$) to $(X \circ \pi_{\msf{m}})(\Ad(u)(\sum v_\alpha du_\alpha))$ is given by 
    \[
        (X\circ \pi_{\msf{m}})\bigl(\Ad(i_{-\alpha}(- u_{\beta_{\alpha}, 1}^{q^{n_{\alpha}}}))(c_{\alpha, 1} u_{\alpha, 1} du_\alpha) + \Ad(i_{- \gamma_\alpha}(- u_\alpha^{q^{n_{\gamma_\alpha}}}))(c_{\gamma_\alpha, 1} u_{\gamma_\alpha, 1} du_{\gamma_\alpha})\bigr). 
    \]
    By the genericity of $X$, it is easy to see that this has the form of the claim. 
\end{proof}

Now, we may apply \Cref{lem:reduction_N_>=3} and \Cref{lem:reduction_N_=2} to $f_r$ and $u_{\alpha, 1}$ ($\alpha \in \Phi^r_{\mu < 0}$). However, \Cref{lem:reduction_N_=2} does not directly reduce the case $f_r$ to $f_{r -}$ since $A_{N - 1}$ could contain non-zero elements. To reduce to the case $a = 0$, we use the invariance of $f_1$ under the inner action. 

Recall from \Cref{prop:inner_action_U(n)_reduction} that the inner action of $G_b(F)_{x_b, 1}$ on $\msf{U}(1)$ is trivial. We need the whole inner action. 

\begin{lem} \label{lem:reduction_inner_action}
    For each $j \in G_b(F)_{x_b, 0+}$, let $\ov{n} \in \msf{\ov{N}}(\ov{k})$ be the image of $j$ under 
    \[
        G_b(F)_{x_b, 0} \to \cl{G}(O_{\breve{F}}) \to \msf{G}(\ov{k}). 
    \]
    Let $\ov{n}_{\mu < 0} \in \msf{U}_{\mu < 0}$ be the projection of $\ov{n}$ under $\msf{\ov{N}} \cong \msf{U}_{\mu < 0} \times (\msf{\ov{N}} \cap \sigma(\msf{\ov{N}}))$. Via the natural identification $\msf{U}_{\mu < 0} \cong \msf{u}_{\mu < 0}$ and \eqref{eq:U(n)_isomorphism}, we have $j \cdot u = \Ad(\ov{n})(u) + \ov{n}_{\mu < 0}$ for $u \in \msf{u}_{\mu < 0}$. 
\end{lem}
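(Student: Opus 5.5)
I would mimic the proofs of \Cref{prop:isom_deformation}, \Cref{prop:U(n)_stability} and \Cref{prop:equivariance_reduction_odd}, now at level $m = 1$. Fix $j \in G_b(F)_{x_b, 0+}$ and a geometric point $x \in \cl{U}(1)(\bb{C}_p)$ with coordinates $u_\alpha(x) = (u_\alpha^\flat)^\sharp$, $u_\alpha^\flat \in \pi^{\flat} O_{\bb{C}_p}^\flat$, and write $v_\alpha^\flat$ for the coordinates of $j \cdot x$. Set $b_u = \mu([\pi^\flat]-\pi)\prod_\alpha i_\alpha([u_\alpha^\flat])$ and $b_v$ analogously. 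By \Cref{lem:expBKF} and the description of the inner action in \cite[Proposition 2.15]{Tak25z}, there is a unique $\iota \in \cl{G}(W_{O_F}(O_{\bb{C}_p}^\flat))$, trivial modulo some $[\varpi]$, with $\iota j b_u \sigma(\iota j)^{-1} = b_v$. By \Cref{lem:Perfd_universal_deformation}, it is enough to determine this equation modulo $[\pi^{\flat, 2}]$, and then read off $v_\alpha$ modulo $\pi^2$, i.e.\ $v_{\alpha,1} = \pi^{-1}v_\alpha$ modulo $\pi$.

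The key steps are as follows. First, I would reduce $j$ modulo $\pi$. Since $x$ and $x_b$ lie in the same facet $\mfr{f}$ and $x - \varepsilon\lambda \in \mfr{f}^\circ$, the description of $G_b(\breve{F})_{x_b, \ast}$ in the proof of \Cref{lem:AdbCM} shows that $j = \ov{p}u$ with $u \equiv 1 \bmod \pi$. Hence $j \bmod \pi = \ov{n} \in \msf{\ov{N}}$, and we write $\ov{n} = \ov{n}_{\mu<0}\,\ov{n}'$ according to $\msf{\ov{N}} \cong \msf{U}_{\mu<0} \times (\msf{\ov{N}}\cap\sigma(\msf{\ov{N}}))$. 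Second, I would split $j = j_0 j_1$, where $j_1 \in G_b(F)_{x_b,1}$. By \Cref{prop:inner_action_U(n)_reduction}, the action of $j_1$ on $\msf{U}(1)$ is trivial, so the action of $j$ depends only on $\ov{n}$. Third, I would solve for $\iota$ modulo $[\pi^{\flat,2}]$ by the infinite product $\iota = \cdots \Ad(b_u\sigma)(X)\cdot X$ with $X = \Ad(b_u\sigma)(j)j^{-1}$, exactly as in \Cref{lem:isom_deformation_arbit}. The integrality of the factors again comes from \cite[Proposition 4.2.9]{Ito25a}, together with $\msf{\ov{N}} \cap \sigma(\msf{N}) = \msf{U}_{\mu<0}$.

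Fourth, I would project the resulting identity to $\mfr{g}\otimes \pi O_{\bb{C}_p}/\pi^2$, as in \Cref{prop:equivariance_reduction_odd}. The $\mu$-weight argument kills the contributions of $\pr(\mu^{-1}\iota\mu)$ in $\mfr{g}^{\mu\ge0}$ and of $\pr(\sigma(\iota))$ along $\mfr{g}^{\mu<0}$. What survives in $\msf{u}_{\mu<0}$ is $\Ad(\ov{n})(u)$ plus a translation term. This term comes from $j$ itself: modulo $[\pi^\flat]-\pi$, the element $\mu([\pi^\flat]-\pi)$ conjugates $i_\alpha(\ast)$ for $\alpha\in\Phi_{\mu<0}$ by a factor $([\pi^\flat]-\pi)^{-1}$. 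The $\msf{U}_{\mu<0}$-component of $\ov{n}$ therefore appears divided by $\pi$, which matches the normalization $u_{\alpha,1}=\pi^{-1}u_\alpha$ and gives $+\,\ov{n}_{\mu<0}$. The $\msf{\ov{N}}\cap\sigma(\msf{\ov{N}})$-component only acts by conjugation. Fifth, I would check consistency at $u=0$, using the image of $x_\CM$ computed via \Cref{lem:image_CM_outside}.

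The main obstacle is step four. One has to control the cross terms between $\Ad(\ov{n})$ and the translation modulo $\pi^2$, which is now nonabelian at depth below $1$. The plan here is to first treat $\ov{n} \in \msf{U}_{\mu<0}$ alone, where everything commutes and one gets a pure translation. Next, I would treat $\ov{n}\in \msf{\ov{N}}\cap\sigma(\msf{\ov{N}})$, which fixes $x_\CM$ and acts linearly by the argument of \Cref{prop:equivariance_reduction_odd}. Finally, I would compose the two cases using the group action property.
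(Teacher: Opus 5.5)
There is a genuine gap, and it sits in your third and fourth steps, which is where the content of the lemma lies.

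Reusing the infinite product of \Cref{lem:isom_deformation_arbit} with $X=\Ad(b_u\sigma)(j)j^{-1}$ would give an isomorphism between $b_u\sigma$ and $jb_u\sigma(j)^{-1}\sigma$ modulo $[\pi^{\flat,2}]$. The argument of \Cref{prop:inner_action_U(n)_reduction} would then show that $j$ acts \emph{trivially} on $\msf{U}(1)$, contradicting the formula you want as soon as $\ov{n}\neq 1$.

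The construction fails because the integrality input \cite[Proposition 4.2.9]{Ito25a} needs $\sigma(j)\bmod \pi^2\in\breve{\cl{G}}^{\mu\geq 0}$. This holds for $j\in G_b(F)_{x_b,1}$. For $j\in G_b(F)_{x_b,0+}$, however, the relation $\sigma(j)=\Ad(\mu(-\pi)^{-1})(j)$ gives $\sigma(j)$ a $\mu<0$ component equal to $-\pi$ times that of $j$. This is nonzero modulo $\pi^2$ whenever $\ov{n}_{\mu<0}\neq 1$.

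Your account of the translation (conjugating the $i_\alpha$'s by $([\pi^\flat]-\pi)^{-1}$ ``modulo $[\pi^\flat]-\pi$'', where that factor is undefined) is not a computation. Also, \Cref{lem:image_CM_outside} says nothing about the image of $x_\CM$, so it cannot serve as a consistency check at $u=0$.

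The missing idea is that you never construct $\iota$. You only use that the $\iota$ given by \cite[Proposition 2.15]{Tak25z} is trivial modulo $[\pi^\flat]$, so $\sigma(\iota)$ is trivial modulo $[\pi^{\flat,q}]$.
\begin{itemize}
\item Write $\mu([\pi^\flat]-\pi)=\mu(-\pi)\mu(1-[\pi^\flat]/\pi)$ and use $j\mu(-\pi)=\mu(-\pi)\sigma(j)$. This gives $jb_u\sigma(j)^{-1}=\Ad(j)\bigl(\mu(1-[\pi^\flat]/\pi)\Ad(\mu(-\pi))(u)\bigr)\mu(-\pi)$, and similarly for $b_v$.
\item After cancelling $\mu(-\pi)$, every factor is trivial modulo $[\pi^\flat]$ in $\cl{G}(W_{O_F}(O_{\bb{C}_p}^\flat)[\tfrac{1}{\pi}])$. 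So modulo $[\pi^{\flat,2}]$ the equation becomes linear in the abelian Moy--Prasad quotient. With $u_\alpha^\flat=\pi^\flat u^\flat_{\alpha,1}$, $v_\alpha^\flat=\pi^\flat v^\flat_{\alpha,1}$ and $\bar\iota$ the image of $\iota$, it reads
\[
    \bar\iota-\pi^{-1}\Ad(j)\bigl(d\mu+\sum_{\alpha} [u^\flat_{\alpha,1}]du_\alpha\bigr)=-\pi^{-1}\bigl(d\mu+\sum_{\alpha} [v^\flat_{\alpha,1}]du_\alpha\bigr).
\]
\item Multiplying by $\pi$ and reducing modulo $\pi$ kills the integral term $\bar\iota$. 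This yields $j\cdot u=\Ad(\ov{n})(u)+(\Ad(\ov{n})-\id)(d\mu)$, and $(\Ad(\ov{n})-\id)(d\mu)=[\ov{n}_{\mu<0},d\mu]=\ov{n}_{\mu<0}$ because $\ov{n}_{\mu<0}^{-1}\ov{n}$ centralizes $\mu$.
\end{itemize}
So the translation comes from $j$ conjugating the $d\mu$-term of $\mu(1-[\pi^\flat]/\pi)$, and there are no nonabelian cross terms to control.

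Your fallback, treating $\ov{n}\in\msf{U}_{\mu<0}$ and $\ov{n}\in\ov{\msf{N}}\cap\sigma(\ov{\msf{N}})$ separately and composing, also fails. Every $\sigma$-orbit of $\Phi^{\msf{M}}$ meets $\Phi_{\mu<0}$ (the fact behind \Cref{cor:orbit_injectivity}). Hence an element of $G_b(F)_{x_b,0+}$ whose reduction lies in $\ov{\msf{N}}\cap\sigma(\ov{\msf{N}})$ already lies in $G_b(F)_{x_b,1}$. The two factors of a general $\ov{n}$ are therefore not reductions of group elements, and there is nothing to compose. Moreover, \Cref{prop:equivariance_reduction_odd} concerns the diagonal action of $M(F)_{x,0}$, which commutes with $\mu([\pi^\flat]-\pi)$. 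It says nothing about inner actions of elements of $G_b(F)$ outside $M$.
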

\begin{proof}
    Let $x \in \cl{U}(1)(\bb{C}_p)$ be an arbitrary point and let $u_\alpha^\flat \in \pi^\flat \cdot O_{\bb{C}_p}^\flat$ be an element with $u_\alpha(x) = (u_\alpha^\flat)^\sharp$. We will compute the reduction of $j \cdot x \in \cl{U}(1)$. By \cite[Proposition 2.15]{Tak25z},  we can take $v_\alpha^\flat \in \pi^\flat \cdot O_{\bb{C}_p}^\flat$ for each $\alpha \in \Phi_{\mu < 0}$ so that 
    \begin{equation} \label{eq:inner_action_n=1}
        \iota j \mu([\pi^\flat] - \pi) \prod_{\alpha \in \Phi_{\mu < 0}} i_\alpha([u_\alpha^\flat]) \sigma(j)^{-1} \sigma(\iota)^{-1} = \mu([\pi^\flat] - \pi) \prod_{\alpha \in \Phi_{\mu < 0}} i_\alpha([v_\alpha^\flat])
    \end{equation}
    for some $\iota \in \cl{G}(W_{O_F}(O_{\bb{C}_p}^\flat))$ that is trivial modulo $[\pi^\flat]$. Let 
    \[
        u_{\alpha, 1}^\flat = \pi^{\flat, -1} u_\alpha^\flat, \quad v_{\alpha, 1}^\flat = \pi^{\flat, -1} v_\alpha^\flat \in O_{\bb{C}_p}^\flat. 
    \]
    Then $u_{\alpha, 1}(j\cdot x) = (v_{\alpha, 1}^\flat)^\sharp$, so $j \cdot \sum_{\alpha \in \Phi_{\mu < 0}} u_{\alpha, 1}^\flat du_\alpha \equiv \sum_{\alpha \in \Phi_{\mu < 0}} v_{\alpha, 1}^\flat du_\alpha$ modulo $\ov{k}$ in $\msf{u}_{\mu < 0}(\ov{k})$. 

    Let $\cl{G}(W_{O_F}(O_{\bb{C}_p}^\flat)[\tfrac{1}{\pi}])_i = \Ker(\cl{G}(W_{O_F}(O_{\bb{C}_p}^\flat)[\tfrac{1}{\pi}]) \to \cl{G}(W_{O_F}(O_{\bb{C}_p}^\flat)[\tfrac{1}{\pi}] / [\pi^{\flat, i}]))$ for $i = 1, 2$ and we will use the Moy-Prasad isomorphism 
    \begin{equation} \label{eq:MP_isom_n=1}
        \cl{G}(W_{O_F}(O_{\bb{C}_p}^\flat)[\tfrac{1}{\pi}])_1 / \cl{G}(W_{O_F}(O_{\bb{C}_p}^\flat)[\tfrac{1}{\pi}])_2 \cong [\pi^\flat] \cdot \mfr{g} \otimes W_{O_F}(O_{\bb{C}_p}^\flat)[\tfrac{1}{\pi}] / [\pi^\flat]
    \end{equation}
    to rewrite \eqref{eq:inner_action_n=1} modulo $[\pi^{\flat, 2}]$. Since $\sigma(\iota)$ is trivial modulo $[\pi^{\flat, q}]$ and $\Ad(b\sigma)(j) = j$, we get
    \[
        \bar{\iota} - \pi^{-1} \Ad(j)(d\mu + \sum_{\alpha \in \Phi_{\mu < 0}} [u_{\alpha, 1}^\flat] du_\alpha) = -\pi^{-1} d\mu - \pi^{-1} \sum_{\alpha \in \Phi_{\mu < 0}} [v_{\alpha, 1}^\flat] du_\alpha. 
    \] 
    Here, $d\mu \in \mfr{\breve{g}}$ is the differential of $\mu$ and $\bar{\iota}$ is the image of $\iota$ under \eqref{eq:MP_isom_n=1}. Since $\bar{\iota}$ lies in $\mfr{g} \otimes W_{O_F}(O_{\bb{C}_p}^\flat) / [\pi^\flat]$, we have 
    \[
        j \cdot \sum_{\alpha \in \Phi_{\mu < 0}} u_{\alpha, 1}^\flat du_\alpha \equiv \sum_{\alpha \in \Phi_{\mu < 0}} v_{\alpha, 1}^\flat du_\alpha = (\Ad(\ov{n}) - \id)(d\mu) + \Ad(\ov{n})(\sum_{\alpha \in \Phi_{\mu < 0}} [u_{\alpha, 1}^\flat] du_\alpha)
    \]
    in $\breve{\mfr{g}} \otimes O_{\bb{C}_p}^\flat / \pi^{\flat}$ since $j \equiv \ov{n}$ modulo $\pi$. As $(\ov{n}_{\mu < 0})^{-1} \ov{n}$ commutes with $\mu$, we have $\Ad(\ov{n})(d\mu) = \Ad(\ov{n}_{\mu < 0})(d \mu)$. Then, $(\Ad(\ov{n}) - \id)(d\mu) = [\ov{n}_{\mu < 0}, d\mu] = \ov{n}_{\mu < 0}$, so we get the claim. 
\end{proof}
\begin{cor} \label{cor:orbit_injectivity}
    For each $\tfrac{1}{2} < r < 1$, the inner action of $G_b(F)_{x_b, 1 - r}$ on $\msf{U}(1)$ stabilizes $\bb{A}_{\geq s}$ for every $s \leq 1 - r$. Moreover, the induced map
    \[
        \msf{m}^\perp_{x_b, 1 - r} \cong G_b(F)_{x_b, 1 - r} / G_b(F)_{x_b, (1 - r)+} \to \bb{A}_{1 - r}(\ov{k}) ,\quad j \mapsto j \cdot 0
    \]
    via $\bb{A}_{1 - r} = \bb{A}_{\geq 1 - r} / \bb{A}_{> 1 - r}$ is injective. 
\end{cor}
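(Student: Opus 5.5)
The plan is to derive both claims from the explicit formula in \Cref{lem:reduction_inner_action}: for $j \in G_b(F)_{x_b, 0+}$ with image $\ov{n} \in \msf{\ov{N}}(\ov{k})$, the action on $\msf{U}(1) \cong \msf{u}_{\mu < 0}$ is $u \mapsto \Ad(\ov{n})(u) + \ov{n}_{\mu < 0}$. Both statements then reduce to bookkeeping of root-group coordinates, filtered by the values $\alpha(x_b)$, where $\alpha$ is normalized by $\alpha(x) = 0$ and $0 < \alpha(x_b) < 1$ for $\alpha \in \Phi_{\msf{\ov{N}}}$.

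For stability, I first describe the image of $G_b(F)_{x_b, 1-r}$ in $\msf{G}(\ov{k})$. Since $x$ and $x_b$ lie in the same facet, $G_b(F)_{x_b, 1-r} \subset G(\breve{F})_{x_b, 1-r}$, and its reduction modulo $\pi$ lies in the subgroup $\msf{\ov{N}}_{\geq 1-r}$ generated by the root groups $\msf{U}_\beta$ with $\beta \in \Phi_{\msf{\ov{N}}}$ and $\beta(x_b) \geq 1-r$. The reason is that the affine root $\beta - \beta(x_b)$ at $x_b$ has depth $\geq 1-r$ exactly when its reduction is nontrivial only for such $\beta$, which follows from the Iwahori-type decomposition of $G(\breve{F})_{x_b, 1-r}$ \cite[Proposition 13.2.5]{KP23}. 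This subgroup is normal in $\msf{\ov{N}}$, since the root values add. Hence $\ov{n}_{\mu < 0} \in \bb{A}_{\geq 1-r}$. Moreover, for $\beta(x_b) \geq 1-r$, the adjoint action $\Ad(\ov{n})$ sends $du_\alpha$ into the span of $du_{\alpha + \sum \beta_i}$, whose values at $x_b$ are at least $\alpha(x_b)$. Only components in $\msf{u}_{\mu < 0}$ matter here, and the $\mu$-weight bookkeeping keeps them in $\Phi_{\mu < 0}$. Therefore $\Ad(\ov{n})$ preserves $\bb{A}_{\geq s}$ for every $s$. Combined with $\ov{n}_{\mu < 0} \in \bb{A}_{\geq 1-r} \subset \bb{A}_{\geq s}$ for $s \leq 1-r$, this gives stability.

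For injectivity, observe that $j \cdot 0 = \ov{n}_{\mu < 0}$. Modulo $\bb{A}_{> 1-r}$, only the $\beta$-coordinates with $\beta \in \Phi_{\mu < 0}^{1-r}$ survive, and on the quotient $\msf{\ov{N}}_{\geq 1-r}/\msf{\ov{N}}_{> 1-r}$, which is abelian, the map $j \mapsto \ov{n}$ realizes the Moy--Prasad isomorphism. This identifies $\msf{m}^\perp_{x_b, 1-r}$ with the $k$-rational points of the $\sigma$-twisted space $\bigoplus_{\beta(x_b) = 1-r} \msf{u}_\beta$. The composite map is then the projection of this space onto its $\Phi_{\mu<0}$-components. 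To prove it is injective, I will use the $\mu(-\pi)\sigma$-stability of $x_b$ computed in the proof of \Cref{prop:highest_term_f_r}: $(\sigma^i\alpha)(x_b) = \alpha(x_b)$ for $i < n_{\gamma_\alpha}$, and it drops by $1$ at $i = n_{\gamma_\alpha}$. Consequently, every Frobenius orbit of roots with $\beta(x_b) = 1-r$ passes through $\Phi_{\mu < 0}^{1-r}$ by \cite[Lemma 3.2]{Tak25z}. A $b\sigma$-fixed element is determined by its coordinate at one point of each orbit, so if its $\Phi_{\mu<0}$-coordinates vanish, the element is zero. This mirrors \Cref{lem:M_action_on_W(n)_odd}(1).

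I expect the main obstacle to be the orbit argument in the injectivity step, specifically the claim that the twisted Frobenius $\Ad(\mu(-\pi))\circ\sigma$ preserves the value $1-r$ except at the step where the root passes through $\mu < 0$. This requires care with signs: one must check that, within the graded piece at depth $1-r$, orbits cannot leave $\Phi_{\msf{\ov{N}}}$ without passing through $\Phi_{\mu<0}$.
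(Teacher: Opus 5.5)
Your stability argument is fine; it is a more explicit version of what the paper calls immediate from \Cref{lem:reduction_inner_action}. Your key fact for injectivity (every $\sigma$-orbit in $\Phi^{\msf{M}}$ meets $\Phi_{\mu<0}$, \cite[Lemma 3.2]{Tak25z}) is also exactly the paper's.

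\textbf{The gap.} The problem is the space on which you run the orbit argument. Since $\beta(x_b)\in(-1,1)$ for all $\beta\in\Phi$, the affine roots of value $1-r$ at $x_b$ come in two families:
\begin{itemize}
\item[(i)] the roots $\beta$ with $\beta(x_b)=1-r$;
\item[(ii)] the affine roots $\beta+1$ with $\beta(x_b)=-r$, where $\beta\in\Phi_{\msf{N}}$.
\end{itemize}
The components $\pi\,du_\beta$ of family (ii) die in $\msf{G}(\ov{k})$, as is implicit in your own stability step. Hence $j\mapsto\ov{n}\bmod\msf{\ov{N}}_{>1-r}$ is \emph{not} the Moy--Prasad isomorphism onto $\bigoplus_{\beta(x_b)=1-r}\msf{u}_\beta$. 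Over $\ov{k}$ the underlying linear map is a projection with nonzero kernel in general, so its injectivity on $k$-points is part of what must be proved, not an input.

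Relatedly, $\bigoplus_{\beta(x_b)=1-r}\msf{u}_\beta$ carries no $k$-structure, because it is not stable under $\Ad(\mu(-\pi))\circ\sigma$. The fact you quote, that the value drops by $1$ at $i=n_{\gamma_\alpha}$, says exactly this: the orbit of $\alpha\in\Phi^{1-r}_{\mu<0}$ moves to $\sigma^{n_{\gamma_\alpha}}\alpha=-\gamma_\alpha$, of value $-r$, i.e.\ into the invisible summand. Consistently, the count $\lvert\msf{m}^\perp_{x_b,1-r}\rvert=\prod_{\alpha\in\Phi^{r}_{\mu<0}\sqcup\Phi^{1-r}_{\mu<0}}q^{n_\alpha}$ used in \Cref{prop:induction_principle_n=1} includes both families. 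So the check you single out as the main obstacle cannot be carried out as set up: orbits do leave $\Phi_{\msf{\ov{N}}}$, and their coordinates there are invisible in $\ov{n}$.

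\textbf{The repair.} Run the orbit argument on the whole graded piece. The paper does this by regarding $\msf{m}^\perp_{x_b,1-r}$ inside $\msf{m}^\perp\otimes\ov{k}$ as the sum of the weight spaces with $\alpha(x_b)\in\{1-r,-r\}$, and composing with the projection to $\msf{u}_{\mu<0}$. The argument then goes as follows.
\begin{itemize}
\item[(a)] The twisted Frobenius permutes these weight spaces semilinearly, up to unit scalars, along $\sigma$-orbits of $\Phi^{\msf{M}}$.
\item[(b)] Every such orbit meets $\Phi_{\mu<0}$.
\item[(c)] A root of $\Phi_{\mu<0}\subset\Phi_{\msf{\ov{N}}}$ in this piece has positive value at $x_b$, hence value $1-r$ and no $\pi$-shift.
\end{itemize}
So a $k$-rational element is determined by its $\Phi^{1-r}_{\mu<0}$-coordinates. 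These are exactly the coordinates of $j\cdot 0=\ov{n}_{\mu<0}$ modulo $\bb{A}_{>1-r}$. This single argument recovers the hidden $(-r)$-components and handles the projection at the same time, which gives injectivity of the composite.
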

\begin{proof}
    The first claim is immediate from \Cref{lem:reduction_inner_action}. For the second claim, it is enough to show that the composition
    \[
        G_b(F)_{x_b, 1 - r} / G_b(F)_{x_b, (1 - r)+} \cong \msf{m}^\perp_{x_b, 1 - r} \to \msf{u}_{\mu < 0}(\ov{k})
    \]
    is injective. Here, the second map is a natural projection. Then, the injectivity follows from the fact that every $\sigma$-orbit of $\Phi^{\msf{M}}$ contains an element of $\Phi_{\mu < 0}$. 
\end{proof}

\begin{prop} \label{prop:induction_principle_n=1}
    Let $\tfrac{1}{2} < r < 1$ and $\pr \colon \bb{A}_{[1-r, r]} \to \bb{A}_{(1-r, r)} \times \bb{A}_{1-r}$ be the natural projection. 
    \begin{enumerate}
        \item The $\Qla$-local system $f_r^*\cl{L}_\psi$ is equivariant under $G_b(F)_{x_b, 1-r}$. Here, the inner action on $\bb{A}_{[1-r, r]}$ is induced by $\bb{A}_{[1-r, r]} = \bb{A}_{\geq 1 - r} / \bb{A}_{> r}$. 
        \item There is a subset $A_{1-r} \subset \bb{A}_{1-r}(\ov{k})$ of size $\lvert \msf{m}_{x_b, 1-r}^\perp \rvert$ such that $\pr_! f_r^* \cl{L}_\psi$ is supported on $\bb{A}_{(1-r, r)} \times A_{1-r}$ and $G_b(F)_{x_b, 1 - r} / G_b(F)_{x_b, (1-r)+}$ acts on $A_{1-r}$ transitively. 
        \item The polynomial $f_r$ satisfies (M) if and only if $f_{r-}$ satisfies (M). Moreover, 
        \[
            H_c^\bullet (\bb{A}_{[1-r, r]}, f_r^* \cl{L}_\psi) = \bigoplus_{j \in G_b(F)_{x_b, 1 - r} / G_b(F)_{x_b, (1-r)+}} j \cdot H_c^{\bullet - 2d_r} (\bb{A}_{(1-r, r)}, f_{r-}^* \cl{L}_\psi)
        \]
        equivariantly under $G_b(F)_{x_b, 1 - r}$ for $d_r = \dim \bb{A}_r$. 
    \end{enumerate}
\end{prop}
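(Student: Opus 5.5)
We prove (1), (2) and (3) in order. Statement (3) follows formally from (1) and (2) together with \Cref{lem:reduction_N_>=3} and \Cref{lem:reduction_N_=2}.

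For (1), the starting point is \Cref{prop:levelVn_m=1_modulo} and the construction of $\msf{W}(1)_\phi$ in \Cref{defi:reduction_of_specaff_odd}. The $k$-torsor $\msf{W}(1)_\phi \to \msf{U}(1)$ is the Artin--Schreier pullback along $f_1$. By \Cref{cor:stability_Img_can_nm}, the group $G_b(F)_{x_b, 1, 0+}$ acts on $\msf{W}(1)_\phi$ compatibly with this torsor structure. Hence, for each $j$, the pullback $j^*f_1^*\cl{L}_\psi \cong f_1^*\cl{L}_\psi$, i.e.\ $f_1 \circ j - f_1$ is of the form $t^q - t$ on $\msf{U}(1)$. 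We restrict this to the stable locus $\bb{A}_{\geq 1-r}$ supplied by \Cref{cor:orbit_injectivity}, and then pass to the quotient $\bb{A}_{[1-r,r]} = \bb{A}_{\geq 1-r}/\bb{A}_{>r}$. This passage requires checking that $f_1|_{\bb{A}_{\geq 1-r}}$ differs from the pullback of $f_r$ by an Artin--Schreier coboundary, or is actually independent of the coordinates of weight $> r$. I would establish this by the same weight bookkeeping used in the proof of \Cref{prop:highest_term_f_r}. The key observation is that the coordinates $u_{\alpha,1}$ with $\alpha(x_b) > r$ only pair, through $\langle -,-\rangle_X$, with coordinates of weight $< 1-r$, and these vanish on $\bb{A}_{\geq 1-r}$.

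For (2), let $N = d_r$ and apply \Cref{lem:reduction_N_>=3} and \Cref{lem:reduction_N_=2} successively to the variables $u_{\alpha,1}$ with $\alpha \in \Phi^r_{\mu<0}$. \Cref{prop:highest_term_f_r} guarantees that each such variable appears only through the two-term shape required by those lemmas. The variables in $\Phi^r_{\mu<0}$ are linked by $\alpha \mapsto \beta_\alpha$ and $\alpha \mapsto \gamma_\alpha$, and these chains end in variables of weight $1-r$, because $(\sigma^{n_{\gamma_\alpha}}\alpha)(x_b) = \alpha(x_b)-1$. Eliminating the variables of weight $r$ along each chain therefore either substitutes a Frobenius-twisted monomial, in the generic step, or constrains a weight-$(1-r)$ coordinate to a finite set, in the closing step of a cycle. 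The result is a finite subset $A_{1-r} \subset \bb{A}_{1-r}(\ov{k})$ on which $\pr_! f_r^*\cl{L}_\psi$ is supported. Since the support is preserved under the action, (1) shows that $A_{1-r}$ is stable under $G_b(F)_{x_b,1-r}$. It contains $0$, the reduction of the CM point. By \Cref{cor:orbit_injectivity}, the orbit of $0$ under $\msf{m}^\perp_{x_b,1-r}$ injects into $A_{1-r}$. We then count: the degree of the defining additive polynomial in each closing step of \Cref{lem:reduction_N_=2} is $q^{d_{N-1}+d_N}$, and the product of these degrees should equal $\lvert \msf{m}^\perp_{x_b,1-r}\rvert$. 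The comparison uses \Cref{lem:vareps}, which relates the $n_\alpha$ to the $\sigma$-orbit lengths in $\Phi^{\msf{M}}$ at weight $\pm(1-r)$. Since the orbit of $0$ already has this many elements, the count gives $A_{1-r} = \msf{m}^\perp_{x_b,1-r}\cdot 0$, which is transitivity.

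For (3), the fiber of $\pr_! f_r^*\cl{L}_\psi$ over the point $0 \in A_{1-r}$ is computed by the reduction lemmas. Setting the eliminated variables to zero, or to the twisted monomial in the closing step, yields precisely the restriction to $\bb{A}_{(1-r,r)}$. This restriction is $f_{r-}$ up to a Tate twist and the shift $2d_r$; note that $\dim\bb{A}_r = \dim\bb{A}_{1-r}$, so eliminating both layers shifts degree by $2d_r$. The fibers over the other points of $A_{1-r}$ are translates of this one by (1) and (2). Summing over $A_{1-r}$ gives the stated decomposition equivariantly, and property (M) transfers as in the reduction lemmas.

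The main obstacle is the counting step in (2), namely showing $\lvert A_{1-r}\rvert = \lvert\msf{m}^\perp_{x_b,1-r}\rvert$. If the degree bookkeeping is unwieldy, I would instead bound $\lvert A_{1-r}\rvert$ from above via the dimension of the total compactly supported cohomology, and use the injectivity from \Cref{cor:orbit_injectivity} for the lower bound.
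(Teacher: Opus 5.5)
Your proposal is correct. For (2) and (3) it is the paper's argument: you apply \Cref{lem:reduction_N_>=3} and \Cref{lem:reduction_N_=2} successively along the cycles of $\alpha\mapsto\beta_\alpha$ in $\Phi^r_{\mu<0}\sqcup\Phi^{1-r}_{\mu<0}$, use the bound $\prod q^{n_\alpha}=\lvert\msf{m}^\perp_{x_b,1-r}\rvert$ and the stability of the support, and finish with \Cref{cor:orbit_injectivity}.

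You prove (1) by a different route. The paper never proves (1) directly for a given $r$. It runs a downward induction over the jumps, with base case $r=1$ coming from the action on $\msf{W}(1)_\phi$. From (1) at $r$ it deduces (2) and (3) at $r$. It then gets (1) at $r-$ by restricting the equivariant complex $\pr_!f_r^*\cl{L}_\psi[2d_r]$ to the slice $\bb{A}_{(1-r,r)}\times\{0\}$. That slice is stable under $G_b(F)_{x_b,(1-r)+}$, and the complex restricts there to $f_{r-}^*\cl{L}_\psi$.

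You instead restrict the $r=1$ equivariance to $\bb{A}_{\geq 1-r}$ and descend along $q\colon\bb{A}_{\geq 1-r}\to\bb{A}_{[1-r,r]}$. This needs $f_1\vert_{\bb{A}_{\geq1-r}}=f_r\circ q$, which holds exactly, with no Artin--Schreier coboundary. However, the reason is a homogeneity property, not a ``pairing through $\langle-,-\rangle_X$'': $f_1$ is not bilinear, so that picture does not cover its higher-degree terms. The precise statement is as follows.
\begin{itemize}
\item Regard $f_1$ as a polynomial in the $u_{\alpha,1}$ and in the twists $u_{\alpha,1}^{q^i}$ that occur in $g_0^-$, sitting in the root groups of $\sigma^i\alpha$ for $1\leq i\leq n_{\gamma_\alpha}$. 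Give each such factor weight $\alpha(x_b)$.
\item $X\circ\pi_{\msf m}$ only sees $T$-weight $0$ (\Cref{lem:vanish_X}), and the $\mu$-weights must cancel. So every monomial has one linear factor from $\sum_\alpha c_{\alpha,1}u_{\alpha,1}du_\alpha$, exactly one factor in a root $\sigma^{n_{\gamma_\alpha}}\alpha=-\gamma_\alpha$ (of $x_b$-value $\alpha(x_b)-1$), and otherwise only factors of $\mu$-weight $0$.
\item Vanishing of the total $x_b$-value then says the weights of each monomial add up to exactly $1$.
\item A monomial containing a factor of weight $>r$ therefore contains another factor of weight $<1-r$, so it vanishes on $\bb{A}_{\geq 1-r}$.
\end{itemize}
To get an actual equivariant structure on $f_r^*\cl{L}_\psi$, as the word ``equivariantly'' in (3) requires, descend it using that $q^*$ is fully faithful on lisse sheaves (since $q_*\Qla=\Qla$). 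Checking isomorphism classes via the zero section is not enough.

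The trade-off is this. The paper's induction needs nothing about $f_1$ beyond \Cref{prop:highest_term_f_r}, but it ties (1) at each jump to (2) and (3) at the previous jump. Your route decouples (1) and makes the structure at every level visibly the restriction of the one on $\msf{W}(1)_\phi$, at the cost of this homogeneity check.
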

\begin{proof}
    First, (1) clearly holds for $r = 1$. We prove the claims by induction on $r$: we first deduce the claims (2) and (3) for $r$ from (1) and then deduce the claim (1) for $r-$. 
    
    Suppose that (1) holds for $r$. By \Cref{prop:highest_term_f_r}, for each cycle $C \subset \Phi_{\mu < 0}^r \sqcup \Phi_{\mu < 0}^{1 - r}$ in the permutation $\alpha \mapsto \beta_\alpha$, we may apply \Cref{lem:reduction_N_>=3} and \Cref{lem:reduction_N_=2} successively to $u_{\alpha, 1}$ with $\alpha \in C \cap \Phi^{r}_{\mu < 0}$. Then \Cref{lem:reduction_N_=2} (1) implies that $\pr_! f_r^* \cl{L}_\psi$ is supported on $\bb{A}_{(1-r, r)} \times A_{1-r}$ for some subset $A_{1 - r}$ containing $0$ whose size is at most 
    \[
        \prod_{\alpha \in \Phi_{\mu < 0}^{r} \sqcup \Phi_{\mu < 0}^{1 - r}} q^{n_\alpha} = \lvert \msf{m}_{x_b, 1 - r}^\perp \rvert. 
    \]
    The support of $\pr_! f_r^* \cl{L}_\psi$ is stable under the action of $G_b(F)_{x_b, 1 - r}$, so $\lvert A_{1-r} \rvert \leq  \lvert \msf{m}_{x_b, 1 - r}^\perp \rvert$ implies
    \[
        A_{1 - r} = G_b(F)_{x_b, 1-r} / G_b(F)_{x_b,(1-r)+} \cdot 0
    \]
    by \Cref{cor:orbit_injectivity}. Thus, we get the claim (2) for $r$. Moreover, \Cref{lem:reduction_N_>=3} and \Cref{lem:reduction_N_=2} imply that $f_r$ satisfies (M) if and only if 
    \[
        \pr_! f_r^* \cl{L}_\psi [2d_r] \vert_{\bb{A}_{(1-r, r)} \times a}
    \]
    satisfies (M) for every $a \in A_{1-r}$. Since $ \pr_! f_r^* \cl{L}_\psi [2d_r]$ is equivariant under $G_b(F)_{x_b, 1-r}$ and $\pr_! f_r^* \cl{L}_\psi\vert_{\bb{A}_{(1-r, r)}[2d_r] \times 0} \cong f_{r-}^* \cl{L}_\psi$, we get the claim (3). Then, we get the claim (1) for $r-$ since $\bb{A}_{(1-r, r)} \times \{0\}$ is stable under $G_b(F)_{x_b, (1-r)+}$. 
\end{proof}

Now, the computation of $H_c^\bullet(\msf{U}(1), f_1^* \cl{L}_\psi)$ is reduced to $H_c^\bullet(\bb{A}_{1/2}, f_{1/2}^* \cl{L}_\psi)$. Then, we can directly apply the computation of \cite{Tak26_Yu} by \Cref{prop:highest_term_f_r}. 

\begin{prop} \label{prop:cohomology_reduction_n=1_partial}
    The natural map 
    \[
        R\Gamma_c(\msf{W}(1)_\phi, \Qla)[\psi^{-1}] \to R\Gamma(\msf{W}(1)_\phi, \Qla)[\psi^{-1}]
    \]
    is an isomorphism and isomorphic to $\kappa_{b, 1, \phi}[-d]$ as representations of $G_b(F)_{x_b, 0+} \rtimes M(F)_{x, 0+}$. 
\end{prop}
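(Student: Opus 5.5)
We will mirror the proof of \Cref{prop:cohomology_reduction_odd_partial}, with the induction of \Cref{prop:induction_principle_n=1} in place of \cite[Theorem 4.1]{Tak26_Yu}. By \Cref{defi:reduction_of_specaff_odd}, $\msf{W}(1)_\phi \to \msf{U}(1)$ is the Artin--Schreier $k$-torsor attached to $f_1$. Hence $R\Gamma_c(\msf{W}(1)_\phi, \Qla)[\psi^{-1}] \cong R\Gamma_c(\msf{U}(1), f_1^*\cl{L}_\psi)$, and the same holds for $R\Gamma$. The problem is therefore to show that $f_1$ satisfies (M), that the cohomology is concentrated in degree $d$, and to identify the resulting representation.

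The argument runs by descending induction on $r$ through the finitely many jumps $\tfrac12 < r < 1$ of $\alpha(x_b)$. Note that $\msf{U}(1) = \bb{A}_{[0,1]}$; the roots with $\alpha(x_b)$ outside the range in question contribute trivially, since every $\alpha \in \Phi_{\mu<0}$ lies in $\Phi_{\mathsf{\ov{N}}}$ and so has $0<\alpha(x_b)<1$. Applying \Cref{prop:induction_principle_n=1}(3) repeatedly gives two conclusions.
\begin{itemize}
\item $f_1$ satisfies (M) if and only if $f_{1/2}$ does.
\item There is an isomorphism
\[
H_c^\bullet(\msf{U}(1), f_1^*\cl{L}_\psi) \cong \cInd\, H_c^{\bullet - 2d'}(\bb{A}_{1/2}, f_{1/2}^*\cl{L}_\psi),
\]
where $d' = \sum_{r>1/2} d_r$ and the induction is along the successive quotients $G_b(F)_{x_b,1-r}/G_b(F)_{x_b,(1-r)+}$. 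Assembled, these quotients give an induction from the stabilizer to $G_b(F)_{x_b,0+}$, with index $\prod_{r} \lvert \msf{m}^\perp_{x_b,1-r}\rvert = \lvert \msf{m}^\perp_{x_b,(0,1/2)}\rvert$.
\end{itemize}
On $\bb{A}_{1/2}$, \Cref{prop:highest_term_f_r} still shows that $f_{1/2}$ is a sum of terms $a\, u_{\alpha,1}u_{\beta_\alpha,1}^{q^{n_\alpha}}$ with $a \in \ov{k}^\times$, indexed by cycles of $\alpha\mapsto\beta_\alpha$ inside $\Phi^{1/2}_{\mu<0}$. So \cite[Theorem 4.1]{Tak26_Yu} applies cycle by cycle. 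It shows that $f_{1/2}$ satisfies (M) and that its cohomology is concentrated in degree $2d_{1/2}$ in the Tate-twist-ignoring normalization, i.e. the middle degree of $\bb{A}_{1/2}$, with dimension $\sqrt{\lvert \msf{m}^\perp_{x_b,1/2}\rvert}$. Combining the two steps, $R\Gamma_c \to R\Gamma$ is an isomorphism concentrated in degree $d$, and its total dimension is $\lvert \msf{m}^\perp_{x_b,(0,1/2)}\rvert \cdot \sqrt{\lvert \msf{m}^\perp_{x_b,1/2}\rvert} = \sqrt{\lvert\msf{m}^\perp\rvert}$.

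Next we identify the action. By \Cref{prop:positive_depth_diagonal_action_odd}, $M(F)_{x,0+}$ acts trivially, and by \Cref{prop:trivial_inner_action_odd}, $G_b(F)_{x_b,2,1}$ acts trivially. The diagonal action of $M(F)_{x,1}$ is trivial, and $M(F)_{x,1}\subset G(F)_{x,1,1}$ acts by $(\psi\circ X)^{-1}$; hence $G_b(F)_{x_b,1,1}$ acts through the character $\psi\circ X$ of $\msf{m}_{x,1}$, exactly as in the case $m\ge2$. Since $m_b = 0+$ here, \Cref{prop:Heisenberg_odd} now shows that $H^d_c$ contains $\kappa_{b,1,\phi}$. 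Both have dimension $\sqrt{\lvert\msf{m}^\perp\rvert}$, so $H^d_c \cong \kappa_{b,1,\phi}$ as a representation of $G_b(F)_{x_b,0+}$. Since $M(F)_{x,0+}$ acts trivially on both sides (on $\kappa_{x_b,1,\phi}$ by \cite[Theorem 2.27]{Tak26_Yu}), the isomorphism extends to $G_b(F)_{x_b,0+}\rtimes M(F)_{x,0+}$.

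The main obstacle is making the induction of \Cref{prop:induction_principle_n=1} genuinely compatible with $G_b(F)_{x_b,0+}$-equivariance, so that the dimensions multiply as claimed. One has to check that the orbits $A_{1-r}$ have full size at every stage; this is what ensures the terminal bound is $\sqrt{\lvert\msf{m}^\perp\rvert}$ rather than merely an upper bound. A secondary point is that \Cref{prop:highest_term_f_r} is stated only for $r>\tfrac12$, so its analogue at $r=\tfrac12$ (where $\Phi^{r}_{\mu<0}=\Phi^{1-r}_{\mu<0}$) must be rechecked. If that fails, we would instead compute the trace of $\msf{M}$ as in \Cref{prop:cohomology_reduction_odd}.
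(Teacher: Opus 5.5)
Your proposal is correct and follows essentially the same route as the paper: reduce via \Cref{prop:induction_principle_n=1} to $f_{1/2}$, apply \cite[Theorem 4.1]{Tak26_Yu} there, count dimensions to get $\sqrt{\lvert \msf{m}^\perp \rvert}$, and conclude with \Cref{prop:Heisenberg_odd} using the trivial diagonal action of $M(F)_{x,0+}$. The full-orbit property you list as an obstacle is already \Cref{prop:induction_principle_n=1}~(2), and the form of $f_{1/2}$ is read off from the proof of \Cref{prop:highest_term_f_r}, as in the paper; the one slip is that the cohomology of $f_{1/2}^*\cl{L}_\psi$ sits in the middle degree $\dim \bb{A}_{1/2} = d_{1/2}$, not $2d_{1/2}$, which is what makes the total degree $2d' + d_{1/2} = d$.
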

\begin{proof}   
    For the first claim, it is enough to show that $f_{1/2}$ satisfies (M) by \Cref{prop:induction_principle_n=1}. Since $f_{1/2}$ is of the form 
    \[
        \sum_{\alpha \in \Phi_{\mu < 0}^{1/2}} a_{\alpha} u_{\alpha, 1} u_{\beta_\alpha, 1}^{q^{n_\alpha}}
    \]
    for nonzero constants $a_\alpha \in \ov{k}^\times$ (cf.\ the proof of \Cref{prop:highest_term_f_r}), the claim follows from \cite[Theorem 4.1]{Tak26_Yu}. Moreover, we get 
    \begin{equation} \label{eq:reduction_to_1/2}
        H_c^d(\msf{U}(1), f_1^* \cl{L}_\psi) \cong \cInd_{G_b(F)_{x_b, 1/2}}^{G_b(F)_{x_b, 0+}} H_c^w(\bb{A}_{1/2}, f_{1/2}^* \cl{L}_\psi)
    \end{equation}
    from \Cref{prop:induction_principle_n=1} (3) by setting $w = \dim \bb{A}_{1/2}$. In particular, the result loc. cit. implies 
    \[
        \dim H_c^d(\msf{U}(1), f_1^* \cl{L}_\psi) = \prod_{1/2 < r < 1} \lvert \msf{m}^\perp_{x_b, 1-r} \rvert \cdot \prod_{\alpha \in \Phi_{\mu < 0}^{1/2}} q^{n_\alpha} = \sqrt{\lvert \msf{m}^\perp \rvert}. 
    \]
    By \Cref{prop:positive_depth_diagonal_action_odd}, $M(F)_{x, 0+}$ acts trivially on $H_c^d(\msf{U}(1), f_1^* \cl{L}_\psi)$, so $G_b(F)_{x_b, 1, 1}$ acts by a scalar via $\psi \circ X$. Thus, the claim follows from the characterization in \Cref{prop:Heisenberg_odd}. 
\end{proof}

As in the case $m \geq 2$, it remains to compute the trace of $\gamma \in \msf{M}$ on $H_c^d(\msf{U}(1), f_1^* \cl{L}_\psi)$. Since \eqref{eq:reduction_to_1/2} is equivariant under $\msf{M}$, we may argue as in the case $m \geq 2$.

\begin{prop} \label{prop:cohomology_reduction_n=1}
    \Cref{prop:cohomology_reduction} holds for $n = 1$. 
\end{prop}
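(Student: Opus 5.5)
By \Cref{prop:cohomology_reduction_n=1_partial}, the map $R\Gamma_c(\msf{W}(1)_\phi, \Qla)[\psi^{-1}] \to R\Gamma(\msf{W}(1)_\phi, \Qla)[\psi^{-1}]$ is already an isomorphism, concentrated in degree $d$, and it is isomorphic to $\kappa_{b, 1, \phi}$ as a representation of $G_b(F)_{x_b, 0+} \rtimes M(F)_{x, 0+}$. Tensoring over $G(F)_{x, 1, 1}$ with the character $\kappa_{x, 1, \phi}$ extracts exactly this isotypic part, so only the $\msf{M}$-action remains to be identified. We argue as in \Cref{prop:cohomology_reduction_odd}. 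Since $\kappa_{b,1,\phi}$ is irreducible under $G_b(F)_{x_b, 0+}$, there is an isomorphism $H_c^d(\msf{U}(1), f_1^*\cl{L}_\psi) \cong \kappa_{b,1,\phi} \otimes \chi$ for some character $\chi$ of $\msf{M}$. Set $V = H_c^d(\msf{U}(1), f_1^*\cl{L}_\psi)^{G_b(F)_{x_b, 1+, 1/2+}}$. By \Cref{prop:Heisenberg_odd}, $V \cong \kappa_{x_b,1,\phi}$ as a $G_b(F)_{x_b,1,1/2}$-representation, so it suffices to show $\tr(\gamma \vert V) = \tr(\gamma \vert \kappa_{x_b,1,\phi})$ for $\gamma \in \msf{M}$. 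The right-hand side is $(-1)^{d_\gamma}\sqrt{\lvert \msf{m}^{\perp,\gamma}_{x_b,1/2}\rvert}$ by the argument of \Cref{lem:trace_for_kappa_odd}, which did not use $m \geq 2$.

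To compute $\tr(\gamma \vert V)$, we first need the $\msf{M}$-equivariance of the Artin--Schreier diagram defining $\msf{W}(1)_\phi$, together with the $\msf{M}$-invariance of $f_1$. This should follow as in \Cref{lem:M_action_on_W(n)_odd} (3). Indeed, $f_1$ is built from $X \circ \pi_{\msf{m}}$, from $\Ad(g_0^-)$ and from coordinates that are $\msf{M}$-equivariant by \Cref{prop:equivariance_reduction_odd}. The two candidate actions then agree, because both fix the reduction of $x_{\CM, K_\phi}$, which has coordinates $u_{\alpha,1}=0$ and $t=0$. With this in hand, the decomposition \eqref{eq:reduction_to_1/2} is $\msf{M}$-equivariant and realizes $H_c^d$ as the induction from $G_b(F)_{x_b,1/2}$ of $H_c^w(\bb{A}_{1/2}, f_{1/2}^*\cl{L}_\psi)$, whose $G_b(F)_{x_b,1/2}$-translates are exactly the copies of $V$.

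On $\bb{A}_{1/2}$ the polynomial $f_{1/2}$ has the shape $\sum a_\alpha u_{\alpha,1}u_{\beta_\alpha,1}^{q^{n_\alpha}}$. After normalizing the coefficients as in \Cref{lem:normalization_t_factor}, restricted to the classes with $\alpha(x_b)=1/2$, we would like to identify it with $u \mapsto \langle u, \sigma(i(u))\rangle_X$ on the symmetric part $\msf{m}^\perp_{x_b,1/2}$. Then \cite[Proposition 5.3, Theorem 5.14]{Tak26_Yu} give $\tr(\gamma \vert V) = (-1)^{d_\gamma}\sqrt{\lvert \msf{m}^{\perp,\gamma}_{x_b,1/2}\rvert}$ directly. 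As a fallback, we can run the counting argument of \Cref{prop:cohomology_reduction_odd}: bound $\lvert J_\gamma\rvert$ by $\lvert \msf{m}^{\perp,\gamma}_{x_b,(0,1/2)}\rvert$ and each summand by $\sqrt{\lvert \msf{m}^{\perp,\gamma}_{x_b,1/2}\rvert}$, compute the total trace as $(-1)^{d_\gamma}\sqrt{\lvert \msf{m}^{\perp,\gamma}\rvert}$ via the same Heisenberg identification, and conclude that all inequalities are equalities. Either route yields $\chi = 1$.

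The main obstacle is the identification of $f_{1/2}$, and of the $\msf{M}$-action on $\bb{A}_{1/2}$, with the Heisenberg Deligne--Lusztig form. This requires controlling the complicated terms of $g_0^-$ once they are restricted to $\alpha(x_b) = 1/2$. We expect to do this by tracking the proof of \Cref{prop:highest_term_f_r} for $r = 1/2$, checking that only the leading pairing $\alpha \leftrightarrow \beta_\alpha$ survives, and using that the symmetric $\Sigma$-orbits all lie at $\alpha(x_b) = 1/2$.
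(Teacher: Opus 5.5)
Your proposal is correct and follows essentially the same route as the paper: reduce to the triviality of the $\msf{M}$-character $\chi$, use the $\msf{M}$-equivariance of \eqref{eq:reduction_to_1/2} to isolate $H_c^w(\bb{A}_{1/2}, f_{1/2}^*\cl{L}_\psi) \cong \kappa_{x_b,1,\phi}$, renormalize the coefficients of $f_{1/2}$ as in \Cref{lem:normalization_t_factor}, and identify the result $\msf{M}$-equivariantly with the Heisenberg Deligne--Lusztig variety for $M \subset G_b$ at depth $1$. The differences are cosmetic: the paper applies \cite[Theorem 6]{Tak26_Yu} to that variety directly rather than matching trace formulas on both sides, and your explicit check that the Artin--Schreier diagram for $n=1$ is $\msf{M}$-equivariant spells out a step the paper uses tacitly.
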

\begin{proof}
    By \Cref{prop:cohomology_reduction_n=1_partial}, we have an isomorphism
    \[
        H_c^d(\msf{U}(1), f_1^*\cl{L}_{\psi}) \cong H_c^d(\msf{W}(1)_\phi, \Qla)[\psi^{-1}] \cong \kappa_{b, 1, \phi} \otimes \chi 
    \]
    equivariant under $G_b(F)_{x_b, n, m_b} \rtimes M(F)_{x, 0}$ for some character $\chi \colon \msf{M} \to \Qlax$. It is enough to show that $\chi$ is trivial. By the proof of \Cref{prop:cohomology_reduction_n=1_partial} (cf.\ \eqref{eq:reduction_to_1/2}), $H_c^w(\bb{A}_{1/2}, f_{1/2}^* \cl{L}_\psi) \cong \kappa_{x_b, 1, \phi}$ as a $G_b(F)_{x_b, 1, 1/2}$-representation. Then, it is enough to show 
    \[
        \tr(\gamma\vert H_c^w(\bb{A}_{1/2}, f_{1/2}^* \cl{L}_\psi)) = \tr(\gamma \vert \kappa_{x_b, 1, \phi}) \quad (\gamma \in \msf{M}).
    \]
    Now, $f_{1 /2}$ is explicitly given by 
    \[
        \sum_{\alpha \in \Phi_{\mu < 0}^{1/2}} c_{\alpha, 1} X([du_\alpha, du_{-\alpha}]_{\msf{g}}) u_{\alpha, 1} u_{\beta_\alpha}^{q^{n_\alpha}}. 
    \]
    Since $c_{\alpha, 1}$ only depends on $\alpha\vert_{Z_{M}^\circ}$, we may replace the coordinate $u_{\alpha, 1}$ by $v_\alpha = t_\alpha u_{\alpha, 1}$ for some nonzero constant $t_\alpha$ depending only on $\alpha\vert_{Z_M^\circ}$, so that we have
    \[
        f_{1/2}(v_\alpha) = \sum_{\alpha \in \Phi_{\mu < 0}^{1/2}} X([du_\alpha, du_{-\alpha}]_{\msf{g}}) v_{\alpha, 1} v_{\beta_\alpha}^{q^{n_\alpha}}
    \]
    by the same argument as in \Cref{lem:normalization_t_factor}. Since the $\msf{M}$-action on $\bb{A}_{1/2} = \Spec(\ov{k}[v_\alpha \vert \alpha \in \Phi^{1/2}_{\mu < 0}])$ is identified with the adjoint action by \Cref{prop:equivariance_reduction_odd}, it follows as in \Cref{lem:M_action_on_W(n)_odd} that 
    \[
        H_c^w(\bb{A}_{1/2}, f_{1/2}^* \cl{L}_\psi) \cong H_c^w(\msf{W}^b_{\Phi_{\mu < 0}, 1}, \Qla)[\psi^{-1}]
    \]
    equivariantly under $\msf{M}$. Here, $\msf{W}^b_{\Phi_{\mu < 0}, 1}$ denotes the Heisenberg Deligne-Lusztig variety for $M \subset G_b$ at depth $1$. Then, the claim follows from \cite[Theorem 6]{Tak26_Yu}. 
\end{proof}

\section{Summary of properties of special affinoids} \label{sec:summary}

In this section, we collect the properties of special affinoids $\cl{W}(n)_\phi$ to show \Cref{thm:explicit_geometry}. Recall that $F$ is a finite extension of $\bb{Q}_p$ and $(M, \mu, x)$ is an unramified length-zero triple. 

\begin{thm} \label{thm:explicit_geometry_strongest}
    For $n \geq 0$, let $\cl{U}(n) \subset \cl{M}_{\cl{G}, b, \mu}$ be the open ball as in \Cref{defi:U(n)_radius} and let
    \[
        J_n = K_n, \quad 
        J_{b, n} = \left\{ \begin{alignedat}{4}
            & K_{b, n} & \; & (n = 2m) \\
            & K_{b, n - 1} & \; & (n = 2m - 1)
        \end{alignedat}
        \right.
    \]
    (see \Cref{defi:level_subgroups}). Then, we have the following. 
    \begin{enumerate}
        \item $\cl{U}(n)$ is stable under $J^0_{b, n}$ and $j \cdot \cl{U}(n) \cap \cl{U}(n) = \emptyset$ for $j \in G_b(F) - J_{b, n}^0$. Moreover, 
        \[
            \Delta(m)(\cl{U}(n)) = \cl{U}(n) \quad (m \in M(F)_x) ,\quad
            \sigma_E(\cl{U}(n)) = z_\mu \cdot \cl{U}(n)
        \]
        as open subsets of $\cl{M}_{\cl{G}, b, \mu}$. Here, $z_\mu$ acts via the $G_b(F)$-action (cf.\ \Cref{lem:zero_dimensional_LSV}). 
        \item The restriction of $\pi_\GM$ to $\bigsqcup_{j \in \cl{G}_b(O_F) / J^0_{b, n}} j \cdot \cl{U}(n)$ is an open immersion. 
        \item There is a unique $\breve{F}$-valued point $x_\CM \in \cl{U}(n) \cap \cl{M}_{\cl{M}, b, \mu}$ and a unique section
        \[
            \can_n \colon \cl{U}(n) \to \cl{M}_{G, b, \mu, J^0_n}
        \]
        sending $x_\CM$ into its natural lift via $\cl{M}_{\cl{M}, b, \mu} \subset \cl{M}_{G, b, \mu, J^0_n}$. Moreover, the open image $\Img(\can_n) \subset \cl{M}_{G, b, \mu, J^0_n}$ is stable under $J_{b, n}^0$. 
        \item For every $(G, G_b, M)$-supergeneric $\rho \in \Irr^\sm(M(F)_x)$ of depth $n$, the natural map
        \begin{equation*}
            R\Gamma_c(\cl{U}(n)_{\bb{C}_p}, \can_n^*\cl{L}_{R^{J_n}_M(\rho)})[R^{J_{b, n}}_M(\rho)] \to R\Gamma(\cl{U}(n)_{\bb{C}_p}, \can_n^*\cl{L}_{R^{J_n}_M(\rho)})[R^{J_{b, n}}_M(\rho)]
        \end{equation*}
        is an isomorphism and isomorphic to $R^{J_{b, n}}_M(\rho)[-d]$ in $\cl{D}(J_{b, n}, \Qla)$. 
    \end{enumerate}
    Moreover, let $\cl{U}^\infty(n) \subset \cl{M}_{G, b, \mu, \infty}$ be the inverse image of $\Img(\can_n)$. Then
    \[
        \Sht^b_{J_n, -\mu}(n) = \bigsqcup_{m \in M(F)_x / M(F)_{x, 0}} m \cdot \cl{U}^\infty(n) / \und{J_n \times J_{b, n}} = \Sht^1_{J_{b, n}, \mu}(n)
    \]
    are special open neighborhoods at levels $(J_n, J_{b, n})$ of depth $n$. 
\end{thm}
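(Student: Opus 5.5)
The plan is to assemble the theorem from results already established, separating $n = 0$ from $n \geq 1$, and then to obtain the final assertion about special open neighborhoods by a single application of \Cref{prop:restatement_LSV}. That proposition needs two preliminary facts: $X_*(Z_M^\circ)_{I_F}$ is torsion-free, and $J_n = J_n^0 M(F)_x$, $J_{b,n} = J_{b,n}^0 M(F)_x$. The first holds because $M$ is unramified (see the remark after \Cref{lem:zero_dimensional_LSV}). The second follows from \Cref{defi:level_subgroups} together with \Cref{lem:hyperspecial_M(F)_x}.

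For $n = 0$, all four conditions are exactly \Cref{prop:verification_at_depth_zero}, so nothing new is needed. For $n \geq 1$ I would check the conditions one at a time.

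\begin{enumerate}
\item Condition (1). Stability under $J^0_{b,n}$ is \Cref{prop:U(n)_stability}. Disjointness of $j \cdot \cl{U}(n)$ from $\cl{U}(n)$ for $j \in G_b(F) - J^0_{b,n}$ is \Cref{prop:action_outside_J_bn}. The diagonal action and the Weil descent datum are handled by \Cref{prop:stability_U(n)_diagonal}.
\item Condition (2). The radii in \Cref{defi:U(n)_radius} are at least $1 \geq r_\alpha > 1 - q^{-1} > q^{-1}$. Hence $\cl{U}(n) \subset \cl{U}(0) \subset \cl{U}_{r}$ for some $r > q^{-1}$, and \Cref{thm:computation_period_map} shows that $\pi_\GM$ is injective on $\cl{U}_r$. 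The translates $j \cdot \cl{U}(n)$, for $j \in \cl{G}_b(O_F)/J^0_{b,n}$, lie in $\cl{U}(0)$ by \Cref{prop:U(0)_previous_work}(1) and are pairwise disjoint by condition (1). It follows that $\pi_\GM$ restricted to their disjoint union is the restriction of an open immersion, hence an open immersion.
\item Condition (3). The point $x_\CM$ comes from \Cref{prop:U(0)_previous_work}(2) together with the lemma placing $x_\CM$ in $\cl{U}(n)$. The section $\can_n$ is \Cref{cor:canonical_CM_structure}, which applies because the radii of $\cl{U}(n)$ exceed $m - 1 + r_\alpha$. Stability of $\Img(\can_n)$ under $J^0_{b,n}$ is \Cref{prop:stability_Img_can_n}.
\item Condition (4). \Cref{prop:nearby_cycle_positive_depth} reduces this to \Cref{prop:cohomology_reduction}, tensored with $\rho$ using $R^{J_{b,n}}_M(\rho) = \kappa_{b,n,\phi}\otimes\rho$. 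The latter is proved case by case: \Cref{prop:cohmology_reduction_even} for $n = 2m$, \Cref{prop:cohomology_reduction_odd} for $n = 2m-1$ with $m \geq 2$, and \Cref{prop:cohomology_reduction_n=1} for $n = 1$. Taking the $R^{J_{b,n}}_M(\rho)$-isotypic part then gives the stated isomorphism with $R^{J_{b,n}}_M(\rho)[-d]$.
\end{enumerate}

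For the last assertion I would apply \Cref{prop:restatement_LSV}. Its proof builds $\Sht^b_{J_n,-\mu}(n)$ as a disjoint union of translates of $\cl{U}^\infty(n)$ indexed by $J_{b,n}/J^0_{b,n}$, and builds $\Sht^1_{J_{b,n},\mu}(n)$ as a disjoint union indexed by $J_n/J_n^0$. Both index sets are identified with $M(F)_x/M(F)_{x,0}$, which gives the uniform description in the statement. The triviality of the diagonal $Z_G(F)$-action (\Cref{lem:diagonal_central_action}) lets the $M(F)_x$-translates be written as either inner or level translates.

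The main obstacle is the bookkeeping in condition (4), specifically for $n = 2m-1$, where $J_{b,n} = K_{b,n-1}$ differs from $K_{b,n}$. There I need to check that the representation produced by \Cref{prop:cohomology_reduction}, namely $\kappa_{b,n,\phi}\otimes\rho$ on $G_b(F)_{x_b,n,m_b}M(F)_x$, is exactly $R^{J_{b,n}}_M(\rho)$. I would verify this by checking the group identity $J_{b,n} = G_b(F)_{x_b,n,m_b}M(F)_x$ directly from the Moy--Prasad filtrations.
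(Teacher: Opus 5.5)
Your proposal is correct and is essentially the paper's proof. The case $n=0$ is \Cref{prop:verification_at_depth_zero}. For $n \geq 1$ the paper cites exactly the same results for (1)--(4), deducing (2) from \Cref{thm:computation_period_map} via $\bigsqcup_j j\cdot\cl{U}(n)\subset\cl{U}(0)$. It then obtains the last assertion from the construction in \Cref{prop:restatement_LSV}.

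The extra bookkeeping you add is treated as immediate in the paper. This covers the hypotheses of \Cref{prop:restatement_LSV} and the identity $J_{b,n}=G_b(F)_{x_b,n,m_b}M(F)_x$, which holds because roots outside $M$ take non-integral values at $x_b$. The only slip is that $1-q^{-1}>q^{-1}$ fails for $q=2$; this is harmless, since $r_\alpha>1-q^{-1}\geq q^{-1}$ still gives the needed strict bound.
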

\begin{proof}   
    The case $n = 0$ is proved in \Cref{prop:verification_at_depth_zero}. For $n \geq 1$, each property is proved in the following propositions. 
    \begin{itemize}
        \item (1) is proved in \Cref{prop:U(n)_stability}, \Cref{prop:stability_U(n)_diagonal} and \Cref{prop:action_outside_J_bn}. 
        \item (2) follows from \Cref{thm:computation_period_map} since $\bigsqcup_{j \in \cl{G}_b(O_F) / J^0_{b, n}} j \cdot \cl{U}(n) \subset \cl{U}(0)$ by (1). 
        \item (3) follows from \Cref{cor:canonical_CM_structure} and \Cref{prop:stability_Img_can_n}. 
        \item (4) follows from \Cref{prop:nearby_cycle_positive_depth} and \Cref{prop:cohomology_reduction}. 
    \end{itemize}
    Moreover, the last claim follows from the construction in \Cref{prop:restatement_LSV}. 
\end{proof}

A key ingredient in the proof of (4) is the following property of positive-depth special affinoids $\cl{W}(n)_\phi$. Let $\phi \colon M(F) \to \Qlax$ be a $(G, G_b, M)$-supergeneric character of depth $n$ such that $\rho \otimes \phi^{-1}$ is trivial on $M(F)_{x, n}$ (see \Cref{prop:transfer_regular_supercuspidal}). 

\begin{thm} \label{thm:property_of_special_affinoids}
    For each $n \geq 1$, there is a unique refinement of $\can_n$ to a section
    \[
        \can_{n, m} \colon \cl{U}(n)_{\breve{F}_n} \to \cl{M}_{G, b, \mu, G(F)_{x, n, m}}
    \]
    sending $x_\CM$ to $x_{\CM, n}$. Let $\cl{W}(n)_\phi \subset \cl{M}_{G, b, \mu, K_\phi, \breve{F}_n}$ be the inverse image of $\Img(\can_{n, m})$. Then, $\cl{W}(n)_{\phi, \bb{C}_p}$ has good reduction and its reduction $\msf{W}(n)_\phi$ is isomorphic to a Heisenberg Deligne-Lusztig variety for $n \geq 2$. 
\end{thm}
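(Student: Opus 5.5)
The statement is essentially a repackaging of results already established in \Cref{sec:canonical_trivialization} and \Cref{sec:cohomology_computation}, so the plan is to assemble them in three steps.

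\textbf{The refined section $\can_{n, m}$.} Existence and uniqueness are exactly \Cref{thm:refined_trivialization}. There $\can_{n,m}$ is built over the perfectoid cover $\cl{U}_\infty(n)$ by the convergent series solving \eqref{eq:levelVn}, using \Cref{lem:compute_h_mn}. It is then descended to $\cl{U}(n)_{\bb{C}_p}$ by \Cref{prop:mapatinf}, and to $\breve{F}_n$ by Galois descent. Uniqueness follows from connectedness of $\cl{U}(n)$ together with the normalization $x_\CM\mapsto x_{\CM,n}$. To say that $\can_{n,m}$ \emph{refines} $\can_n$, I would check that its composition to level $K_n^0$ sends $x_\CM$ to $x_{\CM,K_n^0}$. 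Since $G(F)_{x,n,m}\subset K_n^0$ and both points come from $x_{\CM,\infty}$, the uniqueness statement in \Cref{cor:canonical_CM_structure} then forces agreement.

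\textbf{Good reduction of $\cl{W}(n)_\phi$.} I would split by parity.
\begin{itemize}
\item For $n=2m-1$, \Cref{prop:good_reduction_odd} gives a $k$-equivariant isomorphism $\mfr{W}(n)_{\phi,\eta}\cong\cl{W}(n)_{\phi,\bb{C}_p}$. Here $\mfr{W}(n)_\phi$ is the finite \'etale Artin--Schreier lift of $\msf{W}(n)_\phi\to\msf{U}(n)$ over the smooth formal ball $\mfr{U}(n)$, so $\mfr{W}(n)_\phi$ is a smooth formal model.
\item For $n=2m$, \Cref{prop:good_reduction_even} gives the analogous statement with the Lang torsor of $H^\pre_{x,n,\phi}$.
\end{itemize}
In both cases the reduction is $\msf{W}(n)_\phi$.

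\textbf{Identification with a Heisenberg Deligne--Lusztig variety.}
\begin{itemize}
\item For $n=2m$, base change along \eqref{eq:U(n)_isomorphism} gives $\msf{W}(n)_\phi^\perf\cong\msf{W}_{\Phi_{\mu<0},n}$. This is equivariant by \Cref{prop:equivariance_under_HW_even}.
\item For $n=2m-1$ with $m\ge2$, I would renormalize the coordinates using \Cref{lem:normalization_t_factor}. Then \Cref{lem:M_action_on_W(n)_odd} puts $f_n$ in the form $u\mapsto\langle u,\sigma(i_{\mu<0}(u))\rangle_X$, which matches \cite[Proposition 5.3]{Tak26_Yu} up to sign. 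This yields the (non-canonical) isomorphism with $\msf{W}_{\Phi_{\mu<0},2m}$ noted in \Cref{rmk:comparison_Heisenberg_Deligne_Lusztig}.
\end{itemize}

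\textbf{Main obstacle.} The delicate point is the odd case. There the Artin--Schreier equation $t^q-t=f_n$ must be matched with the Lang-torsor description of the Heisenberg variety in \cite{Tak26_Yu}, including the sign $\psi$ versus $\psi^{-1}$ and the choice of $t_\alpha$. I would handle this by following the translation of notation given in the proof of \Cref{prop:trace_computation_cohomology}.

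The case $n=1$ is excluded: $f_1$ is not of this simple quadratic-Frobenius shape, which is why the statement is restricted to $n\geq 2$.
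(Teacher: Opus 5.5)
Your proposal is correct and follows the paper's own proof essentially verbatim: it cites \Cref{thm:refined_trivialization} for $\can_{n,m}$, \Cref{prop:good_reduction_odd} and \Cref{prop:good_reduction_even} for good reduction, and \Cref{rmk:comparison_Heisenberg_Deligne_Lusztig} (odd case, $m\geq 2$) and \Cref{prop:equivariance_under_HW_even} (even case) for the comparison with Heisenberg Deligne--Lusztig varieties, exactly as the paper does. Your extra check that $\can_{n,m}$ actually refines $\can_n$ is a harmless addition that the paper leaves implicit: it uses $G(F)_{x,n,m}\subset K_n^0$, that both sections send $x_\CM$ to the image of $x_{\CM,\infty}$, and the uniqueness in \Cref{cor:canonical_CM_structure}.
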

\begin{proof}
    The construction of $\can_{n, m}$ is given in \Cref{thm:refined_trivialization}. Then, the latter claim is proved separately according to the parity of $n$. 
    \begin{itemize}
        \item When $n = 2m - 1$, the latter claim is proved in \Cref{prop:good_reduction_odd} and the comparison with Heisenberg Deligne-Lusztig varieties for $m \geq 2$ is given in \Cref{rmk:comparison_Heisenberg_Deligne_Lusztig}. 
        \item When $n = 2m$, the latter claim is proved in \Cref{prop:good_reduction_even} and the comparison with Heisenberg Deligne-Lusztig varieties is proved in \Cref{prop:equivariance_under_HW_even}. 
    \end{itemize}
    \vspace{-\topsep}
\end{proof}

As a direct consequence of the existence of special open neighborhoods, we obtain an explicit contribution of the Hecke operator at degree $0$. 

\begin{cor} \label{cor:Hecke_operator_unramified_CM}
    Let $\rho \in \Irr^\sm(M(F)_x)$ be $(G, G_b, M)$-supergeneric of depth $n$ and let $\pi = R^{G(F)}_M(\rho)$ and $\pi_b =  R^{G_b(F)}_M(\rho)$. For some smooth character $\xi_E \colon W_E \to \Qlax$, we have 
    \[
        \pi_b \boxtimes \xi_E \subset H^0(i_b^* T_{-\mu} i_{1!}(\pi)). 
    \]
    In particular, their Fargues-Scholze parameters are equal, i.e.\ $\varphi^\FS_\pi = \varphi^\FS_{\pi_b}$. 
\end{cor}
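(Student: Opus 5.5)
The argument is essentially formal once the special open neighborhoods exist, so I will assemble it from two earlier results and not redo any geometry. First I invoke \Cref{thm:explicit_geometry_strongest} for the given unramified length-zero triple $(M, \mu, x)$ and the depth $n$. It produces special open neighborhoods
\[
    \Sht^b_{J_n, -\mu}(n) \cong \Sht^1_{J_{b, n}, \mu}(n)
\]
at levels $(J_n, J_{b, n})$ of depth $n$, in the sense of \Cref{defi:special_open_neighborhoods}. Second, I apply \Cref{prop:consequence_Hecke_operator} with $J_r = J_n$ and $J_{b, r} = J_{b, n}$. Its hypotheses are exactly that $\rho$ is $(G, G_b, M)$-supergeneric of depth $n$ and that such neighborhoods exist. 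This gives $\pi_b \boxtimes \xi_E \subset H^0(i_b^* T_{-\mu} i_{1!}(\pi))$ for some smooth character $\xi_E$, together with the equality of Fargues-Scholze parameters.

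The one point to check is that the representations in the corollary are the same as those in \Cref{prop:consequence_Hecke_operator}. There $\pi = \cInd_{J_n}^{G(F)} R^{J_n}_M(\rho)$ and $\pi_b = \cInd_{J_{b, n}}^{G_b(F)} R^{J_{b, n}}_M(\rho)$. I handle the two groups separately.
\begin{enumerate}
    \item On the $G$ side, $J_n = K_n$, so $\pi = R^{G(F)}_M(\rho)$ by definition.
    \item On the $G_b$ side with $n = 2m$, we have $J_{b, n} = K_{b, n}$, and $\pi_b = R^{G_b(F)}_M(\rho)$ by definition.
    \item On the $G_b$ side with $n = 2m - 1$, the representation $R^{J_{b, n}}_M(\rho)$ used in the proof of \Cref{thm:explicit_geometry_strongest} is $\kappa_{b, n, \phi} \otimes \rho$, where
    \[
        \kappa_{b, n, \phi} = \cInd_{G_b(F)_{x_b, n, n/2}}^{G_b(F)_{x_b, n, m_b}} \kappa_{x_b, n, \phi}.
    \]
    By transitivity of compact induction,
    \[
        \cInd_{J_{b, n}}^{G_b(F)}(\kappa_{b, n, \phi} \otimes \rho) \cong \cInd_{K_{b, n}}^{G_b(F)}(\kappa_{x_b, n, \phi} \otimes \rho) = R^{G_b(F)}_M(\rho).
    \]
\end{enumerate}
Irreducibility, used in \Cref{prop:consequence_Hecke_operator} to make $W_E$ act on $\pi_b$ by a character, follows from \Cref{prop:transfer_regular_supercuspidal} applied to both $M \subset G$ and $M \subset G_b$.

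I expect no real obstacle, since all the substance sits in \Cref{thm:explicit_geometry_strongest}. The only care needed is the odd-depth bookkeeping just described: one has to see that the enlarged level $J_{b, 2m-1} = K_{b, 2m-2}$ still induces up to $R^{G_b(F)}_M(\rho)$. That holds because $\kappa_{b, n, \phi}$ was defined as an induction from $\kappa_{x_b, n, \phi}$, and \Cref{prop:Heisenberg_odd} shows this induction is irreducible.
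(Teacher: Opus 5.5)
Your proposal is correct and matches the paper's proof exactly: the paper proves the corollary by feeding the special open neighborhoods of \Cref{thm:explicit_geometry_strongest} into \Cref{prop:consequence_Hecke_operator}. Your odd-depth check is also correct: transitivity of compact induction gives $\cInd_{J_{b,n}}^{G_b(F)} R^{J_{b,n}}_M(\rho) \cong R^{G_b(F)}_M(\rho)$, a step the paper leaves implicit in the remark that $R^{J_{b,n}}_M(\rho) = \kappa_{b,n,\phi} \otimes \rho$ ``by construction''.
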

\begin{proof}
    It follows from \Cref{thm:explicit_geometry_strongest} and \Cref{prop:consequence_Hecke_operator}. 
\end{proof}

This holds without any restriction on $p$ and $q$. 

\begin{rmk}
    When $\pi$ and $\pi_b$ are constituents of Kaletha's regular supercuspidal $L$-packets \cite{Kal19}, the equality $\varphi^\FS_\pi = \varphi^\FS_{\pi_b}$ follows from \cite[Corollary 3.4.12]{Var24} and \cite[Corollary 1.3]{Han26} for sufficiently large $p$. 
\end{rmk}


Our construction of special open neighborhoods in this paper is carried out via the explicit geometry of local Shimura varieties. It would be interesting to find a direct stacky construction as in \Cref{thm:canonical_level_structures}. 


\renewcommand\bibfont{\footnotesize}
\printbibliography

\end{document}